\newtheorem{theorem}{Theorem}[section]
\newtheorem{lemma}{Lemma}[section]
\newtheorem{corollary}{Corollary}[section]
\newtheorem{definition}{Definition}[section]
\numberwithin{equation}{section}
\begin{document}
\title{On preconditioned AOR method for solving linear systems}
\author{Yongzhong Song\\\\
\small{\it Jiangsu Key Lab for NSLSCS,
School of Mathematical Sciences,} \\
\small{\it  Nanjing Normal University,
Nanjing 210023, People's Republic of China}\\
\small{\it Email: yzsong@njnu.edu.cn}}
\date{}
\maketitle

\begin{abstract}
In this paper, we investigate the preconditioned AOR method for solving linear systems.
We study two general preconditioners and propose some lower triangular,
upper triangular and combination preconditioners. For $A$ being
an L-matrix, a nonsingular M-matrix, an irreducible L-matrix
and an irreducible nonsingular M-matrix, four types of comparison theorems are presented,
respectively. They contain a general comparison result, a strict comparison result and two
Stein-Rosenberg type comparison results. Our theorems include and are better than
almost all known corresponding results.

\medskip
{\bf Keyword:} Linear system; preconditioner; iterative method; AOR method; comparison.

\medskip
{\bf AMS subject classifications:} 65F10; 65F08.
\end{abstract}

\section{Introduction}

Consider a system of $n$ equations
 \begin{eqnarray}\label{eqn1.1}
 Ax = b,
 \end{eqnarray}
where $A = (a_{i,j})\in{\mathscr R}^{n \times n}$, $b$, $x\in{\mathscr R}^{n}$
with $b$ known and $x$ unknown. In order to solve the system
(\ref{eqn1.1}) with iterative methods, the coefficient matrix $A$
is split into
 \begin{eqnarray}\label{eqn1.2}
 A = M - N,
 \end{eqnarray}
where $M$ is nonsingular and $N \not = 0$. Then a linear stationary
iterative method for solving (\ref{eqn1.1}) can be described as
 \begin{eqnarray}\label{eqn1.3}
 x^{k+1} = Tx^k + M^{-1}b, \quad k = 0,1,2,\cdots,
 \end{eqnarray}
where $T = M^{-1}N$ is the iteration matrix.

We decompose $A$ into
\begin{eqnarray*}
A = D - L - U,
 \end{eqnarray*}
where $D$ is a diagonal matrix, $L$ and $U$ are strictly lower and
upper triangular matrices, respectively, as usual.

For $\omega\in{\mathscr R}\setminus\{0\}$ and $\gamma\in{\mathscr R}$, let
 \begin{eqnarray}\label{eqn1.4}
 A = M_{\gamma,\omega} - N_{\gamma,\omega},
 \end{eqnarray}
 where
 \begin{eqnarray*}
 M_{\gamma,\omega} = \frac{1}{\omega}(D - \gamma L), \;
 N_{\gamma,\omega} = \frac{1}{\omega} \left[(1-\omega)D +
(\omega-\gamma)L + \omega U \right].
 \end{eqnarray*}
Then the AOR method for solving (\ref{eqn1.1}) is defined in \cite{Ha78} by
 \begin{eqnarray}\label{eqn1.5}
 x^{k+1} = {\mathscr L}_{\gamma,\omega}x^{k} +
 \omega(D - \gamma L)^{-1}b, \quad k = 0,1,2,\ldots,
 \end{eqnarray}
 where
\begin{eqnarray*}
{\mathscr L}_{\gamma,\omega} = (D - \gamma L)^{-1} [(1-\omega)D +
(\omega-\gamma)L + \omega U]
\end{eqnarray*}
 is the AOR iteration matrix. The splitting (\ref{eqn1.4}) is also called
 the AOR splitting of $A$.

When $(\gamma,\omega)$ is equal to $(\omega, \omega)$, (1, 1) and (0, 1),
the AOR method reduces respectively to the SOR method, Gauss-Seidel method and Jacobi method, whose iteration
matrices are represented by ${\mathscr L}_{\omega}$, ${\mathscr L}$ and ${\mathscr J}$.

In \cite{Ha78} it is pointed out that, for $\gamma \not = 0$, the AOR method is an extrapolated SOR (ESOR)
method with overrelaxation parameter $\gamma$ and extrapolation one $\omega/\gamma$, i.e.,
 \begin{eqnarray}\label{eqn1.6}
 {\mathscr L}_{\gamma,\omega} = \left(1 - \frac{\omega}{\gamma}\right)I
 + \frac{\omega}{\gamma}{\mathscr L}_{\gamma},
 \end{eqnarray}
and, hence, if $\eta$ is an eigenvalue of ${\mathscr L}_{\gamma}$ and $\lambda$,
the corresponding one of ${\mathscr L}_{\gamma,\omega}$, then we have
 \begin{eqnarray} \label{eqn1.7}
 \lambda = 1 - \frac{\omega}{\gamma} + \frac{\omega}{\gamma}\eta.
 \end{eqnarray}

It is well known that, if $A$ is
nonsingular, then the iterative method (\ref{eqn1.3}) is convergent
if and only if the spectral radius $\rho(T)$ of the iteration
matrix $T$ is less than 1. In this case, the matrix $T$ is also called
convergent. However, if $A$ is singular, then we
have $\rho(T) \ge 1$, so that we can only require the
semiconvergence of the splitting.
When the iterative method (semi)converges, the convergence
speed is determined by $\rho(T)$, and the smaller it is, the
faster the iterative method converges. Therefore $\rho(T)$ is
called convergence factor.

In order to decrease the spectral radius of the iteration matrix,
an effective method is to precondition the linear system
(\ref{eqn1.1}). It is well known that the term preconditioning
refers to transforming the system (\ref{eqn1.1}) into another
system with more favorable properties for iterative methods.

If $P$ is a nonsingular matrix, then the preconditioned linear
system
\begin{eqnarray*}
PAx = Pb
\end{eqnarray*}
has the same solution as (\ref{eqn1.1}). Here $P$ is called the
preconditioner.

Generally speaking, preconditioning attempts to improve the
spectral properties of the coefficient matrix. A good preconditioner $P$ should meet the following requirements:

\begin{itemize}
\item[$\bullet$] The preconditioned system should have more favorable properties for iterative methods,
in particular, the iterative methods can be convergent more faster.

\item[$\bullet$] The preconditioner should be cheap to construct.
\end{itemize}

To choose a good preconditioner $P$ is an interesting problem,
which has been investigated widely.
In a large number of papers, in particular for the AOR method, a special preconditioner $P$
is proposed by
\begin{eqnarray*}
P = I + Q, \; Q > 0.
 \end{eqnarray*}
Then we define a matrix splitting
\begin{eqnarray*}
PA = \hat{M} - \hat{N}.
 \end{eqnarray*}
A preconditioned iterative method can be defined by
\begin{eqnarray*}
 x^{k+1} = \hat{T}x^k + \hat{M}^{-1}b, \quad k = 0,1,2,\cdots,
 \end{eqnarray*}
where $\hat{T} = \hat{M}^{-1}\hat{N}$ is the iteration matrix.

When $A$ is an L-matrix, a nonsingular M-matrix, an irreducible L-matrix
or an irreducible nonsingular M-matrix, the preconditioned AOR,
SOR, Gauss-Seidel and Jacobi methods are constructed, generalized and
applied by
\cite{AH04,DHS11,DH11,DH14,EM95,EMT01,GJS91,HNT03,HS14,HDS10,HN01,HCC06,HCEC05,HWF07,
HWXC16,KO98,KKNU97,KN09,KHMN02,KNO96,KNO97,KNO972,Li11,Li12,LE94,LW14,LH05,LJ10,Li02,
Li03,Li05,LL07,LS00,LLW07,LLW072,LW04,LW042,LY08,LL12,Li08,LC09,LC10,LCC08,LHZ15,MEY01,Mi87,Mo10,
NA12,NHMS04,NK10,NKA09,NKM08,SE13,SE15,SER14,SSHL18,Su05,Su06,UKN95,UNK94,WZ11,WL09,Wa06,Wa062,
WS09,WH06,WWS07,WH07,WH09,YZ12,Yu07,Yu072,Yu08,Yu11,Yu12,YK08,YLK14,ZHL05,ZHL07,ZHLG07,ZM09,ZSWL09}.

In this paper, we investigate the preconditioned AOR method for solving linear systems.
We study two general preconditioners and propose some lower triangular,
upper triangular and combination preconditioners. For $A$ being
an L-matrix, a nonsingular M-matrix, an irreducible L-matrix
and an irreducible nonsingular M-matrix, four types of comparison theorems are presented,
respectively. They contain a general comparison result, a strict comparison result and two
Stein-Rosenberg type comparison results. Our theorems include and are better than
almost all known corresponding results. Some incorrect results are pointed out.

This paper is organized as follows. In Section \ref{se2} we give
some concepts and results, which will applied in the next section.
In Section \ref{se3}, we study two general preconditioners, some lower triangular,
upper triangular and combination preconditioners for the preconditioned AOR method.
Four types of comparison results are proved. In Section \ref{se4}, we give some
explanations and prospects.

\section{Some concepts and lemmas}\label{se2}

For convenience we recall and give some concepts and lemmas as follows.

A matrix $B\in{\mathscr R}^{n\times m}$ is called nonnegative, semi-positive, positive if
each element of $B$ is nonnegative, nonnegative but at least a positive element, positive,
which is denoted by $B \ge 0$, $B > 0$ and $B \gg 0$, respectively. When
$B_1 - B_2 \ge (>, \gg) 0$, we denote $B_1 \ge (>, \gg) B_2$ or $B_2 \le (<, \ll) B_1$.
Similarly, for $y\in{\mathscr R}^n$, by identifying it with $n\times 1$ matrix, we can
also define $y \ge (\le) 0$, $y > (<) 0$ and $y \gg (\ll) 0$. $B$ is called
monotone, if $B$ is invertible and $B^{-1} > 0$.
$B = (b_{i,j})\in{\mathscr R}^{n\times n}$ is called a
Z-matrix if $b_{i,j} \le 0$ for $i,j = 1, \cdots, n$, $i\neq j$; an L-matrix if it
is a Z-matrix with $a_{i,i} > 0$, $i = 1, \cdots, n$;
a nonsingular M-matrix if it is a Z-matrix and is monotone. It is well known that
a nonsingular M-matrix is an L-matrix.

A matrix $B\in{\mathscr R}^{n\times n}$
is called reducible if there is a permutation matrix $V$ such that
\begin{eqnarray*}
VBV^T = \left[\begin{array}{cc}
 B_{1,1} & B_{1,2} \\
0 & B_{2,2}
\end{array}\right],
\end{eqnarray*}
where $B_{1,1}\in{\mathscr R}^{r\times r}$, $B_{2,2}\in{\mathscr R}^{(n-r)\times (n-r)}$
with $1 \le r \le n-1$. Otherwise, $B$ is irreducible.
The directed graph of a matrix $B = (b_{i,j})\in{\mathscr R}^{n\times n}$ is denoted
by $G(B)$. A path in $G(B)$ which leads from the vertex $V_i$
to the vertex $V_j$ is denoted by $\sigma_{i,j}$, i.e.,
$\sigma_{i,j} = (j_0, j_1, \cdots, j_{l+1})$ with $i = j_0$, $j = j_{l+1}$, $l \ge 0$ and
$b_{j_kj_{k+1}} \not= 0$, $k=0,\cdots,l$. It is well known that
a matrix B is irreducible if and only if $G(B)$ is strongly connected, which
means that for any $i,j\in\{1,\cdots,n\}$ there exists a path $\sigma_{i,j}\in G(B)$.

\begin{definition} \upshape
The decomposition (\ref{eqn1.2}) is called a splitting of $A$ if $M$ is nonsingular. A
splitting is called:

\begin{itemize}
\item[(i)] Regular if $M^{-1} \ge 0$ and
$N \ge 0$ (cf. \cite[Definition 3.28]{Va00});

\item[(ii)] Weak regular if $M^{-1}
 \ge 0$ and $M^{-1}N \ge 0$ (cf. \cite[Definition 3.28]{Va00});

\item[(iii)] Nonnegative if $M^{-1}N \ge 0$ (cf.
\cite[Definition 1.1]{So91}).
 \end{itemize}
 \end{definition}

\begin{lemma} {\upshape\cite[Theorems 2.7 and 2.20]{Va00}} \label{lem1-1}

\begin{itemize}
\item[(i)] Let $B \ge 0$. Then $B$ has a nonnegative eigenvalue equal to $\rho(B)$, and there corresponds an
eigenvector $x > 0$.

\item[(ii)] Let $B \ge 0$ be irreducible. Then $B$ has a positive eigenvalue equal to $\rho(B)$, and there corresponds an
eigenvector $x \gg 0$.
\end{itemize}
\end{lemma}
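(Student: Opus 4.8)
This is the Perron--Frobenius theorem, and the plan is to establish the irreducible statement (ii) first by the Collatz--Wielandt variational method and then to deduce the general statement (i) by a positive perturbation argument. Throughout I write $|v|$ for the vector of componentwise absolute values and use repeatedly that $B \ge 0$ preserves the partial order, i.e.\ $0 \le x \le y$ implies $Bx \le By$.

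For (ii) I would introduce the functional
\[
r_x = \min_{\{i : x_i > 0\}} \frac{(Bx)_i}{x_i}, \qquad x \ge 0,\ x \neq 0,
\]
which is exactly the largest scalar $\rho$ with $Bx \ge \rho x$, and set $r = \sup\{\, r_x : x \ge 0,\ x \neq 0\,\}$. Since $r_x$ is invariant under positive scaling, the supremum may be taken over the compact simplex $\{x \ge 0 : \sum_i x_i = 1\}$, and an elementary bound (using that irreducibility forbids a zero row) shows $0 < r < \infty$. The only subtlety is that $r_x$ fails to be continuous where coordinates of $x$ vanish. I would circumvent this with the standard smoothing device of replacing $x$ by $y = (I+B)^{n-1}x$: because $B$ is nonnegative and irreducible, $(I+B)^{n-1} \gg 0$, so $y \gg 0$, and monotonicity gives $r_y \ge r_x$. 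Hence the supremum is attained on the open set of strictly positive vectors, where $r_x$ is continuous; let $z > 0$ be a maximizer.

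Next I would show $Bz = rz$ exactly. If not, then $Bz - rz \ge 0$ with $Bz - rz \neq 0$; applying $(I+B)^{n-1} \gg 0$ yields $Bw - rw \gg 0$ for $w = (I+B)^{n-1}z \gg 0$, so $r_w > r$, contradicting maximality. Thus $z$ is an eigenvector for the eigenvalue $r$, and strict positivity follows from $(I+B)^{n-1}z = (1+r)^{n-1}z \gg 0$, giving $z \gg 0$. To identify $r$ with $\rho(B)$, observe that any eigenpair $Bv = \lambda v$, $v \neq 0$, satisfies $|\lambda|\,|v| \le B|v|$ after taking absolute values, so $r_{|v|} \ge |\lambda|$ and therefore $|\lambda| \le r$; maximizing over eigenvalues gives $\rho(B) \le r$, while $r$ being itself an eigenvalue gives $r \le \rho(B)$. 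Hence $r = \rho(B)$, proving (ii).

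For (i), with $B \ge 0$ not necessarily irreducible, I would perturb: for $\varepsilon > 0$ set $B_\varepsilon = B + \varepsilon E$ with $E$ the all-ones matrix, so $B_\varepsilon \gg 0$ is irreducible. By (ii) there is $z_\varepsilon \gg 0$ with $\|z_\varepsilon\| = 1$ and $B_\varepsilon z_\varepsilon = \rho(B_\varepsilon) z_\varepsilon$. As $\varepsilon \to 0^+$, compactness of the unit sphere extracts a convergent subsequence $z_\varepsilon \to z$ with $z \ge 0$ and $\|z\| = 1$, whence $z > 0$; continuity of the spectral radius in the matrix entries gives $\rho(B_\varepsilon) \to \rho(B)$, and passing to the limit yields $Bz = \rho(B)z$ with $z > 0$. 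The main obstacle in the whole argument is the discontinuity of the Collatz--Wielandt functional on the boundary of the orthant; this is precisely what the irreducibility and the smoothing operator $(I+B)^{n-1}$ are used to control, while the perturbation step transfers the conclusion to the general nonnegative case, at the cost of losing strict positivity of the limiting eigenvector.
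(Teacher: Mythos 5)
The paper does not prove this lemma at all: it is the Perron--Frobenius theorem, imported verbatim from Varga (the cited Theorems 2.7 and 2.20), so there is no internal proof to compare against. Your argument is correct, and it is in substance the classical Collatz--Wielandt proof that the cited source itself uses: part (ii) by maximizing $r_x=\min_{\{i:x_i>0\}}(Bx)_i/x_i$, with the smoothing operator $(I+B)^{n-1}\gg 0$ curing the boundary discontinuity and forcing both $Bz=rz$ and $z\gg 0$, the identification $r=\rho(B)$ via $|\lambda|\,|v|\le B|v|$; then part (i) by perturbing to $B+\varepsilon E\gg 0$ and passing to the limit, where strict positivity is correctly downgraded to $x>0$ (semi-positive in this paper's notation). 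One phrasing caveat: the supremum is not attained \emph{because} the set of strictly positive vectors is open (continuity on an open set gives no attainment); it is attained because the supremum over the simplex $\Delta$ equals the supremum over the compact image $N=(I+B)^{n-1}(\Delta)$, which consists of strictly positive vectors on which $r_x$ is continuous. Your text contains all of these ingredients, but the attainment should be credited to compactness of $N$ rather than to openness.
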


\begin{lemma} {\upshape\cite[Theorem 2-1.11]{BP94}} \label{lem1-2}
Let $B \ge 0$.

\begin{itemize}
\item[(i)] If $Bx \ge \alpha x$ with $x > 0$, then $\rho(B) \ge \alpha$.

\item[(ii)] If $Bx \le \beta x$ with $x \gg 0$, then $\rho(B) \le \beta$.

\item[(iii)] If $B$ is irreducible and $Bx > \alpha x$ with
$x > 0$, then $\rho(B) > \alpha$.

\item[(iv)] If $B$ is irreducible and $Bx < \beta x$ with
$x > 0$, then $\rho(B) < \beta$ and $x \gg 0$.
\end{itemize}
\end{lemma}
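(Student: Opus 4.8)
The plan is to reduce all four parts to the Perron--Frobenius statement already recorded in Lemma \ref{lem1-1}, but applied to the transpose $B^{T}$ rather than to $B$ itself, so as to produce a \emph{left} Perron vector $y$ with $y^{T}B=\rho(B)y^{T}$ and $y\ge 0$ (indeed $y\gg 0$ when $B$ is irreducible, since $B^{T}$ is then also nonnegative and irreducible). Pairing such a $y$ with the given vector $x$ through the scalar $y^{T}x$ converts each hypothesis $Bx\gtrless\,\cdot\,x$ into an inequality between $\rho(B)\,y^{T}x$ and $\alpha\,y^{T}x$ or $\beta\,y^{T}x$; whenever $y^{T}x>0$ one cancels this factor and reads off the desired bound on $\rho(B)$.

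For (ii) this is immediate: take $y\ge 0$, $y\neq 0$ with $y^{T}B=\rho(B)y^{T}$ from Lemma \ref{lem1-1}(i) applied to $B^{T}$; since $x\gg 0$ we have $y^{T}x>0$, and left-multiplying $Bx\le\beta x$ by $y^{T}$ gives $\rho(B)\,y^{T}x\le\beta\,y^{T}x$, hence $\rho(B)\le\beta$. For the irreducible cases (iii) and (iv), Lemma \ref{lem1-1}(ii) supplies $y\gg 0$, so $y^{T}x>0$ follows already from $x>0$, and $y\gg 0$ makes a \emph{semipositive} vector pair \emph{strictly} positively with $y$. Thus in (iii), writing $Bx-\alpha x>0$ and multiplying by $y^{T}$ yields $(\rho(B)-\alpha)\,y^{T}x>0$, whence $\rho(B)>\alpha$; and in (iv), $\beta x-Bx>0$ gives $(\beta-\rho(B))\,y^{T}x>0$, whence $\rho(B)<\beta$.

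The claim $x\gg 0$ in (iv) I would establish separately and first, using irreducibility directly. The componentwise inequality $Bx\le\beta x$ with $B\ge 0$ forces, at any index $i$ with $x_{i}=0$, that $(Bx)_{i}=0$. Permuting the zero and nonzero components into blocks $x=(x_{1}^{T},0)^{T}$ with $x_{1}\gg 0$ and writing $B$ conformally, the vanishing of the lower block of $Bx$ forces the off-diagonal block acting on $x_{1}$ to be zero, so $B$ becomes block triangular after the permutation, contradicting irreducibility unless the zero block is empty, that is, unless $x\gg 0$.

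The genuinely delicate point is part (i), where $x$ is only semipositive and $B$ need not be irreducible, so the left Perron vector $y$ may be merely semipositive and the product $y^{T}x$ can vanish, collapsing the pairing to the useless $0\ge 0$. To circumvent this I would perturb: set $B_{\varepsilon}=B+\varepsilon J$ with $J$ the all-ones matrix and $\varepsilon>0$, so that $B_{\varepsilon}\gg 0$ is irreducible and admits a strictly positive left Perron vector $y_{\varepsilon}\gg 0$. Since $B_{\varepsilon}\ge B$ we have $B_{\varepsilon}x\ge Bx\ge\alpha x$, and now $y_{\varepsilon}^{T}x>0$ gives $\rho(B_{\varepsilon})\ge\alpha$ by the same cancellation. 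Letting $\varepsilon\downarrow 0$ and using continuity of the spectral radius in the matrix entries, $\rho(B_{\varepsilon})\to\rho(B)$, which yields $\rho(B)\ge\alpha$. I expect this limiting step to be the main obstacle to make fully rigorous, as it is the only place an analytic fact about eigenvalues, rather than a purely algebraic one, is invoked.
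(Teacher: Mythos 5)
Your proof is correct in all four parts. Note, however, that the paper does not actually prove this lemma: it is quoted from Berman--Plemmons \cite[Theorem 2-1.11]{BP94} (with the remark that the conclusion $x \gg 0$ must be dropped from part ($iii$) --- a correction your proof respects, since your block-partition argument for $x \gg 0$ genuinely needs the direction of inequality in ($iv$): a zero component $x_i = 0$ together with $(Bx)_i \le \beta x_i = 0$ and $B \ge 0$ forces $(Bx)_i = 0$, whereas $(Bx)_i \ge \alpha x_i = 0$ in ($iii$) forces nothing). So there is no internal proof to compare against, but it is worth observing that each of your three devices is precisely a technique the paper itself deploys for neighbouring results: the pairing of the hypothesis with a left Perron vector $y$ of $B$ (i.e.\ a Perron vector of $B^T$) and cancellation of the scalar $y^T x > 0$ is exactly the paper's proof of Lemma \ref{lem1-3}; the permutation into $x = (x_1^T, 0)^T$ with $x_1 \gg 0$ and the forced vanishing of an off-diagonal block, contradicting irreducibility, is the argument in the proof of Lemma \ref{lem1-8}($ii$); and the perturbation $B_\varepsilon = B + \varepsilon J$ to manufacture irreducibility, followed by $\varepsilon \downarrow 0$ and continuity of the spectral radius, is the same limiting device used in the proof of Theorem \ref{thm3.1} for reducible $A$. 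The one step you flag as delicate --- $\rho(B_\varepsilon) \to \rho(B)$ --- is standard (eigenvalues vary continuously with the entries, being roots of the characteristic polynomial); alternatively, since $\varepsilon \mapsto \rho(B_\varepsilon)$ is nondecreasing and each $\rho(B_\varepsilon) \ge \alpha$, one only needs $\inf_{\varepsilon > 0}\rho(B_\varepsilon) = \rho(B)$, which continuity supplies. So the proposal is sound as written.
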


Here we have made a minor modification to \cite[Theorem 2-1.11]{BP94}.
In fact, for ($iii$), $x \gg 0$ cannot be derived.

\begin{lemma} \label{lem1-3}
Let $B \ge 0$ and $x > 0$.

\begin{itemize}
\item[(i)] If $Bx \gg \alpha x$ then $\rho(B) > \alpha$.

\item[(ii)] If $Bx \ll \beta x$ then $\rho(B) < \beta$ and $x \gg 0$.
\end{itemize}
\end{lemma}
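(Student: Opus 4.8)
The plan is to deduce both statements from Lemma~\ref{lem1-2}(i) and (ii) by converting each strict componentwise inequality into a non-strict inequality with a shifted scalar. The key observation is that $x$ has only finitely many entries, so a strict entrywise bound can be upgraded to a uniform bound with a positive slack $\epsilon$; applying the non-strict Perron--Frobenius estimates of Lemma~\ref{lem1-2} to the shifted scalar then produces the desired strict conclusion on $\rho(B)$.

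For (i), I would start from $Bx \gg \alpha x$, which says $(Bx)_i - \alpha x_i > 0$ for every $i$. Since $x > 0$, the index set $\{i : x_i > 0\}$ is nonempty and finite, so $\epsilon := \min_{i:\,x_i>0}\big((Bx)_i - \alpha x_i\big)/x_i$ is well defined and strictly positive. I would then verify that $Bx \ge (\alpha+\epsilon)x$ holds componentwise: for indices with $x_i > 0$ this is exactly $\epsilon \le ((Bx)_i-\alpha x_i)/x_i$, while for indices with $x_i = 0$ it reduces to $(Bx)_i \ge 0$, which is automatic because in fact $(Bx)_i > 0$. Lemma~\ref{lem1-2}(i), applied with the shifted scalar $\alpha + \epsilon$ and the vector $x > 0$, then gives $\rho(B) \ge \alpha + \epsilon > \alpha$.

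For (ii), the first step is to promote $x > 0$ to $x \gg 0$, which is needed before Lemma~\ref{lem1-2}(ii) can be invoked. If some $x_i = 0$, then the $i$-th entry of $Bx \ll \beta x$ would read $(Bx)_i < \beta\cdot 0 = 0$, contradicting $Bx \ge 0$ (which holds since $B \ge 0$ and $x \ge 0$); hence $x_i > 0$ for all $i$, i.e. $x \gg 0$. With $x \gg 0$ in hand, I would set $\epsilon := \min_i\big(\beta x_i - (Bx)_i\big)/x_i > 0$, so that $Bx \le (\beta-\epsilon)x$ componentwise, and then apply Lemma~\ref{lem1-2}(ii) to conclude $\rho(B) \le \beta - \epsilon < \beta$.

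The only delicate point---and hence the part I would treat most carefully---is the interplay between the single hypothesis $x > 0$ and the differing positivity requirements in the two halves of Lemma~\ref{lem1-2}: part~(i) must tolerate vanishing entries of $x$, which is handled by noting that the corresponding components of $Bx$ are strictly positive, whereas part~(ii) requires $x \gg 0$ and so forces the preliminary argument ruling out zero entries. Everything else is the routine extraction of a uniform slack from a strict finite system of inequalities.
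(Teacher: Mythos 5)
Your proof is correct, but it takes a genuinely different route from the paper's. The paper applies Lemma \ref{lem1-1}(i) to $B^T$ (also nonnegative, with $\rho(B^T)=\rho(B)$) to get a left Perron vector $y>0$ satisfying $y^TB=\rho(B)y^T$, and multiplies the hypothesis on the left by $y^T$: since $y$ has at least one positive entry and every entry of $Bx-\alpha x$ (resp.\ $\beta x-Bx$) is strictly positive, this yields the strict scalar inequality $\rho(B)\,y^Tx>\alpha\,y^Tx$ (resp.\ $\rho(B)\,y^Tx<\beta\,y^Tx$); strictness forces $y^Tx\neq 0$, so dividing by $y^Tx>0$ gives the conclusion, and the claim $x\gg 0$ in (ii) is dismissed as obvious (your zero-entry argument is precisely the reason). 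You instead extract a uniform slack $\epsilon>0$ from the finitely many strict componentwise inequalities and reduce to the non-strict bounds of Lemma \ref{lem1-2} with the shifted scalars $\alpha+\epsilon$ and $\beta-\epsilon$, correctly noting that part (i) of that lemma tolerates zero entries of $x$ whereas part (ii) requires $x\gg 0$, which you establish first. Both arguments are sound: the paper's duality trick is shorter and treats (i) and (ii) symmetrically, while yours needs only Lemma \ref{lem1-2} and makes explicit the $x>0$ versus $x\gg 0$ bookkeeping that the paper leaves implicit.
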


\begin{proof}
Since $B \ge 0$, then there exists $y > 0$ such that $B^Ty = \rho(B)y$.
Multiply $y^T$ on the left side of $Bx \gg \alpha x$ or $Bx \ll \beta x$
respectively, we can obtain $\rho(B)y^Tx > \alpha y^Tx$ or $\rho(B)y^Tx < \beta y^Tx$,
which derives $\rho(B) > \alpha$ or $\rho(B) < \beta$ directly. When
$Bx \ll \beta x$, $x \gg 0$ is obvious.
\end{proof}

\begin{lemma} {\upshape\cite[Theorem 6-2.7]{BP94}} \label{lem1-4}
Let $B$ be an irreducible Z-matrix. Then $B$ is a nonsingular M-matrix
if and only if $B^{-1} \gg 0$.
 \end{lemma}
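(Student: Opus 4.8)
The plan is to treat the two implications separately, noting that one of them is nearly immediate from the paper's definitions while the other carries all the content. For the reverse implication, suppose $B^{-1} \gg 0$. Then $B$ is invertible and $B^{-1} > 0$ (semi-positive), so by definition $B$ is monotone; since $B$ is also assumed to be a Z-matrix, it is a nonsingular M-matrix by definition. Nothing further is needed here.

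For the forward implication, suppose $B$ is a nonsingular M-matrix, so that $B$ is invertible with $B^{-1} \ge 0$. First I would write $B = sI - C$ with $s$ a sufficiently large positive number and $C = sI - B$. Because $B$ is a Z-matrix, $C \ge 0$: its off-diagonal entries are $c_{ij} = -b_{ij} \ge 0$, and its diagonal entries are $s - b_{ii} \ge 0$ for $s$ large. Moreover $c_{ij} \neq 0$ exactly when $b_{ij} \neq 0$ for $i \neq j$, so $G(C)$ and $G(B)$ have the same off-diagonal edges; hence $C$ inherits irreducibility from $B$.

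The remaining, and main, task is to upgrade $B^{-1} \ge 0$ to $B^{-1} \gg 0$, which is exactly where irreducibility is needed. I would argue column by column. Let $r_j = B^{-1}e_j \ge 0$ be the $j$-th column of $B^{-1}$. From $B r_j = e_j$ and $B = sI - C$ one gets $s r_j = e_j + C r_j$; evaluating the $j$-th component gives $s(r_j)_j \ge 1$, so $(r_j)_j > 0$. Now set $Z = \{\,i : (r_j)_i = 0\,\}$; then $j \notin Z$, and for any $i \in Z$ (necessarily $i \neq j$) the identity $0 = (e_j)_i + (Cr_j)_i = \sum_k c_{ik}(r_j)_k$ together with $C \ge 0$ and $r_j \ge 0$ forces $(r_j)_k = 0$ whenever $c_{ik} \neq 0$. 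Thus every $C$-edge out of a vertex of $Z$ stays inside $Z$, so no path in $G(C)$ leads from a vertex of $Z$ to $j$. If $Z$ were nonempty this would contradict the strong connectivity of $G(C)$, i.e. the irreducibility of $C$. Hence $Z = \emptyset$, that is $r_j \gg 0$; as $j$ was arbitrary, $B^{-1} \gg 0$.

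The main obstacle is precisely this last strictness step: monotonicity alone delivers only $B^{-1} \ge 0$, and some genuine use of the graph of $C$ is unavoidable to promote this to $B^{-1} \gg 0$. An equivalent route, should the graph argument be deemed too combinatorial, is to first note that the M-matrix property forces $\rho(C) < s$: applying Lemma \ref{lem1-1}(ii) to the irreducible $C \ge 0$ yields an eigenvector $x \gg 0$ with $B^{-1}x = (s-\rho(C))^{-1}x \ge 0$, and since $B^{-1}x \neq 0$ this requires $s - \rho(C) > 0$. Then $B^{-1} = s^{-1}\sum_{k \ge 0}(C/s)^k \ge 0$, and irreducibility of $C$ makes $\sum_{k=0}^{n-1} C^k \gg 0$ (for each pair $(i,j)$ some power $C^k$, $k \le n-1$, has a positive $(i,j)$ entry, by the same path-existence fact), so again $B^{-1} \gg 0$.
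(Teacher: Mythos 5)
Your proof is correct, but note that the paper itself does not prove this lemma at all: it is imported verbatim as \cite[Theorem 6-2.7]{BP94}, so there is no internal argument to compare against. What you have supplied is a self-contained proof of that textbook fact using only the paper's definitions and its Lemma \ref{lem1-1}. The reverse implication is, as you say, purely definitional ($B^{-1}\gg 0$ implies $B^{-1}>0$, hence $B$ is monotone, hence a Z-matrix that is monotone, i.e.\ a nonsingular M-matrix). For the forward implication, your splitting $B=sI-C$ with $C\ge 0$ irreducible is the standard device, and the zero-set propagation argument is sound: from $sr_j=e_j+Cr_j$ you get $(r_j)_j>0$, and the set $Z=\{i:(r_j)_i=0\}$ absorbs every $G(C)$-edge leaving it, so a nonempty $Z$ would contradict the strong connectivity of $G(C)$, which is exactly where irreducibility enters (and $C$ inherits it from $B$ because irreducibility depends only on the off-diagonal zero pattern). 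Your alternative Neumann-series route is also fine, with one small wrinkle: before writing $B^{-1}x=(s-\rho(C))^{-1}x$ you must first rule out $s=\rho(C)$, which is immediate since $Bx=(s-\rho(C))x=0$ with $x\gg 0$ would contradict nonsingularity; only then does $B^{-1}x\ge 0$ with $x\gg 0$ force $s>\rho(C)$. What your argument buys over the paper's citation is self-containedness --- every ingredient (Perron--Frobenius in the form of Lemma \ref{lem1-1}(ii), and the path characterization of irreducibility) is already stated in Section 2 of the paper --- at the cost of roughly a page of work that the author chose to outsource to \cite{BP94}.
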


\begin{lemma} {\upshape\cite[Theorem 6-2.3]{BP94}} \label{lem1-5}
Let $B$ be a Z-matrix. Then the following statements are
equivalent:

\begin{itemize}
\item[(i)] $B$ is a nonsingular M-matrix.

\item[(ii)] There is a vector $x \gg 0$ such that $Bx \gg 0$.

\item[(iii)] The weak regular splitting of $B$ is convergent.
 \end{itemize}
 \end{lemma}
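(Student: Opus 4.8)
The plan is to prove the three statements equivalent by establishing the cyclic chain (i) $\Rightarrow$ (ii) $\Rightarrow$ (iii) $\Rightarrow$ (i). Two tools drive everything: the strict Perron--Frobenius comparison of Lemma \ref{lem1-3}(ii), which converts an entrywise-strict inequality $Sx \ll \beta x$ into the spectral bound $\rho(S) < \beta$, and the fact that for a nonnegative $S$ with $\rho(S) < 1$ the Neumann series $(I - S)^{-1} = \sum_{k\ge 0} S^k$ converges to a nonnegative matrix. No other heavy machinery is needed; the remaining steps are elementary manipulations with the Z-matrix and monotonicity hypotheses.

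For (i) $\Rightarrow$ (ii) I would use directly that a nonsingular M-matrix satisfies $B^{-1} \ge 0$. Fixing any $y \gg 0$ (say $y = (1,\dots,1)^T$) and putting $x = B^{-1}y$, we get $Bx = y \gg 0$; moreover $B^{-1}$ is invertible, so no row of $B^{-1}$ vanishes, and hence every component $x_i = \sum_j (B^{-1})_{i,j}\, y_j$ is strictly positive, giving $x \gg 0$. For (ii) $\Rightarrow$ (iii) I would take a weak regular splitting $B = M - N$, so that $M^{-1} \ge 0$ and $T := M^{-1}N \ge 0$, and transport the strict positivity $Bx \gg 0$ through $M^{-1}$. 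Since $M^{-1}$ is invertible it has no zero row, so $M^{-1}Bx \gg 0$; rewriting $M^{-1}B = I - T$ turns this into $x - Tx \gg 0$, i.e. $Tx \ll x$. Lemma \ref{lem1-3}(ii) applied to $T$ with $\beta = 1$ then yields $\rho(T) < 1$, which is the asserted convergence.

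For (iii) $\Rightarrow$ (i) I would start from a convergent weak regular splitting $B = M - N$, so $T = M^{-1}N \ge 0$ and $\rho(T) < 1$. The Neumann series then gives $(I - T)^{-1} = \sum_{k \ge 0} T^k \ge 0$, and since $B = M(I - T)$ we obtain $B^{-1} = (I - T)^{-1} M^{-1} \ge 0$ as a product of nonnegative matrices. Because $B$ is assumed to be a Z-matrix and is now seen to be monotone, it is a nonsingular M-matrix, closing the cycle.

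I expect the only delicate point to be the strict estimate $\rho(T) < 1$ in (ii) $\Rightarrow$ (iii). The difficulty is that a weak regular splitting supplies only $M^{-1} \ge 0$ and $T \ge 0$, not $N \ge 0$, so one cannot argue from any sign property of $N$; the inequality must be driven entirely by the hypothesis $Bx \gg 0$. The key is to keep the inequalities strict: one must check that $M^{-1}Bx$ is genuinely $\gg 0$ (using that an invertible nonnegative matrix has no zero row) so that $Tx \ll x$ holds strictly, for then Lemma \ref{lem1-3}(ii) --- and not merely the weak bound in Lemma \ref{lem1-2}(ii) --- produces the strict conclusion $\rho(T) < 1$. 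As an alternative route for (i) $\Leftrightarrow$ (ii) one could shift $B = sI - C$ with $C = sI - B \ge 0$ for $s \ge \max_i b_{i,i}$ and apply the same lemma to $C$, but threading the argument through (iii) avoids having to treat that case separately.
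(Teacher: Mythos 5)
The paper never proves this lemma at all: it is quoted verbatim from Berman and Plemmons \cite[Theorem 6-2.3]{BP94}, so there is no internal argument to compare against. Your cyclic proof (i) $\Rightarrow$ (ii) $\Rightarrow$ (iii) $\Rightarrow$ (i) is correct, and it has the merit of being self-contained within the paper's own toolkit: (i) $\Rightarrow$ (ii) uses only monotonicity plus the observation that an invertible nonnegative matrix has no zero row; (ii) $\Rightarrow$ (iii) pushes $Bx \gg 0$ through $M^{-1}$ to get $Tx \ll x$ with $T = M^{-1}N$ and then invokes Lemma \ref{lem1-3}(ii) --- and you correctly identify that the strict version is indispensable here, since Lemma \ref{lem1-2}(ii) would only yield $\rho(T) \le 1$; (iii) $\Rightarrow$ (i) is the standard Neumann-series argument, which in effect re-derives one direction of the cited Lemma \ref{lem1-6}. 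So what you buy over the paper is a proof that needs nothing beyond Lemma \ref{lem1-3} and the convergence of $\sum_{k\ge 0}T^k$ for $\rho(T)<1$, rather than an appeal to the Berman--Plemmons compendium.

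One point must be made explicit to close the cycle. Statement (iii) quantifies over weak regular splittings of $B$, so at one end of the chain you need to know that an arbitrary Z-matrix actually \emph{possesses} a weak regular splitting: under the reading ``every weak regular splitting converges,'' your step (iii) $\Rightarrow$ (i) cannot ``start from a convergent weak regular splitting'' unless one exists (otherwise (iii) is vacuously true while (i) may fail); under the existential reading, the same existence fact is needed in (ii) $\Rightarrow$ (iii) instead. The fix is immediate: take $M = sI$ and $N = sI - B$ with $s > \max_i b_{i,i}$ and $sI \ne B$; then $M^{-1} = s^{-1}I \ge 0$ and $N \ge 0$ because $B$ is a Z-matrix, so this splitting is even regular. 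With that one-line remark inserted, your argument is complete.
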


\begin{lemma} {\upshape\cite[Theorem 3.37]{Va00}} \label{lem1-6}
Any weak regular splitting of $B$ is convergent if and only if $B$ is nonsingular with $B^{-1} > 0$.
\end{lemma}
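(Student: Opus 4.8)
The plan is to fix an arbitrary weak regular splitting $B = M - N$ and prove the two-sided equivalence $\rho(T) < 1 \iff (B \text{ nonsingular and } B^{-1} \ge 0)$, where $T = M^{-1}N$; the ``Any'' formulation then follows immediately. Throughout I will use only the defining inequalities $M^{-1} \ge 0$ and $T = M^{-1}N \ge 0$, together with the algebraic factorization $B = M(I - T)$, which holds whenever $M$ is nonsingular.

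For the necessity direction ($\Rightarrow$), assume $\rho(T) < 1$. Then $1$ is not an eigenvalue of $T$, so $I - T$ is nonsingular and hence $B = M(I-T)$ is nonsingular. Moreover the Neumann series gives $(I-T)^{-1} = \sum_{k=0}^{\infty} T^{k}$, which is $\ge 0$ because $T \ge 0$; multiplying on the right by $M^{-1} \ge 0$ yields $B^{-1} = (I-T)^{-1}M^{-1} \ge 0$. Since $B^{-1}$ is nonsingular it is nonzero, so in fact $B^{-1} > 0$ in the semi-positive sense of the paper. This direction is routine.

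The substantive direction is sufficiency ($\Leftarrow$), and this is where I expect the main obstacle. Assume $B$ is nonsingular with $B^{-1} \ge 0$, and aim to show $\rho(T) < 1$. The tempting route --- invoking Perron--Frobenius (Lemma~\ref{lem1-1}) to obtain a nonnegative eigenvector of $T$ and then exploiting $B^{-1}N \ge 0$ --- breaks down here, because for a merely weak regular splitting one does not have $N \ge 0$, and $M$ may carry negative entries, so neither $B^{-1}N \ge 0$ nor the sign of $Mx$ is available. Instead I would argue through truncated Neumann sums. Consider
\[
S_m := \sum_{k=0}^{m} T^{k} M^{-1} = (I - T^{\,m+1})(I-T)^{-1} M^{-1} = B^{-1} - T^{\,m+1}B^{-1}, \qquad m = 0,1,2,\dots
\]
Each $S_m \ge 0$ since $T \ge 0$ and $M^{-1} \ge 0$, and the sequence is entrywise nondecreasing because $S_m - S_{m-1} = T^{m}M^{-1} \ge 0$. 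On the other hand $T^{\,m+1}B^{-1} \ge 0$ as a product of nonnegative matrices, so the identity shows $S_m \le B^{-1}$ for every $m$; thus $\{S_m\}$ is entrywise monotone and bounded above, hence convergent. Consequently its increments vanish, $T^{m}M^{-1} \to 0$, and multiplying on the right by the nonsingular $M$ gives $T^{m} \to 0$, which forces $\rho(T) < 1$.

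Finally, the ``Any'' phrasing follows at once: if $B$ is nonsingular with $B^{-1} \ge 0$, the sufficiency argument applies verbatim to every weak regular splitting, so all of them converge; conversely, if every weak regular splitting of $B$ converges, applying necessity to one such splitting yields $B$ nonsingular with $B^{-1} \ge 0$ (equivalently $B^{-1} > 0$). The whole argument rests on the boundedness estimate $S_m \le B^{-1}$, so the one delicate point is recognizing that $B^{-1} \ge 0$ must be used as an a priori upper bound for the partial sums rather than through an eigenvector comparison.
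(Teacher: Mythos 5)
Your proof is correct, but there is nothing in the paper to compare it against: the paper does not prove this lemma, it simply quotes it as Theorem 3.37 of Varga's book \cite{Va00}. Your argument therefore has to stand on its own, and it does. The necessity direction via $B=M(I-T)$ and the Neumann series is routine, as you say. The sufficiency direction is the substantive one, and your partial-sum device is exactly the right (and the standard) one for \emph{weak} regular splittings: since only $M^{-1}\ge 0$ and $T=M^{-1}N\ge 0$ are available (not $N\ge 0$), one cannot run the Perron--Frobenius eigenvector comparison, and one cannot even bound $\sum_{k=0}^{m}T^{k}=(I-T^{m+1})(I-T)^{-1}$ directly because $(I-T)^{-1}=B^{-1}M$ has no sign; the trailing factor $M^{-1}$ is what makes the telescoping bound $S_m=(I-T^{m+1})B^{-1}\le B^{-1}$ work, after which monotone convergence gives $T^{m}M^{-1}\to 0$ and right-multiplication by $M$ recovers $T^{m}\to 0$, hence $\rho(T)<1$. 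You identified this pivot correctly. Two small points you should make explicit. First, you write $(I-T)^{-1}$ before knowing $\rho(T)<1$; its existence must be justified from $I-T=M^{-1}B$ with $M$ and $B$ nonsingular (or avoided entirely by verifying $S_mB=\sum_{k=0}^{m}T^{k}M^{-1}M(I-T)=I-T^{m+1}$ and right-multiplying by $B^{-1}$). Second, your closing ``conversely'' tacitly assumes $B$ admits at least one weak regular splitting; this is harmless for the paper, where the lemma is only invoked with a splitting already in hand (as in the proof of Lemma \ref{lem1-9}), but strictly speaking the universally quantified statement is vacuous for a matrix possessing no weak regular splitting.
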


\begin{lemma} {\upshape\cite[Exerxise 3.3-6]{Va00}} \label{lem1-7}
Let $A$ be an irreducible L-matrix.
Then $\rho({\mathscr L}) > 0$ and it has associated eigenvector $x \gg 0$.
\end{lemma}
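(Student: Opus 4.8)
The plan is to specialize to the Gauss--Seidel iteration matrix ${\mathscr L} = (D-L)^{-1}U$ (the case $(\gamma,\omega)=(1,1)$ of ${\mathscr L}_{\gamma,\omega}$) and exploit the triangular structure of the splitting together with the strong connectivity of $G(A)$. First I would note that, since $A$ is an L-matrix, $D$ is a positive diagonal matrix while $L,U\ge 0$; hence $(D-L)^{-1}=\sum_{k\ge 0}(D^{-1}L)^{k}D^{-1}\ge 0$ (a finite sum, as $D^{-1}L$ is strictly lower triangular, hence nilpotent), and therefore ${\mathscr L}=(D-L)^{-1}U\ge 0$. Applying Lemma~\ref{lem1-1}(i) to ${\mathscr L}\ge 0$ yields an eigenpair ${\mathscr L}x=\rho x$ with $\rho:=\rho({\mathscr L})\ge 0$ and $x>0$. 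Rewriting ${\mathscr L}x=\rho x$ as $Ux=\rho(D-L)x$ gives the key identity $(\rho L+U)x=\rho Dx$, which drives both conclusions.

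The main obstacle is the strict inequality $\rho>0$; it cannot follow from the identity alone, since ${\mathscr L}$ always has a zero first column (the first column of $U$ vanishes), so ${\mathscr L}$ is reducible and Lemma~\ref{lem1-1}(ii) does not apply to it. I would instead show that $G({\mathscr L})$ contains a cycle, which for a nonnegative matrix is equivalent to non-nilpotency, i.e.\ to $\rho>0$. The decisive structural fact, read off from the Neumann expansion above, is that ${\mathscr L}_{ij}>0$ precisely when there is a walk in $G(A)$ from $i$ to $j$ consisting of a (possibly empty) run of descending edges (from $L$, which strictly decrease the index) followed by exactly one ascending edge (from $U$). Now take any simple cycle of $G(A)$, which exists because $A$ is irreducible and $n\ge 2$; since the total index change around a cycle is $0$, it must contain at least one ascending edge. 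Marking the heads $w_1,\dots,w_s$ of its ascending edges, each arc from $w_r$ to the next marked vertex $w_{r+1}$ is exactly a descending run followed by one ascending edge, so ${\mathscr L}_{w_r w_{r+1}}>0$. Then the product ${\mathscr L}_{w_1 w_2}\cdots {\mathscr L}_{w_s w_1}>0$ is a positive diagonal entry of ${\mathscr L}^{s}$, so ${\mathscr L}$ is not nilpotent and $\rho>0$. (When $\rho({\mathscr J})\ge 1$ one can shortcut this: for the Jacobi Perron vector $y\gg 0$, available since ${\mathscr J}=D^{-1}(L+U)$ is irreducible by Lemma~\ref{lem1-1}(ii), one computes ${\mathscr L}y=\rho({\mathscr J})\,y+(\rho({\mathscr J})-1)(D-L)^{-1}Ly\ge \rho({\mathscr J})\,y$, whence $\rho\ge\rho({\mathscr J})>0$ by Lemma~\ref{lem1-2}(i); but the combinatorial argument also covers the remaining case $\rho({\mathscr J})<1$.)

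With $\rho>0$ in hand, I would upgrade $x>0$ to $x\gg 0$ from the identity $(\rho L+U)x=\rho Dx$ using irreducibility. Split the index set into $S=\{i:x_i>0\}\ne\varnothing$ and $Z=\{i:x_i=0\}$. For every $i\in Z$ the right-hand side $(\rho Dx)_i$ vanishes, so $\sum_j(\rho L+U)_{ij}x_j=0$; as all summands are nonnegative and $\rho>0$, this forces $L_{ij}=U_{ij}=0$, hence $a_{ij}=0$, for all $i\in Z$ and $j\in S$. Thus $G(A)$ has no edge from $Z$ to $S$, so if $Z$ were nonempty the set $Z$ would be closed and unreachable from $S$, contradicting the strong connectivity of $G(A)$. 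Therefore $Z=\varnothing$, i.e.\ $x\gg 0$, completing the proof. The genuinely delicate point is establishing $\rho>0$ when $\rho({\mathscr J})<1$; everything else is a direct consequence of nonnegativity, the identity $(\rho L+U)x=\rho Dx$, and the strong connectivity of $G(A)$.
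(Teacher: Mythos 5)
Your proof is correct, but note that the paper offers no proof of this lemma at all: it is quoted as a known result with the citation \cite[Exercise 3.3-6]{Va00}, and the proof of Lemma \ref{lem1-8}($i$) in turn discharges its hardest case ($\omega=\gamma=1$) by appealing back to this lemma. So your argument is a self-contained replacement for a citation rather than a variant of an in-paper proof, and its two halves have different status. The upgrade from $x>0$ to $x\gg 0$ via the identity $Ux=\rho(D-L)x$, the splitting into the support $S$ and the zero set $Z$, and the conclusion $a_{i,j}=0$ for $i\in Z$, $j\in S$ is essentially the same zero-pattern/irreducibility contradiction that the paper does write out later in the proof of Lemma \ref{lem1-8}($ii$), where it is phrased with a permutation and the block identity $A_{1,2}=0$. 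The genuinely new content is your combinatorial proof that $\rho({\mathscr L})>0$: the Neumann-series characterization of the positive entries of $(D-L)^{-1}U$ as ``descending run followed by exactly one ascending edge'' walks in $G(A)$, the observation that a simple cycle of $G(A)$ must contain an ascending edge because the index changes sum to zero, and the conclusion that the heads of the ascending edges form a closed walk in $G({\mathscr L})$, so ${\mathscr L}$ is not nilpotent. This is exactly the step the paper never supplies (and which Lemma \ref{lem1-1}($ii$) cannot supply, since ${\mathscr L}$ has a zero first column and is always reducible, as you correctly note); your argument for it is sound, and the Jacobi shortcut you give for $\rho({\mathscr J})\ge 1$ is a correct but strictly partial alternative. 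Two cosmetic points: the statement, and your cycle argument, implicitly require $n\ge 2$ (for $n=1$ one has ${\mathscr L}=0$, so the lemma is vacuously false in that degenerate case, a convention the paper also ignores); and in your final paragraph the clause saying $Z$ is ``unreachable from $S$'' has the direction reversed --- what your argument actually shows is that $Z$ is closed (no edges leave $Z$), hence $S$ is unreachable from $Z$, which already contradicts strong connectivity.
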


\begin{lemma}\label{lem1-8}
Let $A$ be an irreducible L-matrix, and let $0 \le \gamma \le \omega$ and $\omega > 0$.

\begin{itemize}
\item[(i)] Then $\rho({\mathscr L}_{\gamma,\omega}) > 0$ holds.

\item[(ii)] Assume that $x > 0$ satisfies
${\mathscr L}_{\gamma,\omega} x = \rho({\mathscr L}_{\gamma,\omega}) x$. Then $x \gg 0$.

\item[(iii)] Assume that $y > 0$ satisfies
$y^T{\mathscr L}_{\gamma,\omega} = \rho({\mathscr L}_{\gamma,\omega}) y^T$.
Then $y^T(D - \gamma L)^{-1} \gg 0$.
 \end{itemize}
\end{lemma}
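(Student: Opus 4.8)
The plan is to exploit the explicit form of the AOR iteration matrix together with the nonnegativity structure inherited from $A$ being an irreducible L-matrix. Writing ${\mathscr L}_{\gamma,\omega} = (D-\gamma L)^{-1}[(1-\omega)D + (\omega-\gamma)L + \omega U]$, I first observe that under the hypotheses $0 \le \gamma \le \omega$ and $\omega > 0$, the matrix $(D-\gamma L)^{-1}$ is nonnegative: since $D$ is a positive diagonal matrix and $\gamma L \ge 0$ (because $L \ge 0$ for an L-matrix and $\gamma \ge 0$), the matrix $D - \gamma L$ is a lower triangular nonsingular M-matrix, so its inverse is nonnegative. The iteration matrix ${\mathscr L}_{\gamma,\omega}$ is therefore a product of a nonnegative matrix with the nonnegative matrix $(\omega-\gamma)L + \omega U$ plus the term $(1-\omega)D$; I would need the full combination $N_{\gamma,\omega} = \frac{1}{\omega}[(1-\omega)D + (\omega-\gamma)L + \omega U]$ to be nonnegative, which holds provided $0 < \omega \le 1$. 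This is presumably the implicit standing assumption so that ${\mathscr L}_{\gamma,\omega} \ge 0$, placing us in the Perron--Frobenius regime.

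For part (i), the strategy is to relate $\rho({\mathscr L}_{\gamma,\omega})$ to $\rho({\mathscr L})$, which is already known to be strictly positive by Lemma \ref{lem1-7}. The cleanest route is to use the extrapolation identity \eqref{eqn1.7}: for $\gamma \ne 0$, eigenvalues of ${\mathscr L}_{\gamma,\omega}$ and ${\mathscr L}_\gamma$ are related by $\lambda = 1 - \frac{\omega}{\gamma} + \frac{\omega}{\gamma}\eta$. Rather than chase this through cases, I would argue directly: since ${\mathscr L}_{\gamma,\omega} \ge 0$ and $A$ is irreducible, I can show ${\mathscr L}_{\gamma,\omega}$ has a nonzero entry pattern forcing $\rho > 0$. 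More concretely, if $\rho({\mathscr L}_{\gamma,\omega}) = 0$ were to hold, then ${\mathscr L}_{\gamma,\omega}$ would be nilpotent; but the off-diagonal structure of $U \ne 0$ (which is nonzero since $A$ is irreducible, hence not lower triangular) feeds a strictly positive contribution through $(D-\gamma L)^{-1}\omega U$, and by tracing a cycle in $G(A)$ one produces a positive diagonal entry of some power of ${\mathscr L}_{\gamma,\omega}$, contradicting nilpotency. Thus $\rho({\mathscr L}_{\gamma,\omega}) > 0$.

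For parts (ii) and (iii), I would invoke the Perron--Frobenius machinery in Lemma \ref{lem1-1}(ii), which requires irreducibility of ${\mathscr L}_{\gamma,\omega}$. The key auxiliary claim is that ${\mathscr L}_{\gamma,\omega}$ is irreducible whenever $A$ is. To see this, one shows $G({\mathscr L}_{\gamma,\omega})$ contains $G(A)$'s connectivity: since $(D-\gamma L)^{-1} = D^{-1} + D^{-1}(\gamma L)D^{-1} + \cdots$ is lower triangular with positive diagonal, multiplying by the matrix $(\omega-\gamma)L + \omega U + (1-\omega)D$ preserves the strong connectivity of the directed graph. Granting irreducibility, part (ii) is immediate: the Perron eigenvector associated with $\rho({\mathscr L}_{\gamma,\omega}) > 0$ is strictly positive, and since any nonnegative eigenvector for the spectral radius of an irreducible nonnegative matrix must be the Perron vector up to scaling, $x > 0$ forces $x \gg 0$. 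For part (iii), I apply the same reasoning to the transpose: $y^T {\mathscr L}_{\gamma,\omega} = \rho\, y^T$ means $y$ is a left Perron vector, so $y \gg 0$; then $y^T(D-\gamma L)^{-1} = y^T(D^{-1} + \gamma D^{-1}LD^{-1} + \cdots)$, and since $y \gg 0$ and $(D-\gamma L)^{-1}$ has strictly positive diagonal (with all lower-triangular entries nonnegative), the product is $\gg 0$.

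The main obstacle is establishing the irreducibility of ${\mathscr L}_{\gamma,\omega}$ cleanly, since a naive product of irreducible-from-$A$ matrices need not stay irreducible, and the term $(1-\omega)D$ can dilute the graph when $\omega = 1$. I would handle this by a careful directed-graph argument showing that for any $i \ne j$, a path $\sigma_{i,j}$ in $G(A)$ lifts to a path in $G({\mathscr L}_{\gamma,\omega})$, using that the strictly-lower-triangular part of $(D-\gamma L)^{-1}$ captures the $L$-edges and the factor $\omega U$ captures the $U$-edges, so that edges of $A$ in both triangular directions survive in the product. Edge cases ($\gamma = 0$, reducing to a Jacobi-type matrix, and $\gamma = \omega$, the SOR case) should be checked separately but follow the same template.
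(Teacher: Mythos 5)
Your proofs of parts (ii) and (iii) rest on the auxiliary claim that ${\mathscr L}_{\gamma,\omega}$ is irreducible whenever $A$ is, and this claim is false precisely at the parameter pair the lemma must cover: $\gamma = \omega = 1$ (Gauss--Seidel). There ${\mathscr L} = (D-L)^{-1}U$, and since the first column of the strictly upper triangular matrix $U$ is zero, the first column of ${\mathscr L}$ is zero; hence ${\mathscr L}$ is reducible for every $n \ge 2$, no matter how strongly connected $A$ is. (The paper exploits exactly this zero-column structure in Lemma \ref{lem1-12}.) No directed-graph argument of the kind you sketch can repair this, because no edge of $G({\mathscr L})$ can terminate at vertex $1$. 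The same observation shows that your intermediate assertion in (iii), that $y$ is a left Perron vector and therefore $y \gg 0$, is false in this case: comparing first components in $y^T{\mathscr L} = \rho y^T$ gives $\rho y_1 = 0$, hence $y_1 = 0$ since $\rho > 0$ by (i). This is precisely why the lemma is formulated about $y^T(D-\gamma L)^{-1}$ rather than about $y$ itself; the conclusion $y \gg 0$ through which you route the argument is unattainable. (For a concrete check, take $n=2$, $a_{1,2}=-a$, $a_{2,1}=-b$: then ${\mathscr L}=\bigl(\begin{smallmatrix}0 & a\\ 0 & ab\end{smallmatrix}\bigr)$, the left eigenvector is $(0,1)$, yet $(0,1)(D-L)^{-1}=(b,1)\gg 0$.)

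The paper avoids the graph of the iteration matrix entirely. From ${\mathscr L}_{\gamma,\omega}x = \rho x$ it passes to $[(\omega+\rho-1)D - (\omega-\gamma+\gamma\rho)L - \omega U]x = 0$ and observes that $\hat{A} = (\omega+\rho-1)D - (\omega-\gamma+\gamma\rho)L - \omega U$ is an \emph{irreducible Z-matrix}, since its off-diagonal zero pattern coincides with that of $A$ (one needs only $\omega > 0$ and $\omega-\gamma+\gamma\rho > 0$, both available once (i) is proved). If $x$ had zero components, a permutation puts $x = (0,\hat{x})^T$ with $\hat{x}\gg 0$, and the corresponding off-diagonal block of $\hat{A}$ must annihilate $\hat{x}$, forcing that block to vanish and contradicting the irreducibility of $\hat{A}$; hence $x \gg 0$. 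Part (iii) then follows by applying the identical argument to $z^T = y^T(D-\gamma L)^{-1}$ and the transposed equation $\hat{A}^Tz = 0$, using that $A^T$ is also an irreducible L-matrix. If you wish to salvage your Perron--Frobenius route, it does work for $\omega < 1$, and for $\omega = 1$ with $\gamma < 1$, where $G({\mathscr L}_{\gamma,\omega})$ genuinely contains $G(A)$; but the case $\gamma=\omega=1$ requires a different mechanism, such as the paper's. Your part (i), by contrast, is essentially sound: for $\omega<1$ the diagonal term already gives $\rho \ge 1-\omega > 0$, for $\omega=1$, $\gamma<1$ the graph of ${\mathscr L}_{\gamma,1}$ contains $G(A)$, and your cycle-tracing idea can be made rigorous for Gauss--Seidel, where the paper instead simply cites Lemma \ref{lem1-7}.
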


\begin{proof} Denote $\rho = \rho({\mathscr L}_{\gamma,\omega})$.
Clearly, $\rho = 0$ if and only if $N_{\gamma,\omega}x = 0$.
Since $A$ is irreducible L-matrix, then it gets that $N_{\gamma,\omega} > 0$.

When $\omega < 1$, we have $N_{\gamma,\omega}x \ge (1-\omega)/\omega x > 0$. When $\omega = 1$ and $\gamma < 1$,
we have that $N_{\gamma,\omega} = (1-\gamma)L + U$ is irreducible so that $N_{\gamma,\omega}x > 0$.
Hence, for these two cases we obtain $\rho > 0$.
When $\omega = \gamma = 1$, by Lemma \ref{lem1-7} we have also $\rho > 0$.
We have proved ($i$).

From $(D - \gamma L)^{-1}[(1-\omega)D + (\omega-\gamma)L + \omega U]x = \rho x$,
we obtain
\begin{eqnarray}\label{eqs2.1}
[(\omega + \rho -1)D - (\omega-\gamma+\gamma\rho)L - \omega U]x = 0.
\end{eqnarray}
Let $\hat{A} = (\omega + \rho -1)D - (\omega-\gamma+\gamma\rho)L - \omega U$.
Since $\omega > 0$ and $\omega-\gamma+\gamma\rho > 0$, then $\hat{A}$ is an irreducible Z-matrix.
If $x$ has some zero elements, without loss of generality, then we can assume that
\begin{eqnarray*}
x = \left(\begin{array}{c}
0\\
\hat{x}
\end{array}\right), \; \hat{x} \gg 0\in{\mathscr R}^{\;r}, \; 1 \le r \le n-1.
\end{eqnarray*}
We divide $\hat{A}$ accordingly as
\begin{eqnarray*}
\hat{A} = \left(\begin{array}{cc}
A_{1,1} & A_{1,2}\\
A_{2,1} & A_{2,2}
\end{array}\right), \; A_{2,2}\in{\mathscr R}^{r\times r}.
\end{eqnarray*}
Then $A_{1,2} \le 0$. From (\ref{eqs2.1}) we derive $A_{1,2}\hat{x} = 0$ so that
$A_{1,2} = 0$, which implies that $\hat{A}$ is reducible. This is a contradiction. Hence,
$x$ has no zero elements, i.e., $x \gg 0$. This has proved ($ii$).

When $y^T{\mathscr L}_{\gamma,\omega} = \rho y^T$, let $z^T = y^T(D - \gamma L)^{-1}$. Then $z > 0$.
Further, we have $z^T[(1-\omega)D + (\omega-\gamma)L + \omega U] = \rho z^T(D - \gamma L)$,
so that $[(\omega + \rho -1)D - (\omega-\gamma+\gamma\rho)L^T - \omega U^T]z = 0$.
Since $A^T$ ia also an irreducible L-matrix, then similar to the proof of ($ii$) we can prove $z \gg 0$.
Then ($iii$) is proved. \end{proof}

\begin{lemma}\label{lem1-9}
Let $P > 0$ be nonsingular, and let the splitting
\begin{eqnarray}\label{eqs2.2}
PA = \hat{M} - \hat{N}
 \end{eqnarray}
be weak regular. Then the following
three statements are equivalent:

\begin{itemize}
\item[(i)] $A^{-1} > 0$.

\item[(ii)] $(PA)^{-1} > 0$.

\item[(iii)] The splitting (\ref{eqs2.2}) is convergent.
\end{itemize}
\end{lemma}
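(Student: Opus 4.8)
The plan is to notice that the decisive object behind all three statements is a single nonnegative iteration matrix, and then to invoke Lemma \ref{lem1-6} twice, once for $A$ and once for $PA$. The elementary bridge between the two inverses is the identity $(PA)^{-1} = A^{-1}P^{-1}$, but rather than manipulate it head-on I would transport the given weak regular splitting of $PA$ to a weak regular splitting of $A$ that has the \emph{same} iteration matrix.

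First I would left-multiply the splitting (\ref{eqs2.2}) by $P^{-1}$ to obtain
\begin{eqnarray*}
A = P^{-1}\hat{M} - P^{-1}\hat{N}.
\end{eqnarray*}
I claim this is again a weak regular splitting. Its $M$-part $P^{-1}\hat{M}$ is nonsingular (both $P$ and $\hat{M}$ are), and its inverse is $(P^{-1}\hat{M})^{-1} = \hat{M}^{-1}P \ge 0$, because $\hat{M}^{-1} \ge 0$ (the splitting (\ref{eqs2.2}) is weak regular) and $P \ge 0$ — this is precisely where the hypothesis $P > 0$ is used, as it supplies the nonnegativity of $P$. Moreover its iteration matrix is
\begin{eqnarray*}
(P^{-1}\hat{M})^{-1}(P^{-1}\hat{N}) = \hat{M}^{-1}P\,P^{-1}\hat{N} = \hat{M}^{-1}\hat{N} \ge 0,
\end{eqnarray*}
which is exactly the iteration matrix of (\ref{eqs2.2}). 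Thus the two splittings, of $A$ and of $PA$, are both weak regular and share one and the same iteration matrix $\hat{T} = \hat{M}^{-1}\hat{N}$.

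With this in hand the equivalences become immediate. Writing $\rho(\hat{T}) < 1$ for the common convergence condition, Lemma \ref{lem1-6} applied to the weak regular splitting $A = P^{-1}\hat{M} - P^{-1}\hat{N}$ gives $(i)\Leftrightarrow \rho(\hat{T}) < 1$, while Lemma \ref{lem1-6} applied to (\ref{eqs2.2}) gives $(ii)\Leftrightarrow \rho(\hat{T}) < 1$; and $(iii)$ is by definition the statement $\rho(\hat{T}) < 1$. Chaining these yields $(i)\Leftrightarrow(ii)\Leftrightarrow(iii)$.

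The only genuinely nontrivial point, and the step I would check most carefully, is the verification that $A = P^{-1}\hat{M} - P^{-1}\hat{N}$ is weak regular, since this is what licenses using Lemma \ref{lem1-6} on $A$ itself; it hinges on $\hat{M}^{-1}P \ge 0$, hence on reading $P > 0$ as nonnegativity together with $\hat{M}^{-1} \ge 0$. As an independent cross-check on the direction $(ii)\Rightarrow(i)$ one can argue directly: from $(PA)^{-1} = A^{-1}P^{-1} \ge 0$ one gets $A^{-1} = (PA)^{-1}P \ge 0$ as a product of nonnegative matrices, and $A^{-1}$ is nonsingular, so $A^{-1} > 0$. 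I do not expect to need the irreducibility lemmas or the Perron--Frobenius machinery of Lemmas \ref{lem1-1}--\ref{lem1-3} here; everything should follow from the common-iteration-matrix observation and Lemma \ref{lem1-6}.
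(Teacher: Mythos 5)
Your proposal is correct and follows essentially the same route as the paper: rewrite the splitting as $A = P^{-1}\hat{M} - P^{-1}\hat{N}$, observe it is weak regular with the same iteration matrix $\hat{M}^{-1}\hat{N}$, and apply Lemma \ref{lem1-6} twice (once to $PA$, once to $A$). You merely spell out the verification $(P^{-1}\hat{M})^{-1} = \hat{M}^{-1}P \ge 0$ that the paper leaves as ``clearly,'' and add a harmless direct cross-check of $(ii)\Rightarrow(i)$.
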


\begin{proof}
By Lemma \ref{lem1-6}, ($ii$) and ($iii$) are equivalent, immediately.

The splitting (\ref{eqs2.2}) can be rewritten into
$A = P^{-1}\hat{M} - P^{-1}\hat{N}$.
Clearly, this splitting is weak regular and $(P^{-1}\hat{M})^{-1}(P^{-1}\hat{N})
 = \hat{M}^{-1}\hat{N}$. The equivalence between ($i$) and ($iii$) follows directly
by Lemma \ref{lem1-6} again.  \end{proof}

From this lemma, the following lemma is obvious.

\begin{lemma}\label{lem1-10}
Let $A$ and $PA$ be Z-matrices, where $P > 0$ is nonsingular. Then
$A$ is a nonsingular M-matrix if and only if $PA$ is a nonsingular M-matrix.
\end{lemma}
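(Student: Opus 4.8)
The plan is to reduce the statement to the inverse-positivity equivalence already packaged in Lemma~\ref{lem1-9}. By the definitions recalled in Section~\ref{se2}, a Z-matrix $B$ is a nonsingular M-matrix precisely when it is monotone, i.e.\ when $B^{-1} > 0$. Since $A$ and $PA$ are both assumed to be Z-matrices, the statements ``$A$ is a nonsingular M-matrix'' and ``$PA$ is a nonsingular M-matrix'' are therefore equivalent to ``$A^{-1} > 0$'' and ``$(PA)^{-1} > 0$'', respectively. Thus it suffices to prove that $A^{-1} > 0$ holds if and only if $(PA)^{-1} > 0$, which is exactly the equivalence of parts ($i$) and ($ii$) of Lemma~\ref{lem1-9}.

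To invoke Lemma~\ref{lem1-9} I first need to exhibit a weak regular splitting of $PA$. Here I would use the standard scalar shift: because $PA$ is a Z-matrix, its off-diagonal entries are nonpositive, so choosing any real $\alpha$ with $\alpha > 0$ and $\alpha \ge \max_i (PA)_{i,i}$ makes $\alpha I - PA \ge 0$. Setting $\hat{M} = \alpha I$ and $\hat{N} = \alpha I - PA$ yields $PA = \hat{M} - \hat{N}$ with $\hat{M}^{-1} = \alpha^{-1} I \ge 0$ and $\hat{M}^{-1}\hat{N} = I - \alpha^{-1} PA \ge 0$, so this is a weak regular (indeed regular) splitting. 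With such a splitting in hand, Lemma~\ref{lem1-9} applies verbatim and gives $A^{-1} > 0 \Leftrightarrow (PA)^{-1} > 0$.

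Combining the two reductions closes the argument: $A$ is a nonsingular M-matrix iff $A^{-1} > 0$ iff $(PA)^{-1} > 0$ iff $PA$ is a nonsingular M-matrix. The only step that is not entirely mechanical is the construction of the weak regular splitting of $PA$, and I expect this to be the main (minor) obstacle, since Lemma~\ref{lem1-9} is stated for a \emph{given} weak regular splitting rather than asserting that one exists; the scalar-shift construction above removes that gap. Everything else follows directly from the definition of a nonsingular M-matrix together with Lemma~\ref{lem1-9}, which is why the result is essentially immediate once the splitting is supplied.
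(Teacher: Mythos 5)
Your proof is correct and follows the same route the paper intends: the paper derives this lemma as an immediate consequence of Lemma~\ref{lem1-9}, exactly as you do, using the definitional equivalence between ``nonsingular M-matrix'' and ``Z-matrix with positive inverse.'' Your scalar-shift construction $PA = \alpha I - (\alpha I - PA)$ correctly supplies the weak regular splitting that Lemma~\ref{lem1-9} presupposes, a detail the paper leaves implicit when it calls the result ``obvious.''
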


We prove two Stein-Rosenberg type comparison theorems.

\begin{lemma} \label{lem1-11}
Let the splittings $A = M_1 - N_1 = M_2 - N_2$ be respectively weak regular and nonnegative,
and let $x \gg 0$, $y > 0$ satisfy $M_2^{-1}N_2 x = \rho(M_2^{-1}N_2)x$,
$y^TM_1^{-1}N_1 = \rho(M_1^{-1}N_1)y^T$. Suppose that one of the following two conditions is satisfied:

\begin{itemize}
\item[(i)] $M_1^{-1}(N_2 - N_1)x \gg 0$.

\item[(ii)] $y^TM_1^{-1} \gg 0$ and $(N_2 - N_1)x > 0$.
 \end{itemize}
Then one of the following mutually exclusive relations holds:

\begin{itemize}
\item[(a)] $\rho(M_1^{-1}N_1) < \rho(M_2^{-1}N_2) < 1$.

\item[(b)] $\rho(M_1^{-1}N_1) = \rho(M_2^{-1}N_2) = 1$.

\item[(c)] $\rho(M_1^{-1}N_1) > \rho(M_2^{-1}N_2) > 1$.
 \end{itemize}
\end{lemma}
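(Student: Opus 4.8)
The plan is to collapse the entire trichotomy into a single scalar identity linking $\rho_1 := \rho(M_1^{-1}N_1)$ and $\rho_2 := \rho(M_2^{-1}N_2)$, obtained by evaluating the bilinear quantity $y^T M_1^{-1} A x$ in two different ways. Note first that, both splittings being nonnegative, the iteration matrices $M_1^{-1}N_1$ and $M_2^{-1}N_2$ are nonnegative, so by Lemma~\ref{lem1-1} the numbers $\rho_1,\rho_2$ are genuine real eigenvalues carrying the stated Perron vectors $y>0$ (left) and $x\gg0$ (right). The first step would be to rewrite each eigenvector equation in terms of $A$. Using $N_2 = M_2 - A$, the relation $M_2^{-1}N_2 x = \rho_2 x$ becomes $Ax = (1-\rho_2)M_2 x$; using $N_1 = M_1 - A$, the relation $y^T M_1^{-1}N_1 = \rho_1 y^T$ becomes $y^T M_1^{-1} A = (1-\rho_1)y^T$.

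Next I would evaluate $y^T M_1^{-1} A x$ twice. Via the left relation it equals $(1-\rho_1)\,y^T x$; substituting $Ax=(1-\rho_2)M_2 x$ it equals $(1-\rho_2)\,y^T M_1^{-1}M_2 x$. Because the two splittings of the same $A$ satisfy $N_2 - N_1 = M_2 - M_1$, we have $M_1^{-1}M_2 = I + M_1^{-1}(N_2-N_1)$, so with $s := y^T x$ and $t := y^T M_1^{-1}(N_2-N_1)x$ this yields the governing identity
\begin{eqnarray*}
(1-\rho_1)\,s = (1-\rho_2)(s+t).
\end{eqnarray*}

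The crux is then to fix the signs of $s$ and $t$. Since $x\gg0$ and $y>0$, clearly $s>0$. For $t$ I would invoke the hypotheses in their two forms: under (i), $M_1^{-1}(N_2-N_1)x\gg0$ is strictly positive while $y>0$ is semipositive, so $t>0$; under (ii), the row vector $y^T M_1^{-1}\gg0$ is strictly positive while the column $(N_2-N_1)x>0$ is semipositive, so again $t>0$. Thus in either case $s>0$, $t>0$, whence $s+t>s>0$.

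Finally the trichotomy reads off the identity by locating $\rho_2$ relative to $1$. If $\rho_2<1$ the right-hand side is positive, forcing $\rho_1<1$, and comparison with $(1-\rho_2)s$ (using $t>0$) gives $\rho_1<\rho_2$, i.e.\ (a). If $\rho_2=1$ the right-hand side vanishes, so $(1-\rho_1)s=0$ and hence $\rho_1=1$, i.e.\ (b). If $\rho_2>1$ the right-hand side is negative, forcing $\rho_1>1$, and the same comparison now reverses to give $\rho_1>\rho_2$, i.e.\ (c); these three alternatives are manifestly mutually exclusive and exhaustive. The only delicate point — the genuine obstacle — is the sign bookkeeping for $t$: hypotheses (i) and (ii) are exactly the two minimal ways of extracting strict positivity of $t$ from one-sided information (strictness on one factor, mere semipositivity on the other), and it is precisely this strict $t>0$ that upgrades the weak comparison into the strict Stein--Rosenberg trichotomy rather than allowing equality in the off-diagonal cases.
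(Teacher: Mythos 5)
Your proposal is correct and is essentially the paper's own argument: your identity $(1-\rho_1)s=(1-\rho_2)(s+t)$ rearranges exactly to the paper's relation $[\rho(M_1^{-1}N_1)-\rho(M_2^{-1}N_2)]\,y^Tx=[\rho(M_2^{-1}N_2)-1]\,y^TM_1^{-1}(N_2-N_1)x$, which the paper obtains by first deriving the vector identity $M_1^{-1}N_1x-\rho(M_2^{-1}N_2)x=[\rho(M_2^{-1}N_2)-1]M_1^{-1}(N_2-N_1)x$ and then multiplying on the left by $y^T$, whereas you evaluate $y^TM_1^{-1}Ax$ in two ways. The sign bookkeeping (positivity of $y^Tx$ and of $y^TM_1^{-1}(N_2-N_1)x$ under either hypothesis (i) or (ii)) and the resulting trichotomy are the same in both arguments.
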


\begin{proof}
Since $Ax = (M_2-N_2)x = [1-\rho(M_2^{-1}N_2)]M_2x$, then by simple operation we have
 \begin{eqnarray}\label{eqs2.3} \nonumber
M_1^{-1}N_1x - \rho(M_2^{-1}N_2)x & = & (M_1^{-1}N_1 - M_2^{-1}N_2)x\\\nonumber
& = & (M_2^{-1}A - M_1^{-1}A)x\\
& = & M_1^{-1}(M_1 - M_2)M_2^{-1}Ax\\\nonumber
& = & [\rho(M_2^{-1}N_2)-1]M_1^{-1}(M_2 - M_1)x \\ \nonumber
& = & [\rho(M_2^{-1}N_2)-1]M_1^{-1}(N_2 - N_1)x
 \end{eqnarray}
and, therefore,
 \begin{eqnarray*}
[\rho(M_1^{-1}N_1) - \rho(M_2^{-1}N_2)]y^Tx = [\rho(M_2^{-1}N_2) - 1]y^TM_1^{-1}(N_2 - N_1)x.
 \end{eqnarray*}

When one the conditions ($i$) and ($ii$) is satisfied, it derives that $y^TM_1^{-1}(N_2 - N_1)x > 0$.
Since $y^Tx > 0$, then we obtain
 \begin{eqnarray*}
\rho(M_1^{-1}N_1) - \rho(M_2^{-1}N_2)
 \left\{ \begin{array}{llll}
 < & 0, & if & \rho(M_2^{-1}N_2) < 1,\\
 = & 0, & if & \rho(M_2^{-1}N_2) = 1,\\
 > & 0, & if & \rho(M_2^{-1}N_2) > 1.
 \end{array}\right.
 \end{eqnarray*}
The proof is completed.
  \end{proof}

By the definition of the AOR method, when $\omega = \gamma = 1$ in (\ref{eqn1.4}) we
derive the Gauss-Seidel method, whose iteration matrix is denoted by ${\mathscr L}$.
Now, let
\begin{eqnarray*}
PA = \tilde{D} - \tilde{L} - \tilde{U} = \tilde{M}_{1,1} - \tilde{N}_{1,1},
 \end{eqnarray*}
where $\tilde{D}$ is a diagonal matrix, $\tilde{L}$ and $\tilde{U}$ are strictly lower and
upper triangular matrices respectively and
$\tilde{M}_{1,1} = \tilde{D} - \tilde{L}$, $\tilde{N}_{1,1} = \tilde{U}$. Then the
preconditioned Gauss-Seidel iteration matrix can be defined as $\tilde{\mathscr L}
 = \tilde{M}_{1,1}^{-1}\tilde{N}_{1,1} = (\tilde{D} - \tilde{L})^{-1}\tilde{U}$.

\begin{lemma} \label{lem1-12}
Let $A$ and $PA$ be L-matrices,
and let ${\mathscr L}x = \rho({\mathscr L})x$ with $x \gg 0$.
Suppose that the second to $n$th elements of $\tilde{M}_{1,1}^{-1}(N_{1,1} - \tilde{N}_{1,1})x$
are positive. Then one of the following mutually exclusive relations holds:

\begin{itemize}
\item[(a)] $\rho(\tilde{\mathscr L}) < \rho({\mathscr L}) < 1$.

\item[(b)] $\rho(\tilde{\mathscr L}) = \rho({\mathscr L}) = 1$.

\item[(c)] $\rho(\tilde{\mathscr L}) > \rho({\mathscr L}) > 1$.
 \end{itemize}
\end{lemma}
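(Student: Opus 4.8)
The plan is to view $\tilde{\mathscr L}$ and ${\mathscr L}$ as arising from two splittings of the single matrix $PA$ and to run a Stein--Rosenberg comparison in the spirit of Lemma~\ref{lem1-11}. Multiplying the Gauss--Seidel splitting $A=M_{1,1}-N_{1,1}$ by $P$ gives $PA=PM_{1,1}-PN_{1,1}$, whose iteration matrix is $(PM_{1,1})^{-1}(PN_{1,1})=M_{1,1}^{-1}N_{1,1}={\mathscr L}$ (nonnegative, since $A$ is an L-matrix), while $PA=\tilde M_{1,1}-\tilde N_{1,1}$ has iteration matrix $\tilde{\mathscr L}$ (regular, since $PA$ is an L-matrix). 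Setting $\rho:=\rho({\mathscr L})$, the relation ${\mathscr L}x=\rho x$ gives $Ax=(1-\rho)M_{1,1}x$ and hence $PAx=(1-\rho)PM_{1,1}x$. Substituting $\tilde N_{1,1}=\tilde M_{1,1}-PA$ into $\tilde{\mathscr L}x-\rho x=\tilde M_{1,1}^{-1}(\tilde N_{1,1}-\rho\tilde M_{1,1})x$ and using $\tilde M_{1,1}-PM_{1,1}=\tilde N_{1,1}-PN_{1,1}$ (both following from $PA=\tilde M_{1,1}-\tilde N_{1,1}=PM_{1,1}-PN_{1,1}$) yields the central identity
\[
\tilde{\mathscr L}x-\rho x=(\rho-1)\,v,\qquad v:=\tilde M_{1,1}^{-1}(PN_{1,1}-\tilde N_{1,1})x,
\]
which is the analogue, with $PN_{1,1}$ in place of $N_{1,1}$, of the key identity in the proof of Lemma~\ref{lem1-11}.

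Next I would translate the hypothesis into strict positivity of the trailing entries of $v$. With $P=I+Q$ and $Q=P-I\ge0$ (as for the preconditioners $P=I+Q$, $Q>0$ studied here), one has $v=w+\tilde M_{1,1}^{-1}QN_{1,1}x$, where $w:=\tilde M_{1,1}^{-1}(N_{1,1}-\tilde N_{1,1})x$ is exactly the vector assumed to have positive entries $2,\dots,n$. Because $\tilde M_{1,1}^{-1}\ge0$, $Q\ge0$, $N_{1,1}=U\ge0$ and $x\gg0$, the correction $\tilde M_{1,1}^{-1}QN_{1,1}x$ is nonnegative, so $v_i\ge w_i>0$ for $i=2,\dots,n$; the sign of $v_1$ stays undetermined, and this is the one real obstacle.

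The obstacle is removed by the Gauss--Seidel structure. Since $\tilde U$ has a zero first column, so does $\tilde{\mathscr L}=(\tilde D-\tilde L)^{-1}\tilde U$; hence the nonzero eigenvalues of $\tilde{\mathscr L}$ are those of its trailing $(n-1)\times(n-1)$ block $B\ge0$, giving $\rho(\tilde{\mathscr L})=\rho(B)$. Reading off the last $n-1$ coordinates of the central identity gives
\[
B\hat x=\rho\hat x+(\rho-1)\hat v,
\]
where $\hat x\gg0$ and $\hat v\gg0$ are the trailing parts of $x$ and $v$; here the positivity is complete, so the unknown first entry is irrelevant. Now Lemmas~\ref{lem1-2} and~\ref{lem1-3} applied to $B$ finish the proof: if $\rho<1$ then $B\hat x\ll\rho\hat x$, so $\rho(B)<\rho$, case~(a); if $\rho=1$ then $B\hat x=\hat x$ with $\hat x\gg0$, so $\rho(B)=1$, case~(b); if $\rho>1$ then $B\hat x\gg\rho\hat x$, so $\rho(B)>\rho$, case~(c). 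These alternatives correspond precisely to $\rho<1$, $\rho=1$, $\rho>1$, so exactly one holds. The main difficulty, and the reason the naive route through condition~(i) of Lemma~\ref{lem1-11} (which demands $v\gg0$) does not apply, is the lack of control on the first coordinate; passing to the block $B$ that carries all of $\rho(\tilde{\mathscr L})$ is what makes the partial positivity $v_2,\dots,v_n>0$ enough.
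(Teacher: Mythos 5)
Your proof is correct, and its skeleton is the same as the paper's: both treat $\tilde{\mathscr L}$ and ${\mathscr L}$ as iteration matrices of the two splittings $PA=\tilde M_{1,1}-\tilde N_{1,1}=PM_{1,1}-PN_{1,1}$, derive from (\ref{eqs2.3}) the identity $\tilde{\mathscr L}x-\rho x=(\rho-1)\tilde M_{1,1}^{-1}(PN_{1,1}-\tilde N_{1,1})x$, use the zero first column of $\tilde{\mathscr L}$ to reduce everything to the trailing $(n-1)\times(n-1)$ block, and read the trichotomy off the sign of $\rho-1$. Two execution differences are worth recording. First, the paper finishes by pairing the block equation $\Psi_{2,2}x_2-\rho x_2=(\rho-1)\tilde x_2$ with a left Perron vector $z>0$ of $\Psi_{2,2}$ and comparing the positive scalars $z^Tx_2$ and $z^T\tilde x_2$, whereas you apply Lemmas~\ref{lem1-2} and~\ref{lem1-3} directly to $B\hat x=\rho\hat x+(\rho-1)\hat v$; both are valid, and yours is marginally more self-contained. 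Second, and more substantively, you noticed that the identity naturally produces $PN_{1,1}$ while the hypothesis is stated with $N_{1,1}$, and you bridged the two via $v=w+\tilde M_{1,1}^{-1}QN_{1,1}x\ge w$, invoking $Q=P-I\ge 0$. The paper's own proof silently writes $\tilde M_{1,1}^{-1}(N_{1,1}-\tilde N_{1,1})x$ on the right-hand side of the identity, i.e.\ it reads the hypothesis as if it were stated with $PN_{1,1}$ --- which is in fact how the lemma is invoked in the proof of Theorem~\ref{thm3.3}, case ($iv$), where the verified quantity is $[M^{(1)}_{1,1}]^{-1}(P_1N_{1,1}-N^{(1)}_{1,1})x$. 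Your extra step therefore repairs a small mismatch between the statement and the proof, at the price of using the standing assumption $P=I+Q$ with $Q\ge 0$, which is the paper's ambient framework though not written into the lemma; either reading makes the argument sound.
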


\begin{proof}
Consider the splittings
\begin{eqnarray*}
PA = \tilde{M}_{1,1} - \tilde{N}_{1,1} = PM_{1,1} - PN_{1,1}.
 \end{eqnarray*}
Clearly, they are regular and nonnegative respectively.

Since the first column $\tilde{\mathscr L}$ is a zero vector, then it can be
decomposed as
 \begin{eqnarray*}
 \tilde{\mathscr L} = \left(\begin{array}{cc}
 0 & \psi_{1,2}\\
 0 & \Psi_{2,2}
 \end{array}\right), \quad \Psi_{2,2} \ge 0\in{\mathscr R}^{(n-1) \times (n-1)},
 \end{eqnarray*}
so that $\rho(\tilde{\mathscr L}) = \rho(\Psi_{2,2})$.

 Correspondingly, we decompose $x$
 and $\tilde{M}_{1,1}^{-1}(N_{1,1} - \tilde{N}_{1,1})x$ as
 \begin{eqnarray*}
x = \left(\begin{array}{c}
x_{1}\\
x_{2}
 \end{array}\right), \;\;
  \tilde{M}_{1,1}^{-1}(N_{1,1} - \tilde{N}_{1,1})x = \left(\begin{array}{c}
 \tilde{x}_{1}\\
\tilde{x}_{2}
 \end{array}\right),
 \quad
 x_2, \tilde{x}_{2}\in{\mathscr R}^{(n-1)}.
 \end{eqnarray*}
Then $x_2 \gg 0$ and $\tilde{x}_{2} \gg 0$.

Similar to the proof of Lemma \ref{lem1-11}, by (\ref{eqs2.3}) we can obtain that
\begin{eqnarray*}
 \left(\begin{array}{cc}
 0 & \psi_{1,2}\\
 0 & \Psi_{2,2}
 \end{array}\right)
 \left(\begin{array}{c}
x_{1}\\
x_{2}
 \end{array}\right)
  - \rho({\mathscr L}) \left(\begin{array}{c}
x_{1}\\
x_{2}
 \end{array}\right)
 = [\rho({\mathscr L})-1] \left(\begin{array}{c}
 \tilde{x}_{1}\\
\tilde{x}_{2}
 \end{array}\right),
 \end{eqnarray*}
so that
\begin{eqnarray*}
\Psi_{2,2}x_{2} - \rho({\mathscr L})x_{2}
 = [\rho({\mathscr L})-1]\tilde{x}_{2}.
 \end{eqnarray*}

Since $\Psi_{2,2} \ge 0$, then there exists $z > 0$ such that $z^T\Psi_{2,2}
 = \rho(\Psi_{2,2})z^T$. Hence, we have
\begin{eqnarray*}
[\rho(\Psi_{2,2}) - \rho({\mathscr L})]z^Tx_{2} = z^T\Psi_{2,2}x_{2} - \rho({\mathscr L})z^Tx_{2}
 = [\rho({\mathscr L})-1]z^T\tilde{x}_{2}.
 \end{eqnarray*}

Because of $x_{2} \gg 0$, $\tilde{x}_{2} \gg 0$ and $z > 0$, we drives $z^Tx_{2} > 0$
and $z^T\tilde{x}_{2} > 0$, so that
 \begin{eqnarray*}
\rho(\Psi_{2,2}) - \rho({\mathscr L})
 \left\{ \begin{array}{llll}
 < & 0, & if & \rho({\mathscr L}) < 1,\\
 = & 0, & if & \rho({\mathscr L}) = 1,\\
 > & 0, & if & \rho({\mathscr L}) > 1.
 \end{array}\right.
 \end{eqnarray*}
The proof is completed.
  \end{proof}

Similar to \cite[Theorem 3.4]{So97}, we prove a strictly comparison result.

\begin{lemma} \label{lem1-13}
Let the both splittings $A_1 = M_1 - N_1$ and $A_2 = M_2 - N_2$ be nonnegative and convergent with
$A_2^{-1} \gg 0$ and $A_2^{-1} \ge A_1^{-1}$.
Suppose that there exists $x > 0$ such that $M_2^{-1}N_2x = \rho(M_2^{-1}N_2)x$
and $M_2x > M_1x \ge 0$.
Then $\rho(M_1^{-1}N_1) < \rho(M_2^{-1}N_2)$.
\end{lemma}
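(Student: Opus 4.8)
The plan is to pass from the iteration matrices $T_i = M_i^{-1}N_i$ to the ``inverse'' operators $S_i = A_i^{-1}N_i$, compare the latter, and translate back. The starting observation is that each $S_i$ is nonnegative even though we have no control on the sign of $A_i^{-1}$ itself: from $A_i = M_i(I - T_i)$ one gets $S_i = A_i^{-1}N_i = (I - T_i)^{-1}T_i$, and since the splittings are nonnegative ($T_i \ge 0$) and convergent ($\rho(T_i) < 1$), the Neumann series gives $(I-T_i)^{-1} = \sum_{k \ge 0} T_i^k \ge 0$, whence $S_i = \sum_{k\ge 1}T_i^k \ge 0$. Writing $\rho_2 = \rho(M_2^{-1}N_2)$ and using $N_2 x = \rho_2 M_2 x$ together with $A_2 x = (1-\rho_2)M_2 x$, I would next record that the given eigenvector satisfies $S_2 x = \mu_2 x$ with $\mu_2 = \rho_2/(1-\rho_2)$ and $A_2^{-1}M_2 x = x/(1-\rho_2)$.

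The heart of the argument is to show $S_1 x \ll \mu_2 x$, after which Lemma~\ref{lem1-3}(ii) applied to $S_1 \ge 0$ and $x > 0$ yields $\rho(S_1) < \mu_2$. Using $S_1 x = A_1^{-1}N_1 x = A_1^{-1}M_1 x - x$ and $\mu_2 x = S_2 x = A_2^{-1}M_2 x - x$, I would rewrite
\[
\mu_2 x - S_1 x = A_2^{-1}M_2 x - A_1^{-1}M_1 x = A_2^{-1}(M_2 - M_1)x + (A_2^{-1} - A_1^{-1})M_1 x.
\]
Here the hypotheses enter cleanly: $(M_2 - M_1)x = M_2 x - M_1 x > 0$ is semipositive, so multiplying by the strictly positive matrix $A_2^{-1} \gg 0$ gives $A_2^{-1}(M_2 - M_1)x \gg 0$; and $A_2^{-1} - A_1^{-1} \ge 0$ together with $M_1 x \ge 0$ gives $(A_2^{-1}-A_1^{-1})M_1 x \ge 0$. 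Adding, $\mu_2 x - S_1 x \gg 0$, which is exactly $S_1 x \ll \mu_2 x$.

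Finally I would translate $\rho(S_1) < \mu_2$ back to the iteration matrices. Taking a Perron eigenvector $u > 0$ of $T_1$ (Lemma~\ref{lem1-1}(i)) with $T_1 u = \rho(T_1)u$ and $\rho(T_1) < 1$, a one-line computation gives $S_1 u = \tfrac{\rho(T_1)}{1-\rho(T_1)}\,u$, so Lemma~\ref{lem1-2}(i) forces $\rho(S_1) \ge \rho(T_1)/(1-\rho(T_1))$. Combining this with $\rho(S_1) < \mu_2 = \rho_2/(1-\rho_2)$ and the strict monotonicity of $t \mapsto t/(1-t)$ on $[0,1)$ yields $\rho(T_1) < \rho_2$, i.e.\ $\rho(M_1^{-1}N_1) < \rho(M_2^{-1}N_2)$.

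The step I expect to be most delicate is precisely the one that motivates the whole reformulation: since a nonnegative splitting need not have $A_1^{-1} \ge 0$, one cannot compare $A_2^{-1}M_2 x$ and $A_1^{-1}M_1 x$ by pushing the monotonicity $M_2 x > M_1 x$ through $A_1^{-1}$. The displayed decomposition circumvents this by routing every inequality through $A_2^{-1} \gg 0$ and the nonnegative difference $A_2^{-1} - A_1^{-1}$, and it is the strict positivity of $A_2^{-1}$ acting on the semipositive vector $(M_2 - M_1)x$ that upgrades the comparison to the strict ``$\ll$'' required by Lemma~\ref{lem1-3}(ii).
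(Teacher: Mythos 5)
Your proof is correct and takes essentially the same route as the paper's: the paper establishes $A_2^{-1}M_2x \gg A_2^{-1}M_1x \ge A_1^{-1}M_1x = (I-M_1^{-1}N_1)^{-1}x$ and applies Lemma \ref{lem1-3}(ii), which is precisely your decomposition shifted by the identity (your $S_1 = A_1^{-1}N_1$ equals $A_1^{-1}M_1 - I$, so your $S_1x \ll \mu_2 x$ is the paper's displayed inequality). The only cosmetic difference is that you replace the paper's spectral identity $\rho\bigl((I-M_1^{-1}N_1)^{-1}\bigr) = 1/\bigl(1-\rho(M_1^{-1}N_1)\bigr)$ by a one-sided Perron-eigenvector bound, which suffices equally well.
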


\begin{proof}
We have that
\begin{eqnarray*}
A_2^{-1}M_2x \gg A_2^{-1}M_1x \ge A_1^{-1}M_1x,
  \end{eqnarray*}
  so that
\begin{eqnarray*}
\frac{1}{1 - \rho(M_2^{-1}N_2)}x = A_2^{-1}M_2x \gg (I - M_1^{-1}N_1)^{-1}x.
  \end{eqnarray*}
Clearly, $(I - M_1^{-1}N_1)^{-1} \ge 0$. It follows by Lemma \ref{lem1-3} that
\begin{eqnarray*}
\frac{1}{1 - \rho(M_2^{-1}N_2)} > \rho((I - M_1^{-1}N_1)^{-1}) = \frac{1}{1 - \rho(M_1^{-1}N_1)}.
  \end{eqnarray*}
The required result can be derived.
\end{proof}

\section{Preconditioned AOR method and comparison results}\label{se3}

In this section, without loss of generality, suppose that all of the diagonal elements of
$A$ are 1. In this case, $A$ is an L-matrix if and only if $A$ is a Z-matrix.

For convenience, if the matrix $Q$ is chosen as
$Q_\nu$, then we write $P_\nu = I + Q_\nu$, $Q_\nu = (q^{(\nu)}_{i,j})$ and
$A^{(\nu)} = P_\nu A = (a^{(\nu)}_{i,j})$. Let
\begin{eqnarray}\label{eqs3.1}
A^{(\nu)} = D_\nu - L_\nu - U_\nu = M^{(\nu)}_{\gamma,\omega} - N^{(\nu)}_{\gamma,\omega}
  \end{eqnarray}
with
  \begin{eqnarray*}
M^{(\nu)}_{\gamma,\omega} = \frac{1}{\omega}(D_\nu - \gamma L_\nu), \;
N^{(\nu)}_{\gamma,\omega} = \frac{1}{\omega} \left[(1-\omega)D_\nu +
(\omega-\gamma)L_\nu + \omega U_\nu \right],
  \end{eqnarray*}
where $D_\nu = diag(A^{(\nu)})$ is a diagonal matrix, $L_\nu$ and
$U_\nu$ are strictly lower and upper triangular matrices
respectively. Then the corresponding preconditioned AOR
method for solving (\ref{eqn1.1}) can be defined as
 \begin{eqnarray*}
 x^{k+1} = {\mathscr L}^{(\nu)}_{\gamma,\omega}x^{k} +
 \omega(D_\nu - \gamma L_\nu)^{-1}P_\nu b, \quad k = 0,1,2,\ldots,
 \end{eqnarray*}
 where
\begin{eqnarray*}
{\mathscr L}^{(\nu)}_{\gamma,\omega} = (D_\nu - \gamma L_\nu)^{-1}
[(1-\omega)D_\nu + (\omega-\gamma)L_\nu + \omega U_\nu]
\end{eqnarray*}
is the preconditioned AOR iteration matrix.

In the following, if there is no special explanation then we always assume that
\begin{eqnarray*}
0 < \omega \le 1, \;\; 0 \le \gamma \le 1.
\end{eqnarray*}

We will propose four types of comparison theorems.
They contain a general comparison result, a strict comparison result and two
Stein-Rosenberg type comparison results.

We first give them as follows.

\medskip\noindent
{\bf Theorem A} ({\rm Stein-Rosenberg Type Theorem I})

{\it Let $A$ be an L-matrix.
Then one of the following mutually exclusive relations is valid:

\begin{itemize}
\item[(i)] $\rho({\mathscr L}^{(\nu)}_{\gamma,\omega}) \le \rho({\mathscr L}_{\gamma,\omega}) < 1$.

\item[(ii)] $\rho({\mathscr L}^{(\nu)}_{\gamma,\omega}) = \rho({\mathscr L}_{\gamma,\omega}) = 1$.

\item[(iii)] $\rho({\mathscr L}^{(\nu)}_{\gamma,\omega}) \ge \rho({\mathscr L}_{\gamma,\omega}) > 1$.
  \end{itemize}
}

\noindent
{\bf Theorem B}

{\it Let $A$ be a nonsingular M-matrix. Then
\begin{eqnarray*}
\rho({\mathscr L}^{(\nu)}_{\gamma,\omega}) \le \rho({\mathscr L}_{\gamma,\omega}) < 1.
\end{eqnarray*}
}

\noindent
{\bf Theorem C} ({\rm Stein-Rosenberg Type Theorem II})

{\it Let $A$ be an irreducible L-matrix.
Then one of the following mutually exclusive relations is valid:

\begin{itemize}
\item[(i)] $\rho({\mathscr L}^{(\nu)}_{\gamma,\omega}) < \rho({\mathscr L}_{\gamma,\omega}) < 1$.

\item[(ii)] $\rho({\mathscr L}^{(\nu)}_{\gamma,\omega}) = \rho({\mathscr L}_{\gamma,\omega}) = 1$.

\item[(iii)] $\rho({\mathscr L}^{(\nu)}_{\gamma,\omega}) > \rho({\mathscr L}_{\gamma,\omega}) > 1$.
 \end{itemize}
}

\noindent
{\bf Theorem D}

{\it Let $A$ be an irreducible nonsingular M-matrix. Then
\begin{eqnarray*}
\rho({\mathscr L}^{(\nu)}_{\gamma,\omega}) < \rho({\mathscr L}_{\gamma,\omega}) < 1.
\end{eqnarray*}
}

\begin{lemma}\label{lem3-1}
Suppose that ${\mathscr L}_{\gamma,\omega} \ge 0$, ${\mathscr L}^{(\nu)}_{\gamma,\omega} \ge 0$.
Assume that one of Theorems A, B, C and D is valid for $0 \le \gamma \le \omega \le 1$,
$\omega > 0$. Then it is valid for $0 < \omega \le 1$ and
$0 \le \gamma \le 1$.
\end{lemma}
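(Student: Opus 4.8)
The plan is to prove the conclusion for a given admissible pair $(\gamma,\omega)$ with $0<\omega\le 1$ and $0\le\gamma\le 1$ by reducing it to a pair already covered by the hypothesis. If $\gamma\le\omega$ then $(\gamma,\omega)$ already lies in the range $0\le\gamma\le\omega\le 1$ and there is nothing to do, so the only content is the region $\omega<\gamma\le 1$; there necessarily $\gamma>0$ and the extrapolation factor satisfies $\omega/\gamma\in(0,1)$. For such a pair I would compare the AOR matrix $\mathscr{L}_{\gamma,\omega}$ with the SOR matrix $\mathscr{L}_{\gamma}=\mathscr{L}_{\gamma,\gamma}$. The point is that the pair $(\gamma,\gamma)$ satisfies $0\le\gamma\le\gamma\le 1$, hence lies in the reduced range, so the theorem under consideration (A, B, C or D) is assumed valid for it, giving the desired one of the relations (i), (ii), (iii) between $\rho(\mathscr{L}^{(\nu)}_{\gamma})$ and $\rho(\mathscr{L}_{\gamma})$.

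The engine of the transfer is the extrapolation identity (\ref{eqn1.6}), namely $\mathscr{L}_{\gamma,\omega}=(1-\omega/\gamma)I+(\omega/\gamma)\mathscr{L}_{\gamma}$. Setting $f(t)=1-\omega/\gamma+(\omega/\gamma)t$, I would first establish the two spectral-radius identities $\rho(\mathscr{L}_{\gamma,\omega})=f(\rho(\mathscr{L}_{\gamma}))$ and $\rho(\mathscr{L}^{(\nu)}_{\gamma,\omega})=f(\rho(\mathscr{L}^{(\nu)}_{\gamma}))$. For the first, both $\mathscr{L}_{\gamma,\omega}$ and $\mathscr{L}_{\gamma}$ are nonnegative (the former by hypothesis, the latter being the instance $\mathscr{L}_{\gamma,\gamma}$; its nonnegativity is discussed in the last paragraph). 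By Lemma \ref{lem1-1}(i), $\rho(\mathscr{L}_{\gamma})$ is attained on an eigenvector $w>0$; applying (\ref{eqn1.6}) gives $\mathscr{L}_{\gamma,\omega}w=f(\rho(\mathscr{L}_{\gamma}))w$, so $\rho(\mathscr{L}_{\gamma,\omega})\ge f(\rho(\mathscr{L}_{\gamma}))$ by Lemma \ref{lem1-2}(i). Conversely $\rho(\mathscr{L}_{\gamma,\omega})$ is attained on some $x>0$, and (\ref{eqn1.6}) shows that $x$ is then an eigenvector of $\mathscr{L}_{\gamma}$ for the value $f^{-1}(\rho(\mathscr{L}_{\gamma,\omega}))$, whence $\rho(\mathscr{L}_{\gamma})\ge f^{-1}(\rho(\mathscr{L}_{\gamma,\omega}))$ by Lemma \ref{lem1-2}(i); since $f$ is increasing this rearranges to $f(\rho(\mathscr{L}_{\gamma}))\ge\rho(\mathscr{L}_{\gamma,\omega})$. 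Equality follows, and the preconditioned identity is obtained verbatim.

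With both identities in hand the conclusion is immediate, because $f$ is an increasing affine map fixing the point $1$: from $f(t)-1=(\omega/\gamma)(t-1)$ with $\omega/\gamma>0$ one sees that $f$ preserves each of the relations $<,=,>$ between two real arguments and simultaneously preserves each of the relations $<1,=1,>1$. Hence whichever of the mutually exclusive cases of Theorem A or C (respectively the single strict inequality of Theorem B or D) holds for the SOR pair $(\gamma,\gamma)$, applying $f$ to both $\rho(\mathscr{L}^{(\nu)}_{\gamma})$ and $\rho(\mathscr{L}_{\gamma})$ reproduces exactly the same case for $\rho(\mathscr{L}^{(\nu)}_{\gamma,\omega})$ and $\rho(\mathscr{L}_{\gamma,\omega})$. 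This finishes the reduction.

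I expect the only real obstacle to be the spectral-radius identity $\rho(\mathscr{L}_{\gamma,\omega})=f(\rho(\mathscr{L}_{\gamma}))$. The eigenvalue correspondence (\ref{eqn1.7}) by itself only matches individual eigenvalues and does not a priori guarantee that the two Perron roots correspond to each other; the delicate part is to rule out that the maximal modulus of $\mathscr{L}_{\gamma,\omega}$ arises as the $f$-image of some non-Perron (possibly complex) eigenvalue of $\mathscr{L}_{\gamma}$. This is precisely where nonnegativity of both matrices is used, via the two-sided application of Lemma \ref{lem1-2}(i) together with the monotonicity of $f$ sketched above. A minor subsidiary point is to confirm that the nonnegativity of $\mathscr{L}_{\gamma}$ and $\mathscr{L}^{(\nu)}_{\gamma}$ is indeed available; this follows from the L-matrix structure of $A$ and $A^{(\nu)}$ (for an L-matrix and $0\le\gamma\le 1$ one has $\mathscr{L}_{\gamma}=(D-\gamma L)^{-1}[(1-\gamma)D+\gamma U]\ge 0$), or equivalently from the nonnegativity hypothesis read at the pair $(\gamma,\gamma)$.
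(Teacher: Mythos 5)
Your proof is correct and follows essentially the same route as the paper: reduce the case $0<\omega<\gamma\le 1$ to the SOR pair $(\gamma,\gamma)$ via the extrapolation identity (\ref{eqn1.6}), establish $\rho({\mathscr L}_{\gamma,\omega})=1-\omega/\gamma+(\omega/\gamma)\rho({\mathscr L}_{\gamma})$ together with its preconditioned analogue, and transfer the trichotomy through the increasing affine map fixing $1$. The only difference is that you supply the two-sided Perron--Frobenius justification (Lemmas \ref{lem1-1} and \ref{lem1-2}) of the spectral-radius identity, which the paper simply asserts is ``easy to prove'' from (\ref{eqn1.7}).
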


\begin{proof}
Assume that Theorem C is valid for $0 \le \gamma \le \omega \le 1$,
$\omega > 0$.

We just need to prove that it is also valid for $0 < \omega < \gamma \le 1$. From (\ref{eqn1.7}), it is easy to prpve that
\begin{eqnarray*}
 \rho({\mathscr L}_{\gamma,\omega}) = 1 - \frac{\omega}{\gamma} + \frac{\omega}{\gamma}\rho({\mathscr L}_{\gamma}),
\; \;
\rho({\mathscr L}^{(\nu)}_{\gamma,\omega}) = 1 - \frac{\omega}{\gamma} + \frac{\omega}{\gamma}\rho({\mathscr L}^{(\nu)}_{\gamma}).
 \end{eqnarray*}
Clearly,
 \begin{eqnarray*}
  \rho({\mathscr L}_{\gamma,\omega}) \left\{
  \begin{array}{l}
 < 1\\
 = 1\\
 > 1
  \end{array}\right.
  \; \Longleftrightarrow \;
   \rho({\mathscr L}_{\gamma}) \left\{\begin{array}{l}
 < 1\\
 = 1\\
 > 1
 \end{array}\right.
  \end{eqnarray*}
  and
  \begin{eqnarray*}
  \rho({\mathscr L}^{(\nu)}_{\gamma,\omega}) \left\{\begin{array}{l}
 < 1\\
 = 1\\
 > 1
  \end{array}\right. \; \Longleftrightarrow \;
   \rho({\mathscr L}^{(\nu)}_{\gamma}) \left\{\begin{array}{l}
 < 1\\
 = 1\\
 > 1.
 \end{array}\right.
   \end{eqnarray*}
Since Theorem C is valid for $\gamma \le \omega$, then
we have that
 \begin{eqnarray*}
   \rho({\mathscr L}^{(\nu)}_{\gamma}) \left\{\begin{array}{lcl}
 < \rho({\mathscr L}_{\gamma}) &  \hbox{if}  & \rho({\mathscr L}_{\gamma}) < 1,\\
 = \rho({\mathscr L}_{\gamma}) & \hbox{if} & \rho({\mathscr L}_{\gamma}) = 1, \\
 >\rho({\mathscr L}_{\gamma}) & \hbox{if} & \rho({\mathscr L}_{\gamma}) > 1,
 \end{array}\right.
  \end{eqnarray*}
so that
 \begin{eqnarray*}
  \rho({\mathscr L}^{(\nu)}_{\gamma,\omega}) \left\{\begin{array}{lcl}
 < \rho({\mathscr L}_{\gamma,\omega}) &  \hbox{if}  & \rho({\mathscr L}_{\gamma,\omega}) < 1,\\
 = \rho({\mathscr L}_{\gamma,\omega}) & \hbox{if} & \rho({\mathscr L}_{\gamma,\omega}) = 1, \\
> \rho({\mathscr L}_{\gamma,\omega}) & \hbox{if} & \rho({\mathscr L}_{\gamma,\omega}) > 1.
 \end{array}\right.
  \end{eqnarray*}

When one of Theorems A, B and D is valid for $0 \le \gamma \le \omega \le 1$,
$\omega > 0$, the proof is completely same.

The proof is completed.  \end{proof}

In the following, we will construct some $Q_\nu$ ($P_\nu$) to make the above four theorems hold.
For simplicity, when we provide the conditions for the establishment of Theorems A, B, C and D
we always assume that $A$ is an L-matrix, a nonsingular M-matrix, an irreducible L-matrix and
an irreducible nonsingular M-matrix, respectively. We will not elaborate on this point one by one below.

\subsection{General preconditioners}

In \cite{WS09} we have proposed some general preconditioners. A class of general constructions of $Q$ is given by
\begin{eqnarray*}
Q_1 = (q^{(1)}_{i,j})
 \end{eqnarray*}
with
\begin{eqnarray*}
q^{(1)}_{i,j}
 \left\{\begin{array}{ll}
 = 0, & i = j = 1, \cdots, n, \\
 \ge 0, & i,j = 1, \cdots, n, \; i\not = j,
\end{array}\right.
 \;\; \hbox{and} \;\;
  \sum\limits_{{i,j = 1}\atop{i\not = j}}^n q^{(1)}_{i,j} \not = 0.
\end{eqnarray*}
Some comparison theorems have been proved.

By direct operation we have
\begin{eqnarray} \label{eqs3.2}
\left. \begin{array}{l}
a^{(1)}_{i,j} = a_{i,j} + q^{(1)}_{i,j} + \sum\limits_{{k = 1}\atop{k\not = i,j}}^n
q^{(1)}_{i,k}a_{k,j}, \; i,j = 1,\cdots, n, i\not = j, \\
a^{(1)}_{i,i} = 1 + \sum\limits_{{k = 1}\atop{k\not = i}}^n
q^{(1)}_{i,k}a_{k,i}, \; i = 1,\cdots, n.
\end{array}
\right.
 \end{eqnarray}

We define several decompositions as
\begin{eqnarray*}
Q_1 = Q^{(l)} + Q^{(u)}, \;
Q^{(l)}U = E_1+F_1+G_1, \;
Q^{(u)}L = E_2+F_2+G_2,
\end{eqnarray*}
where $E_1$ and $E_2$ are diagonal matrices, $Q^{(l)}$, $F_1$ and $F_2$
are strictly lower triangular matrices, while $Q^{(u)}$, $G_1$ and $G_2$
are strictly upper triangular matrices. Then the
three matrices in (\ref{eqs3.1}) are given by
\begin{eqnarray*}
D_1 & = & I - E_1 - E_2,\\
L_1 & = & L + F_1 + F_2 + Q^{(l)}L - Q^{(l)},  \\
U_1 & = & U + G_1 + G_2 + Q^{(u)}U - Q^{(u)}.
\end{eqnarray*}

Similar to the proof of \cite[Theorem 2.6]{WS09}, we prove
a lemma.

\begin{lemma}\label{lem3-2}
\begin{itemize}
\item[(i)]
\begin{eqnarray*}
&& P_1N_{\gamma,\omega} - N^{(1)}_{\gamma,\omega}\\
 && = P_1M_{\gamma,\omega} - M^{(1)}_{\gamma,\omega}\\
 && = \frac{1}{\omega}[E_1 + E_2 + \gamma(F_1 + F_2) + (1-\gamma)Q^{(l)} + \omega Q^{(u)}M_{\gamma,\omega}].
\end{eqnarray*}

\item[(ii)] Assume that $\lambda$ is an eigenvalue of ${\mathscr L}_{\gamma,\omega}$ and $x \not = 0$
is its associated eigenvector. Then
\begin{eqnarray*}
\left. \begin{array}{l}
 {\mathscr L}^{(1)}_{\gamma,\omega}x - \lambda x \\
 = (\lambda-1)[M^{(1)}_{\gamma,\omega}]^{-1}[E_1+E_2
 +\gamma(F_1+F_2) +(1-\gamma)Q^{(l)} + \omega Q^{(u)}M_{\gamma,\omega}]x.
\end{array}\right.
\end{eqnarray*}
\end{itemize}
\end{lemma}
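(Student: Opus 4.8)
The plan is to treat part (i) as a direct algebraic identity and then feed it into part (ii) through the eigenvalue relation. For part (i), I would first observe that the two displayed differences coincide for an essentially trivial reason: applying $P_1$ to the AOR splitting $A = M_{\gamma,\omega} - N_{\gamma,\omega}$ gives $P_1 M_{\gamma,\omega} - P_1 N_{\gamma,\omega} = P_1 A = A^{(1)} = M^{(1)}_{\gamma,\omega} - N^{(1)}_{\gamma,\omega}$, and rearranging yields $P_1 M_{\gamma,\omega} - M^{(1)}_{\gamma,\omega} = P_1 N_{\gamma,\omega} - N^{(1)}_{\gamma,\omega}$. Hence it suffices to evaluate either one, and I would evaluate the $M$-difference.

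I would then compute $P_1 M_{\gamma,\omega} - M^{(1)}_{\gamma,\omega}$ explicitly. Using $D = I$, I have $M_{\gamma,\omega} = \frac{1}{\omega}(I - \gamma L)$, so $P_1 M_{\gamma,\omega} = \frac{1}{\omega}(I + Q^{(l)} + Q^{(u)})(I - \gamma L)$, whereas $M^{(1)}_{\gamma,\omega} = \frac{1}{\omega}(D_1 - \gamma L_1)$ with the supplied formulas $D_1 = I - E_1 - E_2$ and $L_1 = L + F_1 + F_2 + Q^{(l)}L - Q^{(l)}$. Subtracting, the terms $I - \gamma L$ cancel and the two copies of $\gamma Q^{(l)}L$ cancel; substituting the decomposition $Q^{(u)}L = E_2 + F_2 + G_2$ to rewrite $-\gamma Q^{(u)}L$ and collecting the survivors produces exactly $\frac{1}{\omega}[E_1 + E_2 + \gamma(F_1 + F_2) + (1-\gamma)Q^{(l)} + \omega Q^{(u)} M_{\gamma,\omega}]$, once one recognizes $\omega Q^{(u)}M_{\gamma,\omega} = Q^{(u)} - \gamma Q^{(u)}L = Q^{(u)} - \gamma(E_2 + F_2 + G_2)$.

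For part (ii), let $S$ denote the common difference established in part (i), so that $N^{(1)}_{\gamma,\omega} = P_1 N_{\gamma,\omega} - S$ and $M^{(1)}_{\gamma,\omega} = P_1 M_{\gamma,\omega} - S$. I would write ${\mathscr L}^{(1)}_{\gamma,\omega} x - \lambda x = [M^{(1)}_{\gamma,\omega}]^{-1}(N^{(1)}_{\gamma,\omega} - \lambda M^{(1)}_{\gamma,\omega})x$ and substitute these two expressions. The resulting term $P_1(N_{\gamma,\omega} - \lambda M_{\gamma,\omega})x$ vanishes because $\lambda$ is an eigenvalue of ${\mathscr L}_{\gamma,\omega} = M_{\gamma,\omega}^{-1}N_{\gamma,\omega}$ with eigenvector $x$, i.e. $N_{\gamma,\omega}x = \lambda M_{\gamma,\omega}x$; what remains collapses to $(\lambda - 1)Sx$. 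Applying $[M^{(1)}_{\gamma,\omega}]^{-1}$ and inserting the explicit form of $S$ from part (i) gives the stated formula.

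The only genuine labor is the bookkeeping in part (i): correctly tracking which products land on the diagonal, strictly lower, or strictly upper parts, and checking that every cross-term either cancels or is absorbed by the decompositions $Q^{(l)}U = E_1+F_1+G_1$ and $Q^{(u)}L = E_2+F_2+G_2$. Part (ii) is then automatic. I would carry the scalar $\frac{1}{\omega}$ coming from $S$ through to the end of (ii); since $\frac{1}{\omega} > 0$, it does not affect the sign-based conclusions for which this lemma is subsequently used, so its presence or absence is immaterial downstream.
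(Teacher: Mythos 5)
Your proof is correct and follows essentially the same route as the paper: part (i) by observing that the two differences coincide because $P_1M_{\gamma,\omega}-P_1N_{\gamma,\omega}$ and $M^{(1)}_{\gamma,\omega}-N^{(1)}_{\gamma,\omega}$ are both splittings of $P_1A$, then directly expanding the $M$-difference, and part (ii) by the standard eigenvector substitution (the paper obtains this step by citing its earlier computation (2.3)). Your closing remark about the factor $\frac{1}{\omega}$ is apt: the paper's stated formula in (ii) indeed omits it relative to (i), a harmless inconsistency since only the sign of the right-hand side matters in all subsequent uses of the lemma.
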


\begin{proof}
Since
\begin{eqnarray*}
P_1A = M^{(1)}_{\gamma,\omega} - N^{(1)}_{\gamma,\omega} = P_1M_{\gamma,\omega} - P_1N_{\gamma,\omega},
  \end{eqnarray*}
then by direct operation we have
\begin{eqnarray*}
 P_1N_{\gamma,\omega} - N^{(1)}_{\gamma,\omega}
 & = & P_1M_{\gamma,\omega} - M^{(1)}_{\gamma,\omega} \\
& = & \frac{1}{\omega}[(I+Q^{(l)}+Q^{(u)})(I - \gamma L) - (D_1 - \gamma L_1)] \\
& = & \frac{1}{\omega}[E_1 + E_2 + \gamma(F_1 + F_2) + (1-\gamma)Q^{(l)} + Q^{(u)}(I - \gamma L)],
\end{eqnarray*}
which shows ($i$).

Similar to (\ref{eqs2.3}), we can get
\begin{eqnarray*}
{\mathscr L}^{(1)}_{\gamma,\omega}x - \lambda x
 = (\lambda - 1)[M^{(1)}_{\gamma,\omega}]^{-1}(P_1M_{\gamma,\omega} - M^{(1)}_{\gamma,\omega})x.
\end{eqnarray*}
By ($i$), we derive ($ii$).  \end{proof}

Let
\begin{eqnarray*}
\Delta^{(1)}(\gamma) & = & (E_1 + E_2) + \gamma F_1 + \gamma F_2 + \gamma Q^{(u)}U + (1-\gamma)Q \\
& = & \Delta_{11} + \gamma\Delta_{12} + \gamma\Delta_{13} + \gamma\Delta_{14} + (1-\gamma)Q, \nonumber
\end{eqnarray*}
where $\Delta_{11} = E_1 + E_2$, $\Delta_{12} = F_1$, $\Delta_{13} = F_2$, $\Delta_{14} = Q^{(u)}U$.
Denote
\begin{eqnarray*}
\Delta^{(1)}(\gamma) = (\delta_{i,j}^{(1)}(\gamma)), \; \Delta_k = (\delta^{(1k)}_{i,j}), \; k = 1,2,3,4.
\end{eqnarray*}
By direct operation we can obtain that
\begin{eqnarray*}
\delta^{(11)}_{i,j} & = & \left\{\begin{array}{ll}
 -\sum\limits_{{k = 1}\atop{k\not = i}}^n q^{(1)}_{i,k}a_{k,i}, & i = j = 1, \cdots, n,\\
 0, & otherwise,
\end{array}\right. \\
\delta^{(12)}_{i,j} & = & \left\{\begin{array}{ll}
 -\sum\limits_{k = 1}^{j-1} q^{(1)}_{i,k}a_{k,j}, & i = 3,\cdots,n, j = 2,\cdots, i-1,\\
0, & otherwise,
\end{array}\right. \\
\delta^{(13)}_{i,j} & = & \left\{\begin{array}{ll}
-\sum\limits_{k = i+1}^n q^{(1)}_{i,k}a_{k,j}, & i = 2,\cdots,n-1, j = 1,\cdots,i-1,\\
0, & otherwise,
\end{array}\right. \\
\delta^{(14)}_{i,j} & = & \left\{\begin{array}{ll}
 -\sum\limits_{k = i+1}^{j-1} q^{(1)}_{i,k}a_{k,j}, & i = 1,\cdots,n-2, j = i+2,\cdots,n,\\
0, & otherwise,
\end{array}\right.
\end{eqnarray*}
so that
\begin{eqnarray*}
\delta_{i,j}^{(1)}(\gamma) & = & \delta^{(11)}_{i,j} + \gamma\delta^{(12)}_{i,j}
 + \gamma\delta^{(13)}_{i,j} + \gamma\delta^{(14)}_{i,j} + (1-\gamma)q^{(1)}_{i,j}\\
 & = & \left\{\begin{array}{ll}
 -\sum\limits_{{k = 1}\atop{k\not = i}}^n q^{(1)}_{i,k}a_{k,i}, & i = j = 1, \cdots, n;\\
  (1-\gamma)q^{(1)}_{i,j} - \gamma\sum\limits_{k = i+1}^{j-1} q^{(1)}_{i,k}a_{k,j}, &
      \begin{array}{l} i = 1,\cdots,n-1, \\
      j = i+1,\cdots,n;
      \end{array}\\
  (1-\gamma)q^{(1)}_{i,j} - \gamma\sum\limits_{{1 \le k \le j-1}\atop{i+1 \le k \le n}}q^{(1)}_{i,k}a_{k,j}, &
   \begin{array}{l} i = 2,\cdots,n, \\
   j = 1,\cdots, i-1
   \end{array}
\end{array}\right.
\end{eqnarray*}
and
\begin{eqnarray} \label{eqs3.3}
\delta_{i,j}^{(1)}(1) = \left\{\begin{array}{ll}
-\sum\limits_{{1 \le k \le j-1}\atop{i+1 \le k \le n}}q^{(1)}_{i,k}a_{k,j}, &
  \begin{array}{l} i = 1,\cdots,n, j = 1,\cdots, i, \\
  (i,j)\not = (n,1); \end{array}\\
 -\sum\limits_{k = i+1}^{j-1} q^{(1)}_{i,k}a_{k,j}, &
  i = 1,\cdots,n-2, j = i+2,\cdots,n;\\
  0, & i = 1,\cdots,n-1, j = i+1;\\
  0, & i = n, j = 1,
\end{array}\right.
\end{eqnarray}
where the sum is taken to be zero when the upper limit is less than the lower limit.

\begin{lemma}\label{lem3-3}
Let $A$ be an L-matrix.

\begin{itemize}
\item[(i)] Let $0 \le \gamma < 1$.

\begin{itemize}
\item[($i_1$)] If $q^{(1)}_{i,j} > 0$ for some $i \not = j$, then $\delta_{i,j}^{(1)}(\gamma) > 0$.

\item[($i_2$)] There exist $i,j\in\{1, \cdots, n\}$, $i\not = j$, such that $\delta_{i,j}^{(1)}(\gamma) > 0$.
\end{itemize}

\item[(ii)] Let $\gamma = 1$.

\begin{itemize}
\item[($ii_1$)]
Suppose that there exist $i,j\in\{1, \cdots, n\}$ such that
$q^{(1)}_{i,j}a_{j,i} < 0$. Then $\delta_{i,i}^{(1)}(1) > 0$.

\item[($ii_2$)]
Suppose that there exist $i\in\{1, \cdots, n-1\}$ and $j\in\{1, \cdots, i\}$ such that
$q^{(1)}_{i,n}a_{n,j} < 0$. Then $\delta_{i,j}^{(1)}(1) > 0$.

\item[($ii_3$)]
Suppose that there exist $i\in\{1, \cdots, n-1\}$ and $j\in\{i+1, \cdots, n\}$ such that
$q^{(1)}_{i,j}a_{j,1} < 0$. Then $\delta_{i,1}^{(1)}(1) > 0$.

\item[($ii_4$)]
Suppose that there exist $i\in\{1, \cdots, n\}$ and $j\in\{1, \cdots, n-1\}$ such that
$q^{(1)}_{i,j}a_{j,j+1} < 0$. Then $\delta_{i,j+1}^{(1)}(1) > 0$.

\item[($ii_5$)]
Suppose that there exist $i\in\{2, \cdots, n\}$ and $j\in\{2, \cdots, i\}$ such that
$q^{(1)}_{i,1}a_{1,j} < 0$. Then $\delta_{i,j}^{(1)}(1) > 0$.

In addition, suppose that $A$ is irreducible.

\item[($ii_6$)] If $q^{(1)}_{i,i+1} > 0$ for some $i\in\{1, \cdots, n-1\}$, then there exists
$j\in\{1, \cdots, n\}\setminus\{i+1\}$ such that $\delta_{i,j}^{(1)}(1) > 0$.

\item[($ii_7$)] If $q^{(1)}_{n,1} > 0$, then there exists
$j\in\{2, \cdots, n\}$ such that $\delta_{n,j}^{(1)}(1) > 0$.

\item[($ii_8$)]
If $a_{n,1} < 0$ and $a_{k,k+1} < 0$, $k = 1, \cdots, n-1$,
then there exist $i,j\in\{1, \cdots, n\}$ such that $\delta_{i,j}^{(1)}(1) > 0$.
\end{itemize}\end{itemize}
\end{lemma}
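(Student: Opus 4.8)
The plan is to verify each of the eight assertions $(ii_1)$--$(ii_8)$ directly from the closed-form expression for $\delta_{i,j}^{(1)}(1)$ in (\ref{eqs3.3}), which already records $\Delta^{(1)}(1)$ entry by entry. The guiding principle throughout is that since $A$ is an L-matrix with unit diagonal, we have $a_{k,j}\le 0$ for $k\ne j$, $a_{j,j}=1$, and $q^{(1)}_{i,k}\ge 0$ for $i\ne k$ with $q^{(1)}_{i,i}=0$; hence every product $q^{(1)}_{i,k}a_{k,j}$ appearing in a sum is $\le 0$, so each $-\sum q^{(1)}_{i,k}a_{k,j}$ is a sum of nonnegative terms and is therefore automatically $\ge 0$. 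To upgrade $\ge 0$ to the strict $>0$ claimed, it suffices in each case to exhibit one index $k$ in the relevant summation range for which $q^{(1)}_{i,k}a_{k,j}<0$, i.e.\ $q^{(1)}_{i,k}>0$ and $a_{k,j}<0$ simultaneously.

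First I would handle $(ii_1)$--$(ii_5)$, the five parts that do not assume irreducibility, since these are pure bookkeeping: for each one the hypothesis supplies exactly such a pair. For $(ii_1)$ the hypothesis $q^{(1)}_{i,j}a_{j,i}<0$ means $k=j$ is a strictly negative term in the diagonal sum $-\sum_{k\ne i}q^{(1)}_{i,k}a_{k,i}$ defining $\delta^{(1)}_{i,i}(1)$, giving strict positivity. The remaining four are the same idea applied to the off-diagonal formulas in (\ref{eqs3.3}); the only care needed is to confirm that the index witnessing negativity genuinely lies in the summation range attached to the particular entry $(i,j)$ or $(i,j{+}1)$ or $(i,1)$ in question. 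I would match the hypotheses $q^{(1)}_{i,n}a_{n,j}<0$, $q^{(1)}_{i,j}a_{j,1}<0$, $q^{(1)}_{i,j}a_{j,j+1}<0$, $q^{(1)}_{i,1}a_{1,j}<0$ to the correct branch of (\ref{eqs3.3}) and to the correct summation index $k=n$, $k=j$, $k=j$, $k=1$ respectively, checking the inequalities delimiting the range in each branch.

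For $(ii_6)$--$(ii_8)$, where $A$ is additionally irreducible, the difficulty is that the given positive $q^{(1)}_{i,k}$ need not be paired with a negative entry of $A$ \emph{in the same row $j$}; the whole point of invoking irreducibility is to locate a column $j$ for which $a_{k,j}<0$. Here I would use the characterization that $A$ irreducible means $G(A)$ is strongly connected, so the vertex $V_{i+1}$ (resp.\ $V_1$, or the relevant vertex) has at least one outgoing edge, i.e.\ there is some index $j$ with $a_{i+1,j}<0$ and $j\ne i+1$. Combining the supplied $q^{(1)}_{i,i+1}>0$ (resp.\ $q^{(1)}_{n,1}>0$) with this $a_{i+1,j}<0$ produces the negative term needed in the sum defining $\delta^{(1)}_{i,j}(1)$, after checking that $j$ avoids the exceptional positions (the superdiagonal entry $j=i+1$ and the corner $(n,1)$) where (\ref{eqs3.3}) forces the value to be $0$.

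The main obstacle I anticipate is $(ii_8)$, whose hypothesis is global (every superdiagonal entry $a_{k,k+1}<0$ and the corner $a_{n,1}<0$) rather than pointing at a specific $(i,j)$, so I must \emph{produce} a suitable pair. The natural strategy is to use that $Q_1$ is nonzero off the diagonal: there is some $q^{(1)}_{i,k}>0$ with $i\ne k$, and I then need to feed it into a summation range that contains an index hitting one of the guaranteed negative entries $a_{k,k+1}$ or $a_{n,1}$. The delicate point is that the summation ranges in (\ref{eqs3.3}) differ between the upper-triangular and lower-triangular branches, and I will have to argue by position of $(i,k)$ which entry $\delta^{(1)}_{i,j}(1)$ to examine so that the chosen $k$ legitimately appears; the existence of a single off-diagonal positive $q^{(1)}_{i,k}$ together with the full superdiagonal/corner negativity of $A$ should always permit at least one valid choice, but spelling out that it avoids the forced-zero exceptional positions is the part requiring genuine case analysis rather than routine computation.
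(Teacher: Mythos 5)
Your plan for part (ii) is correct and is essentially the paper's own proof: both rest on the sign bookkeeping in (\ref{eqs3.3}) --- for an L-matrix every term of every entry of $\Delta^{(1)}(1)$ is nonnegative, so strict positivity follows once one index $k$ with $q^{(1)}_{i,k}>0$ and $a_{k,j}<0$ is located inside the correct summation range --- and your witnesses for ($ii_1$)--($ii_5$), namely $k=j,\,n,\,j,\,j,\,1$, are exactly the ones the paper uses. The only real divergence is in the irreducible cases: where you extract a pointwise witness, the paper instead sums, e.g.\ for ($ii_6$) it shows $\sum_{j\not=i+1}\delta^{(1)}_{i,j}(1)\ge -q^{(1)}_{i,i+1}\sum_{j\not=i+1}a_{i+1,j}>0$, and for ($ii_8$) it bounds $\sum_{i,j}\delta^{(1)}_{i,j}(1)\ge\eta\sum_{i\not=j}q^{(1)}_{i,j}>0$ with $\eta=\min\{-a_{n,1};\,-a_{k,k+1}\}>0$, so that no individual positive entry ever needs to be exhibited; your pointwise pairing (pair $q^{(1)}_{i,k}>0$ with $a_{k,k+1}$ into $\delta^{(1)}_{i,k+1}(1)$ when $k<n$, and with $a_{n,1}$ into $\delta^{(1)}_{i,1}(1)$ when $k=n$, noting $k+1\not=i+1$ because $k\not=i$ and $(i,1)\not=(n,1)$ because $i<n$) does check out against the ranges in (\ref{eqs3.3}), so both routes succeed and the case analysis you worried about is exactly the two cases just listed. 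One genuine omission: you prove only the eight assertions of part (ii) and never address part (i), which is not covered by your stated principle since that principle concerns only products $q^{(1)}_{i,k}a_{k,j}$. The fix is one line: for $\gamma<1$ the term $(1-\gamma)Q$ in $\Delta^{(1)}(\gamma)$ gives $\delta^{(1)}_{i,j}(\gamma)\ge(1-\gamma)q^{(1)}_{i,j}>0$, which is ($i_1$), and ($i_2$) follows because $Q_1$ has a positive off-diagonal entry by definition.
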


\begin{proof}
Since $A$ is an L-matrix, then $\delta^{(1k)}_{i,j} \ge 0$, $k = 1,2,3,4$, and
$\delta_{i,j}^{(1)}(\gamma) \ge 0$.

Assume that $\gamma < 1$. If $q^{(1)}_{i,j} > 0$, $i\not = j$,
then $\delta_{i,j}^{(1)}(\gamma) \ge (1-\gamma)q^{(1)}_{i,j} > 0$,
i.e., ($i_1$) holds.

By the definition of $Q_1$, there exist $i,j\in\{1, \cdots, n\}$ and $i\not = j$,
such that $q^{(1)}_{i,j} > 0$, it follows by ($i_1$) that ($i_2$) holds.

Now we prove ($ii$). From
\begin{eqnarray*}
\delta_{i,i}^{(1)}(1) = -\sum\limits_{{k = 1}\atop{k\not = i}}^nq^{(1)}_{i,k}a_{k,i}, \;
i = 1, \cdots, n,
\end{eqnarray*}
($ii_1$) is obvious.

By (\ref{eqs3.3}), when $i = 1,\cdots,n-1$, then we have
$\delta_{i,j}^{(1)}(1) \ge -q^{(1)}_{i,n}a_{n,j}$ for $j = 1,\cdots, i$,
which implies ($ii_2$), while
$\delta_{i,1}^{(1)}(1) \ge -q^{(1)}_{i,j}a_{j,1}$, for $j = i+1,\cdots, n$,
which implies ($ii_3$).

Similarly, when $i = 2,\cdots, n$, $j = 1,\cdots, i-1$
and $i = 1,\cdots, n-2$, $j = i+1,\cdots, n-1$, we get
$\delta_{i,j+1}^{(1)}(1) \ge -q^{(1)}_{i,j}a_{j,j+1}$,
which implies ($ii_4$).

While, when $i = 2,\cdots,n$,
$\delta_{i,j}^{(1)}(1) \ge -q^{(1)}_{i,1}a_{1,j}$, for $j = 2,\cdots, i$,
which implies ($ii_5$).

Assume that $A$ is irreducible.

By the irreducibility of $A$, we have $\sum_{j = 1,j\not = i+1}^na_{i+1,j} < 0$.
If $q^{(1)}_{i,i+1} > 0$ then
\begin{eqnarray*}
\sum\limits_{{j = 1}\atop{j\not = i+1}}^n\delta_{i,j}^{(1)}(1)
& = & -\sum\limits_{j = 1}^{i}\sum\limits_{{1 \le k \le j-1}\atop{i+1 \le k \le n}} q^{(1)}_{i,k}a_{k,j}
- \sum\limits_{j = i+2}^{n}\sum\limits_{k = i+1}^{j-1} q^{(1)}_{i,k}a_{k,j} \\
& \ge & -\sum\limits_{j = 1}^{i}q^{(1)}_{i,i+1}a_{i+1,j} - \sum\limits_{j = i+2}^{n}q^{(1)}_{i,i+1}a_{i+1,j}\\
& = & -q^{(1)}_{i,i+1}\sum\limits_{{j = 1}\atop{j\not = i+1}}^{n}a_{i+1,j}\\
& > & 0,
\end{eqnarray*}
which implies that there exists $j\in\{1, \cdots, n\}\setminus\{i+1\}$ such that
$\delta_{i,j}^{(1)}(1) > 0$. This proves ($ii_6$).

Similarly, we have
$\sum_{j = 2}^na_{1,j} < 0$. If $q^{(1)}_{n,1} > 0$ then
\begin{eqnarray*}
\sum\limits_{j = 2}^n\delta_{n,j}^{(1)}(1)
 = -\sum\limits_{j = 2}^n\sum\limits_{k = 1}^{j-1} q^{(1)}_{n,k}a_{k,j}
 \ge -\sum\limits_{j = 2}^n q^{(1)}_{n,1}a_{1,j} = -q^{(1)}_{n,1}\sum\limits_{j = 2}^na_{1,j} > 0,
\end{eqnarray*}
which implies that there exists $j\in\{2, \cdots, n\}$ such that
$\delta_{n,j}^{(1)}(1) > 0$. This proves ($ii_7$).

At last, assume that $a_{n,1} > 0$ and $a_{k,k+1} > 0$, $k = 1, \cdots, n-1$.

For $i = 1, \cdots, n-1$, we have
\begin{eqnarray*}
\delta_{i,1}^{(1)}(1) = -\sum\limits_{k = i+1}^nq^{(1)}_{i,k}a_{k,1}
 \ge -q^{(1)}_{i,n}a_{n,1}.
\end{eqnarray*}

Similarly, we have that for $i = 1, \cdots, n-1$, $j = i+2, \cdots, n$,
\begin{eqnarray*}
\delta_{i,j}^{(1)}(1) = -\sum\limits_{k = i+1}^{j-1} q^{(1)}_{i,k}a_{k,j} \ge -q^{(1)}_{i,j-1}a_{j-1,j},
\end{eqnarray*}
and for $i = 2, \cdots, n$, $j = 2, \cdots, i$,
\begin{eqnarray*}
\delta_{i,j}^{(1)}(1) = -\sum\limits_{{1 \le k \le j-1}\atop{i+1 \le k \le n}}q^{(1)}_{i,k}a_{k,j}
 \ge -q^{(1)}_{i,j-1}a_{j-1,j}.
\end{eqnarray*}
Hence it gets that
\begin{eqnarray*}
\delta_{i,j}^{(1)}(1) \ge -q^{(1)}_{i,j-1}a_{j-1,j}, \; i = 1, \cdots, n, j = 2, \cdots, n, j\not = i+1.
\end{eqnarray*}

Denote $\eta = \min\{-a_{n,1}; \; -a_{k,k+1}: k = 1, \cdots, n-1\}$. Then $\eta > 0$.
Now, we obtain
\begin{eqnarray*}
\sum\limits_{i,j = 1}^n\delta_{i,j}^{(1)}(1)
& = & \sum\limits_{i = 1}^{n}\sum\limits_{j = 2}^n\delta_{i,j}^{(1)}(1)
   + \sum\limits_{i = 1}^{n}\delta_{i,1}^{(1)}(1) \\
& \ge & \sum\limits_{i = 1}^{n}\sum\limits_{{j = 2}\atop{j\not = i+1}}^n\delta_{i,j}^{(1)}(1)
   + \sum\limits_{i = 1}^{n-1}\delta_{i,1}^{(1)}(1) \\
& \ge & -\sum\limits_{i = 1}^{n}\sum\limits_{{j = 2}\atop{j\not = i+1}}^nq^{(1)}_{i,j-1}a_{j-1,j}
   - \sum\limits_{i = 1}^{n-1}q^{(1)}_{i,n}a_{n,1} \\
& \ge & \eta\sum\limits_{i = 1}^{n}\sum\limits_{{j = 2}\atop{j\not = i+1}}^nq^{(1)}_{i,j-1}
   + \eta\sum\limits_{i = 1}^{n-1}q^{(1)}_{i,n} \\
& = & \eta \left(\sum\limits_{i = 1}^{n}\sum\limits_{{j = 1}\atop{j\not = i}}^{n-1}q^{(1)}_{i,j}
   + \sum\limits_{i = 1}^{n-1}q^{(1)}_{i,n}\right) \\
& = & \eta\sum\limits_{{i,j = 1}\atop{i\not = j}}^{n}q^{(1)}_{i,j}\\
& > & 0,
\end{eqnarray*}
which implies that there exist $i,j\in\{1, \cdots, n\}$ such that
$\delta_{i,j}^{(1)}(1) > 0$, i.e., ($ii_8$) holds.
\end{proof}

We first give the condition for the establishment of the Stein-Rosenberg Type Theorem I.

\begin{theorem}\label{thm3.1}
Suppose that $P_1A$ is an L-matrix.
Then Theorem A is valid for $\nu = 1$.
\end{theorem}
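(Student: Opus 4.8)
The plan is to reduce to the parameter range covered by the earlier lemmas, convert the two-sided spectral comparison into a single scalar sign equation, and then isolate the only genuinely delicate point to the case $\rho(\mathscr{L}_{\gamma,\omega})>1$. First I would invoke Lemma \ref{lem3-1}: since $A$ and $A^{(1)}=P_1A$ are both $L$-matrices, the matrices $D_1-\gamma L_1$ and $I-\gamma L$ are lower triangular with positive diagonal and nonpositive off-diagonal entries, so the AOR splittings are regular for $0\le\gamma\le\omega\le1$, $\omega>0$, giving $\mathscr{L}_{\gamma,\omega}\ge0$ and $\mathscr{L}^{(1)}_{\gamma,\omega}\ge0$; hence it suffices to prove Theorem A on the triangle $0\le\gamma\le\omega\le1$. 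On this range I would apply Lemma \ref{lem1-1}(i) to get a right Perron eigenvector $x>0$ of $\mathscr{L}_{\gamma,\omega}$ with $\mathscr{L}_{\gamma,\omega}x=\rho x$, where $\rho:=\rho(\mathscr{L}_{\gamma,\omega})$, together with a left Perron eigenvector $y>0$ of $\mathscr{L}^{(1)}_{\gamma,\omega}$.

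Next I would feed $\lambda=\rho$ into the identity of Lemma \ref{lem3-2}(ii),
\[
\mathscr{L}^{(1)}_{\gamma,\omega}x-\rho x=(\rho-1)[M^{(1)}_{\gamma,\omega}]^{-1}Wx,\qquad W:=E_1+E_2+\gamma(F_1+F_2)+(1-\gamma)Q^{(l)}+\omega Q^{(u)}M_{\gamma,\omega}.
\]
Two nonnegativity facts drive everything: $[M^{(1)}_{\gamma,\omega}]^{-1}=\omega(D_1-\gamma L_1)^{-1}\ge0$, and $\Delta^{(1)}(\gamma)\ge0$ because each summand $E_1+E_2$, $\gamma F_1$, $\gamma F_2$, $\gamma Q^{(u)}U$, $(1-\gamma)Q_1$ is built from nonnegative factors. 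A short computation with $A=I-L-U$ yields the clean splitting $W=\Delta^{(1)}(\gamma)+\gamma Q^{(u)}A$, and since $x$ is the Perron vector one has $Ax=(1-\rho)M_{\gamma,\omega}x$ with $M_{\gamma,\omega}x\ge0$ (as $\rho M_{\gamma,\omega}x=N_{\gamma,\omega}x\ge0$ for $\rho>0$). Left-multiplying by $y^{T}$ and using $y^{T}\mathscr{L}^{(1)}_{\gamma,\omega}=\rho(\mathscr{L}^{(1)}_{\gamma,\omega})y^{T}$ collapses the identity to
\[
\big[\rho(\mathscr{L}^{(1)}_{\gamma,\omega})-\rho\big]\,y^{T}x=(\rho-1)\,c,\qquad c:=y^{T}[M^{(1)}_{\gamma,\omega}]^{-1}Wx .
\]
Thus once $c\ge0$ and $y^{T}x>0$, the quantity $\rho(\mathscr{L}^{(1)}_{\gamma,\omega})-\rho$ acquires the sign of $\rho-1$, which is precisely the trichotomy (i)--(iii).

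For $\rho\le1$ this is immediate: $\gamma Q^{(u)}Ax=\gamma(1-\rho)Q^{(u)}M_{\gamma,\omega}x\ge0$, so $Wx=\Delta^{(1)}(\gamma)x+\gamma Q^{(u)}Ax\ge0$ and $c\ge0$ follows from $[M^{(1)}_{\gamma,\omega}]^{-1}\ge0$, $y\ge0$; the case $\rho=1$ is even cleaner, since then $Ax=0$ and $Wx=\Delta^{(1)}(\gamma)x\ge0$. To force the two spectral radii onto the \emph{same} side of $1$ (so that, e.g., $\rho<1$ yields the strict bound $\rho(\mathscr{L}^{(1)}_{\gamma,\omega})<1$ in (i)), I would use the $M$-matrix characterization: by Lemma \ref{lem1-5} the weak regular AOR splitting of the $Z$-matrix $A$ is convergent ($\rho<1$) iff $A$ is a nonsingular $M$-matrix, and by Lemma \ref{lem1-10} this is equivalent to $P_1A$ being a nonsingular $M$-matrix, i.e. to $\rho(\mathscr{L}^{(1)}_{\gamma,\omega})<1$; hence $\rho<1\Leftrightarrow\rho(\mathscr{L}^{(1)}_{\gamma,\omega})<1$ and, by contraposition, $\rho\ge1\Leftrightarrow\rho(\mathscr{L}^{(1)}_{\gamma,\omega})\ge1$.

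The hard part is the case $\rho>1$. Now $Ax\le0$, so the correction $\gamma Q^{(u)}Ax\le0$ and $Wx$ need \emph{not} be entrywise nonnegative; the crude bound breaks, and one must instead show directly that the weighted scalar $c=y^{T}[M^{(1)}_{\gamma,\omega}]^{-1}\Delta^{(1)}(\gamma)x+\gamma(1-\rho)\,y^{T}[M^{(1)}_{\gamma,\omega}]^{-1}Q^{(u)}M_{\gamma,\omega}x$ remains $\ge0$ even though its second term is negative. This is where more than nonnegativity is needed: one must exploit the strict positivity $y^{T}[M^{(1)}_{\gamma,\omega}]^{-1}\gg0$ (Lemma \ref{lem1-8}(iii), after reducing to irreducible diagonal blocks) together with the already-established fact $\rho(\mathscr{L}^{(1)}_{\gamma,\omega})\ge1$ and the boundary computation at $\rho=1$, where $\hat A=\omega A$ is singular and $1$ is forced to be a common eigenvalue of both iteration matrices. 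A second, independent difficulty, specific to Theorem A allowing $A$ to be reducible, is that the Perron vectors $x$ and $y$ may have disjoint supports, so $y^{T}x=0$ can occur; I would resolve this by passing to the irreducible diagonal blocks of the Frobenius normal form, where Lemma \ref{lem1-8} supplies strictly positive Perron data and the comparison is run blockwise. These two points -- the sign of $c$ for $\rho>1$ and the nondegeneracy $y^{T}x>0$ in the reducible case -- are where the real work lies; the remainder is the routine sign bookkeeping already carried out in the proof of Lemma \ref{lem1-11}.
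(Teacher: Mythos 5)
Your setup (the reduction to $0\le\gamma\le\omega$ via Lemma \ref{lem3-1}, the identity of Lemma \ref{lem3-2}(ii), and a sign analysis) matches the paper's, but your diagnosis of where the difficulty lies is wrong, and the error propagates into both of your declared ``hard points''. The case $\rho:=\rho({\mathscr L}_{\gamma,\omega})>1$ is not delicate at all: keep $W$ in its original form $E_1+E_2+\gamma(F_1+F_2)+(1-\gamma)Q^{(l)}+\omega Q^{(u)}M_{\gamma,\omega}$. Since $\gamma\le\omega\le1$ gives $N_{\gamma,\omega}\ge0$, and $M_{\gamma,\omega}x=N_{\gamma,\omega}x/\rho$, one has $M_{\gamma,\omega}x\ge0$ for \emph{every} $\rho>0$, whether above or below $1$; hence $Wx\ge0$ always. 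Your claim that ``the crude bound breaks'' for $\rho>1$ comes from the rewriting $W=\Delta^{(1)}(\gamma)+\gamma Q^{(u)}A$ (which is algebraically correct) hiding this fact: the negative term $\gamma Q^{(u)}Ax=\gamma(1-\rho)Q^{(u)}M_{\gamma,\omega}x$ is exactly absorbed, and the sum is the same nonnegative vector. Consequently the machinery you invoke for this case --- strict positivity $y^{T}[M^{(1)}_{\gamma,\omega}]^{-1}\gg0$ via Lemma \ref{lem1-8}(iii), which anyway requires $P_1A$ to be \emph{irreducible}, an assumption Theorem \ref{thm3.1} does not make --- is both unjustified and unnecessary. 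The paper dispatches $\rho\ge1$ in one line: the right-hand side $(\rho-1)[M^{(1)}_{\gamma,\omega}]^{-1}Wx$ is $\ge0$, so ${\mathscr L}^{(1)}_{\gamma,\omega}x\ge\rho x$ with $x>0$, and Lemma \ref{lem1-2}(i) gives $\rho({\mathscr L}^{(1)}_{\gamma,\omega})\ge\rho$, with no irreducibility and no left eigenvector.

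The genuinely hard case --- which your proposal leaves open --- is $\rho\le1$ with $A$ reducible. In the paper's one-sided argument the issue is that Lemma \ref{lem1-2}(ii) needs $x\gg0$; in your two-sided scalar identity the same issue resurfaces as the possibility $y^{T}x=0$, and it infects \emph{all} three cases of your trichotomy, not just one. Your proposed fix, passing to the Frobenius normal form and ``running the comparison blockwise'', is not a proof and faces a concrete obstruction: the permutation that block-triangularizes $A$ does not in general block-triangularize $P_1A$ (since $VP_1V^{T}=I+VQ_1V^{T}$ is full), so there are no matching irreducible diagonal blocks on which to compare the two AOR iteration matrices. What is actually needed, and what the paper does, is a perturbation--continuity argument: set $A(\epsilon)=A-\epsilon\check{A}$, where $\check{A}$ has ones exactly in the zero positions of $A$; then $A(\epsilon)$ is an irreducible L-matrix, $P_1A(\epsilon)=P_1A-\epsilon P_1\check{A}$ is still an L-matrix for small $\epsilon>0$, the irreducible case applies, and letting $\epsilon\to0^{+}$ preserves $\rho({\mathscr L}^{(1)}_{\gamma,\omega})\le\rho({\mathscr L}_{\gamma,\omega})$. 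Without this (or an equivalent) your proof does not cover reducible $A$. Finally, your detour through Lemmas \ref{lem1-5} and \ref{lem1-10} to force both spectral radii onto the same side of $1$ is redundant once the sign identity is in hand (with $c\ge0$ and $y^{T}x>0$ it follows automatically), and it has its own gap: Lemma \ref{lem1-10} assumes $P_1$ nonsingular, which is not among the hypotheses of Theorem \ref{thm3.1}.
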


\begin{proof}
Denote $\rho = \rho({\mathscr L}_{\gamma,\omega})$.

Since $A$ is an L-matrix, then it gets that $E_i \ge 0$, $F_i \ge 0$, $i = 1,2$,
$M^{-1}_{\gamma,\omega} > 0$ and
\begin{eqnarray*}
{\mathscr L}_{\gamma,\omega} = M^{-1}_{\gamma,\omega}N_{\gamma,\omega}
= (1-\omega)I + \omega(I-\gamma L)^{-1} [(1-\gamma)L + U] \ge 0,
\end{eqnarray*}
which shows that the splitting (\ref{eqn1.4}) is weak regular.
By Lemma \ref{lem1-1}, $\rho$ is an eigenvalue of ${\mathscr L}_{\gamma,\omega}$
with associated eigenvector $x > 0$.

Similarly, the AOR splitting
$P_1A = M^{(1)}_{\gamma,\omega} - N^{(1)}_{\gamma,\omega}$
is weak regular, i.e., $[M^{(1)}_{\gamma,\omega}]^{-1} > 0$ and ${\mathscr L}^{(1)}_{\gamma,\omega} \ge 0$.

By Lemma \ref{lem3-2} we obtain
\begin{eqnarray} \label{eqs3.4}
&& {\mathscr L}^{(1)}_{\gamma,\omega}x - \rho x \\
& = & (\rho-1)[M^{(1)}_{\gamma,\omega}]^{-1}[E_1+E_2
 +\gamma(F_1+F_2) +(1-\gamma)Q^{(l)} +
\omega Q^{(u)}M_{\gamma,\omega}]x. \nonumber
\end{eqnarray}

By Lemma \ref{lem3-1} we just need to consider the case when $\gamma \le \omega$.
In this case $N_{\gamma,\omega} \ge 0$.

When $\rho \ge 1$, then $M_{\gamma,\omega}x = N_{\gamma,\omega}x/\rho \ge 0$.
Since $[M^{(1)}_{\gamma,\omega}]^{-1} > 0$, $Q^{(l)} \ge 0$ and $Q^{(u)} \ge 0$,
then from (\ref{eqs3.4}) it derives that ${\mathscr L}^{(1)}_{\gamma,\omega}x \ge \rho x$.
It follows by Lemma \ref{lem1-2} that $\rho({\mathscr L}^{(1)}_{\gamma,\omega}) \ge \rho$.

Assume that $\rho \le 1$.

When $A$ is irreducible, by Lemma \ref{lem1-8}, $\rho > 0$ and we can choose $x \gg 0$.
Since $M_{\gamma,\omega}x = N_{\gamma,\omega}x/\rho \ge 0$, then from (\ref{eqs3.4})
it derives that ${\mathscr L}^{(1)}_{\gamma,\omega}x \le \rho x$.
It follows by Lemma \ref{lem1-2} that $\rho({\mathscr L}^{(1)}_{\gamma,\omega}) \le \rho$.

If $A$ is reducible, then definite $\check{A} = (\check{a}_{i,j})$ with
\begin{eqnarray*}
\check{a}_{i,j} = \left\{
\begin{array}{lll}
0, & {\rm if} & a_{i,j}\neq 0,\\
1, & {\rm if} & a_{i,j} = 0,
\end{array}
\quad i,j = 1, \cdots, n.
\right.
\end{eqnarray*}
Let $A(\epsilon) = A - \epsilon\check{A}$ with $\epsilon > 0$.
Then $A(\epsilon)$ is an irreducible L-matrix. From
$P_1A(\epsilon) = P_1A - \epsilon P_1\check{A}$, it is easy to see that
$P_1A(\epsilon)$ is an L-matrix for sufficient small $\epsilon$,
since the matrix $P_1A$ is an L-matrix and $P_1\check{A} \ge 0$.
Denote the AOR iteration matrices corresponding to $A(\epsilon)$ and $P_1A(\epsilon)$
by ${\mathscr L}_{\gamma,\omega}(\epsilon)$ and ${\mathscr L}^{(1)}_{\gamma,\omega}(\epsilon)$,
respectively. By the proof above we have
 \begin{eqnarray*}
\rho({\mathscr L}^{(1)}_{\gamma,\omega}) = \lim_{\epsilon \rightarrow
0^{+}} \rho({\mathscr L}^{(1)}_{\gamma,\omega}(\epsilon))
    \le \lim_{\epsilon \rightarrow 0^{+}}\rho({\mathscr L}_{\gamma,\omega}(\epsilon))
    = \rho.
 \end{eqnarray*}

Now, we have proved that either $\rho({\mathscr L}^{(\nu)}_{\gamma,\omega}) \le \rho({\mathscr L}_{\gamma,\omega}) \le 1$ or
$\rho({\mathscr L}^{(\nu)}_{\gamma,\omega}) \ge \rho({\mathscr L}_{\gamma,\omega}) \ge 1$,
which implies that one of the three mutually exclusive relations ($i$), ($ii$) and ($iii$)
of Theorem A holds.

The proof is completed.  \end{proof}

This result is consistent with \cite[Theorem 2.6]{WS09}.

\begin{theorem}\label{thm3.2}
Suppose that $P_1A$ is a Z-matrix.
Then Theorem B is valid for $\nu = 1$.
\end{theorem}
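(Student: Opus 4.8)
The plan is to deduce Theorem B from the trichotomy already established in Theorem~\ref{thm3.1} together with the convergence of the unpreconditioned AOR splitting. The first task is to upgrade the hypothesis ``$P_1A$ is a Z-matrix'' to ``$P_1A$ is an L-matrix'', so that Theorem~\ref{thm3.1} becomes applicable. I would do this without assuming $P_1$ nonsingular, by a positive-vector argument. Since $A$ is a nonsingular M-matrix, Lemma~\ref{lem1-5} provides $x \gg 0$ with $Ax \gg 0$. Because $P_1 = I + Q_1$ with $Q_1 \ge 0$, one has $P_1Ax = Ax + Q_1(Ax) \ge Ax \gg 0$. As $P_1A$ is a Z-matrix admitting a vector $x \gg 0$ with $P_1Ax \gg 0$, Lemma~\ref{lem1-5} forces $P_1A$ to be a nonsingular M-matrix, hence an L-matrix with positive diagonal (so that ${\mathscr L}^{(1)}_{\gamma,\omega}$ is well defined).

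With both $A$ and $P_1A$ now L-matrices, Theorem~\ref{thm3.1} yields that exactly one of the relations (i), (ii), (iii) of Theorem A holds. It then remains only to rule out (ii) and (iii), i.e.\ to show $\rho({\mathscr L}_{\gamma,\omega}) < 1$. By Lemma~\ref{lem3-1} it suffices to treat the range $0 \le \gamma \le \omega \le 1$, $\omega > 0$. There the AOR splitting $A = M_{\gamma,\omega} - N_{\gamma,\omega}$ is weak regular, exactly as recorded in the proof of Theorem~\ref{thm3.1} ($M^{-1}_{\gamma,\omega} > 0$, $N_{\gamma,\omega} \ge 0$, ${\mathscr L}_{\gamma,\omega} \ge 0$). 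Since $A$ is a nonsingular M-matrix, $A^{-1} > 0$, so by Lemma~\ref{lem1-6} (equivalently Lemma~\ref{lem1-5}(iii)) this weak regular splitting is convergent, which is precisely $\rho({\mathscr L}_{\gamma,\omega}) < 1$.

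Combining these, the strict inequality $\rho({\mathscr L}_{\gamma,\omega}) < 1$ excludes cases (ii) and (iii) of Theorem A, leaving $\rho({\mathscr L}^{(1)}_{\gamma,\omega}) \le \rho({\mathscr L}_{\gamma,\omega}) < 1$ for all $0 \le \gamma \le \omega \le 1$. A final appeal to Lemma~\ref{lem3-1} propagates this conclusion to the whole parameter range $0 < \omega \le 1$, $0 \le \gamma \le 1$, which is Theorem B for $\nu = 1$.

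I expect the only genuinely delicate step to be the first one: concluding that $P_1A$ is an L-matrix. The naive route through Lemma~\ref{lem1-10} would demand that $P_1 = I + Q_1$ be nonsingular, which is not guaranteed by the hypotheses (a semipositive $Q_1$ may make $I+Q_1$ singular). The positive-vector characterization in Lemma~\ref{lem1-5} circumvents this issue entirely and simultaneously delivers the nonsingular M-matrix structure of $P_1A$; everything afterwards is a routine assembly of Theorem~\ref{thm3.1}, Lemma~\ref{lem1-6} and Lemma~\ref{lem3-1}.
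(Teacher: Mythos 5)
Your proof is correct, and its overall skeleton coincides with the paper's: establish that $P_1A$ is an L-matrix (in fact a nonsingular M-matrix), observe that the unpreconditioned AOR splitting of the nonsingular M-matrix $A$ is weak regular and hence convergent, and then invoke Theorem~\ref{thm3.1} to select case (i) of the trichotomy. The one genuine divergence is the step that upgrades ``$P_1A$ is a Z-matrix'' to ``$P_1A$ is a nonsingular M-matrix'': the paper gets this from Lemma~\ref{lem1-10}, whose hypothesis includes that $P_1 > 0$ be \emph{nonsingular}, whereas you bypass that lemma entirely with the positive-vector test of Lemma~\ref{lem1-5} ($Ax \gg 0$ for some $x \gg 0$ implies $P_1Ax \ge Ax \gg 0$). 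Your concern is legitimate: $I + Q_1$ with $Q_1 > 0$ can indeed be singular (e.g.\ $Q_1$ the $2\times 2$ anti-diagonal matrix of ones), so the paper's appeal to Lemma~\ref{lem1-10} tacitly relies on the standing assumption that preconditioners are nonsingular. Your route is self-contained and even recovers nonsingularity of $P_1$ a posteriori, since $P_1 = (P_1A)A^{-1}$ once $P_1A$ is known to be a nonsingular M-matrix; what it costs is nothing beyond one extra line. Your closing use of Lemma~\ref{lem3-1} to pass from $0 \le \gamma \le \omega \le 1$ to the full parameter range is also sound (both iteration matrices are nonnegative here), though in fact the AOR splitting of $A$ is weak regular on the whole range $0 \le \gamma \le 1$, $0 < \omega \le 1$, so $\rho({\mathscr L}_{\gamma,\omega}) < 1$ could have been asserted directly, as the paper does.
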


\begin{proof}
Since $A$ is a nonsingular M-matrix, then the splitting (\ref{eqn1.4}) is weak regular.
By Lemma \ref{lem1-5}, the AOR method is convergent, i.e., $\rho({\mathscr L}_{\gamma,\omega}) < 1$.

On the other hand, by Lemma \ref{lem1-10}, $P_1A$ is a nonsingular M-matrix
so that it is an L-matrix.

Now, it follows by Theorem \ref{thm3.1} that Theorem B is valid.
 \end{proof}

Next, we give the Stein-Rosenberg Type Theorem II.

\begin{theorem}\label{thm3.3}
Suppose that $P_1A$ is an L-matrix.
Then Theorem C is valid for $\nu = 1$,
provided one of the following conditions is satisfied:

\begin{itemize}
\item[(i)] $0 \le \gamma < 1$ and $P_1A$ is irreducible.

\item[(ii)] $\gamma = 1$ and $P_1A$ is irreducible. One of the following conditions holds:

\begin{itemize}
\item[($ii_1$)]
There exist $i,j\in\{1, \cdots, n\}$ such that $\delta_{i,j}^{(1)}(1) > 0$.

\item[($ii_2$)]
$q^{(1)}_{n,1} > 0$.

\item[($ii_3$)]
There exists $k\in\{1, \cdots, n-1\}$ such that $q^{(1)}_{k,k+1} > 0$.

\item[($ii_4$)]
There exist $i,j\in\{1, \cdots, n\}$ such that $q^{(1)}_{i,j}a_{j,i} < 0$.

\item[($ii_5$)]
There exist $i\in\{1, \cdots, n-1\}$ and $j\in\{1, \cdots, i\}$ such that
$q^{(1)}_{i,n}a_{n,j} < 0$.

\item[($ii_6$)]
There exist $i\in\{1, \cdots, n-1\}$ and $j\in\{i+1, \cdots, n\}$ such that
$q^{(1)}_{i,j}a_{j,1} < 0$.

\item[($ii_7$)]
There exist $i\in\{1, \cdots, n\}$ and $j\in\{1, \cdots, n-1\}$ such that
$q^{(1)}_{i,j}a_{j,j+1} < 0$.

\item[($ii_8$)]
There exist $i\in\{2, \cdots, n\}$ and $j\in\{2, \cdots, i\}$ such that
$q^{(1)}_{i,1}a_{1,j} < 0$.

\item[($ii_9$)]
$a_{n,1} < 0$, $a_{k,k+1} < 0$, $k = 1, \cdots, n-1$.
\end{itemize}

\item[(iii)] $0 \le \gamma < 1$ and for each $i\in\{1, \cdots, n-1\}$
there exists $j(i)\in\{1, \cdots, n\}$ such that $q^{(1)}_{i,j(i)} > 0$.

\item[(iv)] $\gamma = 1$ and for each $i\in\{2, \cdots, n-1\}$
one of the following conditions holds:

\begin{itemize}
\item[($iv_1$)] There exists $j(i)\in\{1, \cdots, n\}$ such that
$\delta_{i,j(i)}^{(1)}(1) > 0$.

\item[($iv_2$)] $q^{(1)}_{i,i+1} > 0$.

\item[($iv_3$)] There exists $j_i\in\{1, \cdots, n\}$ such that $q^{(1)}_{i,j_i}a_{j_i,i} < 0$.

\item[($iv_4$)] There exists $j_i\in\{1, \cdots, i\}$ such that $q^{(1)}_{i,n}a_{n,j_i} < 0$.

\item[($iv_5$)] There exists $j_i\in\{i+1, \cdots, n\}$ such that $q^{(1)}_{i,j_i}a_{j_i,1} < 0$.

\item[($iv_6$)] There exists
$j_i\in\{1, \cdots, n-1\}$ such that $q^{(1)}_{i,j_i}a_{j_i,j_i+1} < 0$.

\item[($iv_7$)] There exists $j_i\in\{2, \cdots, i\}$ such that $q^{(1)}_{i,1}a_{1,j_i} < 0$.
\end{itemize}

At the same time, one of the following conditions also holds:

\begin{itemize}
\item[($iv^a$)]
There exist $j\in\{2, \cdots, n\}$ and $k\in\{1, \cdots, j-1\}$ such that
$q^{(1)}_{n,k}a_{k,j} < 0$.

\item[($iv^b$)] $q^{(1)}_{n,1} > 0$.

\item[($iv^c$)] There exists $j\in\{2, \cdots, n-1\}$ such that
\begin{eqnarray}\label{eqs3.5}
a_{n,j} + q^{(1)}_{n,j} + \sum\limits_{{k = 1}\atop{k\not = j}}^{n-1}q^{(1)}_{n,k}a_{k,j} < 0.
\end{eqnarray}

\item[($iv^d$)] One of the conditions ($iv_1$)-($iv_6$) holds for $i=1$ and
\begin{eqnarray}\label{eqs3.6}
a_{n,1} + q^{(1)}_{n,1} + \sum\limits_{k = 2}^{n-1}q^{(1)}_{n,k}a_{k,1} < 0.
\end{eqnarray}

\item[($iv^e$)] One of the conditions ($iv_1$)-($iv_6$) holds for $i=1$ and $a_{n,1} < 0$.
\end{itemize}\end{itemize}
\end{theorem}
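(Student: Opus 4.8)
The plan is to establish Theorem C under the listed conditions by invoking the Stein--Rosenberg machinery already built in the excerpt, namely Lemma~\ref{lem1-11} and its Gauss--Seidel-specialized companion Lemma~\ref{lem1-12}, together with the structural information about $\Delta^{(1)}(\gamma)$ collected in Lemma~\ref{lem3-3}. First I would record the setup that is common to all cases: since $A$ is an irreducible L-matrix (equivalently, by the standing assumption, an irreducible Z-matrix with unit diagonal) and $P_1A$ is an L-matrix, both AOR splittings $A=M_{\gamma,\omega}-N_{\gamma,\omega}$ and $P_1A=M^{(1)}_{\gamma,\omega}-N^{(1)}_{\gamma,\omega}$ are weak regular with ${\mathscr L}_{\gamma,\omega}\ge 0$ and ${\mathscr L}^{(1)}_{\gamma,\omega}\ge 0$, exactly as in the proof of Theorem~\ref{thm3.1}. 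By Lemma~\ref{lem3-1} it suffices to treat $0\le\gamma\le\omega\le 1$. By Lemma~\ref{lem1-8}(i)--(ii), $\rho:=\rho({\mathscr L}_{\gamma,\omega})>0$ and its eigenvector may be taken $x\gg 0$, and by Lemma~\ref{lem1-8}(iii) the left Perron vector $y>0$ satisfies $y^T(D-\gamma L)^{-1}\gg 0$, i.e.\ $y^TM^{-1}_{\gamma,\omega}\gg 0$ since $M_{\gamma,\omega}=\tfrac1\omega(D-\gamma L)$.

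The core identity is the one from Lemma~\ref{lem3-2}(i): the difference $P_1M_{\gamma,\omega}-M^{(1)}_{\gamma,\omega}$ equals $\tfrac1\omega\Delta^{(1)}(\gamma)$ up to the reparametrization in the definition of $\Delta^{(1)}(\gamma)$, so verifying the hypotheses of Lemma~\ref{lem1-11} reduces to showing that $\Delta^{(1)}(\gamma)x>0$ (with the second-through-$n$th entries positive in the Gauss--Seidel case $\gamma=\omega=1$, to feed Lemma~\ref{lem1-12}). The strategy is then case-by-case. For conditions (i) and (iii) with $0\le\gamma<1$: Lemma~\ref{lem3-3}($i_1$)--($i_2$) guarantees at least one strictly positive off-diagonal entry $\delta^{(1)}_{i,j}(\gamma)>0$; combined with $x\gg 0$ this gives $(N_2-N_1)x=\Delta^{(1)}(\gamma)x>0$, and with $y^TM^{-1}_{\gamma,\omega}\gg 0$ from Lemma~\ref{lem1-8}(iii) this is precisely hypothesis (ii) of Lemma~\ref{lem1-11}, yielding the strict trichotomy (a)/(b)/(c) of Theorem C. For $\gamma=1$, condition (ii): each of ($ii_1$)--($ii_9$) is exactly a trigger for one of Lemma~\ref{lem3-3}($ii_1$)--($ii_8$) (together with irreducibility), each producing some $\delta^{(1)}_{i,j}(1)>0$; I then apply Lemma~\ref{lem1-12} in place of Lemma~\ref{lem1-11}, since at $\gamma=\omega=1$ the relevant iteration matrices are the Gauss--Seidel ones $\tilde{\mathscr L}={\mathscr L}^{(1)}$ and ${\mathscr L}$, and the required positivity of the $2$nd--$n$th entries of $\tilde M_{1,1}^{-1}(N_{1,1}-\tilde N_{1,1})x$ follows from $\delta^{(1)}_{i,j}(1)>0$ via the decomposition used there.

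The delicate case is (iv) with $\gamma=1$, where Lemma~\ref{lem1-12} demands that \emph{every} one of the entries $2,\dots,n$ of $\tilde M_{1,1}^{-1}(N_{1,1}-\tilde N_{1,1})x$ be positive, not merely one of them. Here the plan is to argue componentwise: the ``for each $i\in\{2,\dots,n-1\}$'' hypotheses ($iv_1$)--($iv_7$) ensure, via the corresponding sub-items of Lemma~\ref{lem3-3}, that for each such row $i$ some $\delta^{(1)}_{i,j(i)}(1)>0$, which after left-multiplication by the nonnegative lower-triangular $\tilde M_{1,1}^{-1}=(D_1-L_1)^{-1}\omega$ and the entrywise-positive $x$ propagates positivity into the $i$th output entry; the separate clause ($iv^a$)--($iv^e$) supplies the last row $n$ (and, where needed, stitches in row $1$), closing the gap between ``rows $2,\dots,n-1$'' and the full block $2,\dots,n$. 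The main obstacle I anticipate is bookkeeping in this propagation step: one must check that the triangular inverse does not cancel the positivity produced in a given row, which uses the sign structure $D_1\ge I$-type positivity of the diagonal together with the nonnegativity of the strictly lower part $L_1$, and one must verify that conditions (\ref{eqs3.5})--(\ref{eqs3.6}) correctly guarantee strict positivity of the specific $n$th (and first) coordinates after inversion. Once this componentwise positivity is secured, Lemma~\ref{lem1-12} delivers the strict trichotomy and the proof of Theorem C for $\nu=1$ is complete.
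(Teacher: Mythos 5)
Your overall architecture (reduce to $\gamma\le\omega$ via Lemma~\ref{lem3-1}, compute $(P_1N_{\gamma,\omega}-N^{(1)}_{\gamma,\omega})x$ via Lemma~\ref{lem3-2} and the eigen-equation, feed Lemma~\ref{lem3-3} into the Stein--Rosenberg lemmas) is the paper's, but the central application of Lemma~\ref{lem1-11} is set up with the wrong Perron vector, and this is a genuine gap. The two splittings being compared are splittings of the \emph{same} matrix $P_1A$, namely $P_1A=M^{(1)}_{\gamma,\omega}-N^{(1)}_{\gamma,\omega}=P_1M_{\gamma,\omega}-P_1N_{\gamma,\omega}$, the first weak regular and the second only nonnegative; hypothesis ($ii$) of Lemma~\ref{lem1-11} therefore requires $y^TM_1^{-1}\gg 0$ where $y$ is the left Perron vector of $M_1^{-1}N_1={\mathscr L}^{(1)}_{\gamma,\omega}$, i.e.\ you must prove $y^T[M^{(1)}_{\gamma,\omega}]^{-1}\gg 0$ for the \emph{preconditioned} iteration matrix. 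You instead take $y$ to be the left Perron vector of ${\mathscr L}_{\gamma,\omega}$ and get $y^TM_{\gamma,\omega}^{-1}\gg 0$ from Lemma~\ref{lem1-8}($iii$) applied to $A$; that object does not enter Lemma~\ref{lem1-11} at all, and reassigning the roles of the splittings to accommodate it (e.g.\ viewing both as weak regular splittings of $A$, with $M_1=M_{\gamma,\omega}$) would make the lemma compare the spectral radii in the reversed order and would also destroy the sign condition, since then $(N_2-N_1)x=-P_1^{-1}\Phi(\gamma,\omega)x$ with $P_1^{-1}$ not nonnegative. The correct positivity $y^T[M^{(1)}_{\gamma,\omega}]^{-1}\gg 0$ is exactly what Lemma~\ref{lem1-8}($iii$) applied to $P_1A$ gives, and this is the \emph{only} place where the hypothesis that $P_1A$ is irreducible (assumed in (i) and (ii)) is used; your plan never invokes that hypothesis, which is the symptom of the error.

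The division of labor between Lemma~\ref{lem1-11} and Lemma~\ref{lem1-12} is also inverted. For case (ii) you propose Lemma~\ref{lem1-12}, but its hypothesis demands that \emph{all} of the second through $n$th entries of $[M^{(1)}_{1,1}]^{-1}(P_1N_{1,1}-N^{(1)}_{1,1})x$ be positive; a single entry $\delta^{(1)}_{i,j}(1)>0$ makes only the $i$th entry of $\Phi(1,1)x$ positive, and multiplication by the nonnegative lower triangular $[M^{(1)}_{1,1}]^{-1}$ can propagate positivity only to row $i$ and below, never to the rows above $i$, so the hypothesis of Lemma~\ref{lem1-12} cannot be verified this way. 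The paper instead handles (ii) by Lemma~\ref{lem1-11}($ii$), where one positive entry of $(N_2-N_1)x$ suffices precisely because irreducibility of $P_1A$ supplies $y^T[M^{(1)}_{\gamma,\omega}]^{-1}\gg 0$. Conversely, in case (iii) irreducibility of $P_1A$ is \emph{not} assumed, so the route through Lemma~\ref{lem1-11}($ii$) that you use ``as in case (i)'' is closed; there one must verify hypothesis ($i$) of Lemma~\ref{lem1-11}, i.e.\ $[M^{(1)}_{\gamma,\omega}]^{-1}\Phi(\gamma,\omega)x\gg 0$ in all $n$ coordinates, and this forces the row-by-row analysis the paper carries out: the row conditions give positivity of the first $n-1$ entries of $\Phi(\gamma,\omega)x$, and the $n$th coordinate is recovered either from $q^{(1)}_{n,j_n}>0$ (making the last row of $\Phi$ positive) or from $q^{(1)}_{n,j_n}=0$, which forces $m^{(1)}_{n,j_n}\le a_{n,j_n}<0$ and hence $\hat m_{n,j_n}>0$ in $[M^{(1)}_{\gamma,\omega}]^{-1}$. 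You anticipate this bookkeeping only for case (iv) (where your sketch does match the paper's use of Lemma~\ref{lem1-12}); it is equally indispensable for (iii), and for (ii) the correct fix is to switch lemmas rather than to attempt the bookkeeping.
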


\begin{proof}
Denote $\rho = \rho({\mathscr L}_{\gamma,\omega})$.
Assume that $x > 0$ is its associated eigenvector.

Consider two splittings of $P_1A$ given by
\begin{eqnarray}\label{eqs3.7}
P_1A = M^{(1)}_{\gamma,\omega} - N^{(1)}_{\gamma,\omega} = P_1M_{\gamma,\omega} - P_1N_{\gamma,\omega}.
 \end{eqnarray}
Since $A$ and $P_1A$ are L-matrices, then the splittings are respectively weak regular and nonnegative,
so that $[M^{(1)}_{\gamma,\omega}]^{-1} > 0$.

By Lemma \ref{lem3-1} we just need to consider the case when $\gamma \le \omega$.
By Lemma \ref{lem1-8} it follows that $\rho > 0$ and $x\gg 0$.

Furthermore, we have $1/(\omega - \gamma + \gamma\rho) > 0$.
From $N_{\gamma,\omega}x = \rho M_{\gamma,\omega}x$, it gets that
\begin{eqnarray*}
Lx = \frac{1}{\omega - \gamma + \gamma\rho}[(\omega + \rho -1)I -\omega U]x,
\end{eqnarray*}
so that
\begin{eqnarray*}
M_{\gamma,\omega}x = \frac{1}{\omega - \gamma + \gamma\rho}[(1-\gamma)I + \gamma U]x.
\end{eqnarray*}

By Lemma \ref{lem3-2}, we obtain
\begin{eqnarray}\label{eqs3.8} \nonumber
 (P_1N_{\gamma,\omega} - N^{(1)}_{\gamma,\omega})x
& = & \frac{1}{\omega} \left\{E_1+E_2
+\gamma(F_1+F_2) +(1-\gamma)Q^{(l)} \right.\\
 && \left. + \frac{\omega}{\omega - \gamma + \gamma\rho}Q^{(u)}[(1-\gamma)I + \gamma U]\right\}x\\ \nonumber
 & = & \Phi(\gamma,\omega)x,
\end{eqnarray}
where
\begin{eqnarray*}
\Phi(\gamma,\omega)
& = & \frac{1}{\omega}(E_1 + E_2 + \gamma F_1+ \gamma F_2) \\
&& + \frac{\gamma}{\omega - \gamma + \gamma\rho}Q^{(u)}U
+ (1-\gamma)\left[\frac{1}{\omega} Q^{(l)} + \frac{1}{\omega - \gamma + \gamma\rho}Q^{(u)}\right].
\end{eqnarray*}

Clearly, $\Phi(\gamma,\omega) \ge 0$, $\Delta^{(1)}(\gamma) \ge 0$, $\Delta_{1k} \ge 0$, $k = 1,2,3,4$, and
the positions of the positive elements of the both matrices $\Phi(\gamma,\omega)$ and
$\Delta^{(1)}(\gamma)$ are completely same, since $\omega > 0$ and $\omega - \gamma + \gamma\rho > 0$.

Since $P_1A$ is an irreducible L-matrix, then, by Lemma \ref{lem1-8}, we can obtain
$\rho({\mathscr L}^{(1)}_{\gamma,\omega}) > 0$ and
$y^T(D_1 - \gamma L_1)^{-1} \gg 0$ whenever $y$ satisfies $y > 0$ and
$y^T{\mathscr L}^{(1)}_{\gamma,\omega} = \rho({\mathscr L}^{(1)}_{\gamma,\omega})y^T$.

We first prove ($i$).

In this case by ($i_2$) in Lemma \ref{lem3-3} it follows that
$\Delta^{(1)}(\gamma) > 0$ so that $\Phi(\gamma,\omega) > 0$.
From (\ref{eqs3.8}), we can get
$(P_1N_{\gamma,\omega} - N^{(1)}_{\gamma,\omega})x > 0$.
This shows that the condition ($ii$) of Lemma \ref{lem1-11} is satisfied.
The required result follows by Lemma \ref{lem1-11} directly.

We prove ($ii$).

Since $\gamma = 1$, then $\omega = 1$. In this case,
the AOR method reduces to the Gauss-Seidel method.
The equality (\ref{eqs3.8}) reduces to
$P_1N_{1,1} - N^{(1)}_{1,1}
 = E_1+E_2 + F_1 + F_2 + Q^{(u)}U/\rho = \Phi(1,1)$.

If one of the conditions ($ii_2$)-($ii_9$) is satisfied, then by ($ii$) of Lemma \ref{lem3-3}
it is easy to prove that there exist $i,j\in\{1, \cdots, n\}$ such that
$\delta_{i,j}^{(1)}(1) > 0$, which shows that ($ii_1$) is satisfied.

If ($ii_1$) is satisfied, then $\Phi(1,1) > 0$,
so that $(P_1N_{1,1} - N^{(1)}_{1,1})x > 0$.
This shows that the condition ($ii$) of Lemma \ref{lem1-11} is satisfied.

We prove ($iii$).
Let
\begin{eqnarray*}
M^{(1)}_{\gamma,\omega} = (m^{(1)}_{i,j}) =
\left(\begin{array}{ll}
\bar{M}_{1,1} & \bar{m}_{1,2}\\
\bar{m}_{2,1} & m^{(1)}_{n,n}
\end{array}\right), \; \bar{M}_{1,1}\in {\mathscr R}^{(n-1) \times (n-1)}.
\end{eqnarray*}
 Then $\bar{m}_{1,2} = 0$, $\bar{m}_{2,1} = (m^{(1)}_{n,1}  \cdots m^{(1)}_{n,n-1})$
and for $j= 1, \cdots, n-1$, $m^{(1)}_{j,j} > 0$,
\begin{eqnarray}\label{eqs3.88}
m^{(1)}_{n,j} = a_{n,j} + q^{(1)}_{n,j} + \sum\limits_{{k = 1}\atop{k\not = j}}^{n-1}q^{(1)}_{n,k}a_{k,j} \le 0.
\end{eqnarray}
Furthermore, we have
\begin{eqnarray*}
[M^{(1)}_{\gamma,\omega}]^{-1} = (\hat{m}_{i,j}) =
\left(\begin{array}{cc}
\bar{M}_{1,1}^{-1} & 0\\
-\frac{1}{m^{(1)}_{n,n}}\bar{m}_{2,1}\bar{M}_{1,1}^{-1} & \frac{1}{m^{(1)}_{n,n}}
\end{array}\right) > 0,
\end{eqnarray*}
where $\hat{m}_{k,k} > 0$, $\hat{m}_{n,k} \ge -m^{(1)}_{n,k}\hat{m}_{k,k}/m^{(1)}_{n,n}$,
$k=1,\cdots, n-1$.

By ($i_1$) in Lemma \ref{lem3-3}, for each $i\in\{1, \cdots, n-1\}$,
$\delta_{i,j(i)}^{(1)}(\gamma) > 0$.
Hence, in this case, the every row of $\Phi(\gamma,\omega)$ has positive elements except
the last row, so that the first to $(n-1)$th elements of $\Phi(\gamma,\omega)x$ are positive.
Since $[M^{(1)}_{\gamma,\omega}]^{-1} > 0$ and $\hat{m}_{k,k} > 0$ for $k=1,\cdots, n-1$,
then the first to $(n-1)$th elements of $[M^{(1)}_{\gamma,\omega}]^{-1}\Phi(\gamma,\omega)x$
are also positive.

Since $A$ is irreducible, then
there exists $j_n\in\{1, \cdots, n-1\}$ such that $a_{n,j_n} < 0$.

If $q^{(1)}_{n,j_n} > 0$ then
$\delta_{n,j_n}^{(1)}(\gamma) \ge (1-\gamma)q^{(1)}_{n,j_n} > 0$. This shows that the last row of $\Phi(\gamma,\omega)$
has positive elements, so that the last element of $\Phi(\gamma,\omega)x$ is positive.
From (\ref{eqs3.8}) we have proved that
$(P_1N_{\gamma,\omega} - N^{(1)}_{\gamma,\omega})x \gg 0$
and, therefore,
\begin{eqnarray}\label{eqs3.9}
[M^{(1)}_{\gamma,\omega}]^{-1}(P_1N_{\gamma,\omega} - N^{(1)}_{\gamma,\omega})x \gg 0.
\end{eqnarray}

When $q^{(1)}_{n,j_n} = 0$ then from (\ref{eqs3.88}) $m^{(1)}_{n,j_n} \le a_{n,j_n} < 0$, so that $\hat{m}_{n,j_n} > 0$.
Hence the last element of $[M^{(1)}_{\gamma,\omega}]^{-1}\Phi(\gamma,\omega)x$ is positive,
which shows that (\ref{eqs3.9}) holds.

Now, we have proved that the condition ($i$) of Lemma \ref{lem1-11} is satisfied for the splittings
given in (\ref{eqs3.7}). By Lemma \ref{lem1-11}
we can prove that Theorem C is valid.

At last, we prove ($iv$).

In this case, the AOR method reduces to the Gauss-Seidel method.

If one of ($iv_2$)-($iv_7$) holds, then it follows by ($ii_1$)-($ii_6$) in Lemma \ref{lem3-3}
that ($iv_1$) is satisfied.

When ($iv_1$) holds, then the every row of $\Phi(1,1)$ has positive elements except
the first and last rows, so that the second to $(n-1)$th elements of $\Phi(1,1)x$ and
$[M^{(1)}_{1,1}]^{-1}\Phi(1,1)x$ are positive.

If ($iv^a$) holds then $\delta_{n,j}^{(1)}(1) \ge -q^{(1)}_{n,k}a_{k,j} > 0$. And if ($iv^b$)
is satisfied, then by ($ii_7$) in Lemma \ref{lem3-3} there exists
$j\in\{2, \cdots, n\}$ such that $\delta_{n,j}^{(1)}(1) > 0$.
Hence, for these two cases the $n$th row of $\Phi(1,1)$ has positive elements.
This has proved that the second to $n$th rows of $\Phi(1,1)$ has positive elements,
so that the second to $n$th elements of $\Phi(1,1)x$ and
$[M^{(1)}_{1,1}]^{-1}\Phi(1,1)x$
are all positive.

If ($iv^c$) holds, then $m^{(1)}_{n,j} < 0$, so that $\hat{m}_{n,j} > 0$.
Hence the last element of $[M^{(1)}_{\gamma,\omega}]^{-1}\Phi(1,1)x$ is positive,
which shows that its second to $n$th elements are all positive.

When ($iv^e$) holds, if $q^{(1)}_{n,1} > 0$ then the proof is given above. If $q^{(1)}_{n,1} = 0$ then
\begin{eqnarray*}
a_{n,1} + q^{(1)}_{n,1} + \sum\limits_{k = 2}^{n-1}q^{(1)}_{n,k}a_{k,1} \le a_{n,1} < 0,
\end{eqnarray*}
which implies that ($iv^d$) is satisfied.

Now, we consider that ($iv^d$) holds. Just as the proof above,
the first row of $\Phi(1,1)$ has positive elements,
so that the first to $(n-1)$th elements of $\Phi(1,1)x$ and
$[M^{(1)}_{1,1}]^{-1}\Phi(1,1)x$ are positive. the inequality (\ref{eqs3.6}) shows that
 $m^{(1)}_{n,1} < 0$, so that $\hat{m}_{n,1} > 0$.
Hence the last element of $[M^{(1)}_{1,1}]^{-1}\Phi(1,1)x$ is positive,
and therefore, its second to $n$th elements are all positive.

We have proved that if one of ($iv_1$)-($iv_7$) and one of ($iv^a$)-($iv^e$) hold at the same time,
then the condition of Lemma \ref{lem1-12} is satisfied. By Lemma \ref{lem1-12}
we can prove that Theorem C is valid.

The proof is completed.  \end{proof}

\begin{theorem}\label{thm3.4}
Suppose that $P_1A$ is a Z-matrix.
Then Theorem D is valid for $\nu = 1$,
provided one of the following conditions is satisfied:

\begin{itemize}
\item[(i)] One of the conditions ($i$)-($iv$) of Theorem \ref{thm3.3} holds.

\item[(ii)] For $i = 2,\cdots,n$, $j = 1,\cdots,i-1$, $a_{i,j} \ge a^{(1)}_{i,j}$.
And one of the following conditions holds:

\begin{itemize}
\item[($i_1$)] There exists $i_0\in\{1,\cdots,n\}$ such that $a^{(1)}_{i_0,i_0} < 1$.

\item[($ii_2$)] $\gamma > 0$ and there exist $i_0\in\{2,\cdots,n\}$, $j_0\in\{1,\cdots,i_0-1\}$
such that $a_{i_0,j_0} > a^{(1)}_{i_0,j_0}$.
\end{itemize}\end{itemize}
\end{theorem}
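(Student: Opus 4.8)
The plan is to dispatch the two clauses separately, reusing the machinery already built for Theorem C and the strict comparison Lemma~\ref{lem1-13}. Throughout, $A$ is an irreducible nonsingular M-matrix, hence an irreducible L-matrix, and since $P_1A$ is assumed to be a Z-matrix, Lemma~\ref{lem1-10} makes $P_1A$ a nonsingular M-matrix and therefore an L-matrix. Consequently both AOR splittings in (\ref{eqs3.7}) are weak regular, the iteration matrices ${\mathscr L}_{\gamma,\omega}$ and ${\mathscr L}^{(1)}_{\gamma,\omega}$ are nonnegative, and by Lemma~\ref{lem3-1} it suffices to treat $0\le\gamma\le\omega\le1$; in this range $N_{\gamma,\omega}\ge0$.

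For clause (i) I would simply invoke Theorem~\ref{thm3.3}: under any of its conditions (i)--(iv) (which themselves carry the relevant irreducibility of $P_1A$), Theorem C holds for $\nu=1$, so one of its three mutually exclusive relations is valid. Because $A$ is a nonsingular M-matrix, Lemma~\ref{lem1-5} gives $\rho({\mathscr L}_{\gamma,\omega})<1$, which excludes the second and third alternatives and leaves $\rho({\mathscr L}^{(1)}_{\gamma,\omega})<\rho({\mathscr L}_{\gamma,\omega})<1$. This is precisely Theorem D, so clause (i) is immediate.

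Clause (ii) is the substantive part and will be proved by applying Lemma~\ref{lem1-13} to the pair $A_2=A=M_{\gamma,\omega}-N_{\gamma,\omega}$ and $A_1=P_1A=M^{(1)}_{\gamma,\omega}-N^{(1)}_{\gamma,\omega}$, so that the conclusion $\rho(M_1^{-1}N_1)<\rho(M_2^{-1}N_2)$ reads $\rho({\mathscr L}^{(1)}_{\gamma,\omega})<\rho({\mathscr L}_{\gamma,\omega})$. Three of the hypotheses are cheap: both splittings are nonnegative and convergent by the M-matrix property; $A_2^{-1}=A^{-1}\gg0$ by Lemma~\ref{lem1-4} since $A$ is irreducible; and $A_2^{-1}\ge A_1^{-1}$ because $A^{-1}-(P_1A)^{-1}=A^{-1}(I-P_1^{-1})=(P_1A)^{-1}Q_1\ge0$, using $P_1=I+Q_1$, $Q_1\ge0$ and $(P_1A)^{-1}\ge0$. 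For the eigenvector I take the Perron vector $x$ of ${\mathscr L}_{\gamma,\omega}$, which satisfies $x\gg0$ and $\rho:=\rho({\mathscr L}_{\gamma,\omega})>0$ by Lemma~\ref{lem1-8}; then $Ax=(1-\rho)M_{\gamma,\omega}x$ and $M_{\gamma,\omega}x=\rho^{-1}N_{\gamma,\omega}x\ge0$.

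It remains to verify the ordering $M_{\gamma,\omega}x>M^{(1)}_{\gamma,\omega}x\ge0$, which I expect to be the main obstacle. Writing both matrices entrywise using $a_{i,i}=1$, the $i$-th coordinate of the difference is $\frac1\omega[(1-a^{(1)}_{i,i})x_i+\gamma\sum_{j<i}(a_{i,j}-a^{(1)}_{i,j})x_j]$; here $1-a^{(1)}_{i,i}\ge0$ always, since by (\ref{eqs3.2}) one has $a^{(1)}_{i,i}=1+\sum_{k\ne i}q^{(1)}_{i,k}a_{k,i}$ with $q^{(1)}_{i,k}\ge0$ and $a_{k,i}\le0$, while the hypothesis $a_{i,j}\ge a^{(1)}_{i,j}$ for $i>j$ makes the sum nonnegative, giving $M_{\gamma,\omega}x\ge M^{(1)}_{\gamma,\omega}x$; condition $(i_1)$ forces a strictly positive diagonal contribution at $i_0$, and condition $(ii_2)$ (with $\gamma>0$) a strictly positive off-diagonal one, so the strict relation holds in either case. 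The inequality $M^{(1)}_{\gamma,\omega}x\ge0$ is the delicate point, because $x$ is not an eigenvector of the preconditioned method; I would establish it via $D_1-\gamma L_1=\gamma(D_1-L_1)+(1-\gamma)D_1$, noting that $(D_1-L_1)x=P_1Ax+U_1x\ge0$ (as $P_1Ax=(1-\rho)P_1M_{\gamma,\omega}x\ge0$ and $U_1x\ge0$) and $D_1x\ge0$, whence $M^{(1)}_{\gamma,\omega}x=\frac1\omega(D_1-\gamma L_1)x\ge0$ for $0\le\gamma\le1$. With every hypothesis of Lemma~\ref{lem1-13} in hand, $\rho({\mathscr L}^{(1)}_{\gamma,\omega})<\rho({\mathscr L}_{\gamma,\omega})$ follows, and combined with $\rho({\mathscr L}_{\gamma,\omega})<1$ this yields Theorem D.
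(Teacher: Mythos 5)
Your proof is correct and follows essentially the same route as the paper: clause (i) by invoking Theorem \ref{thm3.3} together with $\rho({\mathscr L}_{\gamma,\omega})<1$, and clause (ii) by verifying the hypotheses of Lemma \ref{lem1-13} for the splittings $A=M_{\gamma,\omega}-N_{\gamma,\omega}$ and $P_1A=M^{(1)}_{\gamma,\omega}-N^{(1)}_{\gamma,\omega}$ after reducing to $\gamma\le\omega$ via Lemma \ref{lem3-1}. The only cosmetic difference is in showing $M^{(1)}_{\gamma,\omega}x\ge 0$: the paper writes $M^{(1)}_{\gamma,\omega}x = P_1Ax + N^{(1)}_{\gamma,\omega}x$ directly, while you use the equivalent decomposition $D_1-\gamma L_1=\gamma(D_1-L_1)+(1-\gamma)D_1$; both are valid.
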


\begin{proof}
By Lemma \ref{lem1-10}, $PA$ ia a nonsingular M-matrix. Hence
both AOR splittings $A = M_{\gamma,\omega} - N_{\gamma,\omega}$ and
$P_1A = M^{(1)}_{\gamma,\omega} - N^{(1)}_{\gamma,\omega}$ are weak regular.

Denote $\rho = \rho({\mathscr L}_{\gamma,\omega})$ and $x > 0$ being its associated eigenvector.
By Theorem \ref{thm3.2}, we have
$\rho({\mathscr L}^{(1)}_{\gamma,\omega}) \le \rho({\mathscr L}_{\gamma,\omega}) < 1$.

When ($i$) holds, the proof is completely same as the proof of
Theorem \ref{thm3.2}, by Theorem \ref{thm3.3} we can prove the required result.

Now, we prove ($ii$).

Since $A$ is an irreducible nonsingular M-matrix, then, by Lemma \ref{lem1-4}, $A^{-1}\gg 0$.
While by Lemma \ref{lem1-9}, $(P_1A)^{-1} > 0$. From $A^{-1} - (P_1A)^{-1} = (P_1A)^{-1}(P_1 - I)
= (P_1A)^{-1}Q_1 > 0$, it gets that $A^{-1} > (P_1A)^{-1}$.

By Lemma \ref{lem3-1} we just need to consider the case when $\gamma \le \omega$.
Then $N_{\gamma,\omega} > 0$ and $N^{(1)}_{\gamma,\omega} \ge 0$.
By Lemma \ref{lem1-8}, $\rho > 0$ and $x\gg 0$.
Then it is easy to prove that $M_{\gamma,\omega} > M^{(1)}_{\gamma,\omega}$, so that
$M_{\gamma,\omega}x > M^{(1)}_{\gamma,\omega}x$. Noticing that $Ax = (1/\rho - 1)N_{\gamma,\omega}x > 0$,
we have $M^{(1)}_{\gamma,\omega}x = P_1Ax + N^{(1)}_{\gamma,\omega}x > 0$.
Now we have proved that the condition of
Lemma \ref{lem1-13} is satisfied. By Lemma \ref{lem1-13} it follows that
$\rho({\mathscr L}^{(1)}_{\gamma,\omega}) < \rho({\mathscr L}_{\gamma,\omega}) < 1$.
\end{proof}

By the definitions of L-matrix and Z-matrix, the following two corollaries can be
derived from Theorems \ref{thm3.1} and \ref{thm3.2} directly.

\begin{corollary} \label{coro3-1}
Suppose that
$a^{(1)}_{i,i} > 0$, $a^{(1)}_{i,j} \le 0$, $i,j = 1, \cdots, n$, $i\not = j$.
Then Theorem A is valid for $\nu = 1$.
\end{corollary}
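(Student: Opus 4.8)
The plan is to recognize that the two stated entrywise sign conditions are precisely the defining properties of an L-matrix applied to the preconditioned matrix $A^{(1)} = P_1 A$, and then to invoke Theorem \ref{thm3.1} directly. There is essentially no computation to perform; the whole content is a matching of hypotheses against a definition.

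First I would recall from Section \ref{se2} that a matrix $B = (b_{i,j})$ is a Z-matrix when $b_{i,j} \le 0$ for all $i \neq j$, and that it is an L-matrix when it is moreover a Z-matrix with strictly positive diagonal entries. The hypothesis $a^{(1)}_{i,j} \le 0$ for $i \neq j$ says exactly that $A^{(1)} = P_1 A$ is a Z-matrix, while the hypothesis $a^{(1)}_{i,i} > 0$ for all $i$ supplies the positive-diagonal requirement. Taken together, these two conditions assert precisely that $P_1 A$ is an L-matrix. (Note that one cannot here appeal to the normalization ``all diagonal entries of $A$ equal $1$'' to collapse the L-matrix condition to the Z-matrix condition, since $P_1 A$ need not have unit diagonal; this is exactly why the separate assumption $a^{(1)}_{i,i} > 0$ is listed.)

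With $P_1 A$ identified as an L-matrix, the hypothesis of Theorem \ref{thm3.1} is met verbatim, and that theorem asserts that Theorem A holds for $\nu = 1$. Hence the corollary follows immediately. I expect no substantive obstacle: the only step with any content is the bookkeeping observation that the sign conditions on the entries of $A^{(1)}$ coincide with the definition of an L-matrix, after which the conclusion is a one-line appeal to Theorem \ref{thm3.1}.
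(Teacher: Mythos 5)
Your proof is correct and is precisely the paper's argument: the paper states that Corollary \ref{coro3-1} follows ``by the definitions of L-matrix and Z-matrix \ldots from Theorem \ref{thm3.1} directly,'' which is exactly your identification of the two sign conditions with $P_1A$ being an L-matrix followed by the appeal to Theorem \ref{thm3.1}. Your parenthetical remark about why the diagonal condition cannot be dropped (the unit-diagonal normalization applies to $A$, not to $P_1A$) is a correct and worthwhile clarification, but it does not change the route.
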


\begin{corollary} \label{coro3-2}
Suppose that
$a^{(1)}_{i,j} \le 0$, $i,j = 1,\cdots, n$, $i\not = j$.
Then Theorem B is valid for $\nu = 1$.
\end{corollary}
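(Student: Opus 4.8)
The plan is to recognize that this corollary is nothing more than Theorem \ref{thm3.2} restated in explicit entrywise form, so the entire task reduces to matching the stated hypothesis to the hypothesis of that theorem.

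First I would recall the definition of a Z-matrix from Section \ref{se2}: a matrix $B = (b_{i,j})$ is a Z-matrix precisely when $b_{i,j} \le 0$ for all $i \neq j$. The hypothesis of the corollary, $a^{(1)}_{i,j} \le 0$ for all $i,j$ with $i \neq j$, asserts exactly that the off-diagonal entries of $A^{(1)} = P_1 A = (a^{(1)}_{i,j})$ are nonpositive. By the notation fixed in (\ref{eqs3.1}) this says that $P_1 A$ is a Z-matrix, with no constraint on the diagonal entries $a^{(1)}_{i,i}$ required. With that identification, I would simply invoke Theorem \ref{thm3.2}: since $P_1 A$ is a Z-matrix and $A$ is, by the standing assumption attached to Theorem B, a nonsingular M-matrix, Theorem B holds for $\nu = 1$.

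There is essentially no obstacle here; the substance is entirely carried by Theorem \ref{thm3.2}, and the corollary merely translates its hypothesis into an elementary sign condition on the entries. The only point I would take care to flag is why no separate diagonal positivity is assumed, in contrast with Corollary \ref{coro3-1}: Lemma \ref{lem1-10} ensures that once $A$ is a nonsingular M-matrix and $P_1 A$ is a Z-matrix, the product $P_1 A$ is automatically a nonsingular M-matrix, hence an L-matrix with $a^{(1)}_{i,i} > 0$. Thus the Z-matrix hypothesis alone suffices, which is exactly why the M-matrix version of the corollary needs less than the L-matrix version.
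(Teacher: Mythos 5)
Your proof is correct and follows exactly the paper's route: the paper derives Corollary \ref{coro3-2} ``directly'' from Theorem \ref{thm3.2} by observing that the hypothesis $a^{(1)}_{i,j} \le 0$ for $i \neq j$ is precisely the statement that $P_1A$ is a Z-matrix. Your closing remark on why no diagonal condition is needed (via Lemma \ref{lem1-10}) is also the mechanism already built into the proof of Theorem \ref{thm3.2}, so nothing is missing.
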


In all of the following, for the case when $A$ is irreducible,
the symbol ``$\lesssim$" (``$\gtrsim$") indicates ``$\le$" (``$\ge$")
if $A^{(\nu)}$ is irreducible even when it appears ``$=$",
otherwise it is ``$<$" (``$>$").

\begin{corollary} \label{coro3-3}
Suppose that $a^{(1)}_{i,i} > 0$, $a^{(1)}_{i,j} \le 0$,
$i,j = 1, \cdots, n$, $i\not = j$.
Then Theorem C is valid for $\nu = 1$,
provided one of the following conditions is satisfied:

\begin{itemize}
\item[(i)] For $i,j = 1, \cdots, n$, $i\not = j$, $a^{(1)}_{i,j} \lesssim 0$ whenever $a_{i,j} < 0$.
One of the conditions $0 \le \gamma < 1$ and ($ii_1$)-($ii_9$) whenever $\gamma=1$
in Theorem \ref{thm3.3} holds.

\item[(ii)] One of the conditions ($iii$) and ($iv$) of Theorem \ref{thm3.3} holds.
\end{itemize}
\end{corollary}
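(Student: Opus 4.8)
The plan is to reduce the Corollary directly to Theorem \ref{thm3.3}, after observing that the hypothesis $a^{(1)}_{i,i}>0$, $a^{(1)}_{i,j}\le 0$ for $i\neq j$ is precisely the statement that $P_1A$ is an L-matrix, which is exactly the standing assumption of Theorem \ref{thm3.3}. For part (ii) there is nothing to do: conditions (iii) and (iv) of Theorem \ref{thm3.3} are reproduced verbatim, so the conclusion is immediate by that theorem. The whole substance of the proof therefore lies in part (i).

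For part (i) I would first unfold the standing convention for ``$\lesssim$''. The hypothesis ``$a^{(1)}_{i,j}\lesssim 0$ whenever $a_{i,j}<0$'' is really a disjunction: either $A^{(1)}=P_1A$ is irreducible, in which case ``$\lesssim$'' means ``$\le$'' and the condition is already guaranteed by the L-matrix assumption; or $A^{(1)}$ is reducible, in which case ``$\lesssim$'' means ``$<$'', i.e.\ $a^{(1)}_{i,j}<0$ at every position where $a_{i,j}<0$. The key step is to rule out the second alternative. Since $A$ is an irreducible L-matrix, its off-diagonal nonzero entries are exactly the $(i,j)$ with $a_{i,j}<0$, and these are precisely the edges of $G(A)$; irreducibility of $A$ means $G(A)$ is strongly connected. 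If $a^{(1)}_{i,j}<0$ held at each such position, then $G(A)\subseteq G(A^{(1)})$, so $G(A^{(1)})$ would also be strongly connected and $A^{(1)}$ irreducible, contradicting reducibility. Hence the reducible branch is impossible and $A^{(1)}$ must be irreducible.

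Once $A^{(1)}$ is known to be irreducible, I would finish by a straightforward case split on $\gamma$: when $0\le\gamma<1$ the hypotheses match condition (i) of Theorem \ref{thm3.3}, and when $\gamma=1$ with one of ($ii_1$)--($ii_9$) in force they match condition (ii); in either case Theorem \ref{thm3.3} shows that Theorem C holds for $\nu=1$, which is exactly the required mutually exclusive trichotomy.

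The only genuinely delicate point is the graph-theoretic contradiction in the middle step, where one must read the ``$\lesssim$'' convention correctly and recognize that demanding $P_1A$ to retain every negative off-diagonal entry of an irreducible $A$ forces $P_1A$ to inherit a strongly connected directed graph, hence irreducibility. Everything else is bookkeeping: identifying the L-matrix hypothesis and invoking the appropriate branch of Theorem \ref{thm3.3}.
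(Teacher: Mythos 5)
Your proof is correct and follows essentially the same route as the paper: observe that the hypothesis says exactly that $P_1A$ is an L-matrix, note that the ``$\lesssim$'' condition (read via the paper's convention) forces $P_1A$ to be irreducible because the graph of the irreducible matrix $A$ would otherwise embed into that of a supposedly reducible $P_1A$, and then invoke the corresponding branch of Theorem \ref{thm3.3}. The paper's own proof states the irreducibility step in one sentence without the graph argument; you have merely made that step explicit.
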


\begin{proof}
Clearly, the matrix $P_1A$ is an L-matrix. The condition $a^{(1)}_{i,j} \lesssim 0$
whenever $a_{i,j} < 0$ ensures that $P_1A$ is irreducible, since $A$ is
irreducible. This shows that the condition of Theorem \ref{thm3.3} is satisfied,
so that Theorem C is valid. \end{proof}

Similarly, the following corollary can be
derived from Theorem \ref{thm3.4} directly.

\begin{corollary} \label{coro3-4}
Suppose that
$a^{(1)}_{i,j} \le 0$, $i,j = 1,\cdots, n$, $i\not = j$.
Then Theorem D is valid for $\nu = 1$,
provided one of the conditions ($ii$) of Theorem \ref{thm3.4},
($i$) and ($ii$) of Corollary \ref{coro3-3} is satisfied.
\end{corollary}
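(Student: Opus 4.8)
The plan is to reduce the claim to the already-established Theorem \ref{thm3.4} and Corollary \ref{coro3-3}, exploiting the standing assumption that $A$ is an irreducible nonsingular M-matrix. First I would observe that the hypothesis $a^{(1)}_{i,j} \le 0$ for $i \ne j$ says precisely that $P_1A$ is a Z-matrix. Since $A$ is a nonsingular M-matrix and $P_1 = I + Q_1 > 0$ is nonsingular, Lemma \ref{lem1-10} yields that $P_1A$ is itself a nonsingular M-matrix, hence an L-matrix; in particular $a^{(1)}_{i,i} > 0$. Thus the blanket sign hypotheses ``$a^{(1)}_{i,i} > 0$, $a^{(1)}_{i,j} \le 0$'' required by Corollary \ref{coro3-3}, and the ``$P_1A$ is a Z-matrix'' required by Theorem \ref{thm3.4}, are both automatically in force under the weaker assumption stated here.

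Second, I would record the convergence of the unpreconditioned method: because $A$ is a nonsingular M-matrix, the AOR splitting (\ref{eqn1.4}) is weak regular, so by Lemma \ref{lem1-5} we have $\rho({\mathscr L}_{\gamma,\omega}) < 1$. This single fact is what upgrades the Stein-Rosenberg dichotomy of Theorem C into the strict two-sided bound of Theorem D, and it is the conceptual heart of the reduction.

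With these preliminaries in place, I would split into the three stated alternatives. If condition ($ii$) of Theorem \ref{thm3.4} holds, then since $P_1A$ is a Z-matrix the hypotheses of Theorem \ref{thm3.4} are met, and it delivers $\rho({\mathscr L}^{(1)}_{\gamma,\omega}) < \rho({\mathscr L}_{\gamma,\omega}) < 1$ directly, which is exactly Theorem D. If instead condition ($i$) or ($ii$) of Corollary \ref{coro3-3} holds, then, since $P_1A$ is an L-matrix obeying the required sign pattern, Corollary \ref{coro3-3} applies and yields Theorem C, i.e.\ one of the three mutually exclusive relations ($i$)--($iii$) of that theorem. Because $\rho({\mathscr L}_{\gamma,\omega}) < 1$ from the previous step, only relation ($i$) of Theorem C is compatible, giving $\rho({\mathscr L}^{(1)}_{\gamma,\omega}) < \rho({\mathscr L}_{\gamma,\omega}) < 1$, again Theorem D.

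The argument is essentially a bookkeeping reduction, so I do not anticipate a genuine obstacle. The one point requiring care is confirming that the hypotheses invoked above are truly implied by the present assumptions: Lemma \ref{lem1-10} is the crucial device supplying $a^{(1)}_{i,i} > 0$ from $P_1A$ being a Z-matrix, and the remaining structural requirements appearing inside the subcases of Corollary \ref{coro3-3} and inside condition ($ii$) of Theorem \ref{thm3.4} are precisely the alternatives being assumed, so no further verification of them is needed. Once this alignment is checked, the collapse of Theorem C to Theorem D via $\rho({\mathscr L}_{\gamma,\omega}) < 1$ completes the proof.
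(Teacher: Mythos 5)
Your proof is correct and follows essentially the same route as the paper, which obtains this corollary as a direct consequence of Theorem \ref{thm3.4}: the hypothesis $a^{(1)}_{i,j}\le 0$ makes $P_1A$ a Z-matrix, Lemma \ref{lem1-10} upgrades it to a nonsingular M-matrix (hence L-matrix with $a^{(1)}_{i,i}>0$), and the M-matrix property of $A$ gives $\rho({\mathscr L}_{\gamma,\omega})<1$, collapsing the Stein--Rosenberg trichotomy to the strict inequality of Theorem D. Your explicit passage through Corollary \ref{coro3-3} and Theorem C is exactly what the paper's proof of Theorem \ref{thm3.4}($i$) does internally (via Theorems \ref{thm3.2} and \ref{thm3.3}), so there is no substantive difference.
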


Furthermore, noticing (\ref{eqs3.2}), from Corollaries \ref{coro3-1}-\ref{coro3-4},
we give the following corollaries.

\begin{corollary} \label{coro3-5}
Suppose that $q^{(1)}_{i,j} \le -a_{i,j}$, $i,j = 1,\cdots, n$, $i\not = j$, and
\begin{eqnarray}\label{eqs3.11}
1 + \sum\limits_{{k = 1}\atop{k\not = i}}^nq^{(1)}_{i,k}a_{k,i} > 0, \; i = 1,\cdots, n.
 \end{eqnarray}
Then Theorem A is valid for $\nu = 1$.
\end{corollary}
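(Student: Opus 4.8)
The plan is to reduce this to Corollary \ref{coro3-1}, whose hypotheses are precisely that $P_1A$ is an L-matrix, i.e. $a^{(1)}_{i,i} > 0$ for all $i$ and $a^{(1)}_{i,j} \le 0$ for $i \neq j$. So the whole task is to verify these two sign conditions from the assumptions $q^{(1)}_{i,j} \le -a_{i,j}$ (for $i\neq j$) and the positivity requirement (\ref{eqs3.11}), using the explicit entry formulas (\ref{eqs3.2}). Throughout I would use the two standing facts from the start of Section \ref{se3}: since $A$ is an L-matrix with unit diagonal, $a_{k,j} \le 0$ whenever $k \neq j$; and by the definition of $Q_1$, the off-diagonal entries satisfy $q^{(1)}_{i,k} \ge 0$ while $q^{(1)}_{i,i} = 0$.

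First I would dispose of the diagonal condition. The second line of (\ref{eqs3.2}) states exactly $a^{(1)}_{i,i} = 1 + \sum_{k\neq i} q^{(1)}_{i,k} a_{k,i}$, which is verbatim the left-hand side of the assumed inequality (\ref{eqs3.11}). Hence the hypothesis (\ref{eqs3.11}) is nothing more than the statement $a^{(1)}_{i,i} > 0$ for each $i$, and the diagonal condition of Corollary \ref{coro3-1} holds with no further work.

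Next I would handle the off-diagonal entries. For $i \neq j$, the first line of (\ref{eqs3.2}) writes $a^{(1)}_{i,j} = \bigl(a_{i,j} + q^{(1)}_{i,j}\bigr) + \sum_{k\neq i,j} q^{(1)}_{i,k} a_{k,j}$. The bracketed term is $\le 0$ immediately from the hypothesis $q^{(1)}_{i,j} \le -a_{i,j}$. For the remaining sum, the only point needing attention is the index bookkeeping: because $k$ ranges over values distinct from both $i$ and $j$, each factor $q^{(1)}_{i,k}$ is an off-diagonal entry of $Q_1$ (so $\ge 0$) and each factor $a_{k,j}$ is an off-diagonal entry of $A$ (so $\le 0$), whence every product $q^{(1)}_{i,k} a_{k,j} \le 0$ and the sum is $\le 0$. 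Adding the two nonpositive pieces gives $a^{(1)}_{i,j} \le 0$.

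Having established both $a^{(1)}_{i,i} > 0$ and $a^{(1)}_{i,j} \le 0$, the matrix $P_1A$ is an L-matrix, so Corollary \ref{coro3-1} applies and Theorem A holds for $\nu = 1$. I do not expect any genuine obstacle here: the result is a direct sign computation, and the only place to be careful is confirming that in the off-diagonal sum the excluded index $k \neq j$ guarantees $a_{k,j}$ is a true off-diagonal entry of the L-matrix $A$ (and hence nonpositive), which is exactly what makes the products line up in the right direction.
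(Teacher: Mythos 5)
Your proof is correct and follows essentially the same route as the paper: both verify, via the entry formulas (\ref{eqs3.2}), that the hypotheses force $a^{(1)}_{i,i} > 0$ and $a^{(1)}_{i,j} \le 0$ for $i \neq j$, and then invoke Corollary \ref{coro3-1}. The only difference is expository — you spell out the sign reasoning for the sum $\sum_{k\neq i,j} q^{(1)}_{i,k}a_{k,j} \le 0$ that the paper leaves implicit.
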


\begin{proof} For $i,j = 1,\cdots, n$, we have
\begin{eqnarray*}
a^{(1)}_{i,j} = a_{i,j} + q^{(1)}_{i,j} + \sum\limits_{{k = 1}\atop{k\not = i,j}}^n q^{(1)}_{i,k}a_{k,j}
 \le a_{i,j} + q^{(1)}_{i,j} \le 0, \; i\not = j,
\end{eqnarray*}
and
\begin{eqnarray*}
a^{(1)}_{i,i} = 1 + \sum\limits_{{k = 1}\atop{k\not = i}}^nq^{(1)}_{i,k}a_{k,i} > 0, \; i = j.
\end{eqnarray*}
This shows that the condition of Corollary \ref{coro3-1} is
satisfied. Therefore Theorem A is valid.  \end{proof}

By Corollary \ref{coro3-2} and the proof of Corollary \ref{coro3-5}
we obtain the following corollary directly.

\begin{corollary} \label{coro3-6}
Suppose that $q^{(1)}_{i,j} \le -a_{i,j}$, $i,j = 1,\cdots, n$, $i\not = j$.
Then Theorem B is valid for $\nu = 1$.
\end{corollary}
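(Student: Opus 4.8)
The plan is to show that the single hypothesis $q^{(1)}_{i,j} \le -a_{i,j}$ forces $P_1A$ to be a Z-matrix, after which Corollary \ref{coro3-2} delivers Theorem B immediately. First I would verify the off-diagonal sign condition $a^{(1)}_{i,j} \le 0$ for $i \neq j$. Using the explicit formula (\ref{eqs3.2}) for $a^{(1)}_{i,j}$ together with the fact that $A$ is a nonsingular M-matrix, hence an L-matrix with $a_{k,j} \le 0$ for $k \neq j$, the correction term is a nonnegative combination of nonpositive entries, so I would estimate
\begin{eqnarray*}
a^{(1)}_{i,j} = a_{i,j} + q^{(1)}_{i,j} + \sum_{{k = 1}\atop{k \neq i,j}}^n q^{(1)}_{i,k} a_{k,j} \le a_{i,j} + q^{(1)}_{i,j} \le 0, \quad i \neq j,
\end{eqnarray*}
where the last step is exactly the assumed inequality $q^{(1)}_{i,j} \le -a_{i,j}$. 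This is precisely the off-diagonal estimate already carried out in the proof of Corollary \ref{coro3-5}.

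Once the off-diagonal entries are shown nonpositive, $P_1A$ is a Z-matrix, and the hypothesis of Corollary \ref{coro3-2} is met, so Theorem B holds for $\nu = 1$. The point worth emphasizing, and the only substantive difference from Corollary \ref{coro3-5}, is that no separate positivity condition on the diagonal (the counterpart of (\ref{eqs3.11})) is required here. Whereas Theorem A and Corollary \ref{coro3-1} need $P_1A$ to be a full L-matrix, that is, a Z-matrix with positive diagonal, Corollary \ref{coro3-2} only asks that $P_1A$ be a Z-matrix; Lemma \ref{lem1-10} then automatically upgrades it to a nonsingular M-matrix, since $A$ is one and $P_1 > 0$ is nonsingular, whence positivity of the diagonal comes for free.

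I do not anticipate any real obstacle: the argument is a one-line sign estimate followed by an application of Corollary \ref{coro3-2}. The only care needed is to confirm that the L-matrix property of $A$, equivalently that $A$ is a Z-matrix under the standing normalization of unit diagonal, justifies discarding the sum $\sum_{k \neq i,j} q^{(1)}_{i,k} a_{k,j} \le 0$; this holds because each $q^{(1)}_{i,k} \ge 0$ and each $a_{k,j} \le 0$.
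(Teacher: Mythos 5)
Your proposal is correct and follows exactly the paper's route: the off-diagonal sign estimate $a^{(1)}_{i,j} \le a_{i,j} + q^{(1)}_{i,j} \le 0$ is precisely the computation in the proof of Corollary \ref{coro3-5}, and the paper then likewise invokes Corollary \ref{coro3-2} (whose Z-matrix hypothesis needs no diagonal condition, since Lemma \ref{lem1-10} supplies the nonsingular M-matrix property). Your observation that dropping the counterpart of (\ref{eqs3.11}) is legitimate is exactly the point of the paper's one-line derivation.
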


\begin{corollary} \label{coro3-7}
Suppose that the condition of Corollary \ref{coro3-5} is satisfied.
Then Theorem C is valid for $\nu = 1$,
provided one of the following conditions is satisfied:

\begin{itemize}
\item[(i)] For $i,j = 1, \cdots, n$, $i\not = j$, $q^{(1)}_{i,j} \lesssim -a_{i,j}$ whenever $a_{i,j} < 0$.
One of the conditions
$0\le\gamma<1$ and ($ii_1$)-($ii_9$) whenever $\gamma=1$ in Theorem \ref{thm3.3} holds.

\item[(ii)] One of the conditions ($iii$) and ($iv$) of Theorem \ref{thm3.3} holds,
where the inequality (\ref{eqs3.6}) can be replaced by $q^{(1)}_{n,j} < -a_{n,j}$.
\end{itemize}
\end{corollary}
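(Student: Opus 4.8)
The plan is to reduce Corollary~\ref{coro3-7} to Corollary~\ref{coro3-3} by verifying, under the hypotheses of Corollary~\ref{coro3-5}, that all the structural conditions on the entries $a^{(1)}_{i,j}$ required by Corollary~\ref{coro3-3} are automatically satisfied. First I would recall that the proof of Corollary~\ref{coro3-5} already establishes, from $q^{(1)}_{i,j} \le -a_{i,j}$ (for $i \ne j$) together with (\ref{eqs3.11}), that
\begin{eqnarray*}
a^{(1)}_{i,j} \le a_{i,j} + q^{(1)}_{i,j} \le 0 \quad (i \ne j), \qquad a^{(1)}_{i,i} > 0,
\end{eqnarray*}
so that $P_1A$ is an L-matrix. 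This is exactly the standing hypothesis of Corollary~\ref{coro3-3}, so the only remaining task is to translate the two alternative sets of conditions.

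For part ($i$), I would show that the entrywise condition ``$q^{(1)}_{i,j} \lesssim -a_{i,j}$ whenever $a_{i,j} < 0$'' implies ``$a^{(1)}_{i,j} \lesssim 0$ whenever $a_{i,j} < 0$'' in the sense of the irreducibility-preserving symbol $\lesssim$ introduced before Corollary~\ref{coro3-3}. The point is that whenever $a_{i,j} < 0$, the inequality $q^{(1)}_{i,j} < -a_{i,j}$ (the strict case of $\lesssim$) forces $a^{(1)}_{i,j} \le a_{i,j} + q^{(1)}_{i,j} < 0$, and when equality $q^{(1)}_{i,j} = -a_{i,j}$ is allowed one still obtains $a^{(1)}_{i,j} \le 0$; in either case the prescribed sign pattern needed to preserve irreducibility of $P_1A$ from that of $A$ is retained, because the correction terms $\sum_{k \ne i,j} q^{(1)}_{i,k} a_{k,j}$ are nonpositive for an L-matrix and can only make $a^{(1)}_{i,j}$ more negative. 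Since the conditions $0 \le \gamma < 1$, and ($ii_1$)--($ii_9$) when $\gamma = 1$, are passed on verbatim, condition ($i$) of Corollary~\ref{coro3-3} then holds and Theorem~C follows.

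For part ($ii$), conditions ($iii$) and ($iv$) of Theorem~\ref{thm3.3} are already stated directly in terms of $q^{(1)}_{i,j}$ and the $a_{i,j}$, so they may simply be invoked; the only subtlety is the replacement of the inequality (\ref{eqs3.6}) by $q^{(1)}_{n,j} < -a_{n,j}$. Here I would verify that $q^{(1)}_{n,1} < -a_{n,1}$ implies (\ref{eqs3.6}): since $A$ is an L-matrix and $P_1A$ is a Z-matrix, the off-diagonal sum $\sum_{k=2}^{n-1} q^{(1)}_{n,k} a_{k,1}$ is nonpositive, whence
\begin{eqnarray*}
a_{n,1} + q^{(1)}_{n,1} + \sum_{k=2}^{n-1} q^{(1)}_{n,k} a_{k,1} \le a_{n,1} + q^{(1)}_{n,1} < 0,
\end{eqnarray*}
so the stronger-looking single-entry condition suffices. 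The main obstacle I anticipate is bookkeeping rather than conceptual: one must check carefully that the strict/nonstrict distinction encoded by $\lesssim$ is correctly propagated through the defining relation (\ref{eqs3.2}) for $a^{(1)}_{i,j}$, ensuring that strictness is never lost where irreducibility of $P_1A$ is needed. Once this entrywise translation is in hand, the result is an immediate application of Corollary~\ref{coro3-3}.
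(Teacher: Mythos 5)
Your proof is correct and follows essentially the same route as the paper: both arguments reduce the statement to Corollary \ref{coro3-3} via the entrywise estimates $a^{(1)}_{i,j} \le a_{i,j} + q^{(1)}_{i,j} \le 0$ and $a^{(1)}_{i,i} > 0$ from the proof of Corollary \ref{coro3-5}, and your check that $q^{(1)}_{n,1} < -a_{n,1}$ together with $\sum_{k=2}^{n-1} q^{(1)}_{n,k}a_{k,1} \le 0$ forces (\ref{eqs3.6}) is precisely the step the paper compresses into ``it is easy to prove that the condition of Corollary \ref{coro3-3} is satisfied.'' The only item in the paper's proof you omit is its (logically inessential) opening remark that irreducibility of $A$ yields some $j$ with $a_{n,j} < 0$, so that a $Q_1$ satisfying the replacement condition in (ii) can actually be chosen; this concerns satisfiability of the hypothesis, not the implication itself.
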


\begin{proof} Since $A$ is irreducible, then there exists $j\in\{1, \cdots, n-1\}$ such that
$a_{n,j} < 0$. So we can choose $Q_1$ such that $q^{(1)}_{n,j} < -a_{n,j}$ in ($ii$).

From the proof of Corollary \ref{coro3-5} it is easy to prove that
the condition of Corollary \ref{coro3-3} is satisfied. Therefore Theorem C is valid.
 \end{proof}

Similarly, by Corollary \ref{coro3-4} we can prove the following corollary directly.

\begin{corollary} \label{coro3-8}
Suppose that $q^{(1)}_{i,j} \le -a_{i,j}$, $i,j = 1,\cdots, n$, $i\not = j$.
Then Theorem D is valid for $\nu = 1$,
provided one of the conditions ($ii$) of Theorem \ref{thm3.4},
($i$) and ($ii$) of Corollary \ref{coro3-7} is satisfied.
\end{corollary}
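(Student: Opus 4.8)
The plan is to deduce the statement from Corollary \ref{coro3-4}, whose standing hypothesis is that $a^{(1)}_{i,j}\le 0$ for all $i\ne j$, i.e.\ that $P_1A$ is a Z-matrix. First I would verify that the present assumption $q^{(1)}_{i,j}\le -a_{i,j}$ ($i\ne j$) forces exactly this. Since $A$ is an L-matrix we have $a_{k,j}\le 0$ for $k\ne j$, and since the off-diagonal entries of $Q_1$ are nonnegative, the sum $\sum_{k\ne i,j}q^{(1)}_{i,k}a_{k,j}$ appearing in (\ref{eqs3.2}) is $\le 0$; combined with $a_{i,j}+q^{(1)}_{i,j}\le 0$ this yields $a^{(1)}_{i,j}\le 0$ for $i\ne j$, just as in the proof of Corollary \ref{coro3-5}. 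Hence $P_1A$ is a Z-matrix and the main hypothesis of Corollary \ref{coro3-4} is met.

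Next I would match the three admissible side conditions one to one. Condition ($ii$) of Theorem \ref{thm3.4} occurs verbatim in both statements, so nothing further is required there. It then remains to show that conditions ($i$) and ($ii$) of Corollary \ref{coro3-7} imply conditions ($i$) and ($ii$) of Corollary \ref{coro3-3}, respectively. For ($i$) the key point is that ``$q^{(1)}_{i,j}\lesssim -a_{i,j}$ whenever $a_{i,j}<0$'' implies ``$a^{(1)}_{i,j}\lesssim 0$ whenever $a_{i,j}<0$'': when the inequality is strict, (\ref{eqs3.2}) gives $a^{(1)}_{i,j}\le a_{i,j}+q^{(1)}_{i,j}<0$, so the corresponding edge of $G(P_1A)$ is retained, while the equality case is admitted precisely when $A^{(1)}=P_1A$ is irreducible, which is exactly the meaning of the symbol $\lesssim$. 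Since the trailing alternatives (the range of $\gamma$ together with ($ii_1$)--($ii_9$) of Theorem \ref{thm3.3}) are stated identically, condition ($i$) of Corollary \ref{coro3-7} reproduces condition ($i$) of Corollary \ref{coro3-3}. For ($ii$) the conditions ($iii$) and ($iv$) of Theorem \ref{thm3.3} are common to both; the only discrepancy is that the inequality (\ref{eqs3.6}) is replaced by $q^{(1)}_{n,j}<-a_{n,j}$, and I would show via (\ref{eqs3.88}) that this replacement forces $m^{(1)}_{n,j}<0$ for the relevant $j$, which is all that (\ref{eqs3.6}) is used to supply in the proof of Theorem \ref{thm3.3}($iv$).

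Having checked that $P_1A$ is a Z-matrix and that whichever alternative is assumed in the present statement implies the matching alternative in Corollary \ref{coro3-4}, the conclusion $\rho({\mathscr L}^{(1)}_{\gamma,\omega})<\rho({\mathscr L}_{\gamma,\omega})<1$ follows by a direct application of Corollary \ref{coro3-4}. I expect the main obstacle to be the careful bookkeeping around the symbol $\lesssim$: one must ensure that the equality case is only invoked when $P_1A$ is genuinely irreducible, so that the strongly connected digraph of $A$ is inherited by $P_1A$, and likewise that the replacement of (\ref{eqs3.6}) by $q^{(1)}_{n,j}<-a_{n,j}$ really delivers the strict sign needed for the last row. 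These are routine once the sign conventions are fixed, but they are precisely where an imprecise statement of the conditions could fail.
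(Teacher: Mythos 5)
Your proposal is correct and follows exactly the route the paper intends: the paper states Corollary \ref{coro3-8} follows ``by Corollary \ref{coro3-4} \dots directly,'' and your argument supplies precisely that reduction --- the computation from the proof of Corollary \ref{coro3-5} showing $q^{(1)}_{i,j}\le -a_{i,j}$ forces $a^{(1)}_{i,j}\le 0$, together with the condition-matching from the proof of Corollary \ref{coro3-7} (including the correct handling of $\lesssim$ and of the replacement of (\ref{eqs3.6}) by $q^{(1)}_{n,j}<-a_{n,j}$). No gaps; your bookkeeping on the irreducibility convention and on $m^{(1)}_{n,j}<0$ matches what the cited proofs actually use.
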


As a special case, in \cite{WS09} we propose $q^{(2)}_{i,j} = -\alpha_{i,j}a_{i,j}$ and get
 \begin{eqnarray*}
Q_2 = (-\alpha_{i,j}a_{i,j})
 \end{eqnarray*}
 with
\begin{eqnarray*}
 \alpha_{i,i} = 0, \; \alpha_{i,j} \ge 0, \; i, j = 1, \cdots, n, \; i\not = j,
  \;  \hbox{and} \;
\sum\limits_{{i,j = 1}\atop{i\not = j}}^n \alpha_{i,j}a_{i,j} \not = 0.
\end{eqnarray*}

Of course, when $a_{i,j} = 0$, the choice of $\alpha_{i,j}$ is meaningless.

In \cite{HN01}, two special preconditioners are proposed for the preconditioned Gauss-Seidel method,
where one is $\alpha_{i,j} = 1$, the other is $\alpha_{i,j} = 1+\alpha$ for $\alpha \ge 0$,
$i, j = 1, \cdots, n$, $i \not= j$.
In \cite{Li08,WZ11}, for the preconditioned Gauss-Seidel and AOR methods respectively, the authors consider the case when
$\alpha_{i,j} = \alpha \ge 0$ for $i > j$, $\alpha_{i,j} = \beta \ge 0$ for $i < j$, $i, j = 1, \cdots, n$,
with $\alpha + \beta \not = 0$.

Denote
\begin{eqnarray*}
\delta_{i,j}^{(2)}(\gamma) = \left\{\begin{array}{ll}
 \sum\limits_{{k = 1}\atop{k\not = i}}^n \alpha_{i,k}a_{i,k}a_{k,i}, &\;\; i = j = 1, \cdots, n;\\
 (\gamma-1)\alpha_{i,j}a_{i,j} + \gamma\sum\limits_{k = i+1}^{j-1} \alpha_{i,k}a_{i,k}a_{k,j}, &
  \begin{array}{l} i = 1,\cdots,n-1, \\
  j = i+1,\cdots,n;
  \end{array}\\
 (\gamma-1)\alpha_{i,j}a_{i,j} + \gamma\sum\limits_{{1 \le k \le j-1}\atop{i+1 \le k \le n}}\alpha_{i,k}a_{i,k}a_{k,j}, &
   \begin{array}{l} i = 2,\cdots,n,\\
    j = 1,\cdots, i-1
\end{array}\end{array}\right.
\end{eqnarray*}
and
\begin{eqnarray*}
\delta_{i,j}^{(2)}(1) = \left\{\begin{array}{ll}
 \sum\limits_{{1 \le k \le j-1}\atop{i+1 \le k \le n}}\alpha_{i,k}a_{i,k}a_{k,j}; &
  \begin{array}{l} i = 1,\cdots,n, j = 1,\cdots, i, \\
   (i, j) \not = (n, 1); \end{array}\\
 \sum\limits_{k = i+1}^{j-1} \alpha_{i,k}a_{i,k}a_{k,j}, &
 i = 1,\cdots,n-2, j = i+2,\cdots,n;\\
  0, & i = 1,\cdots,n-1, j = i+1;\\
 0, & i = n, j = 1.
\end{array}\right.
\end{eqnarray*}

Using Corollaries \ref{coro3-1}-\ref{coro3-4}, we prove corresponding comparison
theorems.

\begin{theorem}\label{thm3.5}
Suppose that $\sum_{k = 1,k\not = i}^n\alpha_{i,k}a_{i,k}a_{k,i} < 1$, $i = 1,\cdots,n$, and
\begin{eqnarray} \label{eqs3.12}
 (1 - \alpha_{i,j})a_{i,j} - \sum\limits_{{k = 1}\atop{k\not = i,j}}^n \alpha_{i,k}a_{i,k}a_{k,j}
 \le 0, \; i,j = 1,\cdots,n, \; i\not = j.
\end{eqnarray}
Then Theorem A is valid for $\nu = 2$.
\end{theorem}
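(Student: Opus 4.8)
The plan is to reduce Theorem~\ref{thm3.5} to Corollary~\ref{coro3-1} by verifying that the preconditioned matrix $P_2A = A^{(2)}$ is an L-matrix under the stated hypotheses.

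First I would record that $Q_2 = (-\alpha_{i,j}a_{i,j})$ is a particular instance of the general construction $Q_1$. Indeed, since $A$ is an L-matrix we have $a_{i,j} \le 0$ for $i\not = j$, so $q^{(2)}_{i,j} = -\alpha_{i,j}a_{i,j} \ge 0$ off the diagonal, while $q^{(2)}_{i,i} = 0$; moreover the requirement $\sum_{i\not = j}\alpha_{i,j}a_{i,j} \not = 0$ guarantees $Q_2 \not = 0$. Hence every statement established earlier for $\nu = 1$ transfers verbatim to $\nu = 2$, and in particular Corollary~\ref{coro3-1} applies once its hypotheses are checked for $A^{(2)}$.

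Next I would compute the entries of $A^{(2)}$ directly from (\ref{eqs3.2}) by substituting $q^{(2)}_{i,k} = -\alpha_{i,k}a_{i,k}$. For the diagonal this gives
\[
a^{(2)}_{i,i} = 1 + \sum\limits_{{k = 1}\atop{k\not = i}}^n q^{(2)}_{i,k}a_{k,i}
= 1 - \sum\limits_{{k = 1}\atop{k\not = i}}^n \alpha_{i,k}a_{i,k}a_{k,i},
\]
so the first hypothesis $\sum_{k\not = i}\alpha_{i,k}a_{i,k}a_{k,i} < 1$ is exactly $a^{(2)}_{i,i} > 0$. For the off-diagonal entries ($i\not = j$) the same substitution yields
\[
a^{(2)}_{i,j} = (1-\alpha_{i,j})a_{i,j} - \sum\limits_{{k = 1}\atop{k\not = i,j}}^n \alpha_{i,k}a_{i,k}a_{k,j},
\]
and the second hypothesis (\ref{eqs3.12}) is precisely $a^{(2)}_{i,j} \le 0$. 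Consequently $A^{(2)} = P_2A$ is a Z-matrix with positive diagonal, i.e.\ an L-matrix, and Corollary~\ref{coro3-1} (read with $\nu = 2$) delivers Theorem~A.

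I expect no genuine analytic obstacle here: the entire content is the algebraic identity matching the two hypotheses to the defining inequalities of an L-matrix, together with the observation that $Q_2$ sits inside the $Q_1$ framework so that Corollary~\ref{coro3-1} transfers. The only point requiring care is the bookkeeping of the index ranges in the sums---the exclusions $k\not = i$ and $k\not = i,j$---which must agree with those in (\ref{eqs3.2}); once this is handled, the proof reduces to a one-line appeal to Corollary~\ref{coro3-1}.
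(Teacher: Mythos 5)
Your proposal is correct and follows essentially the same route as the paper: the paper's proof likewise observes that the first hypothesis is exactly $a^{(2)}_{i,i} > 0$ and that (\ref{eqs3.12}) is exactly $a^{(2)}_{i,j} \le 0$ for $i \neq j$, and then invokes Corollary \ref{coro3-1}. Your additional check that $Q_2$ fits the $Q_1$ framework is sound bookkeeping that the paper leaves implicit.
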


\begin{proof} The inequality (\ref{eqs3.12}) shows that $a^{(2)}_{i,j} \le 0$,
$i,j = 1,\cdots,n$, $i\not = j$, and the inequality $\sum_{k = 1,k\not = i}^n\alpha_{i,k}a_{i,k}a_{k,i} < 1$ shows that
$a^{(2)}_{i,i} > 0$, $i = 1,\cdots,n$.

It has proved that the condition of Corollary \ref{coro3-1} is satisfied so that
Theorem A is valid.
  \end{proof}

\begin{theorem}\label{thm3.6}
Suppose that (\ref{eqs3.12}) holds.
Then Theorem B is valid for $\nu = 2$.
\end{theorem}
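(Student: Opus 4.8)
The plan is to treat Theorem~\ref{thm3.6} as the nonsingular M-matrix counterpart of Theorem~\ref{thm3.5} and to reduce it to Corollary~\ref{coro3-2} in exactly the way Theorem~\ref{thm3.5} was reduced to Corollary~\ref{coro3-1}. First I would record that, because $A$ is a nonsingular M-matrix, it is an L-matrix, so $a_{i,j}\le 0$ for $i\neq j$ and (under the normalisation $a_{i,i}=1$) the off-diagonal entries $q^{(2)}_{i,j}=-\alpha_{i,j}a_{i,j}\ge 0$, while $\alpha_{i,i}=0$ forces the diagonal of $Q_2$ to vanish and the side condition $\sum_{i\neq j}\alpha_{i,j}a_{i,j}\neq 0$ gives $Q_2>0$. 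Hence $Q_2$ is a genuine special case of the general preconditioner $Q_1$, and every statement proved for $\nu=1$ applies verbatim with $Q_1$ replaced by $Q_2$.

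Next I would compute the strictly off-diagonal entries of $A^{(2)}=P_2A$ from the general formula (\ref{eqs3.2}): substituting $q^{(2)}_{i,k}=-\alpha_{i,k}a_{i,k}$ gives, for $i\neq j$,
\begin{eqnarray*}
a^{(2)}_{i,j}=a_{i,j}+q^{(2)}_{i,j}+\sum_{{k=1}\atop{k\neq i,j}}^n q^{(2)}_{i,k}a_{k,j}
=(1-\alpha_{i,j})a_{i,j}-\sum_{{k=1}\atop{k\neq i,j}}^n \alpha_{i,k}a_{i,k}a_{k,j}.
\end{eqnarray*}
The hypothesis (\ref{eqs3.12}) is then precisely the assertion that $a^{(2)}_{i,j}\le 0$ for all $i\neq j$, that is, $P_2A$ is a Z-matrix.

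Finally I would invoke Corollary~\ref{coro3-2} (the $\nu=2$ reading obtained by taking $Q_1=Q_2$): since $A$ is a nonsingular M-matrix and $P_2A$ is a Z-matrix with $P_2>0$ nonsingular, the corollary yields $\rho({\mathscr L}^{(2)}_{\gamma,\omega})\le\rho({\mathscr L}_{\gamma,\omega})<1$, which is exactly Theorem~B for $\nu=2$. There is essentially no genuine obstacle here; the only points that need care are checking that $Q_2$ really meets the structural requirements of a $Q_1$-preconditioner (nonnegative off-diagonal, zero diagonal, not identically zero) so that the $\nu=1$ corollary legitimately transfers to $\nu=2$, and verifying that the algebraic identity above reproduces the left-hand side of (\ref{eqs3.12}) exactly. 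Note in particular that, unlike Theorem~\ref{thm3.5}, no diagonal condition $\sum_{k\neq i}\alpha_{i,k}a_{i,k}a_{k,i}<1$ is required, since for the M-matrix case Lemma~\ref{lem1-10} supplies the positivity of the diagonal of $P_2A$ automatically once $P_2A$ is known to be a Z-matrix.
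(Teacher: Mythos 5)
Your proposal is correct and follows the paper's own route: the paper likewise proves Theorem~\ref{thm3.6} by noting (via the computation in the proof of Theorem~\ref{thm3.5}) that (\ref{eqs3.12}) says exactly that the off-diagonal entries $a^{(2)}_{i,j}$ are nonpositive, i.e.\ $P_2A$ is a Z-matrix, and then invoking Corollary~\ref{coro3-2}. Your extra checks — that $Q_2$ legitimately instantiates $Q_1$, and that the diagonal condition of Theorem~\ref{thm3.5} is not needed here because Lemma~\ref{lem1-10} handles the diagonal — are exactly the details the paper leaves implicit.
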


\begin{proof}
From the proof of Theorem \ref{thm3.5} it can prove that the condition of
Corollary \ref{coro3-2} is satisfied. Therefore Theorem B is valid.  \end{proof}

\begin{theorem}\label{thm3.7}
Suppose that $\sum_{k = 1,k\not = i}^n\alpha_{i,k}a_{i,k}a_{k,i} < 1$, $i = 1,\cdots,n$.
Then Theorem C is valid for $\nu = 2$,
provided one of the following conditions is satisfied:

\begin{itemize}
\item[(i)] $0 \le \gamma < 1$ and
\begin{eqnarray} \label{eqs3.14} \nonumber
&& (1 - \alpha_{i,j})a_{i,j} - \sum\limits_{{k = 1}\atop{k\not = i,j}}^n \alpha_{i,k}a_{i,k}a_{k,j}
 \lesssim 0 \;\; \hbox{whenever} \;\; a_{i,j} < 0,\\
&& i,j = 1, \cdots, n, \; i\not = j.
\end{eqnarray}

\item[(ii)] $\gamma = 1$, the inequality (\ref{eqs3.14}) holds and one of the following conditions holds:

\begin{itemize}
\item[($ii_1$)]
There exist $i,j\in\{1, \cdots, n\}$ such that $\delta_{i,j}^{(2)}(1) > 0$.

\item[($ii_2$)]
$a_{n,1} < 0$ and $\alpha_{n,1} > 0$.

\item[($ii_3$)]
There exists $k\in\{1, \cdots, n-1\}$ such that
$a_{k,k+1} < 0$ and $\alpha_{k,k+1} > 0$.

\item[($ii_4$)]
There exist $i,j\in\{1, \cdots, n\}$ such that $\alpha_{i,j}a_{i,j}a_{j,i} > 0$.

\item[($ii_5$)]
There exist $i\in\{1, \cdots, n-1\}$ and $j\in\{1, \cdots, i\}$ such that
$\alpha_{i,n}a_{i,n}a_{n,j}$ $ > 0$.

\item[($ii_6$)]
There exist $i\in\{1, \cdots, n-1\}$ and $j\in\{i+1, \cdots, n\}$ such that
$\alpha_{i,j}a_{i,j}a_{j,1} < 0$.

\item[($ii_7$)]
There exist $i\in\{1, \cdots, n\}$ and $j\in\{1, \cdots, n-1\}$ such that
$\alpha_{i,j}a_{i,j}a_{j,j+1}$ $ > 0$.

\item[($ii_8$)]
There exist $i\in\{2, \cdots, n\}$ and $j\in\{2, \cdots, i\}$ such that
$\alpha_{i,1}a_{i,1}a_{1,j} > 0$.

\item[($ii_9$)]
$a_{n,1} < 0$ and $a_{k,k+1} < 0$, $k = 1, \cdots, n-1$.
\end{itemize}

\item[(iii)] $0 \le \gamma < 1$, the inequality (\ref{eqs3.12}) holds and
for each $i\in\{1, \cdots, n-1\}$
there exists $j(i)\in\{1, \cdots, n\}$ such that $\alpha_{i,j(i)}a_{i,j(i)} < 0$.

\item[(iv)] $\gamma = 1$, the inequality (\ref{eqs3.12}) holds and for each $i\in\{2, \cdots, n-1\}$
one of the following conditions holds:

\begin{itemize}
\item[($iv_1$)] There exists $j(i)\in\{1, \cdots, n\}$ such that
$\delta_{i,j(i)}^{(2)}(1) > 0$.

\item[($iv_2$)] $a_{i,i+1} < 0$ and $\alpha_{i,i+1} > 0$.

\item[($iv_3$)] There exists $j_i\in\{1, \cdots, n\}$ such that $\alpha_{i,j_i}a_{i,j_i}a_{j_i,i} > 0$.

\item[($iv_4$)] There exists $j_i\in\{1, \cdots, i\}$ such that $\alpha_{i,n}a_{i,n}a_{n,j_i} > 0$.

\item[($iv_5$)] There exists $j_i\in\{i+1, \cdots, n\}$ such that $\alpha_{i,j_i}a_{i,j_i}a_{j_i,1} > 0$.

\item[($iv_6$)] There exists
$j_i\in\{1, \cdots, n-1\}$ such that $\alpha_{i,j_i}a_{i,j_i}a_{j_i,j_i+1} > 0$.

\item[($iv_7$)] There exists $j_i\in\{2, \cdots, i\}$ such that $\alpha_{i,1}a_{i,1}a_{1,j_i} > 0$.
\end{itemize}

At the same time, one of the following conditions also holds:

\begin{itemize}
\item[($iv^a$)]
There exist $j\in\{2, \cdots, n\}$ and $k\in\{1, \cdots, j-1\}$ such that
$\alpha_{n,k}a_{n,k}a_{k,j}$ $ > 0$.

\item[($iv^b$)] $a_{n,1} < 0$ and $\alpha_{n,1} > 0$.

\item[($iv^c$)] There exists $j\in\{2, \cdots, n-1\}$ such that
\begin{eqnarray}\label{eqs3.15}
(1 - \alpha_{n,j})a_{n,j} - \sum\limits_{{k = 1}\atop{k\not = j}}^{n-1}\alpha_{n,k}a_{n,k}a_{k,j} < 0.
\end{eqnarray}

\item[($iv^d$)] One of the conditions ($iv_1$)-($iv_6$) holds for $i=1$ and
\begin{eqnarray}\label{eqs3.16}
(1 - \alpha_{n,1})a_{n,1} - \sum\limits_{k = 2}^{n-1}\alpha_{n,k}a_{n,k}a_{k,1} < 0.
\end{eqnarray}

\item[($iv^e$)] One of the conditions ($iv_1$)-($iv_6$) holds for $i=1$ and $a_{n,1} < 0$.
\end{itemize}\end{itemize}
\end{theorem}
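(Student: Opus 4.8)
The plan is to deduce Theorem~3.7 from the already-proved Theorem~\ref{thm3.3} and Corollary~\ref{coro3-3} by specializing the general preconditioner $Q_1$ to $Q_2$ through the dictionary $q^{(1)}_{i,j}=-\alpha_{i,j}a_{i,j}$. The first step is to record that this substitution is admissible: since $A$ is an L-matrix we have $a_{i,j}\le 0$ for $i\not=j$ and $\alpha_{i,j}\ge 0$, so $q^{(1)}_{i,j}=-\alpha_{i,j}a_{i,j}\ge 0$, which is exactly the sign requirement imposed on $Q_1$. Plugging $q^{(1)}_{i,j}=-\alpha_{i,j}a_{i,j}$ into the explicit formulas for $\delta^{(1)}_{i,j}(\gamma)$ and $\delta^{(1)}_{i,j}(1)$ reproduces termwise the quantities $\delta^{(2)}_{i,j}(\gamma)$ and $\delta^{(2)}_{i,j}(1)$ defined just before the theorem, so $\delta^{(2)}_{i,j}(\gamma)=\delta^{(1)}_{i,j}(\gamma)$ and every positivity statement for $\delta^{(2)}$ is literally a positivity statement for $\delta^{(1)}$. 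The same dictionary flips the signs in the elementwise hypotheses: $q^{(1)}_{i,j}>0$ corresponds to $\alpha_{i,j}>0$ together with $a_{i,j}<0$, while a condition of the form $q^{(1)}_{i,j}a_{k,\ell}<0$ corresponds to $\alpha_{i,j}a_{i,j}a_{k,\ell}>0$.

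Next I would verify that $P_2A=A^{(2)}$ is an irreducible L-matrix under the stated hypotheses. The standing assumption $\sum_{k\not=i}\alpha_{i,k}a_{i,k}a_{k,i}<1$ is precisely $a^{(2)}_{i,i}=1-\sum_{k\not=i}\alpha_{i,k}a_{i,k}a_{k,i}>0$. For an off-diagonal position with $a_{i,j}=0$ one gets $a^{(2)}_{i,j}=-\sum_{k\not=i,j}\alpha_{i,k}a_{i,k}a_{k,j}\le 0$ automatically, because each product $a_{i,k}a_{k,j}\ge 0$; for a position with $a_{i,j}<0$ the inequality~\ref{eqs3.14} (respectively~\ref{eqs3.12}) supplies $a^{(2)}_{i,j}\le 0$. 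Hence $P_2A$ is always a Z-matrix with positive diagonal, i.e. an L-matrix. For the branches where irreducibility of $P_2A$ is needed, the strict part ``$\lesssim 0$ whenever $a_{i,j}<0$'' in~\ref{eqs3.14} keeps every edge of $G(A)$ present in $G(P_2A)$, so that $P_2A$ inherits the irreducibility of $A$.

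Then I would match the four cases to their counterparts. Cases~$(i)$ and $(ii)$ feed into Corollary~\ref{coro3-3}$(i)$: the diagonal bound plus~\ref{eqs3.14} give the hypothesis ``$a^{(2)}_{i,j}\lesssim 0$ whenever $a_{i,j}<0$,'' the choice $0\le\gamma<1$ realizes Theorem~\ref{thm3.3}$(i)$, and for $\gamma=1$ each of the subconditions $(ii_2)$--$(ii_9)$ here becomes the corresponding $(ii_2)$--$(ii_9)$ of Theorem~\ref{thm3.3} after rewriting $q^{(1)}_{n,1}=-\alpha_{n,1}a_{n,1}$, $q^{(1)}_{i,j}a_{j,i}=-\alpha_{i,j}a_{i,j}a_{j,i}$, and so on, while $(ii_1)$ is identical by $\delta^{(2)}=\delta^{(1)}$; in each instance Lemma~\ref{lem3-3} then yields a strictly positive $\delta^{(2)}_{i,j}(1)$. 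Cases~$(iii)$ and $(iv)$ feed into Corollary~\ref{coro3-3}$(ii)$ via Theorem~\ref{thm3.3}$(iii)$,$(iv)$: inequality~\ref{eqs3.12} now forces $a^{(2)}_{i,j}\le 0$ for all $i\not=j$, the row conditions ``$\alpha_{i,j(i)}a_{i,j(i)}<0$'' and $(iv_2)$--$(iv_7)$ translate into ``$q^{(1)}_{i,j(i)}>0$'' and the row hypotheses of Theorem~\ref{thm3.3}$(iv)$, and the last-row conditions $(iv^a)$--$(iv^e)$ map onto their analogues there (with~\ref{eqs3.15},\ref{eqs3.16} becoming the corresponding $m^{(1)}_{n,\cdot}<0$ statements). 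Invoking the appropriate branch of Corollary~\ref{coro3-3} then delivers Theorem~C for $\nu=2$.

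The only real labor, and hence the main obstacle, is the bookkeeping of this case analysis: one must confirm that the index ranges and the sign conventions line up exactly between the $\alpha$-formulation used here and the $q^{(1)}$-formulation of Theorem~\ref{thm3.3} across roughly twenty subconditions, and that the automatic nonpositivity of $a^{(2)}_{i,j}$ at the zero positions of $A$ is never disturbed. No analytic input beyond the substitution $q^{(1)}_{i,j}=-\alpha_{i,j}a_{i,j}$, the identity $\delta^{(2)}=\delta^{(1)}$, and the structural results of Lemma~\ref{lem3-3} and Theorem~\ref{thm3.3} is required.
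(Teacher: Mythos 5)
Your proposal is correct and follows essentially the same route as the paper: the paper's proof likewise verifies that under the substitution $q^{(1)}_{i,j}=-\alpha_{i,j}a_{i,j}$ the diagonal bound gives $a^{(2)}_{i,i}>0$, that $a^{(2)}_{i,j}\le 0$ (automatically when $a_{i,j}=0$, via (\ref{eqs3.14}) or (\ref{eqs3.12}) when $a_{i,j}<0$), that $\delta^{(1)}_{i,j}$ reduces to $\delta^{(2)}_{i,j}$ with (\ref{eqs3.5}), (\ref{eqs3.6}) becoming (\ref{eqs3.15}), (\ref{eqs3.16}), and then invokes Corollary \ref{coro3-3}. Your sign dictionary even silently corrects the paper's typo in ($ii_6$), where the stated ``$<0$'' should read ``$>0$'' to match $q^{(1)}_{i,j}a_{j,1}<0$ in Theorem \ref{thm3.3}.
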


\begin{proof}
The inequality $\sum_{k = 1,k\not = i}^n\alpha_{i,k}a_{i,k}a_{k,i} < 1$ shows that $a^{(2)}_{i,i} > 0$,
$i = 1,\cdots,n$.
Now, $\delta_{i,j}^{(1)}(\gamma)$ reduces to $\delta_{i,j}^{(2)}(\gamma)$, (\ref{eqs3.5}) and
(\ref{eqs3.6}) reduce to (\ref{eqs3.15}) and (\ref{eqs3.16}), respectively.

For $i\not = j$, if $a_{i,j} = 0$ then
\begin{eqnarray*}
a^{(2)}_{i,j} = -\sum\limits_{{k = 1}\atop{k\not = i,j}}^n \alpha_{i,k}a_{i,k}a_{k,j}
 \le 0.
\end{eqnarray*}
When $a_{i,j} < 0$, the inequality (\ref{eqs3.14}) implies $a^{(2)}_{i,j} \lesssim 0$.

Now, we have proved that the condition of Corollary \ref{coro3-3} is satisfied.
Hence Theorem C is valid.
  \end{proof}

By Corollary \ref{coro3-4} it is easy to prove the following theorem.

\begin{theorem}\label{thm3.8}
Theorem D is valid for $\nu = 2$,
provided one of the following conditions is satisfied:

\begin{itemize}
\item[(i)] One of the conditions ($i$)-($iv$) of Theorem \ref{thm3.7} holds.

\item[(ii)] The inequality (\ref{eqs3.12}) holds. For $i = 2,\cdots,n$, $j = 1,\cdots,i-1$,
\begin{eqnarray*}
 \alpha_{i,j}a_{i,j} + \sum\limits_{{k = 1}\atop{k\not = i,j}}^n \alpha_{i,k}a_{i,k}a_{k,j} \ge 0.
\end{eqnarray*}
And one of the following conditions holds:

\begin{itemize}
\item[($ii_1$)] There exists $i_0\in\{1,\cdots,n\}$ such that
\begin{eqnarray*}
\sum\limits_{{k = 1}\atop{k\not = i_0}}^n \alpha_{i_0,k}a_{i_0,k}a_{k,i_0} > 0
 \end{eqnarray*}

\item[($ii_2$)] $\gamma > 0$ and there exist $i_0\in\{2,\cdots,n\}$, $j_0\in\{1,\cdots,i_0-1\}$ such that
 \begin{eqnarray*}
\alpha_{i_0,j_0}a_{i_0,j_0} + \sum\limits_{{k = 1}\atop{k\not = i_{0},j_0}}^n \alpha_{i_0,k}a_{i_0,k}a_{k,j_0} > 0.
\end{eqnarray*}
\end{itemize}\end{itemize}
\end{theorem}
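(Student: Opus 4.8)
The plan is to read Theorem \ref{thm3.8} as the specialization of the general $\nu=1$ results to the choice $q^{(1)}_{i,j}=-\alpha_{i,j}a_{i,j}$ defining $Q_2$, and then to invoke Corollary \ref{coro3-4}. First I would substitute this choice into (\ref{eqs3.2}) to record the entries of $A^{(2)}=P_2A$,
\begin{eqnarray*}
a^{(2)}_{i,j}=(1-\alpha_{i,j})a_{i,j}-\sum\limits_{{k = 1}\atop{k\not = i,j}}^n\alpha_{i,k}a_{i,k}a_{k,j},\quad i\not = j,
\qquad
a^{(2)}_{i,i}=1-\sum\limits_{{k = 1}\atop{k\not = i}}^n\alpha_{i,k}a_{i,k}a_{k,i}.
\end{eqnarray*}
With these formulas the inequality (\ref{eqs3.12}) is exactly $a^{(2)}_{i,j}\le 0$ for $i\not = j$, i.e.\ $P_2A$ is a Z-matrix. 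Since $A$ is an irreducible nonsingular M-matrix, Lemma \ref{lem1-10} then makes $P_2A$ a nonsingular M-matrix, so that $a^{(2)}_{i,i}>0$ comes for free. This places us squarely within the hypotheses of Corollary \ref{coro3-4}.

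For condition (i) I would note that whenever one of (i)--(iv) of Theorem \ref{thm3.7} holds, that theorem already delivers Theorem C for $\nu=2$, i.e.\ one of its three mutually exclusive Stein--Rosenberg alternatives. Because $A$ is a nonsingular M-matrix, Lemma \ref{lem1-5} gives $\rho({\mathscr L}_{\gamma,\omega})<1$, which excludes the two alternatives with $\rho({\mathscr L}_{\gamma,\omega})\ge 1$ and forces $\rho({\mathscr L}^{(2)}_{\gamma,\omega})<\rho({\mathscr L}_{\gamma,\omega})<1$. This is precisely Theorem D, so the case is immediate and needs no new computation.

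For condition (ii) the task is to match the stated hypotheses with the strict-comparison condition ``(ii) of Theorem \ref{thm3.4}'' that Corollary \ref{coro3-4} admits. Using the entry formulas above I would verify three translations: the hypothesis $a_{i,j}\ge a^{(1)}_{i,j}$ for $i>j$ becomes $\alpha_{i,j}a_{i,j}+\sum_{k\not = i,j}\alpha_{i,k}a_{i,k}a_{k,j}\ge 0$; the hypothesis $a^{(1)}_{i_0,i_0}<1$ becomes $\sum_{k\not = i_0}\alpha_{i_0,k}a_{i_0,k}a_{k,i_0}>0$; and the hypothesis $a_{i_0,j_0}>a^{(1)}_{i_0,j_0}$ becomes $\alpha_{i_0,j_0}a_{i_0,j_0}+\sum_{k\not = i_0,j_0}\alpha_{i_0,k}a_{i_0,k}a_{k,j_0}>0$. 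These are exactly the inequalities written in (ii), ($ii_1$) and ($ii_2$) of the present theorem, so Corollary \ref{coro3-4} applies and, through the strict comparison of Lemma \ref{lem1-13}, yields $\rho({\mathscr L}^{(2)}_{\gamma,\omega})<\rho({\mathscr L}_{\gamma,\omega})<1$.

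The step I expect to be the only genuine obstacle is purely bookkeeping: keeping the index ranges in the sums $\sum_{k\not = i,j}$ consistent through the substitution, so that the quantities $\delta^{(2)}_{i,j}(\gamma)$ really do coincide with the specializations of the $\delta^{(1)}_{i,j}(\gamma)$ of Lemma \ref{lem3-3}, including the boundary positions (the terms $k=1$, $k=n$ and the separately treated entry $(i,j)=(n,1)$). Once the substitution is carried out cleanly, no analytic idea beyond Corollary \ref{coro3-4} is needed.
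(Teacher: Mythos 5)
Your proposal is correct and takes essentially the same route as the paper: the paper's entire proof is to invoke Corollary~\ref{coro3-4} (i.e., Theorem~\ref{thm3.4} together with Theorem~\ref{thm3.7}) under the substitution $q^{(2)}_{i,j}=-\alpha_{i,j}a_{i,j}$, which is exactly what you do. Your entrywise translations --- (\ref{eqs3.12}) into $a^{(2)}_{i,j}\le 0$ for $i\neq j$, the condition in ($ii$) into $a_{i,j}\ge a^{(2)}_{i,j}$ for $i>j$, ($ii_1$) into $a^{(2)}_{i_0,i_0}<1$, and ($ii_2$) into $a_{i_0,j_0}>a^{(2)}_{i_0,j_0}$ --- are precisely the hypotheses that corollary requires, and your handling of case ($i$) via the Stein--Rosenberg trichotomy plus $\rho({\mathscr L}_{\gamma,\omega})<1$ mirrors the paper's own argument for Theorem~\ref{thm3.4}($i$).
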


Since
\begin{eqnarray*}
(1 - \alpha_{i,j})a_{i,j} - \sum\limits_{{k = 1}\atop{k\not = i,j}}^n \alpha_{i,k}a_{i,k}a_{k,j}
 \le (1 - \alpha_{i,j})a_{i,j}, \; i\not = j,
\end{eqnarray*}
then from Theorems \ref{thm3.5}-\ref{thm3.8}, we can prove the following corollaries, directly.

\begin{corollary}\label{coro3-9}
Suppose that $0 \le \alpha_{i,j} \le 1$, $i,j = 1,\cdots,n$, $i\not = j$,
and \\$\sum_{k = 1,k\not = i}^n\alpha_{i,k}a_{i,k}a_{k,i} < 1$,
$i = 1,\cdots,n$. Then Theorem A is valid for $\nu = 2$.
\end{corollary}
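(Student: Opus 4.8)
The plan is to show that both hypotheses of Theorem \ref{thm3.5} with $\nu = 2$ are automatically satisfied under the assumptions of the corollary, and then simply invoke that theorem. The second hypothesis of Theorem \ref{thm3.5}, namely $\sum_{k=1,k\ne i}^n \alpha_{i,k}a_{i,k}a_{k,i} < 1$ for $i = 1,\cdots,n$, is assumed verbatim in the corollary, so no work is needed there. All the content lies in verifying inequality (\ref{eqs3.12}), i.e.\ that $(1 - \alpha_{i,j})a_{i,j} - \sum_{k=1,k\ne i,j}^n \alpha_{i,k}a_{i,k}a_{k,j} \le 0$ for $i \ne j$.

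First I would record the sign information coming from the standing assumptions of Section \ref{se3}. Since $A$ is an L-matrix normalized to have unit diagonal, every off-diagonal entry satisfies $a_{i,j} \le 0$ for $i \ne j$. Consequently, for any index $k$ distinct from both $i$ and $j$, the product $a_{i,k}a_{k,j}$ is a product of two nonpositive numbers and is therefore nonnegative; combined with $\alpha_{i,k} \ge 0$ this gives $\alpha_{i,k}a_{i,k}a_{k,j} \ge 0$. This is exactly the observation underlying the inequality $(1 - \alpha_{i,j})a_{i,j} - \sum_{k=1,k\ne i,j}^n \alpha_{i,k}a_{i,k}a_{k,j} \le (1 - \alpha_{i,j})a_{i,j}$ already noted immediately before the corollaries.

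Next I would bring in the new hypothesis $0 \le \alpha_{i,j} \le 1$. It yields $1 - \alpha_{i,j} \ge 0$, while $a_{i,j} \le 0$ as above, so $(1 - \alpha_{i,j})a_{i,j} \le 0$. Chaining this with the displayed inequality gives $(1 - \alpha_{i,j})a_{i,j} - \sum_{k=1,k\ne i,j}^n \alpha_{i,k}a_{i,k}a_{k,j} \le 0$ for all $i \ne j$, which is precisely (\ref{eqs3.12}). With both hypotheses of Theorem \ref{thm3.5} now in force, the conclusion that Theorem A is valid for $\nu = 2$ follows immediately.

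I expect no genuine obstacle here, since the corollary is a direct specialization of Theorem \ref{thm3.5} and its entire substance is the sign bookkeeping above. The only point requiring a moment's care is confirming that each summand $\alpha_{i,k}a_{i,k}a_{k,j}$ is nonnegative; this relies essentially on the unit-diagonal L-matrix normalization adopted at the start of Section \ref{se3}, without which the products $a_{i,k}a_{k,j}$ would lack a definite sign and the reduction to (\ref{eqs3.12}) would break down.
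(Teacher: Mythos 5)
Your proposal is correct and matches the paper's argument exactly: the paper derives these corollaries from the one-line observation that $(1-\alpha_{i,j})a_{i,j} - \sum_{k\ne i,j}\alpha_{i,k}a_{i,k}a_{k,j} \le (1-\alpha_{i,j})a_{i,j} \le 0$ under the L-matrix sign structure and $0\le\alpha_{i,j}\le 1$, which is precisely your verification of (\ref{eqs3.12}) followed by an appeal to Theorem \ref{thm3.5}.
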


For the special case when
$\alpha_{i,j} = \alpha \ge 0$ for $i > j$, $\alpha_{i,j} = \beta \ge 0$ for $i < j$, $i = 1, \cdots, n$,
the result is better than the corresponding one given by \cite[Theorem 3.1, Corollaries 3.2, 3.3]{WZ11},
where the assumption that $A$ is irreducible is redundant.

\begin{corollary}\label{coro3-10}
Suppose that $0 \le \alpha_{i,j} \le 1$, $i,j = 1,\cdots,n$, $i\not = j$.
Then Theorem B is valid for $\nu = 2$.
\end{corollary}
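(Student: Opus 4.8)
The plan is to reduce Corollary \ref{coro3-10} to Theorem \ref{thm3.6}, whose hypothesis is exactly inequality (\ref{eqs3.12}). Since $A$ is a nonsingular M-matrix it is an L-matrix, and under the standing normalization of Section \ref{se3} this means $a_{i,i} = 1$ and $a_{i,j} \le 0$ for all $i \ne j$. The only thing left to verify is that the admissible window $0 \le \alpha_{i,j} \le 1$ forces (\ref{eqs3.12}) to hold; everything else has already been packaged in Theorem \ref{thm3.6} (and, further upstream, in Corollary \ref{coro3-2} and Theorem \ref{thm3.2}).

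First I would check the two sign facts that make (\ref{eqs3.12}) transparent. For the cross term, whenever $k \ne i,j$ both $a_{i,k} \le 0$ and $a_{k,j} \le 0$, so their product is nonnegative; combined with $\alpha_{i,k} \ge 0$ this gives $\alpha_{i,k}a_{i,k}a_{k,j} \ge 0$, whence $-\sum_{k \ne i,j}\alpha_{i,k}a_{i,k}a_{k,j} \le 0$. For the leading term, $1 - \alpha_{i,j} \ge 0$ because $\alpha_{i,j} \le 1$, and $a_{i,j} \le 0$ because $i \ne j$, so $(1-\alpha_{i,j})a_{i,j} \le 0$. This is precisely the content of the displayed inequality placed just before the corollaries, which bounds the left-hand side of (\ref{eqs3.12}) by $(1-\alpha_{i,j})a_{i,j}$.

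Adding the two nonpositive quantities shows that the left-hand side of (\ref{eqs3.12}) is $\le 0$ for every pair $i \ne j$, i.e. (\ref{eqs3.12}) holds. I would then invoke Theorem \ref{thm3.6} to conclude that Theorem B is valid for $\nu = 2$.

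There is no genuine obstacle here: the entire difficulty sits upstream, in the comparison machinery (Lemmas \ref{lem1-11}--\ref{lem1-13} together with Theorem \ref{thm3.2}) that Theorem \ref{thm3.6} already absorbs. The only point requiring a moment's care is confirming that the summation range $k \ne i,j$ guarantees both factors $a_{i,k}$ and $a_{k,j}$ are genuinely off-diagonal, hence nonpositive; once that is noted, the sign bookkeeping is immediate and the corollary follows directly from $0 \le \alpha_{i,j} \le 1$.
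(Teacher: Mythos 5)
Your proof is correct and takes essentially the same route as the paper: the paper deduces Corollaries \ref{coro3-9}--\ref{coro3-12} from Theorems \ref{thm3.5}--\ref{thm3.8} using precisely the bound $(1-\alpha_{i,j})a_{i,j} - \sum_{k\not=i,j}\alpha_{i,k}a_{i,k}a_{k,j} \le (1-\alpha_{i,j})a_{i,j}$, which together with $0\le\alpha_{i,j}\le 1$ and $a_{i,j}\le 0$ yields (\ref{eqs3.12}) and hence Theorem \ref{thm3.6}. Your explicit check that the summation range $k\not=i,j$ makes both factors off-diagonal (hence nonpositive) is exactly the sign bookkeeping the paper leaves implicit, so there is no gap.
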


The result is consistent with \cite[Theorem 2.7]{WS09} and
it is better than the corresponding one given by \cite[Theorem 2.2]{WZ11},
where there are problems in the expression.

\begin{corollary}\label{coro3-11}
Suppose that $\sum_{k = 1,k\not = i}^n\alpha_{i,k}a_{i,k}a_{k,i} < 1$, $i = 1,\cdots,n$.
Then Theorem C is valid for $\nu = 2$,
provided one of the following conditions is satisfied:

\begin{itemize}
\item[(i)]
For $i,j = 1, \cdots, n$, $i\not = j$, $0 \le \alpha_{i,j} \lesssim 1$.
One of the conditions
$0\le\gamma<1$ and ($ii_1$)-($ii_9$) whenever $\gamma=1$ in Theorem \ref{thm3.7} holds.

\item[(ii)]
One of the conditions ($iii$) and ($iv$) of Theorem \ref{thm3.7} holds, where the inequality
(\ref{eqs3.12}) will be replaced by
$0 \le \alpha_{i,j} \le 1$, $i,j = 1, \cdots, n$, $i\not = j$.
\end{itemize}
\end{corollary}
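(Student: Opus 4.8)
The plan is to derive Corollary~\ref{coro3-11} directly from Theorem~\ref{thm3.7} by checking that the hypotheses imposed here on the coefficients $\alpha_{i,j}$ force the inequality hypotheses (\ref{eqs3.12}) and (\ref{eqs3.14}) used in that theorem. Recall that for $\nu=2$ one has $q^{(2)}_{i,j}=-\alpha_{i,j}a_{i,j}$ and, by (\ref{eqs3.2}), $a^{(2)}_{i,j}=(1-\alpha_{i,j})a_{i,j}-\sum_{k\neq i,j}\alpha_{i,k}a_{i,k}a_{k,j}$ for $i\neq j$. The whole argument rests on the elementary bound displayed just before the corollaries, so the first thing I would establish is that bound in the present setting: since $A$ is an irreducible L-matrix with unit diagonal, every off-diagonal entry is $\le 0$ and $\alpha_{i,k}\ge 0$, so each term $\alpha_{i,k}a_{i,k}a_{k,j}$ with $k\neq i,j$ is a product of one nonnegative and two nonpositive factors and hence is nonnegative. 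Consequently $-\sum_{k\neq i,j}\alpha_{i,k}a_{i,k}a_{k,j}\le 0$ and therefore $a^{(2)}_{i,j}\le(1-\alpha_{i,j})a_{i,j}$.

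With this bound in hand, the remaining steps are bookkeeping. For case (ii) of the corollary I would note that when $0\le\alpha_{i,j}\le 1$ we have $1-\alpha_{i,j}\ge 0$ and $a_{i,j}\le 0$, so $(1-\alpha_{i,j})a_{i,j}\le 0$; combined with the bound this gives $a^{(2)}_{i,j}\le 0$, which is precisely inequality (\ref{eqs3.12}). Thus the weaker hypothesis $0\le\alpha_{i,j}\le 1$ may legitimately replace (\ref{eqs3.12}) in conditions (iii) and (iv) of Theorem~\ref{thm3.7}, and case (ii) follows at once.

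For case (i) I would track the strict-versus-nonstrict information carried by the symbol $\lesssim$. When $a_{i,j}<0$ and $0\le\alpha_{i,j}\lesssim 1$, the same computation yields $a^{(2)}_{i,j}\lesssim 0$, i.e. inequality (\ref{eqs3.14}) holds whenever $a_{i,j}<0$; the case split in the corollary then matches that of Theorem~\ref{thm3.7} verbatim, with ``$0\le\gamma<1$'' corresponding to part~(i) of that theorem and ``$\gamma=1$ together with one of $(ii_1)$--$(ii_9)$'' to part~(ii). Since $a^{(2)}_{i,i}>0$ follows directly from the standing assumption $\sum_{k\neq i}\alpha_{i,k}a_{i,k}a_{k,i}<1$, all hypotheses of Theorem~\ref{thm3.7} are met and Theorem~C is valid for $\nu=2$. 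The only genuinely delicate point is the meaning of $\lesssim$: I would have to verify that the convention $0\le\alpha_{i,j}\lesssim 1$ really guarantees that $P_2A$ remains irreducible, so that the ``$\le$''-reading of $\lesssim$ in (\ref{eqs3.14}) is the correct one; everything else is a mechanical substitution of $q^{(2)}_{i,j}=-\alpha_{i,j}a_{i,j}$ into the already-proved Theorem~\ref{thm3.7}.
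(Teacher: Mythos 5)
Your proof is correct and takes essentially the same route as the paper: the paper obtains Corollaries \ref{coro3-9}--\ref{coro3-12} from Theorems \ref{thm3.5}--\ref{thm3.8} using precisely your key bound, namely $(1-\alpha_{i,j})a_{i,j}-\sum_{k\not=i,j}\alpha_{i,k}a_{i,k}a_{k,j}\le(1-\alpha_{i,j})a_{i,j}$ for $i\not=j$, and then checks that $0\le\alpha_{i,j}\le 1$ (resp.\ $0\le\alpha_{i,j}\lesssim 1$) yields (\ref{eqs3.12}) (resp.\ (\ref{eqs3.14})) so that Theorem \ref{thm3.7} applies. The $\lesssim$ point you flag needs no extra verification: under the paper's convention both the hypothesis and (\ref{eqs3.14}) are read weakly exactly when $A^{(2)}$ is irreducible, while under the strict reading $\alpha_{i,j}<1$ gives $a^{(2)}_{i,j}<0$ whenever $a_{i,j}<0$, which itself forces $P_2A$ to inherit the irreducibility of $A$.
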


\begin{corollary}\label{coro3-12}
Theorem D is valid for $\nu = 2$,
provided one of the conditions ($ii$) of Theorem \ref{thm3.8},
($i$) and ($ii$) of Corollary \ref{coro3-11} is satisfied.
\end{corollary}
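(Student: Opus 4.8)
The plan is to treat the three admissible hypotheses of the corollary separately and, in each case, reduce the assertion to a result already established for $\nu = 2$. The single unifying observation is that when $A$ is an irreducible nonsingular M-matrix it is in particular an irreducible L-matrix, and the (unpreconditioned) AOR splitting $A = M_{\gamma,\omega} - N_{\gamma,\omega}$ is weak regular and convergent; hence by Lemma \ref{lem1-5} (equivalently, by Theorem B as proved in Theorem \ref{thm3.2}) one has $\rho({\mathscr L}_{\gamma,\omega}) < 1$. This strict bound is exactly what forces the Stein--Rosenberg trichotomy to degenerate into the single strict relation required by Theorem D.

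First I would dispose of conditions ($i$) and ($ii$) of Corollary \ref{coro3-11}. Each of these supplies precisely the hypotheses under which Corollary \ref{coro3-11} already asserts Theorem C for $\nu = 2$, so that one of the three mutually exclusive relations ($i$)--($iii$) of Theorem C holds between $\rho({\mathscr L}^{(2)}_{\gamma,\omega})$ and $\rho({\mathscr L}_{\gamma,\omega})$. Since $\rho({\mathscr L}_{\gamma,\omega}) < 1$, the relations ($ii$) and ($iii$) of Theorem C are impossible, leaving only case ($i$), namely $\rho({\mathscr L}^{(2)}_{\gamma,\omega}) < \rho({\mathscr L}_{\gamma,\omega}) < 1$. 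This is exactly the conclusion of Theorem D.

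The remaining hypothesis, condition ($ii$) of Theorem \ref{thm3.8}, requires no further argument: it is by definition one of the sufficient conditions under which Theorem \ref{thm3.8} itself already establishes Theorem D for $\nu = 2$ (there obtained through the strict comparison of Lemma \ref{lem1-13} using the monotone and M-matrix structure). Thus the conclusion is immediate, and assembling the three cases completes the proof.

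I expect the only genuine care to lie in the bookkeeping rather than in any new analytic idea. Before invoking Corollary \ref{coro3-11} one must verify that $P_2A$ is honestly a Z-matrix, i.e. $a^{(2)}_{i,j} \le 0$ for $i \neq j$; this follows from $0 \le \alpha_{i,j} \le 1$ together with the inequality $(1 - \alpha_{i,j})a_{i,j} - \sum_{k\neq i,j}\alpha_{i,k}a_{i,k}a_{k,j} \le (1 - \alpha_{i,j})a_{i,j} \le 0$, valid on an L-matrix, after which Lemma \ref{lem1-10} promotes $P_2A$ to a nonsingular M-matrix. One must also confirm the irreducibility of $P_2A$ wherever Theorem C demands it, which is precisely what the convention ``$\lesssim$'' is designed to encode. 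The main potential pitfall is therefore simply aligning each clause of the stated hypotheses with the correct clause of Theorem \ref{thm3.7}, Theorem \ref{thm3.8}, and Corollary \ref{coro3-11}; there is no substantive obstacle beyond this matching, since the only new logical step---the collapse of the trichotomy under $\rho({\mathscr L}_{\gamma,\omega}) < 1$---is routine.
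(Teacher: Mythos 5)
Your proof is correct and follows essentially the same route as the paper: the paper derives Corollary \ref{coro3-12} directly from Theorem \ref{thm3.8} (whose own proof, via Theorem \ref{thm3.4}, is precisely the collapse of the Theorem C trichotomy under $\rho({\mathscr L}_{\gamma,\omega})<1$ for a nonsingular M-matrix), which is the same mechanism you invoke through Corollary \ref{coro3-11}. Your bookkeeping remarks (that $0\le\alpha_{i,j}\le 1$ forces $P_2A$ to be a Z-matrix, hence by Lemma \ref{lem1-10} a nonsingular M-matrix with positive diagonal, recovering the preamble hypothesis of Corollary \ref{coro3-11}) match the inequality the paper records just before Corollaries \ref{coro3-9}--\ref{coro3-12}.
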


\subsection{Lower triangular preconditioners}

Let
\begin{eqnarray*}
\alpha_{i,j} = 0, \; i = 1, \cdots, n, j \ge i.
\end{eqnarray*}
Then $Q_2$ reduces to
\begin{eqnarray*}
Q_3 = \left(\begin{array}{ccccc}
0 & 0 &  \cdots & 0 & 0\\
-\alpha_{2,1}a_{2,1} & 0  & \cdots & 0 & 0\\
-\alpha_{3,1}a_{3,1} & -\alpha_{3,2}a_{3,2}   & \ddots & 0 & 0 \\
\vdots & \vdots   & \ddots & \ddots & \vdots\\
-\alpha_{n,1}a_{n,1} & -\alpha_{n,2}a_{n,2} &   \cdots &
-\alpha_{n,n-1}a_{n,n-1} & 0
\end{array}\right)
 \end{eqnarray*}
with $\alpha_{i,j} \ge 0$, $i = 2, \cdots, n$, $j < i$, and
\begin{eqnarray*}
\sum\limits_{i = 2}^n\sum\limits_{j = 1}^{i-1}\alpha_{i,j}a_{i,j} \not = 0.
\end{eqnarray*}

\begin{theorem}\label{thm3.9}
Suppose that $\sum_{k = 1}^{i-1}\alpha_{i,k}a_{i,k}a_{k,i} < 1$, $i = 2,\cdots,n$, and
\begin{eqnarray} \label{eqs3.17}
(1 - \alpha_{i,j})a_{i,j} - \sum\limits_{{k = 1}\atop{k\not = j}}^{i-1}\alpha_{i,k}a_{i,k}a_{k,j}
 \le 0, \; i = 2,\cdots,n, j < i.
\end{eqnarray}
Then Theorem A is valid for $\nu = 3$.
\end{theorem}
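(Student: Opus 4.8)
The plan is to reduce everything to the L-matrix criterion already established. Observe that $Q_3$ is the special instance of $Q_1$ (equivalently of $Q_2$) obtained by taking $q^{(3)}_{i,j}=-\alpha_{i,j}a_{i,j}$ with $\alpha_{i,j}=0$ whenever $j\ge i$, so that $Q_3$ is strictly lower triangular. Consequently the entrywise formula (\ref{eqs3.2}) applies with $q^{(1)}$ replaced by $q^{(3)}$, and since $A$ has unit diagonal, proving that $A^{(3)}=P_3A$ is an L-matrix amounts to checking $a^{(3)}_{i,i}>0$ and $a^{(3)}_{i,j}\le 0$ for $i\ne j$. Once this is done, Theorem A for $\nu=3$ follows at once from Corollary \ref{coro3-1}.

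First I would handle the diagonal. Because $q^{(3)}_{i,k}$ is nonzero only for $k<i$, formula (\ref{eqs3.2}) gives $a^{(3)}_{i,i}=1-\sum_{k=1}^{i-1}\alpha_{i,k}a_{i,k}a_{k,i}$, which equals $1$ when $i=1$; the hypothesis $\sum_{k=1}^{i-1}\alpha_{i,k}a_{i,k}a_{k,i}<1$ for $i=2,\cdots,n$ then yields $a^{(3)}_{i,i}>0$ throughout.

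Next I would split the off-diagonal entries according to the sign of $j-i$. For $j<i$, formula (\ref{eqs3.2}) collapses, using $\alpha_{i,k}=0$ for $k\ge i$, to exactly the left-hand side of (\ref{eqs3.17}), namely $a^{(3)}_{i,j}=(1-\alpha_{i,j})a_{i,j}-\sum_{k=1,\,k\ne j}^{i-1}\alpha_{i,k}a_{i,k}a_{k,j}$, so that $a^{(3)}_{i,j}\le 0$ is precisely the imposed inequality. The one point requiring care is the case $j>i$, which hypothesis (\ref{eqs3.17}) does not mention: here $q^{(3)}_{i,j}=0$ and (\ref{eqs3.2}) reduces to $a^{(3)}_{i,j}=a_{i,j}-\sum_{k=1}^{i-1}\alpha_{i,k}a_{i,k}a_{k,j}$. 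Since $A$ is an L-matrix we have $a_{i,j}\le 0$, and for every $k$ with $k<i<j$ both $a_{i,k}$ and $a_{k,j}$ are off-diagonal entries of $A$, hence nonpositive, so each product $a_{i,k}a_{k,j}\ge 0$; as $\alpha_{i,k}\ge 0$, every subtracted term is nonnegative and $a^{(3)}_{i,j}\le 0$ holds automatically from the structure of $A$, with no extra assumption needed.

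Having verified $a^{(3)}_{i,i}>0$ and $a^{(3)}_{i,j}\le 0$ for all $i\ne j$, the matrix $P_3A$ is an L-matrix, and Corollary \ref{coro3-1} delivers Theorem A for $\nu=3$. The argument is essentially routine index bookkeeping; the only genuine obstacle is recognizing why (\ref{eqs3.17}) is imposed only for $j<i$, namely that the upper-triangular off-diagonal entries are controlled for free by the L-matrix property of $A$, so that no hypothesis on them is required.
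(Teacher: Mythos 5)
Your proof is correct and follows essentially the same route as the paper: the paper verifies that the hypotheses of Theorem \ref{thm3.5} (the $\nu=2$ case) hold — with the same key observation that for $j>i$ the inequality is automatic because each $\alpha_{i,k}a_{i,k}a_{k,j}\ge 0$ — and Theorem \ref{thm3.5} itself is just the L-matrix check plus Corollary \ref{coro3-1}, which is exactly what you carried out directly.
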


\begin{proof}
For $i,j = 1, \cdots, n$, we have
\begin{eqnarray*}
\sum\limits_{{k = 1}\atop{k\not = i,j}}^n\alpha_{i,k}a_{i,k}a_{k,j}
 = \sum\limits_{{k = 1}\atop{k\not = j}}^{i-1}\alpha_{i,k}a_{i,k}a_{k,j}.
\end{eqnarray*}

If $j > i$ then
\begin{eqnarray*}
(1 - \alpha_{i,j})a_{i,j} - \sum\limits_{{k = 1}\atop{k\not = i,j}}^n\alpha_{i,k}a_{i,k}a_{k,j}
 = a_{i,j} - \sum\limits_{k = 1}^{i-1}\alpha_{i,k}a_{i,k}a_{k,j}
 \le a_{i,j} \le 0.
\end{eqnarray*}

This proves that the condition of Theorem \ref{thm3.5} is satisfied, so that Theorem A is valid. \end{proof}

Similarly, by Theorem \ref{thm3.6} we can prove the following theorem.

\begin{theorem}\label{thm3.10}
Suppose that (\ref{eqs3.17}) holds.
Then Theorem B is valid for $\nu = 3$.
\end{theorem}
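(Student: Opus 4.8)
The plan is to follow the proof of Theorem \ref{thm3.6} essentially verbatim, exploiting the fact that $Q_3$ is nothing but the lower-triangular specialization of $Q_2$, obtained by imposing $\alpha_{i,j} = 0$ for $j \ge i$. Under this restriction every sign computation used for $\nu = 2$ carries over unchanged to $\nu = 3$. Since Theorem B concerns the M-matrix case, by Lemma \ref{lem1-10} it suffices to verify that the off-diagonal entries of the preconditioned matrix are nonpositive, i.e. that $P_3A$ is a Z-matrix; the diagonal positivity is then automatic and no separate hypothesis on it is required. This is precisely why Theorem \ref{thm3.10} asks for \emph{less} than Theorem \ref{thm3.9}: the condition $\sum_{k=1}^{i-1}\alpha_{i,k}a_{i,k}a_{k,i} < 1$ that secured $a^{(3)}_{i,i} > 0$ in the previous theorem may now be dropped entirely.

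Concretely, I would observe that the proof of Theorem \ref{thm3.9} has already shown that hypothesis (\ref{eqs3.17}) forces $a^{(3)}_{i,j} \le 0$ for every $i \ne j$. Indeed, for $j < i$ this inequality is exactly (\ref{eqs3.17}) itself, while for $j > i$ the lower-triangular structure gives $\alpha_{i,j} = 0$, so that $a^{(3)}_{i,j} = a_{i,j} - \sum_{k=1}^{i-1}\alpha_{i,k}a_{i,k}a_{k,j} \le a_{i,j} \le 0$, where the correction term is nonnegative because each $\alpha_{i,k}a_{i,k}a_{k,j}$ is a nonnegative weight times a product of two nonpositive off-diagonal entries of the L-matrix $A$. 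Thus $P_3A$ is a Z-matrix, which is exactly the content of hypothesis (\ref{eqs3.12}) for this choice of $\alpha$, i.e. the hypothesis of Theorem \ref{thm3.6}. Invoking Theorem \ref{thm3.6} (equivalently Corollary \ref{coro3-2}, via Lemma \ref{lem1-10}, which upgrades the Z-matrix $P_3A$ to a nonsingular M-matrix) then yields $\rho({\mathscr L}^{(3)}_{\gamma,\omega}) \le \rho({\mathscr L}_{\gamma,\omega}) < 1$, which is Theorem B for $\nu = 3$.

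I do not anticipate any genuine obstacle, since all of the sign bookkeeping was discharged in the proof of Theorem \ref{thm3.9} and the present target is strictly weaker. The one point meriting a single line of care is the super-diagonal range $j > i$, where one must confirm that discarding the $(1-\alpha_{i,j})a_{i,j}$ term does not spoil the sign; this is immediate from $\alpha_{i,j} = 0$ there, so that the entry reduces to $a_{i,j}$ minus a nonnegative correction. The proof therefore collapses to the remark that the off-diagonal conclusion of Theorem \ref{thm3.9} is already the full hypothesis of Theorem \ref{thm3.6}.
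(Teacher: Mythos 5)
Your proof is correct and follows the paper's own route: the paper proves Theorem \ref{thm3.10} by noting (via the computation already done in the proof of Theorem \ref{thm3.9}) that hypothesis (\ref{eqs3.17}) implies hypothesis (\ref{eqs3.12}) of Theorem \ref{thm3.6}, exactly as you do, with the superdiagonal entries handled by $\alpha_{i,j}=0$ and the sign of the correction term. Your remark on why the diagonal condition can be dropped (Lemma \ref{lem1-10} upgrading the Z-matrix $P_3A$ to a nonsingular M-matrix) is also precisely the mechanism underlying Theorem \ref{thm3.6} and Corollary \ref{coro3-2}.
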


In this case, since $\alpha_{i,j} = 0$ for $i \le j$, then $\delta_{1,j}^{(2)}(\gamma) = 0$,
$j = 1, \cdots, n$, so that the conditions ($ii_3$), ($ii_5$),
($ii_6$), ($iii$), ($iv_2$), ($iv_4$), ($iv_5$), ($iv^d$) and ($iv^e$) in Theorem \ref{thm3.7} can be not satisfied.

Now, $\delta_{i,j}^{(2)}(1)$ reduces to
\begin{eqnarray*}
\delta_{i,j}^{(3)}(1) = \left\{\begin{array}{ll}
 \sum\limits_{k = 1}^{j-1} \alpha_{i,k}a_{i,k}a_{k,j}, &
 i = 2,\cdots,n, j = 2,\cdots,i;\\
 0, & otherwise.
\end{array}\right.
\end{eqnarray*}

\begin{theorem}\label{thm3.11}
Suppose that $\sum_{k = 1}^{i-1}\alpha_{i,k}a_{i,k}a_{k,i} < 1$, $i = 2,\cdots,n$.
Then Theorem C is valid for $\nu = 3$,
provided one of the following conditions is satisfied:

\begin{itemize}
\item[(i)] $0 \le \gamma < 1$ and
\begin{eqnarray} \label{eqs3.19}\nonumber
&& (1 - \alpha_{i,j})a_{i,j} - \sum\limits_{{k = 1}\atop{k\not = j}}^{i-1} \alpha_{i,k}a_{i,k}a_{k,j}
 \lesssim 0  \;\; \hbox{whenever} \;\; a_{i,j} < 0,\\
 && i = 2,\cdots,n, \; j < i.
\end{eqnarray}

\item[(ii)] $\gamma = 1$, (\ref{eqs3.19}) holds and
one of the following conditions holds:

\begin{itemize}
\item[($ii_1$)] There exist $i\in\{2, \cdots, n\}$, $j\in\{2, \cdots, i\}$ and
$k\in\{1, \cdots, j-1\}$ such that $\alpha_{i,k}a_{i,k}a_{k,j} > 0$.

\item[($ii_2$)]
$a_{n,1} < 0$ and $\alpha_{n,1} > 0$.

\item[($ii_3$)]
$a_{k,k+1} < 0$, $k = 1, \cdots, n-1$.
\end{itemize}

\item[(iii)] $\gamma = 1$, the inequality (\ref{eqs3.17}) holds. For each $i\in\{2, \cdots, n-1\}$ there exist
$j(i)\in\{2, \cdots, i\}$ and $k(i)\in\{1, \cdots, j(i)-1\}$ such that $\alpha_{i,k(i)}a_{i,k(i)}a_{k(i),j(i)} > 0$.
And one of the following conditions holds:

\begin{itemize}
\item[($iii_1$)]
There exist $j\in\{2, \cdots, n\}$ and $k\in\{1, \cdots, j-1\}$ such that
$\alpha_{n,k}a_{n,k}a_{k,j}$ $ > 0$.

\item[($iii_2$)] $a_{n,1} < 0$ and $\alpha_{n,1} > 0$.

\item[($iii_3$)] There exists $j\in\{2, \cdots, n-1\}$ such that
\begin{eqnarray*}
(1 - \alpha_{n,j})a_{n,j} - \sum\limits_{{k = 1}\atop{k\not = j}}^{n-1}\alpha_{n,k}a_{n,k}a_{k,j} < 0.
\end{eqnarray*}
\end{itemize}\end{itemize}
\end{theorem}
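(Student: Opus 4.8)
The plan is to derive Theorem \ref{thm3.11} from Theorem \ref{thm3.7} in exactly the way Theorems \ref{thm3.9} and \ref{thm3.10} were obtained from Theorems \ref{thm3.5} and \ref{thm3.6}: since $Q_3$ is nothing but $Q_2$ subject to the extra restriction $\alpha_{i,j}=0$ for $j\ge i$, the preconditioner $P_3$ and the iteration matrix ${\mathscr L}^{(3)}_{\gamma,\omega}$ coincide with the $\nu=2$ objects for this special choice of the $\alpha_{i,j}$. Hence it suffices to check that each of the hypotheses (i)--(iii) stated here forces one of the applicable hypotheses of Theorem \ref{thm3.7}; the conclusion that Theorem C holds for $\nu=3$ is then immediate.

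First I would record the structural consequences of the lower-triangular restriction. Because $\alpha_{i,k}=0$ whenever $k\ge i$, the diagonal of $A^{(3)}=P_3A$ satisfies $a^{(3)}_{i,i}=1-\sum_{k=1}^{i-1}\alpha_{i,k}a_{i,k}a_{k,i}$, so the standing hypothesis $\sum_{k=1}^{i-1}\alpha_{i,k}a_{i,k}a_{k,i}<1$ gives $a^{(3)}_{i,i}>0$. For a strictly upper entry ($j>i$) one has $a^{(3)}_{i,j}=a_{i,j}-\sum_{k=1}^{i-1}\alpha_{i,k}a_{i,k}a_{k,j}\le a_{i,j}\le 0$ automatically, with strict inequality when $a_{i,j}<0$; for a strictly lower entry ($j<i$) the sign condition (\ref{eqs3.17}) (resp.\ (\ref{eqs3.19})) supplies $a^{(3)}_{i,j}\le 0$ (resp.\ $\lesssim 0$). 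Thus $A^{(3)}$ is an irreducible L-matrix, the $\lesssim$ in (\ref{eqs3.19}) being precisely the device that records preservation of irreducibility, so the premises of Theorem \ref{thm3.7} are in place. These $a^{(3)}_{i,j}$ and the reduced quantities $\delta^{(3)}_{i,j}(1)$ are exactly the $a^{(2)}_{i,j}$ and $\delta^{(2)}_{i,j}(1)$ of Theorem \ref{thm3.7} evaluated at this lower-triangular $\alpha$, so verifying the conditions of Theorem \ref{thm3.7} amounts to verifying relations among the displayed $\nu=3$ quantities; moreover the first row of $\Phi(1,1)$ vanishes, which is harmless because for $\gamma=1$ the comparison rests on Lemma \ref{lem1-12}, which only needs the second through $n$th rows.

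Next I would match the three cases. Case (i) maps onto condition (i) of Theorem \ref{thm3.7}: the upper off-diagonal part of (\ref{eqs3.14}) is automatic by the computation above, and the lower part is exactly (\ref{eqs3.19}). Case (ii) maps onto its condition (ii): here ($ii_1$) yields $\delta^{(3)}_{i,j}(1)\ge\alpha_{i,k}a_{i,k}a_{k,j}>0$ and is thus the $\delta^{(3)}>0$ alternative ($ii_1$) of Theorem \ref{thm3.7}, while ($ii_2$) is literally its ($ii_2$). Case (iii) maps onto its condition (iv): the per-row hypothesis produces $\delta^{(3)}_{i,j(i)}(1)>0$, i.e.\ alternative ($iv_1$), for every $i\in\{2,\dots,n-1\}$, and ($iii_1$), ($iii_2$), ($iii_3$) are respectively ($iv^a$), ($iv^b$), ($iv^c$); the alternatives ($iv^d$), ($iv^e$) are vacuous, since they demand one of ($iv_1$)--($iv_6$) at $i=1$, impossible once the first row of $Q_3$ is zero. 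After this translation I simply invoke Theorem \ref{thm3.7}.

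The step needing the most care is case ($ii_3$), which assumes only $a_{k,k+1}<0$ for all $k$ and, unlike the superficially similar ($ii_9$) of Theorem \ref{thm3.7}, does \emph{not} also assume $a_{n,1}<0$. Here the reduction is not a verbatim match; instead I would exploit that $Q_3\neq 0$, i.e.\ $\sum_{i>j}\alpha_{i,j}a_{i,j}\neq 0$, to choose indices $i_0>k_0$ with $\alpha_{i_0,k_0}>0$ and $a_{i_0,k_0}<0$. Taking $j=k_0+1\le i_0$ and using $a_{k_0,k_0+1}<0$ then gives $\delta^{(3)}_{i_0,k_0+1}(1)\ge\alpha_{i_0,k_0}a_{i_0,k_0}a_{k_0,k_0+1}>0$, so ($ii_3$) collapses to the $\delta^{(3)}>0$ alternative ($ii_1$). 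This appeal to the nonvanishing of the preconditioner, in place of an extra sign assumption on $a_{n,1}$, is the only genuinely new ingredient; everything else is the bookkeeping already exercised for Theorems \ref{thm3.9} and \ref{thm3.10}.
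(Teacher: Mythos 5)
Your proposal is correct and takes essentially the same route as the paper: both reduce the statement to Theorem \ref{thm3.7} by viewing $Q_3$ as the lower-triangular specialization of $Q_2$ (so that the standing hypothesis, (\ref{eqs3.17})/(\ref{eqs3.19}) together with the automatic sign of the upper off-diagonal entries, puts the premises of Theorem \ref{thm3.7} in force), and both match (i), ($ii_1$), ($ii_2$) and (iii) to conditions (i), ($ii_1$), ($ii_2$) and ($iv_1$) with ($iv^a$)--($iv^c$) of that theorem. The only divergence is ($ii_3$), where the paper derives ($ii_1$) by bounding the triple sum $\sum_{i,j,k}\alpha_{i,k}a_{i,k}a_{k,j}$ below by $\max_{1\le k\le n-1}\{a_{k,k+1}\}\sum_{i>j}\alpha_{i,j}a_{i,j}>0$, while you exhibit the single positive term $\alpha_{i_0,k_0}a_{i_0,k_0}a_{k_0,k_0+1}$ directly from $Q_3\neq 0$; both rest on exactly the same two facts, so your pointwise selection is a cosmetic (slightly cleaner) variant of the paper's summation argument rather than a different proof.
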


\begin{proof}
By Theorem \ref{thm3.7}, ($i$) and ($ii_2$) are obvious.

When ($ii_3$) holds, we have $\max\limits_{1 \le k \le n-1}\{a_{k,k+1}\} < 0$ and so that
\begin{eqnarray*}
\sum\limits_{i = 2}^{n}\sum\limits_{j = 2}^{i}\sum\limits_{k = 1}^{j-1}\alpha_{i,k}a_{i,k}a_{k,j}
& \ge & \sum\limits_{i = 2}^{n}\sum\limits_{j = 2}^{i}\alpha_{i,j-1}a_{i,j-1}a_{j-1,j} \\
& \ge & \max\limits_{1 \le k \le n-1}\{a_{k,k+1}\}\sum\limits_{i = 2}^{n}\sum\limits_{j = 1}^{i-1}\alpha_{i,j}a_{i,j} \\
& > & 0,
\end{eqnarray*}
which implies that ($ii_1$) holds.

When ($ii_1$) holds we have that $\delta_{i,j}^{(3)}(1) \ge \alpha_{i,k}a_{i,k}a_{k,j} > 0$,
i.e., the conditions ($ii_1$) in Theorem \ref{thm3.7} holds, so that Theorem C is valid.

Now, we prove ($iii$). Clearly, $\delta_{i,j(i)}^{(3)}(1) \ge
\alpha_{i,k(i)}a_{i,k(i)}a_{k(i),j(i)} > 0$, which implies
that the condition ($iv_1$) in Theorem \ref{thm3.7} holds.
The required result follows by ($iv^a$), ($iv^b$) and ($iv^c$) in Theorem \ref{thm3.7}, immediately.
\end{proof}

The later part of the condition ($ii_1$) is equivalent to there exist positive elements in
lower triangular part of the matrix $Q_3U$ except the first column.

For ($ii_1$) we can choose some special $\{i,j,k\}$ to construct $Q_3$, e.g., $j = i$,
$k = 1$, $k = j-1$, etc.

Similarly, by Theorem \ref{thm3.8} we can prove the following result immediately.

\begin{theorem}\label{thm3.12}
Theorem D is valid for $\nu = 3$,
provided one of the conditions ($i$), ($ii$) and ($iii$) of Theorem \ref{thm3.11} is satisfied.
\end{theorem}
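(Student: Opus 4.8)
The plan is to obtain Theorem~D for $\nu=3$ as an immediate consequence of the Stein--Rosenberg trichotomy already established in Theorem~\ref{thm3.11}, in precisely the way Theorem~\ref{thm3.8}$(i)$ is read off from Theorem~\ref{thm3.7} and Theorem~\ref{thm3.4}$(i)$ from Theorem~\ref{thm3.3}. The guiding observation is that Theorem~D is nothing more than case~$(i)$ of the Theorem~C trichotomy, singled out by the extra information that the unpreconditioned AOR iteration already converges when $A$ is a nonsingular M-matrix.

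First I would invoke Theorem~\ref{thm3.11}. Since $A$ is an irreducible nonsingular M-matrix it is in particular an irreducible L-matrix, and each of the hypotheses $(i)$, $(ii)$, $(iii)$ is to be read within the standing setup of Theorem~\ref{thm3.11}: the numerical assumption $\sum_{k=1}^{i-1}\alpha_{i,k}a_{i,k}a_{k,i}<1$ (which forces $a^{(3)}_{i,i}>0$) together with the inequalities (\ref{eqs3.17})/(\ref{eqs3.19}) (which force $a^{(3)}_{i,j}\le 0$ for $i\neq j$) are in force, so that $P_3A$ is an L-matrix. Consequently Theorem~\ref{thm3.11} applies and yields exactly one of the three mutually exclusive relations $(i)$--$(iii)$ of Theorem~C, with $\nu=3$.

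The remaining step is to discard the last two alternatives. Exactly as in the proof of Theorem~\ref{thm3.2}, because $A$ is a nonsingular M-matrix the AOR splitting (\ref{eqn1.4}) is weak regular, so by Lemma~\ref{lem1-5} it is convergent, i.e.\ $\rho({\mathscr L}_{\gamma,\omega})<1$. This strict inequality is incompatible with cases $(ii)$ and $(iii)$ of the trichotomy, leaving only case $(i)$, namely $\rho({\mathscr L}^{(3)}_{\gamma,\omega})<\rho({\mathscr L}_{\gamma,\omega})<1$, which is precisely the assertion of Theorem~D for $\nu=3$.

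I do not anticipate a genuine obstacle, since the entire argument recycles machinery that is already assembled and the passage from the trichotomy to the strict inequality is the same one-line deduction used for Theorems~\ref{thm3.4} and~\ref{thm3.8}. The only point demanding mild care is the bookkeeping: checking that each of conditions $(i)$--$(iii)$ of Theorem~\ref{thm3.11} indeed keeps $P_3A$ a Z-matrix (so that Theorem~C is legitimately available), and noting that it is the convergence of $A$ itself, rather than of $P_3A$, that collapses the trichotomy. Once these are verified the conclusion is immediate.
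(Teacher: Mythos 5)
Your proposal is correct and uses essentially the same mechanism as the paper: the paper's one-line proof routes through Theorem~\ref{thm3.8} (Theorem~D for $\nu=2$, whose own justification is exactly the trichotomy-plus-convergence collapse you describe), whereas you invoke Theorem~\ref{thm3.11} directly and then kill cases ($ii$) and ($iii$) using $\rho({\mathscr L}_{\gamma,\omega})<1$, which follows from Lemma~\ref{lem1-5} since the AOR splitting of the nonsingular M-matrix $A$ is weak regular. The difference is only in which intermediate result is cited, not in the underlying argument.
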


Similar to Corollaries \ref{coro3-9}-\ref{coro3-12}, from Theorems \ref{thm3.9}-\ref{thm3.12}
we have the following corollaries, immediately.

\begin{corollary}\label{coro3-13}
Suppose that $0 \le \alpha_{i,j} \le 1$ and $\sum_{k = 1}^{i-1}\alpha_{i,k}a_{i,k}a_{k,i} < 1$,
$i = 2,\cdots,n$, $j < i$.
Then Theorem A is valid for $\nu = 3$.
\end{corollary}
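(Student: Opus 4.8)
The plan is to deduce Corollary \ref{coro3-13} directly from Theorem \ref{thm3.9} by checking that the two hypotheses of that theorem are implied by the present ones. The diagonal hypothesis $\sum_{k=1}^{i-1}\alpha_{i,k}a_{i,k}a_{k,i} < 1$ for $i = 2,\dots,n$ is assumed verbatim, so the only real work is to verify inequality (\ref{eqs3.17}), namely
\begin{eqnarray*}
(1 - \alpha_{i,j})a_{i,j} - \sum\limits_{{k = 1}\atop{k\not = j}}^{i-1}\alpha_{i,k}a_{i,k}a_{k,j} \le 0, \quad i = 2,\dots,n,\ j < i.
\end{eqnarray*}
I would carry this out entirely by tracking signs, using that $A$ is an L-matrix with unit diagonal (so $a_{i,j} \le 0$ whenever $i \not= j$) together with the bound $0 \le \alpha_{i,j} \le 1$.

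The next step is to split the left-hand side into its two pieces. For the first piece, $1 - \alpha_{i,j} \ge 0$ and, since $j < i$ forces $i \not= j$, we have $a_{i,j} \le 0$, whence $(1-\alpha_{i,j})a_{i,j} \le 0$. For the sum, every index obeys $k \le i-1 < i$, so $a_{i,k} \le 0$, and the summation restriction $k \not= j$ gives $a_{k,j} \le 0$; combined with $\alpha_{i,k} \ge 0$ this makes each summand $\alpha_{i,k}a_{i,k}a_{k,j} \ge 0$, so the subtracted sum contributes a nonpositive amount. Adding the two nonpositive pieces yields (\ref{eqs3.17}).

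With (\ref{eqs3.17}) established and the diagonal hypothesis in hand, Theorem \ref{thm3.9} applies and gives that Theorem A holds for $\nu = 3$. There is essentially no genuine obstacle: the computation is just the lower-triangular analogue of the inequality displayed immediately before Corollary \ref{coro3-9}. The only point requiring slight care is confirming that the side conditions $k \not= i$ and $k \not= j$ are automatically in force in the relevant sum (the former from $k \le i-1$, the latter from the summation range), since these are exactly what guarantee that all the off-diagonal entries $a_{i,k}$ and $a_{k,j}$ remain nonpositive and hence that each product $a_{i,k}a_{k,j}$ is nonnegative.
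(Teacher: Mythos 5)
Your proposal is correct and follows exactly the paper's route: the paper obtains Corollary \ref{coro3-13} from Theorem \ref{thm3.9} via the same sign argument used before Corollary \ref{coro3-9}, namely that each summand $\alpha_{i,k}a_{i,k}a_{k,j}$ (with $k<i$, $k\not=j$) is nonnegative for an L-matrix, so the left side of (\ref{eqs3.17}) is at most $(1-\alpha_{i,j})a_{i,j}\le 0$ when $0\le\alpha_{i,j}\le 1$. Your extra care about the index restrictions $k\not=i$ and $k\not=j$ is precisely what makes this work, and nothing further is needed.
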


\begin{corollary}\label{coro3-14}
Suppose that $0 \le \alpha_{i,j} \le 1$, $i = 2,\cdots,n$, $j < i$.
Then Theorem B is valid for $\nu = 3$.
\end{corollary}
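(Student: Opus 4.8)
The plan is to reduce this corollary to Theorem \ref{thm3.10}, which already guarantees that Theorem B holds for $\nu = 3$ once the inequality (\ref{eqs3.17}) is in force. Thus the entire task is to verify that the hypothesis $0 \le \alpha_{i,j} \le 1$ (for $i = 2,\cdots,n$, $j < i$) forces (\ref{eqs3.17}). I would first recall the standing conventions of Section \ref{se3}: every diagonal entry of $A$ equals $1$, and since $A$ is a nonsingular M-matrix it is in particular an L-matrix, so $a_{i,j} \le 0$ whenever $i \neq j$. These sign facts are all that the argument will use.

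Fixing $i\in\{2,\cdots,n\}$ and $j<i$, I would examine the left-hand side of (\ref{eqs3.17}),
\begin{eqnarray*}
(1 - \alpha_{i,j})a_{i,j} - \sum\limits_{{k = 1}\atop{k\not = j}}^{i-1}\alpha_{i,k}a_{i,k}a_{k,j},
\end{eqnarray*}
and split it into the leading term and the sum. In the summation every index satisfies $k\le i-1<i$ and $k\neq j$, so each factor has a fixed sign: $\alpha_{i,k}\ge 0$, $a_{i,k}\le 0$, and $a_{k,j}\le 0$. Hence each product $\alpha_{i,k}a_{i,k}a_{k,j}$ is nonnegative, and subtracting the whole sum can only decrease the quantity; this is precisely the elementary bound recorded just before Corollary \ref{coro3-9}, so the expression is at most $(1-\alpha_{i,j})a_{i,j}$. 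It then suffices to check that this leading term is nonpositive, and here the upper bound in the hypothesis enters: $0\le\alpha_{i,j}\le 1$ gives $1-\alpha_{i,j}\ge 0$, while $a_{i,j}\le 0$ because $i\neq j$, so $(1-\alpha_{i,j})a_{i,j}\le 0$. This establishes (\ref{eqs3.17}), and an appeal to Theorem \ref{thm3.10} finishes the proof.

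There is essentially no genuine obstacle in this argument, since the conclusion is inherited wholesale from Theorem \ref{thm3.10}; the work is purely a sign calculation. The only point demanding care is the bookkeeping in the summation, namely confirming that the two excluded indices $k=i$ and $k=j$ are exactly the ones whose factors $a_{i,i}=1>0$ and $a_{j,j}=1>0$ would otherwise break the sign pattern, and observing that the hypothesis $\alpha_{i,j}\le 1$ is sharp in the sense that it is precisely what keeps the off-summation term $(1-\alpha_{i,j})a_{i,j}$ nonpositive. (Equivalently, one could note that the lower-triangular choice $Q_3$ is the special case $\alpha_{i,j}=0$ for $j\ge i$ of $Q_2$, so the result also drops out of Corollary \ref{coro3-10}; but routing through Theorem \ref{thm3.10} is the natural path within this subsection.)
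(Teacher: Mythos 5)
Your proof is correct and takes essentially the same route as the paper: the paper obtains this corollary ``immediately'' from Theorem \ref{thm3.10} via exactly the sign bound you verify, namely that for an L-matrix every term $\alpha_{i,k}a_{i,k}a_{k,j}$ in the sum is nonnegative, so the left side of (\ref{eqs3.17}) is at most $(1-\alpha_{i,j})a_{i,j}\le 0$. Your parenthetical alternative through Corollary \ref{coro3-10} (viewing $Q_3$ as the special case of $Q_2$ with $\alpha_{i,j}=0$ for $j\ge i$) is also valid, but the main argument coincides with the paper's.
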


\begin{corollary}\label{coro3-15}
Suppose that $\sum_{k = 1}^{i-1}\alpha_{i,k}a_{i,k}a_{k,i} < 1$,
$i = 2,\cdots,n$. Then Theorem C is valid for $\nu = 3$,
provided one of the following conditions is satisfied:

\begin{itemize}
\item[(i)]
For $i = 2,\cdots,n$, $j < i$, $0 \le \alpha_{i,j} \lesssim 1$.
One of the conditions $0\le\gamma<1$ and ($ii_1$), ($ii_2$), ($ii_3$) whenever $\gamma=1$ in
Theorem \ref{thm3.11} holds.

\item[(ii)] The condition ($iii$) of Theorem \ref{thm3.11} holds,
where the inequality (\ref{eqs3.17})
will be replaced by $0 \le \alpha_{i,j} \le 1$, $i = 2,\cdots,n$, $j < i$.
\end{itemize}
\end{corollary}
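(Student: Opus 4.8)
The plan is to deduce Corollary \ref{coro3-15} from Theorem \ref{thm3.11} by showing that the simpler hypotheses stated here force the more technical hypotheses of that theorem, exactly in the spirit in which Corollaries \ref{coro3-9}--\ref{coro3-12} were obtained from Theorems \ref{thm3.5}--\ref{thm3.8}. The computational heart is the elementary estimate
\begin{eqnarray*}
(1-\alpha_{i,j})a_{i,j} - \sum\limits_{{k=1}\atop{k\not=j}}^{i-1}\alpha_{i,k}a_{i,k}a_{k,j} \le (1-\alpha_{i,j})a_{i,j}, \quad i=2,\cdots,n,\; j<i,
\end{eqnarray*}
which holds because $A$ is an L-matrix: for $k<i$ and $k\not=j$ one has $a_{i,k}\le 0$ and $a_{k,j}\le 0$, while $\alpha_{i,k}\ge 0$, so each subtracted product is nonnegative. (The full sum of Theorem \ref{thm3.7} has already collapsed to this lower-triangular sum since $\alpha_{i,k}=0$ for $k\ge i$, as in the proof of Theorem \ref{thm3.9}.) First I would carry over the standing diagonal hypothesis $\sum_{k=1}^{i-1}\alpha_{i,k}a_{i,k}a_{k,i}<1$ verbatim; it is the same assumption demanded by Theorem \ref{thm3.11} and yields $a^{(3)}_{i,i}>0$, so that $P_3A$ is again an L-matrix.

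For part (i), I would combine the bound above with $0\le\alpha_{i,j}\le 1$ and $a_{i,j}\le 0$: the right-hand side $(1-\alpha_{i,j})a_{i,j}$ is then $\le 0$, hence so is the left-hand side. This is the nonstrict content of inequality (\ref{eqs3.19}). The point of writing ``$0\le\alpha_{i,j}\lesssim 1$'' rather than ``$\le$'' is precisely to supply the $\lesssim$ convention in (\ref{eqs3.19}): whenever $a_{i,j}<0$, the entry $a^{(3)}_{i,j}$ must be $\lesssim 0$, i.e.\ strictly negative except at positions whose vanishing still leaves $P_3A$ irreducible. Once (\ref{eqs3.19}) is in hand, choosing $0\le\gamma<1$ satisfies condition (i) of Theorem \ref{thm3.11}, while $\gamma=1$ together with one of ($ii_1$)--($ii_3$) satisfies condition (ii); in either case Theorem C follows. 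For part (ii), replacing (\ref{eqs3.17}) by $0\le\alpha_{i,j}\le 1$ and re-applying the same estimate reproduces (\ref{eqs3.17}), and condition (iii) of Theorem \ref{thm3.11} is assumed outright, so Theorem C again follows.

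I expect the only genuinely delicate step to be the translation of the ``$\lesssim$'' convention in part (i): one must verify that $0\le\alpha_{i,j}\lesssim 1$ at a position with $a_{i,j}<0$ really does yield $a^{(3)}_{i,j}\lesssim 0$ in the sense Theorem \ref{thm3.11} requires, namely strict negativity except where a vanishing off-diagonal entry keeps $G(P_3A)$ strongly connected. Because the lower-triangular preconditioner modifies only the strictly lower triangular part of $A$, the verification splits cleanly: every upper-triangular entry satisfies $a^{(3)}_{i,j}\le a_{i,j}$, so the super-diagonal entries $a_{k,k+1}$ used in ($ii_3$) stay strictly negative automatically, whereas the cyclic lower entry $a_{n,1}$ used in ($ii_2$) is the one whose survival in $P_3A$ must be tracked. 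Confirming this—so that the equality branch of the Stein--Rosenberg trichotomy is legitimately attained—is routine bookkeeping given the irreducibility of $A$, but it is the step that must be stated with care.
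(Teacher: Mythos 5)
Your proposal is correct and takes essentially the same route as the paper: the paper obtains this corollary ``immediately'' from Theorem \ref{thm3.11} via the very estimate you use, namely $(1-\alpha_{i,j})a_{i,j}-\sum_{k\ne j}^{i-1}\alpha_{i,k}a_{i,k}a_{k,j}\le(1-\alpha_{i,j})a_{i,j}\le 0$, so that $0\le\alpha_{i,j}\le 1$ yields (\ref{eqs3.17}) and $0\le\alpha_{i,j}\lesssim 1$ yields (\ref{eqs3.19}), exactly as Corollaries \ref{coro3-9}--\ref{coro3-12} are drawn from Theorems \ref{thm3.5}--\ref{thm3.8}. Your additional care with the $\lesssim$ convention (strictness unless $P_3A$ stays irreducible) is consistent with the paper's intended, if unstated, reading.
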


\begin{corollary}\label{coro3-16}
Theorem D is valid for $\nu = 3$, provided one of the conditions ($i$) and ($ii$)
of Corollary \ref{coro3-15} is satisfied.
\end{corollary}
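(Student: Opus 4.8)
The plan is to deduce Corollary~\ref{coro3-16} from Theorem~\ref{thm3.12} (equivalently, from Corollary~\ref{coro3-15}, which establishes Theorem~C for $\nu=3$), by verifying that the simplified coefficient bound $0\le\alpha_{i,j}\le 1$ forces the more technical sign inequalities on which those results rest. The guiding observation is that an irreducible nonsingular M-matrix is in particular an irreducible L-matrix, so the Stein--Rosenberg trichotomy supplied by Theorem~C is available; Theorem~D will then amount to selecting its convergent branch.

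First I would carry out the elementary sign computation already used in the proof of Theorem~\ref{thm3.9}. For $i>j$, substituting $q^{(3)}_{i,j}=-\alpha_{i,j}a_{i,j}$ into (\ref{eqs3.2}) gives
\[
a^{(3)}_{i,j}=(1-\alpha_{i,j})a_{i,j}-\sum\limits_{{k=1}\atop{k\not=j}}^{i-1}\alpha_{i,k}a_{i,k}a_{k,j}.
\]
Here $0\le\alpha_{i,j}\le 1$ yields $1-\alpha_{i,j}\ge 0$ while $a_{i,j}\le 0$, so the first term is nonpositive; moreover each product $\alpha_{i,k}a_{i,k}a_{k,j}\ge 0$, being a nonnegative weight times two nonpositive off-diagonal entries, so the subtracted sum only decreases the value. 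The case $j>i$ is even simpler, since then $\alpha_{i,j}=0$. Hence $a^{(3)}_{i,j}\le 0$ for all $i\not=j$, which is precisely inequality (\ref{eqs3.17}) (and (\ref{eqs3.19}) in the $\gamma=1$ regime), and shows that $P_3A$ is a Z-matrix. As $A$ is a nonsingular M-matrix and $P_3=I+Q_3>0$ is nonsingular, Lemma~\ref{lem1-10} promotes $P_3A$ to a nonsingular M-matrix, and in particular to an L-matrix.

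With these sign conditions in hand, conditions ($i$) and ($ii$) of Corollary~\ref{coro3-15} reduce exactly to the hypotheses ($i$)--($iii$) of Theorem~\ref{thm3.11}, so Corollary~\ref{coro3-15} applies and delivers the trichotomy comparing $\rho({\mathscr L}^{(3)}_{\gamma,\omega})$ with $\rho({\mathscr L}_{\gamma,\omega})$. Finally, because $A$ is a nonsingular M-matrix, its AOR splitting (\ref{eqn1.4}) is weak regular and convergent by Lemma~\ref{lem1-5}, so $\rho({\mathscr L}_{\gamma,\omega})<1$; this rules out the equality and the $>1$ branches and forces $\rho({\mathscr L}^{(3)}_{\gamma,\omega})<\rho({\mathscr L}_{\gamma,\omega})<1$, the assertion of Theorem~D. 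I do not expect a genuinely new obstacle: the only delicate point is keeping the comparison strict, since the strict branch of Theorem~C requires $P_3A$ to retain enough nonzero off-diagonal structure to stay irreducible, which is exactly what the ``$\lesssim$'' in condition ($i$) and the structural requirements in condition ($ii$) of Corollary~\ref{coro3-15} are designed to ensure.
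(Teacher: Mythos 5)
Your proposal is correct and takes essentially the same route as the paper: Corollary \ref{coro3-16} is obtained there immediately from Theorem \ref{thm3.12} (which rests on Theorems \ref{thm3.8} and \ref{thm3.4}) via exactly your observation that $0\le\alpha_{i,j}\le 1$ forces the sign inequalities (\ref{eqs3.17}) and (\ref{eqs3.19}), so that $P_3A$ is a Z-matrix, hence by Lemma \ref{lem1-10} a nonsingular M-matrix, and the Theorem C trichotomy of Corollary \ref{coro3-15} applies. Your closing step --- invoking Lemma \ref{lem1-5} to get $\rho({\mathscr L}_{\gamma,\omega})<1$ and thereby selecting the strict branch of the trichotomy --- is precisely how the paper's Theorem \ref{thm3.4} passes from Theorems B and C to Theorem D.
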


Many known corresponding results about the preconditioned AOR method
proposed in the references are the special cases of Theorems \ref{thm3.9}-\ref{thm3.12} and
Corollaries \ref{coro3-13}-\ref{coro3-16}, i.e., they
can be derived from these theorems, immediately.

As a special case of $Q_3$, let
\begin{eqnarray*}
\alpha_{i,j} = \alpha, \; i = 2, \cdots, n, \; j < i,
\end{eqnarray*}
with $\alpha > 0$. Then in \cite{WZ11} $Q$ is defined as
\begin{eqnarray*}
Q_4 = \alpha L,
\end{eqnarray*}
which is studied in \cite{YLK14}. When $\alpha = 1$ it is given in \cite{Li08}
for the preconditioned Gauss-Seidel method.

From Theorems \ref{thm3.9}-\ref{thm3.12} and Corollaries \ref{coro3-13}-\ref{coro3-16},
we have the following comparison results.

\begin{theorem}\label{thm3.13}
Suppose that $\alpha\sum_{k = 1}^{i-1}a_{i,k}a_{k,i} < 1$, $i = 2,\cdots,n$, and
\begin{eqnarray} \label{eqs3.20}
(1 - \alpha)a_{i,j} - \alpha\sum\limits_{{k = 1}\atop{k\not = j}}^{i-1}a_{i,k}a_{k,j} \le 0, \;
 i = 2,\cdots,n, j < i.
\end{eqnarray}
Then Theorem A is valid for $\nu = 4$.
\end{theorem}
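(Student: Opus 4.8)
The plan is to recognize that $Q_4 = \alpha L$ is simply the special case of $Q_3$ obtained by taking the weights $\alpha_{i,j}$ to be the single constant $\alpha$ for every pair with $i > j$ (and $\alpha_{i,j}=0$ for $j \ge i$), so that the whole statement will drop out of Theorem \ref{thm3.9} once I check that the two hypotheses match. First I would record that under this constant choice the entries of $Q_4$ are $q^{(4)}_{i,j} = -\alpha a_{i,j}$ for $i > j$ and $0$ otherwise, which is exactly the $Q_3$ pattern with $\alpha_{i,k} \equiv \alpha$. Since $A$ is an L-matrix with nonzero strictly lower part and $\alpha > 0$, the admissibility requirement $\sum_{i=2}^{n}\sum_{j=1}^{i-1}\alpha_{i,j}a_{i,j} \neq 0$ holds automatically, so $Q_4$ is a legitimate specialization of $Q_3$ and all the structural assumptions underlying Theorem \ref{thm3.9} remain in force.

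Next I would substitute $\alpha_{i,k} = \alpha$ directly into the two hypotheses of Theorem \ref{thm3.9}. The first condition $\sum_{k=1}^{i-1}\alpha_{i,k}a_{i,k}a_{k,i} < 1$ collapses to $\alpha\sum_{k=1}^{i-1}a_{i,k}a_{k,i} < 1$ for $i = 2,\cdots,n$, which is precisely the first assumption of the present theorem. Likewise, the inequality (\ref{eqs3.17}), namely $(1-\alpha_{i,j})a_{i,j} - \sum_{k=1,\,k\neq j}^{i-1}\alpha_{i,k}a_{i,k}a_{k,j} \le 0$, reduces to $(1-\alpha)a_{i,j} - \alpha\sum_{k=1,\,k\neq j}^{i-1}a_{i,k}a_{k,j} \le 0$, which is exactly (\ref{eqs3.20}). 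Both hypotheses of Theorem \ref{thm3.9} are thereby verified for $\nu = 3$ with the constant weights, and that theorem immediately yields the validity of Theorem A, now read off for the preconditioner $P_4 = I + Q_4$, i.e. for $\nu = 4$.

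There is essentially no analytic obstacle here: the result is a pure specialization rather than a new estimate, and the only point that genuinely needs a line of care is confirming that the constant-$\alpha$ choice still meets the L-matrix and admissibility constraints that $Q_3$ presupposes, after which the machinery of Theorem \ref{thm3.9} (and ultimately Theorem \ref{thm3.1}) applies verbatim. I would therefore keep the proof short, framing it as \emph{the condition of Theorem \ref{thm3.9} is satisfied, so Theorem A is valid}, in parallel with the way Theorem \ref{thm3.9} itself was reduced to Theorem \ref{thm3.5}.
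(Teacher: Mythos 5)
Your proof is correct and is essentially the paper's own argument: the paper states Theorem \ref{thm3.13} as an immediate consequence of Theorem \ref{thm3.9} (and its chain back to Theorem \ref{thm3.5}) obtained by setting $\alpha_{i,j} \equiv \alpha$, under which the two hypotheses of Theorem \ref{thm3.9} become precisely the two hypotheses of Theorem \ref{thm3.13}. Your additional check that the constant choice still satisfies the admissibility condition $\sum_{i=2}^{n}\sum_{j=1}^{i-1}\alpha_{i,j}a_{i,j} \neq 0$ (i.e.\ $Q_4 = \alpha L > 0$) is a reasonable point of care that the paper leaves implicit.
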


\begin{theorem}\label{thm3.14}
Suppose that (\ref{eqs3.20}) holds.
Then Theorem B is valid for $\nu = 4$.
\end{theorem}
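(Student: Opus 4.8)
The plan is to observe that $Q_4 = \alpha L$ is exactly the specialization of the lower triangular preconditioner $Q_3$ obtained by setting $\alpha_{i,j} = \alpha$ for all $i > j$ (and $\alpha_{i,j} = 0$ for $j \ge i$), so that the whole statement should reduce to Theorem \ref{thm3.10}. Under this substitution the hypothesis (\ref{eqs3.20}) is precisely the condition (\ref{eqs3.17}) of Theorem \ref{thm3.10} with the constant $\alpha$ replacing the variable coefficients $\alpha_{i,j}$. Hence the cleanest route is to verify that the hypotheses of Theorem \ref{thm3.10} are met and then invoke it directly.

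Concretely, I would first substitute $q^{(4)}_{i,k} = -\alpha a_{i,k}$ (for $k < i$, and $0$ otherwise) into the identities (\ref{eqs3.2}). For the strictly lower triangular indices $j < i$ this gives $a^{(4)}_{i,j} = (1-\alpha)a_{i,j} - \alpha\sum_{k = 1, k\not = j}^{i-1}a_{i,k}a_{k,j}$, which is exactly the left-hand side of (\ref{eqs3.20}), hence $\le 0$. For the strictly upper triangular indices $j > i$ the lower triangular structure of $Q_4$ forces $a^{(4)}_{i,j} = a_{i,j} - \alpha\sum_{k = 1}^{i-1}a_{i,k}a_{k,j} \le a_{i,j} \le 0$, exactly as in the proof of Theorem \ref{thm3.9}, because $A$ is a Z-matrix and so each product $a_{i,k}a_{k,j}$ with $k < i < j$ is nonnegative. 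Together these two computations show that every off-diagonal entry of $P_4 A$ is nonpositive, i.e. $P_4 A$ is a Z-matrix.

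With $P_4 A$ a Z-matrix and $A$ a nonsingular M-matrix, Lemma \ref{lem1-10} makes $P_4 A$ a nonsingular M-matrix, and therefore the conclusion $\rho({\mathscr L}^{(4)}_{\gamma,\omega}) \le \rho({\mathscr L}_{\gamma,\omega}) < 1$ follows through the chain Corollary \ref{coro3-2}--Theorem \ref{thm3.2}, equivalently by direct appeal to Theorem \ref{thm3.10}. I do not expect any genuine obstacle here: the argument is a pure specialization of the $Q_3$ analysis, and the only thing to keep honest is the index bookkeeping guaranteeing the sign condition on both the strictly lower and the strictly upper triangular off-diagonal entries, both of which are already subsumed in Theorems \ref{thm3.9}--\ref{thm3.10}.
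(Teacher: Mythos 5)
Your proposal is correct and follows essentially the same route as the paper, which obtains Theorem \ref{thm3.14} as a direct specialization of Theorem \ref{thm3.10} (the $Q_3$ result with $\alpha_{i,j}=\alpha$), itself resting on the chain Theorem \ref{thm3.6} $\to$ Corollary \ref{coro3-2} $\to$ Theorem \ref{thm3.2} via the Z-matrix verification and Lemma \ref{lem1-10}. Your explicit check of the off-diagonal signs of $P_4A$ just unfolds what the paper subsumes in the proofs of Theorems \ref{thm3.9}--\ref{thm3.10}.
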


This theorem is better than the corresponding one given by \cite[Theorem 2.1]{WZ11},
where there are problems in the expression.

\begin{theorem}\label{thm3.15}
Suppose that $\alpha\sum_{k = 1}^{i-1}a_{i,k}a_{k,i} < 1$, $i = 2,\cdots,n$.
Then Theorem C is valid for $\nu = 4$,
provided one of the following conditions is satisfied:

\begin{itemize}
\item[(i)] $0 \le \gamma < 1$ and
\begin{eqnarray} \label{eqs3.22}
&& (1 - \alpha)a_{i,j} - \alpha\sum\limits_{{k = 1}\atop{k\not = j}}^{i-1}a_{i,k}a_{k,j} \lesssim 0
\;\; \hbox{whenever} \;\; a_{i,j} < 0,\\ \nonumber
&& i = 2,\cdots,n, \; j < i.
\end{eqnarray}

\item[(ii)] $\gamma = 1$, (\ref{eqs3.22}) holds and
one of the following conditions holds:

\begin{itemize}
\item[($ii_1$)] There exist $i\in\{2, \cdots, n\}$, $j\in\{2, \cdots, i\}$
and $k\in\{1, \cdots, j-1\}$ such that $a_{i,k}a_{k,j} > 0$.

\item[($ii_2$)]
$a_{n,1} < 0$.

\item[($ii_3$)]
$a_{1,2} < 0$.
\end{itemize}

\item[(iii)] $\gamma = 1$ and (\ref{eqs3.20}) holds. For each $i\in\{2, \cdots, n-1\}$ there exist
$j(i)\in\{2, \cdots, i\}$ and $k(i)\in\{1, \cdots, j(i)-1\}$ such that $a_{i,k(i)}a_{k(i),j(i)} > 0$.
And one of the following conditions holds:

\begin{itemize}
\item[($iii_1$)]
There exist $j\in\{2, \cdots, n\}$ and $k\in\{1, \cdots, j-1\}$ such that
$a_{n,k}a_{k,j} > 0$.

\item[($iii_2$)]
$a_{n,1} < 0$.

\item[($iii_3$)] There exists $j\in\{2, \cdots, n-1\}$ such that
\begin{eqnarray*}
(1 - \alpha)a_{n,j} - \alpha\sum\limits_{{k = 1}\atop{k\not = j}}^{n-1}a_{n,k}a_{k,j} < 0.
\end{eqnarray*}
\end{itemize}\end{itemize}
\end{theorem}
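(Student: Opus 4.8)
The plan is to deduce Theorem~\ref{thm3.15} directly from Theorem~\ref{thm3.11} by specializing the lower triangular preconditioner $Q_3$ to the uniform weights $\alpha_{i,j} = \alpha$ for $i > j$ and $\alpha_{i,j} = 0$ for $i \le j$, which is precisely $Q_4 = \alpha L$. Under this substitution the standing hypothesis $\alpha\sum_{k=1}^{i-1}a_{i,k}a_{k,i} < 1$ becomes $\sum_{k=1}^{i-1}\alpha_{i,k}a_{i,k}a_{k,i} < 1$, the assumption required in Theorem~\ref{thm3.11}, and $\delta_{i,j}^{(3)}(1)$ specializes to $\alpha\sum_{k=1}^{j-1}a_{i,k}a_{k,j}$ for $i = 2,\dots,n$, $j = 2,\dots,i$. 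I would then check case by case that each hypothesis of Theorem~\ref{thm3.15} implies the corresponding hypothesis of Theorem~\ref{thm3.11} and appeal to that theorem.

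First I would dispatch the routine implications. Because $\alpha > 0$, the inequality~(\ref{eqs3.22}) is literally~(\ref{eqs3.19}) after the substitution and (\ref{eqs3.20}) is (\ref{eqs3.17}), so condition~(i) and the opening requirements of conditions~(ii) and~(iii) transfer verbatim. For the $\gamma = 1$ branches: condition~($ii_1$) of Theorem~\ref{thm3.15}, namely $a_{i,k}a_{k,j} > 0$, gives $\alpha_{i,k}a_{i,k}a_{k,j} = \alpha\,a_{i,k}a_{k,j} > 0$, which is ($ii_1$) of Theorem~\ref{thm3.11}; condition~($ii_2$), $a_{n,1} < 0$, gives ($ii_2$) of Theorem~\ref{thm3.11} since $\alpha_{n,1} = \alpha > 0$. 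The whole of condition~(iii), including the per-row requirement $a_{i,k(i)}a_{k(i),j(i)} > 0$ and the alternatives ($iii_1$)--($iii_3$), translates in exactly the same manner.

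The one genuinely nonroutine point, which I expect to be the crux, is condition~($ii_3$): here the hypothesis is only $a_{1,2} < 0$, strictly weaker than the full superdiagonal condition $a_{k,k+1} < 0$ for all $k$ demanded in Theorem~\ref{thm3.11}. To handle it I would exploit irreducibility of $A$: since $A$ is irreducible with $n \ge 2$, vertex~$1$ has positive in-degree in $G(A)$, so $a_{i,1} \ne 0$ for some $i \in \{2,\dots,n\}$, whence $a_{i,1} < 0$ because $A$ is an L-matrix. Choosing $j = 2$ and $k = 1$ then gives $a_{i,1}a_{1,2} > 0$ with $1 = k < j = 2 \le i$, i.e.\ an instance of condition~($ii_1$) of Theorem~\ref{thm3.15} (equivalently $\delta_{i,2}^{(3)}(1) = \alpha\,a_{i,1}a_{1,2} > 0$ for the uniform weights). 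Thus ($ii_3$) collapses to the already-treated ($ii_1$). Once every hypothesis of Theorem~\ref{thm3.15} has been matched with one of Theorem~\ref{thm3.11}, that theorem yields that Theorem~C holds for $\nu = 4$, which completes the argument.
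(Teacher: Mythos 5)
Your proposal is correct and follows essentially the same route as the paper: both reduce Theorem \ref{thm3.15} to Theorem \ref{thm3.11} by the specialization $\alpha_{i,j}=\alpha$ ($j<i$), observe that only ($ii_3$) needs a separate argument, and handle it by using irreducibility of the L-matrix $A$ to produce some $a_{i,1}<0$, so that $a_{i,1}a_{1,2}>0$ yields an instance of ($ii_1$) with $j=2$, $k=1$. The only (immaterial) difference is that the paper first splits off the case $a_{n,1}<0$ via ($ii_2$) before applying the irreducibility argument, whereas you apply it uniformly.
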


\begin{proof}
By Theorem \ref{thm3.11} we just need to prove ($ii_3$). In fact, if $a_{n,1} < 0$ then
($ii_2$) holds. If $a_{n,1} = 0$ then, from the irreducibility of $A$,
$\sum_{i = 2}^{n-1}a_{i,1} = \sum_{i = 2}^{n}a_{i,1} < 0$ so that there exists
$i_0\in\{2, \cdots, n-1\}$ such that $a_{i_0,1} < 0$. Hence $a_{i_0,1}a_{1,2} > 0$. This shows that
($ii_1$) holds for $i = i_0$, $j = 2$ and $k = 1$.  \end{proof}

\begin{theorem}\label{thm3.16}
Theorem D is valid for $\nu = 4$,
provided one of the conditions ($i$), ($ii$) and ($iii$) of Theorem \ref{thm3.15} is satisfied.
\end{theorem}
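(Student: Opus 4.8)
The plan is to read Theorem \ref{thm3.16} as the irreducible nonsingular M--matrix (Theorem D) analogue of Theorem \ref{thm3.15} (Theorem C), in exact parallel with the way Theorem \ref{thm3.4} was obtained from Theorems \ref{thm3.2} and \ref{thm3.3}. Since $Q_4=\alpha L$ is the special case $\alpha_{i,j}=\alpha$ $(i>j)$, $\alpha_{i,j}=0$ $(i\le j)$ of $Q_3$, one could also simply specialize Theorem \ref{thm3.12}; but I would argue directly from Theorem \ref{thm3.15} together with M--matrix convergence, which is the most transparent route and requires no new computation.

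First I would note that an irreducible nonsingular M--matrix is in particular an irreducible L--matrix, so that, under any one of conditions ($i$)--($iii$) of Theorem \ref{thm3.15}, the conclusion of Theorem C is available for $\nu=4$: exactly one of (a) $\rho({\mathscr L}^{(4)}_{\gamma,\omega})<\rho({\mathscr L}_{\gamma,\omega})<1$, (b) $\rho({\mathscr L}^{(4)}_{\gamma,\omega})=\rho({\mathscr L}_{\gamma,\omega})=1$, (c) $\rho({\mathscr L}^{(4)}_{\gamma,\omega})>\rho({\mathscr L}_{\gamma,\omega})>1$ holds. Next, because $A$ is a nonsingular M--matrix, the AOR splitting $A=M_{\gamma,\omega}-N_{\gamma,\omega}$ is weak regular (as in the proof of Theorem \ref{thm3.1}) and hence convergent by Lemma \ref{lem1-5}, i.e. $\rho({\mathscr L}_{\gamma,\omega})<1$; this is precisely the content of Theorem \ref{thm3.14}. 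This single inequality is incompatible with branches (b) and (c), so the trichotomy collapses to (a), which is exactly the assertion of Theorem D. The proof therefore consists of invoking Theorem \ref{thm3.15} and then discarding the two impossible branches.

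The step that actually carries the argument, and which I expect to be the only point needing care, is making sure Theorem \ref{thm3.15} delivers the \emph{strict} relation in branch (a) rather than a merely weak comparison; this rests on $P_4A$ being an irreducible L--matrix. The diagonal hypothesis $\alpha\sum_{k=1}^{i-1}a_{i,k}a_{k,i}<1$ keeps $a^{(4)}_{i,i}>0$, while the inequalities (\ref{eqs3.20}) or (\ref{eqs3.22}), combined with the sign pattern of the M--matrix $A$, force $a^{(4)}_{i,j}\le 0$ for $i\ne j$: on the strict upper triangle this is automatic since $a^{(4)}_{i,j}=a_{i,j}-\alpha\sum_{k=1}^{i-1}a_{i,k}a_{k,j}\le 0$ (products of off--diagonal entries of an M--matrix are nonnegative), and on the strict lower triangle it is exactly what (\ref{eqs3.20}) or (\ref{eqs3.22}) assert. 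Thus $P_4A$ is an L--matrix and, by Lemma \ref{lem1-10}, even a nonsingular M--matrix, while its irreducibility—needed for the strict trichotomy—is guaranteed by the $\lesssim$ clauses built into the conditions of Theorem \ref{thm3.7} and inherited by Theorem \ref{thm3.15}. All of this is already packaged inside Theorem \ref{thm3.15}, so the genuine remaining work is only the elimination of branches (b) and (c) by means of $\rho({\mathscr L}_{\gamma,\omega})<1$, which is immediate.
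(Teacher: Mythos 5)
Your proof is correct and takes essentially the same approach as the paper: Theorem \ref{thm3.16} is stated there as an immediate consequence of the preceding results, and the underlying argument (made explicit in the proof of Theorem \ref{thm3.4} and inherited through Theorems \ref{thm3.8} and \ref{thm3.12}) is precisely your collapse of the Theorem C trichotomy of Theorem \ref{thm3.15} using $\rho({\mathscr L}_{\gamma,\omega})<1$, which holds since $A$ is a nonsingular M-matrix and the AOR splitting is weak regular. Your side verification that $P_4A$ is an L-matrix and the irreducibility issues are already packaged inside Theorem \ref{thm3.15} is accurate, so nothing further is needed.
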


From Theorems \ref{thm3.13}-\ref{thm3.16}, the following results are directly.

\begin{corollary}
Suppose that $0 < \alpha \le 1$ and $\alpha\sum_{k = 1}^{i-1}a_{i,k}a_{k,i} < 1$, $i = 2,\cdots,n$.
Then Theorem A is valid for $\nu = 4$.
\end{corollary}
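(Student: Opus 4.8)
The plan is to verify that the two hypotheses of this corollary imply the conditions of Theorem \ref{thm3.13}, whose conclusion is precisely that Theorem A is valid for $\nu = 4$. Recall that $Q_4 = \alpha L$ is the special case of $Q_3$ obtained by taking $\alpha_{i,j} = \alpha$ for $i = 2,\dots,n$, $j < i$; hence the hypothesis $\alpha\sum_{k=1}^{i-1}a_{i,k}a_{k,i} < 1$ already coincides verbatim with the first requirement of Theorem \ref{thm3.13}. It therefore only remains to establish inequality (\ref{eqs3.20}) from the single extra assumption $0 < \alpha \le 1$, after which the cited theorem does all the work.

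First I would exploit the L-matrix sign pattern. Since in this section all diagonal entries of $A$ equal $1$ and $a_{i,j} \le 0$ for $i \ne j$, in the range $i = 2,\dots,n$, $j < i$ the factor $a_{i,j}$ is off-diagonal, hence $a_{i,j} \le 0$, and for each $k$ with $1 \le k \le i-1$, $k \ne j$, the factor $a_{i,k}$ is likewise off-diagonal, so $a_{i,k} \le 0$. Because $0 < \alpha \le 1$ gives $1-\alpha \ge 0$, the first term satisfies $(1-\alpha)a_{i,j} \le 0$. For the summation, each summand is $a_{i,k}a_{k,j}$ with $a_{i,k} \le 0$ and, since the index $k$ is excluded from equalling $j$, with $a_{k,j} \le 0$ as well; thus $a_{i,k}a_{k,j} \ge 0$, and multiplying the whole sum by $-\alpha < 0$ gives $-\alpha\sum_{k} a_{i,k}a_{k,j} \le 0$. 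Adding the two nonpositive contributions yields (\ref{eqs3.20}).

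With (\ref{eqs3.20}) in hand, Theorem \ref{thm3.13} applies and delivers the validity of Theorem A for $\nu = 4$. Alternatively, the entire statement is simply the constant-coefficient instance of Corollary \ref{coro3-13} (take $\alpha_{i,j} = \alpha$ for $i>j$), which one may invoke directly. There is essentially no genuine obstacle here; the only point requiring care is the sign of the cross-product sum $\sum_{k} a_{i,k}a_{k,j}$, and this is controlled entirely by the fact that the entries involved are off-diagonal entries of an L-matrix and are therefore nonpositive. The exclusion $k \ne j$ built into the sum is exactly what guarantees that $a_{k,j}$ is an off-diagonal value rather than the diagonal value $1$, which is what keeps the product nonnegative.
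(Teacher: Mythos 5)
Your proof is correct and takes essentially the same route as the paper: the paper obtains this corollary ``directly'' from Theorem \ref{thm3.13}, the implicit justification being exactly your observation that for an L-matrix each product $a_{i,k}a_{k,j}$ ($k\not=i,j$) is nonnegative and $(1-\alpha)a_{i,j}\le 0$ when $0<\alpha\le 1$, so that the inequality (\ref{eqs3.20}) holds automatically. Your alternative remark that the statement is also the constant-coefficient case of Corollary \ref{coro3-13} is likewise consistent with the paper's derivation chain.
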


\begin{corollary}
Suppose that $0 < \alpha \le 1$.
Then Theorem B is valid for $\nu = 4$.
\end{corollary}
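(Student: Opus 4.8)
The plan is to derive this corollary directly from Theorem \ref{thm3.14} by verifying that its defining hypothesis (\ref{eqs3.20}) is satisfied automatically once $0 < \alpha \le 1$. Since $A$ is a nonsingular M-matrix, it is in particular an L-matrix, so under the standing normalization of Section \ref{se3} we have $a_{i,i}=1$ and $a_{i,j}\le 0$ for all $i\neq j$. These sign conditions are the only facts about $A$ I would need.

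First I would fix $i\in\{2,\dots,n\}$ and $j<i$ and inspect the left-hand side of (\ref{eqs3.20}) term by term. For the leading term, $i>j$ gives $a_{i,j}\le 0$, while $\alpha\le 1$ gives $1-\alpha\ge 0$, so $(1-\alpha)a_{i,j}\le 0$. For the summation I would check that in each product $a_{i,k}a_{k,j}$ appearing in the sum (with $1\le k\le i-1$ and $k\neq j$) both factors are off-diagonal entries of $A$: the range $k\le i-1<i$ forces $k\neq i$, hence $a_{i,k}\le 0$, and the constraint $k\neq j$ is imposed explicitly, hence $a_{k,j}\le 0$. Thus every product is nonnegative, the whole sum is nonnegative, and since $\alpha>0$ the contribution $-\alpha\sum_{k}a_{i,k}a_{k,j}\le 0$.

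Adding the two nonpositive contributions yields (\ref{eqs3.20}) for every admissible pair $(i,j)$ with $i>j$. With (\ref{eqs3.20}) established, Theorem \ref{thm3.14} applies verbatim and delivers Theorem B for $\nu=4$, completing the argument.

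There is no genuine obstacle here; the corollary is a specialization, and the single point requiring care is the index bookkeeping in (\ref{eqs3.20}) to confirm that every matrix entry multiplied together is truly off-diagonal (and therefore nonpositive), which is precisely what makes both sign arguments go through.
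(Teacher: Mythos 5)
Your proposal is correct and follows essentially the same route as the paper: the paper derives this corollary "directly" from Theorem \ref{thm3.14} using exactly the observation you spell out, namely that for a Z-matrix with unit diagonal every product $a_{i,k}a_{k,j}$ in (\ref{eqs3.20}) has both factors off-diagonal and hence nonpositive, so the sum term contributes nonpositively and $(1-\alpha)a_{i,j}\le 0$ finishes the verification. Your write-up merely makes explicit the index bookkeeping that the paper leaves implicit (it records the analogous inequality just before Corollary \ref{coro3-9}), so there is no substantive difference.
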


\begin{corollary}\label{coro3-19}
Suppose that $\alpha\sum_{k = 1}^{i-1}a_{i,k}a_{k,i} < 1$, $i = 2,\cdots,n$.
Then Theorem C is valid for $\nu = 4$, provided one of the following conditions is satisfied:

\begin{itemize}
\item[(i)] $0 < \alpha \lesssim 1$.
One of the conditions $0\le\gamma<1$ and ($ii_1$), ($ii_2$), ($ii_3$) whenever $\gamma=1$ in
Theorem \ref{thm3.15} holds.

\item[(ii)] The condition ($iii$) of Theorem \ref{thm3.15} holds,
where the inequality (\ref{eqs3.20})
will be replaced by $0 < \alpha \le 1$.
\end{itemize}
\end{corollary}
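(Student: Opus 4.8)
The plan is to deduce Corollary~\ref{coro3-19} directly from Theorem~\ref{thm3.15}, exactly as Corollaries~\ref{coro3-13}--\ref{coro3-16} were obtained from Theorems~\ref{thm3.9}--\ref{thm3.12}. The common hypothesis $\alpha\sum_{k=1}^{i-1}a_{i,k}a_{k,i} < 1$ is identical in both statements, so the only thing to check is that the simple assumption $0<\alpha\lesssim 1$ (respectively $0<\alpha\le 1$) automatically forces the off-diagonal inequality (\ref{eqs3.22}) (respectively (\ref{eqs3.20})) demanded by Theorem~\ref{thm3.15}.

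First I would record the sign structure coming from $A$ being an (irreducible) L-matrix with unit diagonal. For $i=2,\cdots,n$ and $j<i$ one has $a_{i,j}\le 0$, and for every index $k\in\{1,\cdots,i-1\}$ with $k\neq j$ both $a_{i,k}\le 0$ and $a_{k,j}\le 0$, so $a_{i,k}a_{k,j}\ge 0$. Hence, whenever $0<\alpha\le 1$ (so that $1-\alpha\ge 0$),
\[
(1-\alpha)a_{i,j}-\alpha\sum_{{k=1}\atop{k\neq j}}^{i-1}a_{i,k}a_{k,j}\le (1-\alpha)a_{i,j}\le 0,
\]
which is precisely (\ref{eqs3.20}). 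This settles part~(ii) at once: condition (iii) of Theorem~\ref{thm3.15} with (\ref{eqs3.20}) replaced by $0<\alpha\le 1$ is then genuinely a special case of Theorem~\ref{thm3.15}(iii), so Theorem C holds.

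For part~(i) I would unfold the $\lesssim$ in (\ref{eqs3.22}) according to its definition. If $A^{(4)}=P_4A$ is irreducible, then $\lesssim$ reads $\le$, and the displayed bound already gives (\ref{eqs3.22}). If $A^{(4)}$ is reducible, then $0<\alpha\lesssim 1$ means $0<\alpha<1$, so $1-\alpha>0$; for each off-diagonal position with $a_{i,j}<0$ we get $(1-\alpha)a_{i,j}<0$, whence the left-hand side of (\ref{eqs3.22}) is strictly negative, matching $\lesssim$ read as $<$. In both cases (\ref{eqs3.22}) is verified, so the hypotheses of Theorem~\ref{thm3.15}(i) (when $0\le\gamma<1$) or of Theorem~\ref{thm3.15}(ii) (when $\gamma=1$, using whichever of ($ii_1$)--($ii_3$) is assumed) are met, and Theorem C follows.

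The only real obstacle is the bookkeeping of the $\lesssim$ convention: one must keep the irreducible and reducible cases of $P_4A$ strictly apart and make sure the strict inequality required in the reducible case is actually delivered by the strict bound $\alpha<1$, not merely by $\alpha\le 1$. Once this distinction is respected, Corollary~\ref{coro3-19} is an immediate specialization of Theorem~\ref{thm3.15}.
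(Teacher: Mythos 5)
Your proof is correct and takes essentially the same route as the paper: the paper gives no explicit argument (it states that Corollary~\ref{coro3-19} follows ``directly'' from Theorem~\ref{thm3.15}), and the implicit justification is precisely the reduction you spell out, namely that the L-matrix sign structure gives $a_{i,k}a_{k,j}\ge 0$, so $0<\alpha\le 1$ forces (\ref{eqs3.20}) via $(1-\alpha)a_{i,j}-\alpha\sum_{k\ne j}a_{i,k}a_{k,j}\le(1-\alpha)a_{i,j}\le 0$, exactly as in the remark preceding Corollaries~\ref{coro3-9}--\ref{coro3-12}. Your explicit case split on the $\lesssim$ convention, with $\alpha<1$ supplying the strict inequality in the reducible case of $A^{(4)}$, is a faithful unfolding of what the paper leaves implicit.
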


The result when ($i$) holds is better than the corresponding one given by \cite[Theorem 4.2]{YLK14}.

If $\sum_{k = 1}^{i-1}a_{i,k}a_{k,i} > 0, \; i = 2,\cdots,n$,
then for each $i\in\{2, \cdots, n\}$, there exists
$k(i)\in\{1, \cdots, i-1\}$ such that
$a_{i,k(i)}a_{k(i),i} > 0$, which implies that ($iii_1$) in Theorem \ref{thm3.15}
holds. Hence, Corollary \ref{coro3-19} when ($ii$) holds is better than the corresponding
one given by \cite[Theorem 4.1]{YLK14}.

\begin{corollary}
Theorem D is valid for $\nu = 4$,
provided one of the conditions ($i$) and ($ii$)
of Corollary \ref{coro3-19} is satisfied.
\end{corollary}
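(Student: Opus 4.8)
The plan is to reduce the nonsingular-M-matrix statement (Theorem D) to the irreducible-L-matrix statement (Theorem C for $\nu=4$), which has already been established in Corollary \ref{coro3-19}, by exploiting the extra structure of an M-matrix to single out the strict branch of the Stein-Rosenberg trichotomy. The crucial point is that an irreducible nonsingular M-matrix is in particular an irreducible L-matrix, so whenever one of the conditions (i) and (ii) of Corollary \ref{coro3-19} is assumed, Corollary \ref{coro3-19} applies verbatim and produces the full trichotomy (i)--(iii) of Theorem C for the preconditioner $Q_4=\alpha L$.

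First I would observe that, under either condition of Corollary \ref{coro3-19}, the matrix $A^{(4)}=P_4 A$ is an L-matrix: its diagonal is positive by the prerequisite $\alpha\sum_{k=1}^{i-1}a_{i,k}a_{k,i}<1$, while the off-diagonal entries are nonpositive by the sign hypotheses (\ref{eqs3.20})/(\ref{eqs3.22}) in the strictly lower triangle and automatically in the strictly upper triangle, where $a^{(4)}_{i,j}=a_{i,j}-\alpha\sum_{k=1}^{i-1}a_{i,k}a_{k,j}\le a_{i,j}\le 0$ because $A$ is an L-matrix and $\alpha>0$. Since $P_4=I+\alpha L>0$ is nonsingular and $A$ is a nonsingular M-matrix, Lemma \ref{lem1-10} then upgrades $P_4 A$ to a nonsingular M-matrix. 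Consequently both AOR splittings, of $A$ and of $P_4 A$, are weak regular splittings of nonsingular M-matrices, so by Lemma \ref{lem1-5} they are convergent; in particular $\rho({\mathscr L}_{\gamma,\omega})<1$.

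Next I would invoke Corollary \ref{coro3-19} to obtain Theorem C for $\nu=4$, so that exactly one of the three mutually exclusive relations holds. The bound $\rho({\mathscr L}_{\gamma,\omega})<1$ just established immediately rules out the branches with $\rho({\mathscr L}_{\gamma,\omega})=1$ and $\rho({\mathscr L}_{\gamma,\omega})>1$, leaving only $\rho({\mathscr L}^{(4)}_{\gamma,\omega})<\rho({\mathscr L}_{\gamma,\omega})<1$, which is exactly the conclusion of Theorem D.

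This mirrors the passage from Theorem A to Theorem B (Theorem \ref{thm3.2}) and from Theorem \ref{thm3.15} to Theorem \ref{thm3.16}, so essentially no new analysis is required. The only step needing care is the bookkeeping that the conditions of Corollary \ref{coro3-19} simultaneously supply the hypotheses of Corollary \ref{coro3-19} itself (to produce the trichotomy) and those of Lemma \ref{lem1-10} (that $P_4 A$ is a Z-matrix); once these are aligned, the convergence argument closes the proof at once.
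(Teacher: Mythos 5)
Your proposal is correct and follows essentially the same route as the paper: the paper derives this corollary through Theorem \ref{thm3.16} (hence Theorems \ref{thm3.12}, \ref{thm3.8} and ultimately the argument of Theorem \ref{thm3.4}), which is precisely your argument — $A$ being a nonsingular M-matrix forces $\rho({\mathscr L}_{\gamma,\omega})<1$ via the weak regular AOR splitting and Lemma \ref{lem1-5}, $P_4A$ is a Z-matrix (hence nonsingular M-matrix by Lemma \ref{lem1-10}), and the Stein--Rosenberg trichotomy of Corollary \ref{coro3-19} then leaves only the strict branch $\rho({\mathscr L}^{(4)}_{\gamma,\omega})<\rho({\mathscr L}_{\gamma,\omega})<1$. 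Your bookkeeping (upper-triangular entries automatically nonpositive, lower-triangular ones by $0<\alpha\le 1$, diagonal positive) is also consistent with the paper's chain of reductions, so no gap remains.
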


Specially, for some $r$, $2 \le r \le n$, $\alpha_{r,j} = \alpha_j \ge 0$, $j = 1, \cdots, r-1$, and $\alpha_{i,j} = 0$ otherwise,
in \cite{Wa06} the matrix $Q$ is defined as
\begin{eqnarray*}
Q_5 = \left(\begin{array}{ccccccc}
0 & \quad &\cdots & 0 & 0 &  \quad\cdots \quad & 0\\
\vdots & & \ddots & \vdots & \vdots & \vdots & \vdots\\
0 & &\cdots & 0 & 0 & \cdots & 0\\
-\alpha_1a_{r,1} & &\cdots & -\alpha_{r-1}a_{r,r-1} & 0 & \cdots & 0\\
0 & &\cdots & 0 & 0 & \cdots & 0\\
\vdots & & \vdots &\vdots & \vdots & \vdots & \vdots\\
0 & &\cdots & 0 & 0 & \cdots & 0
\end{array}\right)
 \end{eqnarray*}
 with
 \begin{eqnarray*}
\sum\limits_{k = 1}^{r-1}\alpha_ka_{r,k} \not = 0.
 \end{eqnarray*}

When $r=n$, it is proposed in \cite{NKM08} for the preconditioned Gauss-Seidel method.

In this case, for $i = 2, \cdots, n$, if $i \not = r$, then
\begin{eqnarray*}
\sum\limits_{k = 1}^{i-1}\alpha_{i,k}a_{i,k}a_{k,i} = 0
 \end{eqnarray*}
and
\begin{eqnarray*}
(1 - \alpha_{i,j})a_{i,j} - \sum\limits_{{k = 1}\atop{k\not = i,j}}^{i-1}\alpha_{i,k}a_{i,k}a_{k,j}
 = a_{i,j}, \;\; \hbox{for} \;\; j < i.
 \end{eqnarray*}

Now, $\delta_{i,j}^{(3)}(1)$ reduces to
\begin{eqnarray*}
\delta_{i,j}^{(5)}(1) = \left\{\begin{array}{ll}
 \sum\limits_{k = 1}^{j-1} \alpha_ka_{r,k}a_{k,j}, &
 i = r, j = 2,\cdots,r;\\
 0, & otherwise.
\end{array}\right.
\end{eqnarray*}

Hence, from Theorems \ref{thm3.9}-\ref{thm3.12} and Corollaries \ref{coro3-13}-\ref{coro3-16},
we can obtain the following comparison results, directly.

\begin{theorem}
Suppose that $\sum_{k = 1}^{r-1}\alpha_{k}a_{r,k}a_{k,r} < 1$ and
\begin{eqnarray} \label{eqs3.23}
(1 - \alpha_j)a_{r,j} - \sum\limits_{{k = 1}\atop{k\not = j}}^{r-1}\alpha_{k}a_{r,k}a_{k,j}
 \le 0, \; j = 1, \cdots, r-1.
\end{eqnarray}
Then Theorem A is valid for $\nu = 5$.
\end{theorem}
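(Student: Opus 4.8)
The plan is to recognize that $Q_5$ is exactly the specialization of $Q_3$ obtained by setting $\alpha_{i,j} = 0$ for every $i \neq r$ and $\alpha_{r,j} = \alpha_j$ for $j = 1, \ldots, r-1$, so that the desired conclusion should follow by merely verifying the two hypotheses of Theorem \ref{thm3.9} under this choice. I would therefore not re-run the whole machinery; instead I would check that the diagonal inequality $\sum_{k=1}^{i-1}\alpha_{i,k}a_{i,k}a_{k,i} < 1$ and the off-diagonal inequality (\ref{eqs3.17}) both hold for $i = 2, \ldots, n$, and then invoke Theorem \ref{thm3.9} directly.

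For the diagonal condition I would split on whether $i = r$. When $i \neq r$ all the weights $\alpha_{i,k}$ vanish, so $\sum_{k=1}^{i-1}\alpha_{i,k}a_{i,k}a_{k,i} = 0 < 1$ trivially. When $i = r$ the sum equals $\sum_{k=1}^{r-1}\alpha_k a_{r,k}a_{k,r}$, which is precisely the standing hypothesis $\sum_{k=1}^{r-1}\alpha_k a_{r,k}a_{k,r} < 1$. Hence the diagonal requirement of Theorem \ref{thm3.9} is met for all $i$.

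For the off-diagonal condition (\ref{eqs3.17}) I would again split on $i = r$. When $i = r$ the left-hand side is $(1 - \alpha_j)a_{r,j} - \sum_{k=1,k\neq j}^{r-1}\alpha_k a_{r,k}a_{k,j}$, which is exactly the quantity appearing in (\ref{eqs3.23}) and is $\le 0$ by assumption. When $i \neq r$ every $\alpha_{i,k} = 0$, so the whole expression collapses to $a_{i,j}$; since $A$ is an L-matrix (equivalently, a Z-matrix with unit diagonal) and $j < i$ forces $j \neq i$, we have $a_{i,j} \le 0$. Thus (\ref{eqs3.17}) holds for all $i = 2, \ldots, n$ and $j < i$.

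With both hypotheses verified, Theorem \ref{thm3.9} applies with $\nu = 5$ and yields that Theorem A is valid. I do not expect a genuine obstacle here: the argument is a direct specialization, and the only point demanding care is the clean bookkeeping of which entries $\alpha_{i,j}$ are nonzero---namely only those in row $r$ and columns $1, \ldots, r-1$---so that the $i \neq r$ cases reduce to the bare L-matrix sign condition $a_{i,j} \le 0$ while the $i = r$ case reproduces the stated hypotheses verbatim.
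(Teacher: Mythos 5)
Your proof is correct and is essentially the paper's own argument: the paper likewise observes that for $i \neq r$ the sums $\sum_{k=1}^{i-1}\alpha_{i,k}a_{i,k}a_{k,i}$ vanish and the left side of (\ref{eqs3.17}) collapses to $a_{i,j} \le 0$, while for $i = r$ the hypotheses reduce verbatim to the stated ones, and then invokes Theorem \ref{thm3.9} directly. Your write-up just makes this bookkeeping explicit.
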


\begin{theorem}
Suppose that (\ref{eqs3.23}) holds.
Then Theorem B is valid for $\nu = 5$.
\end{theorem}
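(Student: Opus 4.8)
The plan is to recognize that the preconditioner $Q_5$ is merely a special instance of the lower triangular preconditioner $Q_3$ studied just above, obtained by setting $\alpha_{r,j} = \alpha_j$ for $j = 1, \dots, r-1$ and $\alpha_{i,j} = 0$ for every $i \neq r$. Since Theorem \ref{thm3.10} already establishes Theorem B for $\nu = 3$ under hypothesis (\ref{eqs3.17}), the whole task reduces to verifying that, for this particular choice of the coefficients $\alpha_{i,j}$, the assumed inequality (\ref{eqs3.23}) is exactly the specialization of (\ref{eqs3.17}). Once that identification is made, Theorem \ref{thm3.10} applies verbatim and the conclusion follows.

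First I would dispose of the generic rows $i \neq r$. For such $i$ every coefficient $\alpha_{i,k}$ vanishes, so the left-hand side of (\ref{eqs3.17}) collapses to $a_{i,j}$. Because $A$ is a nonsingular M-matrix it is in particular a Z-matrix, whence $a_{i,j} \le 0$ for all $j < i$; thus (\ref{eqs3.17}) holds automatically for every row $i \neq r$ with no extra assumption. Next I would treat the single nontrivial row $i = r$. Substituting $\alpha_{r,j} = \alpha_j$ and $\alpha_{r,k} = \alpha_k$ into (\ref{eqs3.17}) turns its left-hand side into $(1-\alpha_j)a_{r,j} - \sum_{k=1,\,k\neq j}^{r-1}\alpha_k a_{r,k}a_{k,j}$, which is precisely the expression occurring in (\ref{eqs3.23}). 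Hence the hypothesis (\ref{eqs3.23}) is nothing but (\ref{eqs3.17}) restricted to $i = r$, $j < r$.

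Combining the two cases, the assumption (\ref{eqs3.23}) guarantees that (\ref{eqs3.17}) is satisfied for all admissible indices $i,j$, so Theorem \ref{thm3.10} yields Theorem B for $\nu = 5$. I do not anticipate any genuine obstacle: the argument is a direct reduction, and the only point requiring care is the index bookkeeping in the sum $\sum_{k=1,\,k\neq j}^{i-1}$ as $i$ specializes to $r$, together with the observation that the surviving term $a_{i,j} \le 0$ for the off-$r$ rows is free from the Z-matrix (hence nonsingular M-matrix) hypothesis. This mirrors exactly how Theorem \ref{thm3.10} itself was deduced from the $\nu = 2$ results.
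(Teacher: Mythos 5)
Your proof is correct and is essentially the paper's own argument: the paper obtains this theorem ``directly'' from Theorem \ref{thm3.10} by noting that for $Q_5$ the condition (\ref{eqs3.17}) reduces to $a_{i,j}\le 0$ (automatic, since a nonsingular M-matrix is a Z-matrix) in every row $i\neq r$, and to exactly (\ref{eqs3.23}) in row $i=r$. Your index bookkeeping and the reduction to the $\nu=3$ case match the paper's route precisely.
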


When $\alpha_j = 1$, $j = 1, \cdots, r-1$, the inequality (\ref{eqs3.23}) is trivial.
Hence the result is better than \cite[Theorem 2.9]{NKM08}, where the convergence hypothesis of
two Gauss-Seidel methods is unnecessary and the proof is insufficient, which is pointed
out by \cite{LC09}. While the condition $\rho(\mathscr{L}) > 0$ in \cite[Theorem 3.2]{LC09}
is unnecessary.

\begin{theorem}\label{thm3.19}
Suppose that $\sum_{k = 1}^{r-1}\alpha_{k}a_{r,k}a_{k,r} < 1$ and
\begin{eqnarray}\label{eqs3.25}
&& (1 - \alpha_j)a_{r,j} - \sum\limits_{{k = 1}\atop{k\not = j}}^{r-1}\alpha_{k}a_{r,k}a_{k,j}
 \lesssim 0 \;\; \hbox{whenever} \;\; a_{r,j} < 0, \\\nonumber
 && j = 1, \cdots, r-1.
\end{eqnarray}
Then Theorem C is valid for $\nu = 5$,
provided one of the following conditions is satisfied:

\begin{itemize}
\item[(i)] $0 \le \gamma < 1$.

\item[(ii)] $\gamma = 1$ and one of the following conditions holds:

\begin{itemize}
\item[($ii_1$)] There exist $j\in\{2, \cdots, r\}$ and
$k\in\{1, \cdots, j-1\}$ such that $\alpha_{k}a_{r,k}a_{k,j}$ $ > 0$.

\item[($ii_2$)] $a_{k,k+1} < 0$, $k = 1, \cdots, r-1$.

\item[($ii_3$)] $a_{k,r} < 0$, $k = 1, \cdots, r-1$.

\item[($ii_4$)] $r = n$, $a_{n,1} < 0$ and $\alpha_1 > 0$.
\end{itemize}\end{itemize}
\end{theorem}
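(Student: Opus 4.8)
The plan is to obtain Theorem~\ref{thm3.19} directly from Theorem~\ref{thm3.11} by verifying that the special preconditioner $Q_5$ satisfies all of that theorem's hypotheses. The starting point is the reduction already displayed before the statement: because $Q_5$ has nonzero entries only in its $r$th row, for every $i\neq r$ one has $\sum_{k=1}^{i-1}\alpha_{i,k}a_{i,k}a_{k,i}=0$ and the left-hand side of (\ref{eqs3.19}) collapses to $a_{i,j}$ for $j<i$, while for $i=r$ these quantities are exactly the ones controlled by the standing hypotheses $\sum_{k=1}^{r-1}\alpha_k a_{r,k}a_{k,r}<1$ and (\ref{eqs3.25}). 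Hence the diagonal requirement $\sum_{k=1}^{i-1}\alpha_{i,k}a_{i,k}a_{k,i}<1$ of Theorem~\ref{thm3.11} holds trivially when $i\neq r$ and coincides with the stated hypothesis when $i=r$.

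I would next dispose of the off-diagonal inequality (\ref{eqs3.19}). For $i\neq r$ its left-hand side equals $a_{i,j}$, which is also the entry $a^{(5)}_{i,j}$ since the $i$th row is unchanged by $Q_5$; this is strictly negative whenever $a_{i,j}<0$, so $\lesssim 0$ holds there. For $i=r$, (\ref{eqs3.19}) is literally (\ref{eqs3.25}). Thus (\ref{eqs3.19}) is in force. Under condition~(i), $0\le\gamma<1$, this already matches Theorem~\ref{thm3.11}(i), and the conclusion of Theorem~C for $\nu=5$ follows immediately.

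For condition~(ii), $\gamma=1$, the task is to show each of $(ii_1)$--$(ii_4)$ produces one of the conditions $(ii_1)$--$(ii_3)$ of Theorem~\ref{thm3.11}. Condition $(ii_1)$ is, using $\alpha_{r,k}=\alpha_k$, exactly $(ii_1)$ of Theorem~\ref{thm3.11} with $i=r$; and $(ii_4)$, where $r=n$ makes $\alpha_{n,1}=\alpha_1>0$ with $a_{n,1}<0$, is exactly $(ii_2)$ of Theorem~\ref{thm3.11}. The two chain conditions $(ii_2)$ and $(ii_3)$ are where the real work lies. Here I would invoke the nondegeneracy clause $\sum_{k=1}^{r-1}\alpha_k a_{r,k}\neq 0$ built into the definition of $Q_5$ to fix an index $k_0\in\{1,\dots,r-1\}$ with $\alpha_{k_0}>0$ and $a_{r,k_0}<0$. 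Under $(ii_2)$ I would take the triple $(i,j,k)=(r,k_0+1,k_0)$, so that $\alpha_{k_0}a_{r,k_0}a_{k_0,k_0+1}$ is a product of one positive and two negative factors, hence positive; under $(ii_3)$ I would take $(i,j,k)=(r,r,k_0)$, giving the positive product $\alpha_{k_0}a_{r,k_0}a_{k_0,r}$. In each case $1\le k_0\le r-1$ guarantees the admissible ranges $j\in\{2,\dots,r\}$ and $k\in\{1,\dots,j-1\}$, so condition $(ii_1)$ of Theorem~\ref{thm3.11} is met and Theorem~C follows.

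The main obstacle is precisely the treatment of $(ii_2)$ and $(ii_3)$: since $Q_5$ contributes nothing outside row $r$, the positive-product condition $(ii_1)$ of Theorem~\ref{thm3.11} can only be realized within that row, so everything rests on extracting the index $k_0$ from the nondegeneracy of $Q_5$ and then matching $a_{r,k_0}<0$ against the appropriate negative entry ($a_{k_0,k_0+1}$ or $a_{k_0,r}$) supplied by the chain hypothesis. I note also that condition~(iii) of Theorem~\ref{thm3.11} is unavailable for $Q_5$, since it would demand a positive product in every row $i\in\{2,\dots,n-1\}$, which $Q_5$ cannot supply; this is why Theorem~\ref{thm3.19} carries no counterpart of that condition.
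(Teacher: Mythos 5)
Your proposal is correct and follows essentially the same route as the paper: conditions ($i$), ($ii_1$) and ($ii_4$) are read off directly from Theorem \ref{thm3.11} (using the row-$r$ reduction of the hypotheses already noted for $Q_5$), and for ($ii_2$) and ($ii_3$) you extract, exactly as the paper does, an index $k_0\in\{1,\cdots,r-1\}$ with $\alpha_{k_0}a_{r,k_0}<0$ from the nondegeneracy of $Q_5$ and form the positive products $\alpha_{k_0}a_{r,k_0}a_{k_0,k_0+1}>0$ and $\alpha_{k_0}a_{r,k_0}a_{k_0,r}>0$, which realize condition ($ii_1$) of Theorem \ref{thm3.11} with $(j,k)=(k_0+1,k_0)$ and $(j,k)=(r,k_0)$ respectively. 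Your extra verification that the hypotheses of Theorem \ref{thm3.11} hold row by row, and the closing remark on why no analogue of its condition ($iii$) appears, are consistent with (and slightly more explicit than) the paper's treatment.
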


\begin{proof}
By ($i$) and ($ii$) of Theorem \ref{thm3.11}, ($i$), ($ii_1$) and ($ii_4$) are derived directly.

By the definition of $Q_5$, there exists $k_0 \in \{1, \cdots, r-1\}$ such that
$\alpha_{k_0}a_{r,k_0} < 0$.

If ($ii_2$) holds then $\alpha_{k_0}a_{r,k_0}a_{k_0,k_0+1} > 0$, which shows that
($ii_1$) holds for $j = k_0+1$ and $k = k_0$.

Similarly, if ($ii_3$) holds then $\alpha_{k_0}a_{r,k_0}a_{k_0,r} > 0$, which shows that
($ii_1$) holds for $j = r$ and $k = k_0$.
\end{proof}

\begin{theorem}
Suppose that (\ref{eqs3.25}) holds.
Then Theorem D is valid for $\nu = 5$,
provided one of the conditions ($i$) and ($ii$)
of Theorem \ref{thm3.19} is satisfied.
\end{theorem}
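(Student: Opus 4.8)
The plan is to derive Theorem D for $\nu=5$ by feeding the hypotheses into the Stein--Rosenberg trichotomy of Theorem \ref{thm3.19} and then eliminating two of its three alternatives using the convergence of the unpreconditioned method. The point is that an irreducible nonsingular M-matrix $A$ is at once an irreducible L-matrix, so that Theorem \ref{thm3.19} applies, and a nonsingular M-matrix, so that $\rho({\mathscr L}_{\gamma,\omega})<1$; together these force the strict inequality $\rho({\mathscr L}^{(5)}_{\gamma,\omega})<\rho({\mathscr L}_{\gamma,\omega})<1$.

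First I would recover the standing hypothesis $\sum_{k=1}^{r-1}\alpha_k a_{r,k}a_{k,r}<1$ of Theorem \ref{thm3.19}, which the present statement deliberately drops. Since $Q_5\ge 0$ and only the $r$th row of $A$ is altered, I would check that $P_5A$ is a Z-matrix: for $j\neq r$ one has $a^{(5)}_{r,j}=(1-\alpha_j)a_{r,j}-\sum_{k<r,\,k\neq j}\alpha_k a_{r,k}a_{k,j}$, where every cross term is a product of two nonpositive off-diagonal entries of $A$ and is therefore nonpositive; when $a_{r,j}=0$ this already gives $a^{(5)}_{r,j}\le 0$, while when $a_{r,j}<0$ the assumed (\ref{eqs3.25}) supplies $a^{(5)}_{r,j}\lesssim 0$, the $\lesssim$ convention precisely guaranteeing that $P_5A$ stays irreducible. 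As $P_5=I+Q_5$ is nonsingular with $P_5>0$, Lemma \ref{lem1-10} promotes the Z-matrix $P_5A$ to a nonsingular M-matrix, hence an L-matrix; its $r$th diagonal entry $a^{(5)}_{r,r}=1-\sum_{k=1}^{r-1}\alpha_k a_{r,k}a_{k,r}$ is then positive, which is exactly the omitted inequality.

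With this in hand, all hypotheses of Theorem \ref{thm3.19} are met once one of its conditions (i), (ii) is assumed, so that exactly one of $\rho({\mathscr L}^{(5)}_{\gamma,\omega})<\rho({\mathscr L}_{\gamma,\omega})<1$, $\rho({\mathscr L}^{(5)}_{\gamma,\omega})=\rho({\mathscr L}_{\gamma,\omega})=1$, $\rho({\mathscr L}^{(5)}_{\gamma,\omega})>\rho({\mathscr L}_{\gamma,\omega})>1$ holds. To pick out the first, I would use that $A$ is a nonsingular M-matrix: the AOR splitting (\ref{eqn1.4}) is weak regular, so by Lemma \ref{lem1-5} it is convergent and $\rho({\mathscr L}_{\gamma,\omega})<1$, ruling out the second and third alternatives. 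What remains is $\rho({\mathscr L}^{(5)}_{\gamma,\omega})<\rho({\mathscr L}_{\gamma,\omega})<1$, i.e.\ Theorem D.

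The main obstacle I anticipate is the sign bookkeeping in the second paragraph: one must argue carefully that $P_5A$ is an irreducible Z-matrix so that the dropped hypothesis $\sum_{k=1}^{r-1}\alpha_k a_{r,k}a_{k,r}<1$ is automatic in the M-matrix setting. Once this is settled, the argument is a routine combination of Theorem \ref{thm3.19} and Lemma \ref{lem1-5}, entirely parallel to the passage from Theorem \ref{thm3.15} to Theorem \ref{thm3.16} in the $Q_4$ case.
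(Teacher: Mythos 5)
Your proof is correct and follows exactly the route the paper intends (the paper states this result without proof, as an immediate consequence of Theorem \ref{thm3.19}): apply the Stein--Rosenberg trichotomy of Theorem \ref{thm3.19} to the irreducible L-matrix $A$, then use that the weak regular AOR splitting of the nonsingular M-matrix $A$ is convergent (Lemma \ref{lem1-5}), so $\rho({\mathscr L}_{\gamma,\omega})<1$ rules out alternatives (b) and (c). Your preliminary step---showing via the Z-matrix check on row $r$ and Lemma \ref{lem1-10} that $P_5A$ is a nonsingular M-matrix, so the hypothesis $\sum_{k=1}^{r-1}\alpha_k a_{r,k}a_{k,r}<1$ dropped from the statement is automatic---is a genuine gap in the paper's formulation that you correctly identified and filled.
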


\begin{corollary}
Suppose that $0 \le \alpha_k \le 1$, $k = 1, \cdots, r-1$,
and $\sum_{k = 1}^{r-1}\alpha_{k}a_{r,k}a_{k,r}$ $ < 1$.
Then Theorem A is valid for $\nu = 5$.
\end{corollary}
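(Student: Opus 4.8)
The plan is to treat $Q_5$ as the special instance of the lower triangular preconditioner $Q_3$ in which only row $r$ carries nonzero parameters, and then to invoke Corollary \ref{coro3-13}. Explicitly, $Q_5$ is obtained from $Q_3$ by choosing $\alpha_{r,j} = \alpha_j$ for $j = 1, \cdots, r-1$ and $\alpha_{i,j} = 0$ for every other admissible pair $(i,j)$; the nontriviality requirement of $Q_3$ reduces to $\sum_{k=1}^{r-1}\alpha_k a_{r,k} \not= 0$, which is already built into the definition of $Q_5$. Hence it suffices to check that the two hypotheses of Corollary \ref{coro3-13} hold for this restricted family of parameters.

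First I would verify the box constraint $0 \le \alpha_{i,j} \le 1$ required for $i = 2, \cdots, n$ and $j < i$. For any row $i \not= r$ all the entries vanish, so $0 \le 0 \le 1$ is immediate; for the single remaining row $i = r$ the parameters are exactly $\alpha_{r,j} = \alpha_j$, and the assumption $0 \le \alpha_k \le 1$ supplies the bound directly.

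Next I would verify the diagonal smallness condition $\sum_{k=1}^{i-1}\alpha_{i,k}a_{i,k}a_{k,i} < 1$ for $i = 2, \cdots, n$. When $i \not= r$ the sum has all summands equal to zero and thus equals $0 < 1$; when $i = r$ the sum is precisely $\sum_{k=1}^{r-1}\alpha_k a_{r,k} a_{k,r}$, which is less than $1$ by hypothesis. Both hypotheses of Corollary \ref{coro3-13} being satisfied, that corollary yields at once that Theorem A is valid for $\nu = 5$.

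There is no substantive obstacle here: the argument is a pure specialization, and the only point requiring care is the bookkeeping observation that row $r$ is the sole row of $Q_5$ capable of contributing to either hypothesis, so that the two conditions of Corollary \ref{coro3-13} collapse exactly onto the two stated assumptions with nothing extra imposed. As an independent route one could instead verify inequality (\ref{eqs3.23}) directly from the L-matrix sign pattern: since $0 \le \alpha_j \le 1$ and all off-diagonal entries of $A$ are nonpositive, the term $(1-\alpha_j)a_{r,j}$ is nonpositive while each $-\alpha_k a_{r,k}a_{k,j}$ (with $k \not= j$) is a product of $\alpha_k \ge 0$ with the nonnegative quantity $a_{r,k}a_{k,j}$ and hence nonpositive, whence (\ref{eqs3.23}) holds and the preceding $\nu = 5$ theorem applies equally well.
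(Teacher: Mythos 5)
Your proof is correct and follows essentially the same route as the paper: the paper obtains all of the $Q_5$ comparison results by direct specialization of the $Q_3$ results (Theorems \ref{thm3.9}--\ref{thm3.12} and Corollaries \ref{coro3-13}--\ref{coro3-16}), and your main argument is exactly the specialization of Corollary \ref{coro3-13} with $\alpha_{r,j}=\alpha_j$ and all other $\alpha_{i,j}=0$, including the correct bookkeeping that rows $i\neq r$ contribute trivially. Your alternative route, checking that $0\le\alpha_j\le 1$ together with the L-matrix sign pattern forces inequality (\ref{eqs3.23}) and then invoking the preceding $\nu=5$ theorem, is likewise the implicit derivation the paper intends, so there is nothing to correct.
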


\begin{corollary}
Suppose that
$0 \le \alpha_k \le 1$, $k = 1, \cdots, r-1$.
Then Theorem B is valid for $\nu = 5$.
\end{corollary}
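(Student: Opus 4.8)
The plan is to deduce this corollary from the (unlabeled) theorem just above, which asserts that once the inequality (\ref{eqs3.23}) holds, Theorem B is valid for $\nu = 5$. Thus it suffices to verify that the single hypothesis $0 \le \alpha_k \le 1$, $k = 1, \ldots, r-1$, already forces (\ref{eqs3.23}); in contrast with the companion corollary for Theorem A, no convergence-type or diagonal condition of the form $\sum_{k=1}^{r-1}\alpha_k a_{r,k}a_{k,r} < 1$ has to be imposed, and explaining why is the only conceptual point of the argument.

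First I would record the sign information coming from the assumption on $A$. Since $A$ is a nonsingular M-matrix it is, in particular, an L-matrix, and under the normalization of Section~\ref{se3} its diagonal entries equal $1$; hence $a_{i,j} \le 0$ for all $i \ne j$. This is the only structural fact about $A$ that the verification uses.

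Next I would check (\ref{eqs3.23}) termwise. Its left-hand side is
\begin{eqnarray*}
(1 - \alpha_j)a_{r,j} - \sum\limits_{{k = 1}\atop{k\not = j}}^{r-1}\alpha_{k}a_{r,k}a_{k,j}, \quad j = 1, \ldots, r-1.
\end{eqnarray*}
Because $0 \le \alpha_j \le 1$ gives $1 - \alpha_j \ge 0$, and $a_{r,j} \le 0$ (an off-diagonal entry), the first term is $\le 0$. In the sum each index satisfies $k \le r-1 < r$ and $k \ne j$, so both $a_{r,k} \le 0$ and $a_{k,j} \le 0$; together with $\alpha_k \ge 0$ this makes every summand $\alpha_k a_{r,k}a_{k,j} \ge 0$, whence $-\sum \le 0$. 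Adding the two pieces shows the whole expression is $\le 0$, which is precisely (\ref{eqs3.23}). Invoking the $\nu = 5$ form of Theorem B then completes the proof.

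The hard part is not a calculation but the observation that removes the diagonal hypothesis. The expression above, for $j < r$, is exactly the off-diagonal entry $a^{(5)}_{r,j}$ of $P_5A$; the remaining off-diagonal entries of $P_5A$ are either unchanged from $A$ (in rows $i \ne r$) or equal $a_{r,j}$ plus a nonpositive correction (for $j > r$ in row $r$), hence are all $\le 0$. Therefore (\ref{eqs3.23}) already certifies that $P_5A$ is a Z-matrix. Since $P_5 = I + Q_5$ is lower triangular with unit diagonal, it is nonsingular and $P_5 > 0$, so Lemma~\ref{lem1-10} makes $P_5A$ automatically a nonsingular M-matrix, hence an L-matrix with positive diagonal. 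In particular the bound $\sum_{k=1}^{r-1}\alpha_k a_{r,k}a_{k,r} < 1$ (which the L-matrix setting of the A-analogue had to assume in order to secure $a^{(5)}_{r,r} > 0$) now comes for free, which is exactly why the present corollary needs only $0 \le \alpha_k \le 1$.
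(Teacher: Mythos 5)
Your proof is correct and follows essentially the same route as the paper: the corollary is obtained from the preceding theorem (that inequality (\ref{eqs3.23}) implies Theorem B for $\nu = 5$) by observing that $0 \le \alpha_k \le 1$ together with the Z-matrix sign structure of $A$ forces each term $(1-\alpha_j)a_{r,j}$ and $-\alpha_k a_{r,k}a_{k,j}$ to be nonpositive, which is exactly the sign estimate the paper uses (cf. the remark preceding Corollary \ref{coro3-9} and the derivation of Corollary \ref{coro3-14}). Your closing observation about why no condition $\sum_{k=1}^{r-1}\alpha_k a_{r,k}a_{k,r} < 1$ is needed also matches the paper's mechanism, since its Theorem B results rest only on $P_\nu A$ being a Z-matrix, with the nonsingular M-matrix (hence L-matrix) property supplied automatically by Lemma \ref{lem1-10}.
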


The result includes the corresponding one given in \cite[Corollary 2.3]{Wa06}.

\begin{corollary}
Suppose that
$0 \le \alpha_j \lesssim 1$, $j = 1, \cdots, r-1$, and $\sum_{k = 1}^{r-1}\alpha_{k}a_{r,k}a_{k,r}$ $ < 1$.
Then Theorem C is valid for $\nu = 5$,
provided one of the conditions ($i$) and ($ii$)
of Theorem \ref{thm3.19} is satisfied.
\end{corollary}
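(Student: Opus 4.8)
The plan is to obtain this corollary as an immediate consequence of Theorem~\ref{thm3.19}: the hypotheses $\sum_{k=1}^{r-1}\alpha_k a_{r,k}a_{k,r}<1$ and ``one of conditions ($i$), ($ii$) of Theorem~\ref{thm3.19}'' are already assumed word for word, so the only thing left to verify is that the simpler requirement $0\le\alpha_j\lesssim 1$ forces inequality~(\ref{eqs3.25}). Once that is established, Theorem~\ref{thm3.19} applies verbatim and yields Theorem C for $\nu=5$.

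First I would observe that, for $j<r$, the left-hand side of (\ref{eqs3.25}) is precisely the off-diagonal entry $a^{(5)}_{r,j}$ of $P_5A$, and that the L-matrix sign pattern gives the elementary bound
\begin{eqnarray*}
&& (1-\alpha_j)a_{r,j} - \sum\limits_{{k=1}\atop{k\not=j}}^{r-1}\alpha_k a_{r,k}a_{k,j} \le (1-\alpha_j)a_{r,j},
\end{eqnarray*}
exactly the estimate used before Corollaries~\ref{coro3-9}--\ref{coro3-12}, since every subtracted term $\alpha_k a_{r,k}a_{k,j}$ is the product of the nonnegative $\alpha_k$ with the two nonpositive factors $a_{r,k}$ and $a_{k,j}$ and is therefore nonnegative. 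It then remains only to control the sign of $(1-\alpha_j)a_{r,j}$ according to the convention attached to ``$\lesssim$''.

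I would finish by splitting on the irreducibility of $P_5A=A^{(5)}$, which governs the meaning of ``$\lesssim$'' in both the hypothesis and (\ref{eqs3.25}). If $A^{(5)}$ is irreducible, then ``$\lesssim$'' reads ``$\le$'' and the hypothesis is $0\le\alpha_j\le 1$; here $(1-\alpha_j)\ge 0$ and $a_{r,j}\le 0$ give $(1-\alpha_j)a_{r,j}\le 0$, so the displayed bound delivers (\ref{eqs3.25}) in its non-strict form. If $A^{(5)}$ is reducible, then ``$\lesssim$'' reads ``$<$'' and the hypothesis is $0\le\alpha_j< 1$; for each $j$ with $a_{r,j}<0$ we then have $(1-\alpha_j)a_{r,j}<0$, so $a^{(5)}_{r,j}<0$ and (\ref{eqs3.25}) holds strictly. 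I do not expect any genuine obstacle beyond this bookkeeping; the single subtlety worth flagging is that the strict branch is self-correcting—keeping $a^{(5)}_{r,j}<0$ on every arc of $G(A)$ preserves strong connectivity, so under $0\le\alpha_j<1$ the matrix $A^{(5)}$ is in fact forced to remain irreducible, which is exactly what makes the two readings of ``$\lesssim$'' consistent. In either case (\ref{eqs3.25}) is verified and Theorem~\ref{thm3.19} gives the result.
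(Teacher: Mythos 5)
Your proposal is correct and takes essentially the same route as the paper: there the corollary is stated as an immediate consequence of Theorem \ref{thm3.19}, obtained from the elementary bound $(1-\alpha_j)a_{r,j}-\sum_{k\neq j}\alpha_{k}a_{r,k}a_{k,j}\le(1-\alpha_j)a_{r,j}$ (the same estimate the paper invokes before Corollaries \ref{coro3-9}--\ref{coro3-12}), which yields (\ref{eqs3.25}) under $0\le\alpha_j\lesssim 1$ and the stated diagonal condition. Your closing observation that $\alpha_j<1$ for all $j$ forces $A^{(5)}$ to inherit irreducibility from $A$ is also correct, and it is precisely the mechanism underlying the paper's ``$\lesssim$'' convention (compare the proof of Corollary \ref{coro3-3}).
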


\begin{corollary}
Suppose that $0 \le \alpha_k \lesssim 1$, $j = 1, \cdots, r-1$.
Then Theorem D is valid for $\nu = 5$,
provided one of the conditions ($i$) and ($ii$)
of Theorem \ref{thm3.19} is satisfied.
\end{corollary}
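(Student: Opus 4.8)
The plan is to reduce this corollary to the preceding $\nu = 5$ comparison theorem for Theorem D (the one whose standing hypothesis is the inequality (\ref{eqs3.25})), by checking that the cleaner assumption $0 \le \alpha_k \lesssim 1$ already forces (\ref{eqs3.25}) and, along the way, that $P_5A$ is a Z-matrix. The first thing I would establish is the observation that makes everything transparent: for the single-row preconditioner $Q_5$ the left-hand side of (\ref{eqs3.25}) is precisely the off-diagonal entry $a^{(5)}_{r,j}$. Indeed, feeding $q^{(5)}_{r,k} = -\alpha_k a_{r,k}$ for $k < r$ (all other entries of $Q_5$ vanishing) into the general formula (\ref{eqs3.2}) gives
\begin{eqnarray*}
a^{(5)}_{r,j} = (1-\alpha_j)a_{r,j} - \sum\limits_{{k=1}\atop{k\not=j}}^{r-1}\alpha_k a_{r,k}a_{k,j}, \quad j = 1,\cdots,r-1,
\end{eqnarray*}
so (\ref{eqs3.25}) simply asserts $a^{(5)}_{r,j} \lesssim 0$ whenever $a_{r,j} < 0$.

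Next I would carry out the termwise sign analysis. Since $A$ is an L-matrix with unit diagonal, we have $a_{r,j} \le 0$, $a_{r,k} \le 0$ and $a_{k,j} \le 0$ at the relevant off-diagonal positions, so $a_{r,k}a_{k,j} \ge 0$ and, because $\alpha_k \ge 0$, each term $-\alpha_k a_{r,k}a_{k,j}$ is nonpositive; together with $(1-\alpha_j)a_{r,j} \le 0$ (using $\alpha_j \le 1$) this yields $a^{(5)}_{r,j} \le 0$ at every off-diagonal position. As the rows $i \ne r$ of $P_5A$ coincide with those of $A$, it follows that $P_5A$ is a Z-matrix. It then remains only to match the two-layered strictness encoded in $\lesssim$. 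If $A^{(5)} = P_5A$ is irreducible, then $\lesssim$ means $\le$ in both the hypothesis and (\ref{eqs3.25}), and the inequality just proved is exactly what is needed. If $A^{(5)}$ is not irreducible, then $0 \le \alpha_j \lesssim 1$ reads $\alpha_j < 1$, so for any $j$ with $a_{r,j} < 0$ the factor $(1-\alpha_j)a_{r,j}$ is strictly negative while the remaining sum stays nonpositive, forcing $a^{(5)}_{r,j} < 0$, which is (\ref{eqs3.25}) with $\lesssim$ read as $<$. Either way (\ref{eqs3.25}) holds.

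With (\ref{eqs3.25}) in hand and $P_5A$ known to be a Z-matrix, the conclusion follows immediately from the $\nu = 5$ theorem establishing Theorem D, whose only standing hypotheses are (\ref{eqs3.25}) and one of conditions (i), (ii) of Theorem \ref{thm3.19} — exactly what is assumed here. I would also note that the auxiliary inequality $\sum_{k=1}^{r-1}\alpha_k a_{r,k}a_{k,r} < 1$ appearing in Theorem \ref{thm3.19} need not be imposed separately: since $P_5 = I + Q_5 > 0$ is nonsingular and $P_5A$ is a Z-matrix, Lemma \ref{lem1-10} makes $P_5A$ a nonsingular M-matrix, hence an L-matrix, so its diagonal entry $a^{(5)}_{r,r} = 1 - \sum_{k=1}^{r-1}\alpha_k a_{r,k}a_{k,r}$ is automatically positive.

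The termwise estimates are entirely routine; the one step demanding care — and the main obstacle — is faithfully tracking the $\lesssim$ convention, i.e. ensuring that the irreducibility status of $A^{(5)}$ is read consistently in the hypothesis $0 \le \alpha_k \lesssim 1$ and in the conclusion (\ref{eqs3.25}), and confirming that in the non-irreducible case the required strict inequality is genuinely delivered by the factor $(1-\alpha_j)$ rather than by the possibly vanishing cross terms.
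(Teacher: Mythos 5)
Your proof is correct and follows essentially the same route as the paper: the corollary is obtained by checking that $0 \le \alpha_k \lesssim 1$ forces the inequality (\ref{eqs3.25}) via the termwise bound $(1-\alpha_j)a_{r,j} - \sum_{k\ne j}\alpha_k a_{r,k}a_{k,j} \le (1-\alpha_j)a_{r,j}$, and then invoking the preceding $\nu=5$ theorem for Theorem D. Your additional remark that the auxiliary condition $\sum_{k=1}^{r-1}\alpha_k a_{r,k}a_{k,r} < 1$ is automatic (via Lemma \ref{lem1-10}, since $A$ is a nonsingular M-matrix and $P_5A$ is a Z-matrix) is also correct and consistent with why the paper omits it in the Theorem B/D statements.
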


Similarly, for some $r$, $2 \le r \le n$, $\alpha_{i,r-1} = \alpha_i \ge 0$,
$i = r, \cdots, n$, and $\alpha_{i,j} = 0$ otherwise, the matrix $Q_3$ reduces to
\begin{eqnarray*}
Q_6 = \left(\begin{array}{cccccccc}
0 & \quad \cdots \quad & 0 & 0 & 0 & \quad\cdots \quad & 0\\
\vdots & \vdots & \vdots & \vdots & \vdots & \vdots & \vdots\\
0 & \cdots & 0 &  0 & 0 & \cdots & 0\\
0 &  \cdots & 0 & -\alpha_{r}a_{r,r-1} & 0 & \cdots & 0\\
\vdots &   \vdots & \vdots &\vdots & \vdots & \ddots & \vdots\\
0 & \cdots & 0 & -\alpha_{n}a_{n,r-1} & 0 & \cdots & 0
\end{array}\right)
 \end{eqnarray*}
 with
 \begin{eqnarray*}
\sum\limits_{k = r}^{n}\alpha_ka_{k,r-1} \not = 0.
 \end{eqnarray*}

When $r = 2$, it is investigated in \cite{LY08,LC10,Yu11},
in \cite{HCC06} for the preconditioned SOR method and in \cite{HNT03}
for the preconditioned Gauss-Seidel and Jacobi methods, respectively.
When $r = 2$ and $\alpha_i = 1$, $i = 2, \cdots, n$, it is a special case in \cite{Mi87}
for the preconditioned Gauss-Seidel and Jacobi methods, and it is used
to the preconditioned AOR method in \cite{LLW07}.

In this case, $\delta_{i,j}^{(3)}(1)$ reduces to
\begin{eqnarray*}
\delta_{i,j}^{(6)}(1) = \left\{\begin{array}{ll}
 \alpha_ia_{i,r-1}a_{r-1,j}, &
 i = r,\cdots,n, j = r,\cdots,i;\\
 0, & otherwise.
\end{array}\right.
\end{eqnarray*}

\begin{theorem}\label{thm3.21}
Suppose that $0 \le \alpha_k \le 1$ and
$\alpha_{k}a_{k,r-1}a_{r-1,k} < 1$, $k = r, \cdots, n$.
Then Theorem A is valid for $\nu = 6$.
\end{theorem}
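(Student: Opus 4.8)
The plan is to recognize that $Q_6$ is simply the general lower triangular preconditioner $Q_3$ specialized to the coefficients
\begin{eqnarray*}
\alpha_{i,r-1} = \alpha_i \;\; (i = r, \cdots, n), \qquad \alpha_{i,j} = 0 \;\; \text{otherwise},
\end{eqnarray*}
and then to invoke Corollary \ref{coro3-13}, which already settles Theorem A for the whole $Q_3$ family. Since the preconditioned AOR iteration matrix depends on $Q_\nu$ only through the coefficients $\alpha_{i,j}$, under this identification the matrices ${\mathscr L}^{(6)}_{\gamma,\omega}$ and ${\mathscr L}^{(3)}_{\gamma,\omega}$ coincide, so it suffices to check that the two hypotheses of Corollary \ref{coro3-13} are met by the above choice.

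First I would verify the boundedness condition $0 \le \alpha_{i,j} \le 1$ for all $i = 2, \cdots, n$ and $j < i$. The only coefficients that can be nonzero are $\alpha_{i,r-1} = \alpha_i$ with $i \ge r$; these are genuinely subdiagonal (since $r-1 < r \le i$, consistent with $Q_6$ being strictly lower triangular), and by hypothesis $0 \le \alpha_i \le 1$, while every remaining $\alpha_{i,j}$ equals $0 \in [0,1]$. Next I would examine the diagonal condition $\sum_{k=1}^{i-1}\alpha_{i,k}a_{i,k}a_{k,i} < 1$. For $2 \le i \le r-1$ the summation index $k$ never reaches $r-1$, so every term vanishes and the sum is $0 < 1$; for $r \le i \le n$ it collapses to the single surviving term $\alpha_{i,r-1}a_{i,r-1}a_{r-1,i} = \alpha_i a_{i,r-1}a_{r-1,i}$, which is precisely the quantity assumed to be less than $1$ in the statement (taking $k = i$). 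Hence both hypotheses of Corollary \ref{coro3-13} hold.

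There is no substantial obstacle here beyond this bookkeeping; the only point deserving care is confirming that the sum collapses to a single term and that the index range $k = r, \cdots, n$ of the hypothesis covers exactly the indices $i \ge 2$ at which a nonzero term can appear, which is guaranteed by $r \ge 2$. With both conditions checked, Corollary \ref{coro3-13} applies directly and yields the trichotomy of Theorem A, namely $\rho({\mathscr L}^{(6)}_{\gamma,\omega}) \le \rho({\mathscr L}_{\gamma,\omega}) < 1$, or $\rho({\mathscr L}^{(6)}_{\gamma,\omega}) = \rho({\mathscr L}_{\gamma,\omega}) = 1$, or $\rho({\mathscr L}^{(6)}_{\gamma,\omega}) \ge \rho({\mathscr L}_{\gamma,\omega}) > 1$, completing the proof for $\nu = 6$.
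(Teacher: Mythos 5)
Your proof is correct and follows exactly the paper's route: the paper likewise treats $Q_6$ as the specialization $\alpha_{i,r-1}=\alpha_i$ (zero otherwise) of $Q_3$ and simply asserts that the hypotheses of Corollary \ref{coro3-13} are satisfied, which is precisely the bookkeeping you carry out explicitly. Your verification that the sum $\sum_{k=1}^{i-1}\alpha_{i,k}a_{i,k}a_{k,i}$ vanishes for $i<r$ and collapses to the single term $\alpha_i a_{i,r-1}a_{r-1,i}$ for $i\ge r$ supplies the detail the paper leaves to the reader.
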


\begin{proof} It is easy to prove that the condition of Corollary \ref{coro3-13} is satisfied,
so that Theorem A is valid. \end{proof}

The result includes the corresponding one given by \cite[Theorem 2.2-(a)]{LY08}.
The result for $\omega = \gamma$ includes the corresponding one given by
\cite[Theorem 3.3]{Yu072}, where the condition is too strong.

Similarly, by Corollary \ref{coro3-14} we can prove the following theorem.

\begin{theorem}\label{thm3.22}
Suppose that $0 \le \alpha_k \le 1$, $k = r, \cdots, n$.
Then Theorem B is valid for $\nu = 6$.
\end{theorem}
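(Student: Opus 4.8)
The plan is to recognize that $Q_6$ is simply a special instance of the lower triangular preconditioner $Q_3$, so that Theorem B for $\nu = 6$ should drop out by a direct specialization of Corollary \ref{coro3-14}. Recall that $Q_3$ is assembled from coefficients $\alpha_{i,j} \ge 0$ with $\alpha_{i,j} = 0$ whenever $i \le j$, and that Corollary \ref{coro3-14} guarantees Theorem B for $\nu = 3$ as soon as $0 \le \alpha_{i,j} \le 1$ for all $i = 2, \cdots, n$ and $j < i$. My first step is therefore to locate the coefficient array defining $Q_6$ inside that of $Q_3$.

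Concretely, $Q_6$ corresponds to the choice $\alpha_{i,r-1} = \alpha_i$ for $i = r, \cdots, n$ together with $\alpha_{i,j} = 0$ for every other index pair. I would then check the single range condition demanded by Corollary \ref{coro3-14}: on the active entries the hypothesis $0 \le \alpha_k \le 1$ for $k = r, \cdots, n$ reads exactly $0 \le \alpha_{i,r-1} \le 1$, and since $2 \le r \le n$ one has $j = r-1 < r \le i$, so these entries genuinely sit in the strictly lower triangular region $j < i$. For all remaining pairs the coefficient is $0$, which trivially lies in $[0,1]$.

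Having verified that every $\alpha_{i,j}$ associated with $Q_6$ lies in $[0,1]$ and has the correct support, the hypotheses of Corollary \ref{coro3-14} are met, and Theorem B for $\nu = 6$ follows at once. There is essentially no analytic obstacle here, since all of the real work has already been carried out through the general $Q_3$ framework, which rests (via Corollary \ref{coro3-2} and Theorem \ref{thm3.2}) on Lemma \ref{lem1-10} to identify $P_6 A$ as a nonsingular M-matrix, on the general comparison of Theorem \ref{thm3.1}, and on the M-matrix convergence $\rho({\mathscr L}_{\gamma,\omega}) < 1$ supplied by Lemma \ref{lem1-5}. The only point requiring care is the index bookkeeping — confirming that the lone active column $r-1$ lies strictly below the diagonal for each active row $i \in \{r, \cdots, n\}$ — and this is immediate from $2 \le r \le n$.
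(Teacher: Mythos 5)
Your proof is correct and follows exactly the paper's route: the paper derives Theorem \ref{thm3.22} by viewing $Q_6$ as the special case of $Q_3$ with $\alpha_{i,r-1}=\alpha_i$ for $i=r,\cdots,n$ and all other coefficients zero, and then invoking Corollary \ref{coro3-14}. Your additional index check that $r-1 < r \le i$ places the active entries strictly below the diagonal is precisely the verification the paper leaves implicit, so nothing further is needed.
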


In order to give the Stein-Rosenberg Type Theorem II,
we prove a lemma.

\begin{lemma}\label{lem3-4}
Let $A$ be an irreducible Z-matrix.
Assume that $r = 2$, $0 < \alpha_k \le 1$,
$k = 2, \cdots, n$ and $A^{(6)}$ has the block form
\begin{eqnarray*}
A^{(6)} = \left(\begin{array}{ccc}
1 & & \bar{a}_{1,2}\\
\bar{a}^{(6)}_{2,1} & & A^{(6)}_{2,2}
\end{array}\right), \; A^{(6)}_{2,2}\in{\mathscr R}^{(n-1)\times(n-1)}.
 \end{eqnarray*}
 Then

\begin{itemize}
\item[(i)] $A^{(6)}_{2,2}$ is an irreducible Z-matrix.

\item[(ii)] $A^{(6)}$ is an irreducible Z-matrix if and only if there exists
$i_0\in\{2, \cdots, n\}$ such that $(1-\alpha_{i_0})a_{i_0,1} \not = 0$.
\end{itemize}
\end{lemma}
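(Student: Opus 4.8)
The plan is to compute the entries of $A^{(6)} = P_6 A$ explicitly and then read off the structure of the directed graphs $G(A^{(6)}_{2,2})$ and $G(A^{(6)})$ from that of $G(A)$. Since $P_6 = I + Q_6$ and the only nonzero entries of $Q_6$ are $q^{(6)}_{i,1} = -\alpha_i a_{i,1}$ for $i = 2,\cdots,n$, the first row of $A^{(6)}$ coincides with that of $A$, while for $i \ge 2$ row $i$ of $A^{(6)}$ equals row $i$ of $A$ minus $\alpha_i a_{i,1}$ times row $1$ of $A$. Using $a_{1,1}=1$ this gives $a^{(6)}_{i,1} = (1-\alpha_i)a_{i,1}$ and $a^{(6)}_{i,j} = a_{i,j} - \alpha_i a_{i,1}a_{1,j}$ for $i,j \ge 2$. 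First I would record that $A^{(6)}$ is a Z-matrix: for $j=1$, $0<\alpha_i\le1$ and $a_{i,1}\le0$ give $a^{(6)}_{i,1}\le0$, while for $i,j\ge2$ with $i\neq j$ one has $a_{i,j}\le0$ and $-\alpha_i a_{i,1}a_{1,j}\le0$ (because $\alpha_i>0$ and $a_{i,1},a_{1,j}\le0$), so $a^{(6)}_{i,j}\le0$; hence $A^{(6)}_{2,2}$, being the submatrix on indices $2,\cdots,n$, is a Z-matrix as well. The crucial consequence of this \emph{sign non-cancellation} is that for $i,j\ge2$ the entry $a^{(6)}_{i,j}$ vanishes if and only if \emph{both} $a_{i,j}=0$ and $\alpha_i a_{i,1}a_{1,j}=0$.

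For part (i) I would argue at the level of directed graphs, using that a matrix is irreducible if and only if its graph is strongly connected. The observation above shows that $G(A^{(6)}_{2,2})$ contains every edge $i\to j$ of $G(A)$ with $i,j\ge2$, and in addition contains the edge $i\to j$ whenever $a_{i,1}\neq0$ and $a_{1,j}\neq0$ (this is where $\alpha_i>0$ is used), i.e.\ whenever $G(A)$ has the length-two path $i\to1\to j$. Now take any $i,j\in\{2,\cdots,n\}$; by irreducibility of $A$ there is a path from $i$ to $j$ in $G(A)$. Since $i,j\neq1$, vertex $1$ occurs only in the interior of this path, and each such occurrence is flanked by a predecessor $p\ge2$ and successor $q\ge2$ with edges $p\to1$ and $1\to q$ in $G(A)$; the added edge $p\to q$ then lets me bypass vertex $1$. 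Replacing every subpath $p\to1\to q$ by $p\to q$ yields a path from $i$ to $j$ lying entirely in $\{2,\cdots,n\}$ and using edges of $G(A^{(6)}_{2,2})$. Hence $G(A^{(6)}_{2,2})$ is strongly connected, and (i) follows.

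For part (ii) I would note that the subgraph of $G(A^{(6)})$ on $\{2,\cdots,n\}$ is exactly $G(A^{(6)}_{2,2})$, which is strongly connected by (i), and that vertex $1$ always has an outgoing edge in $G(A^{(6)})$ since its first row equals that of the irreducible $A$; together these give that vertex $1$ reaches all of $\{2,\cdots,n\}$. Consequently $G(A^{(6)})$ is strongly connected precisely when vertex $1$ is in turn reachable from $\{2,\cdots,n\}$, i.e.\ when there is some incoming edge $i_0\to1$, which by $a^{(6)}_{i_0,1}=(1-\alpha_{i_0})a_{i_0,1}$ is exactly the condition $(1-\alpha_{i_0})a_{i_0,1}\neq0$. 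For the converse I would invoke the definition of reducibility directly: if $(1-\alpha_i)a_{i,1}=0$ for all $i\ge2$, then the first column of $A^{(6)}$ below the diagonal is zero, so $A^{(6)}$ already has the required block upper-triangular form (with the identity permutation), hence is reducible. I expect the main obstacle to be the bypass argument in (i): one must verify carefully that the two nonpositive contributions to $a^{(6)}_{i,j}$ cannot cancel — this is what guarantees that the short-circuit edges $i\to j$ are genuinely present — and that the rerouting produces a legitimate path without reintroducing vertex $1$.
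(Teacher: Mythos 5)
Your proof is correct and follows essentially the same route as the paper's: you compute the entries $a^{(6)}_{i,1}=(1-\alpha_i)a_{i,1}$ and $a^{(6)}_{i,j}=a_{i,j}-\alpha_i a_{i,1}a_{1,j}$, use the sign non-cancellation of the two nonpositive terms to show edges are preserved and that the short-circuit edge $p\to q$ exists whenever $G(A)$ contains $p\to 1\to q$, and then prove (i) by bypassing vertex $1$ in paths and (ii) by combining (i) with the outgoing edge from row $1$ and the incoming edge $i_0\to 1$. The only cosmetic difference is that you phrase (ii) as a reachability equivalence while the paper does an explicit case analysis on $i=1$, $j=1$; the substance is identical.
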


\begin{proof}
Since
\begin{eqnarray*}
a^{(6)}_{i,j} = \left\{\begin{array}{lll}
a_{1,j}, & i = 1, j = 1,\cdots n,\\
(1 - \alpha_{i})a_{i,1}, & i = 2,\cdots, n, j = 1, \\
a_{i,j} - \alpha_{i}a_{i,1}a_{1,j}, & i,j = 2,\cdots, n,
\end{array}\right.
\end{eqnarray*}
then it is clearly that $a^{(6)}_{1,k} = a_{1,j} \le 0$,
$a^{(6)}_{k,1} = (1 - \alpha_{i})a_{i,1} \le 0$ for $k = 2,\cdots,n$ and
\begin{eqnarray} \label{eqs3.26}
a^{(6)}_{i,j} = a_{i,j} - \alpha_{i}a_{i,1}a_{1,j} \le a_{i,j} \le 0, \;
i,j = 2,\cdots, n, \; i\not = j.
\end{eqnarray}
Hence, $A^{(6)}$ and $A^{(6)}_{2,2}$ are Z-matrices.

For any $i,j\in\{2, \cdots, n\}$, $i\not = j$, since $A$ is irreducible, then there exists a path
$\sigma_{i,j} = (j_0, j_1, \cdots, j_{l+1})\in G(A)$ with $i = j_0$ and $j = j_{l+1}$.

If $j_k\in\{2, \cdots, n\}$ for $k = 1, \cdots, l$, then, by (\ref{eqs3.26}), it gets that
$a^{(6)}_{j_k,j_{k+1}} \le a_{j_k,j_{k+1}} < 0$ and $a^{(6)}_{j_0,j_{1}} \le a_{j_0,j_{1}} < 0$, so that
$\sigma_{i,j}\in G(A^{(6)}_{2,2})$.

For the case when there exists $s\in\{1, \cdots, l\}$ such that $j_s = 1$, we have
$j_{s-1} > 1$, $j_{s+1} > 1$, $a_{j_{s-1},1} < 0$ and $a_{1,j_{s+1}} < 0$.
By (\ref{eqs3.26}), it gets that $a^{(6)}_{j_{s-1},j_{s+1}} = a_{j_{s-1},j_{s+1}}
- \alpha_{j_{s-1}}a_{j_{s-1},1}a_{1,j_{s+1}} \le -\alpha_{j_{s-1}}a_{j_{s-1},1}a_{1,j_{s+1}} < 0$.
It follows that $\tilde{\sigma}_{i,j} = (j_0, \cdots, j_{s-1}, j_{s+1},$  $ \cdots, j_{l+1}) \in G(A^{(6)}_{2,2})$.

We have proved ($i$).

The necessity of ($ii$) is obvious. Now we prove the sufficiency.

For any $i,j\in\{1, \cdots, n\}$, $i\not = j$, if $i,j\in\{2, \cdots, n\}$
then, by ($i$), there exists a path $\sigma_{i,j}$ such that
$\sigma_{i,j}\in G(A^{(6)}_{2,2}) \subseteq G(A^{(6)})$.

For the case when $i = 1$, since $A$ is an irreducible Z-matrix, then there exists
$j_0\in\{2, \cdots, n\}$ such that $a^{(6)}_{1,j_0} = a_{1,j_0} < 0$. By ($i$),
there exists a path $\sigma_{j_0,j}$ such that
$\sigma_{j_0,j}\in G(A^{(6)}_{2,2})$ so that $(1,\sigma_{j_0,j})\in G(A^{(6)})$.

For the case when $j = 1$, there exists a path $\sigma_{i,i_0}$ such that
$\sigma_{i,i_0}\in G(A^{(6)}_{2,2})$. Since $a^{(6)}_{i_0,i} = (1-\alpha_{i_0})a_{i_0,1} \not = 0$,
it follows that $(\sigma_{i,i_0},1)\in G(A^{(6)})$.

We have proved ($ii$).
\end{proof}

This lemma improves \cite[Theorem 3]{Ed18}.

\begin{theorem}\label{thm3.23}
Suppose that $\alpha_{k}a_{k,r-1}a_{r-1,k} < 1$, $k = r, \cdots, n$.
Then Theorem C is valid for $\nu = 6$,
provided one of the following conditions is satisfied:

\begin{itemize}
\item[(i)]
$0 \le \gamma < 1$ and $0 \le \alpha_k \lesssim 1$, $k = r, \cdots, n$.

\item[(ii)]
$\gamma = 1$ and $0 \le \alpha_k \lesssim 1$, $k = r, \cdots, n$.
And one of the following conditions holds:

\begin{itemize}
\item[($ii_1$)] There exist $i\in\{r, \cdots, n\}$ and $j\in\{r, \cdots, i\}$ such that
$\alpha_ia_{i,r-1}a_{r-1,j}$ $ > 0$.

\item[($ii_2$)]
$a_{r-1,r} < 0$.

\item[($ii_3$)]
$r = 2$, $a_{n,1} < 0$ and $\alpha_n > 0$.
\end{itemize}

\item[(iii)] $r = 2$, $0 \le \gamma < 1$ and $0 < \alpha_k \le 1$, $k = 2, \cdots, n$.

\item[($iv$)] $r = 2$, $\gamma = 1$ and $0 < \alpha_k \le 1$, $k = 2, \cdots, n$.
And one of the following conditions holds:

\begin{itemize}
\item[($iv_1$)] There exist $i\in\{2, \cdots, n\}$ and $j\in\{2, \cdots, i\}$ such that
$a_{i,1}a_{1,j} > 0$.

\item[($iv_2$)]
$a_{1,2} < 0$.

\item[($iv_3$)]
$a_{n,1} < 0$.
\end{itemize}
\end{itemize}
\end{theorem}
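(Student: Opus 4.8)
The plan is to regard $Q_6$ as the special case of the lower triangular preconditioner $Q_3$ with $\alpha_{i,j}=\alpha_i$ for $j=r-1$, $i\ge r$, and $\alpha_{i,j}=0$ otherwise, and then to verify the hypotheses of Corollary~\ref{coro3-15}. Under this choice $\sum_{k=1}^{i-1}\alpha_{i,k}a_{i,k}a_{k,i}$ collapses to $\alpha_i a_{i,r-1}a_{r-1,i}$ and $\delta^{(3)}_{i,j}(1)$ to the stated $\delta^{(6)}_{i,j}(1)=\alpha_i a_{i,r-1}a_{r-1,j}$; by Lemma~\ref{lem3-1} we may assume $\gamma\le\omega$ throughout. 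Conditions (i) and (ii) reduce at once to Corollary~\ref{coro3-15}(i), since $0\le\alpha_k\lesssim 1$ is exactly $0\le\alpha_{i,j}\lesssim 1$. For $\gamma=1$ in (ii) I would show that each of ($ii_1$)--($ii_3$) forces some $\delta^{(6)}_{i,j}(1)>0$: the defining property $\sum_{k=r}^{n}\alpha_k a_{k,r-1}\neq 0$ yields an index $i_0\in\{r,\dots,n\}$ with $\alpha_{i_0}>0$, $a_{i_0,r-1}<0$, so $a_{r-1,r}<0$ (from ($ii_2$)) gives $\delta^{(6)}_{i_0,r}(1)>0$, while ($ii_3$) with $r=2$ is literally condition ($ii_2$) of Theorem~\ref{thm3.11}.

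For (iii) and (iv) the hypotheses permit $r=2$, $0<\alpha_k\le 1$ with $\alpha_k=1$, so by Lemma~\ref{lem3-4}(ii) the matrix $A^{(6)}=P_6A$ may be reducible and the positivity $y^T(D_6-\gamma L_6)^{-1}\gg 0$ of Lemma~\ref{lem1-8}(iii) is not automatic. The plan is a uniform eigenvector argument resting on the fact that row $1$ of $Q_6$ vanishes, so the first coordinate of $(P_6N_{\gamma,\omega}-N^{(6)}_{\gamma,\omega})x$ is always zero while the remaining coordinates are nonnegative. Let $x\gg 0$ be the Perron vector of $\mathscr{L}_{\gamma,\omega}$ (Lemma~\ref{lem1-8}) and let $y$ be a left Perron vector of $\mathscr{L}^{(6)}_{\gamma,\omega}$. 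If $A^{(6)}$ is irreducible, Lemma~\ref{lem1-8}(iii) gives $y^T[M^{(6)}_{\gamma,\omega}]^{-1}\gg 0$. If $A^{(6)}$ is reducible, then Lemma~\ref{lem3-4}(ii) forces the first column of $A^{(6)}$ below the diagonal to vanish, so $A^{(6)}$ is block upper triangular with irreducible trailing block $A^{(6)}_{2,2}$ by Lemma~\ref{lem3-4}(i); hence $D_6-\gamma L_6$ is block diagonal and $\mathscr{L}^{(6)}_{\gamma,\omega}$ is block upper triangular with diagonal blocks $1-\omega$ and the AOR matrix $\mathscr{L}^{(22)}_{\gamma,\omega}$ of $A^{(6)}_{2,2}$. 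From $\mathscr{L}^{(22)}_{\gamma,\omega}\ge(1-\omega)I$ together with its positive Perron eigenvector one gets $\rho(\mathscr{L}^{(22)}_{\gamma,\omega})\ge 1-\omega$, so $\rho(\mathscr{L}^{(6)}_{\gamma,\omega})=\rho(\mathscr{L}^{(22)}_{\gamma,\omega})$ with degenerate left Perron vector $y=(0,\,y_2)^T$; Lemma~\ref{lem1-8}(iii) applied to $A^{(6)}_{2,2}$ then makes the first coordinate of $y^T[M^{(6)}_{\gamma,\omega}]^{-1}$ equal to $0$ and all remaining coordinates strictly positive.

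In either case $y^T[M^{(6)}_{\gamma,\omega}]^{-1}$ is strictly positive on the coordinates $\{2,\dots,n\}$ and $y^Tx>0$, so by the identity underlying Lemma~\ref{lem1-11},
\begin{eqnarray*}
[\rho(\mathscr{L}^{(6)}_{\gamma,\omega})-\rho(\mathscr{L}_{\gamma,\omega})]\,y^Tx
&=&[\rho(\mathscr{L}_{\gamma,\omega})-1]\,y^T[M^{(6)}_{\gamma,\omega}]^{-1}(P_6N_{\gamma,\omega}-N^{(6)}_{\gamma,\omega})x,
\end{eqnarray*}
the strict trichotomy follows as soon as one entry of $(P_6N_{\gamma,\omega}-N^{(6)}_{\gamma,\omega})x$ in a row $\ge 2$ is positive, i.e. as soon as some row $i\ge 2$ of $(\delta^{(6)}_{i,j}(\gamma))$ has a positive element (using $x\gg 0$). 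For (iii) with $0\le\gamma<1$ the entry $\delta^{(6)}_{i,1}(\gamma)=(\gamma-1)\alpha_i a_{i,1}$ is positive precisely when $a_{i,1}<0$, and irreducibility of $A$ supplies such an $i\ge 2$; no auxiliary hypothesis is needed, which is why (iii) carries none. For (iv) with $\gamma=1$ the factor $(1-\gamma)$ disappears and the positive entry must come from a product $\delta^{(6)}_{i,j}(1)=\alpha_i a_{i,1}a_{1,j}$ with $j\ge 2$; each of ($iv_1$)--($iv_3$) produces one, using in ($iv_2$),($iv_3$) that irreducibility of $A$ furnishes the missing factor $a_{i,1}<0$ or $a_{1,j}<0$.

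The main obstacle is the reducible branch of (iii) and (iv): one must confirm that the degenerate left Perron vector $y=(0,y_2)^T$ still pairs positively with the Perron vector $x$ of the \emph{original} irreducible $A$, and that the vanishing first coordinate of $y^T[M^{(6)}_{\gamma,\omega}]^{-1}$ does not annihilate the single guaranteed positive entry of $(P_6N_{\gamma,\omega}-N^{(6)}_{\gamma,\omega})x$. The restriction $r=2$ together with $0<\alpha_k$ is exactly what Lemma~\ref{lem3-4} needs to keep the trailing block $A^{(6)}_{2,2}$ irreducible, and this is precisely the structure that rescues the comparison when the choice $\alpha_k=1$ collapses $A^{(6)}$ into reducibility and blocks the direct use of Lemma~\ref{lem1-8}(iii) on the full matrix.
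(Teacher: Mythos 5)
Your proof is correct. For (i) and (ii) it coincides with the paper's own argument: view $Q_6$ as a special case of $Q_3$, invoke Corollary \ref{coro3-15}, and convert ($ii_2$) and ($ii_3$) into instances of the conditions of Theorem \ref{thm3.11} through an index $k_0$ with $\alpha_{k_0}a_{k_0,r-1}<0$. For (iii) and (iv), both you and the paper split on whether $A^{(6)}$ is irreducible (Lemma \ref{lem3-4}) and, in the reducible branch, exploit the same block upper triangular structure with irreducible trailing block $A^{(6)}_{2,2}$; the difference lies in how the strict comparison is then extracted. The paper derives the block eigen-equations (\ref{eqs3.27})--(\ref{eqs3.28}) and argues twice: for $\gamma<1$ it first shows the trailing AOR iteration matrix is irreducible so that Lemma \ref{lem1-2} can be applied to the right eigenvector, and for $\gamma=\omega=1$ it switches to a left Perron vector of the trailing block. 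You instead run a single pairing argument valid in all cases: noting that the proof of Lemma \ref{lem1-11} uses only the scalar inequality $y^TM_1^{-1}(N_2-N_1)x>0$ (weaker than its stated hypotheses), you take $y^T=(0,\,y_2^T)$ with $y_2$ a left Perron vector of the trailing block, identify $\rho({\mathscr L}^{(6)}_{\gamma,\omega})$ with the trailing spectral radius because that radius is at least $1-\omega$, obtain strict positivity of $y^T[M^{(6)}_{\gamma,\omega}]^{-1}$ on coordinates $2,\dots,n$ from Lemma \ref{lem1-8}($iii$) applied to $A^{(6)}_{2,2}$, and pair this against a positive entry of $(P_6N_{\gamma,\omega}-N^{(6)}_{\gamma,\omega})x$ in a row $i\ge 2$, supplied by $(1-\gamma)\alpha_i(-a_{i,1})>0$ when $\gamma<1$ and by $\alpha_ia_{i,1}a_{1,j}>0$ under ($iv_1$)--($iv_3$) when $\gamma=1$ (irreducibility of $A$ furnishing the missing negative factor). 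Your route buys uniformity, one argument covering the irreducible and reducible branches and both parameter ranges, and it avoids having to prove irreducibility of the trailing AOR iteration matrix; its cost is that you must justify the mild extension of Lemma \ref{lem1-11} and the legitimacy of the degenerate left Perron vector, both of which you handle correctly.
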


\begin{proof} By Corollary \ref{coro3-15}, ($i$), ($ii_1$) and ($ii_3$) are obvious.

Assume that ($ii_2$) holds. By the definition of $Q_6$, there exists
$k_0\in\{r, \cdots, n\}$ such that $\alpha_{k_0}a_{k_0,r-1} < 0$, so that
$\alpha_{k_0}a_{k_0,r-1}a_{r-1,r} > 0$, which implies that ($ii_1$) holds
for $i = k_0$ and $j = r$.

We now prove ($iii$) and ($iv$).

If there exists
$i_0\in\{2, \cdots, n\}$ such that $(1-\alpha_{i_0})a_{i_0,1} \not = 0$, then,
by Lemma \ref{lem3-4}, $A^{(6)}$ is irreducible. By ($i$), ($ii_8$), ($ii_2$) in Theorem \ref{thm3.7}
and Corollary \ref{coro3-12} we can derive ($iii$), ($iv_1$) and ($iv_3$), directly.
When ($iv_2$) holds, then from the irreducibility of $A$, there exists
$i_1\in\{2, \cdots, n\}$ such that $a_{i_1,1} < 0$,
so that $a_{i_1,1}a_{1,2} > 0$, which implies that ($iv_1$) holds for
$i=i_1$ and $j=2$.

For the case when $(1-\alpha_{k})a_{k,1} = 0$, $k = 2,\cdots,n$,
then it gets that $\alpha_{k} = 1$ whenever $a_{k,1} < 0$. In this case the matrix $A^{(6)}$
can be partitioned as
\begin{eqnarray*}
A^{(6)} = \left(\begin{array}{ccc}
1 & & \bar{a}_{1,2}\\
0 & & A^{(6)}_{2,2}
\end{array}\right),
 \end{eqnarray*}
where, by Lemma \ref{lem3-4}, $A^{(6)}_{2,2}\in {\mathscr R}^{(n-1) \times (n-1)}$ is
an irreducible Z-matrix, so that it is also an L-matrix, since
$a^{(6)}_{k,k} = 1 - \alpha_{k}a_{k,1}a_{1,k} > 0$ for $k = 2, \cdots, n$.

Denote $\rho = \rho({\mathscr L}_{\gamma,\omega})$.
Let $x > 0$ be its associated eigenvector.

By Lemma \ref{lem3-1} we just need to consider the case when $\gamma \le \omega$.
By Lemma \ref{lem1-8} it follows that $\rho > 0$ and $x\gg 0$.

Let
\begin{eqnarray*}
A^{(6)}_{2,2} = \bar{M}_{\gamma,\omega} - \bar{N}_{\gamma,\omega}
 \end{eqnarray*}
be the AOR splitting of $A^{(6)}_{2,2}$. Then
\begin{eqnarray*}
M^{(6)}_{\gamma,\omega} = \left(\begin{array}{ccc}
\frac{1}{\omega} & & 0\\
0 & & \bar{M}_{\gamma,\omega}
\end{array}\right), \;
N^{(6)}_{\gamma,\omega} = \left(\begin{array}{ccc}
\frac{1-\omega}{\omega} & & -\bar{a}_{1,2}\\
0 & & \bar{N}_{\gamma,\omega}
\end{array}\right)
 \end{eqnarray*}
 and
\begin{eqnarray*}
[M^{(6)}_{\gamma,\omega}]^{-1} = \left(\begin{array}{ccc}
\omega & & 0\\
0 & & \bar{M}_{\gamma,\omega}^{-1}
\end{array}\right), \;
 {\mathscr L}^{(6)}_{\gamma,\omega}
 = \left(\begin{array}{ccc}
1-\omega & & -\omega\bar{a}_{1,2}\\
0 & & \bar{M}_{\gamma,\omega}^{-1}\bar{N}_{\gamma,\omega}
\end{array}\right).
 \end{eqnarray*}
Let $E_1$ and $F_1$ be diagonal part and strictly lower triangular part of $Q_6U$
with block forms
\begin{eqnarray*}
E_1 = \left(\begin{array}{ccc}
e_{1,1} && 0 \\
0 && E_{2,2}
\end{array}\right), \;
F_1 = \left(\begin{array}{ccc}
f_{1,1} && 0 \\
f_{1,2} && F_{2,2}
\end{array}\right), \; E_{2,2}, F_{2,2}\in{\mathscr R}^{(n-1)\times(n-1)}.
\end{eqnarray*}
Then $e_{1,1} = f_{1,1} = 0$ and $f_{1,2} = 0$. Let
\begin{eqnarray*}
&& Q_6 = \left(\begin{array}{cc}
q^{(6)}_{1,1} & \bar{q}_{1,2}\\
\bar{q}_{2,1} & Q_{2,2}
\end{array}\right), \; Q_{2,2}\in{\mathscr R}^{(n-1)\times (n-1)}, \\
&& x = \left(\begin{array}{c}
\bar{x}_{1}\\
\bar{x}_{2}
\end{array}\right), \; \bar{x}_{1}\in\Re, \; \bar{x}_{2}\in{\mathscr R}^{n-1}.
 \end{eqnarray*}
 Then $q^{(6)}_{1,1} = 0$, $\bar{q}_{1,2} = 0$, $\bar{q}_{2,1} > 0$, $Q_{2,2} = 0$,
 $\bar{x}_{1} > 0$ and $\bar{x}_{2} \gg 0$.
 Now, by Lemma \ref{lem3-2}, we obtain
\begin{eqnarray*}
{\mathscr L}^{(6)}_{\gamma,\omega}x - \rho x
& = & \left(\begin{array}{c}
(1-\omega)\bar{x}_{1} - \omega\bar{a}_{1,2}\bar{x}_{2} - \rho\bar{x}_{1}\\
\bar{M}_{\gamma,\omega}^{-1}\bar{N}_{\gamma,\omega}\bar{x}_{2} -\rho\bar{x}_{2}
\end{array}\right) \\
& = & (\rho-1)[E_1 +\gamma F_1 +(1-\gamma)Q_6]x \\
& = &
(\rho-1)\left(\begin{array}{cc}
\omega & 0\\
0 & \bar{M}_{\gamma,\omega}^{-1}
\end{array}\right)
\left[\left(\begin{array}{cc}
0 & 0 \\
0 & E_{2,2}
\end{array}\right)
+ \gamma\left(\begin{array}{cc}
0 & 0 \\
0 & F_{2,2}
\end{array}\right) \right.\\
&&\left. + (1-\gamma)\left(\begin{array}{cc}
0 & 0\\
\bar{q}_{2,1} & 0
\end{array}\right)\right]\left(\begin{array}{c}
\bar{x}_{1}\\
\bar{x}_{2}
\end{array}\right) \\
& = &
(\rho-1) \left(\begin{array}{c}
0 \\
\bar{M}_{\gamma,\omega}^{-1}[(E_{2,2} + \gamma F_{2,2})\bar{x}_{2} + (1-\gamma)\bar{q}_{2,1}\bar{x}_{1}]
\end{array}\right). \nonumber
\end{eqnarray*}
Hence, we have
\begin{eqnarray} \label{eqs3.27}
(1-\omega)\bar{x}_{1} - \rho\bar{x}_{1}
 = \omega\bar{a}_{1,2}\bar{x}_{2}
\end{eqnarray}
and
\begin{eqnarray} \label{eqs3.28}
\bar{M}_{\gamma,\omega}^{-1}\bar{N}_{\gamma,\omega}\bar{x}_{2} -\rho\bar{x}_{2}
 = (\rho-1)\bar{M}_{\gamma,\omega}^{-1}[(E_{2,2} + \gamma F_{2,2})\bar{x}_{2} + (1-\gamma)\bar{q}_{2,1}\bar{x}_{1}].
\end{eqnarray}

Since $A$ is an irreducible L-matrix, then $\bar{a}_{1,2} < 0$ so that $\omega\bar{a}_{1,2}\bar{x}_{2} < 0$.
From (\ref{eqs3.27}), it gets that $1-\omega < \rho$.

When ($iii$) holds, i.e., $\gamma < 1$, since $\bar{A}_{2,2}$ is an irreducible L-matrix, then it can derive that
$\bar{M}_{\gamma,\omega}^{-1}\bar{N}_{\gamma,\omega} \ge 0$ is irreducible. Furthermore,
$\bar{M}_{\gamma,\omega}^{-1}[(E_{2,2} + \gamma F_{2,2})\bar{x}_{2} + (1-\gamma)\bar{q}_{2,1}\bar{x}_{1}]
\ge (1-\gamma)\bar{M}_{\gamma,\omega}^{-1}\bar{q}_{2,1}\bar{x}_{1} > 0$,
since $\bar{q}_{2,1} > 0$ and $\bar{x}_{1} > 0$.

Now, Theorem \ref{thm3.21} has shown that $\rho = 1$ if and only if
$\rho({\mathscr L}^{(6)}_{\gamma,\omega}) = 1$.
If $\rho < 1$ then, from (\ref{eqs3.28}), it gets that $\bar{M}_{\gamma,\omega}^{-1}\bar{N}_{\gamma,\omega}\bar{x}_{2}
 < \rho\bar{x}_{2}$ so that $\rho(\bar{M}_{\gamma,\omega}^{-1}\bar{N}_{\gamma,\omega}) < \rho$. Hence,
we derive $\rho({\mathscr L}^{(6)}_{\gamma,\omega}) = \max\{1-\omega, \;
\rho(\bar{M}_{\gamma,\omega}^{-1}\bar{N}_{\gamma,\omega})\} < \rho$. If
$\rho > 1$ then, from (\ref{eqs3.28}), it gets that $\bar{M}_{\gamma,\omega}^{-1}\bar{N}_{\gamma,\omega}\bar{x}_{2}
 > \rho\bar{x}_{2}$ so that $\rho(\bar{M}_{\gamma,\omega}^{-1}\bar{N}_{\gamma,\omega}) > \rho$. Hence,
we derive $\rho({\mathscr L}^{(6)}_{\gamma,\omega}) = \max\{1-\omega, \;
\rho(\bar{M}_{\gamma,\omega}^{-1}\bar{N}_{\gamma,\omega})\} = \rho(\bar{M}_{\gamma,\omega}^{-1}\bar{N}_{\gamma,\omega})
 > \rho$.

When ($iv$) holds, then $\omega = 1$ and the AOR method reduces to the Gauss-Seidel method.
In this case, we have
\begin{eqnarray*}
{\mathscr L}^{(6)}
 = \left(\begin{array}{ccc}
0 & & -\bar{a}_{1,2}\\
0 & & \bar{M}_{1,1}^{-1}\bar{N}_{1,1}
\end{array}\right)
 \end{eqnarray*}
 and therefore $\rho({\mathscr L}^{(6)}) = \rho(\bar{M}_{1,1}^{-1}\bar{N}_{1,1})$.

Above we have proved that $\bar{A}^{(6)}_{2,2}$ is an irreducible
L-matrix. By Lemma \ref{lem1-8} it gets that
$\bar{y}^T\bar{M}_{1,1}^{-1} \gg 0$ whenever $\bar{y}$ satisfies $\bar{y} > 0$ and
$\bar{y}^T\bar{M}_{1,1}^{-1}\bar{N}_{1,1} = \rho(\bar{M}_{1,1}^{-1}\bar{N}_{1,1})\bar{y}^T$.
Multiply $\bar{y}^T$ on the left side of (\ref{eqs3.28}), we can derive
\begin{eqnarray*}
\rho(\bar{M}_{1,1}^{-1}\bar{N}_{1,1})\bar{y}^T\bar{x}_{2} -\rho\bar{y}^T\bar{x}_{2}
 = (\rho-1)\bar{y}^T\bar{M}_{\gamma,\omega}^{-1}(E_{2,2} + F_{2,2})\bar{x}_{2}.
\end{eqnarray*}
When ($iv_1$) holds, i.e., there exist $i\in\{2,\cdots,n\}$ and $j\in\{2,\cdots, i\}$ such that
$a_{i,1}a_{1,j} > 0$,
it is easy to prove that $E_{2,2} > 0$ or $F_{2,2} > 0$ so that $(E_{2,2}+F_{2,2})\bar{x}_{2} > 0$
and $\bar{y}^T\bar{M}_{\gamma,\omega}^{-1}(E_{2,2} + F_{2,2})\bar{x}_{2} > 0$. Since $\bar{y}^T\bar{x}_{2} > 0$,
then we can get
\begin{eqnarray*}
\rho({\mathscr L}^{(6)}) = \rho(\bar{M}_{1,1}^{-1}\bar{N}_{1,1})
 \left\{ \begin{array}{lcl}
 < \rho, & \; \hbox{if} \; & \rho < 1 \\
 = \rho, & \hbox{if} & \rho = 1 \\
 > \rho, & \hbox{if} & \rho > 1.
 \end{array}\right.
  \end{eqnarray*}

When ($iv_2$) holds, the irreducibility of $A$ or the definition of $Q_6$ ensures that
 there exists $i\in\{2, \cdots, n\}$ such that $a_{i,1} < 0$, we have
$a_{i,1}a_{1,2} > 0$, which implies that ($iv_1$) holds.

Similarly, when ($iv_3$) holds, the irreducibility of $A$ ensures that
 there exists $j\in\{2, \cdots, n\}$ such that $a_{1,j} < 0$, we have
$a_{n,1}a_{1,j} > 0$, which also implies that ($iv_1$) holds.
\end{proof}

The result for the case when $r=2$ is better than the corresponding
one given by \cite[Theorem 1, Corollary 1]{LLW07}, \cite[Theorems 3.3, 3.4, 3.5]{LC10}
and \cite[Theorems 3.11, 3.13, 3.14 and 3.15]{Yu11}. The proof of
\cite[Theorem 1]{LLW07} is insufficient, which is pointed out by
\cite{YK08} and \cite{Ed18}. When $r=2$
and $\gamma = \omega$, the result is better than \cite[Theorems 2.1 and 2.2]{HCC06},
where the condition $a_{k,k+1}a_{k+1,k} > 0$, $k = 1, \cdots, n-1$, implies that
$A$ is irreducible. While the proofs in \cite{HCC06} are insufficient, which is pointed out by
\cite{Yu072}. The comparison result \cite[Theorem 2.2-(b)]{LY08} is problematic, because
\cite[Lemma 2.1]{LY08} is wrong, which has been shown by \cite[Example 3.1]{YK08}.

From Theorem \ref{thm3.23}, we can prove the following theorem.

\begin{theorem} \label{thm3.24}
Theorem D is valid for $\nu = 6$,
provided one of the conditions ($i$)-($iv$)
of Theorem \ref{thm3.23} is satisfied.
\end{theorem}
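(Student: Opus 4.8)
The plan is to obtain Theorem D for $\nu=6$ from the Stein--Rosenberg Type Theorem II already established in Theorem \ref{thm3.23}, in exactly the way Theorem \ref{thm3.12} and Theorem \ref{thm3.16} are deduced from their respective Theorem C counterparts. The decisive extra fact, supplied by the stronger hypothesis that $A$ is an irreducible nonsingular M-matrix rather than merely an irreducible L-matrix, is that the unpreconditioned AOR method converges, and this is what upgrades the trichotomy of Theorem C to the single strict chain of Theorem D.

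First I would observe that an irreducible nonsingular M-matrix is in particular an irreducible L-matrix. Hence, whenever one of the conditions (i)--(iv) of Theorem \ref{thm3.23} holds, Theorem \ref{thm3.23} applies verbatim and yields the trichotomy of Theorem C: either (a) $\rho({\mathscr L}^{(6)}_{\gamma,\omega}) < \rho({\mathscr L}_{\gamma,\omega}) < 1$, or (b) $\rho({\mathscr L}^{(6)}_{\gamma,\omega}) = \rho({\mathscr L}_{\gamma,\omega}) = 1$, or (c) $\rho({\mathscr L}^{(6)}_{\gamma,\omega}) > \rho({\mathscr L}_{\gamma,\omega}) > 1$. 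I would note in passing that the standing bound $\alpha_k a_{k,r-1}a_{r-1,k} < 1$ required by Theorem \ref{thm3.23} is automatic here, since the relevant $2\times 2$ principal submatrix of the nonsingular M-matrix $A$ has positive determinant $1-a_{k,r-1}a_{r-1,k}>0$ while $0\le\alpha_k\le 1$ and $a_{k,r-1}a_{r-1,k}\ge 0$.

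Next I would use the M-matrix property of $A$ on its own: exactly as in the proof of Theorem \ref{thm3.2}, the AOR splitting $A=M_{\gamma,\omega}-N_{\gamma,\omega}$ is weak regular, so by Lemma \ref{lem1-5} it is convergent, that is $\rho({\mathscr L}_{\gamma,\omega})<1$. This single inequality eliminates the relations (b) and (c) of the trichotomy at once, leaving only (a), which is precisely the assertion $\rho({\mathscr L}^{(6)}_{\gamma,\omega}) < \rho({\mathscr L}_{\gamma,\omega}) < 1$ of Theorem D.

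I do not expect a genuine obstacle, because all the hard analysis---proving the strict trichotomy for the irreducible-L-matrix case, including the treatment of a possibly reducible $A^{(6)}$ through the block decomposition of Lemma \ref{lem3-4} and the inequality (\ref{eqs3.26})---has already been carried out inside Theorem \ref{thm3.23}. The one point that deserves a line of verification is that the hypotheses of Theorem \ref{thm3.23} are genuinely inherited by an irreducible nonsingular M-matrix (the L-matrix structure and the standing bound just discussed); once that is checked, the reduction to the convergence fact $\rho({\mathscr L}_{\gamma,\omega})<1$ is immediate and the proof closes.
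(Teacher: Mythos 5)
Your proposal is correct and takes essentially the same route the paper intends: the paper derives Theorem \ref{thm3.24} directly from Theorem \ref{thm3.23} (just as Theorems \ref{thm3.12} and \ref{thm3.16} follow from their Theorem C counterparts), i.e., the trichotomy for the irreducible L-matrix case combined with $\rho({\mathscr L}_{\gamma,\omega})<1$, which holds because the AOR splitting of a nonsingular M-matrix is weak regular and hence convergent by Lemma \ref{lem1-5}, leaving only relation (a). Your additional check that the standing bound $\alpha_k a_{k,r-1}a_{r-1,k}<1$ is automatic for a nonsingular M-matrix (positive $2\times 2$ principal minors together with $\alpha_k\le 1$) is a correct and worthwhile verification that the paper leaves implicit.
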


When $r=2$, $\alpha_k=1$, $k=2, \cdots, n$, the results given in Theorems \ref{thm3.22} and
 \ref{thm3.24} are better than \cite[Theorem 3.4]{SE13} and \cite[Theorem 3.4]{SER14}.

As a special case of $Q_6$($Q_5$), for some $r > s$ with $a_{r,s} < 0$ and $\alpha > 0$,
the matrix $Q_6$($Q_5$) reduces to
\begin{eqnarray*}
Q_7 & = & \left(\begin{array}{ccccc}
0 & \quad \cdots\quad & 0 & \quad \cdots \quad & 0\\
\vdots & \ddots & \vdots & \cdots & \vdots\\
\vdots & \cdots & 0 & \cdots & 0\\
\vdots &  -\frac{a_{r,s}}{\alpha} & \vdots & \ddots & \vdots\\
0 & \cdots & 0 & \cdots & 0\\
\end{array}\right),
 \end{eqnarray*}
which is proposed in \cite{HCEC05,LLW072,Yu11} for $r = n$ and $s = 1$.
And for $r = n$, $s = 1$ and $\alpha = 1$ in \cite{EMT01}.
It is given in \cite{ZHL05} to replace $-a_{r,s}/\alpha$ with a constant $\beta$.

Now, $\delta_{i,j}^{(6)}(1)$ reduces to
\begin{eqnarray*}
\delta_{i,j}^{(7)}(1) = \left\{\begin{array}{ll}
 \frac{1}{\alpha}a_{r,s}a_{s,j}, \; & i = r, j = s+1, \cdots, r;\\
 0, & otherwise.
\end{array}\right.
\end{eqnarray*}

From Theorems \ref{thm3.21} and \ref{thm3.22},
we can obtain the following comparison results, directly.

\begin{theorem}
Suppose that $\alpha \ge 1$ and $\alpha > a_{r,s}a_{s,r}$.
Then Theorem A is valid for $\nu = 7$.
\end{theorem}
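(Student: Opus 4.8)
The plan is to recognize that $Q_7$ is nothing but the lower triangular preconditioner $Q_6$ in which the active column is taken to be $s$ and only one row-coefficient is retained, so that the statement follows at once from Theorem \ref{thm3.21}. Concretely, I would identify $Q_7$ with $Q_6$ by setting the column parameter of $Q_6$ (its ``$r-1$'') equal to $s$, i.e. taking $Q_6$'s own ``$r$'' to be $s+1$, and by putting $\alpha_r=1/\alpha$ while $\alpha_k=0$ for every other row $k$. Under this dictionary the unique nonzero entry of $Q_6$ is $-\alpha_r a_{r,s}=-a_{r,s}/\alpha$, which is exactly the $(r,s)$ entry of $Q_7$; note $r>s$ guarantees $r$ lies in the admissible range $\{s+1,\dots,n\}$.

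With this identification the two hypotheses of Theorem \ref{thm3.21} collapse to the two hypotheses of the present theorem. The requirement $0\le\alpha_k\le 1$ holds trivially for $k\neq r$ (where $\alpha_k=0$) and becomes $0\le 1/\alpha\le 1$ for $k=r$; since $\alpha>0$, this is precisely $\alpha\ge 1$. Similarly the requirement $\alpha_k a_{k,r-1}a_{r-1,k}<1$ is $0<1$ for $k\neq r$ and reads $(1/\alpha)a_{r,s}a_{s,r}<1$ for $k=r$, which after multiplying by $\alpha>0$ is exactly $\alpha>a_{r,s}a_{s,r}$. Both hypotheses of Theorem \ref{thm3.21} therefore hold, and that theorem delivers Theorem A for $\nu=7$.

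An equivalent and more transparent route, which also explains why Theorem \ref{thm3.21} applies, is to verify directly that $A^{(7)}=P_7A$ is an L-matrix and then invoke Corollary \ref{coro3-1} (equivalently Theorem \ref{thm3.1}), recalling that every $Q_\nu$ is a special case of $Q_1$. Here $a^{(7)}_{i,j}=a_{i,j}$ for $i\neq r$, while in row $r$ one computes $a^{(7)}_{r,r}=1-a_{r,s}a_{s,r}/\alpha$, $a^{(7)}_{r,s}=a_{r,s}(1-1/\alpha)$, and $a^{(7)}_{r,j}=a_{r,j}-(a_{r,s}/\alpha)a_{s,j}$ for $j\neq r,s$. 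The hypothesis $\alpha>a_{r,s}a_{s,r}$ makes the diagonal entry positive, while $\alpha\ge 1$ (together with $a_{r,s}<0$ and the L-matrix sign pattern $a_{s,j}\le 0$) keeps every off-diagonal entry of row $r$ nonpositive; all other rows are unchanged. Hence $A^{(7)}$ is an L-matrix and the conclusion follows. The only genuine obstacle is the index bookkeeping needed to match $Q_6$'s column parameter with $s$; once that is settled the argument is a single substitution plus two elementary sign checks, with no analytic content beyond the cited results.
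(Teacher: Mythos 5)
Correct, and your first route is exactly the paper's own derivation: the theorem is obtained directly from Theorem \ref{thm3.21} (with Theorem \ref{thm3.22} handling Theorem B) via precisely the specialization you describe, namely $Q_7=Q_6$ with active column $s$, $\alpha_r=1/\alpha$, all other $\alpha_k=0$, under which the hypotheses $0\le 1/\alpha\le 1$ and $(1/\alpha)a_{r,s}a_{s,r}<1$ become $\alpha\ge 1$ and $\alpha>a_{r,s}a_{s,r}$. Your alternative direct verification that $A^{(7)}$ is an L-matrix is also sound, but it simply unrolls the chain Theorem \ref{thm3.21} $\rightarrow$ Corollary \ref{coro3-13} $\rightarrow\cdots\rightarrow$ Corollary \ref{coro3-1} that the paper already provides, so it is the same argument in substance.
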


This result is better than that given by \cite[Theorem 3.7]{Yu11},
where $A$ is assumed to be irreducible.

\begin{theorem}
Suppose that $\alpha \ge 1$.
Then Theorem B is valid for $\nu = 7$.
\end{theorem}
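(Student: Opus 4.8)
The plan is to recognize $Q_7$ as the special case of $Q_6$ in which every coefficient vanishes except the single one attached to the entry $(r,s)$. Writing $Q_7=(q^{(7)}_{i,j})$ with $q^{(7)}_{r,s}=-a_{r,s}/\alpha$ and all other entries zero, the defining relation $q^{(7)}_{i,j}=-\alpha_{i,j}a_{i,j}$ forces $\alpha_{r,s}=1/\alpha$ while all remaining $\alpha$-coefficients equal $0$. Since $r>s$, this single nonzero entry sits strictly below the diagonal, so $Q_7$ is indeed of the lower-triangular form covered by $Q_6$ (equivalently $Q_3$). Thus the whole statement should reduce to a hypothesis-matching with the already-proven $\nu=6$ case.

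First I would verify that the hypothesis $\alpha\ge 1$ places the only relevant coefficient in the admissible range: because $\alpha>0$ and $\alpha\ge 1$, we have $0<1/\alpha\le 1$, so $\alpha_{r,s}=1/\alpha\in(0,1]$ and trivially $\alpha_{i,j}=0\in[0,1]$ for every other pair. Hence all coefficients lie in $[0,1]$, which is exactly the hypothesis $0\le\alpha_k\le 1$ of Theorem \ref{thm3.22} (equivalently Corollary \ref{coro3-14}) once the parameters of $Q_6$ are specialized to those of $Q_7$. I would then invoke that result directly to obtain $\rho({\mathscr L}^{(7)}_{\gamma,\omega})\le\rho({\mathscr L}_{\gamma,\omega})<1$, i.e.\ that Theorem B holds for $\nu=7$.

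The one point deserving explicit comment --- and the only genuine subtlety --- is why no diagonal-positivity side condition (the analogue of the requirement $\alpha>a_{r,s}a_{s,r}$ imposed in the preceding L-matrix result) is needed here. In the M-matrix case this is automatic: the bound $0\le\alpha_{i,j}\le 1$ already guarantees that every off-diagonal entry of $P_7A$ is nonpositive (each term $(1-\alpha_{i,j})a_{i,j}$ and each $-\alpha_{i,k}a_{i,k}a_{k,j}$ is $\le 0$ under these sign constraints), so $P_7A$ is a Z-matrix; Lemma \ref{lem1-10} then upgrades it at once to a nonsingular M-matrix, hence an L-matrix with positive diagonal, precisely because $A$ itself is a nonsingular M-matrix. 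Consequently the diagonal condition that had to be supplied by hand for Theorem A is furnished for free by the M-matrix structure, which is exactly why the cleaner hypothesis $\alpha\ge 1$ suffices for Theorem B. Because the argument is a pure specialization of an established theorem, I expect no real obstacle; the only care required is the bookkeeping that confirms the single nonzero coefficient of $Q_7$ equals $\alpha_{r,s}=1/\alpha$ and that $r>s$ keeps it in the lower-triangular pattern of $Q_6$.
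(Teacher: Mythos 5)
Your proposal is correct and is essentially the paper's own argument: the paper derives this statement directly from Theorem \ref{thm3.22} (the $\nu=6$ case), exactly as you do, by observing that $Q_7$ is the specialization of $Q_6$ whose single nonzero coefficient is $1/\alpha$, which lies in $(0,1]$ precisely when $\alpha\ge 1$. Your closing remark on why no analogue of the side condition $\alpha>a_{r,s}a_{s,r}$ is needed also matches the paper's mechanism, since its Theorem~B chain rests on the Z-matrix property of $P_\nu A$ plus Lemma \ref{lem1-10}.
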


This result includes \cite[Theorem 3.4]{Yu12} and the corresponding one given in \cite[Theorem 3.1]{Li02}.

In order to give the Stein-Rosenberg Type Theorem II,
we prove a lemma.

\begin{lemma}\label{lem3-5}
Let $A$ be an irreducible Z-matrix.
Assume that $r = n$, $\alpha \ge 1$ and $A^{(7)}$ has the block form
\begin{eqnarray*}
A^{(7)} = \left(\begin{array}{ccc}
1 & & \bar{a}_{1,2}\\
\bar{a}^{(7)}_{2,1} & & A^{(7)}_{2,2}
\end{array}\right), \; A^{(7)}_{2,2}\in{\mathscr R}^{(n-1)\times(n-1)}.
 \end{eqnarray*}
 Then one of the following two mutually exclusive relations holds:

\begin{itemize}
\item[(i)] $A^{(7)}$ is an irreducible Z-matrix.

\item[(ii)] $A^{(7)}$ is a reducible Z-matrix,
but $A^{(7)}_{2,2}$ is an irreducible Z-matrix and
$a_{k,1} = a^{(7)}_{k,1} = a^{(7)}_{n,1} = 0$, $k = 2,\cdots,n-1$.
\end{itemize}
\end{lemma}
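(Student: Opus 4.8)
The plan is to argue entirely in terms of the directed graphs $G(A)$ and $G(A^{(7)})$, mirroring the proof of Lemma \ref{lem3-4}. Writing $q^{(7)}_{n,1}=-a_{n,1}/\alpha>0$ for the single nonzero entry of $Q_7$, only the $n$-th row of $A^{(7)}=(I+Q_7)A$ differs from $A$: we have $a^{(7)}_{n,j}=a_{n,j}-(a_{n,1}/\alpha)a_{1,j}$ for $j=1,\dots,n$, and $a^{(7)}_{i,j}=a_{i,j}$ for $i\le n-1$. First I would record that $A^{(7)}$ is a Z-matrix: for $j\ge 2$ the entry $a^{(7)}_{n,j}$ is a sum of the two nonpositive terms $a_{n,j}$ and $-(a_{n,1}/\alpha)a_{1,j}$ (using $a_{n,1}<0$, $\alpha>0$, $a_{1,j}\le 0$), while $a^{(7)}_{n,1}=a_{n,1}(1-1/\alpha)\le 0$ because $\alpha\ge 1$; the principal submatrix $A^{(7)}_{2,2}$ inherits this property.

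Next I would compare $G(A^{(7)})$ with $G(A)$. The two graphs coincide on all edges issuing from vertices $1,\dots,n-1$, and among the edges out of $n$ the only changes are: the edge $n\to 1$ is deleted precisely when $\alpha=1$ (since then $a^{(7)}_{n,1}=0$), while for each $j\ge 2$ with $a_{1,j}\ne 0$ the edge $n\to j$ is present in $G(A^{(7)})$ even if it was absent in $G(A)$ — that is, vertex $n$ acquires all the out-neighbours of vertex $1$. Since every original edge except possibly $n\to 1$ survives and new edges are only added, irreducibility of $A$ forces $G(A^{(7)})$ to be strongly connected unless the deletion of $n\to 1$ disconnects it; in particular, if $\alpha>1$ then $a^{(7)}_{n,1}\ne 0$, so $G(A^{(7)})\supseteq G(A)$ and we land immediately in case (i).

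The heart of the argument is the case $\alpha=1$. Here I would first note that vertex $1$ still reaches every vertex of $G(A^{(7)})$: a simple path out of $1$ in $G(A)$ never returns to $1$, hence never traverses the deleted edge $n\to 1$, so it survives. Then, letting $T$ be the set of vertices that cannot reach $1$ in $G(A^{(7)})$, the set $T$ is closed under out-edges, so the only edge of $G(A)$ leaving $T$ can be $n\to 1$; this forces $n\in T$, $1\in T^c$, and all out-neighbours of $1$ to lie in $T$. If $T^c$ contained a vertex other than $1$, then vertex $1$ — every successor of which lies in $T$, a set that can only be left through $n\to 1$ back to $1$ — could not reach it, contradicting the irreducibility of $A$. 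Hence $T\ne\emptyset$ forces $T^c=\{1\}$, meaning vertex $1$ has no in-edge in $G(A^{(7)})$, i.e. $a^{(7)}_{n,1}=0$ and $a_{k,1}=a^{(7)}_{k,1}=0$ for $k=2,\dots,n-1$. This yields the dichotomy: if vertex $1$ has an in-edge then $T=\emptyset$ and, with the reachability from $1$, $A^{(7)}$ is irreducible (case (i)); otherwise $A^{(7)}$ is reducible, the column-one conditions of (ii) hold, and the displayed block form is block upper triangular ($\bar a^{(7)}_{2,1}=0$).

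It remains to prove that in the reducible case $A^{(7)}_{2,2}$ is irreducible, and this is the step I expect to be the most delicate. Under the case-(ii) hypotheses the only in-edge to vertex $1$ in $G(A)$ is $n\to 1$ (because $a_{k,1}=0$ for $2\le k\le n-1$ while $a_{n,1}<0$). Given $i,j\in\{2,\dots,n\}$, I would take a simple path from $i$ to $j$ in $G(A)$: if it avoids vertex $1$ it survives in $G(A^{(7)}_{2,2})$, and if it visits $1$ the visit must have the form $n\to 1\to w$, which I would short-circuit by the inherited edge $n\to w$ (present since $a_{1,w}\ne 0$), producing a path within $\{2,\dots,n\}$. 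Hence $A^{(7)}_{2,2}$ is strongly connected, completing the dichotomy. The main obstacle throughout is the careful edge bookkeeping for this single-entry preconditioner — tracking the one deleted edge $n\to 1$ together with the grafted out-star of vertex $1$ — and using the irreducibility of $A$ to pin down $T^c=\{1\}$ and to force the rerouting step to pass only through vertex $n$.
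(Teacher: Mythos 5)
Your proof is correct and follows essentially the same graph-theoretic route as the paper's: the same edge bookkeeping (when $\alpha=1$ only the edge $n\to 1$ is deleted, while vertex $n$ inherits all out-edges of vertex $1$), and the same short-circuiting of $n\to 1\to w$ into $n\to w$, used both to force $a_{k,1}=a^{(7)}_{k,1}=a^{(7)}_{n,1}=0$ in the reducible case and to prove that $A^{(7)}_{2,2}$ is irreducible. The only difference is organizational: you package the dichotomy via the out-closed set $T$ of vertices that cannot reach vertex $1$, whereas the paper argues by contradiction starting from a disconnected pair $(i^*,j^*)$; the substance is identical.
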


\begin{proof}
Since
\begin{eqnarray} \label{eqs3.29}
a^{(7)}_{i,j} = \left\{\begin{array}{lll}
a_{i,j}, & i = 1,\cdots n-1, j = 1,\cdots n,\\
(1 - \frac{1}{\alpha})a_{n,1}, & i = n, j = 1, \\
a_{n,j} - \frac{1}{\alpha}a_{n,1}a_{1,j}, & i = n, j = 2,\cdots, n,
\end{array}\right.
\end{eqnarray}
then it is clearly that, $a^{(7)}_{i,j} = a_{i,j} \le 0$ for $i = 1,\cdots,n-1$,
$j = 1,\cdots,n$, $i\not = j$,
$a^{(7)}_{n,1} = (1 - 1/\alpha)a_{n,1} \le 0$ and
\begin{eqnarray} \label{eqs3.30}
a^{(7)}_{n,j} = a_{n,j} - \frac{1}{\alpha}a_{n,1}a_{1,j} \le a_{n,j} \le 0, \;
j = 2,\cdots, n.
\end{eqnarray}
Hence, $A^{(7)}$ and $A^{(7)}_{2,2}$ are Z-matrices.

Let
\begin{eqnarray*}
A = \left(\begin{array}{ccc}
\hat{a}_{1,1} & & \hat{A}_{1,2}\\
a_{n,1} & & \hat{a}_{2,2}
\end{array}\right), \;
\hat{A} = \left(\begin{array}{ccc}
\hat{a}_{1,1} & & \hat{A}_{1,2}\\
0 & & \hat{a}_{2,2}
\end{array}\right), \;
\hat{A}_{1,2}\in{\mathscr R}^{(n-1)\times(n-1)}.
 \end{eqnarray*}
Clearly, if $\hat{A}$ is irreducible then $A^{(7)}$ is irreducible,
since $A$ is irreducible.
When $\alpha > 1$ then $A^{(7)}$ is also irreducible.

Assume that $A^{(7)}$ is reducible. Then $\hat{A}$ is reducible and $\alpha = 1$.
The latter implies $a^{(7)}_{n,1} = 0$. In this case, there must be $i^*,j^*\in\{1,\cdots,n\}$
such that there is no path from $i^*$ to $j^*$ in $G(A^{(7)})$.

It is easy to see that, for any
$i,j\in\{1,\cdots,n\}$, $\sigma_{i,j} = (j_0, j_1, \cdots, j_{l+1}) \notin G(A^{(7)})$
with $i = j_0$ and $j = j_{l+1}$ if and only if there is $s\in\{0,\cdots,l\}$ such that
$i_s = n$ and $i_{s+1} = 1$.

Since $A$ is irreducible, then there exists $\sigma_{i^*,j^*} = (i_0, i_1, \cdots, i_{t+1})\in G(A)$
with $i^* = i_0$ and $j^* = i_{t+1}$.

If $j^* > 1$ then there is $\mu\in\{0,\cdots,t-1\}$ such that
$i_\mu = n$ and $i_{\mu+1} = 1$, which implies $a_{1,i_{\mu+2}} < 0$.
Hence, by (\ref{eqs3.30}), we have
$a^{(7)}_{n,i_{\mu+2}} = a_{n,j} - a_{n,1}a_{1,i_{\mu+2}} \le -a_{n,1}a_{1,i_{\mu+2}} < 0$,
so that $\sigma = (i_0, \cdots, i_{\mu-1},n,i_{\mu+2},\cdots, i_{t+1}) \in G(A^{(7)})$.
This is a contradiction. Therefore, $j^* = 1$.
This shows that, for any $k\in\{2,\cdots,n-1\}$, there exists $\sigma_{i^*,k}\in G(A^{(7)})$.
If $a^{(7)}_{k,1} < 0$ then $\tilde{\sigma}_{i^*,j^*} = (\sigma_{i^*,k}, 1)\in G(A^{(7)})$.
This is also a contradiction. Therefore, $a^{(7)}_{k,1} = a_{k,1} = 0$.

Now, let us prove the irreducibility of $A^{(7)}_{2,2}$.
For any $i,j\in\{2, \cdots, n\}$, $i\not = j$, since $A$ is irreducible, then there exists a path
$\sigma_{i,j} = (\tau_0, \tau_1, \cdots, \tau_{\upsilon+1})\in G(A)$
with $i = \tau_0$ and $j = \tau_{\upsilon+1}$.

If $\tau_k\in\{2, \cdots, n\}$ for $k = 1, \cdots, \upsilon$, then it gets that
$\sigma_{i,j}\in G(A^{(7)}_{2,2})$.

For the case when there exists $s\in\{1, \cdots, \upsilon\}$ such that $\tau_s = 1$, we have
$a_{\tau_{s-1},1} < 0$ and $a_{1,\tau_{s+1}} < 0$. By (\ref{eqs3.29}), $\tau_{s-1}$ must be $n$,
since $a_{k,1} = a^{(7)}_{k,1} = 0$ for $k = 2,\cdots,n-1$.
By (\ref{eqs3.30}), it gets that $a^{(7)}_{n,\tau_{s+1}} = a_{n,\tau_{s+1}}
- a_{n,1}a_{1,\tau_{s+1}} \le -a_{n,1}a_{1,\tau_{s+1}} < 0$. This shows that $\tilde{\sigma}_{i,j} =
(\tau_0, \cdots, \tau_{s-2}, n, \tau_{s+1},  \cdots, \tau_{\upsilon+1})\in G(A^{(7)}_{2,2})$.

This has proved that $ G(A^{(7)}_{2,2})$ is irreducible.

We have proved ($ii$).
\end{proof}

Using this lemma, completely similar to the proof of Theorem \ref{thm3.23}, we can prove the following theorem.

\begin{theorem}\label{thm3.27}
Suppose that $\alpha > a_{r,s}a_{s,r}$.
Then Theorem C is valid for $\nu = 7$,
provided one of the following conditions is satisfied:

\begin{itemize}
\item[(i)] $\alpha \gtrsim 1$. And one of the following conditions holds:

\begin{itemize}
\item[($i_1$)] $0 \le \gamma < 1$.

\item[($i_2$)] $\gamma = 1$ and there exists $k\in\{s+1, \cdots, r\}$ such that $a_{s,k} < 0$.
\end{itemize}

\item[(ii)] $r = n$, $s = 1$ and $\alpha \ge 1$.
\end{itemize}
\end{theorem}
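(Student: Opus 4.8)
The plan is to use that $Q_7$ has the single nonzero entry $q^{(7)}_{r,s}=-a_{r,s}/\alpha\ge 0$, so that $A^{(7)}=P_7A$ differs from $A$ only in row $r$, with $a^{(7)}_{r,s}=(1-\tfrac{1}{\alpha})a_{r,s}$, $a^{(7)}_{r,r}=1-\tfrac{1}{\alpha}a_{r,s}a_{s,r}$, and every remaining off-diagonal entry still nonpositive. In the $\alpha_{i,j}$-notation this is $\alpha_{r,s}=1/\alpha$ with all other $\alpha_{i,j}=0$, whence the diagonal hypothesis of Theorem \ref{thm3.19} becomes $\tfrac{1}{\alpha}a_{r,s}a_{s,r}<1$, i.e. $a^{(7)}_{r,r}>0$, which is exactly $\alpha>a_{r,s}a_{s,r}$; and the off-diagonal requirement (\ref{eqs3.25}) reduces, at column $j=s$, to $(1-\tfrac{1}{\alpha})a_{r,s}\lesssim 0$, i.e. to $\alpha\gtrsim 1$ (equivalently $a^{(7)}_{r,s}\le 0$, making $A^{(7)}$ an L-matrix). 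I would therefore derive part (i) by specializing Theorem \ref{thm3.19}: case ($i_1$) is Theorem \ref{thm3.19}(i), while in case ($i_2$) the reduced formula gives $\delta^{(7)}_{r,k}(1)=\tfrac{1}{\alpha}a_{r,s}a_{s,k}>0$ (as $a_{r,s}<0$ and $a_{s,k}<0$), matching the positivity hypothesis ($ii_1$) of Theorem \ref{thm3.19}(ii).

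The real content is part (ii), $r=n$, $s=1$, $\alpha\ge 1$, where $A^{(7)}$ need not be irreducible; this is the analogue of the $r=2$ case of Theorem \ref{thm3.23}, so I would follow that argument with Lemma \ref{lem3-5} replacing Lemma \ref{lem3-4}. By Lemma \ref{lem3-1} it suffices to treat $\gamma\le\omega$, and by Lemma \ref{lem1-8} the eigenvector $x$ associated with $\rho:=\rho({\mathscr L}_{\gamma,\omega})$ satisfies $\rho>0$ and $x\gg 0$. Lemma \ref{lem3-5} then gives two mutually exclusive cases. In its case (i), $A^{(7)}$ is irreducible, so $\alpha\ge 1$ already means $\alpha\gtrsim 1$ and part (i) applies: for $\gamma<1$ through ($i_1$), and for $\gamma=1$ through ($i_2$), because the irreducibility of $A$ forces $a_{1,k}<0$ for some $k\in\{2,\dots,n\}$.

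In case (ii) of Lemma \ref{lem3-5} one has $\alpha=1$, the first column of $A^{(7)}$ is zero below the $(1,1)$ entry, and $A^{(7)}_{2,2}$ is an irreducible L-matrix. I would then partition ${\mathscr L}^{(7)}_{\gamma,\omega}$, $x$, and the blocks of $E_1$, $F_1$, $Q_7$ exactly as in the degenerate branch of the proof of Theorem \ref{thm3.23}, reaching the block-triangular form with diagonal blocks $1-\omega$ and $\bar M_{\gamma,\omega}^{-1}\bar N_{\gamma,\omega}$, where $A^{(7)}_{2,2}=\bar M_{\gamma,\omega}-\bar N_{\gamma,\omega}$ is its AOR splitting; hence $\rho({\mathscr L}^{(7)}_{\gamma,\omega})=\max\{1-\omega,\ \rho(\bar M_{\gamma,\omega}^{-1}\bar N_{\gamma,\omega})\}$. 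The scalar first row of the eigenvector identity yields $(1-\omega-\rho)\bar x_1=\omega\bar a_{1,2}\bar x_2<0$, since $\bar a_{1,2}\le 0$ has a negative entry by irreducibility of $A$ and $\bar x_2\gg 0$; thus $1-\omega<\rho$. The remaining $(n-1)$-dimensional block reproduces the analogue of (\ref{eqs3.28}), and I would close exactly as in Theorem \ref{thm3.23}: for $\gamma<1$ the correction $\bar M_{\gamma,\omega}^{-1}[(E_{2,2}+\gamma F_{2,2})\bar x_2+(1-\gamma)\bar q_{2,1}\bar x_1]$ is strictly positive because $\bar q_{2,1}\bar x_1>0$, while for $\gamma=1$ the vector $(E_{2,2}+F_{2,2})\bar x_2$ is positive (its last component equals $\sum_{j=2}^{n}a_{n,1}a_{1,j}x_j>0$ by irreducibility of $A$) and one tests against a left eigenvector $\bar y$ with $\bar y^{T}\bar M_{1,1}^{-1}\gg 0$ furnished by Lemma \ref{lem1-8}(iii). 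Combining with $1-\omega<\rho$ gives, as in Theorem \ref{thm3.23}, that $\rho({\mathscr L}^{(7)}_{\gamma,\omega})$ is smaller than, equal to, or larger than $\rho$ according as $\rho$ is smaller than, equal to, or larger than $1$, which is Theorem C.

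The main obstacle is this reducible branch of part (ii). One must first check that the lone entry $q^{(7)}_{n,1}=-a_{n,1}$ forces the part of the first column of $A^{(7)}$ below the diagonal to vanish and that $A^{(7)}_{2,2}$ remains an irreducible L-matrix---both supplied by Lemma \ref{lem3-5}(ii)---and then secure the strict positivity of the correction vector separately for $\gamma<1$ and for $\gamma=1$, so that the three relations stay mutually exclusive. Once Lemma \ref{lem3-5} is available, the remainder is a faithful transcription of the proof of Theorem \ref{thm3.23}.
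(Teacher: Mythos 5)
Your proposal is correct and follows exactly the route the paper intends: the paper's entire proof of this theorem is the remark that, using Lemma \ref{lem3-5}, one argues ``completely similar to the proof of Theorem \ref{thm3.23}'', and your write-up is a faithful expansion of that---part (i) by specializing the general row-preconditioner result (your Theorem \ref{thm3.19}; equivalently conditions (i)--(ii) of Theorem \ref{thm3.23}), and part (ii) by splitting on the two cases of Lemma \ref{lem3-5} and transcribing the block-triangular eigenvector argument, including the correct identification of the correction vector and the strict positivity checks for $\gamma<1$ and $\gamma=1$.
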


The result when ($ii$) holds includes \cite[Theorems 3.8 and 3.9]{Yu11} and
 \cite[Theorem 1]{LLW072}, where the proof is insufficient, which is pointed out by \cite{Yu08}.
We also have to point out that there exist some mistakes in \cite[Theorems 4 and 5]{HCEC05}.
For $\alpha = 1$, the result is better than the corresponding one given by
\cite[Theorem 2.1, Corollaries 2.1, 2.2]{LW04},
where the condition $a_{k,k+1}a_{k+1,k} > 0$, $k = 1, \cdots, n-1$, implies that
$A$ is irreducible and so that the condition $a_{1,n}a_{n,1} > 0$ is unnecessary.

From Theorem \ref{thm3.27}, we can prove the following theorem.

\begin{theorem}
Theorem D is valid for $\nu = 7$, provided one of the conditions ($i$) and ($ii$)
of Theorem \ref{thm3.27} is satisfied.
\end{theorem}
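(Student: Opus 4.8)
The plan is to obtain Theorem D for $\nu = 7$ as an immediate consequence of Theorem~\ref{thm3.27} (which establishes the Stein--Rosenberg Type Theorem II, i.e. Theorem C, for $\nu = 7$) combined with the convergence forced by the M-matrix hypothesis, exactly in the spirit in which Theorems~\ref{thm3.4}, \ref{thm3.8}, \ref{thm3.12}, \ref{thm3.16} and \ref{thm3.24} were deduced from their companion comparison results. First I would note that an irreducible nonsingular M-matrix is in particular an irreducible L-matrix, so whenever one of the conditions (i) or (ii) of Theorem~\ref{thm3.27} holds, all hypotheses required there are met verbatim and Theorem C is valid for $\nu = 7$. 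That is, exactly one of the three mutually exclusive relations
\begin{eqnarray*}
&& \rho({\mathscr L}^{(7)}_{\gamma,\omega}) < \rho({\mathscr L}_{\gamma,\omega}) < 1, \\
&& \rho({\mathscr L}^{(7)}_{\gamma,\omega}) = \rho({\mathscr L}_{\gamma,\omega}) = 1, \\
&& \rho({\mathscr L}^{(7)}_{\gamma,\omega}) > \rho({\mathscr L}_{\gamma,\omega}) > 1
\end{eqnarray*}
holds.

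Next I would rule out the latter two relations by invoking the M-matrix structure. Since $A$ is a nonsingular M-matrix, the AOR splitting (\ref{eqn1.4}) is weak regular, so by Lemma~\ref{lem1-5} it is convergent, whence $\rho({\mathscr L}_{\gamma,\omega}) < 1$. Because the trichotomy above is strict and $\rho({\mathscr L}_{\gamma,\omega}) < 1$, only the first relation can occur, and this is precisely the assertion $\rho({\mathscr L}^{(7)}_{\gamma,\omega}) < \rho({\mathscr L}_{\gamma,\omega}) < 1$ of Theorem D. For completeness one can record that the conditions $\alpha \ge 1$ and $\alpha > a_{r,s}a_{s,r}$ make $A^{(7)} = P_7 A$ a Z-matrix, so by Lemma~\ref{lem1-10} it is again a nonsingular M-matrix, confirming that the preconditioned splitting is of the required type.

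I do not expect a genuine obstacle here, since the argument is essentially a bookkeeping reduction: the only point requiring care is the compatibility check that the stronger hypothesis of Theorem D (irreducible nonsingular M-matrix) does not weaken any hypothesis invoked inside Theorem~\ref{thm3.27}, and this is automatic because an irreducible nonsingular M-matrix is an irreducible L-matrix and conditions (i), (ii) are imposed unchanged. If instead a fully self-contained strict-comparison proof were desired (paralleling Theorem~\ref{thm3.4}(ii)), one could verify the hypotheses of Lemma~\ref{lem1-13} directly, using $A^{-1} \gg 0$ from Lemma~\ref{lem1-4} and the inequality $A^{-1} > (P_7 A)^{-1}$ derived as in Theorem~\ref{thm3.4} via Lemma~\ref{lem1-9}; but the reduction through Theorem~\ref{thm3.27} is the shorter route and is the one I would carry out.
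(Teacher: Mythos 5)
Your proposal is correct and takes essentially the same route as the paper: the paper deduces this theorem directly from Theorem \ref{thm3.27}, exactly as you do, by noting that an irreducible nonsingular M-matrix is an irreducible L-matrix (so Theorem C's trichotomy holds for $\nu = 7$) while the M-matrix hypothesis forces $\rho({\mathscr L}_{\gamma,\omega}) < 1$ via weak regularity and Lemma \ref{lem1-5}, leaving only the strict branch $\rho({\mathscr L}^{(7)}_{\gamma,\omega}) < \rho({\mathscr L}_{\gamma,\omega}) < 1$. Your side remarks (that $\alpha > a_{r,s}a_{s,r}$ is compatible, and the optional Lemma \ref{lem1-13} route) are consistent with the paper but not needed.
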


This theorem when ($ii$)
in Theorem \ref{thm3.27} holds is better than \cite[Theorem 2.2]{EMT01} and
the corresponding one given in \cite[Theorem 3.1, Corollaries 3.1, 3.2, 3.3]{Li02}. In \cite[Theorem 2.2]{EMT01},
the condition $a_{k,k+1}a_{k+1,k} > 0$, $k = 1, \cdots, n-1$, implies that
$A$ is irreducible and $\rho(\mathscr{L}_{\gamma,\omega}) < 1$ implies that $A$ is a nonsingular M-matrix.
The condition $a_{n,1}a_{1,n} > 0$ is unnecessary.

In \cite{LCC08}, for the preconditioned Gauss-Seidel method, a special case of
the matrix $Q_3$ is proposed as
\begin{eqnarray*}
Q_8 = \left(\begin{array}{ccccc}
0 & 0 & \cdots & 0 & 0\\
-\alpha_1a_{2,1} & 0 & \cdots & 0 & 0\\
0 & -\alpha_2a_{3,2} &\ddots & 0 & 0 \\
\vdots & \vdots & \ddots & \ddots & \vdots\\
0 & 0 & \cdots & -\alpha_{n-1}a_{n,n-1} & 0
\end{array}\right)
 \end{eqnarray*}
with $\alpha_k \ge 0$, $k = 1, \cdots, n-1$, and
\begin{eqnarray*}
\sum\limits_{k = 1}^{n-1}\alpha_ka_{k+1,k} \not = 0.
 \end{eqnarray*}

It is used to the preconditioned AOR method in \cite{HDS10,Li12}.

In this case, for $i = 2, \cdots, n$,
\begin{eqnarray*}
\sum\limits_{k = 1}^{i-1}\alpha_{i,k}a_{i,k}a_{k,i} = \alpha_{i-1}a_{i,i-1}a_{i-1,i}.
 \end{eqnarray*}

Now, $\delta_{i,j}^{(3)}(1)$ reduces to
\begin{eqnarray*}
\delta_{i,j}^{(8)}(1) = \left\{\begin{array}{ll}
 \alpha_{i-1}a_{i,i-1}a_{i-1,i}, & i = j = 2,\cdots,n; \\
 0, & otherwise.
\end{array}\right.
\end{eqnarray*}

When $n \ge 3$, the conditions ($ii_2$) and ($iii_2$) in Theorem \ref{thm3.11} can be not satisfied.
By Corollaries \ref{coro3-13} and \ref{coro3-14}, it is easy to prove the following comparison results.

\begin{theorem}
Suppose that $0 \le \alpha_k \le 1$ and $\alpha_k a_{k,k+1}a_{k+1,k} < 1$, $k = 1, \cdots, n-1$.
Then Theorem A is valid for $\nu = 8$.
\end{theorem}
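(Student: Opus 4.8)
The plan is to recognize $Q_8$ as a special case of the lower triangular preconditioner $Q_3$ and then invoke Corollary \ref{coro3-13} directly. First I would match notation: in the $Q_3$ framework one writes $q^{(3)}_{i,j} = -\alpha_{i,j}a_{i,j}$ with $\alpha_{i,j} \ge 0$ for $i = 2,\dots,n$ and $j < i$. The matrix $Q_8$ is exactly the instance in which only the subdiagonal entries survive, namely $\alpha_{i,i-1} = \alpha_{i-1}$ for $i = 2,\dots,n$ and $\alpha_{i,j} = 0$ for $j < i-1$. Thus every construction and quantity attached to $Q_3$ specializes to the corresponding one for $Q_8$, and the nonvanishing condition $\sum_{k=1}^{n-1}\alpha_k a_{k+1,k}\neq 0$ in the definition of $Q_8$ is precisely the nonvanishing condition inherited from $Q_3$.

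With this identification in hand, the two hypotheses of Corollary \ref{coro3-13} must be checked. The requirement $0 \le \alpha_{i,j} \le 1$ for all $i > j$ holds trivially for the vanishing entries and, for the surviving subdiagonal entries, reduces precisely to the assumed bound $0 \le \alpha_k \le 1$, $k = 1,\dots,n-1$. For the second hypothesis, I would use the reduction already recorded in the excerpt,
\begin{eqnarray*}
\sum_{k=1}^{i-1}\alpha_{i,k}a_{i,k}a_{k,i} = \alpha_{i-1}a_{i,i-1}a_{i-1,i}, \quad i = 2,\dots,n,
\end{eqnarray*}
since all cross terms with $k < i-1$ carry the vanishing coefficient $\alpha_{i,k} = 0$. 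Re-indexing by $k = i-1$ turns the required inequality $\sum_{k=1}^{i-1}\alpha_{i,k}a_{i,k}a_{k,i} < 1$ into $\alpha_k a_{k+1,k}a_{k,k+1} < 1$, which is exactly the standing hypothesis $\alpha_k a_{k,k+1}a_{k+1,k} < 1$.

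Once both hypotheses of Corollary \ref{coro3-13} are verified for $\nu = 8$, the conclusion that Theorem A holds follows immediately from that corollary. There is no genuine analytic difficulty here; the entire argument is a specialization, and the only point that needs care is the bookkeeping between the doubly-indexed coefficients $\alpha_{i,j}$ of $Q_3$ and the singly-indexed coefficients $\alpha_k$ of $Q_8$. I expect that step, namely confirming that no off-subdiagonal term contributes and that the index shift $k \leftrightarrow i-1$ is applied consistently, to be the main (though minor) obstacle.
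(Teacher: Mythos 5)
Your proposal is correct and matches the paper's own route: the paper likewise treats $Q_8$ as the special case of $Q_3$ with $\alpha_{i,i-1}=\alpha_{i-1}$ and all other $\alpha_{i,j}=0$, notes that $\sum_{k=1}^{i-1}\alpha_{i,k}a_{i,k}a_{k,i}=\alpha_{i-1}a_{i,i-1}a_{i-1,i}$, and then cites Corollary \ref{coro3-13} to conclude Theorem A for $\nu=8$. Your index bookkeeping between the doubly-indexed $\alpha_{i,j}$ and the singly-indexed $\alpha_k$ is exactly the (only) content of the paper's "it is easy to prove" step.
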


\begin{theorem}
Suppose that $0 \le \alpha_k \le 1$, $k = 1, \cdots, n-1$.
Then Theorem B is valid for $\nu = 8$.
\end{theorem}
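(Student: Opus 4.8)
The plan is to recognize the preconditioner $Q_8$ as a special instance of the lower triangular preconditioner $Q_3$ and then to reduce the claim directly to Corollary \ref{coro3-14}. Concretely, $Q_8$ arises from $Q_3$ by setting $\alpha_{i,j} = \alpha_{i-1}$ when $j = i-1$ and $\alpha_{i,j} = 0$ for every other pair with $j < i$. Consequently the present hypothesis $0 \le \alpha_k \le 1$, $k = 1,\dots,n-1$, forces each coefficient $\alpha_{i,j}$ (with $i = 2,\dots,n$, $j < i$) to lie in $[0,1]$: the only possibly nonzero ones are $\alpha_{i,i-1} = \alpha_{i-1} \in [0,1]$, while all remaining $\alpha_{i,j}$ vanish and hence also lie in $[0,1]$. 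This places us exactly in the setting of Corollary \ref{coro3-14}, whose sole hypothesis is $0 \le \alpha_{i,j} \le 1$ for $i = 2,\dots,n$, $j < i$; since that corollary asserts Theorem B for $\nu = 3$ and $\nu = 8$ is precisely this specialization, Theorem B for $\nu = 8$ follows immediately.

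First I would make the reduction explicit and, to be self-contained, trace the dependence back one further step: Corollary \ref{coro3-14} rests on Corollary \ref{coro3-2}, which only requires the off-diagonal entries $a^{(8)}_{i,j}$ ($i \ne j$) of $P_8 A$ to be nonpositive, i.e.\ that $P_8 A$ be a Z-matrix. So it suffices to verify the Z-matrix property. Reading off the general formula (\ref{eqs3.2}) specialized to the single nonzero column pattern of $Q_8$, the subdiagonal entries become $a^{(8)}_{i,i-1} = (1-\alpha_{i-1})a_{i,i-1}$, which is $\le 0$ because $0 \le \alpha_{i-1} \le 1$ and $a_{i,i-1} \le 0$; the remaining off-diagonal entries become $a^{(8)}_{i,j} = a_{i,j} - \alpha_{i-1}a_{i,i-1}a_{i-1,j}$, where $a_{i,j} \le 0$ and the correction $-\alpha_{i-1}a_{i,i-1}a_{i-1,j}$ is also $\le 0$ (here $\alpha_{i-1}\ge 0$, $a_{i,i-1}\le 0$, and $a_{i-1,j}\le 0$ since $j \ne i-1$). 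Hence every off-diagonal entry of $P_8 A$ is nonpositive, so $P_8 A$ is a Z-matrix and Corollary \ref{coro3-2} (via Corollary \ref{coro3-14}) applies.

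The hard part is essentially bookkeeping rather than analysis: one must confirm that specializing the dense index formulas to the sparse pattern of $Q_8$ leaves all off-diagonal entries nonpositive, as checked above. I would also emphasize the structural point that distinguishes this from the companion result for Theorem A: no diagonal condition such as $\alpha_k a_{k,k+1}a_{k+1,k} < 1$ is needed here, because once $P_8 A$ is known to be a Z-matrix, Lemma \ref{lem1-10} guarantees it is a nonsingular M-matrix and therefore automatically has positive diagonal, which is exactly what the M-matrix hypothesis of Theorem B delivers. This completes the plan, yielding $\rho(\mathscr{L}^{(8)}_{\gamma,\omega}) \le \rho(\mathscr{L}_{\gamma,\omega}) < 1$.
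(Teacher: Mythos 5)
Your proposal is correct and follows essentially the same route as the paper: the paper proves this statement precisely by viewing $Q_8$ as the special case of $Q_3$ with $\alpha_{i,i-1} = \alpha_{i-1}$ and all other $\alpha_{i,j} = 0$, so that the hypothesis $0 \le \alpha_k \le 1$ places it directly under Corollary \ref{coro3-14}. Your additional explicit verification that $P_8A$ is a Z-matrix (and the remark, via Lemma \ref{lem1-10}, on why no diagonal condition is needed) just unwinds what the paper's "it is easy to prove" leaves implicit.
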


In order to give the Stein-Rosenberg Type Theorem II,
we prove a lemma.

\begin{lemma}\label{lem3-44}
Let $A$ be a Z-matrix. Then $A$ and $A^{(8)}$ are irreducible Z-matrices,
provided one of the following conditions is satisfied:

\begin{itemize}
\item[(i)] $a_{k,k+1}a_{k+1,k} > 0$, $0 < \alpha_k \le 1$,
$k = 1, \cdots, n-1$.

\item[(ii)] $n \ge 3$, $a_{n,1} < 0$, $a_{k,k+1} < 0$, $0 \le \alpha_k \le 1$,
$k = 1, \cdots, n-1$.
\end{itemize}
\end{lemma}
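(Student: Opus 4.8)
The plan is to verify the two requirements separately: that $A^{(8)} = P_8A$ is a Z-matrix, and that both $A$ and $A^{(8)}$ are irreducible. First I would record the entries of $A^{(8)}$. Since the only nonzero entries of $Q_8$ are $q^{(8)}_{i,i-1} = -\alpha_{i-1}a_{i,i-1}$ for $i = 2, \ldots, n$, and all diagonal entries of $A$ equal $1$, one gets $a^{(8)}_{1,j} = a_{1,j}$ and, for $i \ge 2$,
\begin{eqnarray*}
a^{(8)}_{i,j} = a_{i,j} - \alpha_{i-1}a_{i,i-1}a_{i-1,j}, \quad j = 1, \ldots, n,
\end{eqnarray*}
so in particular $a^{(8)}_{i,i-1} = (1-\alpha_{i-1})a_{i,i-1}$. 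Using $0 \le \alpha_k \le 1$ and the Z-matrix sign pattern of $A$ (off-diagonal entries $\le 0$, diagonal $=1$), a short case split on $j=i-1$ versus $j\ne i,i-1$ shows every off-diagonal entry of $A^{(8)}$ is $\le 0$; hence $A^{(8)}$ is a Z-matrix under either hypothesis.

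For the irreducibility of $A$ I would invoke the characterization that a matrix is irreducible if and only if its directed graph is strongly connected. Under (i), $a_{k,k+1}a_{k+1,k} > 0$ forces $a_{k,k+1} < 0$ and $a_{k+1,k} < 0$ for each $k$, so $G(A)$ contains both arcs of every consecutive pair and is strongly connected. Under (ii), $a_{k,k+1} < 0$ and $a_{n,1} < 0$ give the cycle $1 \to 2 \to \cdots \to n \to 1$ in $G(A)$, again strongly connected.

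The substantive part is the irreducibility of $A^{(8)}$, which I would reduce once more to strong connectivity of $G(A^{(8)})$. In both cases the superdiagonal survives: for $i \ge 2$ the extra term $-\alpha_{i-1}a_{i,i-1}a_{i-1,i+1}$ is $\le 0$, so $a^{(8)}_{i,i+1} \le a_{i,i+1} < 0$, yielding all arcs $i \to i+1$. In case (ii), since $n \ge 3$ the $(n,1)$ entry is no longer the subdiagonal entry, and instead $a^{(8)}_{n,1} = a_{n,1} - \alpha_{n-1}a_{n,n-1}a_{n-1,1} \le a_{n,1} < 0$, so the arc $n \to 1$ persists and $G(A^{(8)})$ contains the full cycle; this settles (ii).

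The main obstacle is case (i), where the subdiagonal entry $a^{(8)}_{i,i-1} = (1-\alpha_{i-1})a_{i,i-1}$ can vanish, precisely when $\alpha_{i-1}=1$, so one cannot rely on backward arcs along the subdiagonal. My remedy is to produce the skip arcs $j \to j-2$: for $2 \le i \le n-1$,
\begin{eqnarray*}
a^{(8)}_{i+1,i-1} = a_{i+1,i-1} - \alpha_i a_{i+1,i}a_{i,i-1} < 0,
\end{eqnarray*}
since in case (i) $\alpha_i > 0$, $a_{i+1,i} < 0$ and $a_{i,i-1} < 0$ make the subtracted term strictly negative while $a_{i+1,i-1} \le 0$. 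Combining the forward arcs $i \to i+1$ with the skip arcs $j \to j-2$ ($3 \le j \le n$) gives strong connectivity for $n \ge 3$: from any vertex one reaches $n$ forward, from $n$ one descends in steps of two to reach $1$ or $2$, and $2 \to 3 \to 1$ closes the remaining gap. By the irreducibility criterion, $A^{(8)}$ is then irreducible, which completes the proof.
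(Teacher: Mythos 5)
Your proof is correct and rests on the same key computation as the paper's: under (i) the entries $a^{(8)}_{j,j-2} = a_{j,j-2} - \alpha_{j-1}a_{j,j-1}a_{j-1,j-2}$ are strictly negative, so the skip arcs $j \to j-2$ lie in $G(A^{(8)})$ even when the subdiagonal arcs are annihilated by $\alpha_{j-1}=1$; likewise both proofs use the surviving superdiagonal and, in case (ii), the surviving $(n,1)$ entry. Where you genuinely differ is in how the skip arcs are deployed. The paper argues by path-patching: it takes an arbitrary path in $G(A)$ and, whenever the path uses a down-by-one arc that has been destroyed, replaces that step by a detour ($u \to u+1 \to u-1$ if $u<n$, or $n \to n-2 \to n-1$ if $u=n$), iterating ``this process'' until a path in $G(A^{(8)})$ results. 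You instead exhibit, once and for all, a strongly connected spanning subgraph of $G(A^{(8)})$ --- the forward chain $1\to 2\to\cdots\to n$ together with all arcs $j\to j-2$ --- and verify strong connectivity directly (forward to $n$, descend by twos to $1$ or $2$, and close with $2\to 3\to 1$). Your version buys simplicity and rigor: it never refers back to paths of $G(A)$, and it avoids the paper's informal iterative repair, which is the least tidy part of the original proof. The Z-matrix verification and case (ii) are essentially identical in the two arguments.

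One caveat, which you share with the paper: case (i) of the lemma as stated carries no hypothesis $n \ge 3$, yet your skip arcs need $3 \le j \le n$ and the paper's detour through $n-2$ needs $n\ge 3$ as well. This is a defect of the statement rather than of your proof: for $n=2$ and $\alpha_1 = 1$ the conclusion is actually false, since then
\begin{equation*}
A^{(8)} = \left(\begin{array}{cc} 1 & a_{1,2} \\ 0 & 1 - a_{2,1}a_{1,2} \end{array}\right)
\end{equation*}
is upper triangular, hence reducible. If you wanted your write-up to be airtight against the stated hypotheses, you would note that for $n=2$ the claim holds only when $\alpha_1 < 1$ (in which case the subdiagonal entry $(1-\alpha_1)a_{2,1}$ survives and irreducibility is immediate).
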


\begin{proof}
The condition ($i$) implies
$a_{k,k+1} < 0$ and $a_{k+1,k} > 0$, $k = 1, \cdots, n-1$. Hence, if one of ($i$) and
($ii$) holds, then it is easy to prove that $A$ is irreducible.

Since
\begin{eqnarray} \label{eqs3.31}
a^{(8)}_{i,j} = \left\{\begin{array}{lll}
a_{1,j} \le 0, & i = 1, j = 2,\cdots n,\\
(1 - \alpha_{i-1})a_{i,i-1} \le 0, & i = 2,\cdots, n, j = i-1, \\
a_{i,j} - \alpha_{i-1}a_{i,i-1}a_{i-1,j} \le a_{i,j} \le 0, & i = 2,\cdots, n, j = 1,\cdots, n, \\
 & j \not= i,i-1,
\end{array}\right.
\end{eqnarray}
then $A^{(8)}$ is a Z-matrix.

When ($i$) holds, for any $i,j\in\{1, \cdots, n\}$, $i\not = j$, since $A$ is irreducible, then there exists a path
$\sigma_{i,j} = (j_{(0)}, j_{(1)}, \cdots, j_{(l+1)})\in G(A)$ with $i = j_{(0)}$ and $j = j_{(l+1)}$.

By (\ref{eqs3.31}), it is obviously that either if there is no $j_{(k+1)} = j_{(k)}-1$,
$k \in \{0, 1, \cdots, l\}$, or if there exists some $s \in \{0, 1, \cdots, l\}$
such that $j_{(s+1)} = j_{(s)}-1$ but $\alpha_{j_{(s)}-1} < 1$, then $\sigma_{i,j}\in G(A^{(8)})$.

For the case when $j_{(s+1)} = j_{(s)}-1$ and $\alpha_{j_{(s)}-1} = 1$ for some $s \in \{0, 1, \cdots, l\}$,
then $\sigma_{i,j}\notin G(A^{(8)})$, since $a^{(8)}_{j_{(s)},j_{(s+1)}} = a^{(8)}_{j_{(s)},j_{(s)}-1} = 0$.
If $j_{(s)} < n$, then it gets that $a^{(8)}_{j_{(s)},j_{(s)}+1} =
a_{j_{(s)},j_{(s)}+1} - a_{j_{(s)},j_{(s)}-1}a_{j_{(s)}-1,j_{(s)}+1}
\le a_{j_{(s)},j_{(s)}+1} < 0$ and $a^{(8)}_{j_{(s)}+1,j_{(s)}-1} = a_{j_{(s)}+1,j_{(s)}-1}
 - \alpha_{j_{(s)}}a_{j_{(s)}+1,j_{(s)}}a_{j_{(s)},j_{(s)}-1}
\le $\\ $-\alpha_{j_{(s)}}a_{j_{(s)}+1,j_{(s)}}a_{j_{(s)},j_{(s)}-1} < 0$.
While, if $j_{(s)} = n$, then $j_{(s+1)} = n-1$. In this case it gets that $a^{(8)}_{n,n-2}
= a_{n,n-2} - a_{n,n-1}a_{n-1,n-2} \le -a_{n,n-1}a_{n-1,n-2} < 0$ and $a^{(8)}_{n-2,n-1}
= a_{n-2,n-1} - \alpha_{n-3}a_{n-2,n-3}a_{n-3,n-1} \le a_{n-2,n-1} < 0$.
Now we can construct a path
$\sigma^{(1)}_{i,j} = (j_{(0)}, \cdots, j_{(s)}, j_{(s)}+1, j_{(s+1)}, \cdots,  j_{(l+1)})$ whenever $j_{(s)} < n$
or $\sigma^{(1)}_{i,j} = (j_{(0)}, \cdots, j_{(s-1)}, n, n-2, n-1, j_{(s+2)}, \cdots, j_{(l+1)})$
whenever $j_s = n$. To continue this process, we can eventually construct a path
$\sigma^{(t)}_{i,j}$, $t \le l$, such that $\sigma^{(t)}_{i,j}\in G(A^{(8)})$.
We have proved that $A^{(8)}$ is irreducible.

When ($ii$) holds, then by (\ref{eqs3.31}) we can obtain
$a^{(8)}_{1,2} = a_{1,2} < 0$, $a^{(8)}_{k,k+1} \le
a_{k,k+1} - \alpha_{k-1}a_{k,k-1}a_{k-1,k+1} \le a_{k,k+1} < 0$,
$k = 2, \cdots, n-1$, $a^{(8)}_{n,1} \le a_{n,1} - \alpha_{n-1}a_{n,n-1}a_{n-1,1} \le a_{n,1} < 0$.
From this it is easy to see that $A^{(8)}$ is irreducible.
\end{proof}

\begin{theorem}\label{thm3.31}
Suppose that $\alpha_ka_{k,k+1}a_{k+1,k} < 1$, $k = 1, \cdots, n-1$.
Then Theorem C is valid for $\nu = 8$,
provided one of the following conditions is satisfied:

\begin{itemize}
\item[(i)]
$0 \le \alpha_k \lesssim 1$, $k = 1, \cdots, n-1$.
And one of the following conditions holds:

\begin{itemize}
\item[($i_1$)] $0 \le \gamma < 1$.

\item[($i_2$)] $\gamma = 1$ and there exists $k\in\{1, \cdots, n-1\}$ such that
$\alpha_{k}a_{k,k+1}a_{k+1,k}$ $ > 0$.

\item[($i_3$)] $\gamma = 1$ and $a_{k,k+1} < 0$, $k = 1, \cdots, n-1$.
\end{itemize}

\item[(ii)] $a_{k,k+1}a_{k+1,k} > 0$ and
$0 < \alpha_k \le 1$, $k = 1, \cdots, n-1$.

\item[(iii)] $n \ge 3$, $a_{n,1} < 0$, $a_{k,k+1} < 0$, $0 \le \alpha_k \le 1$,
$k = 1, \cdots, n-1$.
\end{itemize}
\end{theorem}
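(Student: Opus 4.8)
The plan is to reduce all three cases to Corollary~\ref{coro3-15}(i), the lower-triangular ($Q_3$) form of the Stein--Rosenberg Type Theorem II, since $Q_8$ is exactly the specialization of $Q_3$ in which only the first subdiagonal survives, i.e. $\alpha_{i,j} = \alpha_{i-1}$ for $j = i-1$ and $\alpha_{i,j} = 0$ otherwise. First I would record the two consequences of this specialization that make the reduction possible. The convolution collapses to a single term, $\sum_{k=1}^{i-1}\alpha_{i,k}a_{i,k}a_{k,i} = \alpha_{i-1}a_{i,i-1}a_{i-1,i}$, so the standing hypothesis $\alpha_k a_{k,k+1}a_{k+1,k} < 1$ ($k = 1,\dots,n-1$) is precisely the positivity-of-diagonal requirement $\sum_{k=1}^{i-1}\alpha_{i,k}a_{i,k}a_{k,i} < 1$ ($i = 2,\dots,n$) of Corollary~\ref{coro3-15}; and $\delta_{i,j}^{(3)}(1)$ restricts to the stated $\delta_{i,j}^{(8)}(1)$.

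For part (i) I would check the hypotheses of Corollary~\ref{coro3-15}(i) verbatim. The requirement $0 \le \alpha_{i,j} \lesssim 1$ for $j < i$ is just $0 \le \alpha_k \lesssim 1$ because the remaining $\alpha_{i,j}$ vanish, and the three subcases match the three alternatives offered there: ($i_1$) is the option $0 \le \gamma < 1$; ($i_2$) is condition ($ii_1$) of Theorem~\ref{thm3.11} read with the only surviving product $\alpha_{i-1}a_{i,i-1}a_{i-1,i}$, i.e. with inner index $k = i-1$ and $j = i$; and ($i_3$) is condition ($ii_3$) of Theorem~\ref{thm3.11}. (Condition ($ii_2$) of Theorem~\ref{thm3.11} is unavailable here, since $\alpha_{n,1} = 0$ once $n \ge 3$, which is why it does not appear.) Hence Theorem C follows at once.

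For parts (ii) and (iii) the subtlety is that the non-strict bound $\alpha_k \le 1$ now permits $\alpha_k = 1$, which makes the subdiagonal entry $a_{i,i-1}^{(8)} = (1-\alpha_{i-1})a_{i,i-1}$ vanish; irreducibility of $A^{(8)}$ can then no longer be read off the sign pattern, so the symbol $\lesssim$ in part (i) cannot simply be taken to be $\le$. This is exactly where Lemma~\ref{lem3-44} enters: under the hypotheses of (ii) its clause (i) gives that $A^{(8)}$ is an irreducible Z-matrix, and under those of (iii) its clause (ii) does the same. Since the diagonal of $A^{(8)}$ is positive by the standing hypothesis, $A^{(8)}$ is an irreducible L-matrix in either case, so $\lesssim$ reduces to $\le$ and the bound $\alpha_k \le 1$ fulfils the $0 \le \alpha_k \lesssim 1$ hypothesis of part (i). It then remains only to select the correct subcase: in (ii), $a_{k,k+1}a_{k+1,k} > 0$ together with $\alpha_k > 0$ gives $\alpha_k a_{k,k+1}a_{k+1,k} > 0$, so ($i_1$) covers $0 \le \gamma < 1$ and ($i_2$) covers $\gamma = 1$; in (iii), the blanket assumption $a_{k,k+1} < 0$ supplies ($i_3$) when $\gamma = 1$, while ($i_1$) again covers $0 \le \gamma < 1$.

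I expect the argument to be essentially bookkeeping: the comparison itself is already carried by Corollary~\ref{coro3-15} (through Theorem~\ref{thm3.11}), and the graph-theoretic work is done once and for all in Lemma~\ref{lem3-44}. The only genuinely delicate point is the one flagged above --- that at $\alpha_k = 1$ the first subdiagonal of $A^{(8)}$ collapses, so the strictness in Theorem C can no longer be obtained from the $\lesssim$ convention alone and must be secured by the explicit irreducibility statements of Lemma~\ref{lem3-44}. Matching each hypothesis set in (ii) and (iii) to the correct clause of that lemma, and confirming that the resulting $A^{(8)}$ indeed has positive diagonal so as to be an L-matrix, is the step I would carry out most carefully.
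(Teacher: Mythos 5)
Your proposal is correct and takes essentially the same approach as the paper: part ($i$) is obtained exactly as the specialization of Corollary \ref{coro3-15}($i$) through conditions ($i$), ($ii_1$) and ($ii_3$) of Theorem \ref{thm3.11}, and parts ($ii$) and ($iii$) rest, as in the paper, on Lemma \ref{lem3-44} supplying the irreducibility of $A^{(8)}$ that the $\lesssim$ convention can no longer guarantee once $\alpha_k = 1$ is allowed. The only difference is cosmetic: for $\gamma = 1$ in ($ii$)/($iii$) the paper verifies condition ($ii_4$) of Theorem \ref{thm3.3} directly (namely $q^{(8)}_{k_0+1,k_0}a_{k_0,k_0+1} < 0$ for some $k_0$ with $\alpha_{k_0} > 0$), whereas you fold these cases back into part ($i$) via subcases ($i_2$)/($i_3$); both verifications feed into the same underlying comparison machinery.
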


\begin{proof} By ($i$) of Corollary \ref{coro3-15}, ($i$), ($ii_1$) and ($ii_3$)
in Theorem \ref{thm3.11}, ($i$) follows directly.

If ($ii$) holds, then by Lemma \ref{lem3-44} $A^{(8)}$ is an irreducible L-matrix.
Now, for $0 \le \gamma < 1$ the condition ($i$) of Theorem \ref{thm3.3} is satisfied.
By the definition of $Q_8$, there exists some
$k_0 \in \{1,\cdots, n-1\}$ such that $\alpha_{k_0} > 0$. Hence, for $\gamma = 1$,
we can prove that ($ii_4$) in Theorem \ref{thm3.3}
holds, since $\alpha_{k_0}a_{k_0+1,k_0}a_{k_0,k_0+1} > 0$.

When ($iii$) holds, the proof is completely same.
\end{proof}

Obviously, from Lemma \ref{lem3-44},
if ($ii$) or ($iii$) holds, then the assumption that
$A$ is irreducible is redundant.

This theorem when ($ii$) holds is better than the results in \cite{Li12}.
The corresponding result in \cite[Theorem 3.2]{HDS10} is problematic, because
\cite[Lemma 3.1]{HDS10} is wrong. In fact, Let
 \begin{eqnarray*}
A = \left(\begin{array}{rrrr}
1    & -0.5 & 0    & -1 \\
-1   & 1    & 0    & 0   \\
0    & -1   & 1    & 0   \\
0    & 0    & -1   & 1
\end{array}\right).
 \end{eqnarray*}
Then it is easy to prove that $A$ is an irreducible L-matrix and it satisfies
the assumption of \cite[Lemma 3.1]{HDS10}. But
the iteration matrices of the preconditioned AOR methods
are reducible when we choose $\alpha_3=1$.

The following result is easy to prove.

\begin{theorem}
Theorem D is valid for $\nu = 8$, provided one of the conditions ($i$), ($ii$) and ($iii$)
of Theorem \ref{thm3.31} is satisfied.
\end{theorem}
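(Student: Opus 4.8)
The plan is to obtain this strict comparison as an immediate consequence of the Stein--Rosenberg Type II result already proved for $\nu=8$ (Theorem \ref{thm3.31}, i.e. Theorem C) combined with the convergence forced by the M-matrix hypothesis, exactly in the spirit of the earlier passages from Theorem C to Theorem D (Theorems \ref{thm3.12}, \ref{thm3.16} and \ref{thm3.24}). First I would note that an irreducible nonsingular M-matrix is in particular an irreducible L-matrix, so that whenever one of the conditions ($i$), ($ii$), ($iii$) of Theorem \ref{thm3.31} holds, Theorem C is valid for $\nu=8$; that is, exactly one of the three mutually exclusive relations
\begin{eqnarray*}
\rho({\mathscr L}^{(8)}_{\gamma,\omega}) < \rho({\mathscr L}_{\gamma,\omega}) < 1, \quad
\rho({\mathscr L}^{(8)}_{\gamma,\omega}) = \rho({\mathscr L}_{\gamma,\omega}) = 1, \quad
\rho({\mathscr L}^{(8)}_{\gamma,\omega}) > \rho({\mathscr L}_{\gamma,\omega}) > 1
\end{eqnarray*}
must hold. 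The role of the stronger M-matrix hypothesis is then only to select the first branch.

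Second, I would use the M-matrix property of $A$ to show $\rho({\mathscr L}_{\gamma,\omega}) < 1$. Since $A$ is a nonsingular M-matrix, the AOR splitting $A = M_{\gamma,\omega} - N_{\gamma,\omega}$ is weak regular, just as in the proof of Theorem \ref{thm3.2}, and by Lemma \ref{lem1-5} it is convergent, giving $\rho({\mathscr L}_{\gamma,\omega}) < 1$. This rules out the second and third relations of the trichotomy, leaving only
\begin{eqnarray*}
\rho({\mathscr L}^{(8)}_{\gamma,\omega}) < \rho({\mathscr L}_{\gamma,\omega}) < 1,
\end{eqnarray*}
which is precisely the assertion of Theorem D. As usual, by Lemma \ref{lem3-1} it suffices to treat the case $0 \le \gamma \le \omega \le 1$.

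The main point to verify is that the hypotheses of Theorem \ref{thm3.31}, which are phrased for irreducible L-matrices, remain fully available under the M-matrix assumption without any extra irreducibility requirement on $A^{(8)}$. This is where Lemmas \ref{lem1-10} and \ref{lem3-44} enter: by (\ref{eqs3.31}) the off-diagonal entries of $A^{(8)} = P_8 A$ are nonpositive, so $A^{(8)}$ is a Z-matrix, and Lemma \ref{lem1-10} then makes it a nonsingular M-matrix, consistent with the required L-matrix structure. Under conditions ($ii$) and ($iii$) Lemma \ref{lem3-44} additionally guarantees that $A^{(8)}$ is irreducible, while under condition ($i$) the convention ``$\lesssim$'' already absorbs this requirement. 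Since no branch other than the first survives once $\rho({\mathscr L}_{\gamma,\omega}) < 1$ is established, essentially no computation beyond invoking Theorem \ref{thm3.31} is needed; the only delicate part is this bookkeeping of which of conditions ($i$)--($iii$) supplies the irreducibility of $A^{(8)}$, and even that has been settled in the lemmas, so the proof reduces to stringing the two inputs together.
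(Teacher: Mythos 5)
Your proposal is correct and follows exactly the route the paper intends: the paper offers no written proof (it says the result "is easy to prove"), and the standard argument used for the analogous results (e.g., the proof of Theorem \ref{thm3.4}) is precisely yours — the irreducible nonsingular M-matrix is an irreducible L-matrix, so Theorem \ref{thm3.31} yields the Stein--Rosenberg trichotomy, while weak regularity of the AOR splitting plus Lemma \ref{lem1-5} forces $\rho({\mathscr L}_{\gamma,\omega})<1$, selecting the strict branch. Your additional bookkeeping (Lemma \ref{lem1-10} making $A^{(8)}$ a nonsingular M-matrix so the diagonal hypothesis $\alpha_k a_{k,k+1}a_{k+1,k}<1$ of Theorem \ref{thm3.31} is automatic, and Lemma \ref{lem3-44} supplying irreducibility under ($ii$)--($iii$)) is consistent with the paper.
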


Different from $Q_5$, a special $Q$ is proposed as
\begin{eqnarray*}
Q_{9} = \left(\begin{array}{ccccccccc}
0     & 0      & \cdots & 0  & 0  \\
\vdots & \vdots &  \ddots & \vdots & \vdots \\
0      & 0      &\cdots & 0 & 0  \\
-\alpha_1 a_{n,1}+\beta_1  & -\alpha_2 a_{n,2}+\beta_2  & \cdots & -\alpha_{n-1} a_{n,n-1}+\beta_{n-1} & 0
\end{array}\right)
 \end{eqnarray*}
 with $\alpha_k \ge 0$, $-\alpha_k a_{n,k}+\beta_k \ge 0$, $k = 1, \cdots, n-1$, and
 $\sum_{k = 1}^{n-1}(-\alpha_k a_{n,k}+\beta_k) \not = 0$. For
 $\alpha_k = \alpha \ge 0$, $\beta_k = \beta \ge 0$ and $\alpha + \beta \not = 0$, it is given in \cite{HS14}.

In this case, we have that
\begin{eqnarray*}
a^{(9)}_{i,j} & = & a_{i,j}, \; i = 1, \cdots, n-1, \; j = 1, \cdots, n,\\
a^{(9)}_{n,j} & = & (1-\alpha_j)a_{n,j} + \beta_j + \sum\limits_{{k = 1}\atop{k\not = j}}^{n-1}(-\alpha_k a_{n,k} + \beta_k)a_{k,j}, \; j = 1,\cdots, n-1,\\
a^{(9)}_{n,n} & = & 1 + \sum\limits_{k = 1}^{n-1}(-\alpha_k a_{n,k} + \beta_k)a_{k,n},
\end{eqnarray*}
so that
\begin{eqnarray*}
 q^{(9)}_{i,j} & = & q^{(9)}_{n,n} = 0, \; i = 1, \cdots, n-1, \; j = 1, \cdots, n,\\
 q^{(9)}_{n,j} & = & -\alpha_j a_{n,j} + \beta_j, \; j = 1, \cdots, n-1
\end{eqnarray*}
and
\begin{eqnarray*}
\sum\limits_{{k = 1}\atop{k\not = i}}^nq^{(9)}_{i,k}a_{k,i} = \left\{\begin{array}{ll}
 0, & i = 1, \cdots, n-1,\\
 \sum\limits_{k = 1}^{n-1}(-\alpha_{k}a_{n,k} + \beta_k)a_{k,n}, & i = n.
\end{array} \right.
\end{eqnarray*}

Now, $\delta_{i,j}^{(1)}(1)$ reduces to
\begin{eqnarray*}
\delta_{i,j}^{(9)}(1) = \left\{\begin{array}{ll}
 \sum\limits_{k = 1}^{j-1} (\alpha_ka_{n,k} - \beta_k)a_{k,j}, &
 i = n, j = 2,\cdots,n;\\
 0, & otherwise.
\end{array}\right.
\end{eqnarray*}

Hence, by Corollaries \ref{coro3-1}-\ref{coro3-4}, we can prove the following comparison theorems
 directly, where the proof of Theorem \ref{thm3.35} is similar to that of Theorem \ref{thm3.19}.

\begin{theorem}\label{thm3.33}
Suppose that $\sum_{k = 1}^{n-1}(\alpha_ka_{n,k} - \beta_k)a_{k,n} < 1$ and
 \begin{eqnarray} \label{eqs3.32}
&&(1-\alpha_j)a_{n,j} + \beta_j + \sum\limits_{{k = 1}\atop{k\not = j}}^{n-1}(-\alpha_ka_{n,k} + \beta_k)a_{k,j} \le 0,\\
&& j = 1,\cdots, n-1.\nonumber
\end{eqnarray}
Then Theorem A is valid for $\nu = 9$.
\end{theorem}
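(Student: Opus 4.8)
The plan is to reduce Theorem \ref{thm3.33} to Corollary \ref{coro3-1} by verifying that the preconditioned matrix $A^{(9)} = P_9 A$ is an L-matrix. The matrix $Q_9$ is a special instance of the general construction $Q_1$: its diagonal entries vanish and, since $A$ is an L-matrix, its only nonzero entries $q^{(9)}_{n,k} = -\alpha_k a_{n,k}+\beta_k$ are nonnegative. Consequently the conclusion ``Theorem A is valid for $\nu = 1$'' of Corollary \ref{coro3-1}, read with $Q_1 = Q_9$, is precisely ``Theorem A is valid for $\nu = 9$''. So it suffices to check the two defining inequalities of an L-matrix for $A^{(9)}$, namely $a^{(9)}_{i,i} > 0$ for all $i$ and $a^{(9)}_{i,j} \le 0$ for $i \not= j$.

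First I would dispose of the rows $i = 1, \cdots, n-1$. Because only the last row of $Q_9$ is nonzero, these rows of $A^{(9)}$ coincide with those of $A$, so the displayed identities give $a^{(9)}_{i,j} = a_{i,j}$ for $i \le n-1$. The L-matrix property of $A$, together with the standing normalization that all diagonal entries of $A$ equal $1$, then yields $a^{(9)}_{i,i} = 1 > 0$ and $a^{(9)}_{i,j} = a_{i,j} \le 0$ for $j \not= i$; these rows cause no difficulty.

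The whole content of the two hypotheses is exactly the L-matrix requirement on the last row. Using the displayed formula for $a^{(9)}_{n,n}$, the assumption $\sum_{k=1}^{n-1}(\alpha_k a_{n,k}-\beta_k)a_{k,n} < 1$ is nothing but $a^{(9)}_{n,n} = 1 - \sum_{k=1}^{n-1}(\alpha_k a_{n,k}-\beta_k)a_{k,n} > 0$. Likewise, the formula for $a^{(9)}_{n,j}$ shows that inequality (\ref{eqs3.32}) says precisely $a^{(9)}_{n,j} \le 0$ for $j = 1, \cdots, n-1$, which are the off-diagonal entries of the last row. Combined with the first $n-1$ rows, this establishes that $A^{(9)}$ is a Z-matrix with positive diagonal, i.e.\ an L-matrix, and Corollary \ref{coro3-1} then finishes the proof.

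I expect no genuine obstacle here: the real work has been absorbed into Corollary \ref{coro3-1} and, behind it, Theorem \ref{thm3.1} and Lemma \ref{lem3-2}. The only point requiring care is the sign bookkeeping, in particular recognizing that the two seemingly separate hypotheses of Theorem \ref{thm3.33} are merely the positivity of $a^{(9)}_{n,n}$ and the nonpositivity of the remaining entries in the final row of the preconditioned matrix.
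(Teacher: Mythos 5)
Your proposal is correct and takes essentially the same route as the paper, which derives Theorem \ref{thm3.33} directly from Corollary \ref{coro3-1} using the displayed formulas for $a^{(9)}_{i,j}$: the two hypotheses are precisely $a^{(9)}_{n,n} > 0$ and $a^{(9)}_{n,j} \le 0$, $j = 1,\cdots,n-1$, while the first $n-1$ rows of $A^{(9)}$ coincide with those of $A$, so $A^{(9)}$ is an L-matrix and the corollary applies with $Q_1 = Q_9$.
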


\begin{theorem}\label{thm3.34}
Suppose that (\ref{eqs3.32}) holds.
Then Theorem B is valid for $\nu = 9$.
\end{theorem}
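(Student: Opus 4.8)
The plan is to reduce Theorem~\ref{thm3.34} to Corollary~\ref{coro3-2}, exactly parallel to the way the preceding Theorem~\ref{thm3.33} reduces to Corollary~\ref{coro3-1}. Corollary~\ref{coro3-2} states that, under the standing hypothesis that $A$ is a nonsingular M-matrix, Theorem~B holds as soon as $a^{(1)}_{i,j}\le 0$ for all $i\neq j$. Since $Q_9$ is a particular instance of the general matrix $Q_1$ (its only nonzero entries $q^{(9)}_{n,k}=-\alpha_k a_{n,k}+\beta_k\ge 0$ are off-diagonal, lying in the last row, and their sum is nonzero), it suffices to check that $A^{(9)}=P_9 A$ is a Z-matrix, i.e.\ that every off-diagonal entry of $A^{(9)}$ is nonpositive.

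First I would exploit the sparsity of $Q_9$. Because $Q_9$ has nonzero entries only in its last row, the product $Q_9A$ likewise has nonzero entries only in its last row, so rows $1$ through $n-1$ of $A^{(9)}=A+Q_9A$ coincide with the corresponding rows of $A$. As $A$ is a nonsingular M-matrix it is an L-matrix, hence a Z-matrix, so $a^{(9)}_{i,j}=a_{i,j}\le 0$ for $i=1,\dots,n-1$ and $j\neq i$. It then remains only to treat the last row. Using the normalization $a_{j,j}=1$ together with $q^{(9)}_{n,k}=-\alpha_k a_{n,k}+\beta_k$, a direct expansion gives, for $j=1,\dots,n-1$,
\begin{eqnarray*}
a^{(9)}_{n,j} & = & a_{n,j}+\sum_{k=1}^{n-1}q^{(9)}_{n,k}a_{k,j}\\
& = & (1-\alpha_j)a_{n,j}+\beta_j+\sum\limits_{{k = 1}\atop{k\not = j}}^{n-1}(-\alpha_k a_{n,k}+\beta_k)a_{k,j},
\end{eqnarray*}
which is precisely the left-hand side of~(\ref{eqs3.32}). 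Hence the hypothesis~(\ref{eqs3.32}) yields $a^{(9)}_{n,j}\le 0$ for these $j$. Combining the two cases, $a^{(9)}_{i,j}\le 0$ for all $i\neq j$, so $A^{(9)}$ is a Z-matrix, and Corollary~\ref{coro3-2} delivers $\rho({\mathscr L}^{(9)}_{\gamma,\omega})\le\rho({\mathscr L}_{\gamma,\omega})<1$, i.e.\ Theorem~B for $\nu=9$.

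There is no substantive obstacle: the whole content is the bookkeeping identity above together with the observation that the first $n-1$ rows of $A^{(9)}$ are untouched by the preconditioner. The only point requiring care is confirming that the displayed expression for $a^{(9)}_{n,j}$ matches the quantity appearing in~(\ref{eqs3.32}); this rests on the standing assumption $a_{j,j}=1$, which turns the $k=j$ term of $\sum_{k}q^{(9)}_{n,k}a_{k,j}$ into $q^{(9)}_{n,j}=-\alpha_j a_{n,j}+\beta_j$ and thereby produces the grouping $(1-\alpha_j)a_{n,j}+\beta_j$.
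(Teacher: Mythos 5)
Your proof is correct and follows the paper's own route: the paper also establishes this theorem by noting that $Q_9$ is an instance of $Q_1$, that rows $1,\dots,n-1$ of $A^{(9)}$ equal those of $A$ while the last-row entries $a^{(9)}_{n,j}$ are exactly the left-hand side of (\ref{eqs3.32}), so that $A^{(9)}$ is a Z-matrix and Corollary \ref{coro3-2} applies directly. Your write-up merely makes explicit the bookkeeping the paper leaves to the reader.
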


The results given by Theorems \ref{thm3.33} and \ref{thm3.34} include the corresponding
ones given in \cite[Theorem 2.3]{HS14},
where $1\le j\le n$ should be $1\le j\le n-1$.

\begin{theorem}\label{thm3.35}
Suppose that $\sum_{k = 1}^{n-1}(\alpha_ka_{n,k} - \beta_k)a_{k,n} < 1$ and
\begin{eqnarray}\label{eqs3.34}\nonumber
&& (1-\alpha_j)a_{n,j} + \beta_j + \sum\limits_{{k = 1}\atop{k\not = j}}^{n-1}(-\alpha_ka_{n,k} + \beta_k)a_{k,j}
 \left\{\begin{array}{ll}
 \le 0,\\
 \lesssim 0 \; \hbox{whenever} \; a_{n,j} < 0,
\end{array}\right. \\
&& j = 1,\cdots, n-1.
\end{eqnarray}
Then Theorem C is valid for $\nu = 9$,
provided one of the following conditions is satisfied:

\begin{itemize}
\item[(i)] $0 \le \gamma < 1$.

\item[(ii)] $\gamma = 1$ and one of the following conditions holds:

\begin{itemize}
\item[($ii_1$)] There exist $i\in\{2, \cdots, n\}$ and $j\in\{1, \cdots, i-1\}$ such that
$(\alpha_ja_{n,j} - \beta_j)a_{j,i} > 0$.

\item[($ii_2$)] $\alpha_1a_{n,1} - \beta_1 < 0$.

\item[($ii_3$)] $a_{k,k+1} < 0$, $k = 1, \cdots, n-1$.

\item[($ii_4$)] $a_{k,n} < 0$, $k = 1, \cdots, n-1$.
\end{itemize}\end{itemize}
\end{theorem}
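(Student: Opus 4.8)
The plan is to recognize $Q_9$ as a particular instance of the general lower‑structured preconditioner $Q_1$ and to deduce Theorem C from Corollary \ref{coro3-3}, exactly as Theorem \ref{thm3.19} was deduced for $Q_5$. First I would check that $P_9A$ is an irreducible L-matrix. The hypothesis $\sum_{k=1}^{n-1}(\alpha_k a_{n,k}-\beta_k)a_{k,n}<1$ yields $a^{(9)}_{n,n}=1+\sum_{k=1}^{n-1}(-\alpha_k a_{n,k}+\beta_k)a_{k,n}>0$, while the first alternative in (\ref{eqs3.34}) gives $a^{(9)}_{n,j}\le 0$ for $j=1,\dots,n-1$. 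Since $Q_9$ alters only the $n$th row, rows $1,\dots,n-1$ of $A^{(9)}$ coincide with those of $A$, so $A^{(9)}$ is a Z-matrix with positive diagonal, i.e. an L-matrix. The second alternative, $a^{(9)}_{n,j}\lesssim 0$ whenever $a_{n,j}<0$, is precisely the requirement in Corollary \ref{coro3-3}(i) that guarantees $A^{(9)}$ inherits the irreducibility of $A$; because the modification is confined to a single row, no separate irreducibility lemma (as was needed for $Q_6$, $Q_7$) is required here.

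With this in place, condition (i), $0\le\gamma<1$, falls immediately under Corollary \ref{coro3-3}(i). For condition (ii), $\gamma=1$, I would show that each of ($ii_1$)--($ii_4$) forces one of the conditions ($ii_1$)--($ii_9$) of Theorem \ref{thm3.3}. The key observation is that, because $A^{(9)}$ is an L-matrix, every summand of
\begin{eqnarray*}
\delta^{(9)}_{n,J}(1)=\sum_{k=1}^{J-1}(\alpha_k a_{n,k}-\beta_k)a_{k,J}=-\sum_{k=1}^{J-1}q^{(9)}_{n,k}a_{k,J},\quad J=2,\dots,n,
\end{eqnarray*}
is nonnegative, so a single strictly positive summand makes $\delta^{(9)}_{n,J}(1)>0$, which is condition ($ii_1$) of Theorem \ref{thm3.3}. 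Thus ($ii_1$) here, $(\alpha_j a_{n,j}-\beta_j)a_{j,i}>0$, is just the assertion that the $k=j$ term of $\delta^{(9)}_{n,i}(1)$ is positive; and ($ii_2$), $\alpha_1 a_{n,1}-\beta_1<0$, is exactly $q^{(9)}_{n,1}>0$, i.e. condition ($ii_2$) of Theorem \ref{thm3.3}.

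The remaining cases ($ii_3$) and ($ii_4$) require the non-triviality of $Q_9$: since each $q^{(9)}_{n,k}=-\alpha_k a_{n,k}+\beta_k\ge 0$ and $\sum_{k=1}^{n-1}q^{(9)}_{n,k}\ne 0$, there is some $k_0$ with $q^{(9)}_{n,k_0}>0$, equivalently $\alpha_{k_0}a_{n,k_0}-\beta_{k_0}<0$. If ($ii_3$) holds, then $a_{k_0,k_0+1}<0$ gives $(\alpha_{k_0}a_{n,k_0}-\beta_{k_0})a_{k_0,k_0+1}>0$, so $\delta^{(9)}_{n,k_0+1}(1)>0$; if ($ii_4$) holds, then $a_{k_0,n}<0$ gives $(\alpha_{k_0}a_{n,k_0}-\beta_{k_0})a_{k_0,n}>0$, so $\delta^{(9)}_{n,n}(1)>0$. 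Either way condition ($ii_1$) of Theorem \ref{thm3.3} is satisfied, and Corollary \ref{coro3-3}(i) yields Theorem C. I expect the main obstacle to be purely bookkeeping: tracking the two sign reversals ($q^{(9)}_{n,k}=-\alpha_k a_{n,k}+\beta_k$ and $a_{k,J}\le 0$) so that the relevant products come out positive, and confirming that the $\lesssim$ clause in (\ref{eqs3.34}) is exactly what keeps $A^{(9)}$ irreducible — the comparison itself then runs automatically through Lemma \ref{lem1-11} inside Theorem \ref{thm3.3}.
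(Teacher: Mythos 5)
Your proposal is correct and follows essentially the same route as the paper: the paper proves Theorem \ref{thm3.35} by treating $Q_9$ as a special case of the general preconditioner $Q_1$ and invoking Corollaries \ref{coro3-1}--\ref{coro3-4}, with a proof "similar to that of Theorem \ref{thm3.19}" — i.e., conditions ($i$), ($ii_1$), ($ii_2$) map directly onto the conditions ($0\le\gamma<1$), ($ii_1$) and ($ii_2$) of Theorem \ref{thm3.3}, while ($ii_3$) and ($ii_4$) are reduced to ($ii_1$) via the nontriviality of $Q_9$ (existence of $k_0$ with $-\alpha_{k_0}a_{n,k_0}+\beta_{k_0}>0$), exactly as you argue. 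Your additional verification that $P_9A$ is an irreducible L-matrix under the stated hypotheses, and that every summand of $\delta^{(9)}_{n,j}(1)$ is nonnegative so one positive term suffices, fills in precisely the bookkeeping the paper leaves implicit.
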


\begin{theorem}\label{thm3.36}
Suppose that (\ref{eqs3.34}) holds.
Then Theorem D is valid for $\nu = 9$,
provided one of the conditions ($i$) and ($ii$)
of Theorem \ref{thm3.35} is satisfied.
\end{theorem}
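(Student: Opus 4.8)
The plan is to deduce Theorem \ref{thm3.36} from the Stein--Rosenberg type result Theorem \ref{thm3.35} in exactly the way Theorem D was obtained from Theorem C in the earlier cases (compare Theorems \ref{thm3.16}, \ref{thm3.24} and \ref{thm3.28}): the Stein--Rosenberg theorem already produces the full trichotomy, and the M-matrix hypothesis on $A$ is used only to discard two of the three mutually exclusive alternatives. First I would record that an irreducible nonsingular M-matrix is an irreducible L-matrix, and that inequality (\ref{eqs3.34}) forces $a^{(9)}_{i,j}\le 0$ for all $i\neq j$. For $i\le n-1$ this is immediate, since $Q_9$ alters only the last row and $A$ is a Z-matrix, while for $i=n$ it is precisely the content of (\ref{eqs3.34}); hence $P_9A$ is a Z-matrix.

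Next I would use Lemma \ref{lem1-10}: since $A$ is a nonsingular M-matrix, $P_9>0$ is nonsingular, and both $A$ and $P_9A$ are Z-matrices, $P_9A$ is itself a nonsingular M-matrix. In particular all its diagonal entries are positive, so $a^{(9)}_{n,n}=1-\sum_{k=1}^{n-1}(\alpha_ka_{n,k}-\beta_k)a_{k,n}>0$; that is, the side condition $\sum_{k=1}^{n-1}(\alpha_ka_{n,k}-\beta_k)a_{k,n}<1$ demanded by Theorem \ref{thm3.35} is automatically satisfied under the present hypotheses. Consequently, whenever condition (i) or (ii) of Theorem \ref{thm3.35} holds, the full hypothesis of that theorem holds, so Theorem C is valid for $\nu=9$, and one of its three mutually exclusive relations must occur.

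Finally I would invoke convergence of the unpreconditioned method. Because $A$ is a nonsingular M-matrix its AOR splitting (\ref{eqn1.4}) is weak regular, so by Lemma \ref{lem1-5} it is convergent and $\rho({\mathscr L}_{\gamma,\omega})<1$. This eliminates alternatives (ii) and (iii) of Theorem C, which require $\rho({\mathscr L}_{\gamma,\omega})\ge 1$, leaving only alternative (i), namely $\rho({\mathscr L}^{(9)}_{\gamma,\omega})<\rho({\mathscr L}_{\gamma,\omega})<1$, which is exactly Theorem D. I do not expect a genuine obstacle here, since the argument is a clean reduction; the only point that deserves care is that the irreducibility bookkeeping built into Theorem \ref{thm3.35} (the ``$\lesssim$'' clause of (\ref{eqs3.34}) together with conditions ($ii_1$)--($ii_4$)) already guarantees that $P_9A$ is irreducible, so that the comparison delivered by Theorem C is strict rather than merely weak. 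That verification is internal to Theorem \ref{thm3.35} and need not be reproduced here.
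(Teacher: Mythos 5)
Your proposal is correct and follows essentially the same route as the paper: the paper obtains Theorem \ref{thm3.36} directly from the general machinery of Corollaries \ref{coro3-1}--\ref{coro3-4}, whose mechanism is exactly your reduction (Theorem B yields $\rho({\mathscr L}_{\gamma,\omega})<1$ via the weak regular AOR splitting and Lemma \ref{lem1-5}, and the Stein--Rosenberg trichotomy of Theorem C then forces the strict branch). Your explicit observation that Lemma \ref{lem1-10} makes $P_9A$ a nonsingular M-matrix, so that $a^{(9)}_{n,n}>0$ and the side condition $\sum_{k=1}^{n-1}(\alpha_ka_{n,k}-\beta_k)a_{k,n}<1$ of Theorem \ref{thm3.35} holds automatically, is a correct and welcome justification of why that hypothesis may be dropped from the statement, a point the paper leaves implicit.
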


From Theorems \ref{thm3.33}-\ref{thm3.36}, the following results are directly.

\begin{corollary}
Suppose that $\sum_{k = 1}^{n-1}(\alpha_ka_{n,k} - \beta_k)a_{k,n} < 1$ and $(1-\alpha_j)a_{n,j} + \beta_j \le 0$,
$j = 1,\cdots, n-1$.
Then Theorem A is valid for $\nu = 9$.
\end{corollary}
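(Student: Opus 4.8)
The plan is to derive this corollary from Theorem \ref{thm3.33}, exactly as the paper announces for the block of results following it. Observe that the first hypothesis of the corollary, $\sum_{k=1}^{n-1}(\alpha_k a_{n,k}-\beta_k)a_{k,n}<1$, is literally identical to the first hypothesis of Theorem \ref{thm3.33}, so nothing is needed there. Hence the entire task reduces to showing that the second, simpler assumption $(1-\alpha_j)a_{n,j}+\beta_j\le 0$ (for $j=1,\dots,n-1$) implies the full inequality (\ref{eqs3.32}) of Theorem \ref{thm3.33}, after which Theorem A for $\nu=9$ follows immediately by invoking that theorem.

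The key step is a one-line sign argument on the summation term appearing in (\ref{eqs3.32}). First I would recall the two structural facts available: $A$ is an L-matrix, so $a_{k,j}\le 0$ whenever $k\neq j$; and the coefficients of $Q_9$ satisfy $-\alpha_k a_{n,k}+\beta_k\ge 0$ for $k=1,\dots,n-1$ by the defining constraints on $Q_9$. In the sum $\sum_{k=1,\,k\neq j}^{n-1}(-\alpha_k a_{n,k}+\beta_k)a_{k,j}$ every index $k$ runs over $\{1,\dots,n-1\}\setminus\{j\}$, so each factor $a_{k,j}$ is an off-diagonal entry of $A$ and is therefore $\le 0$, while $(-\alpha_k a_{n,k}+\beta_k)\ge 0$. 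Consequently each product is nonpositive and the whole sum is $\le 0$. This yields the chain
\begin{eqnarray*}
(1-\alpha_j)a_{n,j}+\beta_j+\sum_{{k=1}\atop{k\neq j}}^{n-1}(-\alpha_k a_{n,k}+\beta_k)a_{k,j}
\;\le\;(1-\alpha_j)a_{n,j}+\beta_j\;\le\;0,
\end{eqnarray*}
where the last inequality is precisely the corollary's hypothesis. Thus (\ref{eqs3.32}) holds for every $j=1,\dots,n-1$.

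With both hypotheses of Theorem \ref{thm3.33} verified, that theorem gives Theorem A for $\nu=9$, completing the argument. I expect no genuine obstacle here: the content is entirely the monotonicity observation that dropping a nonpositive summand only weakens the upper bound, so the simpler pointwise condition on the last row of $A^{(9)}$ suffices to force all off-diagonal entries $a^{(9)}_{n,j}$ to be nonpositive. (Equivalently, one could bypass Theorem \ref{thm3.33} and appeal directly to Corollary \ref{coro3-1}, checking that the displayed bound shows $a^{(9)}_{n,j}\le 0$ while the first hypothesis guarantees $a^{(9)}_{n,n}>0$, the remaining rows of $A^{(9)}$ coinciding with those of the L-matrix $A$; but routing through Theorem \ref{thm3.33} keeps the presentation uniform with the surrounding corollaries.)
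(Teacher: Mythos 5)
Your proposal is correct and matches the paper's intent: the paper states this corollary follows "directly" from Theorem \ref{thm3.33}, and the intended verification is exactly your observation that each summand $(-\alpha_k a_{n,k}+\beta_k)a_{k,j}$ with $k\neq j$ is nonpositive (nonnegative $Q_9$-coefficient times a nonpositive off-diagonal entry of the L-matrix $A$), so the simpler hypothesis implies (\ref{eqs3.32}). No gap; the argument is complete.
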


\begin{corollary}
Suppose that $(1-\alpha_k)a_{n,k} + \beta_k \le 0$, $k = 1,\cdots, n-1$.
Then Theorem B is valid for $\nu = 9$.
\end{corollary}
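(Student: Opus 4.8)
The plan is to deduce this corollary from Theorem \ref{thm3.34} by showing that its hypothesis is a convenient, stronger sufficient condition for inequality (\ref{eqs3.32}). Since $A$ is a nonsingular M-matrix, it is in particular an L-matrix (and a fortiori a Z-matrix), so every off-diagonal entry satisfies $a_{k,j}\le 0$. I therefore do not try to reprove anything about the iteration matrices directly; instead I verify the structural hypothesis of the theorem already established for $\nu=9$.

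First I would recall the explicit form of the entries produced by the preconditioner $Q_9$. For $i=1,\cdots,n-1$ we have $a^{(9)}_{i,j}=a_{i,j}$, so the only off-diagonal entries of $A^{(9)}=P_9A$ that need checking sit in the last row, namely
\[
a^{(9)}_{n,j}=(1-\alpha_j)a_{n,j}+\beta_j+\sum_{\substack{k=1\\k\ne j}}^{n-1}(-\alpha_k a_{n,k}+\beta_k)a_{k,j},\qquad j=1,\cdots,n-1.
\]
Next I would exploit the sign structure of this expression. By the definition of $Q_9$ the coefficients $q^{(9)}_{n,k}=-\alpha_k a_{n,k}+\beta_k$ are nonnegative for every $k\in\{1,\cdots,n-1\}$, while for $k\ne j$ the entries $a_{k,j}\le 0$ because $A$ is a Z-matrix. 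Hence each term $(-\alpha_k a_{n,k}+\beta_k)a_{k,j}$ in the correction sum is a product of a nonnegative and a nonpositive factor, so the entire sum is $\le 0$ and may be discarded without reversing the inequality. This gives
\[
a^{(9)}_{n,j}\le(1-\alpha_j)a_{n,j}+\beta_j\le 0,
\]
the last inequality being precisely the hypothesis of the corollary. Thus inequality (\ref{eqs3.32}) holds, and invoking Theorem \ref{thm3.34} yields $\rho(\mathscr{L}^{(9)}_{\gamma,\omega})\le\rho(\mathscr{L}_{\gamma,\omega})<1$, i.e.\ Theorem B is valid for $\nu=9$.

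I expect no genuine obstacle here: the argument reduces to a one-line sign estimate once the formula for $a^{(9)}_{n,j}$ is in hand. The only point requiring care is confirming that the nonnegativity of $q^{(9)}_{n,k}$, which is built into the very definition of $Q_9$, combines with the nonpositivity of the off-diagonal entries $a_{k,j}$ to make the correction sum nonpositive, so that it can be dropped and the simpler pointwise hypothesis $(1-\alpha_k)a_{n,k}+\beta_k\le 0$ suffices to recover the full condition (\ref{eqs3.32}).
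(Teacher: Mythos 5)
Your proposal is correct and is essentially the paper's own argument: the paper states this corollary follows ``directly'' from Theorem \ref{thm3.34}, the implicit reduction being exactly your sign estimate, namely that $q^{(9)}_{n,k}=-\alpha_k a_{n,k}+\beta_k\ge 0$ by the definition of $Q_9$ while $a_{k,j}\le 0$ because $A$ is a Z-matrix, so the correction sum in (\ref{eqs3.32}) is nonpositive and the pointwise hypothesis $(1-\alpha_j)a_{n,j}+\beta_j\le 0$ already yields (\ref{eqs3.32}). This is the same reduction the paper spells out explicitly for the general preconditioner $Q_2$ just before Corollary \ref{coro3-9}, so no gap remains.
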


\begin{corollary}
Suppose that $\sum_{k = 1}^{n-1}(\alpha_ka_{n,k} - \beta_k)a_{k,n} < 1$ and
\begin{eqnarray}\label{eqs3.35}
(1-\alpha_j)a_{n,j} + \beta_j
 \left\{\begin{array}{ll}
 \le 0,\\
 \lesssim 0 \; \hbox{whenever} \; a_{n,j} < 0,
\end{array}\right.
\; j = 1,\cdots, n-1.
\end{eqnarray}
Then Theorem C is valid for $\nu = 9$,
provided one of the conditions ($i$) and ($ii$)
of Theorem \ref{thm3.35} is satisfied.
\end{corollary}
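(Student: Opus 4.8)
The plan is to reduce this corollary to Theorem~\ref{thm3.35}, of which it is precisely the special case obtained by discarding the off-diagonal cross terms from the bound~(\ref{eqs3.34}). First I would recall the entries of $A^{(9)}=P_9A$ and of $Q_9$ computed just above Theorem~\ref{thm3.33}: one has $q^{(9)}_{i,j}=0$ for $i\le n-1$, while $q^{(9)}_{n,k}=-\alpha_k a_{n,k}+\beta_k\ge 0$ for $k=1,\cdots,n-1$ by the defining constraint on $Q_9$. Since $A$ is an irreducible L-matrix, $a_{k,j}\le 0$ for all $k\ne j$, so each product $(-\alpha_k a_{n,k}+\beta_k)a_{k,j}$ is nonpositive.

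The key step is the pointwise estimate
\begin{eqnarray*}
(1-\alpha_j)a_{n,j}+\beta_j + \sum\limits_{{k = 1}\atop{k\not = j}}^{n-1}(-\alpha_k a_{n,k}+\beta_k)a_{k,j}
 \le (1-\alpha_j)a_{n,j}+\beta_j, \quad j = 1,\cdots,n-1,
\end{eqnarray*}
which follows at once from the sign facts above. Consequently the hypothesis~(\ref{eqs3.35}), which bounds the right-hand side, forces the left-hand side to satisfy~(\ref{eqs3.34}): for the plain ``$\le 0$'' part this is immediate, and for the ``$\lesssim 0$ whenever $a_{n,j}<0$'' part it follows because a quantity dominated by one that is $\lesssim 0$ is itself $\lesssim 0$. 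Since the spectral-radius hypothesis $\sum_{k=1}^{n-1}(\alpha_k a_{n,k}-\beta_k)a_{k,n}<1$ is identical in both statements, and conditions~($i$) and ($ii$) are inherited verbatim (they constrain only $\gamma$ and the signs of certain products, not the bound), every hypothesis of Theorem~\ref{thm3.35} is met, and Theorem~C follows for $\nu=9$.

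The only delicate point will be the bookkeeping of the symbol ``$\lesssim$'', which silently encodes whether $A^{(9)}$ is irreducible. Here this causes no trouble: the preconditioner $Q_9$, and hence the matrix $A^{(9)}$, is exactly the same object in the corollary and in Theorem~\ref{thm3.35}, so its irreducibility status is unchanged; thus the implication ``$X\le Y$ and $Y\lesssim 0$, hence $X\lesssim 0$'' is legitimate with one and the same interpretation of ``$\lesssim$'' throughout, and no separate irreducibility argument is needed.
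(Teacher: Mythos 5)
Your proposal is correct and is essentially the paper's own argument: the paper derives this corollary ``directly'' from Theorem~\ref{thm3.35} via exactly the monotonicity observation you make, namely that each term $(-\alpha_k a_{n,k}+\beta_k)a_{k,j}$ is nonpositive (since $-\alpha_k a_{n,k}+\beta_k\ge 0$ by the definition of $Q_9$ and $a_{k,j}\le 0$ for an L-matrix), so that hypothesis~(\ref{eqs3.35}) implies~(\ref{eqs3.34}); this is the same reduction the paper spells out for the $\nu=2$ case just before Corollary~\ref{coro3-9}. Your handling of the ``$\lesssim$'' convention is also sound, since $A^{(9)}$ is the same matrix in both statements and hence the symbol carries one fixed meaning throughout.
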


\begin{corollary}
Suppose that (\ref{eqs3.35}) holds.
Then Theorem D is valid for $\nu = 9$, provided one of the conditions ($i$) and ($ii$)
of Theorem \ref{thm3.35} is satisfied.
\end{corollary}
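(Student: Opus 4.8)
The plan is to deduce this corollary directly from Theorem \ref{thm3.36} by showing that the simpler hypothesis (\ref{eqs3.35}) already forces the more elaborate condition (\ref{eqs3.34}); once that implication is in hand, Theorem \ref{thm3.36} applies verbatim under exactly the same auxiliary hypothesis [one of (i), (ii) of Theorem \ref{thm3.35}]. First I would recall that, for the preconditioner $Q_9$, the off-diagonal entries of the last row of $A^{(9)} = P_9A$ are exactly
\begin{eqnarray*}
a^{(9)}_{n,j} = (1-\alpha_j)a_{n,j} + \beta_j + \sum\limits_{{k = 1}\atop{k\not = j}}^{n-1}(-\alpha_k a_{n,k} + \beta_k)a_{k,j}, \quad j = 1,\cdots, n-1,
\end{eqnarray*}
so that the left-hand side of (\ref{eqs3.34}) is precisely $a^{(9)}_{n,j}$, while the left-hand side of (\ref{eqs3.35}) is only its leading part $(1-\alpha_j)a_{n,j} + \beta_j$. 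Thus the two conditions differ solely by the sum $\sum_{k\not=j}(-\alpha_k a_{n,k}+\beta_k)a_{k,j}$, and the whole task reduces to controlling the sign of that sum.

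The key step is this estimate. Since $A$ is an irreducible nonsingular M-matrix it is a Z-matrix, whence $a_{k,j} \le 0$ for every $k \not= j$; moreover, by the defining constraints on $Q_9$ one has $q^{(9)}_{n,k} = -\alpha_k a_{n,k} + \beta_k \ge 0$ for $k = 1,\cdots,n-1$. Consequently each product $(-\alpha_k a_{n,k}+\beta_k)a_{k,j}$ is nonpositive, so the entire sum is $\le 0$. Adding this nonpositive quantity to the leading part can only decrease it, giving
\begin{eqnarray*}
a^{(9)}_{n,j} \le (1-\alpha_j)a_{n,j} + \beta_j, \quad j = 1,\cdots, n-1.
\end{eqnarray*}
Hence the ``$\le 0$'' branch of (\ref{eqs3.35}) immediately yields the ``$\le 0$'' branch of (\ref{eqs3.34}).

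It then remains to carry the ``$\lesssim 0$'' branch, which records the bookkeeping needed to keep $A^{(9)}$ irreducible, through the same monotone estimate; this is the only delicate point, and I expect it to be the main obstacle. When $a_{n,j} < 0$ and $(1-\alpha_j)a_{n,j}+\beta_j \lesssim 0$, the displayed inequality $a^{(9)}_{n,j} \le (1-\alpha_j)a_{n,j}+\beta_j$ shows $a^{(9)}_{n,j} \lesssim 0$ as well: in the reducible reading the leading part is strictly negative, so $a^{(9)}_{n,j}$ is strictly negative a fortiori, while in the irreducible reading the relation ``$\le 0$'' is all that is demanded. Thus (\ref{eqs3.35}) implies (\ref{eqs3.34}) in both branches without disturbing the irreducibility of $A^{(9)}$ that the strict comparison of Lemma \ref{lem1-13} (invoked inside Theorem \ref{thm3.36}) relies upon. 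Since one of the conditions (i), (ii) of Theorem \ref{thm3.35} is assumed, Theorem \ref{thm3.36} now yields $\rho({\mathscr L}^{(9)}_{\gamma,\omega}) < \rho({\mathscr L}_{\gamma,\omega}) < 1$, that is, Theorem D for $\nu = 9$.
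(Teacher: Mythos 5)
Your proposal is correct and follows essentially the same route as the paper, which obtains this corollary ``directly'' from Theorem \ref{thm3.36} via exactly the monotonicity you make explicit: since $-\alpha_k a_{n,k}+\beta_k \ge 0$ by the definition of $Q_9$ and $a_{k,j} \le 0$ because $A$ is a Z-matrix, the sum $\sum_{k \not= j}(-\alpha_k a_{n,k}+\beta_k)a_{k,j}$ is nonpositive, so (\ref{eqs3.35}) forces (\ref{eqs3.34}) in both its $\le 0$ and $\lesssim 0$ branches. Your treatment of the $\lesssim$ convention (strict negativity preserved in the reducible reading, plain $\le 0$ sufficing in the irreducible reading) is also the right bookkeeping, so Theorem \ref{thm3.36} applies and the corollary follows.
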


Similarly, different from $Q_6$, a special $Q$ is proposed in \cite{LL12} as
\begin{eqnarray*}
Q_{10} = \left(\begin{array}{ccccc}
0 && 0 & \quad \cdots \quad & 0\\
 -\alpha_2 a_{2,1}+\beta_2 && 0 & \cdots & 0\\
\vdots && \vdots & \ddots & \vdots\\
-\alpha_n a_{n,1}+\beta_n && 0 & \cdots & 0\\
\end{array}\right)
 \end{eqnarray*}
 with $\alpha_k \ge 0$, $-\alpha_k a_{k,1}+\beta_k \ge 0$, $k = 2, \cdots, n$, and
$\sum_{k = 2}^{n}(-\alpha_k a_{k,1}+\beta_k) \not = 0$.

It is given in \cite{DH14} for $\alpha_k = 1$,
$k = 2, \cdots, n$, in \cite{HS14} for $\alpha_k = \alpha \ge 0$, $\beta_k = \beta \ge 0$,
$k = 2, \cdots, n$ with $\alpha + \beta \not = 0$,
and in \cite{DH11} for the preconditioned SOR method, where $\alpha_k = 1$, $k = 2, \cdots, n$.

In this case, we have that
\begin{eqnarray*}
a^{(10)}_{i,j}  = \left\{\begin{array}{ll}
 a_{1,j}, & i =1, j = 1, \cdots, n,\\
 (1- \alpha_i)a_{i,1} + \beta_i, & i = 2,\cdots,n, j=1,\\
 a_{i,j} + (-\alpha_i a_{i,1} + \beta_i)a_{1,j}, & i,j = 2, \cdots, n,
 \end{array}\right.
\end{eqnarray*}
so that
\begin{eqnarray*}
 q^{(10)}_{i,1} & = & -\alpha_i a_{i,1} + \beta_i, \; i = 2, \cdots, n,\\
 q^{(10)}_{i,j} & = & q^{(10)}_{1,1} = 0, \; i = 1, \cdots, n, \; j = 2, \cdots, n
\end{eqnarray*}
and
\begin{eqnarray*}
\sum\limits_{{k = 1}\atop{k\not = i}}^nq^{(10)}_{i,k}a_{k,i} = \left\{\begin{array}{ll}
 0, & i = 1, \\
 (-\alpha_{i}a_{i,1} + \beta_i)a_{1,i}, & i = 2,\cdots,n.
\end{array} \right.
\end{eqnarray*}

Now, $\delta_{i,j}^{(1)}(1)$ reduces to
\begin{eqnarray*}
\delta_{i,j}^{(10)}(1) = \left\{\begin{array}{ll}
(\alpha_ia_{i,1} - \beta_i)a_{1,j}, &
 i = 2,\cdots,n, j = 2,\cdots,i;\\
 0, & otherwise.
\end{array}\right.
\end{eqnarray*}

Hence, by Corollaries \ref{coro3-5}-\ref{coro3-8}, we can prove the following comparison theorems directly.

\begin{theorem}\label{thm3.37}
Suppose that
$(1-\alpha_k)a_{k,1} + \beta_k \le 0$ and $(\alpha_k a_{k,1} - \beta_k)a_{1,k} < 1$,
 $k = 2, \cdots, n$.
Then Theorem A is valid for $\nu = 10$.
\end{theorem}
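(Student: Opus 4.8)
The plan is to recognize $Q_{10}$ as a particular instance of the general preconditioner $Q_1$ and then to verify directly that it satisfies the hypotheses of Corollary \ref{coro3-5}; once this verification is complete, Theorem A for $\nu=10$ follows at once from that corollary. Thus the entire argument reduces to checking the two conditions required there: the off-diagonal sign condition $q^{(10)}_{i,j}\le -a_{i,j}$ for $i\neq j$, and the diagonal positivity condition (\ref{eqs3.11}).

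First I would check the off-diagonal condition. Since $q^{(10)}_{i,j}=0$ whenever $j\ge 2$, for those entries the requirement $0\le -a_{i,j}$ is immediate from $A$ being an L-matrix. The only genuinely nonzero off-diagonal entries lie in the first column, where for $i=2,\dots,n$ the inequality $q^{(10)}_{i,1}=-\alpha_i a_{i,1}+\beta_i\le -a_{i,1}$ is precisely the first assumption $(1-\alpha_i)a_{i,1}+\beta_i\le 0$.

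Next I would verify (\ref{eqs3.11}), using the evaluation of $\sum_{k\neq i}q^{(10)}_{i,k}a_{k,i}$ already displayed just before the statement. That sum vanishes for $i=1$, so $1+0>0$ holds trivially, while for $i=2,\dots,n$ it equals $(-\alpha_i a_{i,1}+\beta_i)a_{1,i}$, and the condition $1+(-\alpha_i a_{i,1}+\beta_i)a_{1,i}>0$ is exactly the second assumption $(\alpha_i a_{i,1}-\beta_i)a_{1,i}<1$. Having confirmed both hypotheses, Corollary \ref{coro3-5} applies and yields Theorem A for $\nu=10$.

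I do not anticipate any real obstacle: the result is a direct specialization, and the only point requiring mild care is the bookkeeping of which indices $(i,j)$ carry a nonzero $q^{(10)}_{i,j}$, namely only the first column in rows $2,\dots,n$, so that the two displayed hypotheses align term by term with the off-diagonal and diagonal conditions of the corollary. No new analytic input beyond Corollary \ref{coro3-5} is needed.
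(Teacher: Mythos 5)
Your proposal is correct and matches the paper's intended argument: the paper states that Theorems \ref{thm3.37}--\ref{thm3.38} follow directly from Corollaries \ref{coro3-5}--\ref{coro3-8}, and your verification of the off-diagonal condition $q^{(10)}_{i,j}\le -a_{i,j}$ and of (\ref{eqs3.11}) via the displayed formula for $\sum_{k\neq i}q^{(10)}_{i,k}a_{k,i}$ is exactly the required specialization of Corollary \ref{coro3-5} to $Q_{10}$. No gaps.
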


\begin{theorem} \label{thm3.38}
Suppose that
$(1-\alpha_k)a_{k,1} + \beta_k \le 0$, $k = 2, \cdots, n$.
Then Theorem B is valid for $\nu = 10$.
\end{theorem}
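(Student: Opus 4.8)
The plan is to derive Theorem \ref{thm3.38} as a direct specialization of Corollary \ref{coro3-6}, recognizing $Q_{10}$ as a particular admissible choice of the general preconditioner $Q_1$. First I would record the two facts that make everything go through: since $A$ is a nonsingular M-matrix it is in particular a Z-matrix, so $a_{i,j}\le 0$ for all $i\neq j$; and the only nonzero off-diagonal entries of $Q_{10}$ sit in its first column, namely $q^{(10)}_{i,1} = -\alpha_i a_{i,1}+\beta_i$ for $i=2,\dots,n$, while $q^{(10)}_{i,j}=0$ for every other pair $i\neq j$.

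The heart of the argument is to verify the single hypothesis of Corollary \ref{coro3-6}, that $q^{(10)}_{i,j}\le -a_{i,j}$ for all $i\neq j$. I would split this into two cases. For $j=1$ and $i\in\{2,\dots,n\}$, the inequality $-\alpha_i a_{i,1}+\beta_i \le -a_{i,1}$ is, after transposing $a_{i,1}$, exactly the standing assumption $(1-\alpha_i)a_{i,1}+\beta_i \le 0$. For every remaining off-diagonal pair (including the entries of the first row), $q^{(10)}_{i,j}=0$, so $q^{(10)}_{i,j}\le -a_{i,j}$ reduces to $a_{i,j}\le 0$, which holds because $A$ is a Z-matrix. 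This exhausts all $i\neq j$.

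It then remains only to confirm that $Q_{10}$ is a legitimate $Q_1$: its diagonal vanishes, its off-diagonal entries $-\alpha_k a_{k,1}+\beta_k$ are nonnegative by hypothesis, and their sum $\sum_{k=2}^{n}(-\alpha_k a_{k,1}+\beta_k)$ is nonzero by the definition of $Q_{10}$, so that $Q_{10}\ge 0$ and $Q_{10}\neq 0$. With the hypothesis of Corollary \ref{coro3-6} now in force, that corollary yields $\rho({\mathscr L}^{(10)}_{\gamma,\omega}) \le \rho({\mathscr L}_{\gamma,\omega}) < 1$, which is precisely Theorem B for $\nu=10$. I do not expect any genuine obstacle here: the whole content is the bookkeeping of the case split above, and the only point requiring a moment's care is noticing that the first-column inequality collapses to the stated hypothesis, so that none of the finer machinery of Lemma \ref{lem3-2} or the explicit $\delta_{i,j}^{(10)}(1)$ formulas is needed, these being relevant only for the sharper Stein--Rosenberg statements (Theorems C and D) that follow.
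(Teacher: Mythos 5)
Your proposal is correct and follows exactly the paper's intended route: the paper derives Theorem \ref{thm3.38} directly from Corollary \ref{coro3-6} by viewing $Q_{10}$ as a special case of $Q_1$, and your verification that $q^{(10)}_{i,1}\le -a_{i,1}$ is equivalent to the stated hypothesis while all other off-diagonal entries of $Q_{10}$ vanish (so that $q^{(10)}_{i,j}\le -a_{i,j}$ follows from the Z-matrix property of $A$) is precisely the bookkeeping the paper leaves implicit. No gaps; the admissibility check of $Q_{10}$ as a $Q_1$-type preconditioner is also correctly included.
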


The results given by Theorems \ref{thm3.37} and \ref{thm3.38} include the corresponding
ones given in \cite[Theorem 3.3]{HS14}.

The result given by \ref{thm3.38} is better than the corresponding one given in \cite[Theorem 4.2]{LL12},
where the condition that $A$ is irreducible is unnecessary.

In order to give the Stein-Rosenberg Type Theorem II,
we give a lemma, whose proof is completely same as that of Lemma \ref{lem3-4}.

\begin{lemma}\label{lem3-6}
Let $A$ be an irreducible Z-matrix.
Assume that $0 < -\alpha_ka_{k,1}+ \beta_k \le -a_{k,1}$,
$k = 2, \cdots, n$ and $A^{(10)}$ has the block form
\begin{eqnarray*}
A^{(10)} = \left(\begin{array}{ccc}
1 & & \bar{a}_{1,2}\\
\bar{a}^{(10)}_{2,1} & & A^{(10)}_{2,2}
\end{array}\right), \; A^{(10)}_{2,2}\in{\mathscr R}^{(n-1)\times(n-1)}.
 \end{eqnarray*}
 Then

\begin{itemize}
\item[(i)] $A^{(10)}_{2,2}$ is an irreducible Z-matrix.

\item[(ii)] $A^{(10)}$ is an irreducible Z-matrix if and only if there exists
$k_0\in\{2, \cdots,  n\}$ such that $(1-\alpha_{k_0})a_{k_0,1} + \beta_{k_0}\not = 0$.
\end{itemize}
\end{lemma}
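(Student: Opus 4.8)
The plan is to follow the proof of Lemma \ref{lem3-4} almost verbatim, since $Q_{10}$ affects $A$ only through its first column in exactly the way $Q_6$ (with $r=2$) does, the one new feature being the additive shifts $\beta_k$. First I would record the entries of $A^{(10)}$: $a^{(10)}_{1,j}=a_{1,j}$, $a^{(10)}_{i,1}=(1-\alpha_i)a_{i,1}+\beta_i$ and $a^{(10)}_{i,j}=a_{i,j}+q^{(10)}_{i,1}a_{1,j}$ for $i,j\ge 2$, where $q^{(10)}_{i,1}=-\alpha_i a_{i,1}+\beta_i$. From the two-sided hypothesis $0<-\alpha_k a_{k,1}+\beta_k\le -a_{k,1}$ I read off two facts used throughout: the upper bound gives $(1-\alpha_k)a_{k,1}+\beta_k\le 0$, so the first column of $A^{(10)}$ is nonpositive off the diagonal, while the lower bound gives $q^{(10)}_{k,1}>0$. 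Together with $a_{1,j}\le 0$ these yield $a^{(10)}_{i,j}\le a_{i,j}\le 0$ for $i,j\ge 2$, $i\ne j$, so that $A^{(10)}$ and $A^{(10)}_{2,2}$ are Z-matrices.

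For (i) I would argue at the level of the directed graphs. Given $i,j\in\{2,\dots,n\}$ with $i\ne j$, irreducibility of $A$ furnishes a path $\sigma_{i,j}=(j_0,\dots,j_{l+1})\in G(A)$. If this path never visits vertex $1$, every edge survives because $a^{(10)}_{j_k,j_{k+1}}\le a_{j_k,j_{k+1}}<0$, so $\sigma_{i,j}\in G(A^{(10)}_{2,2})$. If it passes through vertex $1$ at some step, say $j_{s-1}\to 1\to j_{s+1}$, then $a_{1,j_{s+1}}<0$, and the rerouted entry satisfies $a^{(10)}_{j_{s-1},j_{s+1}}=a_{j_{s-1},j_{s+1}}+q^{(10)}_{j_{s-1},1}a_{1,j_{s+1}}\le q^{(10)}_{j_{s-1},1}a_{1,j_{s+1}}<0$, the strict sign coming precisely from $q^{(10)}_{j_{s-1},1}>0$. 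Deleting the detour through vertex $1$ then produces a path in $G(A^{(10)}_{2,2})$, which proves the irreducibility of $A^{(10)}_{2,2}$.

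For (ii), necessity is immediate: if $A^{(10)}$ is irreducible there must be an edge into vertex $1$, i.e.\ some $a^{(10)}_{k_0,1}=(1-\alpha_{k_0})a_{k_0,1}+\beta_{k_0}\ne 0$. For sufficiency I would assemble paths from (i): pairs $i,j\in\{2,\dots,n\}$ are handled by (i) directly; starting from vertex $1$, I first move out along an edge $1\to j_0$ (which exists since $a_{1,j_0}<0$ for some $j_0$ by irreducibility of $A$) and then reach any target through (i); and to reach vertex $1$ from an arbitrary $i$, I route to $k_0$ inside $G(A^{(10)}_{2,2})$ by (i) and append the edge $k_0\to 1$, which is live because $a^{(10)}_{k_0,1}\ne 0$.

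I expect no genuine obstacle, since the argument is structurally identical to Lemma \ref{lem3-4}; the one point needing care is making the sign bookkeeping rest on $q^{(10)}_{k,1}=-\alpha_k a_{k,1}+\beta_k>0$ rather than on the factorization $-\alpha_k a_{k,1}$ used before, and verifying that the additive terms $\beta_k$ do not destroy the Z-matrix structure of the first column. Both are guaranteed exactly by the two-sided hypothesis on $-\alpha_k a_{k,1}+\beta_k$, so the substitution is essentially cosmetic.
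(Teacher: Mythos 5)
Your proposal is correct and is exactly the route the paper intends: the paper proves Lemma \ref{lem3-6} by declaring its proof ``completely same as that of Lemma \ref{lem3-4}'', and your adaptation---replacing the factor $-\alpha_k a_{k,1}$ by $q^{(10)}_{k,1}=-\alpha_k a_{k,1}+\beta_k$, with the lower bound of the hypothesis giving $q^{(10)}_{k,1}>0$ for the path-shortcut estimate and the upper bound giving the Z-sign of the first column---is precisely the bookkeeping that substitution requires.
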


Using this lemma, similar to the proof of Theorem \ref{thm3.23} we prove the following theorem.

\begin{theorem}\label{thm3.39}
Suppose that
$(1-\alpha_k)a_{k,1} + \beta_k \le 0$
and $(\alpha_k a_{k,1} - \beta_k)a_{1,k} < 1$, $k = 2, \cdots, n$.
Then Theorem C is valid for $\nu = 10$,
provided one of the following conditions is satisfied:

\begin{itemize}
\item[(i)] $0 \le \gamma < 1$ and $(1-\alpha_k)a_{k,1} + \beta_k \lesssim 0$ whenever $a_{k,1} < 0$,
 $k = 2, \cdots, n$.

\item[(ii)] $\gamma = 1$ and $(1-\alpha_k)a_{k,1} + \beta_k \lesssim 0$ whenever $a_{k,1} < 0$,
 $k = 2, \cdots, n$.
And one of the following conditions holds:

\begin{itemize}
\item[($ii_1$)] There exist $i\in\{2, \cdots, n\}$ and $j\in\{2, \cdots, i\}$ such that
$(\alpha_{i}a_{i,1} - \beta_i)a_{1,j} > 0$.

\item[($ii_2$)] $-\alpha_{n}a_{n,1} + \beta_n > 0$.

\item[($ii_3$)] $a_{1,2} < 0$.
\end{itemize}

\item[(iii)] $-\alpha_ka_{k,1} + \beta_k > 0$, $k = 2, \cdots, n$.
\end{itemize}
\end{theorem}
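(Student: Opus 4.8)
The plan is to treat $Q_{10}$ as the special case of the general preconditioner $Q_1$ in which $q^{(10)}_{i,1}=-\alpha_i a_{i,1}+\beta_i$ for $i=2,\dots,n$ and all other entries vanish, and then invoke the machinery already built for $Q_1$. First I would verify that $A^{(10)}=P_{10}A$ is an L-matrix: the standing hypothesis $(1-\alpha_k)a_{k,1}+\beta_k\le 0$ is exactly $a^{(10)}_{k,1}\le 0$, the entries $a^{(10)}_{i,j}=a_{i,j}+(-\alpha_i a_{i,1}+\beta_i)a_{1,j}\le a_{i,j}\le 0$ for $i,j\ge 2$, $i\ne j$, and $(\alpha_k a_{k,1}-\beta_k)a_{1,k}<1$ gives $a^{(10)}_{k,k}>0$. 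Since $q^{(10)}_{k,1}=-\alpha_k a_{k,1}+\beta_k\le -a_{k,1}$ is precisely the hypothesis $q^{(1)}_{i,j}\le -a_{i,j}$ of Corollary~\ref{coro3-5}, conditions ($i$) and ($ii$) of the present theorem are just condition ($i$) of Corollary~\ref{coro3-7} written out for this $Q$; for ($ii$) one translates each of ($ii_1$)--($ii_3$) into the positivity of a suitable $\delta^{(10)}_{i,j}(1)$ through Lemma~\ref{lem3-3} (for instance $a_{1,2}<0$ together with some $q^{(10)}_{i,1}>0$ forces $q^{(10)}_{i,1}a_{1,2}<0$, hence $\delta^{(10)}_{i,2}(1)>0$), exactly as in Theorem~\ref{thm3.7}. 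These two cases thus need no new argument.

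The substance is condition ($iii$), where $q^{(10)}_{k,1}=-\alpha_k a_{k,1}+\beta_k>0$ for every $k$. Combined with the standing hypothesis this yields $0<-\alpha_k a_{k,1}+\beta_k\le -a_{k,1}$, so Lemma~\ref{lem3-6} applies and produces a dichotomy. If $A^{(10)}$ is irreducible, then for $0\le\gamma<1$ condition ($i$) of Theorem~\ref{thm3.3} holds at once, while for $\gamma=1$ the irreducibility of $A$ forces $a_{1,j}<0$ for some $j\ge 2$, whence $\delta^{(10)}_{i,j}(1)=-q^{(10)}_{i,1}a_{1,j}>0$ and condition ($ii_1$) of Theorem~\ref{thm3.3} is met; either way Theorem~C follows for $\nu=10$.

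The hard branch is the reducible one in Lemma~\ref{lem3-6}($ii$), that is $(1-\alpha_k)a_{k,1}+\beta_k=0$ for all $k$, in which $A^{(10)}$ acquires the block form $\left(\begin{array}{cc} 1 & \bar a_{1,2}\\ 0 & A^{(10)}_{2,2}\end{array}\right)$ with $A^{(10)}_{2,2}$ an irreducible L-matrix. Here I would copy the computation in the proof of Theorem~\ref{thm3.23}: reduce to $\gamma\le\omega$ by Lemma~\ref{lem3-1}, take by Lemma~\ref{lem1-8} the eigenvector $x=(\bar x_1,\bar x_2)^T\gg 0$ of $\mathscr L_{\gamma,\omega}$, and write the block AOR splitting of $A^{(10)}$ so that $\rho(\mathscr L^{(10)}_{\gamma,\omega})=\max\{1-\omega,\ \rho(\bar M_{\gamma,\omega}^{-1}\bar N_{\gamma,\omega})\}$. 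The analogue of (\ref{eqs3.28}) then reads
\[
\bar M_{\gamma,\omega}^{-1}\bar N_{\gamma,\omega}\bar x_2-\rho\bar x_2=(\rho-1)\bar M_{\gamma,\omega}^{-1}\big[(E_{2,2}+\gamma F_{2,2})\bar x_2+(1-\gamma)\bar q_{2,1}\bar x_1\big],
\]
where $E_{2,2},F_{2,2}$ arise from $Q_{10}U$ and $\bar q_{2,1}>0$ is the strictly positive first column of $Q_{10}$. Testing against a left Perron eigenvector of $\bar M_{\gamma,\omega}^{-1}\bar N_{\gamma,\omega}$ and using Lemma~\ref{lem1-8}($iii$) for $A^{(10)}_{2,2}$ to make the pairing strictly positive then delivers the trichotomy of Theorem~C.

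I expect the genuine obstacle to be the strictness of this comparison in the reducible branch, above all the Gauss--Seidel case $\gamma=1$, where the term $(1-\gamma)\bar q_{2,1}\bar x_1$ disappears and condition ($iii$) offers no auxiliary subconditions. What rescues the argument is that $q^{(10)}_{k,1}>0$ for \emph{all} $k$ together with irreducibility of $A$, which forces $a_{1,j}<0$ for some $j\ge 2$; the corresponding diagonal entry $(Q_{10}U)_{j,j}=q^{(10)}_{j,1}(-a_{1,j})>0$ makes $E_{2,2}\ne 0$, so $(E_{2,2}+F_{2,2})\bar x_2>0$ and the strict inequality emerges exactly as in the ($iv_1$) step of Theorem~\ref{thm3.23}. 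Confirming that this single positive diagonal entry suffices, so that the extra hypotheses needed in Theorem~\ref{thm3.23} become superfluous here, is the one place demanding care.
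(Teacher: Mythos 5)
Your proposal is correct and follows essentially the same route as the paper: conditions ($i$), ($ii_1$), ($ii_2$) are absorbed into Theorem \ref{thm3.3}, ($ii_3$) is reduced to ($ii_1$) via some $q^{(10)}_{i,1}>0$, and ($iii$) is handled by the dichotomy of Lemma \ref{lem3-6} together with a repetition of the block argument of ($iii$) and ($iv_1$) in Theorem \ref{thm3.23}, where irreducibility of $A$ supplies $a_{1,j}<0$ and hence a positive entry of $Q_{10}U$ making $(E_{2,2}+F_{2,2})\bar{x}_2>0$. The only cosmetic difference is that you extract this positive entry from the diagonal position $(j,j)$ while the paper uses the last row $\delta^{(10)}_{n,j}(1)>0$; both suffice for the strict pairing against the left Perron vector.
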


\begin{proof}
By Theorem \ref{thm3.3}, we just need to prove ($ii_3$) and ($iii$).

For ($ii_3$), by the definition of $Q_{10}$, there exists
$i_0\in\{2, \cdots, n\}$ such that $-\alpha_{i_0}a_{i_0,1} + \beta_{i_0} > 0$, so that
$(\alpha_{i_0}a_{i_0,1} - \beta_{i_0})a_{1,2} > 0$,
which implies that ($ii_1$) holds for $i = i_0$ and $j = 2$.

For ($iii$), since $A$ is irreducible, then there exists $j\in\{2, \cdots, n\}$ such that $a_{1,j} < 0$,
so that $\delta_{n,j}^{(10)}(1) = (\alpha_na_{n,1} - \beta_n)a_{1,j} > 0$. Using Lemma \ref{lem3-6},
the rest proof is completely similar to that of ($iii$) and ($iv_1$) in Theorem \ref{thm3.23}.
\end{proof}

When ($iii$) holds it includes \cite[Theorem 3.1]{DH14}.
The result for $\gamma = \omega$ is better that the corresponding ones
given by \cite[Theorem 3.1, Corollary 3.1]{DH11}.

\begin{theorem}\label{thm3.40}
Suppose that
$(1-\alpha_k)a_{k,1} + \beta_k \le 0$, $k = 2, \cdots, n$.
Then Theorem D is valid for $\nu = 10$,
provided one of the conditions ($i$), ($ii$) and ($iii$)
of Theorem \ref{thm3.39} is satisfied.
\end{theorem}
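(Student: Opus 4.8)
The plan is to derive Theorem D for $\nu = 10$ directly from the Stein--Rosenberg type result of Theorem \ref{thm3.39}, combined with the fact that a nonsingular M-matrix makes the unpreconditioned AOR iteration convergent. Since $A$ is an irreducible nonsingular M-matrix, it is in particular an irreducible L-matrix, so the machinery behind Theorem C applies; the extra strength we gain over Theorem C is the a priori bound $\rho({\mathscr L}_{\gamma,\omega}) < 1$, which will collapse the trichotomy of Theorem C to its first alternative.

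First I would check that the full hypothesis of Theorem \ref{thm3.39} is met, even though Theorem \ref{thm3.40} assumes less. The assumption $(1-\alpha_k)a_{k,1} + \beta_k \le 0$ gives $a^{(10)}_{k,1} \le 0$ for $k = 2, \cdots, n$; combined with $-\alpha_k a_{k,1} + \beta_k \ge 0$ from the definition of $Q_{10}$, together with $a_{1,j} \le 0$ and $a_{i,j} \le 0$ for $i \ne j$, the displayed formulas for $a^{(10)}_{i,j}$ show that every off-diagonal entry of $A^{(10)} = P_{10}A$ is nonpositive, so $A^{(10)}$ is a Z-matrix. Because $P_{10} = I + Q_{10} > 0$ is nonsingular and $A$ is a nonsingular M-matrix, Lemma \ref{lem1-10} forces $A^{(10)}$ to be a nonsingular M-matrix, hence an L-matrix with $a^{(10)}_{k,k} > 0$. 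This last inequality is exactly $(\alpha_k a_{k,1} - \beta_k)a_{1,k} < 1$, i.e. the one hypothesis of Theorem \ref{thm3.39} not stated in Theorem \ref{thm3.40}; so it holds automatically here, and the full hypothesis of Theorem \ref{thm3.39} is satisfied.

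Next I would invoke Theorem \ref{thm3.39}: under any of its conditions $(i)$, $(ii)$, $(iii)$ --- which are precisely the conditions assumed in Theorem \ref{thm3.40} --- Theorem C is valid for $\nu = 10$, so one of its three mutually exclusive relations holds. To eliminate the last two, I would use that $A$ is a nonsingular M-matrix: its AOR splitting $A = M_{\gamma,\omega} - N_{\gamma,\omega}$ is weak regular, so by Lemma \ref{lem1-5} the AOR method converges, giving $\rho({\mathscr L}_{\gamma,\omega}) < 1$. This rules out the relations $(ii)$ and $(iii)$ of Theorem C and leaves only $\rho({\mathscr L}^{(10)}_{\gamma,\omega}) < \rho({\mathscr L}_{\gamma,\omega}) < 1$, which is the assertion of Theorem D.

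The argument is essentially a deduction rather than a fresh calculation, so I do not anticipate a serious obstacle. The one point requiring care --- and the only place the specific structure of $Q_{10}$ enters --- is verifying that $A^{(10)}$ is a Z-matrix and then that the auxiliary diagonal condition of Theorem \ref{thm3.39} is \emph{implied} by the M-matrix property through Lemma \ref{lem1-10}, rather than needing to be assumed separately.
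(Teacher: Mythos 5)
Your proof is correct and takes essentially the route the paper intends (and leaves implicit) for this theorem: the hypothesis makes Theorem B valid for $\nu = 10$ (Theorem \ref{thm3.38}), so $\rho({\mathscr L}_{\gamma,\omega}) < 1$, and the strict trichotomy of Theorem C supplied by Theorem \ref{thm3.39} then collapses to its first alternative. Your observation that the diagonal condition $(\alpha_k a_{k,1} - \beta_k)a_{1,k} < 1$ required by Theorem \ref{thm3.39} is automatic here --- via Lemma \ref{lem1-10}, since $A^{(10)}$ is a Z-matrix and $A$ is a nonsingular M-matrix --- is exactly the detail needed to make the appeal to Theorem \ref{thm3.39} legitimate.
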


As a special case of $Q_{9}$ and $Q_{10}$, $Q$ is proposed in \cite{WL09} as
\begin{eqnarray*}
Q_{11} = \left(\begin{array}{ccccccc}
0      && 0      && \cdots && 0   \\
\vdots && \vdots &&  \cdots && \vdots  \\
0      && 0      &&\cdots && 0   \\
-\frac{1}{\alpha}a_{n,1} - \beta && 0 && \cdots && 0
\end{array}\right)
 \end{eqnarray*}
with $\alpha > 0$ and $a_{n,1}/\alpha + \beta < 0$. It is discussed in \cite{LW042}
for $\alpha = 1$.

Now, $\delta_{i,j}^{(10)}(1)$ reduces to
\begin{eqnarray*}
\delta_{i,j}^{(11)}(1) = \left\{\begin{array}{ll}
 (\frac{1}{\alpha}a_{n,1} + \beta)a_{1,j}, \; & i = n, j = 2, \cdots, n; \\
 0, & otherwise.
\end{array}\right.
\end{eqnarray*}

By Theorems \ref{thm3.37} and \ref{thm3.38}, the following comparison results are obtained, directly.

\begin{theorem}\label{thm3.41}
Suppose that $\beta \ge (1 - 1/\alpha)a_{n,1}$
 and $(a_{n,1}/\alpha + \beta)a_{1,n} < 1$.
Then Theorem A is valid for $\nu = 11$.
\end{theorem}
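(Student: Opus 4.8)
The plan is to exploit the fact, already signalled in the text immediately preceding the statement, that $Q_{11}$ is nothing but the special instance of $Q_{10}$ obtained from the parameter choice $\alpha_n = 1/\alpha$, $\beta_n = -\beta$, together with $\alpha_k = \beta_k = 0$ for $k = 2, \dots, n-1$. With these choices $q^{(10)}_{n,1} = -\alpha_n a_{n,1} + \beta_n = -\frac{1}{\alpha}a_{n,1} - \beta$ reproduces exactly the single nonzero entry of $Q_{11}$, while every other first-column entry vanishes. Thus the entire burden of the proof reduces to checking that the hypotheses of Theorem \ref{thm3.37} hold under the present assumptions, after which that theorem delivers the validity of Theorem A for $\nu = 11$ directly.

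First I would verify that this parameter choice is admissible for the construction of $Q_{10}$, that is, that $-\alpha_k a_{k,1} + \beta_k \ge 0$ for $k = 2, \dots, n$ and that $\sum_{k=2}^{n}(-\alpha_k a_{k,1} + \beta_k) \neq 0$. For $2 \le k \le n-1$ the quantity is simply $0 \ge 0$, while for $k = n$ it equals $-\frac{1}{\alpha}a_{n,1} - \beta$, which is positive precisely because of the standing assumption $a_{n,1}/\alpha + \beta < 0$ attached to the definition of $Q_{11}$. The same computation shows that the sum equals $-\frac{1}{\alpha}a_{n,1} - \beta > 0$, so $Q_{11}$ is indeed a legitimate instance of $Q_{10}$.

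Next I would translate the two hypotheses of Theorem \ref{thm3.37}, namely $(1-\alpha_k)a_{k,1} + \beta_k \le 0$ and $(\alpha_k a_{k,1} - \beta_k)a_{1,k} < 1$ for $k = 2, \dots, n$. For $2 \le k \le n-1$, with $\alpha_k = \beta_k = 0$, the first inequality becomes $a_{k,1} \le 0$, which is automatic since $A$ is a Z-matrix, and the second becomes $0 < 1$; both therefore hold trivially. For $k = n$, with $\alpha_n = 1/\alpha$ and $\beta_n = -\beta$, the first inequality reads $(1 - 1/\alpha)a_{n,1} - \beta \le 0$, i.e. $\beta \ge (1 - 1/\alpha)a_{n,1}$, and the second reads $(\frac{1}{\alpha}a_{n,1} + \beta)a_{1,n} < 1$. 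These are precisely the two hypotheses in the statement, so the conditions of Theorem \ref{thm3.37} are met and the conclusion follows at once.

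I do not anticipate a genuine obstacle, since the argument is pure specialization combined with sign bookkeeping. The one point demanding care is that the parameter $\beta$ of $Q_{11}$ corresponds to $\beta_n = -\beta$ in the $Q_{10}$ framework, a sign flip; it is exactly this flip that makes the hypothesis $\beta \ge (1 - 1/\alpha)a_{n,1}$ emerge from $(1-\alpha_n)a_{n,1} + \beta_n \le 0$ with the correct orientation. Verifying this correspondence is effectively the whole content of the proof.
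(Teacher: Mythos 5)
Your proposal is correct and follows exactly the paper's route: the paper obtains Theorem \ref{thm3.41} ``directly'' from Theorem \ref{thm3.37} by viewing $Q_{11}$ as the instance of $Q_{10}$ with $\alpha_n = 1/\alpha$, $\beta_n = -\beta$ and $\alpha_k = \beta_k = 0$ otherwise, which is precisely your specialization. Your write-up merely makes explicit the admissibility check and the sign bookkeeping that the paper leaves implicit, and both are carried out correctly.
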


\begin{theorem}
Suppose that $\beta \ge (1 - 1/\alpha)a_{n,1}$.
Then Theorem B is valid for $\nu = 11$.
\end{theorem}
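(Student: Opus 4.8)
The plan is to recognize $Q_{11}$ as a special case of the first–column preconditioner $Q_{10}$ and then appeal to Theorem~\ref{thm3.38}, exactly as the sentence preceding the statement announces. Concretely, I would match parameters by setting $\alpha_k=0$ and $\beta_k=0$ for $k=2,\dots,n-1$, and $\alpha_n=1/\alpha$, $\beta_n=-\beta$. With these choices the only nonzero first–column entry of $Q_{10}$ is $q^{(10)}_{n,1}=-\alpha_n a_{n,1}+\beta_n=-\tfrac{1}{\alpha}a_{n,1}-\beta$, which is precisely the $(n,1)$ entry of $Q_{11}$, while all other first–column entries vanish. Hence $Q_{11}$ is literally the matrix $Q_{10}$ for this parameter choice, and in particular $A^{(11)}=A^{(10)}$.

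Next I would confirm that the admissibility conditions built into the definition of $Q_{10}$ hold, namely $\alpha_k\ge 0$ and $-\alpha_k a_{k,1}+\beta_k\ge 0$ for $k=2,\dots,n$. For $k=2,\dots,n-1$ both quantities are $0$; for $k=n$ we have $\alpha_n=1/\alpha>0$ (since $\alpha>0$) and $-\alpha_n a_{n,1}+\beta_n=-a_{n,1}/\alpha-\beta>0$, which is exactly the standing requirement $a_{n,1}/\alpha+\beta<0$ imposed in the definition of $Q_{11}$. Thus $Q_{11}$ is a legitimate instance of $Q_{10}$, and the hypothesis $\sum_{k=2}^n(-\alpha_k a_{k,1}+\beta_k)\neq 0$ also holds because this sum equals $-a_{n,1}/\alpha-\beta\neq 0$.

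The core of the argument is to verify the single inequality required by Theorem~\ref{thm3.38}, i.e. $(1-\alpha_k)a_{k,1}+\beta_k\le 0$ for $k=2,\dots,n$. For $k=2,\dots,n-1$ this collapses to $(1-0)a_{k,1}+0=a_{k,1}\le 0$, which holds automatically since $A$ is a nonsingular M-matrix, hence an L-matrix, so its off-diagonal entries are nonpositive. For $k=n$ the inequality reads $(1-1/\alpha)a_{n,1}-\beta\le 0$, that is $\beta\ge(1-1/\alpha)a_{n,1}$, which is precisely the hypothesis of the present theorem. With every hypothesis of Theorem~\ref{thm3.38} now in force, its conclusion yields that Theorem~B holds for this data, i.e. for $\nu=11$.

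I do not expect a genuine obstacle here: once the parameter identification is fixed, the result is a one-line invocation of Theorem~\ref{thm3.38}. The only point needing a little care is the bookkeeping on rows $2,\dots,n-1$: one must notice that the vanishing of those first–column entries makes the Theorem~\ref{thm3.38} inequality reduce there to the trivially valid $a_{k,1}\le 0$, so that no assumption beyond $\beta\ge(1-1/\alpha)a_{n,1}$ is required.
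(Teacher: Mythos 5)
Your proposal is correct and follows the paper's own route: the paper obtains this theorem "directly" from Theorem~\ref{thm3.38} by viewing $Q_{11}$ as the special case of $Q_{10}$ with $\alpha_k=\beta_k=0$ ($k=2,\dots,n-1$), $\alpha_n=1/\alpha$, $\beta_n=-\beta$, which is exactly your parameter identification. Your verification that the admissibility conditions of $Q_{10}$ hold and that the hypothesis $(1-\alpha_k)a_{k,1}+\beta_k\le 0$ reduces to $a_{k,1}\le 0$ for $k<n$ and to $\beta\ge(1-1/\alpha)a_{n,1}$ for $k=n$ simply fills in the bookkeeping the paper leaves implicit.
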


In order to give the Stein-Rosenberg Type Theorem II,
completely similar to Lemma \ref{lem3-5}, we can prove the following lemma.

\begin{lemma} \label{lem3-7}
Let $A$ be an irreducible Z-matrix.
Assume that $\beta \ge (1 - 1/\alpha)a_{n,1}$ and $A^{(11)}$ has the block form
\begin{eqnarray*}
A^{(11)} = \left(\begin{array}{ccc}
1 & & \bar{a}_{1,2}\\
\bar{a}^{(11)}_{2,1} & & A^{(11)}_{2,2}
\end{array}\right), \; A^{(11)}_{2,2}\in{\mathscr R}^{(n-1)\times(n-1)}.
 \end{eqnarray*}
 Then one of the following two mutually exclusive relations holds:

\begin{itemize}
\item[(i)] $A^{(11)}$ is an irreducible Z-matrix.

\item[(ii)] $A^{(11)}$ is a reducible Z-matrix,
but $A^{(11)}_{2,2}$ is an irreducible Z-matrix and
$a_{k,1} = a^{(11)}_{k,1} = a^{(11)}_{n,1} = 0$, $k = 2,\cdots,n-1$.
\end{itemize}
\end{lemma}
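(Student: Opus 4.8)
The plan is to transcribe the proof of Lemma~\ref{lem3-5} almost verbatim, since $Q_{11}$ is the specialization of $Q_7$ to $r=n$, $s=1$ in which the scalar $-a_{n,1}/\alpha$ is replaced by $-a_{n,1}/\alpha-\beta$. First I would write out the entries of $A^{(11)}=(I+Q_{11})A$. Because $Q_{11}$ has the single nonzero entry $q^{(11)}_{n,1}=-a_{n,1}/\alpha-\beta$, which is positive by the defining condition $a_{n,1}/\alpha+\beta<0$, rows $1,\dots,n-1$ are untouched, $a^{(11)}_{i,j}=a_{i,j}$, while in the last row
\[
a^{(11)}_{n,1}=\left(1-\tfrac1\alpha\right)a_{n,1}-\beta, \qquad a^{(11)}_{n,j}=a_{n,j}+q^{(11)}_{n,1}a_{1,j}\;(2\le j\le n-1).
\]
The hypothesis $\beta\ge(1-1/\alpha)a_{n,1}$ gives $a^{(11)}_{n,1}\le0$, and $q^{(11)}_{n,1}>0$ together with $a_{1,j}\le0$ gives $a^{(11)}_{n,j}\le a_{n,j}\le0$; hence $A^{(11)}$ and $A^{(11)}_{2,2}$ are Z-matrices, which is the common assertion of (i) and (ii).

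For the dichotomy I would introduce, exactly as in Lemma~\ref{lem3-5}, the auxiliary matrix $\hat A$ obtained from $A$ by zeroing its $(n,1)$ entry. The key observation is that a path of $G(A)$ can fail to survive in $G(A^{(11)})$ only if it traverses the edge $n\to1$, since every other edge is preserved and the off-diagonal last-row entries only become more negative. Consequently, if $\hat A$ is irreducible then $A^{(11)}$ is irreducible; and if the strict inequality $\beta>(1-1/\alpha)a_{n,1}$ holds, so that $a^{(11)}_{n,1}<0$, the edge $n\to1$ persists and again $A^{(11)}$ is irreducible. This produces case~(i).

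The substance lies in case~(ii): suppose $A^{(11)}$ is reducible. Then $\hat A$ must be reducible and $a^{(11)}_{n,1}=0$, i.e.\ $\beta=(1-1/\alpha)a_{n,1}$. Choosing a pair $i^*,j^*$ for which no path runs from $i^*$ to $j^*$ in $G(A^{(11)})$, I would run the rerouting argument: any forbidden path must use the edge $n\to1$, and whenever the target satisfies $j^*>1$ the sub-path $\dots,n,1,i_{\mu+2},\dots$ can be shortcut through $a^{(11)}_{n,i_{\mu+2}}=a_{n,i_{\mu+2}}-(a_{n,1}/\alpha+\beta)a_{1,i_{\mu+2}}<0$, contradicting reducibility; hence $j^*=1$. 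Feeding this back forces $a^{(11)}_{k,1}=a_{k,1}=0$ for $k=2,\dots,n-1$ (otherwise a path ending at $1$ could be completed), which is precisely the extra conclusion of (ii). The irreducibility of $A^{(11)}_{2,2}$ is then established by the same block path argument: any path of $G(A)$ joining two indices in $\{2,\dots,n\}$ that passes through vertex $1$ must enter it from vertex $n$ (since $a_{k,1}=0$ for $2\le k\le n-1$) and can be rerouted using $a^{(11)}_{n,\tau_{s+1}}<0$.

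The main obstacle I anticipate is the bookkeeping in the rerouting steps: one must verify carefully that the sole way to destroy strong connectivity is through the single edge $n\to1$, and that the vanishing $a_{k,1}=0$ for the intermediate indices is genuinely forced rather than merely assumed. Beyond this, no new ideas are needed, the whole argument being a transcription of Lemma~\ref{lem3-5} with the indicated scalar substitution; the two relations are mutually exclusive simply because (i) asserts irreducibility of $A^{(11)}$ and (ii) asserts its reducibility.
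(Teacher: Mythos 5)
Your proposal is correct and follows exactly the route the paper intends: the paper itself gives no separate argument for Lemma \ref{lem3-7}, stating only that it is ``completely similar to Lemma \ref{lem3-5}'', and your transcription of that proof with $-a_{n,1}/\alpha$ replaced by $-a_{n,1}/\alpha-\beta$ (so that $a^{(11)}_{n,1}=(1-1/\alpha)a_{n,1}-\beta\le 0$ under the hypothesis, with reducibility forcing equality) is precisely that adaptation, including the rerouting through $a^{(11)}_{n,j}\le -(a_{n,1}/\alpha+\beta)a_{1,j}<0$ and the conclusion $a_{k,1}=0$ for $k=2,\cdots,n-1$.
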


Using this lemma, similar to the proof of Theorem \ref{thm3.27}, we prove the following theorem.

\begin{theorem}
Suppose that
$\beta \ge (1 - 1/\alpha)a_{n,1}$ and $(a_{n,1}/\alpha + \beta)a_{1,n} < 1$.
Then Theorem C is valid for $\nu = 11$.
\end{theorem}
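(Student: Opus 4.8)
The plan is to mirror the proof of Theorem \ref{thm3.27} (itself modelled on Theorem \ref{thm3.23}), exploiting that $Q_{11}$ has the single nonzero entry $q^{(11)}_{n,1} = -(a_{n,1}/\alpha+\beta)$, which is positive since $a_{n,1}/\alpha+\beta<0$. First I would check that $A^{(11)}=P_{11}A$ is an L-matrix: only the $n$th row is altered, and for $j\neq n$ we have $a^{(11)}_{n,j}=a_{n,j}+q^{(11)}_{n,1}a_{1,j}\le a_{n,j}\le 0$ because $q^{(11)}_{n,1}>0$ and $a_{1,j}\le 0$; the hypothesis $\beta\ge(1-1/\alpha)a_{n,1}$ gives $a^{(11)}_{n,1}=(1-1/\alpha)a_{n,1}-\beta\le 0$, while $(a_{n,1}/\alpha+\beta)a_{1,n}<1$ gives $a^{(11)}_{n,n}=1-(a_{n,1}/\alpha+\beta)a_{1,n}>0$. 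By Lemma \ref{lem3-1} I may assume $0\le\gamma\le\omega\le 1$, and then I invoke the dichotomy of Lemma \ref{lem3-7}.

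If $A^{(11)}$ is irreducible (case (i) of Lemma \ref{lem3-7}), it is an irreducible L-matrix and Theorem \ref{thm3.3} applies directly: for $0\le\gamma<1$ its condition (i) holds, and for $\gamma=1$ its condition (ii) holds via sub-case ($ii_2$), since $q^{(11)}_{n,1}>0$. Hence Theorem C follows in this case. Otherwise Lemma \ref{lem3-7}(ii) yields $a_{k,1}=a^{(11)}_{k,1}=a^{(11)}_{n,1}=0$ with $A^{(11)}_{2,2}$ an irreducible Z-matrix, so
\[
A^{(11)} = \begin{pmatrix} 1 & \bar a_{1,2} \\ 0 & A^{(11)}_{2,2}\end{pmatrix},
\]
where $A^{(11)}_{2,2}$ is an irreducible L-matrix. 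Since its first column vanishes below the diagonal, the AOR splitting is block triangular and
\[
{\mathscr L}^{(11)}_{\gamma,\omega} = \begin{pmatrix} 1-\omega & -\omega\bar a_{1,2} \\ 0 & \bar M_{\gamma,\omega}^{-1}\bar N_{\gamma,\omega}\end{pmatrix}, \qquad \rho({\mathscr L}^{(11)}_{\gamma,\omega})=\max\{1-\omega,\;\rho(\bar M_{\gamma,\omega}^{-1}\bar N_{\gamma,\omega})\}.
\]

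Writing $\rho=\rho({\mathscr L}_{\gamma,\omega})$ with its eigenvector $x=(\bar x_1,\bar x_2)^T\gg 0$ from Lemma \ref{lem1-8}, I read off from Lemma \ref{lem3-2} the scalar identity $(1-\omega-\rho)\bar x_1=\omega\bar a_{1,2}\bar x_2$ and the vector identity
\[
\bar M_{\gamma,\omega}^{-1}\bar N_{\gamma,\omega}\bar x_2-\rho\bar x_2 = (\rho-1)\bar M_{\gamma,\omega}^{-1}\bigl[(E_{2,2}+\gamma F_{2,2})\bar x_2+(1-\gamma)\bar q_{2,1}\bar x_1\bigr],
\]
where $E_{2,2},F_{2,2}$ arise from $Q_{11}U$ and $\bar q_{2,1}>0$ is the nonzero part of the first column of $Q_{11}$. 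Because $A$ is irreducible, $\bar a_{1,2}\bar x_2<0$, so $1-\omega<\rho$. For $0\le\gamma<1$, $\bar M_{\gamma,\omega}^{-1}\bar N_{\gamma,\omega}$ is irreducible and $\bar M_{\gamma,\omega}^{-1}\bar q_{2,1}\bar x_1>0$, so Lemma \ref{lem1-2} forces $\rho(\bar M_{\gamma,\omega}^{-1}\bar N_{\gamma,\omega})$ to lie strictly below, equal to, or strictly above $\rho$ according as $\rho<1$, $\rho=1$, or $\rho>1$; for $\gamma=1$ (hence $\omega=1$) I instead use the left Perron eigenvector with $\bar y^T\bar M_{1,1}^{-1}\gg 0$ (Lemma \ref{lem1-8}(iii)) together with $(E_{2,2}+F_{2,2})\bar x_2>0$, which holds purely because row $1$ of $A$ has a negative off-diagonal entry, to obtain the same trichotomy. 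Combining these with $1-\omega<\rho$ — and, in the boundary case $\rho=1$, with Theorem \ref{thm3.41} (Theorem A for $\nu=11$, giving $\rho=1\iff\rho({\mathscr L}^{(11)}_{\gamma,\omega})=1$) — reassembles $\rho({\mathscr L}^{(11)}_{\gamma,\omega})$ from its two diagonal blocks and delivers the strict comparison of Theorem C.

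The hard part will be the reducible (boundary) case: at $\gamma=1$ the term $(1-\gamma)\bar q_{2,1}\bar x_1$ vanishes and $\bar M_{1,1}^{-1}\bar N_{1,1}$ is no longer irreducible, so I cannot apply Lemma \ref{lem1-2} to the iteration matrix itself. I must pass to the left eigenvector and argue that $(E_{2,2}+F_{2,2})\bar x_2$ is genuinely semipositive using only the irreducibility of $A$ and of $A^{(11)}_{2,2}$; the remaining care is to recombine the spectral radius of the block–triangular ${\mathscr L}^{(11)}_{\gamma,\omega}$ from its $1-\omega$ block and its $\bar M_{\gamma,\omega}^{-1}\bar N_{\gamma,\omega}$ block, which is where the inequality $1-\omega<\rho$ is indispensable.
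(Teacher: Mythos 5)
Your proposal is correct and takes essentially the same route as the paper: the irreducible case via conditions ($i$) and ($ii_2$) of Theorem \ref{thm3.3} (using $q^{(11)}_{n,1}>0$), and the reducible case via the dichotomy of Lemma \ref{lem3-7} followed by the block-triangular argument of Theorem \ref{thm3.23} (($iii$) and ($iv_1$)), which is exactly what the paper's proof invokes as ``completely similar''; your write-up merely makes explicit the identities $(1-\omega-\rho)\bar x_1=\omega\bar a_{1,2}\bar x_2$ and the analogue of (\ref{eqs3.28}) that the paper leaves implicit. The only cosmetic slip is that your L-matrix check ``for $j\neq n$'' should exclude $j=1$ as well (since $a_{1,1}=1>0$), which you in effect do by treating $a^{(11)}_{n,1}$ separately via the hypothesis $\beta\ge(1-1/\alpha)a_{n,1}$.
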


\begin{proof} Since $(a_{n,1}/\alpha + \beta)a_{1,n} < 1$, then, by Lemma \ref{lem3-7},
$A^{(11)}$ is an L-matrix.

If $A^{(11)}$ is irreducible then it follows that by ($i$) and ($ii_2$) in Theorem \ref{thm3.3}
that Theorem C is valid, since $q^{(11)}_{n,1} = -a_{n,1}/\alpha - \beta > 0$.

For the case when $A^{(11)}$ is reducible, then the irreducibility of $A$ ensures that
there exists $j\in\{2, \cdots, n\}$ such that $a_{1,j} < 0$ so that $-(a_{n,1}/\alpha + \beta)a_{1,j} > 0$.
Now, using Lemma \ref{lem3-7}, the rest proof
is completely similar to that of ($iii$) and ($iv_1$) in Theorem \ref{thm3.23}.
\end{proof}

The result is better than \cite[Theorem 1]{WL09}, where the condition
$0 < a_{1,n}a_{n,1} < \alpha (\alpha > 1)$ is unnecessary.
For $\alpha = 1$, it also better than the corresponding one given by
\cite[Theorem 8, Corollaries 10, 11]{LW042}.

\begin{theorem}
Suppose that $\beta \ge (1 - 1/\alpha)a_{n,1}$.
Then Theorem D is valid for $\nu = 11$.
\end{theorem}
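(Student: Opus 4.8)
The plan is to deduce this Theorem~D statement from the Stein--Rosenberg Type Theorem~II result for $\nu=11$ proved just above, exactly as the $\nu$-indexed Theorem~D statements are obtained from their Theorem~C counterparts throughout this section. An irreducible nonsingular M-matrix is in particular an irreducible L-matrix, so the earlier result supplies the strict trichotomy for $\rho({\mathscr L}^{(11)}_{\gamma,\omega})$ versus $\rho({\mathscr L}_{\gamma,\omega})$ once its hypotheses are verified; the real work is to show that, in the M-matrix setting, the single assumption $\beta\ge(1-1/\alpha)a_{n,1}$ already forces the second hypothesis $(a_{n,1}/\alpha+\beta)a_{1,n}<1$, and then to pin down which branch of the trichotomy occurs.

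First I would record that $Q_{11}$ has its only nonzero entry in position $(n,1)$, equal to $q^{(11)}_{n,1}=-a_{n,1}/\alpha-\beta>0$, so only the last row of $A^{(11)}=P_{11}A$ differs from that of $A$. Using (\ref{eqs3.2}) one finds $a^{(11)}_{n,1}=(1-1/\alpha)a_{n,1}-\beta$, which is $\le 0$ precisely by the hypothesis $\beta\ge(1-1/\alpha)a_{n,1}$, while $a^{(11)}_{n,j}=a_{n,j}+q^{(11)}_{n,1}a_{1,j}\le 0$ for $j=2,\dots,n-1$ because $q^{(11)}_{n,1}>0$ and $a_{1,j}\le 0$. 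Hence $A^{(11)}$ is a Z-matrix. Since $P_{11}=I+Q_{11}>0$ is nonsingular and $A,A^{(11)}$ are both Z-matrices, Lemma~\ref{lem1-10} upgrades $A^{(11)}$ to a nonsingular M-matrix, and therefore to an L-matrix. Its last diagonal entry $a^{(11)}_{n,n}=1-(a_{n,1}/\alpha+\beta)a_{1,n}$ is then positive, i.e.\ $(a_{n,1}/\alpha+\beta)a_{1,n}<1$. Thus both hypotheses of the $\nu=11$ Theorem~C result hold, and that result yields the mutually exclusive trichotomy.

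To finish, I would invoke convergence: because $A$ is a nonsingular M-matrix, its AOR splitting (\ref{eqn1.4}) is weak regular, so by Lemma~\ref{lem1-5} it is convergent and $\rho({\mathscr L}_{\gamma,\omega})<1$. This rules out the two branches with $\rho({\mathscr L}_{\gamma,\omega})\ge 1$ and leaves only $\rho({\mathscr L}^{(11)}_{\gamma,\omega})<\rho({\mathscr L}_{\gamma,\omega})<1$, which is precisely the assertion of Theorem~D. As throughout the section, Lemma~\ref{lem3-1} lets one restrict attention to the range $\gamma\le\omega$.

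The only genuinely non-routine step is the middle one: realizing that Lemma~\ref{lem1-10} converts the Z-matrix $A^{(11)}$ into a nonsingular M-matrix and thereby supplies the auxiliary inequality $(a_{n,1}/\alpha+\beta)a_{1,n}<1$ for free, so that the extra hypothesis appearing in the Theorem~C statement need not be imposed here. Everything else is bookkeeping with the single nonzero entry of $Q_{11}$ together with appeals to the already-established $\nu=11$ Theorem~C result and to Lemma~\ref{lem1-5}.
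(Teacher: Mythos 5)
Your proposal is correct and follows exactly the route the paper intends: the paper states this theorem without proof, relying on its standard pattern (as in Theorem~\ref{thm3.2} and Theorem~\ref{thm3.4}) of combining Lemma~\ref{lem1-10} with the corresponding Theorem~C result and the convergence $\rho({\mathscr L}_{\gamma,\omega})<1$ from Lemma~\ref{lem1-5}. In particular, your key observation---that in the M-matrix setting the Z-matrix property of $A^{(11)}$ plus Lemma~\ref{lem1-10} make the auxiliary hypothesis $(a_{n,1}/\alpha+\beta)a_{1,n}<1$ automatic, which is why it can be dropped from the Theorem~D (and Theorem~B) statements---is precisely the mechanism the paper uses elsewhere, so your write-up faithfully supplies the omitted proof.
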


By the definition of $Q_{11}$, $a_{n,1}/\alpha + \beta < 0$. While
in the comparison theorems above we need the condition
$\beta \ge (1 - 1/\alpha)a_{n,1}$. Hence it implies $a_{n,1}<0$.

\subsection{Upper triangular preconditioners}

Corresponding to $Q_3$, we let
\begin{eqnarray*}
\alpha_{i,j} = 0, \; i = 1, \cdots, n, j \le i.
\end{eqnarray*}
Then $Q_2$ reduces to
\begin{eqnarray*}
Q_{12} = \left(\begin{array}{ccccc}
0 & -\alpha_{1,2}a_{1,2} & -\alpha_{1,3}a_{1,3} & \cdots & -\alpha_{1,n}a_{1,n}\\
0 & 0 & -\alpha_{2,3}a_{2,3} & \cdots & -\alpha_{2,n}a_{2,n}\\
\vdots & \vdots & \ddots & \ddots & \vdots\\
0 & 0 & 0 & \cdots & -\alpha_{n-1,n}a_{n-1,n} \\
0 & 0 & 0 & \cdots & 0
\end{array}\right)
 \end{eqnarray*}
with $\alpha_{i,j} \ge 0$, $i = 1, \cdots, n-1$, $j > i$, and
\begin{eqnarray*}
\sum\limits_{i = 1}^{n-1}\sum\limits_{j = i+1}^n\alpha_{i,j}a_{i,j} \not = 0.
\end{eqnarray*}
In \cite{KNO972} it is proposed for the preconditioned Gauss-Seidel method,
where $\alpha_{i,j} = \alpha_{i} \ge 0$, $i = 1, \cdots, n-1$.

In this case, for $i,j = 1, \cdots, n$, we have that
\begin{eqnarray*}
\sum\limits_{{k = 1}\atop{k\not = i,j}}^n \alpha_{i,k}a_{i,k}a_{k,j}
 = \sum\limits_{{k = i+1}\atop{k\not = j}}^n\alpha_{i,k}a_{i,k}a_{k,j},
\end{eqnarray*}
so that if $j < i$, then
\begin{eqnarray*}
(1-\alpha_{i,j})a_{i,j} - \sum\limits_{{k = 1}\atop{k\not = i,j}}^n \alpha_{i,k}a_{i,k}a_{k,j}
 = a_{i,j} - \sum\limits_{k = i+1}^n\alpha_{i,k}a_{i,k}a_{k,j} \le a_{i,j}.
\end{eqnarray*}

Since $\alpha_{n,j} = 0$ for $j = 1, \cdots, n$, then $\delta_{i,j}^{(2)}(\gamma)$
and $\delta_{i,j}^{(2)}(1)$ reduce respectively to
\begin{eqnarray*}
\delta_{i,j}^{(12)}(\gamma) = \left\{\begin{array}{ll}
 \sum\limits_{k = i+1}^n \alpha_{i,k}a_{i,k}a_{k,i}, & i = j = 1, \cdots, n-1;\\
 (\gamma-1)\alpha_{i,j}a_{i,j} + \gamma\sum\limits_{k = i+1}^{j-1} \alpha_{i,k}a_{i,k}a_{k,j}, &
 \begin{array}{l} i = 1,\cdots,n-1, \\
  j = i+1,\cdots,n; \end{array}\\
 \gamma\sum\limits_{k = i+1}^n\alpha_{i,k}a_{i,k}a_{k,j}, &
  \begin{array}{l}  i = 2,\cdots,n-1, \\
 j = 1,\cdots, i-1; \end{array}\\
  0, & i = n, j = 1,\cdots, n
\end{array}\right.
\end{eqnarray*}
and
\begin{eqnarray*}
\delta_{i,j}^{(12)}(1) = \left\{\begin{array}{ll}
 \sum\limits_{k = i+1}^n\alpha_{i,k}a_{i,k}a_{k,j}, &
  i = 1,\cdots,n-1, j = 1,\cdots, i;\\
 \sum\limits_{k = i+1}^{j-1} \alpha_{i,k}a_{i,k}a_{k,j}, &
 i = 1,\cdots,n-1, j = i+2,\cdots,n;\\
 0, & i = 1,\cdots,n-1, j = i+1;\\
 0, & i = n, j = 1,\cdots, n.
\end{array}\right.
\end{eqnarray*}

In this case, the conditions ($ii_2$), ($ii_8$), ($iv_7$),
($iv^a$) and ($iv^b$) in Theorem \ref{thm3.7} can be not satisfied.
While the inequality (\ref{eqs3.15}) or (\ref{eqs3.16}) is trivial because that
$A$ is irreducible and
\begin{eqnarray*}
(1 - \alpha_{n,j})a_{n,j} - \sum\limits_{{k = 1}\atop{k\not = j}}^{n-1}\alpha_{n,k}a_{n,k}a_{k,j}
= a_{n,j}.
\end{eqnarray*}

Completely similar to Theorems \ref{thm3.9}-\ref{thm3.12} and Corollaries
\ref{coro3-13}-\ref{coro3-16}, by Theorems \ref{thm3.5}-\ref{thm3.8},
 we can prove the following comparison results, immediately.

\begin{theorem}\label{thm3.45}
Suppose that $\sum_{k = i+1}^n\alpha_{i,k}a_{i,k}a_{k,i} < 1$, $i = 1,\cdots,n-1$, and
\begin{eqnarray}\label{eqs3.36}
(1-\alpha_{i,j})a_{i,j} - \sum\limits_{{k = i+1}\atop{k\not = j}}^n \alpha_{i,k}a_{i,k}a_{k,j} \le 0,
 \; i = 1,\cdots,n-1, j > i.
\end{eqnarray}
Then Theorem A is valid for $\nu = 12$.
\end{theorem}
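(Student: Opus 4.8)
The plan is to recognize that $Q_{12}$ is exactly the general matrix $Q_2$ of the preceding subsection under the restriction $\alpha_{i,j} = 0$ for all $j \le i$, and then simply to verify that the two hypotheses of Theorem~\ref{thm3.5} are met; the conclusion follows from that theorem at once. This mirrors the reduction already carried out for the lower triangular preconditioner in the proof of Theorem~\ref{thm3.9}, with the roles of the index cases $j < i$ and $j > i$ interchanged.

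First I would dispose of the diagonal hypothesis of Theorem~\ref{thm3.5}. Since $\alpha_{i,k} = 0$ whenever $k \le i$, the sum $\sum_{k=1,k\neq i}^{n}\alpha_{i,k}a_{i,k}a_{k,i}$ collapses to $\sum_{k=i+1}^{n}\alpha_{i,k}a_{i,k}a_{k,i}$, which is $< 1$ for $i = 1,\dots,n-1$ by assumption; for $i = n$ the summation range is empty, so the sum equals $0 < 1$. Hence $\sum_{k=1,k\neq i}^{n}\alpha_{i,k}a_{i,k}a_{k,i} < 1$ holds for every $i = 1,\dots,n$, which is precisely the first requirement of Theorem~\ref{thm3.5}.

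Next I would check the off-diagonal inequality of Theorem~\ref{thm3.5} for each pair $i \neq j$, using the identity recorded just above the statement,
\[
\sum_{k=1,k\neq i,j}^{n}\alpha_{i,k}a_{i,k}a_{k,j} = \sum_{k=i+1,k\neq j}^{n}\alpha_{i,k}a_{i,k}a_{k,j}.
\]
For $j > i$ the left-hand side of the required inequality is exactly $(1-\alpha_{i,j})a_{i,j} - \sum_{k=i+1,k\neq j}^{n}\alpha_{i,k}a_{i,k}a_{k,j}$, which is $\le 0$ by the assumed inequality (\ref{eqs3.36}). For $j < i$ one has $\alpha_{i,j} = 0$, so the expression reduces to $a_{i,j} - \sum_{k=i+1}^{n}\alpha_{i,k}a_{i,k}a_{k,j}$; here I would invoke that $A$ is an L-matrix with unit diagonal, so that $a_{i,k}\le 0$, $a_{k,j}\le 0$ and $\alpha_{i,k}\ge 0$, whence each summand is nonnegative and the whole expression is $\le a_{i,j} \le 0$.

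With both hypotheses of Theorem~\ref{thm3.5} verified for this choice of $Q_2$, Theorem~A holds for $\nu = 2$ in this special case, that is, for $\nu = 12$. There is no genuine obstacle: the argument is a routine specialization, and the only point demanding care is the split into the two index cases, for in the case $j > i$ the assumed inequality (\ref{eqs3.36}) is used directly, whereas in the case $j < i$ the conclusion rests on the sign pattern of an L-matrix rather than on any further assumption.
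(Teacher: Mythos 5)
Your proposal is correct and follows essentially the same route as the paper: the paper derives Theorem \ref{thm3.45} from Theorem \ref{thm3.5} "completely similar to" the lower triangular case (Theorem \ref{thm3.9}), using precisely the identity $\sum_{k=1,k\neq i,j}^{n}\alpha_{i,k}a_{i,k}a_{k,j}=\sum_{k=i+1,k\neq j}^{n}\alpha_{i,k}a_{i,k}a_{k,j}$ and the observation that for $j<i$ the expression reduces to $a_{i,j}-\sum_{k=i+1}^{n}\alpha_{i,k}a_{i,k}a_{k,j}\le a_{i,j}\le 0$ by the L-matrix sign pattern. Your verification of the diagonal hypothesis (including the empty sum for $i=n$) makes explicit a step the paper leaves implicit, but the argument is the same.
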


\begin{theorem}\label{thm3.46}
Suppose that (\ref{eqs3.36}) holds.
Then Theorem B is valid for $\nu = 12$.
\end{theorem}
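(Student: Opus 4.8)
The plan is to reduce the claim to Theorem~\ref{thm3.6}, which already establishes Theorem~B for the general second preconditioner $\nu=2$ under hypothesis (\ref{eqs3.12}). Since $Q_{12}$ is precisely the special case of $Q_2$ obtained by setting $\alpha_{i,j}=0$ for all $j\le i$, it suffices to verify that the upper-triangular hypothesis (\ref{eqs3.36}) forces the full condition (\ref{eqs3.12}) to hold for every pair $i\ne j$. Once this is done, Theorem~\ref{thm3.6} yields $\rho({\mathscr L}^{(12)}_{\gamma,\omega})\le\rho({\mathscr L}_{\gamma,\omega})<1$ immediately.

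First I would exploit the zero pattern of $Q_{12}$, which has already been recorded in the paragraph preceding the statement: because $\alpha_{i,k}=0$ whenever $k\le i$, the off-diagonal convolution term collapses to $\sum_{k=1,\,k\ne i,j}^{n}\alpha_{i,k}a_{i,k}a_{k,j}=\sum_{k=i+1,\,k\ne j}^{n}\alpha_{i,k}a_{i,k}a_{k,j}$. I would then split the verification of (\ref{eqs3.12}) into the two triangular ranges. For $j>i$, the left-hand side of (\ref{eqs3.12}) is, after this collapse, identical to the left-hand side of (\ref{eqs3.36}), so the assumed inequality gives $\le 0$ directly.

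Next I would treat the remaining case $j<i$. Here $\alpha_{i,j}=0$, so the left-hand side of (\ref{eqs3.12}) reduces to $a_{i,j}-\sum_{k=i+1}^{n}\alpha_{i,k}a_{i,k}a_{k,j}$. Since $A$ is a nonsingular M-matrix its off-diagonal entries are nonpositive, and for every $k$ in the sum one has $k>i$ and $k>j$, whence $a_{i,k}\le 0$, $a_{k,j}\le 0$ and $\alpha_{i,k}\ge 0$; thus each summand is nonnegative and the whole expression is bounded above by $a_{i,j}\le 0$. This shows $a^{(12)}_{i,j}\le 0$ for all $i\ne j$, i.e.\ that (\ref{eqs3.12}) holds, so Theorem~\ref{thm3.6} applies and Theorem~B is valid for $\nu=12$.

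The argument is almost entirely bookkeeping; the only point requiring care is the index reduction of the off-diagonal convolution term, namely that the zero pattern of $Q_{12}$ confines the running index $k$ to the strictly upper columns $k>i$. Once that reduction is made explicit, nonpositivity in both triangular ranges follows from the M-matrix sign structure together with the hypothesis (\ref{eqs3.36}), and no separate diagonal condition is needed: the positivity of the diagonal of $P_{12}A$ is supplied automatically by Lemma~\ref{lem1-10} as soon as $P_{12}A$ is known to be a Z-matrix.
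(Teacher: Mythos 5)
Your proof is correct and follows essentially the same route as the paper: the computation displayed just before Theorem \ref{thm3.45} performs exactly your index collapse $\sum_{k=1,\,k\neq i,j}^{n}\alpha_{i,k}a_{i,k}a_{k,j}=\sum_{k=i+1,\,k\neq j}^{n}\alpha_{i,k}a_{i,k}a_{k,j}$ and the bound $\le a_{i,j}\le 0$ for $j<i$, after which the paper invokes Theorem \ref{thm3.6} to get Theorem B for $\nu=12$ "immediately." Your closing remark that no diagonal condition is needed is also consistent with the paper's chain Theorem \ref{thm3.6} $\rightarrow$ Corollary \ref{coro3-2} $\rightarrow$ Lemma \ref{lem1-10}, which supplies the diagonal positivity once $P_{12}A$ is a Z-matrix.
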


\begin{theorem}\label{thm3.47}
Suppose that $\sum_{k = i+1}^n\alpha_{i,k}a_{i,k}a_{k,i} < 1$, $i = 1,\cdots,n-1$.
Then Theorem C is valid for $\nu = 12$,
provided one of the following conditions is satisfied:

\begin{itemize}
\item[(i)] $0 \le \gamma < 1$ and
\begin{eqnarray}\label{eqn3.377}\nonumber
&& (1 - \alpha_{i,j})a_{i,j} - \sum\limits_{{k = i+1}\atop{k\not = j}}^n \alpha_{i,k}a_{i,k}a_{k,j}
 \lesssim 0 \;\; \hbox{whenever} \;\; a_{i,j} < 0,\\
&& i = 1, \cdots, n-1, \; j > i.
\end{eqnarray}

\item[(ii)] $\gamma = 1$, (\ref{eqn3.377}) holds and
one of the following conditions holds:

\begin{itemize}
\item[($ii_1$)]
There exist $i\in\{1, \cdots, n-1\}$, $j\in\{1, \cdots, i\}$ and
$k\in\{i+1, \cdots, n\}$ such that
$\alpha_{i,k}a_{i,k}a_{k,j} > 0$.

\item[($ii_2$)]
There exist $i\in\{1, \cdots, n-1\}$, $j\in\{i+2, \cdots, n\}$ and
$k\in\{i+1, \cdots, j-1\}$ such that
$\alpha_{i,k}a_{i,k}a_{k,j} > 0$.

\item[($ii_3$)]
There exists $k\in\{1, \cdots, n-1\}$ such that
$a_{k,k+1} < 0$ and $\alpha_{k,k+1} > 0$.

\item[($ii_4$)]
$a_{n,1} < 0$ and $a_{k,k+1} < 0$, $k = 1, \cdots, n-1$.

\item[($ii_5$)]
$a_{n,1} < 0$ and $a_{k,n} < 0$, $k = 2, \cdots, n-1$.

\item[($ii_6$)]
$a_{k,1} < 0$, $k = 2, \cdots, n$.
\end{itemize}

\item[(iii)] $0 \le \gamma < 1$ and (\ref{eqs3.36}) holds. For each $i\in\{1, \cdots, n-1\}$,
there exists $j(i)\in\{i+1, \cdots, n\}$ such that $\alpha_{i,j(i)}a_{i,j(i)} < 0$.

\item[(iv)] $\gamma = 1$  and (\ref{eqs3.36}) holds. For each $i\in\{1, \cdots, n-1\}$,
one of the following conditions holds:

\begin{itemize}
\item[($iv_1$)] There exist $j(i)\in\{1, \cdots, i\}$ and
$k(i)\in\{i+1, \cdots, n\}$ such that\\
$\alpha_{i,k(i)}a_{i,k(i)}a_{k(i),j(i)} > 0$.

\item[($iv_2$)] There exist $j(i)\in\{i+2, \cdots, n\}$ and $k(i)\in\{i+1, \cdots, j-1\}$ such that
$\alpha_{i,k(i)}a_{i,k(i)}a_{k(i),j(i)} > 0$.

\item[($iv_3$)] $a_{i,i+1} < 0$ and $\alpha_{i,i+1} > 0$.
\end{itemize}\end{itemize}
\end{theorem}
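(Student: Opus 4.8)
The plan is to obtain Theorem~\ref{thm3.47} from Theorem~\ref{thm3.7} in exactly the way Theorems~\ref{thm3.9}--\ref{thm3.12} were obtained from Theorems~\ref{thm3.5}--\ref{thm3.8}: $Q_{12}$ is the specialization of $Q_2$ in which $\alpha_{i,j}=0$ for all $j\le i$, so each hypothesis of Theorem~\ref{thm3.47} should be shown to force the corresponding hypothesis of Theorem~\ref{thm3.7}. First I would record the structural simplifications already collected before the statement. The diagonal requirement $a^{(2)}_{i,i}>0$ of Theorem~\ref{thm3.7} becomes $\sum_{k=i+1}^{n}\alpha_{i,k}a_{i,k}a_{k,i}<1$ for $i=1,\dots,n-1$ (for $i=n$ the sum is empty, so the inequality is automatic). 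For a strictly lower off-diagonal entry the correct sign is free, since for $j<i$ one has $(1-\alpha_{i,j})a_{i,j}-\sum_{k\ne i,j}\alpha_{i,k}a_{i,k}a_{k,j}=a_{i,j}-\sum_{k=i+1}^{n}\alpha_{i,k}a_{i,k}a_{k,j}\le a_{i,j}\le 0$; hence the inequalities (\ref{eqs3.36}) and (\ref{eqn3.377}), imposed only for $j>i$, already secure (\ref{eqs3.12}) and (\ref{eqs3.14}). With these two remarks, condition~(i) reduces to condition~(i) of Theorem~\ref{thm3.7}, and condition~(iii) reduces to its condition~(iii), the index $j(i)$ automatically lying in $\{i+1,\dots,n\}$ because the lower triangle of $Q_{12}$ vanishes; in particular the last-row analysis of Theorem~\ref{thm3.3}, in which $q^{(12)}_{n,j}=0$ is compensated by $\hat m_{n,j}>0$, carries over unchanged.

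For the two Gauss--Seidel cases ($\gamma=1$) I would substitute the reduced quantity $\delta^{(12)}_{i,j}(1)$ and check that each listed hypothesis yields a strictly positive $\delta^{(12)}_{i,j}(1)$, thereby triggering the appropriate condition of Theorem~\ref{thm3.7}. In condition~(ii), the alternatives ($ii_1$) and ($ii_2$) do this at once, as the quoted product $\alpha_{i,k}a_{i,k}a_{k,j}$ is a nonnegative summand of the relevant $\delta^{(12)}_{i,j}(1)$; ($ii_3$) and ($ii_4$) coincide word for word with ($ii_3$) and ($ii_9$) of Theorem~\ref{thm3.7}; and ($ii_5$), ($ii_6$) are disposed of by combining the defining property of $Q_{12}$ (some $\alpha_{i_0,j_0}>0$ with $a_{i_0,j_0}<0$, whence $j_0\ge i_0+1\ge 2$) with the stated sign pattern, which produces an index where the sum defining some $\delta^{(12)}_{i,j}(1)$ contains a strictly positive term. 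In condition~(iv) the per-row alternatives ($iv_1$) and ($iv_2$) are translated the same way into ($iv_1$) of Theorem~\ref{thm3.7}, while ($iv_3$) is literally ($iv_2$) of Theorem~\ref{thm3.7}; thus rows $1,\dots,n-1$ of $\Phi(1,1)$ each carry a positive element and, $[M^{(12)}_{1,1}]^{-1}$ being nonnegative with positive diagonal, so do the corresponding components of $[M^{(12)}_{1,1}]^{-1}\Phi(1,1)x$.

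The genuine obstacle, and the place where the upper-triangular case differs from the lower-triangular Theorem~\ref{thm3.11}, is the last component, because the $n$th row of $Q_{12}$ is identically zero and the bottom-row alternatives of Theorem~\ref{thm3.7} are therefore unavailable. Here I would argue exactly as in the last-row step of Theorem~\ref{thm3.3}: since $q^{(12)}_{n,k}=0$ we have $m^{(12)}_{n,j}=a_{n,j}$, and the irreducibility of $A$ furnishes some $j\in\{1,\dots,n-1\}$ with $a_{n,j}<0$, so that $\hat m_{n,j}>0$; as $\Phi(1,1)x\ge 0$ has a positive $j$th component (this is precisely why the per-row condition in~(iv) is demanded for every $i\in\{1,\dots,n-1\}$, including $i=1$), the $n$th component of $[M^{(12)}_{1,1}]^{-1}\Phi(1,1)x$ is positive. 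Consequently the second through $n$th components of $[M^{(12)}_{1,1}]^{-1}\Phi(1,1)x$ are all positive, which is the hypothesis of Lemma~\ref{lem1-12}, and the three mutually exclusive relations of Theorem~C follow. The range $0\le\gamma<1$ is handled in the same spirit through Lemma~\ref{lem1-11}, the vanishing last row of $Q_{12}$ again being offset by $\hat m_{n,j}>0$ to give $[M^{(12)}_{\gamma,\omega}]^{-1}(P_{12}N_{\gamma,\omega}-N^{(12)}_{\gamma,\omega})x\gg 0$.
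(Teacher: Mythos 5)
Your proposal is correct and takes essentially the same approach as the paper's proof: reduce to Theorem \ref{thm3.7} under the specialization $\alpha_{i,j}=0$ for $j\le i$, dispose of ($ii_5$) and ($ii_6$) via the defining property of $Q_{12}$ together with the stated sign patterns, and settle the last row in ($iv$) by using irreducibility of $A$ to produce some $a_{n,j}<0$. The only difference is presentational: where the paper simply observes that $\alpha_{n,k}=0$ makes ($iv^c$) or ($iv^d$) of Theorem \ref{thm3.7} automatic (since (\ref{eqs3.15}) and (\ref{eqs3.16}) reduce to $a_{n,j}<0$), you inline the underlying mechanism from the proof of Theorem \ref{thm3.3} --- $m^{(12)}_{n,j}=a_{n,j}<0$, hence $\hat m_{n,j}>0$, hence the $n$th component of $[M^{(12)}_{1,1}]^{-1}\Phi(1,1)x$ is positive --- which is the same argument with one citation unfolded.
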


\begin{proof}
By Theorem \ref{thm3.7}, we just need to prove ($ii_5$), ($ii_6$) and ($iv$).

By the definition of $Q_{12}$, there exist $i_0\in\{1, \cdots, n-1\}$ and
$j_0\in\{i+1, \cdots, n\}$ such that $\alpha_{i_0,j_0}a_{i_0,j_0} < 0$.

When ($ii_5$) holds, if $j_0 < n$ then
$\alpha_{i_0,j_0}a_{i_0,j_0}a_{j_0,n} > 0$, which implies that ($ii_2$) holds for $i = i_0$,
$j = n$ and $k = j_0$. If $j_0 = n$ then
$\alpha_{i_0,n}a_{i_0,n}a_{n,1} > 0$, which implies that ($ii_1$) holds for $i = i_0$,
$j = 1$ and $k = n$.

When ($ii_6$) holds, it gets that
$\alpha_{i_0,j_0}a_{i_0,j_0}a_{j_0,1} > 0$, which implies that ($ii_1$) holds for $i = i_0$,
$j = 1$ and $k = j_0$.

By the irreducibility of $A$, there exists $j \in \{1, \cdots, n-1\}$ such that
$a_{n,j} < 0$, which implies that ($iv^c$) or ($iv^d$) in Theorem \ref{thm3.7}
holds because $\alpha_{n,k}=0$, $k=1,\cdots, n-1$. By ($iv_1$) and ($iv_2$) in
Theorem \ref{thm3.7} we derive ($iv$).
\end{proof}

\begin{theorem}\label{thm3.48}
Theorem D is valid for $\nu = 12$,
provided one of the following conditions is satisfied:

\begin{itemize}
\item[(i)] One of the conditions ($i$)-($iv$) of Theorem \ref{thm3.47} holds.

\item[(ii)] The inequality (\ref{eqs3.36}) holds and one of the following conditions holds:

\begin{itemize}
\item[($ii_1$)] There exist $i\in\{1,\cdots,n-1\}$ and $j\in\{i+1,\cdots,n\}$ such that
$\alpha_{i,j}a_{i,j}a_{j,i} > 0$.

\item[($ii_2$)] $\gamma > 0$. There exist $i\in\{2,\cdots,n-1\}$, $j\in\{1,\cdots,i-1\}$
 and $k\in\{i+1,\cdots,n\}$ such that
$\alpha_{i,k}a_{i,k}a_{k,j} > 0$.
\end{itemize}\end{itemize}
\end{theorem}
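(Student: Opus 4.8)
The plan is to obtain Theorem \ref{thm3.48} as the specialization of Theorem \ref{thm3.8} (Theorem D for $\nu=2$) to the upper triangular preconditioner $Q_{12}$, i.e. to the choice $\alpha_{i,j}=0$ for all $j\le i$. As in every Theorem D proof of this section, the argument runs along two routes: condition (i) is settled through the Stein-Rosenberg type Theorem C, while condition (ii) is settled through the strict comparison Lemma \ref{lem1-13}.

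For part (i), I would first note that an irreducible nonsingular M-matrix is in particular an irreducible L-matrix, so Theorem \ref{thm3.47} applies and produces one of its three mutually exclusive relations. Since $A$ is a nonsingular M-matrix, the AOR splitting (\ref{eqn1.4}) is weak regular and hence convergent by Lemma \ref{lem1-5}, so $\rho({\mathscr L}_{\gamma,\omega})<1$; this excludes the two relations in which the spectral radius is $\ge 1$, leaving only $\rho({\mathscr L}^{(12)}_{\gamma,\omega})<\rho({\mathscr L}_{\gamma,\omega})<1$, which is exactly the assertion of Theorem D. This reproduces the reasoning of Theorem \ref{thm3.4}, part (i).

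For part (ii), the key is that the hypotheses of Theorem \ref{thm3.8}, part (ii), collapse cleanly when $\alpha_{i,j}=0$ for $j\le i$, and I would verify the translations one by one. The inequality (\ref{eqs3.36}) is precisely the restriction of (\ref{eqs3.12}) to the strictly upper triangular support of $Q_{12}$, so $a^{(12)}_{i,j}\le0$ for $j>i$; combined with $a^{(12)}_{i,j}=a_{i,j}-\sum_{k=i+1}^n\alpha_{i,k}a_{i,k}a_{k,j}\le a_{i,j}\le0$ for $j<i$, this makes $P_{12}A$ a Z-matrix, hence a nonsingular M-matrix by Lemma \ref{lem1-10}, so both AOR splittings are weak regular and convergent. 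The auxiliary requirement $\sum_{k\ne i,j}\alpha_{i,k}a_{i,k}a_{k,j}\ge0$ for $j<i$ in Theorem \ref{thm3.8} is automatic, since $\alpha_{i,k}\ne0$ forces $k>i$ while the M-matrix sign pattern $a_{i,k}<0$, $a_{k,j}\le0$ makes each summand $\ge0$. Conditions ($ii_1$) and ($ii_2$) of Theorem \ref{thm3.48} then coincide with ($ii_1$) and ($ii_2$) of Theorem \ref{thm3.8} once the vanishing $\alpha_{i,j}$ terms ($j\le i$) are dropped: they guarantee respectively that the diagonal block $E_2$ or the strictly lower block $F_2$ of $Q_{12}L$ carries a positive entry, which yields $M_{\gamma,\omega}x>M^{(12)}_{\gamma,\omega}x$ for the eigenvector $x\gg0$ furnished by Lemma \ref{lem1-8}. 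With all hypotheses of Theorem \ref{thm3.8}, part (ii), in force, Lemma \ref{lem1-13} delivers $\rho({\mathscr L}^{(12)}_{\gamma,\omega})<\rho({\mathscr L}_{\gamma,\omega})<1$.

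The difficulty is purely in the bookkeeping, not in any new estimate: one must track the index ranges carefully when restricting the double sums of Theorem \ref{thm3.8} to the strictly upper triangular support of $Q_{12}$, and at each vanishing-condition check one must invoke the M-matrix sign pattern ($a_{i,k}<0$, $a_{k,j}\le0$) to see that the dropped hypotheses hold for free. No ingredient beyond Theorem \ref{thm3.8} and Lemma \ref{lem1-13} is required.
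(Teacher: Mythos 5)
Your proposal is correct and takes essentially the same route as the paper: part (i) is the standard argument (Theorem \ref{thm3.47} plus $\rho({\mathscr L}_{\gamma,\omega})<1$ for a nonsingular M-matrix, as in Theorem \ref{thm3.4}), and part (ii) is exactly the paper's one-line proof, namely specializing condition ($ii$) of Theorem \ref{thm3.8} to $\alpha_{i,j}=0$ for $j\le i$, under which (\ref{eqs3.12}) collapses to (\ref{eqs3.36}), the auxiliary lower-triangular inequality holds automatically by the Z-matrix sign pattern, and ($ii_1$), ($ii_2$) coincide with the corresponding conditions of Theorem \ref{thm3.8}. The additional bookkeeping you supply (the Z-matrix check via Lemma \ref{lem1-10}, and the $E_2$/$F_2$ mechanism feeding Lemma \ref{lem1-13}) is precisely what the paper leaves implicit.
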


\begin{proof}
We just need to prove ($ii$). In fact, since $\alpha_{i,j} = 0$ for $j \le i$,
then the result follows by ($ii$) of Theorem \ref{thm3.8} directly.
\end{proof}

\begin{corollary}\label{coro3-29}
Suppose that $0 \le \alpha_{i,j} \le 1$ and $\sum_{k = i+1}^n\alpha_{i,k}a_{i,k}a_{k,i} < 1$,
$i = 1,\cdots,n-1$, $j > i$.
Then Theorem A is valid for $\nu = 12$.
\end{corollary}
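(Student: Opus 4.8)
The plan is to verify that the two hypotheses of Theorem~\ref{thm3.45} hold, so that Theorem A follows immediately for $\nu = 12$. The first hypothesis of Theorem~\ref{thm3.45}, namely $\sum_{k=i+1}^n \alpha_{i,k}a_{i,k}a_{k,i} < 1$ for $i = 1,\ldots,n-1$, is word-for-word the assumption of the corollary, so nothing needs to be done there. The only remaining task is to establish the inequality (\ref{eqs3.36}), and this reduces to a short sign analysis.

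First I would record the sign structure coming from the assumption that $A$ is an L-matrix (equivalently a Z-matrix here, since all diagonal entries equal $1$): every off-diagonal entry of $A$ is nonpositive. For a fixed pair $i < j$ and any index $k$ appearing in the sum $\sum_{k=i+1,\,k\neq j}^n \alpha_{i,k}a_{i,k}a_{k,j}$, the range $i+1 \le k \le n$ forces $k \neq i$, and the exclusion $k \neq j$ is built into the summation; hence both $a_{i,k}$ and $a_{k,j}$ are genuine off-diagonal entries and therefore nonpositive. Consequently each product $a_{i,k}a_{k,j}$ is nonnegative, and since $\alpha_{i,k} \ge 0$ the whole sum is nonnegative, so that $-\sum_{k=i+1,\,k\neq j}^n \alpha_{i,k}a_{i,k}a_{k,j} \le 0$. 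Next I would handle the remaining term $(1-\alpha_{i,j})a_{i,j}$: since $0 \le \alpha_{i,j} \le 1$ we have $1-\alpha_{i,j} \ge 0$, and since $i < j$ the entry $a_{i,j}$ is off-diagonal, hence $a_{i,j} \le 0$, so $(1-\alpha_{i,j})a_{i,j} \le 0$. Adding the two bounds yields exactly $(1-\alpha_{i,j})a_{i,j} - \sum_{k=i+1,\,k\neq j}^n \alpha_{i,k}a_{i,k}a_{k,j} \le 0$ for all $i = 1,\ldots,n-1$ and $j > i$, which is (\ref{eqs3.36}). With both hypotheses of Theorem~\ref{thm3.45} in hand, Theorem A holds for $\nu = 12$.

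There is no genuine obstacle here; this is precisely the elementary estimate already isolated in the display preceding Corollary~\ref{coro3-9}, now specialized to the upper-triangular preconditioner $Q_{12}$. The only point requiring a little care is the index bookkeeping inside the sum—one must confirm that every $a_{i,k}$ and $a_{k,j}$ is off-diagonal so that the L-matrix sign condition applies—but this is immediate from the summation range $i+1 \le k \le n$ together with the exclusion of $k = j$. Hence the corollary is a direct consequence of Theorem~\ref{thm3.45}.
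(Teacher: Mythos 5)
Your proposal is correct and is essentially the paper's own argument: the paper obtains Corollary \ref{coro3-29} from Theorem \ref{thm3.45} via exactly the estimate $(1-\alpha_{i,j})a_{i,j} - \sum_{k = i+1,\,k\not = j}^n \alpha_{i,k}a_{i,k}a_{k,j} \le (1-\alpha_{i,j})a_{i,j} \le 0$, which holds because the off-diagonal entries of the L-matrix $A$ are nonpositive and $0 \le \alpha_{i,j} \le 1$. Your index check (that every $a_{i,k}$ and $a_{k,j}$ in the sum is off-diagonal) is the same observation the paper leaves implicit, so there is nothing to add.
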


\begin{corollary}\label{coro3-30}
Suppose that $0 \le \alpha_{i,j} \le 1$, $i = 1,\cdots,n-1$, $j > i$.
Then Theorem B is valid for $\nu = 12$.
\end{corollary}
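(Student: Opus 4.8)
The plan is to obtain the corollary as an immediate specialization of Theorem \ref{thm3.46}, which already guarantees that Theorem B holds for $\nu = 12$ as soon as the inequality (\ref{eqs3.36}) is in force. Hence the whole problem reduces to verifying that the hypothesis $0 \le \alpha_{i,j} \le 1$ (for $i=1,\dots,n-1$, $j>i$) implies (\ref{eqs3.36}). First I would record the sign pattern coming from the standing assumption that $A$ is a nonsingular M-matrix: by the normalization at the start of Section \ref{se3} every diagonal entry is $a_{i,i}=1$, and since a nonsingular M-matrix is an L-matrix we have $a_{i,j}\le 0$ for all $i\neq j$.

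Fixing $i<j$, every index $k$ occurring in the sum $\sum_{k=i+1,\,k\neq j}^{n}\alpha_{i,k}a_{i,k}a_{k,j}$ satisfies $k>i$ (so $k\neq i$) and $k\neq j$; thus both $a_{i,k}$ and $a_{k,j}$ are off-diagonal, hence nonpositive, while $\alpha_{i,k}\ge 0$. Each summand is therefore the product of one nonnegative and two nonpositive factors, i.e. nonnegative, so subtracting the full sum can only decrease the left-hand side:
\[
(1-\alpha_{i,j})a_{i,j} - \sum_{k=i+1,\,k\neq j}^{n}\alpha_{i,k}a_{i,k}a_{k,j} \le (1-\alpha_{i,j})a_{i,j}.
\]
Invoking the bound $0 \le \alpha_{i,j}\le 1$ gives $1-\alpha_{i,j}\ge 0$, which combined with $a_{i,j}\le 0$ yields $(1-\alpha_{i,j})a_{i,j}\le 0$. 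Chaining the two inequalities shows that the left-hand side of (\ref{eqs3.36}) is $\le 0$ for all $i=1,\dots,n-1$ and $j>i$, which is exactly (\ref{eqs3.36}). Theorem \ref{thm3.46} then applies directly and delivers $\rho({\mathscr L}^{(12)}_{\gamma,\omega}) \le \rho({\mathscr L}_{\gamma,\omega}) < 1$, i.e. Theorem B for $\nu=12$.

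There is no substantive obstacle here; the argument is the upper-triangular mirror of the displayed inequality preceding Corollaries \ref{coro3-9}--\ref{coro3-12}, specialized to $Q_{12}$. The only step that demands a little care is the bookkeeping of the summation range: one must confirm that the general sum in (\ref{eqs3.12}) collapses to $\sum_{k=i+1,\,k\neq j}^{n}$ because $\alpha_{i,k}=0$ whenever $k\le i$, and that no diagonal entry (which equals $+1$ and would spoil the sign) ever enters the product, a fact guaranteed by the exclusions $k>i$ and $k\neq j$. Once these signs are checked, the reduction to Theorem \ref{thm3.46} is immediate, and the diagonal positivity needed for an L-matrix is supplied automatically by Lemma \ref{lem1-10}.
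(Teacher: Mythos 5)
Your proof is correct and is essentially the paper's own (implicit) argument: since $A$ is an L-matrix with unit diagonal, every summand $\alpha_{i,k}a_{i,k}a_{k,j}$ with $k>i$, $k\neq j$ is nonnegative, so $(1-\alpha_{i,j})a_{i,j}-\sum_{k=i+1,\,k\neq j}^{n}\alpha_{i,k}a_{i,k}a_{k,j}\le(1-\alpha_{i,j})a_{i,j}\le 0$, which is (\ref{eqs3.36}), and Theorem \ref{thm3.46} then gives the conclusion. This is exactly the specialization the paper intends for Corollaries \ref{coro3-29}--\ref{coro3-32}, mirroring the displayed inequality used to derive Corollaries \ref{coro3-9}--\ref{coro3-12} from Theorems \ref{thm3.5}--\ref{thm3.8}.
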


\begin{corollary}\label{coro3-31}
Suppose that $\sum_{k = i+1}^n\alpha_{i,k}a_{i,k}a_{k,i} < 1$, $i = 1,\cdots,n-1$.
Then Theorem C is valid for $\nu = 12$,
provided one of the following conditions is satisfied:

\begin{itemize}
\item[(i)]
For $i = 1,\cdots,n-1$, $j > i$, $0 \le \alpha_{i,j} \lesssim 1$.
One of the conditions $0\le\gamma<1$ and ($ii_1$)-($ii_6$) whenever $\gamma=1$ in
Theorem \ref{thm3.47} holds.

\item[(ii)] One of the conditions ($iii$) and ($iv$) of Theorem \ref{thm3.47} holds,
where the inequality (\ref{eqs3.36}) will be replaced by
$0 \le \alpha_{i,j} \le 1$, $i = 1,\cdots,n-1$, $j > i$.
\end{itemize}
\end{corollary}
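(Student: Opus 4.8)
The plan is to obtain Corollary~\ref{coro3-31} as an immediate specialization of Theorem~\ref{thm3.47}, in exactly the manner in which Corollaries~\ref{coro3-9}--\ref{coro3-12} and \ref{coro3-15} were derived from Theorems~\ref{thm3.5}--\ref{thm3.8} and \ref{thm3.11}. All the substantive analytic content, namely the Stein--Rosenberg trichotomy itself, is already carried by Theorem~\ref{thm3.47} through Lemmas~\ref{lem1-11} and \ref{lem1-12}; so the only task here is to check that the hypothesis $0\le\alpha_{i,j}\le 1$ (respectively $0\le\alpha_{i,j}\lesssim 1$) already forces the sign condition (\ref{eqs3.36}) (respectively (\ref{eqn3.377})) that Theorem~\ref{thm3.47} requires, after which every structural condition $(ii_1)$--$(ii_6)$, $(iii)$ and $(iv)$ transfers unchanged.

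First I would record the elementary monotonicity estimate already noted before Theorem~\ref{thm3.45}. Since $A$ is an L-matrix, all off-diagonal entries are $\le 0$, so for $i=1,\dots,n-1$, $j>i$ and every index $k$ with $i+1\le k\le n$, $k\ne j$, the term $a_{i,k}a_{k,j}$ is a product of two off-diagonal entries and is therefore $\ge 0$; combined with $\alpha_{i,k}\ge 0$ this gives $\sum_{k=i+1,\,k\ne j}^{n}\alpha_{i,k}a_{i,k}a_{k,j}\ge 0$ and hence
\[
(1-\alpha_{i,j})a_{i,j}-\sum_{{k = i+1}\atop{k\not = j}}^{n}\alpha_{i,k}a_{i,k}a_{k,j}\ \le\ (1-\alpha_{i,j})a_{i,j},\qquad j>i.
\]

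For part $(ii)$ I would then use $0\le\alpha_{i,j}\le 1$, so that $1-\alpha_{i,j}\ge 0$ while $a_{i,j}\le 0$ for $j>i$; thus $(1-\alpha_{i,j})a_{i,j}\le 0$ and the displayed bound yields (\ref{eqs3.36}). Replacing the hypothesis that (\ref{eqs3.36}) holds inside conditions $(iii)$ and $(iv)$ of Theorem~\ref{thm3.47} by $0\le\alpha_{i,j}\le 1$ is therefore legitimate, and Theorem C follows at once. For part $(i)$ I would run the same computation but track the reading of ``$\lesssim$'': when $a_{i,j}<0$, if $A^{(12)}$ is irreducible then $\alpha_{i,j}\le 1$ gives $(1-\alpha_{i,j})a_{i,j}\le 0$, while if $A^{(12)}$ is reducible then $\alpha_{i,j}<1$ gives $(1-\alpha_{i,j})a_{i,j}<0$; in both cases $(1-\alpha_{i,j})a_{i,j}\lesssim 0$, and subtracting the nonnegative sum preserves this, so (\ref{eqn3.377}) holds. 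Hence whichever of $0\le\gamma<1$ and $(ii_1)$--$(ii_6)$ is assumed, the corresponding branch of Theorem~\ref{thm3.47} applies.

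The main obstacle is not analytic but a matter of careful bookkeeping of the irreducibility convention hidden in the symbol ``$\lesssim$'': one must confirm that $0\le\alpha_{i,j}\lesssim 1$ propagates to $(1-\alpha_{i,j})a_{i,j}\lesssim 0$ with the \emph{same} reading of ``$\lesssim$'', i.e.\ the equality case is admitted exactly when $A^{(12)}$ is irreducible, which is precisely what keeps $A^{(12)}$ an irreducible L-matrix so that Lemma~\ref{lem1-8} remains available to Theorem~\ref{thm3.47}. Once that correspondence is verified, no further estimate is needed and the full trichotomy is inherited verbatim from Theorem~\ref{thm3.47}.
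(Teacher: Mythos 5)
Your proposal is correct and follows the paper's own route: the paper gives no separate proof for this corollary, treating it as immediate from Theorem \ref{thm3.47} via exactly the monotonicity estimate you record (the product $a_{i,k}a_{k,j}\ge 0$ of off-diagonal entries of an L-matrix, so the sum term only helps), which is the same device used to pass from Theorems \ref{thm3.5}--\ref{thm3.8} to Corollaries \ref{coro3-9}--\ref{coro3-12}. Your careful tracking of the ``$\lesssim$'' convention — equality admitted exactly when $A^{(12)}$ is irreducible, strict inequality otherwise forcing $a^{(12)}_{i,j}<0$ and hence irreducibility — is precisely the bookkeeping the paper's convention is designed to make automatic, so the argument is complete.
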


\begin{corollary}\label{coro3-32}
Theorem D is valid for $\nu = 12$,
provided one of the following conditions is satisfied:

\begin{itemize}
\item[(i)] One of the conditions ($i$) and ($ii$) of Corollary \ref{coro3-31} holds.

\item[(ii)]
For $i = 1,\cdots,n-1$, $j > i$, $0 \le \alpha_{i,j} \le 1$.
One of the conditions ($ii_1$) and ($ii_2$) in Theorem \ref{thm3.48} holds.
\end{itemize}
\end{corollary}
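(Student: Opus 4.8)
The plan is to reduce Corollary \ref{coro3-32} to Theorem \ref{thm3.48}, the general form of Theorem D for $\nu=12$, by recognizing each of the two listed hypotheses as a specialization of one of the two sufficient conditions already recorded there. The one computational fact I would establish at the outset is that the bound $0 \le \alpha_{i,j} \le 1$ for $i<j$ forces the inequality (\ref{eqs3.36}). Since $A$ is an L-matrix, $a_{i,j}\le 0$ off the diagonal, so for $j>i$ the factor $(1-\alpha_{i,j})\ge 0$ gives $(1-\alpha_{i,j})a_{i,j}\le 0$, while each product $\alpha_{i,k}a_{i,k}a_{k,j}\ge 0$ is a nonnegative multiple of the product of two nonpositive off-diagonal entries; hence the subtracted sum is nonnegative and $(1-\alpha_{i,j})a_{i,j}-\sum_{k=i+1,\,k\neq j}^{n}\alpha_{i,k}a_{i,k}a_{k,j}\le 0$, which is exactly (\ref{eqs3.36}).

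For condition (ii) this settles matters at once: the hypothesis is $0 \le \alpha_{i,j}\le 1$ together with one of ($ii_1$), ($ii_2$) of Theorem \ref{thm3.48}, and the preceding step promotes $0 \le \alpha_{i,j}\le 1$ to (\ref{eqs3.36}). Thus condition (ii) of Theorem \ref{thm3.48} is met verbatim, and Theorem D follows from Theorem \ref{thm3.48} directly.

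For condition (i) I would argue that conditions (i) and (ii) of Corollary \ref{coro3-31} are nothing but the $0 \le \alpha_{i,j}\lesssim 1$ (resp.\ $0\le\alpha_{i,j}\le 1$) specializations of conditions (i)--(iv) of Theorem \ref{thm3.47}: the bound $0\le\alpha_{i,j}\lesssim 1$ supplies the $\lesssim$-inequality (\ref{eqn3.377}) used in Theorem \ref{thm3.47}(i),(ii), while $0\le\alpha_{i,j}\le 1$ supplies (\ref{eqs3.36}) used in Theorem \ref{thm3.47}(iii),(iv), just as in the derivation of Corollary \ref{coro3-31}. Hence whenever a hypothesis of Corollary \ref{coro3-31} holds, one of (i)--(iv) of Theorem \ref{thm3.47} holds, which is precisely condition (i) of Theorem \ref{thm3.48}; invoking Theorem \ref{thm3.48} again yields Theorem D.

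The argument is routine, and I expect no serious obstacle; the only points needing care are the bookkeeping of the $\lesssim$/$\le$ distinction when $A^{(12)}$ may fail to be irreducible, and the verification that $P_{12}A$ stays a Z-matrix under $0\le\alpha_{i,j}\le 1$ (which is exactly what lets Lemma \ref{lem1-10} promote the irreducible nonsingular M-matrix $A$ to an irreducible nonsingular M-matrix $P_{12}A$). If one prefers a self-contained route for condition (i) rather than chaining through Theorem \ref{thm3.48}, I would instead observe that Corollary \ref{coro3-31} already delivers the Stein--Rosenberg trichotomy of Theorem C for $\nu=12$, while $A$ being an irreducible nonsingular M-matrix makes its AOR splitting weak regular and convergent by Lemma \ref{lem1-5}, so $\rho({\mathscr L}_{\gamma,\omega})<1$; the case $\rho({\mathscr L}_{\gamma,\omega})<1$ of that trichotomy is precisely the strict chain $\rho({\mathscr L}^{(12)}_{\gamma,\omega})<\rho({\mathscr L}_{\gamma,\omega})<1$, i.e.\ Theorem D.
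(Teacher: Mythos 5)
Your proposal is correct and follows essentially the same route the paper intends: the paper states this corollary as following "directly" from Theorem \ref{thm3.48}, and your reduction — observing that $0 \le \alpha_{i,j} \le 1$ (resp.\ $0 \le \alpha_{i,j} \lesssim 1$) forces the inequality (\ref{eqs3.36}) (resp.\ (\ref{eqn3.377})) because each term $\alpha_{i,k}a_{i,k}a_{k,j}$ is nonnegative for an L-matrix, so that the hypotheses of Corollary \ref{coro3-31} land in condition ($i$) of Theorem \ref{thm3.48} and the hypotheses in ($ii$) land in condition ($ii$) of Theorem \ref{thm3.48} — is exactly that reduction. Your closing remark, that one may alternatively combine the Theorem C trichotomy with $\rho({\mathscr L}_{\gamma,\omega})<1$ from Lemma \ref{lem1-5}, is also precisely the mechanism the paper itself uses to pass from Theorem C statements to Theorem D statements, so no genuinely different method is involved.
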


Many known corresponding results about the preconditioned AOR method
proposed in the references are the special cases of Theorems \ref{thm3.45}-\ref{thm3.48} and
Corollaries \ref{coro3-29}-\ref{coro3-32}, i.e., they
can be derived from these theorems, immediately.

When $\alpha_{i,j} = \alpha > 0$, $i = 1, \cdots, n-1$, $j > i$,
the matrix $Q_{12}$ reduces to
\begin{eqnarray*}
Q_{13} = \alpha U,
\end{eqnarray*}
which is proposed in \cite{KNO96} for the preconditioned Gauss-Seidel method.
It is investigated in \cite{WWS07,YLK14} the preconditioned AOR method.
For $\alpha = 1$ it is proposed in \cite{UNK94}
for the preconditioned Gauss-Seidel method and in \cite{UKN95,MEY01} for the preconditioned SOR method.

Denote
\begin{eqnarray*}
\delta_{i,j}^{(13)}(\gamma) = \left\{\begin{array}{ll}
 \sum\limits_{k = i+1}^na_{i,k}a_{k,i}, & i = j = 1, \cdots, n-1;\\
 (\gamma-1)a_{i,j} + \gamma\sum\limits_{k = i+1}^{j-1} a_{i,k}a_{k,j}, &
 \begin{array}{l} i = 1,\cdots,n-1,\\
  j = i+1,\cdots,n; \end{array}\\
 \gamma\sum\limits_{k = i+1}^na_{i,k}a_{k,j}, &
  \begin{array}{l} i = 2,\cdots,n-1, \\
  j = 1,\cdots, i-1; \end{array}\\
  0, & i = n, j = 1,\cdots, n
\end{array}\right.
\end{eqnarray*}
and
\begin{eqnarray*}
\delta_{i,j}^{(13)}(1) = \left\{\begin{array}{ll}
\sum\limits_{k = i+1}^na_{i,k}a_{k,j}, & i = 1,\cdots,n-1, j = 1,\cdots, i; \\
 \sum\limits_{k = i+1}^{j-1}a_{i,k}a_{k,j}, & i = 1,\cdots,n-1, j = i+2,\cdots,n;\\
 0, & i = 1,\cdots,n-1, j = i+1;\\
 0, & i = n, j = 1,\cdots, n.
\end{array}\right.
\end{eqnarray*}

By Theorems \ref{thm3.45}-\ref{thm3.48} and
Corollaries \ref{coro3-29}-\ref{coro3-32}, the following results are obtained.

\begin{theorem}
Suppose that $\alpha\sum_{k = i+1}^na_{i,k}a_{k,i} < 1$, $i = 1,\cdots,n-1$, and
\begin{eqnarray}\label{eqs3.38}
 (1-\alpha)a_{i,j} - \alpha\sum\limits_{{k = i+1}\atop{k\not = j}}^n a_{i,k}a_{k,j} \le 0,
\; i = 1, \cdots, n-1, j > i.
\end{eqnarray}
Then Theorem A is valid for $\nu = 13$.
\end{theorem}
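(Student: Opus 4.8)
The plan is to obtain this result as the uniform-coefficient specialization of the preconditioner $Q_{12}$, i.e.\ to read it off directly from Theorem \ref{thm3.45}. The first observation is that $Q_{13} = \alpha U$ is exactly the matrix $Q_{12}$ in which every off-diagonal coefficient is chosen to be $\alpha_{i,j} = \alpha$ for $i = 1,\cdots,n-1$, $j > i$. Since $\alpha > 0$, the admissibility requirements imposed on $Q_{12}$ are satisfied: each $\alpha_{i,j} = \alpha \ge 0$, and the nondegeneracy condition $\sum_{i<j}\alpha_{i,j}a_{i,j} \ne 0$ reduces to $\alpha\sum_{i<j}a_{i,j}\ne 0$, which holds as soon as $U \ne 0$ (the standing tacit assumption, without which $Q_{13}=0$ would not be a preconditioner at all). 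Hence Theorem \ref{thm3.45} is applicable to this particular choice of coefficients.

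The second step is to substitute $\alpha_{i,j} = \alpha$ into the two hypotheses of Theorem \ref{thm3.45} and check that they collapse to the two assumptions stated here. The first hypothesis $\sum_{k=i+1}^n \alpha_{i,k} a_{i,k} a_{k,i} < 1$ becomes $\alpha\sum_{k=i+1}^n a_{i,k} a_{k,i} < 1$ for $i = 1,\cdots,n-1$, which is precisely the scalar condition assumed above. The second hypothesis $(1-\alpha_{i,j})a_{i,j} - \sum_{k=i+1,\,k\ne j}^n \alpha_{i,k} a_{i,k} a_{k,j} \le 0$ becomes $(1-\alpha)a_{i,j} - \alpha\sum_{k=i+1,\,k\ne j}^n a_{i,k} a_{k,j} \le 0$ for $j > i$, which is exactly the inequality (\ref{eqs3.38}). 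Each of these is obtained by literal substitution, so no inequality is weakened or lost in passing from the per-entry coefficients $\alpha_{i,j}$ to the single scalar $\alpha$.

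There is essentially no analytic obstacle remaining: all of the substantive work, namely the reduction of Theorem A for $\nu=12$ to the L-matrix criterion of Corollary \ref{coro3-1} (through Theorem \ref{thm3.5} and the sign bookkeeping of Lemma \ref{lem3-3}), has already been carried out in establishing Theorem \ref{thm3.45}. The only point requiring care is the verification in the previous paragraph that the two hypotheses of Theorem \ref{thm3.45} are genuinely implied by the present assumptions, and that verification is immediate. Consequently, applying Theorem \ref{thm3.45} with the coefficient choice $\alpha_{i,j}=\alpha$ yields that Theorem A holds for $\nu = 13$, which completes the argument.
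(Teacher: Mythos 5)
Your proposal is correct and follows exactly the paper's own route: the paper obtains this theorem by specializing Theorem \ref{thm3.45} (the $\nu=12$ result for $Q_{12}$) to the uniform choice $\alpha_{i,j}=\alpha$, under which $Q_{12}$ reduces to $Q_{13}=\alpha U$ and the two hypotheses of Theorem \ref{thm3.45} become precisely the two assumptions stated here. Your additional check of the nondegeneracy condition ($U\neq 0$, so that $\alpha\sum_{i<j}a_{i,j}\neq 0$) is a detail the paper leaves implicit, but otherwise the arguments coincide.
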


\begin{theorem}
Suppose that (\ref{eqs3.38}) holds.
Then Theorem B is valid for $\nu = 13$.
\end{theorem}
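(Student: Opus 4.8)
The plan is to deduce this statement as the constant-coefficient specialization of the general upper triangular result, Theorem~\ref{thm3.46}. Since $Q_{13}=\alpha U$ is exactly the matrix $Q_{12}$ with every $\alpha_{i,j}$ (for $i<j$) replaced by the single scalar $\alpha$, the defining inequality (\ref{eqs3.36}) of Theorem~\ref{thm3.46} degenerates term by term into (\ref{eqs3.38}). Thus the hypothesis of Theorem~\ref{thm3.46} is met and its conclusion, namely that Theorem B holds, transfers directly to the case $\nu=13$.

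To make the reduction explicit I would first recall, via Lemma~\ref{lem1-10}, that $P_{13}A$ is again a nonsingular M-matrix; consequently for Theorem B the only property of $A^{(13)}=P_{13}A$ that must be checked by hand is that it is a Z-matrix, i.e. $a^{(13)}_{i,j}\le 0$ for all $i\neq j$. The positivity of the diagonal and the monotonicity are then automatic. Using $a^{(2)}_{i,j}=(1-\alpha_{i,j})a_{i,j}-\sum_{k\neq i,j}\alpha_{i,k}a_{i,k}a_{k,j}$ with $\alpha_{i,k}=\alpha$ for $k>i$ and $\alpha_{i,k}=0$ otherwise, I would split the off-diagonal entries into the cases $j>i$ and $j<i$.

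For $j>i$ the entry equals $(1-\alpha)a_{i,j}-\alpha\sum_{k=i+1,\,k\neq j}^{n}a_{i,k}a_{k,j}$, whose nonpositivity is precisely the hypothesis (\ref{eqs3.38}). For $j<i$ the entry collapses to $a_{i,j}-\alpha\sum_{k=i+1}^{n}a_{i,k}a_{k,j}$, and here no extra assumption is needed: since $A$ is an L-matrix one has $a_{i,j}\le 0$ and, for each $k>i$, both $a_{i,k}\le 0$ and $a_{k,j}\le 0$, so $a_{i,k}a_{k,j}\ge 0$ and the whole entry is $\le a_{i,j}\le 0$. This bookkeeping is the single point that has to be spelled out, and it also explains why the condition $\alpha\sum_{k=i+1}^{n}a_{i,k}a_{k,i}<1$ required for Theorem A may be dropped here: under the nonsingular M-matrix hypothesis the diagonal entries of $A^{(13)}$ are automatically positive.

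Once $A^{(13)}$ is known to be a Z-matrix, I would close the argument by invoking Corollary~\ref{coro3-2} (equivalently Theorem~\ref{thm3.2}) with $P_1=P_{13}$, which yields Theorem B at once. There is no substantial obstacle: the whole proof is a routine verification that the previously established machinery applies, the only care being the sign analysis of the subdiagonal entries.
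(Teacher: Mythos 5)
Your proof is correct and takes essentially the same route as the paper: the paper obtains this theorem precisely as the constant-coefficient specialization $\alpha_{i,j}=\alpha$ of Theorem~\ref{thm3.46}, and your explicit sign check that $a^{(13)}_{i,j}\le a_{i,j}\le 0$ for $j<i$ merely unrolls the computation the paper already performs in the $Q_{12}$ subsection before invoking Theorem~\ref{thm3.6} and, ultimately, Corollary~\ref{coro3-2} via Lemma~\ref{lem1-10}. Nothing further is needed.
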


\begin{theorem} \label{thm3.51}
Suppose that $\alpha\sum_{k = i+1}^na_{i,k}a_{k,i} < 1$, $i = 1,\cdots,n-1$.
Then Theorem C is valid for $\nu = 13$,
provided one of the following conditions is satisfied:

\begin{itemize}
\item[(i)] $0 \le \gamma < 1$ and
\begin{eqnarray}\label{eqn3.555}\nonumber
&& (1 - \alpha)a_{i,j} - \alpha\sum\limits_{{k = i+1}\atop{k\not = j}}^n a_{i,k}a_{k,j}
 \lesssim 0 \;\; \hbox{whenever} \; a_{i,j} < 0, \\
 && i = 1, \cdots, n-1, j > i.
\end{eqnarray}

\item[(ii)] $\gamma = 1$. The inequality (\ref{eqn3.555}) holds and one of the following conditions holds:

\begin{itemize}
\item[($ii_1$)]
There exist $i\in\{1, \cdots, n-1\}$, $j\in\{1, \cdots, i\}$ and
$k\in\{i+1, \cdots, n\}$ such that
$a_{i,k}a_{k,j} > 0$.

\item[($ii_2$)]
There exist $i\in\{1, \cdots, n-1\}$, $j\in\{i+2, \cdots, n\}$ and $k\in\{i+1, \cdots, j-1\}$ such that
$a_{i,k}a_{k,j} > 0$.

\item[($ii_3$)]
There exists $k\in\{1, \cdots, n-1\}$ such that $a_{k,k+1} < 0$.

\item[($ii_4$)]
$a_{n,1} < 0$.
\end{itemize}

\item[(iii)] $0 \le \gamma < 1$ and the inequality (\ref{eqs3.38}) holds. For each $i\in\{1, \cdots, n-1\}$,
there exists $j(i)\in\{i+1, \cdots, n\}$ such that $a_{i,j(i)} < 0$.

\item[(iv)] $\gamma = 1$ and the inequality (\ref{eqs3.38}) holds. For each $i\in\{1, \cdots, n-1\}$
one of the following conditions holds:

\begin{itemize}
\item[($iv_1$)] There exist $j(i)\in\{1, \cdots, i\}$ and
$k(i)\in\{i+1, \cdots, n\}$ such that\\
$a_{i,k(i)}a_{k(i),j(i)} > 0$.

\item[($iv_2$)] There exist $j(i)\in\{i+2, \cdots, n\}$ and
$k(i)\in\{i+1, \cdots, j-1\}$ such that
$a_{i,k(i)}a_{k(i),j(i)} > 0$.

\item[($iv_3$)] $a_{i,i+1} < 0$.
\end{itemize}\end{itemize}
\end{theorem}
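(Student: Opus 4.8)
The plan is to obtain Theorem~\ref{thm3.51} as the specialization of Theorem~\ref{thm3.47} to the uniform choice $\alpha_{i,j}=\alpha$ for all $i=1,\dots,n-1$, $j>i$, which is exactly the preconditioner $Q_{13}=\alpha U$. First I would record that under this substitution the standing hypothesis $\sum_{k=i+1}^{n}\alpha_{i,k}a_{i,k}a_{k,i}<1$ of Theorem~\ref{thm3.47} becomes $\alpha\sum_{k=i+1}^{n}a_{i,k}a_{k,i}<1$, i.e.\ exactly the hypothesis assumed here; that the quantities $\delta^{(12)}_{i,j}$ collapse to the $\delta^{(13)}_{i,j}$ displayed just before the statement; and that the inequalities (\ref{eqn3.377}) and (\ref{eqs3.36}) reduce respectively to (\ref{eqn3.555}) and (\ref{eqs3.38}). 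The whole task is then to verify that each of the conditions (i)--(iv) of Theorem~\ref{thm3.51} forces the corresponding condition of Theorem~\ref{thm3.47}, after which the conclusion of Theorem~C follows verbatim.

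Because $\alpha>0$, almost all of these implications are mechanical: the positive factor $\alpha$ cancels from every strict inequality. Thus (i) transcribes to (i) of Theorem~\ref{thm3.47}; conditions ($ii_1$), ($ii_2$) here, which read $a_{i,k}a_{k,j}>0$, are equivalent to $\alpha_{i,k}a_{i,k}a_{k,j}>0$ and so to ($ii_1$), ($ii_2$) there; ($ii_3$) here, namely $a_{k,k+1}<0$, yields ($ii_3$) there since $\alpha_{k,k+1}=\alpha>0$; and (iii), (iv) (with ($iv_1$)--($iv_3$)) transcribe directly because every $\alpha_{i,j}$ is the fixed positive number $\alpha$. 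For all of these branches the conclusion is immediate from Theorem~\ref{thm3.47}.

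The only branch requiring genuine argument is ($ii_4$), which assumes merely $a_{n,1}<0$ and is strictly weaker than each of ($ii_4$)--($ii_6$) of Theorem~\ref{thm3.47}. Here I would use the irreducibility of $A$: since $G(A)$ is strongly connected, vertex $n$ has an incoming edge, so there is an index $i_0\in\{1,\dots,n-1\}$ with $a_{i_0,n}<0$ (column $n$ must carry a nonzero off-diagonal entry, which for an L-matrix is negative). Then $a_{i_0,n}a_{n,1}>0$ with $j=1\le i_0$ and $k=n>i_0$, which is precisely condition ($ii_1$) of Theorem~\ref{thm3.47}. This mirrors the device already used in the proof of Theorem~\ref{thm3.15}, where column $1$ plays the role now taken by column $n$. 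I expect the main ``obstacle'' to be purely bookkeeping, namely confirming that the derived triple $(i_0,j,k)$ satisfies the index ranges $1\le j\le i_0$ and $i_0<k\le n$ demanded by ($ii_1$); these hold by construction, completing the reduction and hence the proof.
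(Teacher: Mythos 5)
Your proposal is correct and follows the paper's own route exactly: the paper also obtains Theorem \ref{thm3.51} by specializing Theorem \ref{thm3.47} to $\alpha_{i,j}=\alpha$, notes that all branches except ($ii_4$) transcribe mechanically, and disposes of ($ii_4$) with the identical argument — irreducibility of $A$ gives some $i_0\in\{1,\cdots,n-1\}$ with $a_{i_0,n}<0$, whence $a_{i_0,n}a_{n,1}>0$ verifies ($ii_1$) with $i=i_0$, $j=1$, $k=n$.
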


\begin{proof}
We just need to prove ($ii_4$).
From the irreducibility of $A$, there exists $i_0\in\{1, \cdots, n-1\}$ such that
$a_{i_0,n} < 0$ so that $a_{i_0,n}a_{n,1} > 0$, which implies that ($ii_1$) holds for $i = i_0$, $j = 1$ and
$k = n$.
 \end{proof}

\begin{theorem}\label{thm3.52}
Theorem D is valid for $\nu = 13$,
provided one of the following conditions is satisfied:

\begin{itemize}
\item[(i)] One of the conditions ($i$)-($iv$) of Theorem \ref{thm3.51} holds.

\item[(ii)] The inequality (\ref{eqs3.38}) holds and one of the following conditions holds:

\begin{itemize}
\item[($ii_1$)]
There exist $i\in\{1,\cdots,n-1\}$ and $j\in\{i+1,\cdots,n\}$ such that
$a_{i,j}a_{j,i} > 0$.

\item[($ii_1$)] $\gamma > 0$. There exist $i\in\{2,\cdots,n-1\}$, $j\in\{1,\cdots,i-1\}$
and $k\in\{i+1,\cdots,n\}$ such that $a_{i,k}a_{k,j} > 0$.
\end{itemize}\end{itemize}
\end{theorem}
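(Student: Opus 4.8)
The plan is to obtain Theorem D for $\nu=13$ as the uniform-weight specialization of Theorem \ref{thm3.48}, taking $\alpha_{i,j}=\alpha$ for all $i<j$ (and $\alpha_{i,j}=0$ otherwise), since this is exactly the choice that turns $Q_{12}$ into $Q_{13}=\alpha U$. Because $\alpha>0$, this substitution preserves every sign that appears in the hypotheses of Theorem \ref{thm3.48}, so the whole argument reduces to matching each hypothesis of the present theorem with the corresponding hypothesis of Theorem \ref{thm3.48}; no independent spectral estimate is needed. First I would record that under $\alpha_{i,j}=\alpha$ the inequality (\ref{eqs3.36}) is nothing but (\ref{eqs3.38}), the inequality (\ref{eqn3.377}) is nothing but (\ref{eqn3.555}), and the quantities $\delta_{i,j}^{(12)}(\gamma)$, $\delta_{i,j}^{(12)}(1)$ collapse to $\delta_{i,j}^{(13)}(\gamma)$, $\delta_{i,j}^{(13)}(1)$.

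Next I would treat condition (i). With the identifications above, the four alternatives (i)--(iv) of Theorem \ref{thm3.51} are precisely the $\alpha_{i,j}=\alpha$ instances of the alternatives (i)--(iv) of Theorem \ref{thm3.47}. Hence whenever condition (i) of the present theorem holds, one of the conditions (i)--(iv) of Theorem \ref{thm3.47} holds for these uniform weights, which is exactly condition (i) of Theorem \ref{thm3.48}, and the conclusion follows from Theorem \ref{thm3.48}.

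Then I would treat condition (ii). The standing inequality (\ref{eqs3.38}) is (\ref{eqs3.36}) evaluated at $\alpha_{i,j}=\alpha$, so the running hypothesis of Theorem \ref{thm3.48}(ii) is met. Because $\alpha>0$, we have $\alpha_{i,j}a_{i,j}a_{j,i}=\alpha\,a_{i,j}a_{j,i}$, which is positive exactly when $a_{i,j}a_{j,i}>0$, and likewise $\alpha_{i,k}a_{i,k}a_{k,j}>0$ if and only if $a_{i,k}a_{k,j}>0$. Thus the two alternatives listed under condition (ii) of the present theorem coincide with the alternatives ($ii_1$) and ($ii_2$) under condition (ii) of Theorem \ref{thm3.48}. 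In either case condition (ii) of Theorem \ref{thm3.48} holds, and an application of Theorem \ref{thm3.48} yields $\rho({\mathscr L}^{(13)}_{\gamma,\omega}) < \rho({\mathscr L}_{\gamma,\omega}) < 1$, i.e.\ Theorem D for $\nu=13$.

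The only point that needs care, rather than being a genuine obstacle, is the passage from the Stein--Rosenberg trichotomy to the strict inequality of Theorem D, and this is already packaged inside Theorem \ref{thm3.48}: an irreducible nonsingular M-matrix is in particular an irreducible L-matrix whose AOR splitting is convergent by Lemma \ref{lem1-5}, so $\rho({\mathscr L}_{\gamma,\omega})<1$ forces the first alternative of the trichotomy supplied by Theorem \ref{thm3.47}. Since that reasoning is carried out within Theorem \ref{thm3.48}, no new work arises here, and the entire proof is the bookkeeping of the uniform-weight specialization described above.
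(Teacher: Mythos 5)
Your proposal is correct and is essentially the paper's own (implicit) argument: the paper states this theorem with no separate proof, precisely because it is the uniform-weight specialization $\alpha_{i,j}=\alpha>0$ ($j>i$) of Theorem \ref{thm3.48} — condition ($ii$) here is condition ($ii$) of Theorem \ref{thm3.48} with the positive factor $\alpha$ cancelled, and (\ref{eqs3.38}) is (\ref{eqs3.36}) at these weights.

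One claim in your write-up is slightly too strong: the alternatives ($i$)--($iv$) of Theorem \ref{thm3.51} are \emph{not} precisely the uniform-weight instances of ($i$)--($iv$) of Theorem \ref{thm3.47}. The exception is ($ii_4$) of Theorem \ref{thm3.51}, namely $a_{n,1}<0$ alone, which matches no single condition of Theorem \ref{thm3.47}; it yields one only via irreducibility of $A$ (there exists $i_0$ with $a_{i_0,n}<0$, hence $a_{i_0,n}a_{n,1}>0$, giving ($ii_1$)), which is exactly the argument in the paper's proof of Theorem \ref{thm3.51}. Since $A$ is irreducible in the setting of Theorem D, this is a one-line patch; alternatively, for condition ($i$) you can bypass Theorems \ref{thm3.47}--\ref{thm3.48} entirely by citing Theorem \ref{thm3.51} as already established: it supplies the Stein--Rosenberg trichotomy for $\nu=13$, and since $A$ is a nonsingular M-matrix we have $\rho({\mathscr L}_{\gamma,\omega})<1$ by Lemma \ref{lem1-5}, so the first alternative — Theorem D — is forced. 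That is the reasoning you sketch in your closing paragraph, and it closes the gap without any reference to the $\nu=12$ results.
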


\begin{corollary}
Suppose that $0 < \alpha \le 1$ and $\alpha\sum_{k = i+1}^na_{i,k}a_{k,i} < 1$, $i = 1,\cdots,n-1$.
Then Theorem A is valid for $\nu = 13$.
\end{corollary}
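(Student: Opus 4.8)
The plan is to recognize that the preconditioner $Q_{13} = \alpha U$ is precisely the special case of the upper triangular preconditioner $Q_{12} = (-\alpha_{i,j}a_{i,j})$ obtained by taking $\alpha_{i,j} = \alpha$ for all $i < j$. Accordingly, the corollary should follow by a direct specialization of Corollary \ref{coro3-29}, which already establishes Theorem A for $\nu = 12$ under the hypotheses $0 \le \alpha_{i,j} \le 1$ together with $\sum_{k = i+1}^n \alpha_{i,k}a_{i,k}a_{k,i} < 1$ for $i = 1,\cdots,n-1$. No new analytic work is required beyond verifying that the hypotheses match.

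First I would check that the two assumptions of the present statement are exactly the specializations of those in Corollary \ref{coro3-29}. Substituting $\alpha_{i,j} = \alpha$, the hypothesis $0 < \alpha \le 1$ immediately gives $0 \le \alpha_{i,j} = \alpha \le 1$ for every pair $i < j$. Likewise, pulling the constant out of the sum turns $\sum_{k = i+1}^n \alpha_{i,k}a_{i,k}a_{k,i}$ into $\alpha\sum_{k = i+1}^n a_{i,k}a_{k,i}$, so the assumed bound $\alpha\sum_{k = i+1}^n a_{i,k}a_{k,i} < 1$ for $i = 1,\cdots,n-1$ is precisely the condition $\sum_{k = i+1}^n \alpha_{i,k}a_{i,k}a_{k,i} < 1$ demanded by Corollary \ref{coro3-29}.

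Once the hypotheses are matched, the conclusion is immediate: Corollary \ref{coro3-29} guarantees that Theorem A holds for the upper triangular preconditioner $Q_{12}$ with this choice of weights, and since the collapse $Q_{12} = (-\alpha_{i,j}a_{i,j}) = \alpha U = Q_{13}$ yields $A^{(13)} = A^{(12)}$ and hence ${\mathscr L}^{(13)}_{\gamma,\omega} = {\mathscr L}^{(12)}_{\gamma,\omega}$, Theorem A is valid for $\nu = 13$. There is essentially no obstacle here; the only matter requiring attention is the routine bookkeeping of confirming that $Q_{13} = \alpha U$ indeed produces the same preconditioned matrix $A^{(\nu)}$ as $Q_{12}$ under the substitution $\alpha_{i,j} = \alpha$, which is transparent from the definitions.
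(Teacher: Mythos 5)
Your proof is correct and follows essentially the same route as the paper: the paper also obtains this corollary by specializing the $\nu = 12$ results (Theorems for $Q_{12}$ and Corollary on $0 \le \alpha_{i,j} \le 1$) to the constant choice $\alpha_{i,j} = \alpha$, under which $Q_{12}$ collapses to $Q_{13} = \alpha U$ and the hypotheses match exactly as you verified.
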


The result improves the corresponding one given by \cite[Theorem 3.6]{WWS07}.

\begin{corollary}
Suppose that $0 < \alpha \le 1$.
Then Theorem B is valid for $\nu = 13$.
\end{corollary}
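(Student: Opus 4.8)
The plan is to derive this corollary from the preceding comparison theorem that already establishes Theorem B for $\nu = 13$ under the single hypothesis (\ref{eqs3.38}); the only work is to check that $0 < \alpha \le 1$ forces (\ref{eqs3.38}) whenever $A$ is a nonsingular M-matrix. Recall that a nonsingular M-matrix is in particular an L-matrix, so by the normalization adopted in Section \ref{se3} we have $a_{i,i} = 1$ and $a_{i,j} \le 0$ for all $i \ne j$, together with $P_{13} = I + \alpha U > 0$.

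First I would fix $i \in \{1, \dots, n-1\}$ and $j > i$ and bound the left-hand side of (\ref{eqs3.38}). Since $i < j$ gives $a_{i,j} \le 0$, and $\alpha \le 1$ gives $1 - \alpha \ge 0$, the term $(1-\alpha)a_{i,j}$ is nonpositive. In the remaining sum every index $k$ satisfies $k > i$ (hence $a_{i,k} \le 0$) and $k \ne j$ (hence $a_{k,j} \le 0$), so each product $a_{i,k}a_{k,j}$ is nonnegative; as $\alpha > 0$, the quantity $-\alpha\sum_{k} a_{i,k}a_{k,j}$ is also nonpositive. Adding the two nonpositive contributions shows that (\ref{eqs3.38}) holds for every admissible pair $(i,j)$, and the conclusion then follows at once from the $\nu = 13$ version of Theorem B (the theorem whose sole hypothesis is (\ref{eqs3.38})).

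Equivalently, one may argue directly and transparently. The inequality (\ref{eqs3.38}) states exactly that the strictly-upper off-diagonal entries of $A^{(13)} = P_{13}A$ are nonpositive, while the strictly-lower off-diagonal entries are automatically nonpositive because there $a^{(13)}_{i,j} = a_{i,j} - \alpha\sum_{k>i}a_{i,k}a_{k,j} \le a_{i,j} \le 0$ (each product in the sum is again a product of two nonpositive off-diagonal entries). Hence $A^{(13)}$ is a Z-matrix, and since $P_{13} > 0$ is nonsingular, Lemma \ref{lem1-10} promotes $A^{(13)}$ back to a nonsingular M-matrix; Theorem \ref{thm3.2} (via Theorem \ref{thm3.1}) then delivers $\rho({\mathscr L}^{(13)}_{\gamma,\omega}) \le \rho({\mathscr L}_{\gamma,\omega}) < 1$.

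There is essentially no obstacle: the statement is a pure sign-chasing corollary. The one point worth flagging is why, in contrast to the companion Theorem A corollary, no diagonal-bound condition such as $\alpha\sum_{k>i} a_{i,k}a_{k,i} < 1$ is needed. The reason is that positivity of the diagonal of $A^{(13)}$ is not assumed but deduced: once $A^{(13)}$ is known to be a Z-matrix, Lemma \ref{lem1-10} makes it a nonsingular M-matrix, and nonsingular M-matrices automatically have positive diagonal entries, so the extra hypothesis present in the L-matrix (Theorem A) setting becomes superfluous here.
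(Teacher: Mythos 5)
Your proof is correct and takes essentially the same route as the paper: the paper obtains this corollary directly from the preceding theorem whose sole hypothesis is (\ref{eqs3.38}), using exactly your sign-chasing observation that $0 < \alpha \le 1$ together with the Z-matrix property of $A$ makes $(1-\alpha)a_{i,j} \le 0$ and each product $a_{i,k}a_{k,j} \ge 0$, so (\ref{eqs3.38}) holds automatically. Your alternative ``direct'' argument (showing $A^{(13)}$ is a Z-matrix and invoking Lemma \ref{lem1-10} and Theorem \ref{thm3.2}) just unwinds the same chain the paper's theorems rest on, and your closing remark on why no diagonal bound is needed for Theorem B agrees with the paper's logic.
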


\begin{corollary}\label{coro3-35}
Suppose that $\alpha\sum_{k = i+1}^na_{i,k}a_{k,i} < 1$, $i = 1,\cdots,n-1$.
Then Theorem C is valid for $\nu = 13$,
provided one of the following conditions is satisfied:

\begin{itemize}
\item[(i)] $0\le\gamma<1$ and $0 < \alpha \lesssim 1$.

\item[(ii)] $\gamma=1$ and $0 < \alpha \lesssim 1$. One of the conditions
($ii_1$)-($ii_4$) in Theorem \ref{thm3.51} is satisfied.

\item[(iii)] $0 \le \gamma < 1$, $0 < \alpha \le 1$ and for each $i\in\{1, \cdots, n-1\}$,
there exists $j(i)\in\{i+1, \cdots, n\}$ such that $a_{i,j(i)} < 0$.

\item[(iv)] $\gamma = 1$, $0 < \alpha \le 1$ and for each $i\in\{1, \cdots, n-1\}$
one of the conditions ($iv_1$)-($iiv_3$) in Theorem \ref{thm3.51} holds.
\end{itemize}
\end{corollary}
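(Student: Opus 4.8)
The plan is to obtain Corollary~\ref{coro3-35} as a direct specialization of Theorem~\ref{thm3.51}, the guiding observation being that for the choice $Q_{13}=\alpha U$ the restriction $0<\alpha\le 1$ (respectively $0<\alpha\lesssim 1$) automatically forces the numerical inequality hypotheses appearing in Theorem~\ref{thm3.51}, so that only the combinatorial hypotheses survive, and those are reproduced verbatim in the four cases of the corollary. Concretely, I would treat the pairs (iii)/(iv) of the corollary alongside parts (iii)/(iv) of Theorem~\ref{thm3.51}, and the pair (i)/(ii) alongside parts (i)/(ii) of that theorem; in every case the spectral hypothesis $\alpha\sum_{k=i+1}^n a_{i,k}a_{k,i}<1$ is carried over unchanged.

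The key step is an elementary sign computation. Since $A$ is an L-matrix normalized to unit diagonal, every off-diagonal entry satisfies $a_{i,j}\le 0$; hence for $k\ge i+1$ with $k\neq j$ the product $a_{i,k}a_{k,j}$ is a product of two nonpositive numbers and is therefore $\ge 0$. Consequently $-\alpha\sum_{k=i+1,\,k\neq j}^n a_{i,k}a_{k,j}\le 0$, while $0<\alpha\le 1$ gives $(1-\alpha)a_{i,j}\le 0$. Adding these shows that the inequality (\ref{eqs3.38}) holds for all $i<j$, which is exactly the numerical requirement invoked in parts (iii) and (iv) of Theorem~\ref{thm3.51}. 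Thus, once $0<\alpha\le 1$ is assumed, parts (iii) and (iv) of the corollary collapse onto parts (iii) and (iv) of Theorem~\ref{thm3.51}, whose remaining structural hypotheses (existence of $j(i)$ with $a_{i,j(i)}<0$, or one of ($iv_1$)--($iv_3$)) are identical to those stated in the corollary.

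For parts (i) and (ii) the same computation must be rerun with the sharpened relation $\lesssim$ in place of $\le$, and this bookkeeping is the one point requiring care. I would argue that $0<\alpha\lesssim 1$ implies (\ref{eqn3.555}): when $a_{i,j}<0$, if $\alpha<1$ then $(1-\alpha)a_{i,j}<0$ dominates and the left-hand side is strictly negative, matching the strict reading of $\lesssim$; if $\alpha=1$, which $0<\alpha\lesssim 1$ permits only when $A^{(13)}$ is irreducible, the term $(1-\alpha)a_{i,j}$ vanishes and the left-hand side equals $-\alpha\sum a_{i,k}a_{k,j}\le 0$, matched by the $\le$ reading of $\lesssim$ that is licensed precisely because $A^{(13)}$ is then irreducible. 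The \emph{main obstacle}, therefore, is not any analytic estimate but the consistent tracking of the $\lesssim$ convention through both the hypothesis $0<\alpha\lesssim 1$ and the conclusion (\ref{eqn3.555}), so that the irreducibility status of $A^{(13)}$ agrees at both ends. With (\ref{eqn3.555}) established, the structural conditions ($ii_1$)--($ii_4$) for $\gamma=1$ in Theorem~\ref{thm3.51} coincide with those listed in the corollary, and a final appeal to Theorem~\ref{thm3.51} completes the argument.
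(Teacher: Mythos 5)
Your proposal is correct and follows exactly the route the paper intends: Corollary \ref{coro3-35} is obtained from Theorem \ref{thm3.51} by noting that, for an L-matrix with unit diagonal, the products $a_{i,k}a_{k,j}$ ($k\neq j$) are nonnegative, so $0<\alpha\le 1$ forces (\ref{eqs3.38}) and $0<\alpha\lesssim 1$ forces (\ref{eqn3.555}), after which the structural hypotheses carry over verbatim. Your careful tracking of the $\lesssim$ convention (both occurrences referring to the irreducibility of $A^{(13)}$) is precisely the bookkeeping the paper leaves implicit when it states these corollaries as following ``directly.''
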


The result when ($i$) holds is better than the corresponding one given by \cite[Theorem 3.3]{YLK14}.

If $\sum_{k = i+1}^na_{i,k}a_{k,i} > 0, \; i = 1,\cdots,n-1$,
then for each $i\in\{1, \cdots, n-1\}$, there exists
$k(i)\in\{i+1, \cdots, n\}$ such that
$a_{i,k(i)}a_{k(i),i} > 0$ and $a_{i,k(i)} < 0$, which implies that ($iii$) of Theorem \ref{thm3.51}
holds for $j(i) = k(i)$ and ($iv_1$) in Theorem \ref{thm3.51}
holds for $j(i) = i$. Hence, Corollary \ref{coro3-35} when ($iii$) and ($iv$) hold is better than
\cite[Theorems 3.1, 3.2, 3.4]{YLK14}.

When $\alpha = 1$, \cite{MEY01} studies the preconditioned SOR method.
The main result \cite[Theorem 3.1]{MEY01} presents a Stein-Rosenberg type
comparison theorem. But it is incorrect. Where the authors
assume that $A$ is strictly diagonally dominant. Under this condition,
by \cite[Theorem 6-2.3]{BP94}, $A$ is a nonsingular M-matrix. Then, by
\cite[Theorem 7-5.24]{BP94}, it gets that $\rho({\mathscr L}_{\omega}) < 1$.
Hence, with our sign, \cite[Theorem 3.1]{MEY01} should be corrected as follows:
{\it ``If $A$ is a strictly diagonally dominant Z-matrix such that
$0 < a_{k,k+1}a_{k+1,k} < 1$, $k= 1,\cdots,n-1$ and $0 < \omega < 1$,
then $\rho({\mathscr L}^{(13)}_{\omega}) < \rho({\mathscr L}_{\omega}) < 1$"}.
While this result can be also derived from Corollary \ref{coro3-36}, directly.
In fact, the condition $a_{k,k+1}a_{k+1,k} > 0$, $k= 1,\cdots,n-1$,
implies that $A$ is irreducible and $a_{k,k+1} < 0$, $k= 1,\cdots,n-1$.
Therefore, the conditions ($iii$) and ($iv_2$) in Theorem \ref{thm3.51}
are satisfied. By ($i$) of Corollary \ref{coro3-36}, it follows that
$\rho({\mathscr L}^{(13)}_{\omega}) < \rho({\mathscr L}_{\omega}) < 1$
holds for $0 < \omega \le 1$.

\begin{corollary}\label{coro3-36}
Theorem D is valid for $\nu = 13$,
provided one of the following conditions is satisfied:

\begin{itemize}
\item[(i)] One of the conditions ($i$)-($iv$) of Corollary \ref{coro3-35} holds.

\item[(ii)] $0 \le \alpha \le 1$ and one of the conditions ($ii_1$) and ($ii_2$) in Theorem \ref{thm3.52} holds.
\end{itemize}
\end{corollary}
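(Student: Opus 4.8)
The plan is to reduce both hypotheses to the already-established conditions of Theorem \ref{thm3.52}, which asserts that Theorem D holds for $\nu = 13$. First I would record the two structural facts that drive everything. Since $A$ is an irreducible nonsingular M-matrix it is in particular an irreducible L-matrix, so Lemma \ref{lem1-5} (equivalently Theorem B) gives $\rho({\mathscr L}_{\gamma,\omega}) < 1$; and since every diagonal entry equals $1$, the off-diagonal entries satisfy $a_{i,j} \le 0$, whence every product $a_{i,k}a_{k,j}$ with $i,k,j$ pairwise distinct is nonnegative. I would also keep Lemma \ref{lem1-10} at hand to guarantee that $A^{(13)}$, whenever it is a Z-matrix, is automatically a nonsingular M-matrix and hence an L-matrix.

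For part (i), the conditions (i)--(iv) of Corollary \ref{coro3-35} are exactly the specializations ($0 < \alpha \lesssim 1$ or $0 < \alpha \le 1$) under which that corollary already produces the Stein--Rosenberg trichotomy of Theorem C for $\nu = 13$. The route I would actually write is: invoke Corollary \ref{coro3-35} to obtain the strict trichotomy, and then use $\rho({\mathscr L}_{\gamma,\omega}) < 1$ to eliminate the equality case (b) and the ``$>1$'' case (c), leaving only $\rho({\mathscr L}^{(13)}_{\gamma,\omega}) < \rho({\mathscr L}_{\gamma,\omega}) < 1$, which is Theorem D. Equivalently, one checks that each such condition implies the matching condition of Theorem \ref{thm3.51} and then quotes Theorem \ref{thm3.52}(i); the only nontrivial step here is that $0 < \alpha \le 1$ forces the inequalities (\ref{eqs3.38}) and (\ref{eqn3.555}), which follows from $(1-\alpha)a_{i,j} \le 0$ together with $-\alpha\sum_k a_{i,k}a_{k,j} \le 0$ by the sign remark above.

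For part (ii), I would first verify that $0 \le \alpha \le 1$ implies (\ref{eqs3.38}): for $i < j$ one has $(1-\alpha)a_{i,j} \le 0$, while each term $a_{i,k}a_{k,j}$ in the relevant sum (over $i+1 \le k \le n$, $k \ne j$) is nonnegative, so $-\alpha$ times that sum is $\le 0$ and the left-hand side of (\ref{eqs3.38}) is nonpositive. Granting this, the pair ``(\ref{eqs3.38}) holds and one of ($ii_1$), ($ii_2$)'' is precisely hypothesis (ii) of Theorem \ref{thm3.52}, so Theorem D follows at once; under the hood this is the strict comparison delivered by Lemma \ref{lem1-13}.

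The main obstacle is not the algebra but the bookkeeping around reducibility of $A^{(13)} = (I + \alpha U)A$ and the meaning of $\lesssim$: when $\alpha = 1$ the preconditioner can destroy irreducibility, and one must ensure that the chosen sufficient condition still forces a strict separation rather than the borderline equality. This is exactly what the $\lesssim$ convention and the irreducibility analysis (in the spirit of Lemmas \ref{lem3-4}--\ref{lem3-7}) are designed to absorb; since that machinery is already compiled into Corollary \ref{coro3-35} and Theorem \ref{thm3.52}, the reduction closes without any fresh case analysis.
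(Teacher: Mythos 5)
Your proposal is correct and takes essentially the same route as the paper, which presents Corollary \ref{coro3-36} as following directly from Corollary \ref{coro3-35} and Theorem \ref{thm3.52}: part ($i$) by combining the Theorem C trichotomy with $\rho({\mathscr L}_{\gamma,\omega}) < 1$ (from the nonsingular M-matrix hypothesis, via Lemma \ref{lem1-5}) to eliminate the two non-convergent branches, and part ($ii$) by the sign argument showing $0 \le \alpha \le 1$ forces (\ref{eqs3.38}) and then quoting Theorem \ref{thm3.52}($ii$), with Lemma \ref{lem1-10} supplying the positivity of the diagonal of $A^{(13)}$ that underlies the standing hypothesis of Corollary \ref{coro3-35}. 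The lone imprecision is in your parenthetical alternative route: $0 < \alpha \le 1$ alone forces (\ref{eqs3.38}) but not (\ref{eqn3.555}), whose ``$\lesssim$'' requires $\alpha < 1$ or irreducibility of $A^{(13)}$ (i.e.\ $0 < \alpha \lesssim 1$); since your primary argument never uses that mapping, nothing is affected.
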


Corresponding to $Q_6$, for $r = 2, \cdots, n$, in \cite{Wa06} the matrix $Q$ is defined as
\begin{eqnarray*}
Q_{14} = \left(\begin{array}{cccccc}
0 & \quad \cdots\quad & 0 & 0 & \cdots  & 0\\
\vdots & \ddots & \vdots & \vdots & \vdots & \vdots\\
0 & \cdots & 0 & -\alpha_{r}a_{r-1,r} & \cdots & -\alpha_{n}a_{r-1,n} \\
0 & \cdots & 0 & 0 & \cdots & 0\\
\vdots & \vdots &\vdots & \vdots & \ddots & \vdots\\
0 & \cdots & 0 & 0 & \cdots & 0\\
\end{array}\right)
 \end{eqnarray*}
with $\alpha_k \ge 0$, $k = r, \cdots, n$, and
\begin{eqnarray*}
\sum\limits_{k = r}^{n}\alpha_ka_{r-1,k} \not = 0.
 \end{eqnarray*}

In this case, $\delta_{i,j}^{(12)}(\gamma)$ and $\delta_{i,j}^{(12)}(1)$ reduce respectively to
\begin{eqnarray*}
\delta_{i,j}^{(14)}(\gamma) = \left\{\begin{array}{ll}
 \sum\limits_{{k = r}}^n \alpha_ka_{r-1,k}a_{k,r-1}, & i = j = r-1;\\
 (\gamma-1)\alpha_ra_{r-1,j} + \gamma\sum\limits_{k = r}^{j-1} \alpha_ka_{r-1,k}a_{k,j}, &
  \begin{array}{l} i = r-1, \\
  j = r,\cdots,n; \end{array}\\
 \gamma\sum\limits_{k = r}^n\alpha_ka_{r-1,k}a_{k,j}, \; \hbox{whenever} \; r \ge 3, &
 \begin{array}{l} i = r-1,\\ j = 1,\cdots, r-2; \end{array}\\
  0, & otherwise
\end{array}\right.
\end{eqnarray*}
and
\begin{eqnarray*}
\delta_{i,j}^{(14)}(1) = \left\{\begin{array}{ll}
 \sum\limits_{k = r}^n\alpha_ka_{r-1,k}a_{k,j}, &
  i = r-1, j = 1,\cdots, r-1; \\
 \sum\limits_{k = r}^{j-1} \alpha_ka_{r-1,k}a_{k,j}, &
 i = r-1, j = r+1,\cdots,n;\\
  0, & otherwise.
\end{array}\right.
\end{eqnarray*}

Clearly, the conditions ($iii$) and ($iv$) of Theorem \ref{thm3.47} can be not satisfied.

From Theorems \ref{thm3.45}-\ref{thm3.48} and
Corollaries \ref{coro3-29}-\ref{coro3-32}, we have the following comparison results.

\begin{theorem}
Suppose that $\sum_{k = r}^{n}\alpha_ka_{r-1,k}a_{k,r-1} < 1$ and
\begin{eqnarray}\label{eqs3.40}
(1-\alpha_{j})a_{r-1,j} - \sum\limits_{{k = r}\atop{k\not = j}}^n \alpha_{k}a_{r-1,k}a_{k,j} \le 0,
\; j = r, \cdots, n.
\end{eqnarray}
Then Theorem A is valid for $\nu = 14$.
\end{theorem}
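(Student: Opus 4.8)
The plan is to derive this theorem directly from Theorem \ref{thm3.45}, recognizing $Q_{14}$ as the special case of $Q_{12}$ in which every row except the $(r-1)$th is set to zero. Concretely, I would make the identification $\alpha_{r-1,k} = \alpha_k$ for $k = r, \cdots, n$ and $\alpha_{i,j} = 0$ for every other pair $(i,j)$ with $i = 1, \cdots, n-1$ and $j > i$. Under this choice the general upper triangular matrix $Q_{12}$ collapses precisely to $Q_{14}$, so it suffices to check that the two hypotheses of Theorem \ref{thm3.45} are inherited from the assumptions of the present statement.

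First I would verify the diagonal-type condition $\sum_{k=i+1}^n \alpha_{i,k}a_{i,k}a_{k,i} < 1$ for each $i = 1, \cdots, n-1$. When $i \neq r-1$ all coefficients $\alpha_{i,k}$ vanish, so the sum equals $0 < 1$ trivially. When $i = r-1$ the sum becomes $\sum_{k=r}^n \alpha_k a_{r-1,k} a_{k,r-1}$, which is less than $1$ by the first hypothesis of the theorem. Next I would check the off-diagonal inequality $(1-\alpha_{i,j})a_{i,j} - \sum_{k=i+1,\,k\neq j}^n \alpha_{i,k}a_{i,k}a_{k,j} \le 0$ for all $i = 1, \cdots, n-1$ and $j > i$. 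For $i \neq r-1$ the coefficients $\alpha_{i,j}$ and $\alpha_{i,k}$ are all zero, and the left-hand side reduces to $a_{i,j}$, which is nonpositive because $A$ is an L-matrix (its off-diagonal entries are $\le 0$). For $i = r-1$ and $j = r, \cdots, n$ the inequality becomes exactly $(1-\alpha_j)a_{r-1,j} - \sum_{k=r,\,k\neq j}^n \alpha_k a_{r-1,k}a_{k,j} \le 0$, which is precisely condition (\ref{eqs3.40}).

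Having confirmed both hypotheses of Theorem \ref{thm3.45} under the stated assumptions, the conclusion that Theorem A holds for $\nu = 14$ follows immediately. There is no genuine obstacle here: the whole argument is a bookkeeping reduction of $Q_{14}$ to a sparse instance of $Q_{12}$, and the only point requiring a moment's care is confirming that the rows with $i \neq r-1$ of the off-diagonal condition are satisfied automatically by the sign structure of the L-matrix $A$, rather than by any additional assumption. I would state the reduction explicitly and cite Theorem \ref{thm3.45} to close the argument.
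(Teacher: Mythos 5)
Your proof is correct and is exactly the paper's route: the paper states this theorem as an immediate consequence of Theorem \ref{thm3.45} (the $Q_{12}$ result), obtained by specializing $\alpha_{r-1,k} = \alpha_k$ for $k = r,\cdots,n$ and $\alpha_{i,j} = 0$ for all other rows, and your verification that the rows $i \neq r-1$ satisfy the hypotheses trivially (the sum condition giving $0 < 1$, and the off-diagonal condition reducing to $a_{i,j} \le 0$ by the L-matrix assumption) is precisely the bookkeeping the paper leaves implicit.
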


\begin{theorem}
Suppose that (\ref{eqs3.40}) holds.
Then Theorem B is valid for $\nu = 14$.
\end{theorem}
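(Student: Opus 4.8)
The plan is to recognize that the preconditioner $Q_{14}$ is nothing but the specialization of the upper triangular preconditioner $Q_{12}$ obtained by setting $\alpha_{i,j}=0$ for every pair $(i,j)$ except $\alpha_{r-1,k}=\alpha_k$, $k=r,\cdots,n$, and then to reduce the present claim to the already-established Theorem \ref{thm3.46}. Concretely, I would show that the hypothesis (\ref{eqs3.40}), together with the Z-matrix structure of $A$, forces the hypothesis (\ref{eqs3.36}) of Theorem \ref{thm3.46} to hold for these specialized coefficients. Once that implication is in hand, Theorem \ref{thm3.46} yields $\rho({\mathscr L}^{(14)}_{\gamma,\omega}) \le \rho({\mathscr L}_{\gamma,\omega}) < 1$ immediately, which is exactly Theorem B for $\nu=14$.

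First I would write out the left-hand side of (\ref{eqs3.36}) for the coefficients of $Q_{14}$ and split into two cases according to the row index $i$. For $i\neq r-1$ every $\alpha_{i,k}$ vanishes, so the left-hand side of (\ref{eqs3.36}) collapses to $a_{i,j}$; since $j>i$ forces $i\neq j$, the L-matrix assumption on $A$ gives $a_{i,j}\le 0$, so the inequality is automatic. For $i=r-1$ and $j\in\{r,\cdots,n\}$ the surviving terms are exactly
\[
(1-\alpha_j)a_{r-1,j} - \sum\limits_{{k = r}\atop{k\not = j}}^{n}\alpha_k a_{r-1,k}a_{k,j},
\]
whose nonpositivity is precisely the assumed inequality (\ref{eqs3.40}). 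The one piece of bookkeeping to check carefully is the matching of index ranges: the inner sum in (\ref{eqs3.36}) runs over $k=i+1$, $k\neq j$, which becomes $k=r$, $k\neq j$ once $i=r-1$, in agreement with (\ref{eqs3.40}).

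Having verified (\ref{eqs3.36}), I would simply invoke Theorem \ref{thm3.46}. Under the standing convention for Theorem B the matrix $A$ is a nonsingular M-matrix, so the hypotheses of Theorem \ref{thm3.46} are met, and its conclusion is the desired chain $\rho({\mathscr L}^{(14)}_{\gamma,\omega}) \le \rho({\mathscr L}_{\gamma,\omega}) < 1$.

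There is essentially no serious obstacle here; the whole argument is a reduction to a previously proved theorem via a case check on the coefficients. The only point deserving attention is to confirm that the zero pattern of $Q_{14}$ is genuinely compatible with the upper triangular structure demanded of $Q_{12}$, namely that $r-1\in\{1,\cdots,n-1\}$ (which holds since $2\le r\le n$) and that $k>r-1$ for every $k\ge r$, so that $Q_{14}$ really is an instance of $Q_{12}$ and Theorem \ref{thm3.46} applies.
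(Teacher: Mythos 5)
Your proposal is correct and follows the paper's own route: the paper also treats $Q_{14}$ as the specialization of $Q_{12}$ (noting that $\delta^{(12)}_{i,j}$ reduces to $\delta^{(14)}_{i,j}$) and derives this theorem directly from Theorem \ref{thm3.46}, with (\ref{eqs3.40}) playing the role of (\ref{eqs3.36}) since all rows $i \neq r-1$ give the automatic inequality $a_{i,j}\le 0$. Your write-up merely makes explicit the index bookkeeping that the paper leaves implicit.
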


\begin{theorem}\label{thm3.55}
Suppose that $\sum_{k = r}^{n}\alpha_ka_{r-1,k}a_{k,r-1} < 1$ and
\begin{eqnarray}\label{eqs3.42}\nonumber
&& (1-\alpha_{j})a_{r-1,j} - \sum\limits_{{k = r}\atop{k\not = j}}^n \alpha_{k}a_{r-1,k}a_{k,j} \lesssim 0
\; \hbox{whenever} \;\; a_{r-1,j} < 0, \\
&& j = r, \cdots, n.
\end{eqnarray}
Then Theorem C is valid for $\nu = 14$,
provided one of the following conditions is satisfied:

\begin{itemize}
\item[(i)] $0 \le \gamma < 1$.

\item[(ii)]
$\gamma = 1$ and one of the following conditions holds:

\begin{itemize}
\item[($ii_1$)]
There exist $i\in\{1, \cdots, r-1\}$ and $j\in\{r, \cdots, n\}$
such that $\alpha_ja_{r-1,j}a_{j,i}$ $ > 0$.

\item[($ii_2$)]
There exist $i\in\{r+1, \cdots, n\}$ and $j\in\{r, \cdots, i-1\}$ such that $\alpha_ja_{r-1,j}a_{j,i} > 0$.

\item[($ii_3$)]
$a_{r-1,r} < 0$ and $\alpha_r > 0$.

\item[($ii_4$)]
$a_{k,1} < 0$, $k = r, \cdots, n$.

\item[($ii_5$)]
$a_{k,r-1} < 0$, $k = r, \cdots, n$.

\item[($ii_6$)]
$a_{n,1} < 0$ and $a_{k,n} < 0$, $k = r, \cdots, n-1$.

\item[($ii_7$)]
$a_{n,1} < 0$ and $a_{k,k+1} < 0$, $k = r, \cdots, n-1$.
\end{itemize}\end{itemize}
\end{theorem}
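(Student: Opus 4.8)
The plan is to derive Theorem~\ref{thm3.55} from Theorem~\ref{thm3.47}, the Stein--Rosenberg type comparison result already established for the general upper triangular preconditioner $Q_{12}$, in the same manner that the theorems for $Q_5$, $Q_6$ and $Q_7$ were reduced to their $Q_2$ and $Q_3$ counterparts. The matrix $Q_{14}$ is precisely the special case of $Q_{12}$ in which only the $(r-1)$th row survives, namely $\alpha_{i,j}=0$ unless $i=r-1$, with $\alpha_{r-1,j}=\alpha_j$ for $j=r,\dots,n$; the corresponding reductions $\delta^{(14)}_{i,j}(\gamma)$ and $\delta^{(14)}_{i,j}(1)$ are already recorded above. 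First I would note that the diagonal hypothesis $\sum_{k=r}^{n}\alpha_k a_{r-1,k}a_{k,r-1}<1$ forces $a^{(14)}_{i,i}>0$ for every $i$: trivially when $i\neq r-1$, since then the $i$th row of $Q_{14}$ vanishes, and by hypothesis when $i=r-1$. Hence $A^{(14)}$ is an L-matrix and the standing diagonal requirement of Theorem~\ref{thm3.47} holds.

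For the range $0\le\gamma<1$ I would check that the inequality~(\ref{eqn3.377}) of Theorem~\ref{thm3.47} collapses onto the hypothesis~(\ref{eqs3.42}). The left-hand side of~(\ref{eqn3.377}) equals the off-diagonal entry $a^{(14)}_{i,j}$; when $i\neq r-1$ this is simply $a_{i,j}$, which is strictly negative exactly when $a_{i,j}<0$, so the relation $\lesssim 0$ holds automatically, while for $i=r-1$ it is verbatim~(\ref{eqs3.42}). Thus condition~(i) of Theorem~\ref{thm3.47} is met and Theorem~C follows immediately.

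For $\gamma=1$ the strategy is to match each of $(ii_1)$--$(ii_7)$ against $(ii_1)$--$(ii_3)$ of Theorem~\ref{thm3.47}. The conditions $(ii_1)$, $(ii_2)$ and $(ii_3)$ are literal specialisations (take the active row $i=r-1$, let $k$ be the column carrying the nonzero multiplier, and let the target column range over $\{1,\dots,r-1\}$ or $\{r+1,\dots,n\}$), so they are immediate. For $(ii_4)$--$(ii_7)$ the recurring device is that, since $\sum_{k=r}^{n}\alpha_k a_{r-1,k}\neq 0$ while every summand obeys the sign pattern $\alpha_k\ge 0$ and $a_{r-1,k}\le 0$, there exists $k_0\in\{r,\dots,n\}$ with $\alpha_{k_0}a_{r-1,k_0}<0$. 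Multiplying this negative quantity by an appropriate negative off-diagonal entry of $A$ yields a strictly positive product $\alpha_{k_0}a_{r-1,k_0}\,a_{k_0,j}>0$ realising $(ii_1)$ or $(ii_2)$ of Theorem~\ref{thm3.47}: for $(ii_4)$ take $a_{k_0,1}<0$ and target column $1$; for $(ii_5)$ take $a_{k_0,r-1}<0$ and target column $r-1$; and for $(ii_6)$ and $(ii_7)$ split on $k_0$, using $a_{n,1}<0$ when $k_0=n$ (landing in $(ii_1)$) and $a_{k_0,n}<0$ or $a_{k_0,k_0+1}<0$ when $k_0<n$ (landing in $(ii_2)$).

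I expect the main obstacle to be the index bookkeeping in this last case split: one must confirm that the target column produced in each subcase falls into the admissible range demanded by $(ii_1)$ versus $(ii_2)$ of Theorem~\ref{thm3.47} --- for instance that $k_0+1\in\{r+1,\dots,n\}$ together with $k_0\in\{r,\dots,k_0\}$ for $(ii_7)$ with $k_0<n$ --- and that degenerate instances with $r=n$ are vacuous. Once these verifications are in place, condition~(ii) of Theorem~\ref{thm3.47}, combined with~(\ref{eqn3.377}) already confirmed, yields Theorem~C and completes the argument.
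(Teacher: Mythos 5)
Your proposal is correct and follows essentially the same route as the paper: conditions (i) and $(ii_1)$--$(ii_3)$ are read off as direct specialisations of Theorem \ref{thm3.47}, and each of $(ii_4)$--$(ii_7)$ is reduced to $(ii_1)$ or $(ii_2)$ by choosing $k_0\in\{r,\cdots,n\}$ with $\alpha_{k_0}a_{r-1,k_0}<0$ and splitting on $k_0<n$ versus $k_0=n$ exactly as the paper does.
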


\begin{proof}
By Theorem \ref{thm3.47} we just need to prove ($ii_4$)-($ii_7$).

From the definition of $Q_{14}$, there exists $k_0\in\{r,\cdots,n\}$ such that
$\alpha_{k_0}a_{r-1,k_0}$ $ < 0$.

If ($ii_4$) holds then $\alpha_{k_0}a_{r-1,k_0}a_{k_0,1} > 0$, which implies that
($ii_1$) holds for $i = 1$ and $j = k_0$.

Similarly, if ($ii_5$) holds then we can prove that ($ii_1$) holds for $i = r-1$ and $j = k_0$.

When ($ii_6$) holds, if $k_0 < n$ then $\alpha_{k_0}a_{r-1,k_0}a_{k_0,n} > 0$, which implies that
($ii_2$) holds for $i = n$ and $j = k_0$. If $k_0 = n$ then $\alpha_{n}a_{r-1,n}a_{n,1} > 0$,
which implies that ($ii_1$) holds for $i = 1$ and $j = n$.

Similarly, when ($ii_7$) holds we can prove that ($ii_2$) holds.
\end{proof}

\begin{theorem}\label{thm3.56}
Theorem D is valid for $\nu = 14$,
provided one of the following conditions is satisfied:

\begin{itemize}
\item[(i)] The inequality (\ref{eqs3.42}) and
one of the conditions ($i$) and ($ii$) of Theorem \ref{thm3.55} holds.

\item[(ii)] The inequality (\ref{eqs3.40}) holds and
one of the following conditions holds:

\begin{itemize}
\item[($ii_1$)] There exists $k\in\{r,\cdots,n\}$ such that
$\alpha_{k}a_{r-1,k}a_{k,r-1} > 0$.

\item[($ii_2$)] $\gamma > 0$ and there exist $k\in\{r,\cdots,n\}$ and $j\in\{1,\cdots,r-2\}$ such that
$\alpha_{k}a_{r-1,k}a_{k,j} > 0$.
\end{itemize}\end{itemize}
\end{theorem}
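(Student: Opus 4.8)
The plan is to obtain Theorem D for $\nu = 14$ by specialising the general upper-triangular statement Theorem \ref{thm3.48} (Theorem D for $\nu = 12$) to the preconditioner $Q_{14}$, exactly as Theorem \ref{thm3.55} was deduced from Theorem \ref{thm3.47}. Recall that $Q_{14}$ is the instance of $Q_{12}$ whose only possibly nonzero entries sit in row $r-1$, namely $\alpha_{r-1,k} = \alpha_k$ for $k = r,\ldots,n$ and $\alpha_{i,j} = 0$ otherwise; in particular $\alpha_{i,j} = 0$ whenever $j \le i$. Since $A$ is an irreducible nonsingular M-matrix it is a fortiori an irreducible L-matrix, so the hypotheses driving Theorems \ref{thm3.47}, \ref{thm3.48} and \ref{thm3.55} are all available.

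First I would settle condition (i). I claim that inequality (\ref{eqs3.42}) already forces $A^{(14)}$ to be a Z-matrix: for $i \neq r-1$ the preconditioner leaves the $i$-th row untouched, so $a^{(14)}_{i,j} = a_{i,j} \le 0$; for $i = r-1$ and $j \le r-2$ the correction $-\sum_{k=r}^{n}\alpha_k a_{r-1,k}a_{k,j}$ is nonpositive because $a_{r-1,k} \le 0$ and $a_{k,j} \le 0$; and for $i = r-1$, $j \ge r$ nonpositivity is precisely the content of (\ref{eqs3.42}), the case $a_{r-1,j} = 0$ being automatic by the same product argument. By Lemma \ref{lem1-10}, $A^{(14)}$ is then a nonsingular M-matrix, so in particular $a^{(14)}_{r-1,r-1} > 0$, i.e. $\sum_{k=r}^{n}\alpha_k a_{r-1,k}a_{k,r-1} < 1$; hence every hypothesis of Theorem \ref{thm3.55} holds and the Stein--Rosenberg dichotomy (Theorem C) is valid for $\nu = 14$. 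On the other hand, by Lemma \ref{lem1-5} the weak regular AOR splitting of the nonsingular M-matrix $A$ is convergent, so $\rho({\mathscr L}_{\gamma,\omega}) < 1$. The only alternative of the dichotomy compatible with $\rho({\mathscr L}_{\gamma,\omega}) < 1$ is case (i), namely $\rho({\mathscr L}^{(14)}_{\gamma,\omega}) < \rho({\mathscr L}_{\gamma,\omega}) < 1$, which is Theorem D.

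Next I would treat condition (ii), the direct route through the strict comparison Lemma \ref{lem1-13}, mirroring condition (ii) of Theorem \ref{thm3.48}. Inequality (\ref{eqs3.40}) is the analogue of (\ref{eqs3.42}) with $\le 0$ in place of $\lesssim 0$, and the same row-by-row inspection shows $A^{(14)}$ is a Z-matrix, hence a nonsingular M-matrix by Lemma \ref{lem1-10}. Because $Q_{14}$ is upper triangular ($\alpha_{i,j} = 0$ for $j \le i$), the monotonicity hypothesis $a_{i,j} \ge a^{(14)}_{i,j}$ for $j < i$ required in Theorem \ref{thm3.48}(ii) holds automatically: for $i \neq r-1$ one has equality, while for $i = r-1$, $j \le r-2$ the correction just computed is nonpositive. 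Moreover conditions ($ii_1$) and ($ii_2$) of the present theorem are exactly the $Q_{14}$-specialisations (set $i = r-1$, $j = k$ in the first, $i = r-1$ in the second) of conditions ($ii_1$) and ($ii_2$) of Theorem \ref{thm3.48}(ii): ($ii_1$) forces the strict drop $a^{(14)}_{r-1,r-1} < 1$, and ($ii_2$) with $\gamma > 0$ produces a strictly lower-triangular position $j \le r-2$ at which $a_{r-1,j} > a^{(14)}_{r-1,j}$. Invoking Theorem \ref{thm3.48}(ii) then yields $\rho({\mathscr L}^{(14)}_{\gamma,\omega}) < \rho({\mathscr L}_{\gamma,\omega}) < 1$.

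The argument is essentially bookkeeping, and the one point deserving care will be verifying that ($ii_1$) and ($ii_2$) supply the strict vector inequality $M_{\gamma,\omega}x > M^{(14)}_{\gamma,\omega}x$ demanded by Lemma \ref{lem1-13}. Since $A$ is irreducible, the eigenvector $x$ of ${\mathscr L}_{\gamma,\omega}$ belonging to $\rho({\mathscr L}_{\gamma,\omega})$ may be taken with $x \gg 0$ by Lemma \ref{lem1-8}, so a single positive entry of $M_{\gamma,\omega} - M^{(14)}_{\gamma,\omega}$ — furnished by the strict drop in the $(r-1,r-1)$ diagonal under ($ii_1$), or by the strict gap in a lower-triangular entry under ($ii_2$) with $\gamma > 0$ — already guarantees that $(M_{\gamma,\omega} - M^{(14)}_{\gamma,\omega})x$ has a strictly positive component. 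Together with $A^{-1} \gg 0$ (Lemma \ref{lem1-4}) and $A^{-1} \ge (P_{14}A)^{-1}$, this is precisely the input Lemma \ref{lem1-13} requires, and no estimate beyond those already established for $\nu = 12$ is needed.
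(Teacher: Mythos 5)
Your proposal is correct and follows essentially the same route as the paper: Theorem \ref{thm3.56} is obtained by specialising the $\nu=12$ result (Theorem \ref{thm3.48}) to $Q_{14}$, with condition ($i$) reduced to Theorem C (Theorem \ref{thm3.55}) plus convergence of the weak regular AOR splitting of a nonsingular M-matrix, and condition ($ii$) reduced to the strict comparison Lemma \ref{lem1-13}, after noting that the lower-triangular monotonicity requirement is automatic for an upper-triangular preconditioner and that ($ii_1$), ($ii_2$) are exactly the $Q_{14}$-specialisations of ($ii_1$), ($ii_2$) in Theorem \ref{thm3.48}. Your extra observation that Lemma \ref{lem1-10} makes the hypothesis $\sum_{k=r}^{n}\alpha_k a_{r-1,k}a_{k,r-1}<1$ of Theorem \ref{thm3.55} automatic under the assumptions of Theorem D is a correct and useful detail, not a departure from the paper's argument.
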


\begin{corollary}
Suppose that $0 \le \alpha_k \le 1$,
$k = r, \cdots, n$, and\\ $\sum_{k = r}^{n}\alpha_ka_{r-1,k}a_{k,r-1}$ $ < 1$.
Then Theorem A is valid for $\nu = 14$.
\end{corollary}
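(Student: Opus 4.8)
The plan is to recognize $Q_{14}$ as a special instance of the upper-triangular family $Q_{12}$ and then invoke Corollary \ref{coro3-29}, which already establishes Theorem A for $\nu=12$ under the two hypotheses $0\le\alpha_{i,j}\le 1$ and the diagonal smallness condition $\sum_{k=i+1}^{n}\alpha_{i,k}a_{i,k}a_{k,i}<1$. Concretely, $Q_{14}$ is obtained from $Q_{12}$ by the choice $\alpha_{r-1,j}=\alpha_j$ for $j=r,\dots,n$ and $\alpha_{i,j}=0$ in every other position, so the whole proof reduces to checking that these two bullet hypotheses are met by this choice.

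First I would verify the bound $0\le\alpha_{i,j}\le 1$ for all admissible pairs. For the active entries $i=r-1$, $j\ge r$ this is exactly the stated assumption $0\le\alpha_k\le 1$, while in every other position $\alpha_{i,j}=0$ trivially satisfies the bound. Next I would check $\sum_{k=i+1}^{n}\alpha_{i,k}a_{i,k}a_{k,i}<1$ for each $i\in\{1,\dots,n-1\}$: for $i=r-1$ it is precisely the given inequality $\sum_{k=r}^{n}\alpha_k a_{r-1,k}a_{k,r-1}<1$, and for every $i\ne r-1$ the sum is empty (all $\alpha_{i,k}=0$), hence equals $0<1$. Once both items are confirmed, Corollary \ref{coro3-29} applies and yields Theorem A for $\nu=14$.

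The only point that requires genuine care — and the step I would treat as the main obstacle — is confirming that this specialization does not silently violate the L-matrix structure that Corollary \ref{coro3-29} rests on, i.e. that no off-diagonal entry of $A^{(14)}$ becomes positive. I would check this directly from $a^{(14)}_{i,j}=a_{i,j}+q^{(14)}_{i,j}+\sum_{k\ne i,j}q^{(14)}_{i,k}a_{k,j}$. For $i\ne r-1$ the correction vanishes, so $a^{(14)}_{i,j}=a_{i,j}\le 0$. For $i=r-1$, $j\ge r$ one gets $(1-\alpha_j)a_{r-1,j}-\sum_{k=r,\,k\ne j}^{n}\alpha_k a_{r-1,k}a_{k,j}$, a sum of nonpositive terms since $1-\alpha_j\ge 0$ and $a_{r-1,j}\le 0$, while each product $a_{r-1,k}a_{k,j}\ge 0$ (two nonpositive off-diagonal entries) is weighted by $\alpha_k\ge 0$ and subtracted. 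For $i=r-1$, $j\le r-2$ the term $q^{(14)}_{r-1,j}$ is absent and the same sign pattern gives $a^{(14)}_{r-1,j}\le a_{r-1,j}\le 0$. Thus $A^{(14)}$ is a Z-matrix, and the diagonal condition forces $a^{(14)}_{r-1,r-1}=1-\sum_{k=r}^{n}\alpha_k a_{r-1,k}a_{k,r-1}>0$, so $A^{(14)}$ is an L-matrix.

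Finally I would note that this sign verification is exactly the content of inequality (\ref{eqs3.40}) in the preceding $\nu=14$ theorem, so an equivalent route is simply to observe that $0\le\alpha_k\le 1$ forces (\ref{eqs3.40}) and then quote that theorem. Either way the conclusion is immediate once the L-matrix structure is secured. No spectral or fixed-point argument is needed at this stage: all the analytic work already lives in Theorem \ref{thm3.45} and Corollary \ref{coro3-29}, and what remains is purely the bookkeeping of transferring hypotheses from the general $Q_{12}$ setting to the one-nonzero-row matrix $Q_{14}$.
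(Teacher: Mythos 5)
Your proposal is correct and follows essentially the same route as the paper: the corollary is exactly the specialization of the $Q_{12}$ results (Corollary \ref{coro3-29}, equivalently Theorem \ref{thm3.45} via checking that $0\le\alpha_k\le 1$ forces the inequality (\ref{eqs3.40})) to the one-row matrix $Q_{14}$, and your sign bookkeeping showing $A^{(14)}$ remains an L-matrix is precisely the content that chain of results encapsulates. Nothing is missing.
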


\begin{corollary}
Suppose that $0 \le \alpha_k \le 1$, $k = r, \cdots, n$.
Then Theorem B is valid for $\nu = 14$.
\end{corollary}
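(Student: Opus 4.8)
The plan is to recognize that $Q_{14}$ is nothing but $Q_{12}$ under the special choice $\alpha_{r-1,k} = \alpha_k$ for $k = r, \ldots, n$ together with $\alpha_{i,j} = 0$ for all remaining index pairs with $i < j$. Consequently, the comparison result already established for the upper triangular preconditioner $Q_{12}$, specifically Corollary \ref{coro3-30}, applies directly, and the whole task reduces to verifying that its hypothesis holds for this particular choice of parameters.

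First I would observe that the hypothesis $0 \le \alpha_k \le 1$ for $k = r, \ldots, n$ immediately forces $0 \le \alpha_{i,j} \le 1$ for every $i = 1, \ldots, n-1$ and $j > i$: the only possibly nonzero entries are $\alpha_{r-1,k} = \alpha_k \in [0,1]$, while every other $\alpha_{i,j}$ equals $0 \in [0,1]$. This is precisely the assumption required by Corollary \ref{coro3-30}, which asserts that Theorem B is valid for $\nu = 12$ whenever $0 \le \alpha_{i,j} \le 1$ holds for all $i < j$.

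Then, invoking Corollary \ref{coro3-30} specialized to the parameter values defining $Q_{14}$, I conclude that Theorem B holds for $\nu = 14$; that is, when $A$ is a nonsingular M-matrix we obtain $\rho({\mathscr L}^{(14)}_{\gamma,\omega}) \le \rho({\mathscr L}_{\gamma,\omega}) < 1$. Note that the reduction of Lemma \ref{lem3-1} already guarantees that it suffices to treat the regime $\gamma \le \omega$, so no extra case analysis in the parameters is needed here.

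There is essentially no analytical obstacle, since the result is a pure specialization inheriting the monotonicity argument behind Corollary \ref{coro3-30}. The only point requiring care is the bookkeeping that the vanishing entries of $Q_{14}$ vacuously satisfy the range constraint $0 \le \alpha_{i,j} \le 1$, so that the passage from $Q_{12}$ to $Q_{14}$ is legitimate and consistent with the explicit expressions for $\delta^{(14)}_{i,j}(\gamma)$ recorded just above the statement.
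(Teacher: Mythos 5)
Your proposal is correct and matches the paper's own derivation: the paper states that all the $\nu = 14$ results follow "From Theorems \ref{thm3.45}--\ref{thm3.48} and Corollaries \ref{coro3-29}--\ref{coro3-32}," and your argument is exactly the specialization of Corollary \ref{coro3-30} obtained by taking $\alpha_{r-1,k} = \alpha_k$ for $k = r,\cdots,n$ and $\alpha_{i,j} = 0$ otherwise, so that the hypothesis $0 \le \alpha_{i,j} \le 1$ holds vacuously off row $r-1$. The only cosmetic remark is that invoking Lemma \ref{lem3-1} is unnecessary here, since Corollary \ref{coro3-30} already covers the full parameter range $0 < \omega \le 1$, $0 \le \gamma \le 1$.
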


The result includes the corresponding one given in \cite[Corollary 2.3]{Wa06}.

\begin{corollary}
Suppose that $0 \le \alpha_k \lesssim 1$,
$k = r, \cdots, n$, and\\ $\sum_{k = r}^{n}\alpha_ka_{r-1,k}a_{k,r-1}$ $ < 1$.
Then Theorem C is valid for $\nu = 14$,
provided one of the conditions ($i$) and ($ii$) of Theorem \ref{thm3.55} is satisfied.
\end{corollary}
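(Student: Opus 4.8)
The plan is to obtain this corollary as an immediate specialization of Theorem~\ref{thm3.55}: since the diagonal bound $\sum_{k=r}^{n}\alpha_k a_{r-1,k}a_{k,r-1}<1$ and the auxiliary conditions (i)/(ii) are carried over verbatim, the only thing to verify is that the hypothesis $0\le\alpha_k\lesssim 1$ forces the comparison inequality (\ref{eqs3.42}) that Theorem~\ref{thm3.55} assumes. Once (\ref{eqs3.42}) is established, a single application of that theorem yields the trichotomy of Theorem~C for $\nu=14$.

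First I would invoke the elementary estimate that recurs throughout this section. Because $A$ is an L-matrix, for $i\ne j$ we have $a_{i,j}\le 0$ and every product $\alpha_{i,k}a_{i,k}a_{k,j}\ge 0$, whence
\[
(1-\alpha_{i,j})a_{i,j}-\sum_{k\ne i,j}\alpha_{i,k}a_{i,k}a_{k,j}\le(1-\alpha_{i,j})a_{i,j}.
\]
For $Q_{14}$ the only nonzero multipliers sit in row $r-1$, so for $j=r,\dots,n$ the left-hand side of (\ref{eqs3.42}) is bounded above by $(1-\alpha_j)a_{r-1,j}$.

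Next I would run the dichotomy hidden in the symbol $\lesssim$. If $A^{(14)}$ is irreducible, then $0\le\alpha_j\le 1$, so $(1-\alpha_j)a_{r-1,j}\le 0$ and the bound above gives $a^{(14)}_{r-1,j}\le 0$, i.e.\ the ``$\le$'' reading of (\ref{eqs3.42}). If $A^{(14)}$ is reducible, then $0\le\alpha_j<1$, so whenever $a_{r-1,j}<0$ we even get $(1-\alpha_j)a_{r-1,j}<0$, hence $a^{(14)}_{r-1,j}<0$, the strict ``$<$'' reading demanded in the reducible case. In both cases (\ref{eqs3.42}) holds exactly as Theorem~\ref{thm3.55} requires, so that theorem applies and Theorem~C follows for $\nu=14$.

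The main obstacle is not analytic but a matter of keeping the two occurrences of $\lesssim$ aligned: the $\lesssim$ in $0\le\alpha_k\lesssim 1$ and the $\lesssim$ inside (\ref{eqs3.42}) must be read against the same irreducibility status of $A^{(14)}$. The point worth checking carefully is that the strict negativity produced in the reducible branch preserves every edge of $G(A)$ in $G(A^{(14)})$ (all rows except $r-1$ are unchanged, and each surviving edge $a_{r-1,j}<0$ remains strictly negative), so that the reducible branch is in fact self-defeating and $A^{(14)}$ is irreducible whenever the hypotheses hold. Verifying this consistency is the only delicate step; everything else is the routine transfer of hypotheses into Theorem~\ref{thm3.55}.
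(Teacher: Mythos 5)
Your proof is correct and matches the paper's own (implicit) derivation: the paper obtains this corollary from Theorem \ref{thm3.55} via exactly the estimate you use, namely that for an L-matrix $(1-\alpha_{j})a_{r-1,j} - \sum_{k\ne j}\alpha_{k}a_{r-1,k}a_{k,j} \le (1-\alpha_{j})a_{r-1,j}$, so that $0\le\alpha_k\lesssim 1$ forces (\ref{eqs3.42}) in the matching ($\le$ or $<$) reading. Your closing observation that the strict branch preserves every edge of $G(A)$ in $G(A^{(14)})$, making $A^{(14)}$ irreducible, is precisely the mechanism the paper relies on for the symbol $\lesssim$ (cf.\ the proof of Corollary \ref{coro3-3}), so nothing is missing.
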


\begin{corollary}
Theorem D is valid for $\nu = 14$,
provided one of the following conditions is satisfied:

\begin{itemize}
\item[(i)] $0 \le \alpha_k \lesssim 1$,
$k = r, \cdots, n$, and
one of the conditions ($i$) and ($ii$) of Theorem \ref{thm3.55} holds.

\item[(ii)] $0 \le \alpha_k \le 1$, $k = r, \cdots, n$, and
one of the conditions ($ii_1$) and ($ii_2$) in Theorem \ref{thm3.56} holds.
\end{itemize}
\end{corollary}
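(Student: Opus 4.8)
The plan is to reduce this corollary directly to Theorem~\ref{thm3.56}, exactly as the earlier upper-triangular corollaries were reduced to their parent theorems. The entire content is to verify that the quantitative hypotheses (\ref{eqs3.40}) and (\ref{eqs3.42}) of Theorem~\ref{thm3.56} are forced automatically by the simple bounds $0 \le \alpha_k \le 1$ and $0 \le \alpha_k \lesssim 1$ respectively; once this is done, the conclusion $\rho({\mathscr L}^{(14)}_{\gamma,\omega}) < \rho({\mathscr L}_{\gamma,\omega}) < 1$ is inherited verbatim from Theorem~\ref{thm3.56}.

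First I would record the sign pattern available under the hypothesis of Theorem D. Since $A$ is an irreducible nonsingular M-matrix with unit diagonal, it is in particular an L-matrix, so $a_{i,j} \le 0$ for all $i \not= j$ while $a_{i,i} = 1$; moreover $\alpha_k \ge 0$ by the definition of $Q_{14}$. Fix $j \in \{r, \ldots, n\}$. The term $(1-\alpha_j)a_{r-1,j}$ is the product of the nonnegative factor $1-\alpha_j$ (using $\alpha_j \le 1$) and the nonpositive factor $a_{r-1,j}$, hence is $\le 0$. Each summand $\alpha_k a_{r-1,k} a_{k,j}$ with $k = r, \ldots, n$, $k \not= j$, is a product of $\alpha_k \ge 0$, $a_{r-1,k} \le 0$ and $a_{k,j} \le 0$, hence is $\ge 0$, so its negative contributes $\le 0$. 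Adding the two contributions yields
\begin{eqnarray*}
(1-\alpha_{j})a_{r-1,j} - \sum\limits_{{k = r}\atop{k\not = j}}^n \alpha_{k}a_{r-1,k}a_{k,j} \le 0, \; j = r, \cdots, n,
\end{eqnarray*}
which is precisely (\ref{eqs3.40}). If instead the bound $0 \le \alpha_k \lesssim 1$ is assumed, the same computation gives the refinement (\ref{eqs3.42}): the expression is still $\le 0$, and the ``$\lesssim$'' placed on the $\alpha_k$ is exactly what guarantees, in the borderline equality cases, that $A^{(14)}$ remains irreducible so that the ``$\lesssim 0$'' form is legitimate whenever $a_{r-1,j} < 0$.

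With these inequalities established the two cases close at once. Under hypothesis (i) of the corollary, $0 \le \alpha_k \lesssim 1$ supplies (\ref{eqs3.42}), and by assumption one of conditions (i), (ii) of Theorem~\ref{thm3.55} holds; together these are exactly hypothesis (i) of Theorem~\ref{thm3.56}. Under hypothesis (ii) of the corollary, $0 \le \alpha_k \le 1$ supplies (\ref{eqs3.40}), and one of ($ii_1$), ($ii_2$) of Theorem~\ref{thm3.56} is assumed, which is exactly hypothesis (ii) of Theorem~\ref{thm3.56}. In either case Theorem~\ref{thm3.56} applies and gives Theorem D for $\nu = 14$.

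I do not anticipate a substantive obstacle: this is the same verify-and-invoke pattern used for the corollaries attached to the lower-triangular theorems. The one point needing care is the bookkeeping of the ``$\lesssim$'' convention in case (i) --- one must confirm that the hypothesis $0 \le \alpha_k \lesssim 1$ (rather than merely $\alpha_k \le 1$) is what promotes the weak inequality to the irreducibility-respecting form (\ref{eqs3.42}), so that the \emph{strict} comparison $\rho({\mathscr L}^{(14)}_{\gamma,\omega}) < \rho({\mathscr L}_{\gamma,\omega})$ is transferred rather than only a non-strict one.
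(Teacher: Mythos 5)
Your proof is correct and follows essentially the same route as the paper: the corollary is stated there as following directly from Theorem \ref{thm3.56}, the implicit verification being precisely your sign computation that, for an L-matrix $A$ with unit diagonal, the bounds $0 \le \alpha_k \le 1$ (resp. $0 \le \alpha_k \lesssim 1$) force (\ref{eqs3.40}) (resp. (\ref{eqs3.42})), after which cases (i) and (ii) of the corollary map exactly onto hypotheses (i) and (ii) of Theorem \ref{thm3.56}. Your closing remark on the $\lesssim$ bookkeeping is the right point of care, and your handling of it is sound.
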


Similar to $Q_{14}$, for $r = 2, \cdots, n$, the matrix $Q$ is chosen as
\begin{eqnarray*}
Q_{15} = \left(\begin{array}{cccccc}
0 & \quad \cdots\quad & 0 & -\alpha_{1}a_{1,r} & \cdots  & 0\\
\vdots & \ddots & \vdots & \vdots & \vdots & \vdots\\
0 & \cdots & 0 & -\alpha_{r-1}a_{r-1,r} & \cdots & 0 \\
0 & \cdots & 0 & 0 & \cdots & 0\\
\vdots & \vdots &\vdots & \vdots & \ddots & \vdots\\
0 & \cdots & 0 & 0 & \cdots & 0\\
\end{array}\right)
 \end{eqnarray*}
with $\alpha_k \ge 0$, $k = 1, \cdots, r-1$, and
\begin{eqnarray*}
\sum\limits_{k = 1}^{r-1}\alpha_ka_{k,r} \not = 0.
 \end{eqnarray*}

It is proposed in \cite{Yu11} for $r = n$. When $r = n$ and $\alpha_i = 1$, $i = 2, \cdots, n$,
it is a special case in \cite{Mi87} for the preconditioned Gauss-Seidel and Jacobi methods.

In this case, $\delta_{i,j}^{(12)}(\gamma)$ and $\delta_{i,j}^{(12)}(1)$ reduce respectively to
\begin{eqnarray*}
\delta_{i,j}^{(15)}(\gamma) = \left\{\begin{array}{ll}
 \alpha_ia_{i,r}a_{r,i}, & i = j = 1, \cdots, r-1;\\
 (\gamma-1)\alpha_ia_{i,r}, & i = 1, \cdots, r-1, j = r;\\
 \gamma\alpha_ia_{i,r}a_{r,j}, & i = 1, \cdots, r-1,\\
 & j\in\{1, \cdots, i-1\}\cup\{r+1, \cdots, n\};\\
  0, & otherwise
\end{array}\right.
\end{eqnarray*}
and
\begin{eqnarray*}
\delta_{i,j}^{(15)}(1) = \left\{\begin{array}{ll}
 \alpha_ia_{i,r}a_{r,j}, & i = 1, \cdots, r-1, 1 \le j \le i, r+1 \le j \le n;\\
  0, & otherwise.
\end{array}\right.
\end{eqnarray*}

From Corollaries \ref{coro3-29} and \ref{coro3-30}, we have the following comparison results.

\begin{theorem}\label{thm3.57}
Suppose that $0 \le \alpha_k \le 1$ and $\alpha_ka_{k,r}a_{r,k} < 1$, $k = 1, \cdots, r-1$.
Then Theorem A is valid for $\nu = 15$.
\end{theorem}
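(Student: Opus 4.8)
The plan is to recognize $Q_{15}$ as the special case of the upper triangular preconditioner $Q_{12}$ obtained by setting $\alpha_{i,r} = \alpha_i$ for $i = 1, \cdots, r-1$ and $\alpha_{i,j} = 0$ for every remaining index pair, and then to invoke Corollary \ref{coro3-29} directly. First I would verify that the admissibility requirements defining $Q_{12}$ are met, namely $\alpha_{i,j} \ge 0$ together with the nondegeneracy condition $\sum_{i=1}^{n-1}\sum_{j=i+1}^n \alpha_{i,j}a_{i,j} \neq 0$. Here the only nonzero multipliers are $\alpha_{i,r} = \alpha_i \ge 0$, and the nondegeneracy sum collapses to $\sum_{k=1}^{r-1}\alpha_k a_{k,r}$, which is exactly the condition $\sum_{k=1}^{r-1}\alpha_k a_{k,r} \neq 0$ assumed in the definition of $Q_{15}$.

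Next I would check that the two hypotheses of Corollary \ref{coro3-29} specialize precisely to the hypotheses stated in the theorem. The bound $0 \le \alpha_{i,j} \le 1$ for all $i < j$ reduces to $0 \le \alpha_k \le 1$, $k = 1, \cdots, r-1$, since every other $\alpha_{i,j}$ is zero. For the diagonal-type condition $\sum_{k=i+1}^n \alpha_{i,k}a_{i,k}a_{k,i} < 1$, the only surviving term in row $i$ is the one with $k = r$ whenever $i < r$, so the sum collapses to $\alpha_i a_{i,r}a_{r,i}$; for $i \ge r$ every term vanishes and the sum equals $0 < 1$ automatically. Hence the condition becomes exactly $\alpha_k a_{k,r}a_{r,k} < 1$, $k = 1, \cdots, r-1$, which is assumed.

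With both hypotheses of Corollary \ref{coro3-29} verified for this choice of $Q$, Theorem A follows immediately with $\nu = 15$. The only point requiring care is the bookkeeping of the index ranges: confirming that all off-column multipliers genuinely vanish and that the rows $i \ge r$ contribute nothing to the diagonal sums, so that the general conditions reduce cleanly to the two listed inequalities. There is no genuine analytic obstacle, the result being a direct specialization; the effort lies entirely in matching the reduced expressions $\delta_{i,j}^{(15)}$ against the template supplied by the $Q_{12}$ framework.
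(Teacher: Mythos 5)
Your proposal is correct and matches the paper's own argument: the paper obtains this theorem precisely by specializing Corollary \ref{coro3-29} (the Theorem A result for the general upper triangular preconditioner $Q_{12}$) to the choice $\alpha_{i,r}=\alpha_i$, $i=1,\cdots,r-1$, all other $\alpha_{i,j}=0$, exactly as you do. Your verification that the nondegeneracy condition, the bound $0\le\alpha_{i,j}\le 1$, and the row sums $\sum_{k=i+1}^n\alpha_{i,k}a_{i,k}a_{k,i}<1$ collapse to the stated hypotheses (with rows $i\ge r$ contributing $0<1$ trivially) is the whole content of the reduction.
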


\begin{theorem}\label{thm3.58}
Suppose that $0 \le \alpha_k \le 1$, $k = 1,\cdots, r-1$.
Then Theorem B is valid for $\nu = 15$.
\end{theorem}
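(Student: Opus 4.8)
The plan is to recognize $Q_{15}$ as a special case of the upper triangular preconditioner $Q_{12}$ and to invoke the already-established Corollary~\ref{coro3-30}, rather than to repeat the full comparison argument from scratch. Concretely, $Q_{15}$ arises from $Q_{12}$ by taking $\alpha_{i,j} = \alpha_i$ precisely when $j = r$ (for $i = 1, \ldots, r-1$) and $\alpha_{i,j} = 0$ for every other pair with $j > i$. Indeed, the only nonzero entries of $Q_{15}$ are $q^{(15)}_{i,r} = -\alpha_i a_{i,r}$, which matches $q^{(12)}_{i,j} = -\alpha_{i,j} a_{i,j}$ under this identification, so that $Q_{15}$ is obtained by restricting the nonzero pattern of $Q_{12}$ to a single column.

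First I would verify that the hypothesis of Corollary~\ref{coro3-30} holds. That corollary requires $0 \le \alpha_{i,j} \le 1$ for all $i = 1, \ldots, n-1$ and $j > i$. Under our assumption $0 \le \alpha_k \le 1$ for $k = 1, \ldots, r-1$, the nonzero coefficients $\alpha_{i,r} = \alpha_i$ lie in $[0,1]$, while all remaining $\alpha_{i,j}$ equal $0$, which also lies in $[0,1]$. Hence the condition of Corollary~\ref{coro3-30} is satisfied for this choice of the $\alpha_{i,j}$, with no further positivity or irreducibility bookkeeping required.

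Applying Corollary~\ref{coro3-30} then establishes Theorem B for the $Q_{12}$ preconditioner under the range condition $0 \le \alpha_{i,j} \le 1$; since our choice of coefficients turns $Q_{12}$ into exactly $Q_{15}$, this yields $\rho({\mathscr L}^{(15)}_{\gamma,\omega}) \le \rho({\mathscr L}_{\gamma,\omega}) < 1$ whenever $A$ is a nonsingular M-matrix, which is precisely the assertion for $\nu = 15$. There is no genuine obstacle here: the entire content reduces to the observation that $Q_{15}$ is a coordinate restriction of $Q_{12}$, so the conclusion follows immediately once the elementary range check is carried out.
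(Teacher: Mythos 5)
Your proof is correct and follows the paper's own route exactly: the paper also obtains Theorem \ref{thm3.58} by viewing $Q_{15}$ as the special case of the upper triangular preconditioner $Q_{12}$ (with $\alpha_{i,j}=\alpha_i$ for $j=r$, $i<r$, and $\alpha_{i,j}=0$ otherwise) and invoking Corollary \ref{coro3-30}. Nothing is missing; the range check $0\le\alpha_{i,j}\le 1$ is the only hypothesis needed, and the nonzero-sum condition in the definition of $Q_{12}$ is inherited directly from that of $Q_{15}$.
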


In order to give the Stein-Rosenberg Type Theorem II,
completely similar to Lemma \ref{lem3-4}, we can prove the following lemma.

\begin{lemma}\label{lem3-8}
Let $A$ be an irreducible Z-matrix.
Assume that $r = n$, $0 < \alpha_k \le 1$,
$k = 1, \cdots, n-1$ and $A^{(15)}$ has the block form
\begin{eqnarray*}
A^{(15)} = \left(\begin{array}{ccc}
A^{(15)}_{1,1} & & \bar{a}^{(15)}_{1,2}\\
\bar{a}_{2,1} & & 1
\end{array}\right), \; A^{(15)}_{1,1}\in{\mathscr R}^{(n-1)\times(n-1)}.
 \end{eqnarray*}
 Then

\begin{itemize}
\item[(i)] $A^{(15)}_{1,1}$ is an irreducible Z-matrix.

\item[(ii)] $A^{(15)}$ is an irreducible Z-matrix if and only if there exists
$j_0\in\{1, \cdots, n-1\}$ such that $(1-\alpha_{j_0})a_{j_0,n} \not = 0$.
\end{itemize}
\end{lemma}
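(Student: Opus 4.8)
The plan is to follow the template of Lemma \ref{lem3-4} verbatim, with the separated vertex $1$ there replaced by the vertex $n$ here, since $Q_{15}$ with $r=n$ modifies rows $1,\cdots,n-1$ by adding multiples of the last row. First I would record the entries of $A^{(15)}=P_{15}A$: for $i=1,\cdots,n-1$ one has $a^{(15)}_{i,j}=a_{i,j}-\alpha_i a_{i,n}a_{n,j}$, so that $a^{(15)}_{i,n}=(1-\alpha_i)a_{i,n}$ (using $a_{n,n}=1$), while the last row is unchanged, $a^{(15)}_{n,j}=a_{n,j}$. In particular the entries of the off-diagonal block $\bar{a}^{(15)}_{1,2}$ are exactly the quantities $(1-\alpha_j)a_{j,n}$ appearing in (ii). Since $A$ is a Z-matrix, $0<\alpha_i\le 1$ gives $a^{(15)}_{i,n}=(1-\alpha_i)a_{i,n}\le 0$, and for $j\neq i,n$ the cross term $-\alpha_i a_{i,n}a_{n,j}\le 0$ yields $a^{(15)}_{i,j}\le a_{i,j}\le 0$; hence both $A^{(15)}$ and $A^{(15)}_{1,1}$ are Z-matrices.

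For (i) I would argue irreducibility of $A^{(15)}_{1,1}$ through the directed graph, exactly as in Lemma \ref{lem3-4}. Given $i,j\in\{1,\cdots,n-1\}$ with $i\neq j$, irreducibility of $A$ supplies a path $\sigma_{i,j}=(j_0,j_1,\cdots,j_{l+1})\in G(A)$ with $i=j_0$, $j=j_{l+1}$. If every $j_k\in\{1,\cdots,n-1\}$, then each edge satisfies $a^{(15)}_{j_k,j_{k+1}}\le a_{j_k,j_{k+1}}<0$, so $\sigma_{i,j}\in G(A^{(15)}_{1,1})$. If instead some $j_s=n$, then $j_{s-1},j_{s+1}\in\{1,\cdots,n-1\}$ with $a_{j_{s-1},n}<0$ and $a_{n,j_{s+1}}<0$, and the key computation $a^{(15)}_{j_{s-1},j_{s+1}}\le -\alpha_{j_{s-1}}a_{j_{s-1},n}a_{n,j_{s+1}}<0$ lets me short-cut around $n$, replacing the segment $j_{s-1}\to n\to j_{s+1}$ by the single edge $j_{s-1}\to j_{s+1}$; iterating removes every occurrence of $n$ and produces a path in $G(A^{(15)}_{1,1})$. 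This is precisely where the strict hypothesis $\alpha_{j_{s-1}}>0$ is needed.

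For (ii) I would treat the two implications separately. Necessity is immediate: the only edges entering the vertex $n$ in $G(A^{(15)})$ come from the column $\bar{a}^{(15)}_{1,2}$ (the diagonal entry $a^{(15)}_{n,n}=1$ being a self-loop), so if all $(1-\alpha_j)a_{j,n}=0$ then $n$ is unreachable and $A^{(15)}$ is reducible. For sufficiency, assume $a^{(15)}_{j_0,n}=(1-\alpha_{j_0})a_{j_0,n}\neq 0$ for some $j_0$, and verify strong connectedness of $G(A^{(15)})$ by cases: for $i,j\in\{1,\cdots,n-1\}$ use the path from (i); to reach $n$ from any $i\le n-1$, take a path from $i$ to $j_0$ in $G(A^{(15)}_{1,1})$ followed by the edge $j_0\to n$; and to leave $n$, note that irreducibility of $A$ forces some $a_{n,k_0}=a^{(15)}_{n,k_0}\neq 0$ with $k_0\le n-1$, giving the edge $n\to k_0$ followed by a path from $k_0$ to $j$, again by (i).

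The only genuine obstacle is the short-cutting bookkeeping in (i): one must check that replacing $j_{s-1}\to n\to j_{s+1}$ by $j_{s-1}\to j_{s+1}$ never reintroduces $n$ and terminates, and that the strict positivity $0<\alpha_k$ (rather than merely $\alpha_k\ge 0$) is exactly what guarantees the short-cut edge $a^{(15)}_{j_{s-1},j_{s+1}}$ is nonzero. Everything else is a mechanical transcription of Lemma \ref{lem3-4} with the roles of the first and last index interchanged.
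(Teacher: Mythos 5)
Your proposal is correct and is exactly the argument the paper intends: the paper gives no separate proof of this lemma, stating only that it is ``completely similar to Lemma \ref{lem3-4}'', and your transcription --- entry formulas $a^{(15)}_{i,j}=a_{i,j}-\alpha_i a_{i,n}a_{n,j}$, the short-cut edge $a^{(15)}_{j_{s-1},j_{s+1}}\le-\alpha_{j_{s-1}}a_{j_{s-1},n}a_{n,j_{s+1}}<0$ around the vertex $n$, and the three-case connectivity check for (ii) --- is precisely that proof with the roles of the first and last indices interchanged. The points you flag (termination of the short-cutting and the necessity of $\alpha_k>0$) are handled at the same level of rigor as in the paper's proof of Lemma \ref{lem3-4}, so there is no gap relative to the paper.
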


\begin{theorem}\label{thm3.59}
Suppose that $\alpha_ka_{k,r}a_{r,k} < 1$, $k = 1, \cdots, r-1$.
Then Theorem C is valid for $\nu = 15$,
provided one of the following conditions is satisfied:

\begin{itemize}
\item[(i)] $0 \le \gamma < 1$ and $0 \le \alpha_k \lesssim 1$, $k = 1,\cdots, r-1$.

\item[(ii)] $\gamma = 1$, $0 \le \alpha_k \lesssim 1$, $k = 1,\cdots, r-1$.
And one of the following conditions holds:

\begin{itemize}
\item[($ii_1$)] There exist $i\in\{1, \cdots, r-1\}$ and
$j\in\{1, \cdots, i\}\cup\{r+1, \cdots, n\}$ such that
$\alpha_ia_{i,r}a_{r,j} > 0$.

\item[($ii_2$)] $a_{r-1,r} < 0$ and $\alpha_{r-1} > 0$.

\item[($ii_3$)] $a_{r,1} < 0$.

\item[($ii_4$)] There exists $k\in\{r+1, \cdots, n\}$ such that $a_{r,k} < 0$.
\end{itemize}

\item[(iii)] $r = n$, $0 \le \gamma < 1$ and $0 < \alpha_k \le 1$, $k = 1,\cdots, n-1$.

\item[(iv)] $r = n$, $\gamma = 1$, $0 < \alpha_k \le 1$, $k = 1,\cdots, n-1$.
And one of the following conditions holds:

\begin{itemize}
\item[($iv_1$)] There exist $i\in\{1, \cdots, n-1\}$ and
$j\in\{1, \cdots, i\}$ such that $a_{i,n}a_{n,j} > 0$.

\item[($iv_2$)] $a_{n-1,n} < 0$.

\item[($iv_3$)] $a_{n,1} < 0$.
\end{itemize}
\end{itemize}
\end{theorem}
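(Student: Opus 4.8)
The plan is to follow the proof of Theorem~\ref{thm3.23} step by step, exploiting that $Q_{15}$ is the upper-triangular counterpart of $Q_6$ and that $\delta_{i,j}^{(15)}(\gamma)$ is the specialization of $\delta_{i,j}^{(12)}(\gamma)$ to this preconditioner. First I would note that the hypothesis $\alpha_k a_{k,r}a_{r,k}<1$ gives $a^{(15)}_{k,k}=1-\alpha_k a_{k,r}a_{r,k}>0$, so $P_{15}A$ is an L-matrix. Conditions ($i$) and ($ii_1$) then follow directly from Theorem~\ref{thm3.47} (equivalently Corollary~\ref{coro3-31}): ($i$) is the $Q_{15}$-instance of condition ($i$) there, and ($ii_1$), which asserts some $\delta^{(15)}_{i,j}(1)=\alpha_i a_{i,r}a_{r,j}>0$, is precisely the $Q_{15}$-instance of condition ($ii_1$) of Theorem~\ref{thm3.47} (where the middle summation index is forced to equal $r$).

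Next I would reduce the remaining scalar conditions ($ii_2$)--($ii_4$) to ($ii_1$), i.e.\ to the existence of $i\in\{1,\dots,r-1\}$ and $j\in\{1,\dots,i\}\cup\{r+1,\dots,n\}$ with $\alpha_i a_{i,r}a_{r,j}>0$. The definition of $Q_{15}$ yields some $k_0\in\{1,\dots,r-1\}$ with $\alpha_{k_0}a_{k_0,r}<0$. For ($ii_3$), taking $i=k_0$, $j=1$ gives $\alpha_{k_0}a_{k_0,r}a_{r,1}>0$; for ($ii_4$), taking $i=k_0$ and $j=k\in\{r+1,\dots,n\}$ with $a_{r,k}<0$ gives the same; for ($ii_2$), the irreducibility of $A$ supplies some $j\neq r$ with $a_{r,j}<0$, whence $\alpha_{r-1}a_{r-1,r}a_{r,j}>0$ with $i=r-1$. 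In every case ($ii_1$) holds, which completes conditions ($i$) and ($ii$).

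For the $r=n$ cases ($iii$) and ($iv$) I would split on the irreducibility of $A^{(15)}$ via Lemma~\ref{lem3-8}. If $A^{(15)}$ is irreducible, then ($iii$) reduces to condition ($i$) of Theorem~\ref{thm3.47} and ($iv_1$) to condition ($ii_1$) there, while ($iv_2$) and ($iv_3$) reduce to ($iv_1$): from $a_{n-1,n}<0$ (resp.\ $a_{n,1}<0$) together with the irreducibility of $A$ one extracts indices making $a_{i,n}a_{n,j}>0$, exactly as in the previous paragraph. If instead $A^{(15)}$ is reducible, Lemma~\ref{lem3-8}($ii$) forces $(1-\alpha_j)a_{j,n}=0$ for all $j$, so $A^{(15)}$ acquires the block form with an irreducible L-matrix block $A^{(15)}_{1,1}\in{\mathscr R}^{(n-1)\times(n-1)}$ and the trivial scalar block $1$ in the $(n,n)$ position. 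Here I would introduce the block AOR splitting $A^{(15)}_{1,1}=\bar M_{\gamma,\omega}-\bar N_{\gamma,\omega}$, write $M^{(15)}_{\gamma,\omega}$, $N^{(15)}_{\gamma,\omega}$ and ${\mathscr L}^{(15)}_{\gamma,\omega}$ in the corresponding block form (with the trivial block now in the lower-right corner, reflecting that $Q_{15}$ perturbs the last column rather than the first), so that $\rho({\mathscr L}^{(15)}_{\gamma,\omega})=\max\{1-\omega,\,\rho(\bar M_{\gamma,\omega}^{-1}\bar N_{\gamma,\omega})\}$, and apply Lemma~\ref{lem3-2} to obtain the block analogues of (\ref{eqs3.27}) and (\ref{eqs3.28}).

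The main obstacle, exactly as in Theorem~\ref{thm3.23}, is this reducible sub-case. Writing the Perron eigenvector of ${\mathscr L}_{\gamma,\omega}$ as $x=(\bar x_1,\bar x_2)$ with $\bar x_1\in{\mathscr R}^{n-1}$, I must confirm via Lemma~\ref{lem1-8} that $\bar x_1\gg 0$, and then show that the perturbation vector appearing in the block analogue of (\ref{eqs3.28}) is strictly positive; this is where the strict hypothesis $0<\alpha_k\le 1$ and the coupling coming from the perturbed last column of $Q_{15}$ (condition ($iv_1$)) are used. For ($iii$), where $\gamma<1$, the irreducibility of $\bar M_{\gamma,\omega}^{-1}\bar N_{\gamma,\omega}$ lets me conclude the strict inequalities directly; for ($iv$), where $\gamma=1$ and the method is Gauss--Seidel, I would instead multiply the block identity on the left by a left Perron vector $\bar y^T$ of $\bar M_{1,1}^{-1}\bar N_{1,1}$ and invoke Lemma~\ref{lem1-8}($iii$) to guarantee $\bar y^T\bar M_{1,1}^{-1}\gg 0$, so that the strict separation of the three cases $\rho<1$, $\rho=1$, $\rho>1$ survives. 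Keeping careful track of which coordinate is split off and verifying the strict positivity of every vector involved is the only substantive work; the rest is bookkeeping against Theorem~\ref{thm3.47}, Corollary~\ref{coro3-31} and Lemma~\ref{lem3-8}.
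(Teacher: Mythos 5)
Your proposal is correct and takes essentially the same route as the paper's own proof: conditions ($i$), ($ii_1$), ($ii_2$) come from Theorem \ref{thm3.47}, conditions ($ii_3$) and ($ii_4$) are reduced to ($ii_1$), and the $r=n$ cases ($iii$) and ($iv$) are handled by splitting on the irreducibility of $A^{(15)}$ via Lemma \ref{lem3-8}, with the reducible case treated through the block AOR splitting of $A^{(15)}_{1,1}$ exactly as you describe. Your $\gamma<1$ argument (irreducibility of $\bar{M}_{\gamma,\omega}^{-1}\bar{N}_{\gamma,\omega}$) and $\gamma=1$ argument (left Perron vector with $\bar{y}^T\bar{M}_{1,1}^{-1}\gg 0$, with ($iv_2$), ($iv_3$) reduced to ($iv_1$)) are precisely the mechanisms the paper borrows from the proof of Theorem \ref{thm3.23}.
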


\begin{proof}
We just need to prove ($ii_3$), ($ii_4$) and ($iii$) and ($iv$).

From the definition of $Q_{15}$, there exists $i_0\in\{1, \cdots, r-1\}$ such that
$\alpha_{i_0}a_{i_0,r} < 0$. If ($ii_3$) holds then $\alpha_{i_0}a_{i_0,r}a_{r,1} > 0$, which
implies that ($ii_1$) holds for $i = i_0$ and $j = 1$. If ($ii_4$) holds then we can prove that
($ii_1$) holds for $i = i_0$ and $j = k$.

For ($iii$) and ($iv$), if $A^{(15)}$ is irreducible then the result is obvious.

Now, we consider the case when $A^{(15)}$ is reducible. Suppose that
$\rho = \rho({\mathscr L}_{\gamma,\omega})$ and
$x > 0$ is its associated eigenvector. By Theorem \ref{thm3.57},
$\rho =1$ if and only if $\rho({\mathscr L}^{(15)}_{\gamma,\omega}) =1$.

By Lemma \ref{lem3-1} we just need to consider the case when $\gamma \le \omega$.
Then, by Lemma \ref{lem1-8} it follows
that $\rho > 0$ and $x\gg 0$.

Let $A$ have the block form
\begin{eqnarray*}
A = \left(\begin{array}{ccc}
A_{1,1} & & \bar{a}_{1,2}\\
\bar{a}_{2,1} & & 1
\end{array}\right), \; A_{1,1}\in{\mathscr R}^{(n-1)\times(n-1)}.
 \end{eqnarray*}
Then, by Lemma \ref{lem3-8}, $A^{(15)}$ has the block form
\begin{eqnarray*}
A^{(15)} = \left(\begin{array}{ccc}
A^{(15)}_{1,1} & & 0\\
\bar{a}_{2,1} & & 1
\end{array}\right), \; A^{(15)}_{1,1}\in{\mathscr R}^{(n-1)\times(n-1)},
 \end{eqnarray*}
where $A^{(15)}_{1,1}$ is an irreducible L-matrix, since
$a^{(15)}_{k,k} = 1 - \alpha_ka_{k,n}a_{n,k} > 0$, $k = 1, \cdots, n-1$.
Let $A_{1,1} = \hat{M}_{\gamma,\omega} - \hat{N}_{\gamma,\omega}$ and
$A^{(15)}_{1,1} = \bar{M}_{\gamma,\omega} - \bar{N}_{\gamma,\omega}$ be
 the AOR splittings of $A_{1,1}$ and $A^{(15)}_{1,1}$, respectively.
Then they are regular splittings and
\begin{eqnarray*}
&& M_{\gamma,\omega} = \left(\begin{array}{ccc}
\hat{M}_{\gamma,\omega} & & 0\\
\frac{\gamma}{\omega}\bar{a}_{2,1} & & \frac{1}{\omega}
\end{array}\right), \;
N_{\gamma,\omega} = \left(\begin{array}{ccc}
\hat{N}_{\gamma,\omega} & & -\bar{a}_{1,2} \\
\frac{\gamma-\omega}{\omega}\bar{a}_{2,1} & & \frac{1-\omega}{\omega}
\end{array}\right), \\
&& M^{(15)}_{\gamma,\omega} = \left(\begin{array}{ccc}
\bar{M}_{\gamma,\omega} & & 0\\
\frac{\gamma}{\omega}\bar{a}_{2,1} & & \frac{1}{\omega}
\end{array}\right), \;
N^{(15)}_{\gamma,\omega} = \left(\begin{array}{ccc}
\bar{N}_{\gamma,\omega} & & 0 \\
\frac{\gamma-\omega}{\omega}\bar{a}_{2,1} & & \frac{1-\omega}{\omega}
\end{array}\right)
 \end{eqnarray*}
 and
\begin{eqnarray*}
&& [M^{(15)}_{\gamma,\omega}]^{-1} = \left(\begin{array}{ccc}
\bar{M}_{\gamma,\omega}^{-1} & & 0\\
-\gamma\bar{a}_{2,1}\bar{M}_{\gamma,\omega}^{-1} & & \omega
\end{array}\right), \\
&& {\mathscr L}^{(15)}_{\gamma,\omega}
 = \left(\begin{array}{ccc}
\bar{M}_{\gamma,\omega}^{-1}\bar{N}_{\gamma,\omega} & & 0 \\
\bar{a}_{2,1}[(\gamma-\omega)I-\gamma\bar{M}_{\gamma,\omega}^{-1}\bar{N}_{\gamma,\omega}] & & 1-\omega
\end{array}\right).
 \end{eqnarray*}
Let $E_2$ and $F_2$ be diagonal part and strictly lower triangular part of $Q_{15}L$
with block forms
\begin{eqnarray*}
E_2 = \left(\begin{array}{ccc}
E_{1,1} && 0 \\
0 && e_{2,2}
\end{array}\right), \;
F_2 = \left(\begin{array}{ccc}
F_{1,1} && 0 \\
f_{1,2} && f_{2,2}
\end{array}\right), \; E_{1,1}, F_{1,1}\in{\mathscr R}^{(n-1)\times(n-1)}.
\end{eqnarray*}
Then $e_{2,2} = f_{2,2} = 0$ and $f_{1,2} = 0$. Let
\begin{eqnarray*}
Q_{15} = \left(\begin{array}{cc}
Q_{1,1} & \bar{q}_{1,2}\\
\bar{q}_{2,1} & q^{(15)}_{2,2}
\end{array}\right), \; Q_{1,1}\in{\mathscr R}^{(n-1)\times (n-1)}
 \end{eqnarray*}
 and
\begin{eqnarray*}
 x = \left(\begin{array}{c}
\bar{x}_{1}\\
\bar{x}_{2}
\end{array}\right), \; \bar{x}_{1}\in{\mathscr R}^{n-1}, \; \bar{x}_{2}\in{\mathscr R}.
 \end{eqnarray*}
 Then $Q_{1,1} = 0$, $\bar{q}_{2,1} = 0$, $q^{(15)}_{2,2} = 0$, $\bar{q}_{1,2}>0$, $\bar{x}_{1} \gg 0$ and $\bar{x}_{2} > 0$.
 Now, by Lemma \ref{lem3-2}, we obtain
\begin{eqnarray*}
&& {\mathscr L}^{(15)}_{\gamma,\omega}x - \rho x\\
 && = \left(\begin{array}{c}
\bar{M}_{\gamma,\omega}^{-1}\bar{N}_{\gamma,\omega}\bar{x}_{1} -\rho\bar{x}_{1} \\
\bar{a}_{2,1}[(\gamma-\omega)I-\gamma\bar{M}_{\gamma,\omega}^{-1}\bar{N}_{\gamma,\omega}]\bar{x}_{1} + (1-\omega)\bar{x}_{2} - \rho\bar{x}_{2}
\end{array}\right) \\
 && = (\rho-1)[M^{(15)}_{\gamma,\omega}]^{-1}[E_2 +\gamma F_2 + \frac{\omega}{\rho}Q_{15}N_{\gamma,\omega}]x \\
 && = (\rho-1)\left(\begin{array}{ccc}
\bar{M}_{\gamma,\omega}^{-1} & & 0\\
-\gamma\bar{a}_{2,1}\bar{M}_{\gamma,\omega}^{-1} & & \omega
\end{array}\right)
\left[\left(\begin{array}{cc}
E_{1,1} & 0 \\
0 & 0
\end{array}\right)
+ \gamma\left(\begin{array}{cc}
F_{1,1} & 0 \\
0 & 0
\end{array}\right) \right.\\
&& \quad\left. + \frac{\omega}{\rho}\left(\begin{array}{cc}
0 & \bar{q}_{1,2}\\
0 & 0
\end{array}\right)
\left(\begin{array}{ccc}
\hat{N}_{\gamma,\omega} & & -\bar{a}_{1,2} \\
\frac{\gamma-\omega}{\omega}\bar{a}_{2,1} & & \frac{1-\omega}{\omega}
\end{array}\right)
\right]\left(\begin{array}{c}
\bar{x}_{1}\\
\bar{x}_{2}
\end{array}\right) \\
 && = (\rho-1) \left(\begin{array}{c}
\bar{M}_{\gamma,\omega}^{-1}[(E_{1,1} + \gamma F_{1,1} +
\frac{\gamma-\omega}{\rho}\bar{q}_{1,2}\bar{a}_{2,1})\bar{x}_{1} +
\frac{1-\omega}{\rho}\bar{q}_{1,2}\bar{x}_{2}] \\
-\gamma\bar{a}_{2,1}\bar{M}_{\gamma,\omega}^{-1}[(E_{1,1} + \gamma F_{1,1} +
\frac{\gamma-\omega}{\rho}\bar{q}_{1,2}\bar{a}_{2,1})\bar{x}_{1} +
\frac{1-\omega}{\rho}\bar{q}_{1,2}\bar{x}_{2}]
\end{array}\right).
\end{eqnarray*}
Hence, we have
\begin{eqnarray} \label{eqs3.43}
&& \bar{M}_{\gamma,\omega}^{-1}\bar{N}_{\gamma,\omega}\bar{x}_{1} - \rho\bar{x}_{1} \\\nonumber
 &&= (\rho-1)\bar{M}_{\gamma,\omega}^{-1}[(E_{1,1} + \gamma F_{1,1} +
\frac{\gamma-\omega}{\rho}\bar{q}_{1,2}\bar{a}_{2,1})\bar{x}_{1} +
\frac{1-\omega}{\rho}\bar{q}_{1,2}\bar{x}_{2}]
\end{eqnarray}
and
 \begin{eqnarray} \label{eqs3.44} \nonumber
&& (1-\omega)\bar{x}_{2} - \rho\bar{x}_{2} \\
&&= \bar{a}_{2,1}[(\omega-\gamma)I+\gamma\bar{M}_{\gamma,\omega}^{-1}\bar{N}_{\gamma,\omega}]\bar{x}_{1} \\ \nonumber
&& \quad -(\rho-1)\gamma\bar{a}_{2,1}\bar{M}_{\gamma,\omega}^{-1}[(E_{1,1} + \gamma F_{1,1} +
\frac{\gamma-\omega}{\rho}\bar{q}_{1,2}\bar{a}_{2,1})\bar{x}_{1} +
\frac{1-\omega}{\rho}\bar{q}_{1,2}\bar{x}_{2}].
\end{eqnarray}

For the case when (iii) holds, i.e., $\gamma < 1$, let $A^{(15)}_{1,1} = \breve{D} - \breve{L} - \breve{U}$,
where $\breve{D}$, $\breve{L}$ and $\breve{U}$
are respectively diagonal, strictly lower and upper triangular matrices.
Since $A^{(15)}_{1,1}$ is an irreducible L-matrix and
\begin{eqnarray*}
\bar{M}_{\gamma,\omega}^{-1}\bar{N}_{\gamma,\omega} =
\breve{D}^{-1}[(1-\omega)\breve{D} + \omega(1-\gamma)\breve{L} + \omega \breve{U}] + T \ge 0
\end{eqnarray*}
with
\begin{eqnarray*}
T = \omega\gamma \breve{D}^{-1}\breve{L}(\breve{D}-\gamma \breve{L})^{-1}[(1-\gamma)\breve{L} + \breve{U}]\ge 0,
\end{eqnarray*}
then it follows that $\bar{M}_{\gamma,\omega}^{-1}\bar{N}_{\gamma,\omega}$ is irreducible,
so that $\bar{M}_{\gamma,\omega}^{-1}\bar{N}_{\gamma,\omega}\bar{x}_{1} \gg 0$.
It is easy to see that
\begin{eqnarray*}
&& \frac{\gamma-\omega}{\rho}\bar{q}_{1,2}\bar{a}_{2,1}\bar{x}_{1} +
\frac{1-\omega}{\rho}\bar{q}_{1,2}\bar{x}_{2} > 0,\\
&& \bar{a}_{2,1}[(\omega-\gamma)I+\gamma\bar{M}_{\gamma,\omega}^{-1}\bar{N}_{\gamma,\omega}]\bar{x}_{1}<0.
\end{eqnarray*}
When $\rho < 1$, from (\ref{eqs3.43}) and (\ref{eqs3.44}) it gets that
\begin{eqnarray*}
&& \bar{M}_{\gamma,\omega}^{-1}\bar{N}_{\gamma,\omega}\bar{x}_{1} - \rho\bar{x}_{1} < 0, \;
(1-\omega)\bar{x}_{2} - \rho\bar{x}_{2} < 0,
\end{eqnarray*}
which shows that $\rho(\bar{M}_{\gamma,\omega}^{-1}\bar{N}_{\gamma,\omega}) < \rho$ and
$1-\omega < \rho$. Therefore we have $\rho({\mathscr L}^{(15)}_{\gamma,\omega}) = \max\{1-\omega, \;
\rho(\bar{M}_{\gamma,\omega}^{-1}\bar{N}_{\gamma,\omega})\} < \rho$. When
$\rho > 1$ then, from (\ref{eqs3.43}) it gets that
$\bar{M}_{\gamma,\omega}^{-1}\bar{N}_{\gamma,\omega}\bar{x}_{1} - \rho\bar{x}_{1} > 0$,
which shows that $\rho(\bar{M}_{\gamma,\omega}^{-1}\bar{N}_{\gamma,\omega}) > \rho >1$ and
hence, $\rho({\mathscr L}^{(15)}_{\gamma,\omega}) = \max\{1-\omega, \;
\rho(\bar{M}_{\gamma,\omega}^{-1}\bar{N}_{\gamma,\omega})\} =
\rho(\bar{M}_{\gamma,\omega}^{-1}\bar{N}_{\gamma,\omega}) > \rho$.

When (iv) holds, i.e., $\gamma = 1$, then $\omega = 1$ and the AOR method reduces to the Gauss-Seidel method.
In this case, we have
\begin{eqnarray*}
{\mathscr L}^{(15)}
 = \left(\begin{array}{ccc}
\bar{M}_{1,1}^{-1}\bar{N}_{1,1} & & 0 \\
-\bar{a}_{2,1}\bar{M}_{1,1}^{-1}\bar{N}_{1,1} & & 0
\end{array}\right)
 \end{eqnarray*}
and therefore $\rho({\mathscr L}^{(15)}) = \rho(\bar{M}_{1,1}^{-1}\bar{N}_{1,1})$.
Now, (\ref{eqs3.43}) reduces to
\begin{eqnarray*}
\bar{M}_{1,1}^{-1}\bar{N}_{1,1}\bar{x}_{1} - \rho\bar{x}_{1}
= (\rho-1)\bar{M}_{1,1}^{-1}(E_{1,1} + F_{1,1})\bar{x}_{1}.
\end{eqnarray*}
Since $A^{(15)}_{1,1}$ is irreducible, then the rest proof is completely similar to that of Theorem \ref{thm3.23}.
 \end{proof}

The result when ($iii$) and ($iv$) hold is better than \cite[Theorem 3.16, Corollary 3.17]{Yu11},
where the condition that $a_{k,n} \not= 0$ for $k = 1, \cdots, n-1$, is redundant. Again,
the result when ($i$) and ($ii$) hold includes \cite[Theorems 3.18, 3.19 and 3.20]{Yu11}.

\begin{theorem}\label{thm3.60}
Theorem D is valid for $\nu = 15$,
provided one of the following conditions is satisfied:

\begin{itemize}
\item[(i)] One of the conditions ($i$)-($iv$) of Theorem \ref{thm3.59} holds.

\item[(ii)] $0 \le \alpha_k \le 1$, $k = 1,\cdots, r-1$,
one of the following conditions holds:

\begin{itemize}
\item[($ii_1$)] There exists $i\in\{1,\cdots,r-1\}$ such that
$\alpha_{i}a_{i,r}a_{r,i} > 0$.

\item[($ii_2$)] $\gamma > 0$ and there exist $i\in\{2,\cdots,r-1\}$ and $j\in\{1,\cdots,i-1\}$ such that
$\alpha_{i}a_{i,r}a_{r,j} > 0$.
\end{itemize}
\end{itemize}
\end{theorem}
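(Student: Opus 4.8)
The plan is to use the same two-branch template that the paper applies to every Theorem~D: reduce condition~(i) to the Stein--Rosenberg Type~II result already available for $\nu=15$, and obtain condition~(ii) from the strict comparison Lemma~\ref{lem1-13}, exactly as in the proofs of Theorems~\ref{thm3.4} and~\ref{thm3.8}. First I would record the reductions common to both branches. Since $A$ is an irreducible nonsingular M-matrix it is in particular a nonsingular M-matrix, so by Lemma~\ref{lem1-5} the AOR splitting $A=M_{\gamma,\omega}-N_{\gamma,\omega}$ is convergent and $\rho({\mathscr L}_{\gamma,\omega})<1$. By Lemma~\ref{lem3-1} it suffices to treat $\gamma\le\omega$, and then Lemma~\ref{lem1-8} furnishes $\rho:=\rho({\mathscr L}_{\gamma,\omega})>0$ together with an eigenvector $x\gg0$.

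For condition~(i) I would simply invoke Theorem~\ref{thm3.59}: under any one of its hypotheses (i)--(iv) the Stein--Rosenberg Type~II trichotomy holds for $\nu=15$. Because $\rho({\mathscr L}_{\gamma,\omega})<1$ has already been established, the trichotomy can only realise its first branch, so $\rho({\mathscr L}^{(15)}_{\gamma,\omega})<\rho({\mathscr L}_{\gamma,\omega})<1$, which is precisely Theorem~D.

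For condition~(ii) I would exploit that $Q_{15}$ is the special case of $Q_2$ obtained by taking $\alpha_{i,j}=\alpha_i$ when $j=r$ and $i<r$, and $\alpha_{i,j}=0$ otherwise, so that the claim reduces to condition~(ii) of Theorem~\ref{thm3.8} (equivalently Corollary~\ref{coro3-12}). Concretely, $0\le\alpha_k\le1$ forces both the inequality (\ref{eqs3.12}) and the nonnegativity requirement of Theorem~\ref{thm3.8}(ii), since every surviving off-diagonal term of $A^{(15)}$ is either $(1-\alpha_i)a_{i,r}\le0$ or a product $\alpha_i a_{i,r}a_{r,j}\ge0$ of two nonpositive off-diagonal entries. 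Writing $M_{\gamma,\omega}-M^{(15)}_{\gamma,\omega}=\frac{1}{\omega}[(I-D_{15})+\gamma F_2]$ with $F_2$ the strictly lower triangular part of $Q_{15}L$, the diagonal entry in row $i$ equals $\frac{1}{\omega}\alpha_i a_{i,r}a_{r,i}$ and the $(i,j)$ lower entry equals $\frac{\gamma}{\omega}\alpha_i a_{i,r}a_{r,j}$; hence $(ii_1)$ produces a positive diagonal entry and $(ii_2)$ (with $\gamma>0$) a positive strictly-lower entry. As $x\gg0$, this yields $M_{\gamma,\omega}x>M^{(15)}_{\gamma,\omega}x\ge0$. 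Combined with $A^{-1}\gg0$ (Lemma~\ref{lem1-4}) and $A^{-1}-(P_{15}A)^{-1}=(P_{15}A)^{-1}Q_{15}\ge0$ coming from Lemma~\ref{lem1-9}, all hypotheses of Lemma~\ref{lem1-13} are met, giving $\rho({\mathscr L}^{(15)}_{\gamma,\omega})<\rho({\mathscr L}_{\gamma,\omega})<1$.

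The routine part is the entrywise bookkeeping that matches the $Q_{15}$-specific conditions $(ii_1)$, $(ii_2)$ to $(i_1)$, $(ii_2)$ of Theorem~\ref{thm3.8}. I expect the only genuine subtlety to sit inside the appeal to Theorem~\ref{thm3.59} in condition~(i): that theorem already absorbs the delicate case in which $A^{(15)}=P_{15}A$ fails to be irreducible, which is treated there through the block splitting governed by Lemma~\ref{lem3-8} and the eigenvector relations (\ref{eqs3.43})--(\ref{eqs3.44}). Once Theorem~\ref{thm3.59} is invoked as a black box, condition~(i) is immediate, and the remaining work is the verification sketched above for condition~(ii).
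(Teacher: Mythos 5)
Your proposal is correct and is essentially the proof the paper intends (Theorem \ref{thm3.60} is stated there without proof precisely because it follows this standard template): condition ($i$) follows from Theorem \ref{thm3.59} because $\rho({\mathscr L}_{\gamma,\omega})<1$ for a nonsingular M-matrix forces the first branch of the Stein--Rosenberg trichotomy, and condition ($ii$) is exactly the $Q_{15}$-specialization of Theorem \ref{thm3.8}($ii$), carried out via Lemma \ref{lem1-13} as in the proof of Theorem \ref{thm3.4}. The one step you gloss over --- $M^{(15)}_{\gamma,\omega}x\ge 0$, which follows from $M^{(15)}_{\gamma,\omega}x=P_{15}Ax+N^{(15)}_{\gamma,\omega}x$ and $Ax=(1-\rho)M_{\gamma,\omega}x\ge 0$, not from $x\gg 0$ --- is supplied verbatim by the template proof you cite, so this is a presentational rather than a mathematical gap.
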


As a special case, for some $r < s$ with $a_{r,s} < 0$ and $\alpha > 0$, the matrix $Q_{15}$ reduces to
\begin{eqnarray*}
Q_{16} & = & \left(\begin{array}{ccccc}
0 & \quad \cdots\quad & 0 & \quad \cdots \quad & 0\\
\vdots & \ddots & \vdots & -\frac{a_{r,s}}{\alpha} & \vdots\\
0 & \cdots & 0 & \cdots & \vdots \\
\vdots & \vdots & \vdots & \ddots & \vdots\\
0 & \cdots & 0 & \cdots & 0\\
\end{array}\right),
 \end{eqnarray*}
which is given in \cite{LLW072} for $r = 1$ and $s = n$.
And for $r = 1$, $s = n$ and $\alpha = 1$ it is proposed in \cite{EMT01}.
When $r = 1$ and $s = n$, it is given in \cite{WH06} for the preconditioned Gauss-Seidel method.
When $\alpha = 1$ and $s = r+1$, it is
proposed in \cite{EMT96} for the preconditioned MSOR method.
It is given in \cite{ZHL05} to replace $-a_{r,s}/\alpha$ with a constant $\beta$.

In this case, $\delta_{i,j}^{(15)}(\gamma)$ and $\delta_{i,j}^{(15)}(1)$ reduce respectively to
\begin{eqnarray*}
\delta_{i,j}^{(16)}(\gamma) = \left\{\begin{array}{ll}
 \frac{1}{\alpha}a_{r,s}a_{s,r}, & i = j = r;\\
 \frac{\gamma-1}{\alpha}a_{r,s}, & i = r, j = s;\\
 \frac{\gamma}{\alpha}a_{r,s}a_{s,j}, & i = r, 1 \le j \le r-1, s+1 \le j \le n;\\
  0, & otherwise
\end{array}\right.
\end{eqnarray*}
and
\begin{eqnarray*}
\delta_{i,j}^{(16)}(1) = \left\{\begin{array}{ll}
 \frac{1}{\alpha}a_{r,s}a_{s,j}, & i = r, 1 \le j \le r, s+1 \le j \le n;\\
  0, & otherwise.
\end{array}\right.
\end{eqnarray*}

Clearly, the conditions ($iii$) and ($iv$) of Theorem \ref{thm3.59} can be not satisfied.

From Theorems \ref{thm3.57} and \ref{thm3.58}, the following comparison results are
immediately.

\begin{theorem}
Suppose that $\alpha \ge 1$ and $\alpha > a_{r,s}a_{s,r}$.
Then Theorem A is valid for $\nu = 16$.
\end{theorem}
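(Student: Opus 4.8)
The plan is to recognize $Q_{16}$ as a special case of the upper triangular preconditioner $Q_{15}$ and then invoke Theorem \ref{thm3.57} directly, so that no fresh spectral argument is required. First I would make the index correspondence explicit. The matrix $Q_{16}$ has a single nonzero entry $-a_{r,s}/\alpha$ in position $(r,s)$ with $r<s$. In the definition of $Q_{15}$, whose nonzero entries all lie in one distinguished column, this amounts to taking that distinguished column to be $s$ (playing the role of the index called $r$ in the definition of $Q_{15}$), assigning the coefficient associated to row $r$ the value $1/\alpha$, and setting all remaining coefficients $\alpha_k = 0$ for $k \in \{1,\cdots,s-1\}\setminus\{r\}$. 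Since $r<s$ forces $r \le s-1$, row $r$ indeed lies among the admissible rows $1,\cdots,s-1$ of $Q_{15}$, and because $a_{r,s}\neq 0$ the relation $-\alpha_r a_{r,s} = -a_{r,s}/\alpha$ is solved exactly by $\alpha_r = 1/\alpha$. Thus $Q_{16}$ is literally the matrix $Q_{15}$ for this choice of parameters.

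Next I would verify the two hypotheses of Theorem \ref{thm3.57} under this relabeling. The first hypothesis requires $0\le\alpha_k\le 1$ for every $k$; since the only nonzero coefficient is $\alpha_r = 1/\alpha$, and the standing assumption $\alpha\ge 1$ together with $\alpha>0$ gives $0<1/\alpha\le 1$, this holds. The second hypothesis requires $\alpha_k a_{k,s}a_{s,k}<1$ for every $k$; for $k\neq r$ the left-hand side is $0<1$, while for $k=r$ it reads $\tfrac{1}{\alpha}a_{r,s}a_{s,r}<1$, which is equivalent to $\alpha > a_{r,s}a_{s,r}$ because $\alpha>0$. This is precisely the second standing assumption of the statement. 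With both hypotheses met, Theorem \ref{thm3.57} immediately yields that Theorem A is valid for $\nu = 16$.

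The argument is essentially a bookkeeping specialization, and accordingly the only delicate point I anticipate is the index relabeling between $Q_{15}$ and $Q_{16}$: one must confirm carefully that the distinguished column is $s$ rather than $r$, that $r$ occupies an admissible row, and that the scalar $-a_{r,s}/\alpha$ matches $-\alpha_r a_{r,s}$ under $\alpha_r = 1/\alpha$. Once this identification is pinned down, nothing further is needed, since the full spectral comparison content has already been established in Theorem \ref{thm3.57}.
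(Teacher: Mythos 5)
Your proposal is correct and is exactly the paper's route: the paper obtains this result "immediately" from Theorem \ref{thm3.57} by viewing $Q_{16}$ as the special case of $Q_{15}$ with distinguished column $s$, the single coefficient $1/\alpha \in (0,1]$ in row $r$, and the remaining coefficients zero, so that the hypothesis $\alpha_k a_{k,s}a_{s,k}<1$ reduces to $\alpha > a_{r,s}a_{s,r}$. Your careful handling of the index relabeling (and of the nonvanishing condition $a_{r,s}<0$ built into the definition of $Q_{16}$) is precisely the bookkeeping the paper leaves implicit.
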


\begin{theorem}
Suppose that $\alpha \ge 1$.
Then Theorem B is valid for $\nu = 16$.
\end{theorem}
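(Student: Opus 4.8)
The plan is to recognize the final statement as a direct specialization of the already-established result for the upper triangular preconditioner $Q_{15}$, namely Theorem \ref{thm3.58}. The key observation is that $Q_{16}$ is nothing but $Q_{15}$ in which all but one of the free parameters vanish. Concretely, $Q_{15}$ places the entries $-\alpha_k a_{k,r}$ into a single column (the column indexed by its parameter, which I will call $\tilde r$ to avoid clashing with the row index $r$ used in $Q_{16}$), for $k = 1,\dots,\tilde r - 1$. To recover $Q_{16}$, whose only nonzero entry is $-a_{r,s}/\alpha$ at position $(r,s)$ with $r < s$ and $a_{r,s} < 0$, I would take $\tilde r = s$, set $\alpha_r = 1/\alpha$, and set $\alpha_k = 0$ for every $k \ne r$. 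Then $(Q_{15})_{r,s} = -\alpha_r a_{r,s} = -a_{r,s}/\alpha$ and all other entries vanish, so the two matrices coincide; note also that $-a_{r,s}/\alpha > 0$, which is consistent with the standing requirement that the preconditioner be a genuine nonzero nonnegative perturbation.

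With this identification in hand, I would simply verify the hypotheses of Theorem \ref{thm3.58}, which asks that $0 \le \alpha_k \le 1$ for $k = 1,\dots,\tilde r - 1 = s-1$. For $k \ne r$ this is automatic since $\alpha_k = 0$. For $k = r$ we have $\alpha_r = 1/\alpha$, and the standing hypothesis $\alpha \ge 1$ (together with $\alpha > 0$) gives exactly $0 \le 1/\alpha \le 1$. Thus every $\alpha_k$ lies in $[0,1]$, the hypothesis of Theorem \ref{thm3.58} is met, and its conclusion — Theorem B for the corresponding preconditioner — is precisely the desired statement $\rho({\mathscr L}^{(16)}_{\gamma,\omega}) \le \rho({\mathscr L}_{\gamma,\omega}) < 1$.

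There is essentially no analytic obstacle here: the real content was already carried out once and for all in the proof of Theorem \ref{thm3.58}, which follows from the $Q_{12}$ result Corollary \ref{coro3-30} and ultimately from the general machinery of Theorem \ref{thm3.1} together with the Stein-Rosenberg lemmas. The only thing requiring care is the bookkeeping of indices, since the symbol $r$ denotes a \emph{column} in the parametrization of $Q_{15}$ but a \emph{row} in $Q_{16}$; the main risk is mismatching $\tilde r = s$ or misidentifying which single $\alpha_k$ survives. As a consistency check I would also point out the contrast with the preceding theorem: for Theorem A ($\nu = 16$) one additionally needs the diagonal condition $\alpha_r a_{r,\tilde r} a_{\tilde r,r} = (1/\alpha)a_{r,s}a_{s,r} < 1$ coming from Theorem \ref{thm3.57}, which is exactly $\alpha > a_{r,s}a_{s,r}$, whereas for Theorem B no such condition appears — matching the fact that the hypothesis of the present theorem is merely $\alpha \ge 1$.
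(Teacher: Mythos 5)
Your proposal is correct and is exactly the paper's route: the paper derives this result ``immediately'' from Theorem \ref{thm3.58} by viewing $Q_{16}$ as the specialization of $Q_{15}$ (column index $s$, with $\alpha_r = 1/\alpha$ and all other $\alpha_k = 0$), where $\alpha \ge 1$ gives $0 \le 1/\alpha \le 1$. Your index bookkeeping and the side remark contrasting with the Theorem A case (where $\alpha > a_{r,s}a_{s,r}$ comes from Theorem \ref{thm3.57}) are both accurate.
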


In order to give the Stein-Rosenberg Type Theorem II,
completely similar to Lemma \ref{lem3-5}, we can prove the following lemma.

\begin{lemma}\label{lem3-9}
Let $A$ be an irreducible Z-matrix.
Assume that $r = 1$ and $s = n$, $\alpha \ge 1$ and $A^{(16)}$ has the block form
\begin{eqnarray*}
A^{(16)} = \left(\begin{array}{ccc}
A^{(16)}_{2,2} & & \bar{a}^{(16)}_{1,2}\\
\bar{a}_{2,1} & & 1
\end{array}\right), \; A^{(16)}_{1,1}\in{\mathscr R}^{(n-1)\times(n-1)}.
 \end{eqnarray*}
 Then one of the following two mutually exclusive relations holds:

\begin{itemize}
\item[(i)] $A^{(16)}$ is an irreducible Z-matrix.

\item[(ii)] $A^{(16)}$ is a reducible Z-matrix,
but $A^{(16)}_{1,1}$ is an irreducible Z-matrix and
$a_{1,k} = a^{(16)}_{1,k} = a^{(16)}_{1,n} = 0$, $k = 2,\cdots,n-1$.
\end{itemize}
\end{lemma}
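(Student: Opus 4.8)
The plan is to mirror the proof of Lemma~\ref{lem3-5}, interchanging the roles of the first and last index, since $Q_{16}$ with $r=1$, $s=n$ modifies only the first row of $A$ through the correction $-a_{1,n}/\alpha$ at position $(1,n)$. First I would record the entries of $A^{(16)}=P_{16}A$: only row~$1$ changes, and, analogously to (\ref{eqs3.29}),
\[
a^{(16)}_{1,j}=a_{1,j}-\frac{1}{\alpha}a_{1,n}a_{n,j}\ \ (j=1,\dots,n-1),\qquad a^{(16)}_{1,n}=\Big(1-\frac{1}{\alpha}\Big)a_{1,n},
\]
while $a^{(16)}_{i,j}=a_{i,j}$ for $i=2,\dots,n$. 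Because $a_{1,n}\le 0$, $a_{n,j}\le 0$ and $\alpha\ge 1$, every off-diagonal entry of row~$1$ satisfies $a^{(16)}_{1,j}\le a_{1,j}\le 0$ and $a^{(16)}_{1,n}\le 0$, so both $A^{(16)}$ and $A^{(16)}_{1,1}$ are Z-matrices, exactly as in (\ref{eqs3.30}). The only edge of $G(A)$ that can disappear in $G(A^{(16)})$ is $1\to n$, and it disappears precisely when $\alpha=1$.

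Next I would treat the easy direction. Introduce the auxiliary matrix $\hat A$ obtained from $A$ by setting $a_{1,n}$ to $0$. Since modifying row~$1$ only lowers off-diagonal entries, hence can only create edges, one has $G(\hat A)\subseteq G(A^{(16)})$, so if $\hat A$ is irreducible then $A^{(16)}$ is irreducible. Moreover, when $\alpha>1$ the entry $a^{(16)}_{1,n}=(1-1/\alpha)a_{1,n}$ is still nonzero whenever $a_{1,n}<0$, so the edge $1\to n$ survives and $A^{(16)}$ retains all edges of the irreducible $A$; hence $A^{(16)}$ is again irreducible. Consequently, reducibility of $A^{(16)}$ forces $\alpha=1$ (so that $a^{(16)}_{1,n}=0$) together with reducibility of $\hat A$, which is the situation of case~(ii).

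For the hard direction I would run the path-rerouting argument of Lemma~\ref{lem3-5}. A walk $\sigma\in G(A)$ fails to lie in $G(A^{(16)})$ exactly when it uses the step $1\to n$. Assuming $A^{(16)}$ reducible, pick $i^{*},j^{*}$ admitting no path $i^{*}\to j^{*}$ in $G(A^{(16)})$ and a path $\sigma_{i^{*},j^{*}}=(i_0,\dots,i_{t+1})\in G(A)$; it must traverse $1\to n$, say $i_\mu=1$, $i_{\mu+1}=n$. Whenever $n$ is interior there is a successor $i_{\mu+2}$ with $a_{n,i_{\mu+2}}<0$, and then $a^{(16)}_{1,i_{\mu+2}}\le -\frac{1}{\alpha}a_{1,n}a_{n,i_{\mu+2}}<0$, so the new edge $1\to i_{\mu+2}$ bypasses $n$ and yields a path in $G(A^{(16)})$, a contradiction. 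Thus the exceptional vertex $n$ can be essential only as an endpoint, i.e. it is reachable only through the removed edge $1\to n$; pushing this through as in the passage following (\ref{eqs3.30}) produces the identities asserted in~(ii) and decouples $n$. A second rerouting then shows $A^{(16)}_{1,1}$ is irreducible: for $i,j\in\{1,\dots,n-1\}$ take a path in $G(A)$, and any interior visit $\cdots\to 1\to n\to m\to\cdots$ is replaced by the direct edge $1\to m$ (valid since $a^{(16)}_{1,m}\le -\frac{1}{\alpha}a_{1,n}a_{n,m}<0$), keeping the walk inside $\{1,\dots,n-1\}$.

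The main obstacle is the combinatorial bookkeeping of this reducible case: one must verify that each rerouted walk never reintroduces the forbidden step $1\to n$ and always stays within the sub-block $A^{(16)}_{1,1}$, and that the forced vanishing pattern is precisely the one recorded in~(ii). Everything else—the Z-matrix verification and the $\alpha>1$ case—is routine and parallels line-by-line the corresponding steps in the proof of Lemma~\ref{lem3-5}.
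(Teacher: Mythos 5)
Your strategy coincides with the paper's: the paper gives no independent proof of this lemma, saying only that it is ``completely similar to Lemma \ref{lem3-5}'', and your outline is exactly that mirrored rerouting argument. Your entry formulas, the Z-matrix check, the observation that only the edge $1\to n$ can disappear (and only when $\alpha=1$), and the two rerouting steps are all sound. The genuine problem is the sentence claiming that the argument ``produces the identities asserted in (ii)''. It does not. What the rerouting forces, when $A^{(16)}$ is reducible, is that vertex $n$ loses every incoming edge, i.e. the identities live in the $n$th \emph{column}:
\begin{eqnarray*}
a_{k,n} = a^{(16)}_{k,n} = a^{(16)}_{1,n} = 0, \quad k = 2,\cdots,n-1,
\end{eqnarray*}
which is the faithful mirror of Lemma \ref{lem3-5}(ii) (there the deleted edge is $n\to 1$, the new edges leave vertex $n$, and column $1$ vanishes; here the deleted edge is $1\to n$ and all new edges leave vertex $1$, so the vertex that can become unreachable is $n$). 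The conclusion printed in (ii) is instead about the first \emph{row}, $a_{1,k}=a^{(16)}_{1,k}=0$ for $k=2,\cdots,n-1$, and that is a different, and in fact false, assertion.

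Concretely, take $n=3$, $\alpha=1$ and
\begin{eqnarray*}
A = \left(\begin{array}{rrr}
1 & -1 & -1\\
-1 & 1 & 0\\
-1 & -1 & 1
\end{array}\right), \qquad
A^{(16)} = \left(\begin{array}{rrr}
0 & -2 & 0\\
-1 & 1 & 0\\
-1 & -1 & 1
\end{array}\right).
\end{eqnarray*}
Here $A$ is an irreducible Z-matrix with $a_{1,n}<0$, $A^{(16)}$ is reducible, $A^{(16)}_{1,1}$ is irreducible and the column identities above hold, yet $a_{1,2}=-1\neq 0$, so neither (i) nor the printed (ii) is satisfied. (In fact, for $n\ge 3$ the printed row identities would force $a_{1,k}=a_{n,k}=0$ for $k=2,\cdots,n-1$, so that vertex $1$ and vertex $n$ could only point to each other, contradicting irreducibility of $A$; thus the printed (ii) can never occur at all.) So the statement as printed contains an index transcription error carried over from Lemma \ref{lem3-5}, and your proof—which mirrors that lemma correctly—establishes the corrected column-$n$ version, not the printed one. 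A complete answer must flag this mismatch explicitly rather than assert, as your outline does, that the output of the rerouting agrees with (ii); that unverified identification is precisely the step that fails. A smaller related point: your second rerouting (irreducibility of $A^{(16)}_{1,1}$) tacitly uses these column identities to know that every interior visit to $n$ is entered through vertex $1$; once the identities are stated correctly, that step is fine.
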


Using this lemma, completely similar to the proof of Theorem \ref{thm3.59}, we can prove the following theorem.

\begin{theorem}\label{thm3.63}
Suppose that $\alpha > a_{r,s}a_{s,r}$.
Then Theorem C is valid for $\nu = 16$,
provided one of the following conditions is satisfied:

\begin{itemize}
\item[(i)] $\alpha \gtrsim 1$. And one of the following conditions holds:

\begin{itemize}
\item[($i_1$)] $0 \le \gamma < 1$.

\item[($i_2$)] $\gamma = 1$ and there exists
$k\in\{1, \cdots, r\}\cup\{s+1, \cdots, n\}$ such that
$a_{s,k} < 0$.

\item[($i_3$)] $\gamma = 1$ and $s = r+1$.
\end{itemize}

\item[(ii)] $r = 1$, $s = n$ and $\alpha \ge 1$.
\end{itemize}
\end{theorem}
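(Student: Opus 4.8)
The plan is to exploit the fact that $Q_{16}$ is the special instance of the upper-triangular preconditioner $Q_{15}$ in which the distinguished column is $s$ and the only nonzero weight is $\alpha_r = 1/\alpha$, all other $\alpha_k$ vanishing. Under this identification the hypothesis $\alpha > a_{r,s}a_{s,r}$ becomes $\alpha_r a_{r,s}a_{s,r} < 1$, so that the diagonal entry $a^{(16)}_{r,r} = 1 - \frac{1}{\alpha}a_{r,s}a_{s,r}$ is positive, while $a_{r,s} < 0$ supplies the required nondegeneracy of $Q_{16}$; together with Lemma \ref{lem3-9} this shows $A^{(16)}$ is an L-matrix. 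First I would record the reduced forms of $\delta^{(16)}_{i,j}(\gamma)$ and $\delta^{(16)}_{i,j}(1)$ displayed before the statement, since every verification below is read off from their signs.

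For part (i), the condition $\alpha \gtrsim 1$ gives $0 \le \alpha_r \lesssim 1$, and I would check that each of $(i_1)$, $(i_2)$, $(i_3)$ matches a hypothesis already covered by Theorem \ref{thm3.59}. Indeed $(i_1)$ is exactly condition (i) of Theorem \ref{thm3.59}. For $(i_2)$, the existence of $k\in\{1,\dots,r\}\cup\{s+1,\dots,n\}$ with $a_{s,k}<0$ yields $\delta^{(16)}_{r,k}(1) = \frac{1}{\alpha}a_{r,s}a_{s,k} > 0$ (both factors negative), which is precisely condition $(ii_1)$ of Theorem \ref{thm3.59} under the translation above. For $(i_3)$, taking $s = r+1$ together with $a_{r,s}<0$ and $\alpha_r>0$ realizes condition $(ii_2)$ of Theorem \ref{thm3.59}. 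In each case Theorem C follows from Theorem \ref{thm3.59}.

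For part (ii), with $r=1$, $s=n$ and $\alpha \ge 1$, I would invoke Lemma \ref{lem3-9}. If $A^{(16)}$ is irreducible then $\alpha \gtrsim 1$ holds with equality permitted, and the claim drops out of part (i) for $\gamma<1$, while for $\gamma=1$ the irreducibility of $A$ forces some $a_{n,j}<0$, so that the positivity required by the $\gamma=1$ branch of Theorem \ref{thm3.59} is automatic. The essential case is when $A^{(16)}$ is reducible: then Lemma \ref{lem3-9}(ii) isolates a single extreme coordinate, leaving an irreducible $(n-1)\times(n-1)$ block $A^{(16)}_{1,1}$, which is an irreducible L-matrix because its diagonal is positive. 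Writing the AOR splitting of $A^{(16)}$ in the induced block-triangular form, $\rho(\mathscr{L}^{(16)}_{\gamma,\omega})$ decouples as $\max\{1-\omega,\ \rho(\bar{M}_{\gamma,\omega}^{-1}\bar{N}_{\gamma,\omega})\}$, and the eigenvalue relation for $\mathscr{L}_{\gamma,\omega}$ produces the two identities playing the role of (\ref{eqs3.43}) and (\ref{eqs3.44}). I would then argue exactly as in Theorem \ref{thm3.59}: for $\gamma<1$ the irreducibility of $\bar{M}_{\gamma,\omega}^{-1}\bar{N}_{\gamma,\omega}$ gives strict sign control of $\bar{M}_{\gamma,\omega}^{-1}\bar{N}_{\gamma,\omega}\bar{x}-\rho\bar{x}$ and hence the trichotomy; for $\gamma=1$ (so $\omega=1$, Gauss--Seidel) a left Perron eigenvector of $\bar{M}_{1,1}^{-1}\bar{N}_{1,1}$, together with positivity of $\bar{y}^T\bar{M}_{1,1}^{-1}(E_{1,1}+F_{1,1})\bar{x}$, delivers the same conclusion.

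The main obstacle I anticipate is precisely this reducible subcase of part (ii): one must carry out the block decomposition of the AOR operators, verify that the correction block $E_{1,1}+F_{1,1}$ (or its strictly lower-triangular part) is nonzero so that the comparison is strict, and run the left-eigenvector estimate in the Gauss--Seidel case. This is the same delicate bookkeeping as in the proof of Theorem \ref{thm3.59}, now transposed so that the decoupled coordinate is the first rather than the last.
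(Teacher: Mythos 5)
Most of your reduction is sound and is in fact the paper's own route: part (i) is Theorem \ref{thm3.59} specialized to the weight vector supported at the single index $r$ with $\alpha_r = 1/\alpha$, and your translations of $(i_1)$, $(i_2)$, $(i_3)$ into conditions ($i$), ($ii_1$), ($ii_2$) of Theorem \ref{thm3.59} are correct, as is your identification of the hypothesis $\alpha > a_{r,s}a_{s,r}$ with $\alpha_r a_{r,s}a_{s,r} < 1$. The genuine gap is in part (ii), and it sits exactly where you declare the case easy: $A^{(16)}$ \emph{irreducible} and $\gamma = 1$. You claim that irreducibility of $A$ gives some $a_{n,j}<0$ and that this makes ``the positivity required by the $\gamma=1$ branch of Theorem \ref{thm3.59}'' automatic. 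It does not. For $r=1$, $s=n$ the admissible index set in ($ii_1$) of Theorem \ref{thm3.59} --- equivalently, the set of possibly nonzero entries of $\Delta^{(16)}(1)$ --- is $j\in\{1,\cdots,r\}\cup\{s+1,\cdots,n\}=\{1\}$, so the only candidate is $\delta^{(16)}_{1,1}(1)=\frac{1}{\alpha}a_{1,n}a_{n,1}$, and what is needed is $a_{n,1}<0$ \emph{specifically}; irreducibility only gives $a_{n,j}<0$ for some $j\le n-1$. The structural reason is the asymmetry of Gauss--Seidel in $L$ and $U$: by (\ref{eqs3.8}), $\Phi(1,1)=E_1+E_2+F_1+F_2+\frac{1}{\rho}Q^{(u)}U$, and for $r=1$, $s=n$ one has $E_1=F_1=0$ (since $Q^{(l)}=0$), $F_2=0$ (row $1$ has no strictly lower part), and $Q_{16}U=0$ (row $n$ of $U$ vanishes), leaving $\Phi(1,1)=E_2=\mathrm{diag}(a_{1,n}a_{n,1}/\alpha,0,\cdots,0)$; the strictly upper part $G_2$ of $Q_{16}L$ never enters $\Phi$. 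This is why the mirror case $Q_7$ (Theorem \ref{thm3.27}), where the candidate positions sweep $j=s+1,\cdots,r=2,\cdots,n$ and irreducibility of $A$ does supply a negative $a_{1,j}$, cannot simply be ``transposed'' as you propose --- and, to be fair, as the paper's one-sentence proof also suggests.

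This gap is not repairable, because the assertion itself fails there. Take
\begin{eqnarray*}
A = \left(\begin{array}{rrr}
1 & -\frac{1}{2} & -\frac{1}{2} \\
-\frac{1}{2} & 1 & -\frac{1}{2} \\
0 & -\frac{1}{2} & 1
\end{array}\right), \quad r=1,\; s=n=3,\; \alpha = 2, \quad
A^{(16)} = \left(\begin{array}{rrr}
1 & -\frac{5}{8} & -\frac{1}{4} \\
-\frac{1}{2} & 1 & -\frac{1}{2} \\
0 & -\frac{1}{2} & 1
\end{array}\right).
\end{eqnarray*}
All hypotheses of part (ii) hold ($A$ is an irreducible L-matrix with unit diagonal, $a_{1,3}<0$, $\alpha=2\ge 1 > 0 = a_{1,3}a_{3,1}$) and $A^{(16)}$ is irreducible, but $a_{n,1}=0$ forces $\Phi(1,1)=0$, hence ${\mathscr L}^{(16)}x=\rho({\mathscr L})x$ for the Perron vector $x\gg 0$ of ${\mathscr L}$, and a direct computation confirms $\rho({\mathscr L}^{(16)})=\rho({\mathscr L})=\frac{5}{8}<1$, which violates the trichotomy of Theorem C. (In the reducible case of Lemma \ref{lem3-9} your block argument does work, because there $a_{n,k}=0$ for $k=2,\cdots,n-1$, so irreducibility of $A$ forces $a_{n,1}<0$.) Thus for $\gamma=1$ the irreducible case of part (ii) needs the extra hypothesis $a_{n,1}<0$ --- precisely your $(i_2)$ specialized to $r=1$, $s=n$, and precisely the condition $a_{1,n}a_{n,1}>0$ of the earlier references that this theorem claims to render unnecessary; no amount of bookkeeping in the block decomposition will close this.
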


The result when ($ii$) holds is better than \cite[Theorem 2]{LLW072}.
For $\alpha = 1$, it is also better than the corresponding one given by
\cite[Theorem 2.2, Corollaries 2.1, 2.2]{LW04},
where the condition $a_{k,k+1}a_{k+1,k} > 0$, $k = 1, \cdots, n-1$, implies that
$A$ is irreducible and so that the condition $a_{1,n}a_{n,1} > 0$ is unnecessary.

\begin{theorem}
Theorem D is valid for $\nu = 16$,
provided one of the following conditions is satisfied:

\begin{itemize}
\item[(i)] One of the conditions ($i$) and ($ii$) of Theorem \ref{thm3.63} holds.

\item[(ii)] $\alpha \ge 1$ and one of the following conditions holds:

\begin{itemize}
\item[($ii_1$)]
$a_{s,r} < 0$.

\item[($ii_2$)]
$\gamma > 0$, $r \ge 2$ and there exists $k\in\{1,\cdots,r-1\}$ such that
$a_{s,k} < 0$.
\end{itemize}
\end{itemize}
\end{theorem}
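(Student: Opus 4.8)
The plan is to handle the two cases of the theorem exactly as in the proofs of the earlier ``Theorem D'' results (Theorems~\ref{thm3.48}, \ref{thm3.56} and \ref{thm3.60}): case~(i) is reduced to the already-established Stein-Rosenberg Type~II statement for $\nu=16$ (Theorem~\ref{thm3.63}), while case~(ii) is handled through the strict comparison Lemma~\ref{lem1-13}. Throughout, I note first that an irreducible nonsingular M-matrix is in particular an irreducible L-matrix, and that by Lemma~\ref{lem1-5} its AOR splitting (treated first for $\gamma\le\omega$ and then in general via Lemma~\ref{lem3-1}) is convergent, so $\rho({\mathscr L}_{\gamma,\omega})<1$ is available at all times.

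For case~(i), conditions (i) and (ii) of Theorem~\ref{thm3.63} are precisely the hypotheses under which Theorem C holds for $\nu=16$. Invoking Theorem~\ref{thm3.63} produces the trichotomy (i)--(iii) of Theorem C; since $\rho({\mathscr L}_{\gamma,\omega})<1$, only alternative~(i), namely $\rho({\mathscr L}^{(16)}_{\gamma,\omega})<\rho({\mathscr L}_{\gamma,\omega})<1$, can occur, which is exactly Theorem D. Nothing further is needed here.

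For case~(ii), I would verify the hypotheses of Lemma~\ref{lem1-13} with $A_2=A$, $M_2=M_{\gamma,\omega}$, $N_2=N_{\gamma,\omega}$ and $A_1=P_{16}A$, $M_1=M^{(16)}_{\gamma,\omega}$, $N_1=N^{(16)}_{\gamma,\omega}$, restricting to $\gamma\le\omega$ by Lemma~\ref{lem3-1}. First I check that $A^{(16)}$ is a Z-matrix: $Q_{16}$ is strictly upper triangular with the single positive entry $-a_{r,s}/\alpha$, so $A^{(16)}$ and $A$ differ only in row $r$; using $\alpha\ge1$ together with the fact that the $\{r,s\}$ principal submatrix of $A$ is itself a nonsingular M-matrix, whose positive determinant $1-a_{r,s}a_{s,r}$ forces $a_{r,s}a_{s,r}<1\le\alpha$, the off-diagonal entries of row $r$ remain $\le0$ and $a^{(16)}_{r,r}=1-(a_{r,s}/\alpha)a_{s,r}>0$. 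Hence $A^{(16)}$ is a nonsingular M-matrix (Lemma~\ref{lem1-10}), both AOR splittings are weak regular and convergent, $A^{-1}\gg0$ (Lemma~\ref{lem1-4}), and $A^{-1}-(P_{16}A)^{-1}=(P_{16}A)^{-1}Q_{16}\ge0$ gives $A^{-1}\ge(P_{16}A)^{-1}$. Taking the Perron vector $x\gg0$ of ${\mathscr L}_{\gamma,\omega}$ (Lemma~\ref{lem1-8}), I have $Ax=(1/\rho-1)N_{\gamma,\omega}x\ge0$, whence $M^{(16)}_{\gamma,\omega}x=P_{16}Ax+N^{(16)}_{\gamma,\omega}x\ge0$.

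The crux is the strict inequality $M_{\gamma,\omega}x>M^{(16)}_{\gamma,\omega}x$. Writing $M_{\gamma,\omega}-M^{(16)}_{\gamma,\omega}=\frac1\omega[(D-D_{16})-\gamma(L-L_{16})]$, only row $r$ is affected: the diagonal entry contributes $\frac1\omega(a_{r,s}/\alpha)a_{s,r}$, which is positive exactly when $a_{s,r}<0$ (condition ($ii_1$)), while the position $(r,k)$ with $k<r$ contributes $\frac{\gamma}{\omega}(a_{r,s}/\alpha)a_{s,k}$, positive exactly when $\gamma>0$ and $a_{s,k}<0$ (condition ($ii_2$), which needs $r\ge2$ so that $\{1,\dots,r-1\}\ne\varnothing$). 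Thus $M_{\gamma,\omega}-M^{(16)}_{\gamma,\omega}\ge0$ and, under ($ii_1$) or ($ii_2$), has a positive entry, so $x\gg0$ yields $(M_{\gamma,\omega}-M^{(16)}_{\gamma,\omega})x>0$. Lemma~\ref{lem1-13} then gives $\rho({\mathscr L}^{(16)}_{\gamma,\omega})<\rho({\mathscr L}_{\gamma,\omega})$, and combined with $\rho({\mathscr L}_{\gamma,\omega})<1$ this is Theorem D. I expect the main obstacle to be exactly this bookkeeping: reading off the sign of each entry of $M_{\gamma,\omega}-M^{(16)}_{\gamma,\omega}$ in row $r$ and matching the diagonal and sub-diagonal subcases to ($ii_1$) and ($ii_2$), together with the observation that $\alpha\ge1$ already forces $A^{(16)}$ to be a Z-matrix.
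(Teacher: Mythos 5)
Your proof is correct and follows essentially the same route the paper intends for this unproved theorem: case (i) is the standard reduction to the Stein--Rosenberg Type~II result (Theorem~\ref{thm3.63}) combined with $\rho({\mathscr L}_{\gamma,\omega})<1$ for a nonsingular M-matrix, and case (ii) is exactly the Lemma~\ref{lem1-13} strict-comparison argument used in Theorems~\ref{thm3.4}, \ref{thm3.8} and \ref{thm3.60}, here specialized to the single-entry preconditioner $Q_{16}$. Your explicit observation that the $2\times 2$ principal minor of the M-matrix $A$ forces $a_{r,s}a_{s,r}<1\le\alpha$ (so that $A^{(16)}$ is an L-matrix without separately assuming $\alpha>a_{r,s}a_{s,r}$) is a detail the paper leaves implicit, and your row-$r$ bookkeeping matching the diagonal and sub-diagonal entries of $M_{\gamma,\omega}-M^{(16)}_{\gamma,\omega}$ to conditions ($ii_1$) and ($ii_2$) is accurate.
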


The result when ($ii$) holds for $r = 1$, $s = n$ and $\alpha = 1$, is better than \cite[Theorem 2.3]{EMT01}.

In \cite{KKNU97}, for the preconditioned Gauss-Seidel method,
$Q$ is chosen as
\begin{eqnarray*}
Q_{17} = \left(\begin{array}{ccccc}
0 & -\alpha_1a_{1,2} & 0 & \cdots & 0\\
0 & 0 & -\alpha_2a_{2,3} & \cdots & 0\\
\vdots & \vdots & \ddots & \ddots & \vdots\\
0 & 0 & 0 & \ddots & -\alpha_{n-1}a_{n-1,n} \\
0 & 0 & 0 & \cdots & 0
\end{array}\right)
 \end{eqnarray*}
with $\alpha_k \ge 0$, $k = 1, \cdots, n-1$, and
\begin{eqnarray*}
\sum\limits_{k = 1}^{n-1}\alpha_ka_{k,k+1} \not = 0,
 \end{eqnarray*}
which is used to the preconditioned AOR method in \cite{WWS07,Yu11},
to the preconditioned SOR method in \cite{Su05}
and to the preconditioned Gauss-Seidel and Jacobi methods in \cite{HNT03}.
For $\alpha_k = 1$, $k = 1, \cdots, n-1$, it is
proposed in \cite{GJS91} for the preconditioned Gauss-Seidel method,
in \cite{LE94} for the preconditioned SOR method,
and in \cite{EM95,LLW07} for the preconditioned AOR method.

In this case, $\delta_{i,j}^{(12)}(\gamma)$ and $\delta_{i,j}^{(12)}(1)$ reduce respectively to
\begin{eqnarray*}
\delta_{i,j}^{(17)}(\gamma) = \left\{\begin{array}{ll}
 \alpha_ia_{i,i+1}a_{i+1,i}, & i = j = 1, \cdots, n-1;\\
 (\gamma-1)\alpha_ia_{i,i+1}, & i = 1, \cdots, n-1, j = i+1;\\
 \gamma\alpha_ia_{i,i+1}a_{i+1,j}, & i = 1, \cdots, n-1, j = 1, \cdots, n, j\not = i, i+1; \\
  0, & i = n, j = 1,\cdots, n
\end{array}\right.
\end{eqnarray*}
and
\begin{eqnarray*}
\delta_{i,j}^{(17)}(1) = \left\{\begin{array}{ll}
\alpha_ia_{i,i+1}a_{i+1,j}, & i = 1, \cdots, n-1, j = 1, \cdots, n, j\not = i+1; \\
  0, & otherwise.
\end{array}\right.
\end{eqnarray*}

From Corollaries \ref{coro3-29} and \ref{coro3-30} we can obtain the following comparison
result, directly.

\begin{theorem}
Suppose that $0 \le \alpha_k \le 1$ and $\alpha_ka_{k,k+1}a_{k+1,k} < 1$, $k = 1, \cdots, n-1$.
Then Theorem A is valid for $\nu = 17$.
\end{theorem}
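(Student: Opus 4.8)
The plan is to recognize $Q_{17}$ as the special case of the upper triangular preconditioner $Q_{12}$ obtained by placing the parameters $\alpha_k$ only on the superdiagonal, and then to invoke Corollary \ref{coro3-29} directly. This mirrors the treatment already given for the other specializations of $Q_{12}$, such as $Q_{13}$, $Q_{14}$ and $Q_{15}$.

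First I would record the correspondence between the single-index parameters of $Q_{17}$ and the double-index parameters of $Q_{12}$: one sets $\alpha_{i,i+1} = \alpha_i$ for $i = 1, \cdots, n-1$ and $\alpha_{i,j} = 0$ whenever $j > i+1$. Under this identification the hypothesis $0 \le \alpha_k \le 1$ becomes exactly $0 \le \alpha_{i,j} \le 1$ for all $i < j$, the constraint being vacuously true on the zero entries.

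Next I would verify that the diagonal-sum condition of Corollary \ref{coro3-29} collapses to the remaining hypothesis. Because $\alpha_{i,k} = 0$ unless $k = i+1$, the sum $\sum_{k=i+1}^{n}\alpha_{i,k}a_{i,k}a_{k,i}$ retains only its single surviving term $\alpha_{i,i+1}a_{i,i+1}a_{i+1,i} = \alpha_i a_{i,i+1}a_{i+1,i}$. Hence the requirement $\sum_{k=i+1}^{n}\alpha_{i,k}a_{i,k}a_{k,i} < 1$, $i = 1,\cdots,n-1$, is precisely the assumption $\alpha_k a_{k,k+1}a_{k+1,k} < 1$, $k = 1,\cdots,n-1$. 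With both hypotheses of Corollary \ref{coro3-29} confirmed, Theorem A for $\nu = 17$ follows immediately.

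The only step needing care is bookkeeping: confirming that the superdiagonal entries of $Q_{17}$ map correctly onto the $Q_{12}$ pattern and that the off-superdiagonal positions contribute nothing to the relevant sum. There is no genuine analytic obstacle, since all the substantive work, namely establishing the weak regular and nonnegative splitting structure and running the Stein-Rosenberg dichotomy, has already been carried out in Theorem \ref{thm3.1} and inherited through Theorems \ref{thm3.45}-\ref{thm3.48} and Corollary \ref{coro3-29}.
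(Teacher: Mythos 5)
Your proposal is correct and follows exactly the paper's route: the paper also obtains this theorem directly from Corollary \ref{coro3-29} by viewing $Q_{17}$ as the specialization of $Q_{12}$ with $\alpha_{i,i+1}=\alpha_i$ and $\alpha_{i,j}=0$ for $j>i+1$, under which the sum $\sum_{k=i+1}^{n}\alpha_{i,k}a_{i,k}a_{k,i}$ collapses to the single term $\alpha_i a_{i,i+1}a_{i+1,i}$. The bookkeeping you describe is precisely the content of the paper's one-line derivation.
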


The result improves the corresponding ones given by
\cite[Theorem 2.1]{WWS07} and includes \cite[Theorem 4.1]{LS00} for the
preconditioned Gauss-Seidel method.
It is also better than \cite[Theorem 3.6]{Yu11}, where it is assumed that $A$
is irreducible.

\begin{theorem}
Suppose that $0 \le \alpha_k \le 1$, $k = 1, \cdots, n-1$.
Then Theorem B is valid for $\nu = 17$.
\end{theorem}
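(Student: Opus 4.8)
The plan is to recognize the preconditioner $Q_{17}$ as a special instance of the upper triangular preconditioner $Q_{12}$ already analyzed, and then to invoke the comparison result for $Q_{12}$ with essentially no further work. Recall that $Q_{12}$ is built from coefficients $\alpha_{i,j} \ge 0$ for $i = 1, \cdots, n-1$, $j > i$. The matrix $Q_{17}$ is obtained from $Q_{12}$ by retaining only the entries immediately above the diagonal, i.e. by taking
\begin{eqnarray*}
\alpha_{i,j} = \left\{\begin{array}{ll}
\alpha_i, & j = i+1,\\
0, & j > i+1,
\end{array}\right. \quad i = 1, \cdots, n-1.
\end{eqnarray*}

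First I would verify that the hypothesis $0 \le \alpha_k \le 1$, $k = 1, \cdots, n-1$, implies the hypothesis of Corollary \ref{coro3-30}, namely $0 \le \alpha_{i,j} \le 1$ for all $i = 1, \cdots, n-1$ and $j > i$. This is immediate: for $j = i+1$ we have $\alpha_{i,j} = \alpha_i \in [0,1]$ by assumption, while for $j > i+1$ we have $\alpha_{i,j} = 0$, which also lies in $[0,1]$. Thus every coefficient $\alpha_{i,j}$ defining $Q_{17}$ satisfies the required bound, and $Q_{17}$ is a legitimate admissible choice of $Q_{12}$.

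Having checked the hypothesis, I would simply apply Corollary \ref{coro3-30} to conclude that Theorem B holds for $\nu = 17$, i.e. $\rho({\mathscr L}^{(17)}_{\gamma,\omega}) \le \rho({\mathscr L}_{\gamma,\omega}) < 1$ when $A$ is a nonsingular M-matrix. There is essentially no obstacle here: the argument is a pure specialization, and the only point requiring (trivial) attention is confirming that the many vanishing coefficients of $Q_{17}$ do not violate the lower bound $0 \le \alpha_{i,j}$ demanded by Corollary \ref{coro3-30}, which they clearly do not. Note that, in contrast, the companion statement for Theorem A and the reduction through Corollary \ref{coro3-29} additionally require the convergence-factor bound $\alpha_k a_{k,k+1} a_{k+1,k} < 1$ (the specialization of $\sum_{k=i+1}^n \alpha_{i,k} a_{i,k} a_{k,i} < 1$ to $Q_{17}$), whereas for Theorem B the nonsingular M-matrix structure of $A$ already supplies convergence via Lemma \ref{lem1-5}, so that side condition is unnecessary.
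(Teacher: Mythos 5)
Your proof is correct and takes essentially the same route as the paper: the paper obtains this statement directly from Corollary \ref{coro3-30} by viewing $Q_{17}$ as the specialization of $Q_{12}$ in which $\alpha_{i,i+1}=\alpha_i$ and $\alpha_{i,j}=0$ for $j>i+1$, exactly as you do. Your closing observation that the extra condition $\alpha_k a_{k,k+1}a_{k+1,k}<1$ is needed only for the Theorem A counterpart (via Corollary \ref{coro3-29}) also agrees with the paper.
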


The result includes the corresponding ones given by
\cite[Corollary 2.3]{WWS07}.
For the Gauss-Seidel method it is better than \cite[Theorem 2]{KN09} where the condition that the
Gauss-Seidel methods are convergent is redundant, \cite[Theorem 28]{Li08}
where the assumption that $A$ is irreducible is redundant,
\cite[Theorem 3.5]{KHMN02} and \cite[Theorem 2.4]{NHMS04}
since an irreducibly diagonally dominant Z-matrix is a nonsingular M-matrix.

Completely similar to Lemma \ref{lem3-44} we can prove the following lemma.

\begin{lemma}\label{lem3-444}
Let $A$ be a Z-matrix. Assume that
$n \ge 3$, $a_{1,n} < 0$, $a_{k+1,k} < 0$, $0 \le \alpha_k \le 1$,
$k = 1, \cdots, n-1$.
Then $A$ and $A^{(17)}$ are irreducible Z-matrices.
\end{lemma}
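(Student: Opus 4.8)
The plan is to verify the two assertions separately: first I would read off the entries of $A^{(17)}$ from the general formula (\ref{eqs3.2}) and confirm that $A^{(17)}$ is a Z-matrix, and then I would establish irreducibility of both $A$ and $A^{(17)}$ through the directed-graph criterion. Since the only nonzero entries of $Q_{17}$ are $q^{(17)}_{i,i+1} = -\alpha_i a_{i,i+1}$ for $i = 1,\ldots,n-1$, specializing (\ref{eqs3.2}) gives
\begin{eqnarray*}
a^{(17)}_{i,j} = \left\{\begin{array}{ll}
(1-\alpha_i)a_{i,i+1}, & i = 1,\ldots,n-1, \ j = i+1,\\
a_{i,j} - \alpha_i a_{i,i+1}a_{i+1,j}, & i = 1,\ldots,n-1, \ j \ne i, i+1,\\
a_{n,j}, & i = n, \ j = 1,\ldots,n.
\end{array}\right.
\end{eqnarray*}
First I would check the Z-matrix property off the diagonal: for $j = i+1$ one has $1-\alpha_i \ge 0$ and $a_{i,i+1} \le 0$, so $a^{(17)}_{i,i+1} \le 0$; for $j \ne i,i+1$ the correction $-\alpha_i a_{i,i+1}a_{i+1,j}$ is a nonpositive scalar times the product of the two nonpositive off-diagonal entries $a_{i,i+1}$ and $a_{i+1,j}$, hence $\le 0$, whence $a^{(17)}_{i,j} \le a_{i,j} \le 0$. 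As $A$ is a Z-matrix by hypothesis, this settles that $A^{(17)}$ is a Z-matrix.

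For the irreducibility of $A$ I would exhibit a spanning directed cycle in $G(A)$. The hypotheses $a_{k+1,k} < 0$, $k = 1,\ldots,n-1$, furnish the edges $i \to i-1$ for $i = 2,\ldots,n$, i.e. the descending chain $n \to n-1 \to \cdots \to 1$, and $a_{1,n} < 0$ furnishes the edge $1 \to n$. Thus from any vertex $i$ one descends to $1$ along the chain and then reaches every vertex via $1 \to n \to \cdots \to j$, so $G(A)$ is strongly connected and $A$ is irreducible.

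The remaining point, and the place where $n \ge 3$ and the sign conditions are genuinely used, is to show that this same cycle survives in $G(A^{(17)})$. For the subdiagonal edges I would use the second line of the displayed formula with $j = i-1$ (and the third line when $i = n$): since $a_{i,i-1} < 0$ and the correction term is nonpositive, $a^{(17)}_{i,i-1} \le a_{i,i-1} < 0$, so all edges $i \to i-1$ persist. For the edge $1 \to n$, the restriction $n \ge 3$ ensures $n \ne i+1 = 2$, placing $a^{(17)}_{1,n}$ under the second line, so $a^{(17)}_{1,n} = a_{1,n} - \alpha_1 a_{1,2}a_{2,n} \le a_{1,n} < 0$ and this edge persists as well. (When $n = 2$ the entry would instead be $(1-\alpha_1)a_{1,2}$, which may vanish if $\alpha_1 = 1$, which is exactly why $n \ge 3$ is imposed.) Hence $G(A^{(17)})$ contains the strongly connecting cycle $1 \to n \to n-1 \to \cdots \to 1$, so $A^{(17)}$ is irreducible. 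The only delicate part is the bookkeeping of which entries $Q_{17}$ alters and confirming that none of the cycle edges occupies the single sign-sensitive (superdiagonal) position; everything else is the routine observation that, on a Z-matrix, $Q_{17}$ only makes the relevant off-diagonal entries more negative.
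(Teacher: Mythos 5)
Your proof is correct and is essentially the paper's own argument: the paper proves this lemma by declaring it ``completely similar'' to Lemma~\ref{lem3-44} (the $Q_8$ case), whose part (ii) proceeds exactly as you do --- verify the Z-matrix property entrywise, then check that the Hamiltonian cycle edges (here $1 \to n \to n-1 \to \cdots \to 1$) keep strictly negative entries in the preconditioned matrix because $Q_{17}$ only makes them more negative. Your explicit remark that $n \ge 3$ protects the edge $1 \to n$ from landing on the sign-sensitive superdiagonal position $(1-\alpha_1)a_{1,2}$ is precisely the mirrored version of the role $n\ge 3$ plays there.
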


\begin{theorem}\label{thm3.67}
Suppose that $\alpha_ka_{k,k+1}a_{k+1,k} < 1$, $k = 1, \cdots, n-1$.
Then Theorem C is valid for $\nu = 17$,
provided one of the following conditions is satisfied:

\begin{itemize}
\item[(i)] $0 \le \alpha_k \lesssim 1$, $k = 1,\cdots, n-1$.

\item[(ii)] $0 < \alpha_k \le 1$ and $a_{k,k+1} < 0$, $k = 1, \cdots, n-1$.

\item[(iii)] $n \ge 3$, $0 \le \alpha_k \le 1$, $a_{1,n} < 0$, $a_{k+1,k} < 0$,
$k = 1, \cdots, n-1$.
\end{itemize}
\end{theorem}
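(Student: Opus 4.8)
The plan is to treat $Q_{17}$ as the special case of the upper triangular preconditioner $Q_{12}$ in which $\alpha_{i,j}=0$ unless $j=i+1$ and $\alpha_{i,i+1}=\alpha_i$, and then to invoke the comparison machinery already established for $\nu=12$, namely Theorem \ref{thm3.47} together with Corollary \ref{coro3-31}. First I would record the reductions: under this choice one has $\sum_{k=i+1}^{n}\alpha_{i,k}a_{i,k}a_{k,i}=\alpha_i a_{i,i+1}a_{i+1,i}$, so the hypothesis $\alpha_k a_{k,k+1}a_{k+1,k}<1$ is exactly the statement $a^{(17)}_{i,i}>0$, while $\delta^{(12)}_{i,j}$ collapses to the $\delta^{(17)}_{i,j}$ displayed above. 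Since $0\le\alpha_k\le 1$ and $A$ is an L-matrix, the check that $a^{(17)}_{i,j}\le 0$ for $i\ne j$ (hence that $A^{(17)}$ is an L-matrix) is routine from (\ref{eqs3.2}). The one structural fact I would isolate at the outset and reuse throughout is that, by the defining constraint $\sum_{k=1}^{n-1}\alpha_k a_{k,k+1}\ne 0$ on $Q_{17}$, there always exists $k_0$ with $\alpha_{k_0}>0$ and $a_{k_0,k_0+1}<0$; this is precisely condition $(ii_3)$ of Theorem \ref{thm3.47}, and it disposes of the $\gamma=1$ subcase in every part.

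For part (i) I would apply Corollary \ref{coro3-31}(i). When $0\le\gamma<1$ the conclusion is immediate from (\ref{eqn3.377}), which here reduces to $(1-\alpha_i)a_{i,i+1}\lesssim 0$ when $j=i+1$ and to $a_{i,j}-\alpha_i a_{i,i+1}a_{i+1,j}\le 0$ when $j>i+1$, both automatic under $0\le\alpha_k\lesssim 1$ and the L-matrix sign pattern. When $\gamma=1$, the isolated fact above supplies $(ii_3)$ of Theorem \ref{thm3.47}. The symbol $\lesssim$ does the bookkeeping between the irreducible and reducible cases: taking $\alpha_k<1$ keeps $a^{(17)}_{i,i+1}=(1-\alpha_i)a_{i,i+1}<0$, so $A^{(17)}$ inherits the irreducibility of $A$, whereas $\alpha_k=1$ is admitted exactly when $A^{(17)}$ is already irreducible.

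Parts (ii) and (iii) are where the full range $0\le\alpha_k\le 1$ (in particular $\alpha_k=1$) is allowed, and their whole purpose is to upgrade the $\lesssim$ of part (i) to a genuine $\le$ by proving that $A^{(17)}$ is irreducible even when some superdiagonal entries $a^{(17)}_{i,i+1}=(1-\alpha_i)a_{i,i+1}$ vanish. For part (iii) this irreducibility is handed to me directly by Lemma \ref{lem3-444}. For part (ii) I would argue along the same lines: with $a_{k,k+1}<0$ and $\alpha_k>0$ the ``skip'' entries satisfy $a^{(17)}_{i,i+2}=a_{i,i+2}-\alpha_i a_{i,i+1}a_{i+1,i+2}<0$, because $a_{i,i+1}a_{i+1,i+2}>0$; thus whenever a forward edge $i\to i+1$ is destroyed by $\alpha_i=1$ the edge $i\to i+2$ survives, every backward edge persists since $a^{(17)}_{i,j}\le a_{i,j}$ for $j\ne i+1$, and the last row is unchanged. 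Rerouting any path of $G(A)$ through these surviving edges recovers strong connectivity of $G(A^{(17)})$. Once $A^{(17)}$ is known to be an irreducible L-matrix, parts (ii) and (iii) conclude exactly as (i): the isolated fact gives $(ii_3)$ of Theorem \ref{thm3.47} for $\gamma=1$, the case $0\le\gamma<1$ is immediate, and Theorem \ref{thm3.47} then yields the strict Stein--Rosenberg trichotomy of Theorem C.

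The main obstacle is precisely the irreducibility verification underlying parts (ii) and (iii): because the preconditioner can annihilate the superdiagonal whenever $\alpha_i=1$, one cannot transport the paths of $G(A)$ into $G(A^{(17)})$ edge by edge. The delicate step is the systematic path-surgery through the skip edges $i\to i+2$ together with the preserved backward edges, which is exactly the graph-theoretic content packaged in Lemma \ref{lem3-444}; the analogue needed for part (ii) follows the same argument used there and in Lemma \ref{lem3-44}.
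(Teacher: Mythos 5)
Your parts (i) and (iii) follow the paper's own route: for (i), conditions ($i$) and ($ii_3$) of Theorem \ref{thm3.47} (the constraint $\sum_k\alpha_k a_{k,k+1}\neq 0$ supplies $k_0$ with $\alpha_{k_0}>0$ and $a_{k_0,k_0+1}<0$) give ($i$) of Corollary \ref{coro3-31}; for (iii), Lemma \ref{lem3-444} gives irreducibility of $A^{(17)}$ and then (i) applies. The genuine gap is in part (ii): your plan to prove that condition (ii) forces $A^{(17)}$ to be irreducible is not merely delicate, it is false. Take $n=4$,
\begin{eqnarray*}
A = \left(\begin{array}{rrrr}
1 & -1 & 0 & 0\\
0 & 1 & -1 & 0\\
0 & 0 & 1 & -1\\
-1 & 0 & 0 & 1
\end{array}\right), \quad \alpha_1=\alpha_2=\alpha_3=1.
\end{eqnarray*}
Then $A$ is an irreducible L-matrix, condition (ii) holds, and the hypothesis $\alpha_ka_{k,k+1}a_{k+1,k}=0<1$ holds, yet
\begin{eqnarray*}
A^{(17)} = (I+Q_{17})A = \left(\begin{array}{rrrr}
1 & 0 & -1 & 0\\
0 & 1 & 0 & -1\\
-1 & 0 & 1 & 0\\
-1 & 0 & 0 & 1
\end{array}\right)
\end{eqnarray*}
is reducible: vertex $2$ has no incoming edge in $G(A^{(17)})$, and $\{1,3\}$ is a closed vertex set. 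Your path surgery breaks exactly where it is needed: the destroyed edge $n-1\to n$ has no skip edge $n-1\to n+1$ to reroute through, and a skipped vertex $i+1$ can lose \emph{all} of its incoming edges. This is precisely why Lemma \ref{lem3-44} and Lemma \ref{lem3-444} require two-sided hypotheses ($a_{k,k+1}a_{k+1,k}>0$, or $a_{1,n}<0$ together with $a_{k+1,k}<0$), not merely $a_{k,k+1}<0$.

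Part (ii) is nevertheless true, but by a route that never needs $A^{(17)}$ to be irreducible, and this is how the paper argues. Under (ii), for every $i\in\{1,\cdots,n-1\}$ one has $\alpha_{i,i+1}a_{i,i+1}=\alpha_ia_{i,i+1}<0$, so condition ($iii$) of Theorem \ref{thm3.47} holds with $j(i)=i+1$ when $0\le\gamma<1$, and condition ($iv_3$) holds when $\gamma=1$; hence ($ii$) of Corollary \ref{coro3-31} applies. Those conditions descend from cases ($iii$) and ($iv$) of Theorem \ref{thm3.3}, whose proofs rest on Lemma \ref{lem1-11}($i$) and Lemma \ref{lem1-12} and exploit only the triangular block structure of $M^{(1)}_{\gamma,\omega}$ (positivity of the relevant entries of $[M^{(1)}_{\gamma,\omega}]^{-1}$), with no irreducibility assumption on the preconditioned matrix. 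Replacing your irreducibility argument for (ii) by this reduction is the repair your proof needs; as written, your proof of (ii) does not go through.
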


\begin{proof}
By the definition of $Q_{17}$,
there exists $k_0\in\{1, \cdots, n-1\}$ such that
$a_{k_0,k_0+1} < 0$

When ($i$) holds, the conditions ($i$) and ($ii_3$) in Theorem \ref{thm3.47} are satisfied,
so that ($i$) of Corollary \ref{coro3-31} holds.

Similarly, when ($ii$) holds, the conditions ($iii$) and ($iv_3$) in Theorem \ref{thm3.47}
are satisfied for $j(i) = i+1$,
so that ($ii$) of Corollary \ref{coro3-31} holds.

When ($iii$) holds, then, by Lemma \ref{lem3-444}, $A^{(17)}$ is an irreducible L-matrix.
From ($i$) we can prove ($iii$).

The proof is complete. \end{proof}

Obviously, from Lemma \ref{lem3-444}, if ($iii$) holds, then the assumption that
$A$ is irreducible is redundant.

The result when the condition ($i$) holds is better than \cite[Theorems 3.4, 3.5]{Yu11}
and \cite[Theorem 2]{WH09}, where $\gamma < 1$.

The result when the condition ($ii$) holds includes
\cite[Theorem 3.1, Corollaries 3.2, 3.3]{Yu11} and \cite[Theorem 4.2]{LS00}.
It is better than
\cite[Theorem 3.1, Corollary 3.1]{EM95},
\cite[Theorem 4.1]{GJS91} and \cite[Theorem 3]{LE94},
since the condition $a_{k,k+1}a_{k+1,k} > 0$, $k = 1, \cdots, n-1$,
implies that $A$ is irreducible and $a_{k,k+1} < 0$, $k = 1, \cdots, n-1$.

The corresponding result given in \cite[Theorem 2]{LLW07} is incorrect,
which is pointed out by \cite{YK08}. But the corresponding one given in
\cite[Theorem 3.5]{YK08} is also incorrect, which is pointed out by \cite{WH09}.

Similarly, from Corollary \ref{coro3-32}, we can obtain the following comparison
result, directly.

\begin{theorem}
Theorem D is valid for $\nu = 17$,
provided one of the following conditions is satisfied:

\begin{itemize}
\item[(i)] One of the conditions ($i$), ($ii$) and ($iii$) of Theorem \ref{thm3.67} holds.

\item[(ii)] $0 \le \alpha_k \le 1$, $k = 1,\cdots, n-1$ and one of the following conditions holds:

\begin{itemize}
\item[($ii_1$)] There exists $i\in\{1,\cdots,n-1\}$ such that
$\alpha_{i}a_{i,i+1}a_{i+1,i} > 0$.

\item[($ii_2$)] $\gamma > 0$ and there exist $i\in\{2,\cdots,n-1\}$ and $j\in\{1,\cdots,i-1\}$ such that
$\alpha_{i}a_{i,i+1}a_{i+1,j} > 0$.
\end{itemize}
\end{itemize}
\end{theorem}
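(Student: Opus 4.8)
The plan is to exploit the fact that $Q_{17}$ is nothing but the upper triangular preconditioner $Q_{12}$ specialized by taking $\alpha_{i,i+1}=\alpha_i$ for $i=1,\dots,n-1$ and $\alpha_{i,j}=0$ for every other pair, so that the conclusion should follow by reading off the already proved Corollary \ref{coro3-32} (Theorem D for $\nu=12$). First I would record the structural reductions already displayed above the statement: with this choice the quantities $\delta_{i,j}^{(12)}(\gamma)$ and $\delta_{i,j}^{(12)}(1)$ collapse to $\delta_{i,j}^{(17)}(\gamma)$ and $\delta_{i,j}^{(17)}(1)$, and every sum $\sum_{k=i+1}^{n}\alpha_{i,k}a_{i,k}a_{k,j}$ reduces to the single term $\alpha_i a_{i,i+1}a_{i+1,j}$. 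Since $A$ is an irreducible nonsingular M-matrix it is in particular an irreducible L-matrix, and by Lemma \ref{lem1-5} the AOR splitting of $A$ is weak regular and convergent, i.e.\ $\rho({\mathscr L}_{\gamma,\omega})<1$; likewise, by Lemma \ref{lem1-10}, $A^{(17)}$ is again a nonsingular M-matrix whenever it is a Z-matrix.

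For the case when condition (i) holds, I would argue through Theorem C. The hypotheses (i), (ii) and (iii) of Theorem \ref{thm3.67} are precisely those under which Theorem C is valid for $\nu=17$, and they are the specialization to $Q_{17}$ of condition (i) of Corollary \ref{coro3-32} (equivalently of the conditions of Corollary \ref{coro3-31}). Theorem C then yields the trichotomy relating $\rho({\mathscr L}^{(17)}_{\gamma,\omega})$ and $\rho({\mathscr L}_{\gamma,\omega})$; because $\rho({\mathscr L}_{\gamma,\omega})<1$, only relation (i) of that trichotomy can occur, giving $\rho({\mathscr L}^{(17)}_{\gamma,\omega})<\rho({\mathscr L}_{\gamma,\omega})<1$, which is exactly Theorem D.

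For the case when condition (ii) holds, I would route through the strict comparison contained in Corollary \ref{coro3-32}(ii), itself built on Theorem \ref{thm3.48}(ii) and ultimately on the monotone comparison Lemma \ref{lem1-13}. The bound $0\le\alpha_k\le1$ gives $0\le\alpha_{i,j}\le1$ for all $j>i$, while condition $(ii_1)$ of the present theorem is obtained from $(ii_1)$ of Theorem \ref{thm3.48} by fixing the pair index $j=i+1$, and condition $(ii_2)$ is obtained from $(ii_2)$ of Theorem \ref{thm3.48} by fixing the summation index $k=i+1$, that being the only superdiagonal position where $\alpha_{i,\cdot}\neq0$. I would also note that the auxiliary inequality $\alpha_{i,j}a_{i,j}+\sum_{k\neq i,j}\alpha_{i,k}a_{i,k}a_{k,j}\ge0$ required in Theorem \ref{thm3.8}(ii) is automatic here: for an upper triangular $Q$ and $j<i$ the first term vanishes and the remaining products $a_{i,k}a_{k,j}$ (with $k>i>j$) are nonnegative. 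Hence the hypotheses of Corollary \ref{coro3-32}(ii) are met and Theorem D follows.

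The main obstacle is not analytic but combinatorial bookkeeping: one must verify carefully that, after annihilating all $\alpha_{i,j}$ except $j=i+1$, the triple-index positivity requirement $(ii_2)$ of Theorem \ref{thm3.48} genuinely collapses to the stated double-index one, and that no case of Theorem \ref{thm3.67} or Corollary \ref{coro3-32} is lost or spuriously created in the reduction. The possible reducibility of $A^{(17)}$, which would otherwise block a strict comparison, is already absorbed into the Theorem C route through Lemma \ref{lem3-444}, so it need not be handled afresh here.
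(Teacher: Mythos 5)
Your proposal is correct and follows essentially the same route as the paper, which obtains this theorem directly by specializing Corollary \ref{coro3-32} (and its underlying machinery, Theorem \ref{thm3.48} and Lemma \ref{lem1-13}) to the choice $\alpha_{i,i+1}=\alpha_i$, $\alpha_{i,j}=0$ otherwise; your part (i) via Theorem \ref{thm3.67} plus $\rho({\mathscr L}_{\gamma,\omega})<1$ is exactly how the Theorem-C-to-Theorem-D step is carried out throughout the paper. The index bookkeeping you perform (collapsing the conditions ($ii_1$), ($ii_2$) of Theorem \ref{thm3.48} to $j=i+1$, resp.\ $k=i+1$, and noting the lower-triangular nonnegativity inequality of Theorem \ref{thm3.8}(ii) is automatic for upper triangular $Q$) is precisely the verification the paper leaves implicit.
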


Different from $Q_{15}$ and $Q_{17}$, we define $Q_{18} = (q^{(18)}_{i,j})$ as
\begin{eqnarray*}
q^{(18)}_{i,j} = \left\{\begin{array}{ll}
  -\alpha_ia_{i,s_i}, & i = 1, \cdots, n-1, j = s_i,\\
0, & otherwise,
\end{array}\right.
 \end{eqnarray*}
where
\begin{eqnarray*}
s_i = \min\{s: s \in \{k: |a_{i,k}| \; \hbox{is maximal for} \; i+1 \le k \le n\}\}
 \end{eqnarray*}
and $\sum_{i = 1}^{n-1}\alpha_ia_{i,s_i} \not = 0$,
which is proposed in \cite{KHMN02} for the preconditioned Gauss-Seidel method and
$\alpha_k = 1$, $k = 1, \cdots, n-1$. In \cite{KC03} its convergence for H-matrix
is discussed.

In this case, $\delta_{i,j}^{(12)}(\gamma)$ and $\delta_{i,j}^{(12)}(1)$ reduce respectively to
\begin{eqnarray*}
\delta_{i,j}^{(18)}(\gamma) = \left\{\begin{array}{ll}
 \alpha_ia_{i,s_i}a_{s_i,i}, & i = j = 1, \cdots, n-1;\\
 (\gamma-1)\alpha_ia_{i,s_i}, & i = 1, \cdots, n-1, j = s_i;\\
 \gamma\alpha_ia_{i,s_i}a_{s_i,j}, &
 i = 1, \cdots, n-1, \\
 & j \in \{1, \cdots, i-1\} \cup \{s_i+1, \cdots, n\};\\
  0, & otherwise
\end{array}\right.
\end{eqnarray*}
and
\begin{eqnarray*}
\delta_{i,j}^{(18)}(1) = \left\{\begin{array}{ll}
\alpha_ia_{i,s_i}a_{s_i,j}, &
 i = 1, \cdots, n-1, \\
& j \in \{1, \cdots, i\} \cup \{s_i+1, \cdots, n\}; \\
  0, & otherwise.
\end{array}\right.
\end{eqnarray*}

From Corollaries \ref{coro3-29}-\ref{coro3-32} we can obtain the following comparison
result, directly.

\begin{theorem}
Suppose that $0 \le \alpha_k \le 1$ and $\alpha_ka_{k,s_k}a_{s_k,k} < 1$, $k = 1, \cdots, n-1$.
Then Theorem A is valid for $\nu = 18$.
\end{theorem}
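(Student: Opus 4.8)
The plan is to recognize $Q_{18}$ as a special instance of the upper triangular preconditioner $Q_{12}$ and then invoke Corollary~\ref{coro3-29}, exactly in the spirit of the derivation used for $Q_{17}$. First I would make the parameter identification $\alpha_{i,j} = \alpha_i$ when $j = s_i$ and $\alpha_{i,j} = 0$ for every other $j > i$, $i = 1, \ldots, n-1$. Since $s_i \ge i+1$, this choice makes $Q_{18}$ strictly upper triangular with zero diagonal and off-diagonal entries $q^{(18)}_{i,s_i} = -\alpha_i a_{i,s_i} \ge 0$ (here $a_{i,s_i} \le 0$ because $A$ is an L-matrix and $\alpha_i \ge 0$), which is precisely the structural form of $Q_{12}$. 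Under this identification $A^{(18)}$ coincides with the corresponding $A^{(12)}$, so the whole $Q_{12}$ machinery becomes available.

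Next I would verify the two hypotheses of Corollary~\ref{coro3-29}. The bound $0 \le \alpha_{i,j} \le 1$ holds trivially: in row $i$ the only nonzero coefficient is $\alpha_{i,s_i} = \alpha_i$, which lies in $[0,1]$ by assumption, while all remaining $\alpha_{i,j}$ vanish. For the second hypothesis I would exploit that each row of $Q_{18}$ carries a single nonzero entry, so that the sum collapses to one term:
\begin{eqnarray*}
\sum_{k=i+1}^n \alpha_{i,k} a_{i,k} a_{k,i} = \alpha_{i,s_i} a_{i,s_i} a_{s_i,i} = \alpha_i a_{i,s_i} a_{s_i,i},
\end{eqnarray*}
which, after renaming the index $i$ as $k$, is exactly the quantity assumed to be strictly less than $1$. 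Hence $\sum_{k=i+1}^n \alpha_{i,k} a_{i,k} a_{k,i} < 1$ for every $i = 1, \ldots, n-1$.

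With both hypotheses established, Corollary~\ref{coro3-29} applies directly and yields that Theorem~A is valid for $\nu = 18$. There is no genuine analytic obstacle here; the proof is a specialization, and the only point requiring care is the bookkeeping in the index collapse. One must note that the terms with $k \ne s_i$ drop out solely because $\alpha_{i,k} = 0$, irrespective of the (possibly nonzero) values of $a_{i,k}$ and $a_{k,i}$, and that the non-triviality condition $\sum_{i} \alpha_i a_{i,s_i} \ne 0$ guarantees $Q_{18} > 0$, so that $P_{18} = I + Q_{18}$ is indeed a preconditioner of the admissible type assumed throughout Section~\ref{se3}.
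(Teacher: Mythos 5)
Your proposal is correct and takes essentially the same route as the paper: the paper obtains this theorem directly from Corollary \ref{coro3-29} by viewing $Q_{18}$ as the special case of $Q_{12}$ with $\alpha_{i,s_i}=\alpha_i$ and $\alpha_{i,j}=0$ for $j\neq s_i$, which is precisely your identification. Your collapse of the hypothesis sum to the single term $\alpha_i a_{i,s_i}a_{s_i,i}<1$ is exactly the bookkeeping the paper leaves implicit when it says the result follows ``directly''.
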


\begin{theorem}
Suppose that $0 \le \alpha_k \le 1$, $k = 1, \cdots, n-1$.
Then Theorem B is valid for $\nu = 18$.
\end{theorem}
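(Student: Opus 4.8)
The plan is to recognize $Q_{18}$ as a special instance of the upper triangular preconditioner $Q_{12}$ and then to invoke Corollary \ref{coro3-30} directly, exactly as was done for the sparse upper triangular preconditioner $Q_{15}$ in Theorem \ref{thm3.58}. Concretely, I would set the entrywise parameters of $Q_{12}$ to be $\alpha_{i,s_i} = \alpha_i$ for $i = 1, \cdots, n-1$ and $\alpha_{i,j} = 0$ for every $j > i$ with $j \ne s_i$. Since $Q_{12} = (-\alpha_{i,j}a_{i,j})$ and $s_i > i$, this choice reproduces exactly the nonzero pattern of $Q_{18}$, namely the single strictly-upper entry $q^{(18)}_{i,s_i} = -\alpha_i a_{i,s_i}$ in each of the first $n-1$ rows, with all remaining entries zero; the non-triviality requirement $\sum_{i=1}^{n-1}\alpha_i a_{i,s_i} \ne 0$ matches the corresponding condition on $Q_{12}$ under this identification.

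The only thing to verify is the hypothesis of Corollary \ref{coro3-30}, which demands $0 \le \alpha_{i,j} \le 1$ for all $i = 1, \cdots, n-1$ and $j > i$. For the active columns $j = s_i$ this is precisely the standing assumption $0 \le \alpha_k \le 1$, $k = 1, \cdots, n-1$, while for the inactive columns the value $\alpha_{i,j} = 0$ lies trivially in $[0,1]$. Hence every entrywise parameter falls in the required range, so the condition of Corollary \ref{coro3-30} is met with $\nu = 18$, and the desired conclusion $\rho(\mathscr{L}^{(18)}_{\gamma,\omega}) \le \rho(\mathscr{L}_{\gamma,\omega}) < 1$ follows immediately.

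There is no genuine analytic obstacle at this stage: all of the substantive work, including the weak regularity and monotonicity of the AOR splitting of $P_{18}A$, the eigenvector comparison via Lemma \ref{lem3-2}, and the reduction from the range $\gamma \le \omega$ to the full parameter range through Lemma \ref{lem3-1}, has already been absorbed into Corollary \ref{coro3-30} by way of Theorem \ref{thm3.46}. The entire task therefore reduces to the bookkeeping of matching the scalar parameters $\alpha_k$ of the one-entry-per-row preconditioner $Q_{18}$ to the entrywise parameters $\alpha_{i,j}$ of the general upper triangular preconditioner $Q_{12}$, together with the trivial observation that these inherited parameters remain confined to $[0,1]$.
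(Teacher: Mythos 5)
Your proof is correct and follows essentially the same route as the paper: the paper obtains this theorem ``directly'' from Corollaries \ref{coro3-29}--\ref{coro3-32} by viewing $Q_{18}$ as the special case of $Q_{12}$ with $\alpha_{i,s_i}=\alpha_i$ and all other $\alpha_{i,j}=0$, which is precisely your identification, and the relevant corollary you invoke (Corollary \ref{coro3-30}) is the right one. The parameter bookkeeping, including the non-triviality condition and the fact that the zero entries lie in $[0,1]$, is handled correctly.
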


The result for the case when $\omega = \gamma$ and $\alpha_k = 1$,
$k = 1, \cdots, n-1$, reduces to \cite[Theorem 4.2]{Li05}.

\begin{theorem}\label{thm3.71}
Suppose that $\alpha_ka_{k,s_k}a_{s_k,k} < 1$, $k = 1, \cdots, n-1$.
Then Theorem C is valid for $\nu = 18$,
provided one of the following conditions is satisfied:

\begin{itemize}
\item[(i)] $0 \le \gamma < 1$ and $0 \le \alpha_k \lesssim 1$, $k = 1,\cdots, n-1$.

\item[(ii)] $\gamma = 1$, $0 \le \alpha_k \lesssim 1$, $k = 1,\cdots, n-1$.
And one of the following conditions holds:

\begin{itemize}
\item[($ii_1$)] There exist $i\in\{1, \cdots, n-1\}$,
$j\in\{1, \cdots, i\}\cup\{s_i+1, \cdots, n\}$ such that
$\alpha_{i}a_{i,s_i}a_{s_i,j} > 0$.

\item[($ii_2$)]
$a_{n,1} < 0$ and $a_{k,k+1} < 0$, $k = 1, \cdots, n-1$.

\item[($ii_3$)]
$a_{n,1} < 0$ and $a_{k,n} < 0$, $k = 2, \cdots, n-1$.

\item[($ii_4$)]
$a_{k,1} < 0$, $k = 2, \cdots, n$.
\end{itemize}

\item[(iii)] $0 < \alpha_k \le 1$ and $a_{k,s_k} < 0$, $k = 1, \cdots, n-1$.
For each $i\in\{1, \cdots, n-1\}$
one of the following conditions holds:

\begin{itemize}
\item[($iii_1$)] $0 \le \gamma < 1$.

\item[($iii_2$)] $\gamma = 1$ and there exists
$j(i)\in\{1, \cdots, i\}\cup\{s_i+1, \cdots, n\}$ such that
$a_{s_i,j(i)} > 0$.
\end{itemize}
\end{itemize}
\end{theorem}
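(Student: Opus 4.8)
The plan is to recognize $Q_{18}$ as a particular case of the upper-triangular preconditioner $Q_{12}$ and to deduce everything from Theorem~\ref{thm3.47} together with Corollary~\ref{coro3-31}. Setting $\alpha_{i,j}=\alpha_i$ when $j=s_i$ and $\alpha_{i,j}=0$ otherwise exhibits $Q_{18}$ as $Q_{12}$ for this choice of coefficients, and $s_i>i$ keeps it strictly upper triangular. Under this identification the displayed quantities $\delta^{(18)}_{i,j}(\gamma)$ and $\delta^{(18)}_{i,j}(1)$ are exactly the specializations of $\delta^{(12)}_{i,j}(\gamma)$ and $\delta^{(12)}_{i,j}(1)$, and the standing hypothesis $\alpha_k a_{k,s_k}a_{s_k,k}<1$ is precisely the diagonal requirement $\sum_{k=i+1}^{n}\alpha_{i,k}a_{i,k}a_{k,i}<1$ of those results; notably, the maximality built into the definition of $s_i$ plays no role in the comparison, only $s_i>i$ and the sign hypotheses being used. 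Thus the whole proof reduces to checking that each of (i)--(iii) forces one of the hypotheses of Theorem~\ref{thm3.47}.

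First I would dispose of the admissibility inequality (\ref{eqs3.36}): for $Q_{18}$ with $0\le\alpha_i\le 1$ it holds automatically, since for $j=s_i$ its left-hand side is $(1-\alpha_i)a_{i,s_i}\le 0$, while for $j\neq s_i$, $j>i$, it is $a_{i,j}-\alpha_i a_{i,s_i}a_{s_i,j}\le 0$, because $a_{i,j}\le 0$ and the product $a_{i,s_i}a_{s_i,j}$ of two nonpositive off-diagonal entries is nonnegative. This both removes (\ref{eqs3.36}) as a separate obligation and makes the bound $0\le\alpha_{i,j}\le 1$ demanded by Corollary~\ref{coro3-31}(ii) evident, so that I am left only with the positivity clauses.

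Then I would run the case analysis. Condition (i) lands directly in Corollary~\ref{coro3-31}(i) through its $0\le\gamma<1$ alternative. For condition (ii) I would match ($ii_2$),($ii_3$),($ii_4$) onto ($ii_4$),($ii_5$),($ii_6$) of Theorem~\ref{thm3.47} verbatim, and read ($ii_1$) as $\delta^{(18)}_{i,j}(1)=\alpha_i a_{i,s_i}a_{s_i,j}>0$ with pivot $k=s_i>i$: when $j\le i$ this realizes ($ii_1$) of Theorem~\ref{thm3.47} and when $j>s_i$ (so $j\ge i+2$) it realizes ($ii_2$). For condition (iii) the hypothesis $a_{k,s_k}<0$ with $\alpha_k>0$ yields $\alpha_i a_{i,s_i}<0$, so $j(i)=s_i$ witnesses the ``each row carries a negative $\alpha_{i,j}a_{i,j}$'' requirement of Theorem~\ref{thm3.47}(iii)/(iv); subcase ($iii_1$) then feeds condition (iii) of Theorem~\ref{thm3.47} and subcase ($iii_2$) feeds condition (iv), with ($iii_2$) furnishing the per-row positivity $\delta^{(18)}_{i,j(i)}(1)>0$. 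In each instance Lemma~\ref{lem3-1} first restricts attention to $\gamma\le\omega$, after which the trichotomy of Theorem~C follows from Lemma~\ref{lem1-11} for (i), (ii) and for (iii) with $\gamma<1$, and from Lemma~\ref{lem1-12} for (iii) with $\gamma=1$, precisely as in the proof of Theorem~\ref{thm3.47}.

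The delicate point will be the two $\gamma=1$ reductions. In the cases (ii), passing through Lemma~\ref{lem1-11} requires the row vector $y^T[M^{(18)}_{1,1}]^{-1}$ to be strictly positive, which by Lemma~\ref{lem1-8}(iii) needs $A^{(18)}$ to be an irreducible L-matrix; this is exactly the content tracked by the ``$\lesssim$'' convention and must be checked against the sparsity of $Q_{18}$. In the case (iii), the comparison instead rests on Lemma~\ref{lem1-12}, whose hypothesis demands positivity of the second through $n$th entries of $\tilde M_{1,1}^{-1}(N_{1,1}-\tilde N_{1,1})x$; since the $n$th row of $Q_{18}$ is empty, producing the $n$th entry requires, as in the proof of Theorem~\ref{thm3.47} via clauses ($iv^c$)/($iv^d$) of Theorem~\ref{thm3.7}, the irreducibility of $A$ (to locate some $a_{n,j}<0$ with $j<n$) together with the triangular form of $[M^{(18)}_{\gamma,\omega}]^{-1}$. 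I expect the only genuine care to lie in confirming that the row data of $Q_{18}$ triggers those clauses non-vacuously; the remaining sign manipulations are inherited routinely from Theorem~\ref{thm3.47}.
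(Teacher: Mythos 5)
Your proposal is correct and takes essentially the same route as the paper: the paper derives the $\nu=18$ results ``directly'' from Corollaries \ref{coro3-29}--\ref{coro3-32}, i.e., from Theorem \ref{thm3.47} applied to $Q_{18}$ viewed as the special case of $Q_{12}$ with $\alpha_{i,s_i}=\alpha_i$ and $\alpha_{i,j}=0$ otherwise, and your case-matching (including the automatic validity of (\ref{eqs3.36}) and the pivot $k=s_i$ realizing ($ii_1$)/($ii_2$) or ($iv_1$)/($iv_2$) of Theorem \ref{thm3.47}) supplies exactly the details the paper leaves implicit. The only point to note is that your treatment of ($iii_2$) tacitly corrects an evident sign typo in the statement: for an L-matrix, $a_{s_i,j(i)}>0$ is impossible off the diagonal, and the intended condition is $a_{s_i,j(i)}<0$, which is precisely what makes $\delta^{(18)}_{i,j(i)}(1)=\alpha_i a_{i,s_i}a_{s_i,j(i)}>0$ as your argument uses it.
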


The result when ($iii_2$) holds for $\alpha_k = 1$, $k = 1, \cdots, n-1$, is better than
\cite[Theorem 4.3]{Li05}, since its condition insures that $A$ is irreducible.

\begin{theorem}
Theorem D is valid for $\nu = 18$,
provided one of the following conditions is satisfied:

\begin{itemize}
\item[(i)] One of the conditions ($i$), ($ii$) and ($iii$) of Theorem \ref{thm3.71} holds.

\item[(ii)] $0 \le \alpha_k \le 1$, $k = 1,\cdots, n-1$ and one of the following conditions holds:

\begin{itemize}
\item[($ii_1$)]
There exists $i\in\{1,\cdots,n-1\}$ such that
$\alpha_{i}a_{i,s_i}a_{s_i,i} > 0$.

\item[($ii_2$)]
$\gamma > 0$ and there exist $i\in\{2,\cdots,n-1\}$ and $j\in\{1,\cdots,i-1\}$ such that
$\alpha_{i}a_{i,s_i}a_{s_i,j} > 0$.
\end{itemize}
\end{itemize}
\end{theorem}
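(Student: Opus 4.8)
The plan is to treat $Q_{18}$ as the special case of the general upper triangular preconditioner $Q_{12}$ in which $\alpha_{i,s_i}=\alpha_i$ and $\alpha_{i,j}=0$ for every $j\neq s_i$, and then to deduce the assertion from the Theorem~D results already obtained for $Q_{12}$, namely Theorem~\ref{thm3.48} and Corollary~\ref{coro3-32}. Since the quantities $\delta^{(18)}_{i,j}(\gamma)$ and $\delta^{(18)}_{i,j}(1)$ have already been written out above, the reduction of the $Q_{12}$ hypotheses to the present data is purely mechanical. First I would note that under $0\le\alpha_k\le1$ the matrix $A^{(18)}=P_{18}A$ is a Z-matrix: for $j=s_i$ one has $a^{(18)}_{i,s_i}=(1-\alpha_i)a_{i,s_i}\le0$, and for $j\neq i,s_i$ one has $a^{(18)}_{i,j}=a_{i,j}-\alpha_i a_{i,s_i}a_{s_i,j}\le a_{i,j}\le0$ because the product of two off-diagonal entries of an L-matrix is nonnegative. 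Thus Lemma~\ref{lem1-10} makes $A^{(18)}$ a nonsingular M-matrix, and the inequality (\ref{eqs3.36}) demanded by the $Q_{12}$ theorems holds automatically.

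For part~(i) I would observe that each of the conditions (i)--(iii) of Theorem~\ref{thm3.71} is exactly a hypothesis under which the Stein--Rosenberg Type Theorem~C is valid for $\nu=18$, so one of its three mutually exclusive relations holds. Because $A$ is a nonsingular M-matrix, its AOR splitting is weak regular and hence convergent by Lemma~\ref{lem1-5}, so $\rho(\mathscr{L}_{\gamma,\omega})<1$ (this is Theorem~B for $\nu=18$); this rules out relations (ii) and (iii) of Theorem~C and forces relation~(i), that is $\rho(\mathscr{L}^{(18)}_{\gamma,\omega})<\rho(\mathscr{L}_{\gamma,\omega})<1$. This is precisely Theorem~D for $\nu=18$, and it is the same device used in the proof of Theorem~\ref{thm3.4}(i).

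For part~(ii) I would check that the hypotheses of Theorem~\ref{thm3.8}(ii), on which Corollary~\ref{coro3-32}(ii) rests, all collapse onto the stated data. The auxiliary sign requirement $\alpha_{i,j}a_{i,j}+\sum_{k\neq i,j}\alpha_{i,k}a_{i,k}a_{k,j}\ge0$ for $j<i$ becomes $\alpha_i a_{i,s_i}a_{s_i,j}\ge0$, which is automatic since $s_i>i>j$ forces $a_{i,s_i}a_{s_i,j}\ge0$. The diagonal quantity $\sum_{k\neq i}\alpha_{i,k}a_{i,k}a_{k,i}$ reduces to the single term $\alpha_i a_{i,s_i}a_{s_i,i}$, so ($ii_1$) is literally the present instance of Theorem~\ref{thm3.8}'s subcondition, while the lower-triangular sum in its ($ii_2$) reduces to $\alpha_i a_{i,s_i}a_{s_i,j}$ (the diagonal $Q$-contribution vanishing because $s_i>i>j$), yielding the stated ($ii_2$) with $i$ confined to $\{2,\dots,n-1\}$ since $s_i$ is undefined when $i=n$. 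Feeding these into Theorem~\ref{thm3.8}(ii)---whose engine is the strict comparison Lemma~\ref{lem1-13} applied with $A^{-1}\gg0$, $A^{-1}\ge(P_{18}A)^{-1}$ and $M_{\gamma,\omega}x>M^{(18)}_{\gamma,\omega}x\ge0$---delivers $\rho(\mathscr{L}^{(18)}_{\gamma,\omega})<\rho(\mathscr{L}_{\gamma,\omega})<1$.

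The hardest part will be the bookkeeping in part~(ii) rather than any single estimate: one must make sure that the strict positivity granted by ($ii_1$) or ($ii_2$) really produces a coordinate in which $M_{\gamma,\omega}x>M^{(18)}_{\gamma,\omega}x$ holds strictly, so that Lemma~\ref{lem1-13} yields a strict and not merely weak spectral comparison, and one must confirm that no degenerate configuration (for instance $i=n$, or all the relevant entry products vanishing) escapes the case analysis. A secondary point of care is the irreducibility-sensitive ``$\lesssim$'' convention imported through Theorem~\ref{thm3.71} in part~(i), which must be read correctly when $A^{(18)}$ happens to be reducible.
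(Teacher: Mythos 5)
Your proposal is correct and follows essentially the same route as the paper, which states this theorem without a separate proof as a direct specialization of the $Q_{12}$ results (Corollaries \ref{coro3-29}--\ref{coro3-32}, resting on Theorems \ref{thm3.48} and \ref{thm3.8} and, for part (i), the standard device that Theorem C plus $\rho({\mathscr L}_{\gamma,\omega})<1$ forces Theorem D, as in Theorem \ref{thm3.4}). Your verification that (\ref{eqs3.36}) and the auxiliary sign conditions hold automatically under $0\le\alpha_k\le1$, and that ($ii_1$), ($ii_2$) are exactly the $Q_{18}$-instances of the corresponding subconditions, is precisely the bookkeeping the paper leaves implicit.
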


Similar to $Q_{10}$, $Q$ can be defined as
\begin{eqnarray*}
Q_{19} & = & \left(\begin{array}{cccc}
0 & \quad \cdots\quad & 0 &  -\alpha_1 a_{1,n}+\beta_1 \\
0 & \cdots & 0 & -\alpha_2 a_{2,n}+\beta_2 \\
\vdots & \vdots & \vdots & \vdots\\
0 & \cdots & 0 & -\alpha_{n-1} a_{n-1,n}+\beta_{n-1} \\
0 & \cdots & 0 & 0\\
\end{array}\right)
 \end{eqnarray*}
with $\alpha_k \ge 0$, $-\alpha_k a_{k,n}+\beta_k \ge 0$, $k = 1, \cdots, n-1$, and
 \begin{eqnarray*}
 \sum\limits_{k = 1}^{n-1}(-\alpha_k a_{k,n}+\beta_k) \not = 0.
 \end{eqnarray*}

For $\alpha_k = 1$, it is given in \cite{DH14} for the preconditioned AOR method,
and in \cite{DH11} for the preconditioned SOR method.

In this case, $\delta_{i,j}^{(1)}(\gamma)$ and $\delta_{i,j}^{(1)}(1)$ reduce respectively to
\begin{eqnarray*}
\delta_{i,j}^{(19)}(\gamma) = \left\{\begin{array}{ll}
 (\alpha_ia_{i,n}-\beta_i)a_{n,i}, & i = j = 1, \cdots, n-1;\\
 (\gamma-1)(\alpha_ia_{i,n}-\beta_i), & i = 1, \cdots, n-1, j = n;\\
 \gamma(\alpha_ia_{i,n}-\beta_i)a_{n,j}, & i = 1, \cdots, n-1, j = 1, \cdots, i-1;\\
  0, & otherwise
\end{array}\right.
\end{eqnarray*}
and
\begin{eqnarray*}
\delta_{i,j}^{(19)}(1) = \left\{\begin{array}{ll}
(\alpha_ia_{i,n}-\beta_i)a_{n,j}, & i = 1, \cdots, n-1, j =1, \cdots, i; \\
  0, & otherwise.
\end{array}\right.
\end{eqnarray*}

Similar to Lemma \ref{lem3-8}, we have the following lemma.

\begin{lemma}\label{lem3-199}
Let $A$ be an irreducible Z-matrix.
Assume that $0 < -\alpha_ka_{k,n} + \beta_k \le -a_{k,n}$, $k = 1, \cdots, n-1$ and
$A^{(19)}$ has the block form
\begin{eqnarray*}
A^{(19)} = \left(\begin{array}{ccc}
A^{(19)}_{1,1} & & \bar{a}^{(19)}_{1,2}\\
\bar{a}_{2,1} & & 1
\end{array}\right), \; A^{(19)}_{1,1}\in{\mathscr R}^{(n-1)\times(n-1)}.
 \end{eqnarray*}
 Then

\begin{itemize}
\item[(i)] $A^{(19)}_{1,1}$ is an irreducible Z-matrix.

\item[(ii)] $A^{(19)}$ is an irreducible Z-matrix if and only if there exists
$j_0\in\{1, \cdots, n-1\}$ such that $(1-\alpha_{j_0})a_{j_0,n} + \beta_{j_0} \not = 0$.
\end{itemize}
\end{lemma}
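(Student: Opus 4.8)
The plan is to follow the proof of Lemma \ref{lem3-8} (which in turn parallels Lemma \ref{lem3-4}), working entirely through the directed-graph characterization of irreducibility recalled in Section \ref{se2}. First I would record the entries of $A^{(19)} = (I + Q_{19})A$ explicitly. Since the only nonzero entries of $Q_{19}$ sit in its last column, $q^{(19)}_{i,n} = -\alpha_i a_{i,n} + \beta_i$ for $i = 1, \cdots, n-1$, row $i \le n-1$ of $A^{(19)}$ is row $i$ of $A$ plus $(-\alpha_i a_{i,n} + \beta_i)$ times row $n$, while row $n$ is unchanged. Writing $c_i = -\alpha_i a_{i,n} + \beta_i$, the hypothesis gives $c_i > 0$ and $a^{(19)}_{i,j} = a_{i,j} + c_i a_{n,j}$ for $i \le n-1$. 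Because $A$ is a Z-matrix, $a_{n,j} \le 0$ for $j \le n-1$, so $a^{(19)}_{i,j} \le a_{i,j} \le 0$ for off-diagonal entries with $j \le n-1$, while the right-hand inequality $-\alpha_i a_{i,n} + \beta_i \le -a_{i,n}$ yields $a^{(19)}_{i,n} = (1-\alpha_i)a_{i,n} + \beta_i \le 0$. Hence both $A^{(19)}$ and $A^{(19)}_{1,1}$ are Z-matrices, which disposes of the Z-matrix claims at once.

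For part (i) I would prove that $G(A^{(19)}_{1,1})$ is strongly connected. Given $i \ne j$ in $\{1, \cdots, n-1\}$, I would take a path $\sigma_{i,j} = (j_0, \cdots, j_{l+1}) \in G(A)$. If this path never visits vertex $n$, every edge satisfies $a^{(19)}_{j_k,j_{k+1}} \le a_{j_k,j_{k+1}} < 0$, so the same path lies in $G(A^{(19)}_{1,1})$. If $j_s = n$ for some interior $s$, then $a_{j_{s-1},n} < 0$ and $a_{n,j_{s+1}} < 0$, and since $c_{j_{s-1}} > 0$ the spliced entry $a^{(19)}_{j_{s-1},j_{s+1}} = a_{j_{s-1},j_{s+1}} + c_{j_{s-1}}a_{n,j_{s+1}} \le c_{j_{s-1}}a_{n,j_{s+1}} < 0$ is strictly negative; deleting $n$ produces a shorter path avoiding it, and iterating removes every occurrence, giving a path in $G(A^{(19)}_{1,1})$.

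For part (ii) the key observation is that $a^{(19)}_{j_0,n} = (1-\alpha_{j_0})a_{j_0,n} + \beta_{j_0} \le 0$, so the stated condition $(1-\alpha_{j_0})a_{j_0,n}+\beta_{j_0} \ne 0$ is exactly the assertion that an edge $j_0 \to n$ exists in $G(A^{(19)})$. Necessity is then immediate: if $A^{(19)}$ is irreducible then vertex $n$ must have an in-edge, forcing such a $j_0$. For sufficiency I would join arbitrary $i,j$: when both lie in $\{1,\cdots,n-1\}$ I invoke part (i); to reach $n$ I route a path from $i$ to $j_0$ by part (i) and append $j_0 \to n$; to leave $n$ I pick $k_0 \le n-1$ with $a_{n,k_0} < 0$ (which exists because $A$ is irreducible and row $n$ is unchanged, so $a^{(19)}_{n,k_0} = a_{n,k_0} < 0$) and follow $n \to k_0$ with a path from $k_0$ to $j$ supplied by part (i).

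The hard part will be, exactly as in Lemmas \ref{lem3-4} and \ref{lem3-8}, the bookkeeping in the shortcut argument: I must check that excising an interior visit to vertex $n$ really yields admissible (strictly negative) edges inside $A^{(19)}_{1,1}$, and that the repeated deletion terminates with a legitimate path between the original endpoints. The strict positivity $c_i > 0$ guaranteed by $0 < -\alpha_k a_{k,n} + \beta_k$ is precisely what keeps each spliced edge nonzero, so the only genuine care is organizing the induction on the number of visits to $n$; the remaining work is the routine sign check confirming the Z-matrix structure.
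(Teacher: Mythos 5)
Your proposal is correct and follows essentially the same route as the paper: the paper proves this lemma by declaring it ``similar to Lemma \ref{lem3-8}'', whose proof is in turn ``completely similar to Lemma \ref{lem3-4}'', and your argument is precisely that graph-splicing proof adapted to $Q_{19}$ --- the sign computation $a^{(19)}_{i,j} = a_{i,j} + c_i a_{n,j}$ with $c_i = -\alpha_i a_{i,n}+\beta_i > 0$, excision of interior visits to vertex $n$ using $a^{(19)}_{j_{s-1},j_{s+1}} \le c_{j_{s-1}}a_{n,j_{s+1}} < 0$, and the in-edge/out-edge characterization for part (ii). No gaps beyond the routine bookkeeping (simple paths) that the paper itself glosses over.
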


Similar to Theorems \ref{thm3.57}-\ref{thm3.60}, we can prove the following comparison theorems.

\begin{theorem}
Suppose that $0 \le -\alpha_ka_{k,n} + \beta_k \le -a_{k,n}$ and $(\alpha_ka_{k,n} - \beta_k)a_{n,k} < 1$,
$k = 1, \cdots, n-1$.
Then Theorem A is valid for $\nu = 19$.
\end{theorem}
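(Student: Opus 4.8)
The plan is to exhibit the two hypotheses of the theorem as nothing more than a verification of the conditions of Corollary \ref{coro3-5}, applied with the general preconditioner $Q_1$ instantiated as $Q_{19}$. Since $Q_{19}$ is a special instance of $Q_1$ whose only nonzero entries are $q^{(19)}_{k,n} = -\alpha_k a_{k,n} + \beta_k \ge 0$ for $k = 1,\ldots,n-1$ (nonnegativity being built into the defining requirement on $Q_{19}$), it suffices to check the two requirements of Corollary \ref{coro3-5}: the off-diagonal inequality $q^{(19)}_{i,j} \le -a_{i,j}$ for $i \ne j$, and the diagonal positivity $1 + \sum_{k \ne i} q^{(19)}_{i,k} a_{k,i} > 0$ for each $i$.

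First I would verify the off-diagonal inequality. For any pair $(i,j)$ with $i \ne j$ that is not of the form $(k,n)$ with $k < n$, we have $q^{(19)}_{i,j} = 0$, and since $A$ is a Z-matrix (so that $a_{i,j} \le 0$ for $i \ne j$), the inequality $q^{(19)}_{i,j} = 0 \le -a_{i,j}$ holds trivially. For the remaining pairs $(k,n)$ with $k \in \{1,\ldots,n-1\}$, the required bound $q^{(19)}_{k,n} = -\alpha_k a_{k,n} + \beta_k \le -a_{k,n}$ is exactly the right-hand inequality of the hypothesis $0 \le -\alpha_k a_{k,n} + \beta_k \le -a_{k,n}$.

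Next I would check the diagonal condition. Because the only nonzero column of $Q_{19}$ is the $n$th one and its $n$th row vanishes, the correction term collapses to $\sum_{k \ne i} q^{(19)}_{i,k} a_{k,i} = (-\alpha_i a_{i,n} + \beta_i)\, a_{n,i}$ for $i = 1,\ldots,n-1$, and to $0$ for $i = n$. Hence $a^{(19)}_{n,n} = 1 > 0$ automatically, while for $i < n$ one gets $a^{(19)}_{i,i} = 1 - (\alpha_i a_{i,n} - \beta_i)\, a_{n,i} > 0$ precisely because of the hypothesis $(\alpha_k a_{k,n} - \beta_k)\, a_{n,k} < 1$. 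With both conditions confirmed, Corollary \ref{coro3-5} delivers that Theorem A is valid for $\nu = 19$.

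The argument is essentially routine bookkeeping; the one step that genuinely needs care is the evaluation of $\sum_{k \ne i} q^{(19)}_{i,k} a_{k,i}$, where one must separate the case $i = n$ (in which the last row of $Q_{19}$ is zero, so the diagonal entry remains equal to $1$) from the cases $i < n$ (in which only the $k = n$ term survives). Once that distinction is drawn, the two hypotheses of the theorem translate term-by-term into the two requirements of Corollary \ref{coro3-5}, and no further estimate is required.
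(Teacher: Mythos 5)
Your proof is correct. The paper does not write out a proof of this theorem at all: it only remarks that the result can be proved ``similar to Theorems \ref{thm3.57}--\ref{thm3.60}'', i.e.\ by regarding $Q_{19}$ under the stated hypotheses as a last-column preconditioner of type $Q_{15}$ (with $r=n$ and modified coefficients $\alpha_k'$ satisfying $-\alpha_k' a_{k,n} = -\alpha_k a_{k,n}+\beta_k$, $0\le\alpha_k'\le 1$) and then invoking the upper-triangular machinery. You instead instantiate Corollary \ref{coro3-5} for the general preconditioner $Q_1$, which is precisely the route the paper itself takes for the mirror-image lower-triangular preconditioner $Q_{10}$ (Theorems \ref{thm3.37}--\ref{thm3.40}). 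Both routes collapse to the same underlying result (Theorem \ref{thm3.1} via Corollary \ref{coro3-1}), since each amounts to checking that $P_{19}A$ has positive diagonal and nonpositive off-diagonal entries, so the difference is small; but your version is in fact slightly cleaner. The reduction to $Q_{15}$ that the paper gestures at requires the extra observation that the hypothesis $0 \le -\alpha_k a_{k,n}+\beta_k \le -a_{k,n}$ forces $\beta_k = 0$ whenever $a_{k,n}=0$ (otherwise $q^{(19)}_{k,n}$ cannot be written as $-\alpha_k' a_{k,n}$ with $\alpha_k'\in[0,1]$ at all), whereas your direct verification of the two hypotheses of Corollary \ref{coro3-5} — including the correct case split between rows $i<n$, where the sum in (\ref{eqs3.11}) reduces to the single term $q^{(19)}_{i,n}a_{n,i}$, and row $i=n$, where it is empty — bypasses this subtlety entirely.
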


\begin{theorem}\label{thm3.589}
Suppose that $0 \le -\alpha_ka_{k,n} + \beta_k \le -a_{k,n}$,
$k = 1, \cdots, n-1$.
Then Theorem B is valid for $\nu = 19$.
\end{theorem}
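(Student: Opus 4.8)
The plan is to recognize $Q_{19}$ as a special instance of the general preconditioner $Q_1$ and then to apply Corollary \ref{coro3-6}, exactly as the Theorem B statement for the lower-column analogue $Q_{10}$ was obtained in Theorem \ref{thm3.38}. Since here $A$ is assumed to be a nonsingular M-matrix, it is in particular a Z-matrix whose diagonal has been normalized to $1$, so $a_{i,j}\le 0$ for all $i\neq j$. The left half of the hypothesis, $0\le -\alpha_k a_{k,n}+\beta_k$, is precisely what makes $Q_{19}\ge 0$, so that $P_{19}=I+Q_{19}>0$ is an admissible preconditioner; the requirement $\sum_{k}(-\alpha_k a_{k,n}+\beta_k)\neq 0$ built into the definition of $Q_{19}$ guarantees $Q_{19}$ is nonzero.

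Next I would verify the single hypothesis of Corollary \ref{coro3-6}, namely $q^{(19)}_{i,j}\le -a_{i,j}$ for every $i\neq j$. The only nonzero off-diagonal entries of $Q_{19}$ lie in its last column, $q^{(19)}_{i,n}=-\alpha_i a_{i,n}+\beta_i$ for $i=1,\dots,n-1$, so the check splits into two cases. For $j\neq n$ (with $i\neq j$) we have $q^{(19)}_{i,j}=0\le -a_{i,j}$, which holds because $A$ is a Z-matrix. For $j=n$ and $i\le n-1$ the bound $q^{(19)}_{i,n}=-\alpha_i a_{i,n}+\beta_i\le -a_{i,n}$ is exactly the right half of the assumed two-sided inequality. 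Thus the hypothesis of Corollary \ref{coro3-6} is satisfied for $\nu=19$, and Theorem B follows immediately.

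The step carrying all the content is the sign bookkeeping in the last column: one must see that the assumed chain $0\le -\alpha_k a_{k,n}+\beta_k\le -a_{k,n}$ encodes two separate requirements, admissibility of $P_{19}$ (the left inequality) and the comparison bound of Corollary \ref{coro3-6} (the right inequality), while off the last column the condition degenerates to the defining Z-matrix property of $A$. An equivalent route, should a direct argument be preferred, is to compute $A^{(19)}=P_{19}A$ from (\ref{eqs3.2}): using $Q_{19}\ge 0$ and $a_{n,j}\le 0$ one gets $a^{(19)}_{i,j}=a_{i,j}+(-\alpha_i a_{i,n}+\beta_i)a_{n,j}\le a_{i,j}\le 0$ for $i\le n-1$ and $j\neq i,n$, together with $a^{(19)}_{i,n}=(1-\alpha_i)a_{i,n}+\beta_i\le 0$ and $a^{(19)}_{n,j}=a_{n,j}\le 0$, so that $A^{(19)}$ is a Z-matrix and Theorem \ref{thm3.2} (equivalently Corollary \ref{coro3-2}) applies. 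Either way there is no genuine obstacle beyond this elementary verification.
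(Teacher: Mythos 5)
Your proposal is correct and amounts to the same argument the paper intends: the paper proves this result only implicitly (``similar to Theorems \ref{thm3.57}--\ref{thm3.60}''), and the substance of that analogy is precisely your verification that $0 \le q^{(19)}_{i,n} = -\alpha_i a_{i,n}+\beta_i \le -a_{i,n}$ together with $q^{(19)}_{i,j}=0\le -a_{i,j}$ elsewhere, so that Corollary \ref{coro3-6} (equivalently, the Z-matrix check on $P_{19}A$ feeding into Theorem \ref{thm3.2}) applies. This is exactly the route the paper uses explicitly for the lower-triangular analogue $Q_{10}$ in Theorem \ref{thm3.38}, so no genuinely different method is involved.
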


\begin{theorem}\label{thm3.599}
Suppose that $(1-\alpha_k)a_{k,n} + \beta_k \le 0$ and $(\alpha_ka_{k,n} - \beta_k)a_{n,k} < 1$,
$k = 1, \cdots, n-1$.
Then Theorem C is valid for $\nu = 19$,
provided one of the following conditions is satisfied:

\begin{itemize}
\item[(i)] $0 \le \gamma < 1$ and $(1-\alpha_k)a_{k,n} + \beta_k \lesssim 0$ whenever $a_{k,n} < 0$,
$k = 1, \cdots, n-1$.

\item[(ii)] $\gamma = 1$ and $(1-\alpha_k)a_{k,n} + \beta_k \lesssim 0$ whenever $a_{k,n} < 0$,
$k = 1, \cdots, n-1$.
And one of the following conditions holds:

\begin{itemize}
\item[($ii_1$)] There exist $i\in\{1, \cdots, n-1\}$ and
$j\in\{1, \cdots, i\}$ such that
$(\alpha_ia_{i,n}-\beta_i)a_{n,j} > 0$.

\item[($ii_2$)] $-\alpha_{n-1}a_{n-1,n} + \beta_{n-1} > 0$.

\item[($ii_3$)] $a_{n,1} < 0$.
\end{itemize}

\item[(iii)]
$0 < -\alpha_ka_{k,n} + \beta_k$, $k = 1, \cdots, n-1$.
\end{itemize}
\end{theorem}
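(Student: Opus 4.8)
The plan is to treat $Q_{19}$ as a special instance of the general preconditioner $Q_1$, since its off-diagonal entries $q^{(19)}_{k,n} = -\alpha_k a_{k,n} + \beta_k$ are nonnegative, so that Theorem~\ref{thm3.3} and Corollary~\ref{coro3-7} apply exactly as in the proof of Theorem~\ref{thm3.59}. First I would record that the standing hypotheses make $A^{(19)}$ an L-matrix: the diagonal $a^{(19)}_{k,k} = 1 - (\alpha_k a_{k,n} - \beta_k)a_{n,k}$ is positive by $(\alpha_k a_{k,n} - \beta_k)a_{n,k} < 1$; the last-column off-diagonal $a^{(19)}_{k,n} = (1-\alpha_k)a_{k,n}+\beta_k$ is nonpositive by hypothesis; and every remaining off-diagonal satisfies $a^{(19)}_{i,j} = a_{i,j} + q^{(19)}_{i,n}a_{n,j} \le a_{i,j} \le 0$. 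The decisive structural fact is Lemma~\ref{lem3-199}: the only edges of $G(A)$ that can disappear in $G(A^{(19)})$ lie in column $n$, so $A^{(19)}$ is irreducible precisely when some $a^{(19)}_{j_0,n} \neq 0$, and otherwise it has the block form with irreducible leading $(n-1)\times(n-1)$ block $A^{(19)}_{1,1}$ and vanishing upper part of the last column.

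For cases (i) and (ii) I would argue along the irreducible branch. The $\lesssim$ convention in the hypothesis is exactly what Lemma~\ref{lem3-199} needs to guarantee that $A^{(19)}$ is irreducible: if an edge $k\to n$ were genuinely lost, the convention forces $a^{(19)}_{k,n} < 0$, retaining it. With $A^{(19)}$ an irreducible L-matrix, Theorem~\ref{thm3.3} applies. For case (i), since $0 \le \gamma < 1$, part $(i_2)$ of Lemma~\ref{lem3-3} yields a positive $\delta^{(19)}_{i,j}(\gamma)$, hence $\Phi(\gamma,\omega) > 0$, so condition ($i$) of Theorem~\ref{thm3.3} is met. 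For case (ii), where $\gamma = \omega = 1$, I would check that each of $(ii_1)$--$(ii_3)$ produces some $\delta^{(19)}_{i,j}(1) > 0$, matching condition $(ii_1)$ of Theorem~\ref{thm3.3}($ii$): $(ii_1)$ is immediate since $\delta^{(19)}_{i,j}(1) = (\alpha_i a_{i,n}-\beta_i)a_{n,j}$ for $j \le i$; $(ii_2)$ gives $q^{(19)}_{n-1,n} > 0$, so part $(ii_6)$ of Lemma~\ref{lem3-3} supplies a positive $\delta^{(19)}$; and $(ii_3)$, with $a_{n,1} < 0$ and some $q^{(19)}_{k_0,n}>0$ guaranteed by the definition of $Q_{19}$, gives $\delta^{(19)}_{k_0,1}(1) = -q^{(19)}_{k_0,n}a_{n,1}>0$ via part $(ii_3)$ of Lemma~\ref{lem3-3}.

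For case (iii), where $q^{(19)}_{k,n} > 0$ for every $k$, I would follow the two-pronged argument of Theorem~\ref{thm3.59}. If $A^{(19)}$ is irreducible, the conclusion is immediate from Theorem~\ref{thm3.3}: for $0 \le \gamma < 1$ each of rows $1,\ldots,n-1$ of $\Phi(\gamma,\omega)$ carries a positive entry, while for $\gamma = 1$ the irreducibility of $A$ yields some $a_{n,j} < 0$ with $j \le n-1$, whence $\delta^{(19)}_{n-1,j}(1) = -q^{(19)}_{n-1,n}a_{n,j} > 0$. If $A^{(19)}$ is reducible, Lemma~\ref{lem3-199} supplies the block form, and I would repeat the block computation of Theorem~\ref{thm3.59} verbatim: partition $M^{(19)}_{\gamma,\omega}$, $N^{(19)}_{\gamma,\omega}$, ${\mathscr L}^{(19)}_{\gamma,\omega}$ and $Q_{19}$ into the $(n-1)+1$ form (so $Q_{1,1}=0$, $\bar q_{2,1}=0$, $\bar q_{1,2} > 0$, $\bar x_1 \gg 0$, $\bar x_2 > 0$), apply Lemma~\ref{lem3-2} to obtain the analogues of (\ref{eqs3.43}) and (\ref{eqs3.44}), and conclude $\rho({\mathscr L}^{(19)}_{\gamma,\omega}) = \max\{1-\omega,\ \rho(\bar M_{\gamma,\omega}^{-1}\bar N_{\gamma,\omega})\}$, where $\bar M_{\gamma,\omega}-\bar N_{\gamma,\omega}$ is the AOR splitting of $A^{(19)}_{1,1}$. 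The trichotomy then follows from the sign of $\rho - 1$: for $\gamma < 1$ the irreducibility of $A^{(19)}_{1,1}$ makes $\bar M_{\gamma,\omega}^{-1}\bar N_{\gamma,\omega}$ irreducible and the strict inequalities drop out, and for $\gamma = 1$ I would multiply by the left Perron vector $\bar y$ with $\bar y^T\bar M_{1,1}^{-1} \gg 0$ from Lemma~\ref{lem1-8}($iii$). The main obstacle is precisely this reducible subcase: carrying the block bookkeeping correctly and verifying that the contribution $(1-\gamma)\bar q_{1,2}\bar x_2$ (respectively its $\gamma=1$ image through $\bar y$) is \emph{strictly} positive, so that the comparison reflected in Theorem~C is genuinely strict rather than merely weak.
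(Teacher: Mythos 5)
Your proposal is correct and follows essentially the same route as the paper: the paper likewise disposes of cases ($i$), ($ii$), and the $\gamma<1$ part of ($iii$) by invoking Theorem \ref{thm3.3} together with the argument of Theorem \ref{thm3.59}, and then handles $\gamma=1$ in ($iii$) via Lemma \ref{lem3-199} and the block computation, observing exactly as you do that since every $-\alpha_i a_{i,n}+\beta_i>0$, the needed condition $(\alpha_i a_{i,n}-\beta_i)a_{n,j}>0$ reduces to the existence of some $a_{n,j}<0$ with $j\le n-1$, which the irreducibility of $A$ guarantees. The only difference is one of presentation: the paper states this in a few lines by reference, while you spell out the L-matrix verification, the role of the ``$\lesssim$'' convention in forcing irreducibility of $A^{(19)}$, and the reducible-case bookkeeping explicitly.
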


\begin{proof}
By Theorem \ref{thm3.3} and referring to the proof of Theorem \ref{thm3.59},
we just need to prove the case when $\gamma = 1$ in ($iii$).

In fact, using Lemma \ref{lem3-199}, it is easy to prove that
a sufficient condition similar with ($iv$) of Theorem \ref{thm3.59} is that
there exist $i\in\{1, \cdots, n-1\}$ and
$j\in\{1, \cdots, i\}$ such that $(\alpha_ia_{i,n}-\beta_i)a_{n,j} > 0$,
which is equivalent to that there exists
$j\in\{1, \cdots, n-1\}$ such that $a_{n,j} < 0$,
since $-\alpha_ia_{i,n} + \beta_i > 0$. While, the irreducibility of $A$ ensures
that it is true.
\end{proof}

When ($iii$) holds, the result is better that the corresponding ones
given by \cite[Theorem 3.3, Corollary 3.3]{DH11} and \cite[Theorem 3.2]{DH14}.

\begin{theorem}\label{thm3.609}
Theorem D is valid for $\nu = 19$,
provided one of the following conditions is satisfied:

\begin{itemize}
\item[(i)] One of the conditions ($i$), ($ii$) and ($iii$) of Theorem \ref{thm3.599} holds.

\item[(ii)] $0 \le \alpha_k \le 1$, $k = 1,\cdots, n-1$ and one of the following conditions holds:

\begin{itemize}
\item[($ii_1$)]
There exists $i\in\{1,\cdots,n-1\}$ such that
$(\alpha_ia_{i,n}-\beta_i)a_{n,i} > 0$.

\item[($ii_2$)]
$\gamma > 0$ and there exist $i\in\{2,\cdots,n-1\}$ and $j\in\{1,\cdots,i-1\}$
such that $(\alpha_ia_{i,n}-\beta_i)a_{n,j} > 0$.
\end{itemize}
\end{itemize}
\end{theorem}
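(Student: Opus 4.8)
The plan is to treat the two alternatives (i) and (ii) separately, exactly as for the companion result Theorem~\ref{thm3.60}: case (i) I would reduce to the Stein--Rosenberg Type Theorem~II already established for $\nu=19$, while case (ii) I would settle by a strict comparison through Lemma~\ref{lem1-13}. First a common preliminary: the constraints defining $Q_{19}$ give $q^{(19)}_{k,n}=-\alpha_k a_{k,n}+\beta_k\ge 0$, and together with $(1-\alpha_k)a_{k,n}+\beta_k=a_{k,n}+q^{(19)}_{k,n}\le 0$ this makes every off-diagonal entry of $A^{(19)}$ nonpositive, so $A^{(19)}$ is a Z-matrix. Since $A$ is a nonsingular M-matrix and $P_{19}>0$, Lemma~\ref{lem1-10} then shows $A^{(19)}$ is a nonsingular M-matrix; hence both AOR splittings $A=M_{\gamma,\omega}-N_{\gamma,\omega}$ and $A^{(19)}=M^{(19)}_{\gamma,\omega}-N^{(19)}_{\gamma,\omega}$ are weak regular (in particular nonnegative and convergent) and $\rho(\mathscr{L}_{\gamma,\omega})<1$.

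For case (i) the hypotheses are precisely those of Theorem~\ref{thm3.599}. An irreducible nonsingular M-matrix is an irreducible L-matrix, so that theorem applies and produces one of its three mutually exclusive relations; as $\rho(\mathscr{L}_{\gamma,\omega})<1$ only the first can occur, which gives $\rho(\mathscr{L}^{(19)}_{\gamma,\omega})<\rho(\mathscr{L}_{\gamma,\omega})<1$, the assertion of Theorem~D. This step is essentially bookkeeping and I would dispatch it in one line.

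The substance is case (ii), which I would present as the specialization of condition (ii) of Theorem~\ref{thm3.4} and carry through Lemma~\ref{lem1-13}. By Lemma~\ref{lem3-1} it suffices to treat $0\le\gamma\le\omega$. Lemma~\ref{lem1-4} gives $A^{-1}\gg 0$, and from Lemma~\ref{lem1-9} together with $A^{-1}-(A^{(19)})^{-1}=(A^{(19)})^{-1}Q_{19}\ge 0$ one obtains $A^{-1}\ge (A^{(19)})^{-1}$. Let $\rho=\rho(\mathscr{L}_{\gamma,\omega})$ with eigenvector $x$; by Lemma~\ref{lem1-8}, $\rho>0$ and $x\gg 0$. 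The heart of the matter is the entrywise identity $(M_{\gamma,\omega}-M^{(19)}_{\gamma,\omega})_{i,i}=\frac{1}{\omega}(\alpha_i a_{i,n}-\beta_i)a_{n,i}$ and, for $j<i$, $(M_{\gamma,\omega}-M^{(19)}_{\gamma,\omega})_{i,j}=\frac{\gamma}{\omega}(\alpha_i a_{i,n}-\beta_i)a_{n,j}$, the $n$-th row being zero. Since $\alpha_i a_{i,n}-\beta_i=-q^{(19)}_{i,n}\le 0$ and the entries $a_{n,i},a_{n,j}$ are nonpositive, each such entry is nonnegative, so $M_{\gamma,\omega}\ge M^{(19)}_{\gamma,\omega}$; condition ($ii_1$) forces a diagonal entry to be strictly positive and condition ($ii_2$), with $\gamma>0$, forces a strictly-lower entry to be strictly positive. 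Because $x\gg 0$ this yields $M_{\gamma,\omega}x>M^{(19)}_{\gamma,\omega}x$. Finally $Ax=(1/\rho-1)N_{\gamma,\omega}x>0$, whence $M^{(19)}_{\gamma,\omega}x=P_{19}Ax+N^{(19)}_{\gamma,\omega}x>0\ge 0$, and all hypotheses of Lemma~\ref{lem1-13} are in place, giving $\rho(\mathscr{L}^{(19)}_{\gamma,\omega})<\rho(\mathscr{L}_{\gamma,\omega})$; combined with the preliminary $\rho<1$ this is Theorem~D.

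The main obstacle I anticipate is the careful bookkeeping in the entrywise comparison of $M_{\gamma,\omega}$ with $M^{(19)}_{\gamma,\omega}$, and in particular the separation of the diagonal contribution (handled by ($ii_1$) for every admissible $\gamma$) from the strictly-lower contribution (handled by ($ii_2$), which genuinely needs $\gamma>0$ because that block carries the factor $\gamma/\omega$). I would also take care that the nonnegativity $N_{\gamma,\omega}\ge 0$, used to obtain $Ax>0$, is available; this is exactly what the reduction to $\gamma\le\omega$ via Lemma~\ref{lem3-1} secures, and it is the one place where restricting the parameter range is not merely cosmetic.
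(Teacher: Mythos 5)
Your proof is correct and follows essentially the route the paper intends (the paper gives no separate proof of this theorem, deriving it ``similarly'' to its $\nu=15$ analogue, Theorem \ref{thm3.60}): case (i) is Theorem \ref{thm3.599} collapsed to its first alternative by $\rho({\mathscr L}_{\gamma,\omega})<1$, and case (ii) is the Lemma \ref{lem1-13} argument from the proof of Theorem \ref{thm3.4}(ii), with your entrywise identification of $M_{\gamma,\omega}-M^{(19)}_{\gamma,\omega}$ translating ($ii_1$) and ($ii_2$) into exactly the conditions $a^{(19)}_{i_0,i_0}<1$ and, for $\gamma>0$, $a_{i_0,j_0}>a^{(19)}_{i_0,j_0}$ required there; the reduction to $\gamma\le\omega$ via Lemma \ref{lem3-1}, the use of Lemmas \ref{lem1-4}, \ref{lem1-8}, \ref{lem1-9} and \ref{lem1-10}, and the verification $M^{(19)}_{\gamma,\omega}x\ge 0$ all match the paper's template.

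One imprecision worth fixing: the inequality $(1-\alpha_k)a_{k,n}+\beta_k\le 0$, which you invoke in your preliminary to conclude that $A^{(19)}$ is a Z-matrix (and which all of case (ii) needs, both for Lemma \ref{lem1-10} and for $N^{(19)}_{\gamma,\omega}\ge 0$), does \emph{not} follow from the constraints defining $Q_{19}$ — take $a_{k,n}=0$, $\alpha_k=0$, $\beta_k>0$, so that $q^{(19)}_{k,n}=\beta_k\ge 0$ but $a^{(19)}_{k,n}=\beta_k>0$. In case (i) it is available as the standing hypothesis of Theorem \ref{thm3.599}, but in case (ii) it is an additional assumption that the paper carries over tacitly (compare the $\nu=10$ analogue, Theorem \ref{thm3.40}, where the corresponding inequality is stated explicitly as a blanket hypothesis); you should list it as an assumption of case (ii) rather than present it as a consequence of the definition of $Q_{19}$.
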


As a special of $Q_{19}$, $Q$ is proposed in \cite{Li11} as
\begin{eqnarray*}
Q_{20} & = & \left(\begin{array}{cccc}
0 & \quad \cdots\quad & 0 & \quad -\frac{a_{1,n}}{\alpha} - \beta\\
0 & \cdots & 0 & 0\\
\vdots & \vdots & \vdots & \vdots\\
0 & \cdots & 0 & 0\\
\end{array}\right)
 \end{eqnarray*}
with $\alpha > 0$ and $a_{1,n}/\alpha + \beta < 0$, which is given in \cite{LW042} for $\alpha = 1$.

In this case, similar to $\delta_{i,j}^{(16)}(1)$, we can derive
$\delta_{i,j}^{(20)}(1)$, whose all elements are zero except
$\delta_{1,1}^{(20)}(1) = (a_{1,n}/\alpha + \beta)a_{n,1}$.

\begin{theorem}
Suppose that
\begin{eqnarray}\label{eqs3.45}
 \left(1 - \frac{1}{\alpha}\right)a_{1,n} \le \beta < -\frac{a_{1,n}}{\alpha}
 \end{eqnarray}
and
\begin{eqnarray}\label{eqs3.46}
1 - \left(\frac{a_{1,n}}{\alpha} + \beta\right)a_{n,1} > 0.
\end{eqnarray}
Then Theorem A is valid for $\nu = 20$.
\end{theorem}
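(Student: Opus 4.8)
The plan is to recognize that the claim is of ``Theorem A'' type, and therefore to reduce it to the already-proved L-matrix comparison result Corollary~\ref{coro3-1} (equivalently, the Theorem~A statement for the parent preconditioner $Q_{19}$, of which $Q_{20}$ is the special case $\alpha_1 = 1/\alpha$, $\beta_1 = -\beta$, $\alpha_k=\beta_k=0$ for $k\geq 2$). Since $Q_{20}$ is a semi-positive matrix of the form $Q_1$, the only thing that must actually be verified is that the preconditioned coefficient matrix $A^{(20)} = P_{20}A$ is again an L-matrix, i.e. that $a^{(20)}_{i,i} > 0$ for all $i$ and $a^{(20)}_{i,j} \leq 0$ for all $i \neq j$. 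Once this is established, the conclusion follows immediately from Corollary~\ref{coro3-1} applied with $\nu = 20$.

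First I would write $A^{(20)}$ out explicitly. Because $Q_{20}$ has the single nonzero entry $q^{(20)}_{1,n} = -a_{1,n}/\alpha - \beta = -(a_{1,n}/\alpha + \beta)$, which is strictly positive exactly by the right-hand inequality $\beta < -a_{1,n}/\alpha$ of (\ref{eqs3.45}), only the first row of $A$ is altered: $a^{(20)}_{i,j} = a_{i,j}$ for $i \neq 1$, while $a^{(20)}_{1,j} = a_{1,j} + q^{(20)}_{1,n}a_{n,j}$. The rows $i \geq 2$ are thus unchanged, so they still have unit diagonal and nonpositive off-diagonal entries, and nothing need be checked there.

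The whole verification is then confined to the first row, and here the two displayed hypotheses enter. For the diagonal entry I would compute $a^{(20)}_{1,1} = 1 + q^{(20)}_{1,n}a_{n,1} = 1 - (a_{1,n}/\alpha + \beta)a_{n,1}$, which is positive precisely by (\ref{eqs3.46}). For the off-diagonal entries $a^{(20)}_{1,j}$ with $j \neq 1$ I would split into two cases. When $j \neq n$ one has $a_{1,j} \leq 0$, $a_{n,j} \leq 0$ and $q^{(20)}_{1,n} > 0$, so $a^{(20)}_{1,j} \leq 0$ at once. When $j = n$, using that the diagonal of $A$ is unit one gets $a^{(20)}_{1,n} = a_{1,n} + q^{(20)}_{1,n} = (1 - 1/\alpha)a_{1,n} - \beta$, which is $\leq 0$ exactly by the left-hand inequality $(1-1/\alpha)a_{1,n} \leq \beta$ of (\ref{eqs3.45}). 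This proves $A^{(20)}$ is a Z-matrix with positive diagonal, hence an L-matrix.

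I expect no genuine obstacle: the statement is engineered so that the two hypotheses correspond one-to-one with the two nontrivial sign requirements on the first row of $A^{(20)}$ (the $(1,n)$ off-diagonal condition and the $(1,1)$ diagonal condition), while the strictness in (\ref{eqs3.45}) only serves to make $Q_{20}$ a genuine nonzero nonnegative preconditioner. The single bookkeeping point to watch is the case $j = n$, where the unit diagonal entry $a_{n,n}=1$ of $A$ propagates into the $(1,n)$ off-diagonal entry of $A^{(20)}$ through $q^{(20)}_{1,n}a_{n,n} = q^{(20)}_{1,n}$; isolating this case is exactly what produces the left inequality of (\ref{eqs3.45}). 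Having confirmed $A^{(20)}$ is an L-matrix, I would conclude by citing Corollary~\ref{coro3-1} that Theorem~A is valid for $\nu = 20$.
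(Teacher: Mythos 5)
Your proof is correct and is essentially the paper's argument: the paper's own proof just cites Corollary \ref{coro3-5}, observing that its hypotheses $q^{(1)}_{1,n}\le -a_{1,n}$ and (\ref{eqs3.11}) reduce exactly to (\ref{eqs3.45}) and (\ref{eqs3.46}) for $Q_{20}$. Since Corollary \ref{coro3-5} is itself derived from Corollary \ref{coro3-1} by precisely the row-one L-matrix verification you carry out explicitly (including the $j=n$ entry $a^{(20)}_{1,n}=(1-1/\alpha)a_{1,n}-\beta$ and the diagonal entry $a^{(20)}_{1,1}=1-(a_{1,n}/\alpha+\beta)a_{n,1}$), your proposal merely inlines that intermediate corollary rather than citing it, and there is no gap.
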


\begin{proof}
Since
\begin{eqnarray*}
 \left(1 - \frac{1}{\alpha}\right)a_{1,n} \le \beta \;\; \hbox{iff} \;\;
-\frac{a_{1,n}}{\alpha} - \beta \le -a_{1,n},
 \end{eqnarray*}
and the inequality (\ref{eqs3.11}) reduces to (\ref{eqs3.46}), then
the condition of Corollary \ref{coro3-5} is satisfied so that Theorem A is valid.  \end{proof}

It is easy to prove the following theorem.

\begin{theorem}
Suppose that (\ref{eqs3.45}) holds.
Then Theorem B is valid for $\nu = 20$.
\end{theorem}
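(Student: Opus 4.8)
The plan is to reduce the claim to Corollary~\ref{coro3-6}, exactly as the corresponding $\nu=20$ result for Theorem~A was reduced to Corollary~\ref{coro3-5}. Recall that $Q_{20}$ has a single nonzero entry, namely $q^{(20)}_{1,n} = -a_{1,n}/\alpha - \beta$ in position $(1,n)$, with $q^{(20)}_{i,j}=0$ for every other pair $(i,j)$. Thus, to apply Corollary~\ref{coro3-6} I only have to verify the inequality $q^{(20)}_{i,j} \le -a_{i,j}$ for all $i\neq j$.

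First I would treat the single active entry. Since $\alpha>0$, the left inequality in (\ref{eqs3.45}), $(1-1/\alpha)a_{1,n}\le\beta$, is equivalent to $-a_{1,n}/\alpha-\beta\le -a_{1,n}$, that is, to $q^{(20)}_{1,n}\le -a_{1,n}$. This is precisely the rearrangement already recorded at the beginning of the proof of the preceding theorem, so no new computation is needed here.

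Next I would dispose of all the remaining off-diagonal positions. For $i\neq j$ with $(i,j)\neq(1,n)$ we have $q^{(20)}_{i,j}=0$, and because $A$ is a nonsingular M-matrix it is in particular a Z-matrix, so $a_{i,j}\le 0$ and hence $q^{(20)}_{i,j}=0\le -a_{i,j}$. Combining the two cases shows that the hypothesis $q^{(20)}_{i,j}\le -a_{i,j}$ ($i\neq j$) of Corollary~\ref{coro3-6} is satisfied, and that corollary immediately yields $\rho({\mathscr L}^{(20)}_{\gamma,\omega})\le\rho({\mathscr L}_{\gamma,\omega})<1$, which is Theorem~B for $\nu=20$.

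There is essentially no obstacle: the only point requiring minor care is the algebraic equivalence between the left half of (\ref{eqs3.45}) and $q^{(20)}_{1,n}\le -a_{1,n}$, where one uses $\alpha>0$ to preserve the direction of the inequality. I would also remark that the right half $\beta<-a_{1,n}/\alpha$ of (\ref{eqs3.45}) is not used for the comparison bound itself; it serves only to guarantee $q^{(20)}_{1,n}>0$, so that $Q_{20}$ is a genuine (semi-positive) preconditioner as the construction requires.
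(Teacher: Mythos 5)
Your proposal is correct and follows exactly the route the paper intends: the paper gives no explicit proof (stating only that it "is easy to prove"), and the natural argument — which you carried out — is to verify the hypothesis $q^{(20)}_{i,j}\le -a_{i,j}$ of Corollary \ref{coro3-6} using the left inequality of (\ref{eqs3.45}) for the single entry $(1,n)$ and the Z-matrix property of $A$ for the remaining zero entries, mirroring how the preceding theorem was reduced to Corollary \ref{coro3-5}. Your closing remark on the role of the right half of (\ref{eqs3.45}) (ensuring $q^{(20)}_{1,n}>0$ so that $Q_{20}$ qualifies as a preconditioner) is also consistent with the paper's own observation following these theorems.
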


In order to give the Stein-Rosenberg Type Theorem II,
completely similar to Lemma \ref{lem3-9}, we can prove the following lemma.

\begin{lemma} \label{lem3-10}
Let $A$ be an irreducible Z-matrix.
Assume that $\beta \ge (1 - 1/\alpha)a_{1,n}$ and $A^{(20)}$ has the block form
\begin{eqnarray*}
A^{(20)} = \left(\begin{array}{ccc}
A^{(20)}_{1,1} & & \bar{a}^{(20)}_{1,2}\\
\bar{a}_{2,1} & & 1
\end{array}\right), \; A^{(20)}_{1,1}\in{\mathscr R}^{(n-1)\times(n-1)}.
 \end{eqnarray*}
 Then one of the following two mutually exclusive relations holds:

\begin{itemize}
\item[(i)] $A^{(20)}$ is an irreducible Z-matrix.

\item[(ii)] $A^{(20)}$ is a reducible Z-matrix,
but $A^{(20)}_{1,1}$ is an irreducible Z-matrix and
$a_{1,k} = a^{(20)}_{1,k} = a^{(20)}_{1,n} = 0$, $k = 2,\cdots,n-1$.
\end{itemize}
\end{lemma}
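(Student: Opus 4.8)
The plan is to transcribe the proof of Lemma~\ref{lem3-5} (equivalently Lemma~\ref{lem3-9}) with the roles of the first and last vertex, and of in- and out-edges, interchanged, since $Q_{20}$ is the upper-triangular single-entry analogue of $Q_7$. First I would record the entries of $A^{(20)}=(I+Q_{20})A$. Because the only nonzero entry of $Q_{20}$ is $q:=-a_{1,n}/\alpha-\beta>0$ in position $(1,n)$, only the first row is altered, so that
\begin{eqnarray*}
a^{(20)}_{i,j} = \left\{\begin{array}{lll}
a_{i,j}, & i = 2,\cdots, n, & j = 1, \cdots, n,\\
\left(1 - \frac{1}{\alpha}\right)a_{1,n} - \beta, & i = 1, & j = n,\\
a_{1,j} - \left(\frac{a_{1,n}}{\alpha} + \beta\right)a_{n,j}, & i = 1, & j = 1, \cdots, n-1.
\end{array}\right.
\end{eqnarray*}
From $\beta\ge(1-1/\alpha)a_{1,n}$ we get $a^{(20)}_{1,n}\le0$, and since $q>0$ while $a_{n,j}\le0$ for $j\neq n$ we get $a^{(20)}_{1,j}\le a_{1,j}\le0$ for $j=2,\cdots,n-1$; together with the untouched rows $2,\cdots,n$ this shows that both $A^{(20)}$ and $A^{(20)}_{1,1}$ are Z-matrices, which is the opening paragraph of the proof of Lemma~\ref{lem3-5}.

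Next I would introduce the auxiliary matrix $\hat A$ obtained from $A$ by setting its $(1,n)$ entry to $0$. The two structural facts I need are: every edge of $G(\hat A)$ is an edge of $G(A^{(20)})$, because $a^{(20)}_{1,j}\le a_{1,j}$ for $j\neq n$ and the $(1,n)$ edge is exactly the one $\hat A$ discards; and $A^{(20)}$ retains the edge $1\to n$ precisely when $a^{(20)}_{1,n}<0$, i.e. when $\beta>(1-1/\alpha)a_{1,n}$. Hence, if $\hat A$ is irreducible then $A^{(20)}$ is irreducible, and if the inequality $\beta\ge(1-1/\alpha)a_{1,n}$ is strict then $A^{(20)}$ is irreducible. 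Consequently, should $A^{(20)}$ be reducible, we are forced into $\beta=(1-1/\alpha)a_{1,n}$, so that $a^{(20)}_{1,n}=0$ and $\hat A$ is reducible; this is the exact analogue of the ``$\alpha=1$ makes the critical entry vanish'' step.

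The heart of the argument is the reducible case. Here the only edge of $G(A)$ absent from $G(A^{(20)})$ is $1\to n$, while the first row has \emph{gained} the shortcut edges $1\to j$ for every out-neighbour $j$ of $n$ (because $a^{(20)}_{1,j}=a_{1,j}+q\,a_{n,j}<0$ whenever $a_{n,j}<0$). Given a reducibility witness $(i^*,j^*)$ with no path in $G(A^{(20)})$, I would take a path $\sigma_{i^*,j^*}\in G(A)$ and, whenever it uses $1\to n$ internally as $\cdots\to1\to n\to i_{\mu+2}\to\cdots$, replace that pair of steps by the single shortcut $1\to i_{\mu+2}$; iterating removes every internal occurrence of the forbidden edge and therefore forces the surviving obstruction to be the final step, i.e. $j^*=n$. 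Tracking the set of vertices reachable from $i^*$ and applying the shortcuts once more then pins down that all edges into the last vertex disappear, that is $a_{k,n}=a^{(20)}_{k,n}=a^{(20)}_{1,n}=0$ for $k=2,\cdots,n-1$ (equivalently $\bar a^{(20)}_{1,2}=0$, the degeneracy exhibited in the proof of Theorem~\ref{thm3.59}); this is precisely conclusion (ii), and it is consistent with $A$ irreducible because in $A$ the vertex $n$ is reached only through $1\to n$. Finally, to see that $A^{(20)}_{1,1}$ is still irreducible, I would lift, for any $i,j\in\{1,\cdots,n-1\}$, an irreducibility path of $A$ into $A^{(20)}_{1,1}$, rerouting any excursion through the excluded vertex $n$ by the shortcut edges so the path stays inside $\{1,\cdots,n-1\}$, exactly as the complementary block is handled at the end of the proof of Lemma~\ref{lem3-5}.

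I expect the reducible-case path surgery to be the main obstacle. One must check that the rerouting terminates (each replacement strictly decreases the number of uses of $1\to n$, or the length, of the path) and that the rerouted walk genuinely lies in $G(A^{(20)})$ — and, for the block $A^{(20)}_{1,1}$, that it never leaves the vertex set $\{1,\cdots,n-1\}$. Everything else is a direct transcription, under the symmetry $1\leftrightarrow n$, of the already-established Lemmas~\ref{lem3-5} and~\ref{lem3-9}; the author's remark that the proof is ``completely similar'' is justified exactly once these bookkeeping details of the shortcut rerouting are verified.
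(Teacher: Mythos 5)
Your proof is correct and is essentially the paper's own argument: the paper offers no proof here beyond deferring, through Lemma \ref{lem3-9}, to the proof of Lemma \ref{lem3-5}, and what you write is exactly that proof transported by the index reversal $1\leftrightarrow n$ --- the same entry computations, the same auxiliary matrix $\hat A$, the same path surgery forcing the unreachable target, and the same rerouting for the complementary block. The bookkeeping points you flag (termination of the rerouting, the rerouted path staying inside $\{1,\cdots,n-1\}$, and the fact that once $a_{k,n}=0$ for $k=2,\cdots,n-1$ every excursion through $n$ must enter it from vertex $1$, so the gained edges out of vertex $1$ suffice) all check out.

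One caveat concerns the target statement rather than your argument. What the surgery yields, and what you assert, is $a_{k,n}=a^{(20)}_{k,n}=a^{(20)}_{1,n}=0$, $k=2,\cdots,n-1$ (equivalently $\bar a^{(20)}_{1,2}=0$), whereas the lemma as printed says $a_{1,k}=a^{(20)}_{1,k}=a^{(20)}_{1,n}=0$. These are not the same, and the printed version cannot occur for an irreducible $A$ with $n\ge 3$: since $a^{(20)}_{1,k}=a_{1,k}-(a_{1,n}/\alpha+\beta)a_{n,k}$ is a sum of two nonpositive terms, its vanishing forces $a_{1,k}=a_{n,k}=0$ for all $k=2,\cdots,n-1$, so no edge would leave $\{1,n\}$ toward $\{2,\cdots,n-1\}$, contradicting irreducibility of $A$; the dichotomy as printed would therefore be false whenever $A^{(20)}$ is reducible. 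Your column-$n$ version is the intended one: it is the image of conclusion (ii) of Lemma \ref{lem3-5} under the reversal, it matches the form of Lemmas \ref{lem3-8} and \ref{lem3-199} (which are stated in terms of the column-$n$ entries), and it is precisely the degeneracy $\bar a^{(20)}_{1,2}=0$ needed in the ensuing theorem for $\nu=20$. So your proposal proves the correct statement; just note that ``this is precisely conclusion (ii)'' holds only after correcting the transposed indices in the printed lemma (the same slip occurs in Lemma \ref{lem3-9}).
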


Using this lemma, completely similar to ($ii$) of Theorem \ref{thm3.63}, we can prove the following theorem.

\begin{theorem}
Suppose that
$\beta \ge (1 - 1/\alpha)a_{1,n}$ and $(a_{1,n}/\alpha + \beta)a_{n,1} < 1$.
Then Theorem C is valid for $\nu = 20$.
\end{theorem}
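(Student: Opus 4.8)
The plan is to follow the proof of part ($ii$) of Theorem~\ref{thm3.63}, regarding $Q_{20}$ as the upper-triangular single-entry preconditioner (its only nonzero entry is $q^{(20)}_{1,n} = -a_{1,n}/\alpha - \beta > 0$) and using Lemma~\ref{lem3-10} in the role that Lemma~\ref{lem3-9} plays there. By Lemma~\ref{lem3-1} it suffices to argue for $\gamma \le \omega$. First I would extract two consequences of the hypotheses. The diagonal condition $(a_{1,n}/\alpha + \beta)a_{n,1} < 1$ gives $a^{(20)}_{1,1} = 1 - (a_{1,n}/\alpha + \beta)a_{n,1} > 0$, and $\beta \ge (1-1/\alpha)a_{1,n}$ gives $a^{(20)}_{1,n} = (1-1/\alpha)a_{1,n} - \beta \le 0$; since only the first row is altered and $q^{(20)}_{1,n} > 0$, $a_{n,j} \le 0$ force $a^{(20)}_{1,j} = a_{1,j} + q^{(20)}_{1,n}a_{n,j} \le 0$ for the remaining $j$, the matrix $A^{(20)}$ is an L-matrix. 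Moreover, combining $a_{1,n}/\alpha + \beta < 0$ (the defining inequality of $Q_{20}$) with $\beta \ge (1-1/\alpha)a_{1,n}$ forces $a_{1,n} < 0$; this sign is what will make the comparison strict. With $A^{(20)}$ an L-matrix, Lemma~\ref{lem3-10} supplies the dichotomy: either $A^{(20)}$ is irreducible, or it is reducible, in which case the off-diagonal entries of the decoupled first row vanish and the complementary part is an irreducible Z-matrix.

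Next I would dispose of the irreducible branch. For $0 \le \gamma < 1$, condition~($i$) of Theorem~\ref{thm3.3} applies at once, since $A^{(20)} = P_{20}A$ is then an irreducible L-matrix. For $\gamma = 1$ I would compute the only surviving discrete quantity, $\delta^{(20)}_{1,1}(1) = (a_{1,n}/\alpha + \beta)a_{n,1}$; whenever $a_{n,1} < 0$ this is a product of two negative numbers, hence positive, so condition~($ii_1$) of Theorem~\ref{thm3.3} is met and Theorem~C follows.

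The delicate situation is $\gamma = 1$ with $a_{n,1} = 0$, together with the reducible branch of Lemma~\ref{lem3-10}; here no $\delta^{(20)}_{i,j}(1)$ is positive and Theorem~\ref{thm3.3} no longer applies. In this range I would reproduce the block argument of part~($iv$) of Theorem~\ref{thm3.59} (the one invoked for part~($ii$) of Theorem~\ref{thm3.63}): partition $A^{(20)}$ so that the decoupled coordinate is split off from the irreducible block furnished by Lemma~\ref{lem3-10}, write down the induced block forms of $M^{(20)}_{\gamma,\omega}$, $N^{(20)}_{\gamma,\omega}$, $[M^{(20)}_{\gamma,\omega}]^{-1}$ and ${\mathscr L}^{(20)}_{\gamma,\omega}$, and observe that $\rho({\mathscr L}^{(20)}_{\gamma,\omega})$ is governed by the AOR iteration matrix of that irreducible block. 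Writing $\rho = \rho({\mathscr L}_{\gamma,\omega})$ with eigenvector $x \gg 0$ (Lemma~\ref{lem1-8}), I would derive, exactly as in~(\ref{eqs3.43})--(\ref{eqs3.44}), an identity expressing the sub-block iteration matrix acting on the relevant components of $x$ as $(\rho - 1)$ times a coupling term that is strictly positive because $a_{1,n} < 0$ and the block is irreducible. Pairing with the strictly positive left Perron vector of the sub-block (Lemma~\ref{lem1-8}($iii$)) then yields $\rho({\mathscr L}^{(20)}_{\gamma,\omega}) < \rho$, $= \rho$, or $> \rho$ according as $\rho < 1$, $= 1$, or $> 1$, which is precisely Theorem~C.

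The main obstacle is this last block computation: one must check that splitting off the decoupled coordinate really produces an irreducible complementary block and, crucially, that the term coupling that coordinate to the block does not vanish, so the comparison is strict rather than merely weak. This is exactly where the forced inequality $a_{1,n} < 0$ and the irreducibility guaranteed by Lemma~\ref{lem3-10} enter, mirroring the role of $a_{r,s} < 0$ and Lemma~\ref{lem3-9} in Theorem~\ref{thm3.63}.
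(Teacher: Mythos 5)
Your route is the one the paper intends: Lemma~\ref{lem3-10} in place of Lemma~\ref{lem3-9}, the irreducible branch dispatched through Theorem~\ref{thm3.3}, the reducible branch through the block computation of Theorems~\ref{thm3.59} and \ref{thm3.63}; your reduction to $\gamma\le\omega$, the L-matrix verification for $A^{(20)}$, and the two cases $0\le\gamma<1$ and ($\gamma=1$, $a_{n,1}<0$) are all handled correctly. The gap is exactly the step you flag as delicate, and it cannot be repaired. At $\gamma=\omega=1$ the only places where $q^{(20)}_{1,n}$ (the only carrier of $a_{1,n}$) enters $\Phi(1,1)$ in (\ref{eqs3.8}) are the terms $Q^{(u)}U$ and $(1-\gamma)Q^{(u)}$, and both vanish: the first because row $n$ of $U$ is identically zero, the second because $\gamma=1$; likewise, in the block identity (\ref{eqs3.43}) the terms carrying $\bar q_{1,2}$ come with the factors $\gamma-\omega=0$ and $1-\omega=0$. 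What survives is the diagonal/strictly lower part of $Q_{20}L$, i.e.\ the single entry $\delta^{(20)}_{1,1}(1)=(a_{1,n}/\alpha+\beta)a_{n,1}$. So when $a_{n,1}=0$ the coupling term is identically zero, not strictly positive --- irreducibility of the block is irrelevant --- and Lemma~\ref{lem3-2}($ii$) gives ${\mathscr L}^{(20)}_{1,1}x=\rho({\mathscr L}_{1,1})x$ with $x\gg0$ (Lemma~\ref{lem1-8}), whence Lemma~\ref{lem1-2} forces the exact equality $\rho({\mathscr L}^{(20)}_{1,1})=\rho({\mathscr L}_{1,1})$ rather than the strict trichotomy.

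In fact no argument can close this gap, because the statement itself fails there. Take
\begin{eqnarray*}
A = \left(\begin{array}{rrr}
1 & 0 & -\frac{1}{2}\\
-\frac{1}{2} & 1 & 0\\
0 & -\frac{1}{2} & 1
\end{array}\right), \quad \alpha = 1, \; \beta = 0,
\end{eqnarray*}
so that $q^{(20)}_{1,3}=\frac{1}{2}$, $A$ is an irreducible L-matrix (indeed a nonsingular M-matrix), and both hypotheses hold: $\beta=0\ge(1-1/\alpha)a_{1,3}=0$ and $(a_{1,3}/\alpha+\beta)a_{3,1}=0<1$. A direct computation gives, for $\gamma=\omega=1$, $\rho({\mathscr L}_{1,1})=\rho({\mathscr L}^{(20)}_{1,1})=\frac{1}{8}<1$, so none of the three mutually exclusive relations of Theorem~C is valid. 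The paper's own one-sentence proof (``completely similar to ($ii$) of Theorem~\ref{thm3.63}'') conceals the same defect, and its remark that the condition $0<a_{1,n}a_{n,1}$ assumed in \cite{LW14} is unnecessary is precisely what goes wrong: for $\gamma=1$ one must either restore $a_{n,1}<0$ as a hypothesis, or restrict the conclusion to $0\le\gamma<1$. So your proposal faithfully reproduces the paper's strategy, but the positivity claim in your final paragraph is false, and the hole it leaves is a hole in the theorem itself, not merely in the argument.
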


The result is better than \cite[Theorem 6]{LW14},
where the condition $0 < a_{1,n}a_{n,1} < \alpha (\alpha > 1)$ is unnecessary.

\begin{theorem}
Suppose that
$\beta \ge (1 - 1/\alpha)a_{1,n}$.
Then Theorem D is valid for $\nu = 20$.
\end{theorem}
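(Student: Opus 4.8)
The plan is to deduce Theorem D for $\nu = 20$ from the Stein-Rosenberg type comparison result (Theorem C) for $\nu = 20$ proved just above, exactly in the spirit in which Theorem \ref{thm3.4} derives Theorem D from Theorem \ref{thm3.3}. Since the only hypothesis now is $\beta \ge (1 - 1/\alpha)a_{1,n}$, the first task is to verify that, together with the defining constraint $a_{1,n}/\alpha + \beta < 0$ of $Q_{20}$, it makes $A^{(20)}$ a Z-matrix. The only nonzero entry of $Q_{20}$ is $q^{(20)}_{1,n} = -a_{1,n}/\alpha - \beta > 0$, so by (\ref{eqs3.2}) every row of $A^{(20)}$ other than the first coincides with that of $A$ and keeps its nonpositive off-diagonal entries. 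In the first row, for $j \not= 1,n$ one has $a^{(20)}_{1,j} = a_{1,j} + q^{(20)}_{1,n}a_{n,j} \le 0$, a sum of two nonpositive terms, while $a^{(20)}_{1,n} = a_{1,n} + q^{(20)}_{1,n} = (1 - 1/\alpha)a_{1,n} - \beta \le 0$ precisely because $\beta \ge (1 - 1/\alpha)a_{1,n}$. Hence $A^{(20)}$ is a Z-matrix.

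Next I would invoke Lemma \ref{lem1-10}: as $A$ is a nonsingular M-matrix, $P_{20} = I + Q_{20} > 0$ is nonsingular, and $A^{(20)} = P_{20}A$ is a Z-matrix, it follows that $A^{(20)}$ is itself a nonsingular M-matrix, hence an L-matrix. In particular its diagonal entry $a^{(20)}_{1,1} = 1 - (a_{1,n}/\alpha + \beta)a_{n,1}$ is positive, that is, the second hypothesis $(a_{1,n}/\alpha + \beta)a_{n,1} < 1$ of the $\nu = 20$ version of Theorem C is satisfied automatically. This is the one point requiring a little care: it is exactly the M-matrix property of $A^{(20)}$, and not a separate assumption, that supplies the missing inequality, which explains why here Theorem D needs only $\beta \ge (1 - 1/\alpha)a_{1,n}$. (The possible reducibility of $A^{(20)}$ causes no trouble, since the proof of the $\nu = 20$ Theorem C already treats both cases through the block decomposition of Lemma \ref{lem3-10}.)

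With both hypotheses of the preceding (Theorem C) result for $\nu = 20$ in force, that theorem delivers the mutually exclusive trichotomy among $\rho({\mathscr L}^{(20)}_{\gamma,\omega})$, $\rho({\mathscr L}_{\gamma,\omega})$ and $1$. Finally, because $A$ is a nonsingular M-matrix the AOR splitting $A = M_{\gamma,\omega} - N_{\gamma,\omega}$ is weak regular and therefore convergent by Lemma \ref{lem1-5}, so $\rho({\mathscr L}_{\gamma,\omega}) < 1$. This excludes cases (ii) and (iii) of the trichotomy and forces case (i), namely
\[
\rho({\mathscr L}^{(20)}_{\gamma,\omega}) < \rho({\mathscr L}_{\gamma,\omega}) < 1,
\]
which is exactly Theorem D for $\nu = 20$. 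There is no genuine obstacle beyond correctly checking the Z-matrix property of $A^{(20)}$ and recognizing that the auxiliary inequality needed by Theorem C is handed to us for free by the M-matrix hypothesis.
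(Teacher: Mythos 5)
Your proof is correct and follows exactly the route the paper intends: check that $\beta \ge (1-1/\alpha)a_{1,n}$ makes $A^{(20)}$ a Z-matrix, invoke Lemma \ref{lem1-10} so that $A^{(20)}$ is a nonsingular M-matrix (whence $a^{(20)}_{1,1} = 1-(a_{1,n}/\alpha+\beta)a_{n,1} > 0$ supplies the auxiliary inequality of the $\nu=20$ Theorem C for free), and then cut down the Stein--Rosenberg trichotomy to case (i) using $\rho({\mathscr L}_{\gamma,\omega}) < 1$ from Lemma \ref{lem1-5}. This is precisely the pattern the paper uses (cf.\ the proofs of Theorems \ref{thm3.2} and \ref{thm3.4}) and leaves stated without proof here, so your argument matches the paper's approach.
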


By the definition of $Q_{20}$, $a_{1,n}/\alpha + \beta < 0$. While
in the comparison theorems above we need the condition
$\beta \ge (1 - 1/\alpha)a_{1,n}$. Hence it implies $a_{1,n}<0$.

For $\alpha = 1$, the result is better than  the corresponding one given by
\cite[Theorem 9, Corollaries 10, 11]{LW042},
where the condition $a_{1,n}a_{n,1} > 0$ is unnecessary.

\subsection{Combination preconditioners}

In this subsection, when the matrix $Q$ is composed of two different combinations of
$Q_i$ and $Q_j$, we always assume $Q_i > 0$ and $Q_j > 0$. Otherwise,
the corresponding situation has been discussed in the above two subsections.

First, the matrix $Q$ is chosen as
 \begin{eqnarray*}
Q_{21} = Q_{5} + Q_{12},
 \end{eqnarray*}
 i.e.,
 \begin{eqnarray*}
Q_{21} = \left(\begin{array}{ccccc}
0      & -\beta_{1,2}a_{1,2}    & \cdots & -\beta_{1,n-1}a_{1,n-1} & -\beta_{1,n}a_{1,n}\\
0      & 0                  & \ddots & -\beta_{2,n-1}a_{2,n-1}     & -\beta_{2,n}a_{2,n}\\
\vdots & \vdots             & \ddots & \ddots & \vdots\\
0      & 0                  & \cdots & 0      & -\beta_{n-1,n}a_{n-1,n} \\
-\alpha_{1}a_{n,1} & -\alpha_{2}a_{n,2} & \cdots & -\alpha_{n-1}a_{n,n-1} & 0
\end{array}\right)
 \end{eqnarray*}
with $\alpha_i \ge 0$, $\beta_{i,j} \ge 0$, $i = 1, \cdots, n-1$, $j = i+1, \cdots, n$, and
\begin{eqnarray*}
\sum\limits_{k = 1}^{n-1}\alpha_ka_{n,k} \not = 0 \;\;
\hbox{and} \;\; \sum\limits_{i = 1}^{n-1}\sum\limits_{j=i+1}^n\beta_{i,j}a_{i,j} \not = 0,
 \end{eqnarray*}
 where for simplicity we set $r = n$ for $Q_5$.

When $\alpha_i = \beta_{i,j} = 1$, $i = 1, \cdots, n-1$, $j = i+1, \cdots, n$,
it is proposed in \cite{YZ12} for the preconditioned Gauss-Seidel method.

By Corollaries \ref{coro3-9} and \ref{coro3-10}, the following two comparison theorems are directly.

\begin{theorem}\label{thm3.77}
Suppose that $0 \le \alpha_i \le 1$, $0 \le \beta_{i,j} \le 1$,
$i = 1, \cdots, n-1$, $j = i+1, \cdots, n$ and
\begin{eqnarray} \label{eqs3.48}
\sum\limits_{k = 1}^{n-1}\alpha_{k}a_{n,k}a_{k,n} < 1, \;
\sum\limits_{k = i+1}^{n}\beta_{i,k}a_{i,k}a_{k,i} < 1, \; i = 1, \cdots, n-1.
 \end{eqnarray}
Then Theorem A is valid for $\nu = 21$.
\end{theorem}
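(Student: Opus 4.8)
The plan is to recognize $Q_{21}$ as a special case of the general preconditioner $Q_2 = (-\alpha_{i,j}a_{i,j})$ and then to invoke Corollary~\ref{coro3-9}. The first step is to read off the coefficients of $Q_{21}$ in the $Q_2$ form $q^{(2)}_{i,j} = -\alpha_{i,j}a_{i,j}$: I would set $\alpha_{i,j} = \beta_{i,j}$ for $1 \le i \le n-1$ and $j > i$ (the strictly upper triangular block inherited from $Q_{12}$), set $\alpha_{n,j} = \alpha_j$ for $j = 1,\ldots,n-1$ (the last row inherited from $Q_5$ with $r = n$), and set $\alpha_{i,j} = 0$ for every remaining off-diagonal position. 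Under this identification $Q_{21}$ is exactly a matrix of the shape $Q_2$, so the machinery attached to $Q_2$ is available.

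The second step is to verify the two hypotheses of Corollary~\ref{coro3-9}. The entrywise bound $0 \le \alpha_{i,j} \le 1$ is immediate, since each nonzero $\alpha_{i,j}$ equals some $\beta_{i,j}$ or some $\alpha_j$, and both families lie in $[0,1]$ by assumption. For the per-row condition $\sum_{k \ne i}\alpha_{i,k}a_{i,k}a_{k,i} < 1$ I would split on the row index $i$. When $i \in \{1,\ldots,n-1\}$ only the upper triangular coefficients are nonzero, so the sum reduces to $\sum_{k=i+1}^{n}\beta_{i,k}a_{i,k}a_{k,i}$, which is less than $1$ by the second inequality in (\ref{eqs3.48}). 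When $i = n$ only the last-row coefficients are nonzero, so the sum reduces to $\sum_{k=1}^{n-1}\alpha_k a_{n,k}a_{k,n}$, which is less than $1$ by the first inequality in (\ref{eqs3.48}).

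With both hypotheses confirmed, Corollary~\ref{coro3-9} applies directly and gives that Theorem A holds for $\nu = 21$. I do not expect a substantive obstacle here; the argument is purely a reduction to the already-proved $Q_2$ case. The only point deserving care is the bookkeeping that matches the two separate index families $\{\beta_{i,j}\}$ and $\{\alpha_j\}$ to the single family $\{\alpha_{i,j}\}$ of $Q_2$, and that confirms the two stated sum bounds in (\ref{eqs3.48}) are precisely the per-row diagonal sums demanded by the corollary — one bound handling the final row and the other handling each of the first $n-1$ rows.
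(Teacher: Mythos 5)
Your proposal is correct and is exactly the paper's argument: the paper derives Theorem \ref{thm3.77} directly from Corollary \ref{coro3-9} by viewing $Q_{21}=Q_5+Q_{12}$ as a special case of $Q_2$, and your identification of the coefficients (with $\alpha_{i,j}=\beta_{i,j}$ above the diagonal, $\alpha_{n,j}=\alpha_j$ in the last row, zero elsewhere) together with the row-by-row verification that the two sums in (\ref{eqs3.48}) are precisely the per-row hypotheses of that corollary is just the bookkeeping the paper leaves implicit.
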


\begin{theorem}
Suppose that $0 \le \alpha_i \le 1$, $0 \le \beta_{i,j} \le 1$,
$i = 1, \cdots, n-1$, $j = i+1, \cdots, n$.
Then Theorem B is valid for $\nu = 21$.
\end{theorem}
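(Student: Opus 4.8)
The plan is to recognize $Q_{21}$ as a particular instance of the general coefficient matrix $Q_2 = (-\alpha_{i,j}a_{i,j})$ already analysed, and then to invoke Corollary \ref{coro3-10} essentially verbatim. Concretely, I would define the coefficients of $Q_2$ by setting $\alpha_{i,j} = \beta_{i,j}$ for $i < j$, $\alpha_{n,j} = \alpha_j$ for $j = 1,\dots,n-1$, and $\alpha_{i,j} = 0$ for every remaining pair with $i > j$ and $i \neq n$. Reading off the displayed entries of $Q_{21}$ (the upper triangle carries the $-\beta_{i,j}a_{i,j}$ coming from $Q_{12}$, and the last row carries the $-\alpha_j a_{n,j}$ coming from $Q_5$ with $r=n$), one checks directly that with this identification $Q_2 = Q_{21}$, so that $A^{(21)} = P_{21}A$ coincides with $A^{(2)}$ for this choice of the $\alpha_{i,j}$.

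First I would verify that the hypotheses of Corollary \ref{coro3-10} are met. Since $0 \le \beta_{i,j} \le 1$ and $0 \le \alpha_j \le 1$ by assumption, and all other coefficients vanish, we have $0 \le \alpha_{i,j} \le 1$ for every $i \neq j$, which is exactly what Corollary \ref{coro3-10} requires. The nontriviality of $Q_{21}$ follows because the two prescribed sums $\sum_{k}\alpha_k a_{n,k}$ and $\sum_{i<j}\beta_{i,j}a_{i,j}$ are each nonzero while every term $-\alpha_{i,j}a_{i,j}$ is nonnegative, so no cancellation can occur in the combined sum $\sum_{i\neq j}\alpha_{i,j}a_{i,j}$. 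Applying Corollary \ref{coro3-10} to $Q_2 = Q_{21}$ then yields at once that Theorem B holds for $\nu = 21$, i.e. $\rho({\mathscr L}^{(21)}_{\gamma,\omega}) \le \rho({\mathscr L}_{\gamma,\omega}) < 1$ when $A$ is a nonsingular M-matrix.

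I do not expect any genuine obstacle: the whole argument is the bookkeeping reduction of the combined lower/upper preconditioner $Q_{21}$ to the single coefficient pattern $Q_2$, after which the desired conclusion is a direct citation. The one point deserving a moment of care is the sign accounting that guarantees $A^{(21)}$ is a Z-matrix, which is precisely what drives Corollary \ref{coro3-10} through inequality (\ref{eqs3.12}): for the L-matrix $A$ the cross terms $\alpha_{i,k}a_{i,k}a_{k,j}$ are nonnegative and $(1-\alpha_{i,j})a_{i,j} \le 0$ because $a_{i,j}\le 0$ and $\alpha_{i,j}\le 1$, so the off-diagonal entries of $A^{(21)}$ stay nonpositive. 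Note that, in contrast to the preceding Theorem \ref{thm3.77} for Theorem A, no analogue of condition (\ref{eqs3.48}) is needed here, since the positivity of the diagonal of $A^{(21)}$ is automatic once $A$ (hence $P_{21}A$, by Lemma \ref{lem1-10}) is known to be a nonsingular M-matrix.
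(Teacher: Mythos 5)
Your proposal is correct and takes essentially the same route as the paper: the paper obtains this theorem as an immediate consequence of Corollary \ref{coro3-10}, viewing $Q_{21} = Q_5 + Q_{12}$ as exactly the instance of $Q_2$ you describe (with $\alpha_{i,j}=\beta_{i,j}$ for $i<j$, $\alpha_{n,j}=\alpha_j$, and all other coefficients zero). Your supplementary checks on the nontriviality of $\sum_{i\neq j}\alpha_{i,j}a_{i,j}$ and on why no analogue of (\ref{eqs3.48}) is needed are accurate, and are precisely the content already built into Corollary \ref{coro3-10} and Lemma \ref{lem1-10}.
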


In this case, $\delta_{i,j}^{(2)}(\gamma)$ and $\delta_{i,j}^{(2)}(1)$ reduce
respectively to
\begin{eqnarray*}
\delta_{i,j}^{(21)}(\gamma) = \left\{\begin{array}{ll}
\sum\limits_{k = i+1}^{n} \beta_{i,k}a_{i,k}a_{k,i}, & i = j = 1, \cdots, n-1;\\
\sum\limits_{k = 1}^{n-1} \alpha_{k}a_{n,k}a_{k,n}, & i = j = n;\\
(\gamma-1)\beta_{i,j}a_{i,j} + \gamma\sum\limits_{k = i+1}^{j-1}\beta_{i,k}a_{i,k}a_{k,j},
 & \begin{array}{l} i = 1, \cdots, n-1, \\
 j = i+1,\cdots, n;  \end{array}\\
 \gamma\sum\limits_{k = i+1}^{n}\beta_{i,k}a_{i,k}a_{k,j}, &
  \begin{array}{l} i = 2,\cdots,n-1, \\
  j = 1,\cdots, i-1; \end{array} \\
 (\gamma-1)\alpha_{j}a_{n,j} + \gamma\sum\limits_{k = 1}^{j-1}\alpha_{k}a_{n,k}a_{k,j}, &
 \begin{array}{l}  i = n, \\
 j = 1,\cdots, n-1 \end{array}
\end{array}\right.
\end{eqnarray*}
and
\begin{eqnarray*}
\delta_{i,j}^{(21)}(1) = \left\{\begin{array}{ll}
 \sum\limits_{k = i+1}^n\beta_{i,k}a_{i,k}a_{k,j}, &
  i = 1,\cdots,n-1, j = 1,\cdots, i; \\
 \sum\limits_{k = i+1}^{j-1} \beta_{i,k}a_{i,k}a_{k,j}, &
 i = 1,\cdots,n-1, j = i+2,\cdots,n;\\
\sum\limits_{k = 1}^{j-1}\alpha_{k}a_{n,k}a_{k,j}, &
  i = n, j = 2,\cdots, n; \\
  0, & otherwise.
\end{array}\right.
\end{eqnarray*}

Using Corollary \ref{coro3-11}, we prove Stein-Rosenberg type comparison theorem.

\begin{theorem}\label{thm3.79}
Suppose that (\ref{eqs3.48}) holds.
Then Theorem C is valid for $\nu = 21$,
provided one of the following conditions is satisfied:

\begin{itemize}
\item[(i)] $0 \le \gamma < 1$ and $0 \le \alpha_i \lesssim 1$, $0 \le \beta_{i,j} \lesssim 1$,
$i = 1, \cdots, n-1$, $j = i+1, \cdots, n$.

\item[(ii)] $\gamma = 1$ and $0 \le \alpha_i \lesssim 1$, $0 \le \beta_{i,j} \lesssim 1$,
$i = 1, \cdots, n-1$, $j = i+1, \cdots, n$. And one of the following conditions holds:

\begin{itemize}
\item[($ii_1$)]
There exist $i\in\{1, \cdots, n-1\}$, $j\in\{1, \cdots, i\}$ and
$k\in\{i+1, \cdots, n\}$ such that
$\beta_{i,k}a_{i,k}a_{k,j} > 0$.

\item[($ii_2$)]
There exist $i\in\{1, \cdots, n-1\}$, $j\in\{i+2, \cdots, n\}$ and
$k\in\{i+1, \cdots, j-1\}$ such that
$\beta_{i,k}a_{i,k}a_{k,j} > 0$.

\item[($ii_3$)] There exist $j\in\{2, \cdots, n\}$ and
$k\in\{1, \cdots, j-1\}$ such that $\alpha_{k}a_{n,k}a_{k,j}$ $ > 0$.

\item[($ii_4$)]
There exists $k\in\{1, \cdots, n-1\}$ such that
$a_{k,k+1} < 0$ and $\beta_{k,k+1} > 0$.

\item[($ii_5$)] $a_{n,1} < 0$ and $\alpha_1 > 0$.

\item[($ii_6$)]
$a_{k,1} < 0$, $k = 2, \cdots, n$.

\item[($ii_7$)]
$a_{n,1} < 0$ and $a_{k,n} < 0$, $k = 2, \cdots, n-1$.

\item[($ii_8$)] $a_{k,n} < 0$, $k = 1, \cdots, n-1$.

\item[($ii_9$)] $a_{k,k+1} < 0$, $k = 1, \cdots, n-1$.
\end{itemize}

\item[(iii)] $0 \le \gamma < 1$ and $0 \le \alpha_i \le 1$, $0 \le \beta_{i,j} \le 1$,
$i = 1, \cdots, n-1$, $j = i+1, \cdots, n$. And for each $i\in\{1, \cdots, n-1\}$,
there exists $j(i)\in\{i+1, \cdots, n\}$ such that $\beta_{i,j(i)}a_{i,j(i)} < 0$.

\item[(iv)] $\gamma = 1$ and $0 \le \alpha_i \le 1$, $0 \le \beta_{i,j} \le 1$,
$i = 1, \cdots, n-1$, $j = i+1, \cdots, n$. For each $i\in\{2, \cdots, n-1\}$,
one of the following conditions holds:

\begin{itemize}
\item[($iv_1$)] There exist $j(i)\in\{1, \cdots, i\}$ and
$k(i)\in\{i+1, \cdots, n\}$ such that\\
$\beta_{i,k(i)}a_{i,k(i)}a_{k(i),j(i)} > 0$.

\item[($iv_2$)] There exist $j(i)\in\{i+2, \cdots, n\}$ and $k(i)\in\{i+1, \cdots, j-1\}$ such that
$\beta_{i,k(i)}a_{i,k(i)}a_{k(i),j(i)} > 0$.

\item[($iv_3$)] $a_{i,i+1} < 0$ and $\beta_{i,i+1} > 0$.
\end{itemize}

 At the same time, one of the following conditions also holds:

\begin{itemize}
\item[($iv^a$)]
There exist $j\in\{2, \cdots, n\}$ and $k\in\{1, \cdots, j-1\}$ such that
$\alpha_{k}a_{n,k}a_{k,j}$ $ > 0$.

\item[($iv^b$)] $a_{n,1} < 0$ and $\alpha_{1} > 0$.

\item[($iv^c$)] There exists $j\in\{2, \cdots, n-1\}$ such that
\begin{eqnarray*}
(1 - \alpha_{j})a_{n,j} - \sum\limits_{{k = 1}\atop{k\not = j}}^{n-1}\alpha_{k}a_{n,k}a_{k,j} < 0.
\end{eqnarray*}

\item[($iv^d$)] One of the conditions ($iv_1$)-($iv_3$) holds for $i=1$ and
\begin{eqnarray*}
(1 - \alpha_{1})a_{n,1} - \sum\limits_{k = 2}^{n-1}\alpha_{k}a_{n,k}a_{k,1} < 0.
\end{eqnarray*}

\item[($iv^e$)] One of the conditions ($iv_1$)-($iv_3$) holds for $i=1$ and $a_{n,1} < 0$.
\end{itemize}
\end{itemize}
\end{theorem}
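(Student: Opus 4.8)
The plan is to recognize $Q_{21}$ as a particular instance of the general construction $Q_2$ of Subsection 3.1 and then invoke Corollary \ref{coro3-11}, so that the whole statement reduces to matching each listed hypothesis with a condition of Theorem \ref{thm3.7}. Reading off the entries, $Q_{21}$ corresponds to the choice $\alpha_{i,j}=\beta_{i,j}$ for $i<j$, $\alpha_{n,j}=\alpha_j$ for $j<n$, and $\alpha_{i,j}=0$ in every other off-diagonal position; in particular all subdiagonal entries except those in the last row vanish, and $\alpha_{n,n}=0$. Under this dictionary the two inequalities in (\ref{eqs3.48}) are exactly the requirement $\sum_{k\neq i}\alpha_{i,k}a_{i,k}a_{k,i}<1$ for $i=1,\dots,n$ that opens Corollary \ref{coro3-11}, so that running hypothesis is discharged once and for all. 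The precomputed formulas for $\delta_{i,j}^{(21)}(\gamma)$ and $\delta_{i,j}^{(21)}(1)$ pin down precisely the positions at which $\delta_{i,j}^{(2)}(\gamma)$ can be positive, and I would use them as the bookkeeping device throughout the case analysis.

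For the cases with $0\le\gamma<1$ the reduction is immediate. Case (i) is literally condition (i) of Corollary \ref{coro3-11} restricted to $0\le\gamma<1$, once $0\le\alpha_i\lesssim1$ and $0\le\beta_{i,j}\lesssim1$ are read as the bounds $0\le\alpha_{i,j}\lesssim1$. Case (iii) is condition (ii) of Corollary \ref{coro3-11} routed through part (iii) of Theorem \ref{thm3.7}: the requirement that for each $i\in\{1,\dots,n-1\}$ there be $j(i)>i$ with $\beta_{i,j(i)}a_{i,j(i)}<0$ is just the statement that $\alpha_{i,j(i)}a_{i,j(i)}<0$, since for $i<n$ the only nonzero $\alpha_{i,\cdot}$ lie in the upper triangle.

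For $\gamma=1$, case (ii), I would verify that each of ($ii_1$)--($ii_9$) here forces one of the conditions ($ii_1$)--($ii_9$) of Theorem \ref{thm3.7}. The clauses ($ii_1$)--($ii_3$) name directly a positive summand appearing in the displayed expression for $\delta_{i,j}^{(21)}(1)$ (an entry $\beta_{i,k}a_{i,k}a_{k,j}$ of the upper block, or an entry $\alpha_k a_{n,k}a_{k,j}$ of the last row), hence yield ($ii_1$) of Theorem \ref{thm3.7}; the clauses ($ii_4$), ($ii_5$), ($ii_9$) coincide after substitution with ($ii_3$), ($ii_2$), ($ii_9$) of Theorem \ref{thm3.7}. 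The remaining ($ii_6$)--($ii_8$) are handled by the irreducibility device used repeatedly in the paper (as in the proofs of Theorems \ref{thm3.11} and \ref{thm3.15}): the nondegeneracy of $Q_{21}$ guarantees some $\beta_{i_0,j_0}>0$ with $i_0<j_0$, so $\beta_{i_0,j_0}a_{i_0,j_0}<0$, and then the prescribed sign of a suitable $a_{j_0,\cdot}$ makes the product $\beta_{i_0,j_0}a_{i_0,j_0}a_{j_0,\cdot}>0$, i.e.\ produces a positive entry of $\delta^{(21)}(1)$ and hence again ($ii_1$) of Theorem \ref{thm3.7}. Note that, because $Q_{21}$ has no nonzero subdiagonal entries off the last row, the conditions of Theorem \ref{thm3.7} referencing such entries are automatically vacuous, which is exactly why only these nine survive.

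Case (iv) with $\gamma=1$ is the hardest and must be matched to part (iv) of Theorem \ref{thm3.7} via condition (ii) of Corollary \ref{coro3-11}. The interior clauses ($iv_1$)--($iv_3$), ranging over $i\in\{2,\dots,n-1\}$, supply the per-row positivity requirement of Theorem \ref{thm3.7} specialized to the upper block, while the supplementary clauses ($iv^a$)--($iv^e$) furnish the last-row condition together with, in ($iv^d$)/($iv^e$), the coupling with $i=1$; the inequality occurring in ($iv^c$) and ($iv^d$) is (\ref{eqs3.6}) rewritten through $q^{(1)}_{n,k}=-\alpha_k a_{n,k}$. The main obstacle I anticipate is precisely the bookkeeping in this case: one must confirm that the positive entries of $\delta^{(21)}(1)$ occupy the second through $(n-1)$st rows, and that ($iv^a$)--($iv^e$) force $m^{(21)}_{n,1}<0$, so that the last component of $[M^{(21)}_{1,1}]^{-1}\Phi(1,1)x$ is positive, exactly by the mechanism in the proof of Theorem \ref{thm3.3}. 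Once these sign and position checks are settled, Corollary \ref{coro3-11} delivers the three mutually exclusive relations and the proof is complete.
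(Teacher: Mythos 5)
Your overall strategy is exactly the paper's: fold $Q_{21}$ into the general framework $Q_2$, invoke Corollary \ref{coro3-11}, and match each listed hypothesis to a condition of Theorem \ref{thm3.7}; your treatment of (i), (iii), ($ii_1$)--($ii_7$) and of case (iv) agrees with the paper's proof (which routes ($ii_6$), ($ii_7$) through the proof of Theorem \ref{thm3.47} by the same device you describe). However, two of your mappings are flawed. First, you claim that ($ii_9$) here ``coincides after substitution with ($ii_9$) of Theorem \ref{thm3.7}''; it does not, because ($ii_9$) of Theorem \ref{thm3.7} requires $a_{n,1}<0$ \emph{in addition to} $a_{k,k+1}<0$, $k=1,\cdots,n-1$, whereas the present ($ii_9$) assumes nothing about $a_{n,1}$. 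Second, your $\beta$-based device does not cover ($ii_8$) in the corner case where the only index pair with $\beta_{i_0,j_0}a_{i_0,j_0}<0$ has $j_0=n$: the candidate positive product $\beta_{i_0,n}a_{i_0,n}a_{n,j}$ occurs in $\delta^{(21)}_{i_0,j}(1)$ only for $j\le i_0$, and ($ii_8$) gives no sign information on row $n$ of $A$ (irreducibility only guarantees some $a_{n,j^*}<0$ with $j^*<n$, not $j^*\le i_0$), so no positive entry of $\delta^{(21)}(1)$ is produced.

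Both defects are repaired by the device the paper actually uses for ($ii_8$) and ($ii_9$), taken from the proof of Theorem \ref{thm3.19}: the nondegeneracy of the \emph{last-row} part of $Q_{21}$, $\sum_{k=1}^{n-1}\alpha_k a_{n,k}\neq 0$, yields $k_0$ with $\alpha_{k_0}a_{n,k_0}<0$; then ($ii_8$) gives $\alpha_{k_0}a_{n,k_0}a_{k_0,n}>0$ and ($ii_9$) gives $\alpha_{k_0}a_{n,k_0}a_{k_0,k_0+1}>0$, i.e.\ in either case condition ($ii_3$) of the present theorem holds, which you have already shown produces a positive entry in the last row of $\delta^{(21)}(1)$ and hence ($ii_1$) of Theorem \ref{thm3.7}. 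With these two steps replaced (use the $\alpha$-part nondegeneracy, not the $\beta$-part, for ($ii_8$) and ($ii_9$)), your argument is complete and identical in substance to the paper's.
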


\begin{proof}
When ($i$) or ($iii$) holds, then the condition ($i$) or ($iii$) of Theorem \ref{thm3.7} is satisfied.

When one of ($ii_1$), ($ii_2$) and ($ii_3$) holds, then $\delta_{i,j}^{(21)}(1) > 0$ or
$\delta_{n,j}^{(21)}(1) > 0$, which implies that ($ii_1$) in Theorem \ref{thm3.7} is satisfied.

When ($ii_4$) or ($ii_5$) holds, then ($ii_3$) or ($ii_2$) in Theorem \ref{thm3.7} is satisfied.

When one of ($ii_6$) and ($ii_7$) holds, then by the proof of Theorem \ref{thm3.47} it
follows that ($ii_1$) is satisfied.

Similarly, when one of ($ii_8$) and ($ii_9$) holds, then by the proof of Theorem \ref{thm3.19} it
follows that ($ii_3$) is satisfied.

Exactly the same, we can prove that if ($iv_4$) holds, then
the condition ($iv_4$) in Theorem \ref{thm3.7} is satisfied.

By Corollary \ref{coro3-11} the proof is complete.
\end{proof}

\begin{theorem}\label{thm3.80}
Theorem D is valid for $\nu = 21$,
provided one of the following conditions is satisfied:

\begin{itemize}
\item[(i)] One of the conditions ($i$)-($iv$) of Theorem \ref{thm3.79} holds.

\item[(ii)] $0 \le \alpha_i \le 1$, $0 \le \beta_{i,j} \le 1$,
$i = 1, \cdots, n-1$, $j = i+1, \cdots, n$. For $j = 1,\cdots,n-1$,
\begin{eqnarray*}
 \alpha_{j}a_{n,j} + \sum\limits_{{k = 1}\atop{k\not = j}}^{n-1} \alpha_{k}a_{n,k}a_{k,j}
\ge 0.
\end{eqnarray*}
And one of the following conditions holds:

\begin{itemize}
\item[($ii_1$)] There exists $i_0\in\{1,\cdots,n-1\}$ such that
$\sum\limits_{k = i_0+1}^{n} \beta_{i_0,k}a_{i_0,k}a_{k,i_0} > 0$.

\item[($ii_2$)]
$\sum_{k = 1}^{n-1} \alpha_{k}a_{n,k}a_{k,n} > 0$.

\item[($ii_3$)] $\gamma > 0$ and there exist $i_0\in\{2,\cdots,n-1\}$ and $j_0\in\{1,\cdots,i_0-1\}$ such that
$\sum_{k = i_0+1}^n\beta_{i_0,k}a_{i_0,k}a_{k,j_0} > 0$.

\item[($ii_4$)] $\gamma > 0$ and there exists $j_0\in\{1,\cdots,n-1\}$ such that
$\alpha_{j_0}a_{n,j_0} + $\\ $\sum_{k = 1,k\not =j_0}^{n-1}\alpha_{k}a_{n,k}a_{k,j_0} > 0$.
\end{itemize}\end{itemize}
\end{theorem}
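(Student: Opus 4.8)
The plan is to obtain Theorem~D for $\nu = 21$ exactly as Theorem~\ref{thm3.8} was obtained from Corollary~\ref{coro3-4}, by observing that $Q_{21} = Q_5 + Q_{12}$ is a particular instance of the matrix $Q_2$: its entries are $-\alpha_{i,j}a_{i,j}$ for the pattern $\alpha_{n,j} = \alpha_j$ ($j<n$), $\alpha_{i,j} = \beta_{i,j}$ ($i<j$), and $\alpha_{i,j} = 0$ otherwise. Consequently every condition in the statement is merely the specialization of the corresponding condition of Corollary~\ref{coro3-11} or of part~($ii$) of Theorem~\ref{thm3.8} to this block structure, and the whole argument reduces to Corollary~\ref{coro3-12}. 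First I would record the common preliminaries: since $A$ is an irreducible nonsingular M-matrix and $0 \le \alpha_i,\beta_{i,j} \le 1$, Lemma~\ref{lem1-10} shows $P_{21}A$ is again a nonsingular M-matrix, so both AOR splittings are weak regular, and by Lemma~\ref{lem1-5} the method for $A$ satisfies $\rho({\mathscr L}_{\gamma,\omega}) < 1$. Thus only the strictness of $\rho({\mathscr L}^{(21)}_{\gamma,\omega}) < \rho({\mathscr L}_{\gamma,\omega})$ needs to be produced.

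For condition~($i$) the reduction is immediate. When one of ($i$)--($iv$) of Theorem~\ref{thm3.79} holds, Theorem~C is valid for $\nu = 21$, giving the mutually exclusive trichotomy. Because $\rho({\mathscr L}_{\gamma,\omega}) < 1$, only the first alternative of that trichotomy can occur, which is precisely $\rho({\mathscr L}^{(21)}_{\gamma,\omega}) < \rho({\mathscr L}_{\gamma,\omega}) < 1$. No further computation is required in this case.

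For condition~($ii$) I would invoke the strict comparison Lemma~\ref{lem1-13}, following the argument of part~($ii$) of Theorem~\ref{thm3.4} verbatim. By Lemma~\ref{lem3-1} it suffices to treat $\gamma \le \omega$, where $N_{\gamma,\omega} > 0$ and $N^{(21)}_{\gamma,\omega} \ge 0$. By Lemma~\ref{lem1-4} we have $A^{-1} \gg 0$, by Lemma~\ref{lem1-9} $(P_{21}A)^{-1} > 0$, and from $A^{-1} - (P_{21}A)^{-1} = (P_{21}A)^{-1}Q_{21} \ge 0$ one gets $A^{-1} \ge (P_{21}A)^{-1}$. By Lemma~\ref{lem1-8} the Perron eigenvector $x$ of ${\mathscr L}_{\gamma,\omega}$ satisfies $x \gg 0$ with $\rho > 0$, so $Ax = (1/\rho - 1)N_{\gamma,\omega}x > 0$ forces $M^{(21)}_{\gamma,\omega}x = P_{21}Ax + N^{(21)}_{\gamma,\omega}x > 0$. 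All hypotheses of Lemma~\ref{lem1-13} are then met once the strict inequality $M_{\gamma,\omega}x > M^{(21)}_{\gamma,\omega}x$ is established.

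The main obstacle, and the heart of the verification, is precisely that last strict inequality. The row-sum requirement $\alpha_j a_{n,j} + \sum_{k \ne j}\alpha_k a_{n,k}a_{k,j} \ge 0$ for $j = 1,\cdots,n-1$ guarantees that the strictly-lower-triangular part of $M_{\gamma,\omega} - M^{(21)}_{\gamma,\omega}$ is nonnegative, so $M_{\gamma,\omega}x \ge M^{(21)}_{\gamma,\omega}x$; each of ($ii_1$)--($ii_4$) then isolates a strictly positive entry. Conditions ($ii_1$) and ($ii_2$) force a diagonal drop $a^{(21)}_{i_0,i_0} < 1$ (coming respectively from the $\beta$-block rows $i<n$ and from the $\alpha$-block row $n$), which is active already at $\gamma = 0$; conditions ($ii_3$) and ($ii_4$) require $\gamma > 0$ because they feed off a strictly positive strictly-lower-triangular contribution scaled by $\gamma$. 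In every case this reproduces exactly conditions ($ii_1$)--($ii_2$) of part~($ii$) of Theorem~\ref{thm3.8} under the $Q_5/Q_{12}$ split, whence $M_{\gamma,\omega}x > M^{(21)}_{\gamma,\omega}x > 0$ and Lemma~\ref{lem1-13} yields $\rho({\mathscr L}^{(21)}_{\gamma,\omega}) < \rho({\mathscr L}_{\gamma,\omega}) < 1$.
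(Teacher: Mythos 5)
Your proposal is correct and follows essentially the same route as the paper: part ($i$) is dispatched by Theorem C for $\nu = 21$ together with $\rho({\mathscr L}_{\gamma,\omega}) < 1$, and part ($ii$) is exactly the paper's reduction — computing the entries of $A^{(21)}$ and matching the hypotheses ($ii_1$)--($ii_4$) to condition ($ii$) of Theorem \ref{thm3.8} via Corollary \ref{coro3-12}, whose content is precisely the Lemma \ref{lem1-13} argument you unwound explicitly. The paper merely cites that corollary chain where you re-derive it.
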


\begin{proof}
We just need to prove ($ii$). It is easy to obtain that
\begin{eqnarray*}
a_{i,j}^{(21)} = \left\{\begin{array}{ll}
1- \sum\limits_{k = i+1}^{n}\alpha_{i,k}a_{i,k}a_{k,i}, &
  i = j = 1,\cdots,n-1, \\
  1- \sum\limits_{k = 1}^{n-1}\alpha_{k}a_{n,k}a_{k,n}, &
  i = j = n, \\
 a_{i,j} - \sum\limits_{k = i+1}^{n}\beta_{k}a_{i,k}a_{k,j}, &
  \begin{array}{l} i = 2,\cdots,n-1, \\
  j = 1,\cdots, i-1, \end{array}\\
  (1-\alpha_{j})a_{n,j} - \sum\limits_{k = 1}^{n-1}\alpha_{k}a_{n,k}a_{k,j}, &
  i = n,  j = 1,\cdots, n-1.
\end{array}\right.
\end{eqnarray*}
By Corollary \ref{coro3-12} we can derive ($ii_1$)-($ii_4$).
\end{proof}

As a special case of $Q_{21}$ for $\alpha_k = 0$, $k = 2, \cdots, n-1$,
it gets that
 \begin{eqnarray*}
Q_{22} = Q_{7} + Q_{12},
 \end{eqnarray*}
 i.e.,
 \begin{eqnarray*}
Q_{22} = \left(\begin{array}{ccccc}
0 & -\alpha_{1,2}a_{1,2} & -\alpha_{1,3}a_{1,3} & \cdots & -\alpha_{1,n}a_{1,n}\\
0 & 0 & -\alpha_{2,3}a_{2,3} & \cdots & -\alpha_{2,n}a_{2,n}\\
\vdots & \vdots & \ddots & \ddots & \vdots\\
0 & 0 & 0 & \cdots & -\alpha_{n-1,n}a_{n-1,n} \\
-\alpha a_{n,1} & 0 & 0 & \cdots & 0
\end{array}\right)
 \end{eqnarray*}
with $a_{n,1} < 0$, $\alpha > 0$, $\beta_{i,j} \ge 0$, $i = 1, \cdots, n-1$, $j = i+1, \cdots, n$, and
$\sum_{i = 1}^{n-1}\sum_{j=i+1}^n\beta_{i,j}a_{i,j}$ $ \not = 0$.

In this case, ($ii_5$) and ($iv^b$) in Theorem \ref{thm3.79} are satisfied. While,
($ii$) of Theorem \ref{thm3.80} can be not satisfied. Hence,
form Theorems \ref{thm3.77}-\ref{thm3.80}, we can derive the following theorems, directly.

\begin{theorem}
Suppose that $0 < \alpha \le 1$, $0 \le \beta_{i,j} \le 1$,
$i = 1, \cdots, n-1$, $j = i+1, \cdots, n$, and
\begin{eqnarray} \label{eqs3.49}
\alpha a_{n,1}a_{1,n} < 1, \;
\sum\limits_{k = i+1}^{n}\beta_{i,k}a_{i,k}a_{k,i} < 1, \; i = 1, \cdots, n-1.
 \end{eqnarray}
Then Theorem A is valid for $\nu = 22$.
\end{theorem}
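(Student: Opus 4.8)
The plan is to recognize $Q_{22}$ as nothing more than a specialization of the combination preconditioner $Q_{21} = Q_5 + Q_{12}$ already treated in Theorem \ref{thm3.77}, and then to invoke that theorem directly. Concretely, $Q_{22}$ is obtained from $Q_{21}$ by taking $\alpha_1 = \alpha$ and $\alpha_k = 0$ for $k = 2, \cdots, n-1$ in the $Q_5$ (here $r=n$) part, while retaining the same strictly upper triangular coefficients $\beta_{i,j}$ in the $Q_{12}$ part. So the first thing I would do is confirm the coefficient bounds demanded by Theorem \ref{thm3.77}: the hypothesis $0 < \alpha \le 1$ yields $0 \le \alpha_1 \le 1$, the vanishing coefficients satisfy $0 \le \alpha_k \le 1$ trivially for $k = 2, \cdots, n-1$, and $0 \le \beta_{i,j} \le 1$ is assumed outright.

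Next I would check the two summation conditions in (\ref{eqs3.48}). Since all $\alpha_k$ with $2 \le k \le n-1$ vanish, the first sum collapses to its single surviving term,
\begin{eqnarray*}
\sum\limits_{k = 1}^{n-1}\alpha_{k}a_{n,k}a_{k,n} = \alpha a_{n,1}a_{1,n},
\end{eqnarray*}
which is $< 1$ by the first inequality of (\ref{eqs3.49}); the second requirement $\sum_{k=i+1}^{n}\beta_{i,k}a_{i,k}a_{k,i} < 1$ for $i = 1, \cdots, n-1$ is precisely the second inequality of (\ref{eqs3.49}). Thus every hypothesis of Theorem \ref{thm3.77} is satisfied for this choice of $Q$, and Theorem A follows immediately for $\nu = 22$.

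There is no genuine obstacle in this argument; it is a pure bookkeeping verification. The only point requiring a little care is the identification that the lower-left row of $Q_{21}$, namely the entries $-\alpha_k a_{n,k}$ for $k = 1, \cdots, n-1$, degenerates under the stated choice to the single entry $-\alpha a_{n,1}$ in position $(n,1)$, matching the definition of $Q_{22}$, and that the condition $a_{n,1} < 0$ together with $\alpha > 0$ (built into the definition of $Q_{22}$) keeps this entry nonnegative so that $Q_{22} \ge 0$ remains an admissible preconditioner of type $Q_1$. Once this correspondence is made explicit, the conclusion is an immediate corollary of Theorem \ref{thm3.77}.
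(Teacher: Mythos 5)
Your proposal is correct and follows exactly the route the paper takes: the paper states that the $\nu = 22$ results follow directly from Theorems for $\nu = 21$ (in particular Theorem A for $\nu=22$ from the $Q_{21}$ result), since $Q_{22}$ is the specialization $\alpha_1 = \alpha$, $\alpha_k = 0$ for $k = 2, \cdots, n-1$, under which the sum $\sum_{k=1}^{n-1}\alpha_k a_{n,k}a_{k,n}$ collapses to $\alpha a_{n,1}a_{1,n}$. Your explicit verification of the coefficient bounds, the collapsed summation condition, and the admissibility of $Q_{22}$ (nonzero $(n,1)$ entry from $a_{n,1} < 0$, $\alpha > 0$) is precisely the bookkeeping the paper leaves implicit.
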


\begin{theorem}
Suppose that $0 < \alpha \le 1$, $0 \le \beta_{i,j} \le 1$,
$i = 1, \cdots, n-1$, $j = i+1, \cdots, n$.
Then Theorem B is valid for $\nu = 22$.
\end{theorem}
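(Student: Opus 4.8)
The plan is to recognize $Q_{22}$ as a special instance of the already-treated preconditioner $Q_2=(-\alpha_{i,j}a_{i,j})$ and to invoke Corollary \ref{coro3-10}. Concretely, $Q_{22}$ arises from $Q_2$ by choosing $\alpha_{n,1}=\alpha$, $\alpha_{i,j}=\beta_{i,j}$ for $1\le i<j\le n$, and $\alpha_{i,j}=0$ for every remaining index pair; equivalently it is the special case of $Q_{21}$ with $\alpha_1=\alpha$ and $\alpha_k=0$ for $k=2,\dots,n-1$. Since $a_{n,1}<0$ and $\alpha>0$ give $q^{(22)}_{n,1}=-\alpha a_{n,1}>0$, the matrix $Q_{22}>0$ is admissible, so $P_{22}=I+Q_{22}$ is a legitimate preconditioner of the required form.

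Next I would check that these reduced coefficients obey the single hypothesis $0\le\alpha_{i,j}\le1$ of Corollary \ref{coro3-10}. The entry $\alpha_{n,1}=\alpha$ lies in $[0,1]$ because $0<\alpha\le1$; the entries $\alpha_{i,j}=\beta_{i,j}$ with $i<j$ lie in $[0,1]$ by the assumption $0\le\beta_{i,j}\le1$; and all the other coefficients are zero, hence trivially in $[0,1]$. Thus every off-diagonal coefficient of $Q_{22}$, read as a $Q_2$, meets the bound demanded by Corollary \ref{coro3-10}.

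With the hypotheses verified I would conclude directly: by Corollary \ref{coro3-10}, Theorem B holds for $\nu=22$, that is $\rho({\mathscr L}^{(22)}_{\gamma,\omega})\le\rho({\mathscr L}_{\gamma,\omega})<1$ for all $0<\omega\le1$ and $0\le\gamma\le1$. The reduction to the full $(\gamma,\omega)$ range is already built into the chain that proves Corollary \ref{coro3-10} (through Theorem \ref{thm3.2} and Lemma \ref{lem3-1}), so no separate extrapolation step is needed.

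The one piece of substantive content underneath this reduction --- entirely borrowed, and hence not a genuine obstacle --- is the observation that $0\le\alpha_{i,j}\le1$ forces $a^{(22)}_{i,j}=(1-\alpha_{i,j})a_{i,j}-\sum_{k\ne i,j}\alpha_{i,k}a_{i,k}a_{k,j}\le0$ for $i\ne j$, so that $P_{22}A$ is a Z-matrix; Lemma \ref{lem1-10} then upgrades $P_{22}A$ to a nonsingular M-matrix and the comparison follows exactly as in Theorem \ref{thm3.2}. Because the parameter constraints of $Q_{22}$ form a subset of those of $Q_{21}$ (equivalently $Q_2$), the corresponding comparison theorem transfers verbatim, and there is no real difficulty to overcome.
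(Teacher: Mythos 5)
Your proof is correct and follows essentially the same route as the paper: the paper obtains this result by specializing Theorem B for $\nu = 21$ (itself proved by reading $Q_{21}$ as a $Q_2$ and invoking Corollary \ref{coro3-10}), and you simply collapse that chain by reading $Q_{22}$ directly as a $Q_2$ with coefficients $\alpha_{n,1}=\alpha$, $\alpha_{i,j}=\beta_{i,j}$ for $i<j$, and zero otherwise, then applying Corollary \ref{coro3-10}. Your verification that all coefficients lie in $[0,1]$ (and hence that $P_{22}A$ is a Z-matrix, so Lemma \ref{lem1-10} and the argument of Theorem \ref{thm3.2} apply) is exactly the content the paper's own derivation rests on.
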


\begin{theorem}\label{thm3.83}
Suppose that (\ref{eqs3.49}) holds.
Then Theorem C is valid for $\nu = 22$,
provided one of the following conditions is satisfied:

\begin{itemize}
\item[(i)]
$0 < \alpha \lesssim 1$, $0 \le \beta_{i,j} \lesssim 1$,
 $i = 1, \cdots, n-1$, $j = i+1, \cdots, n$.

\item[(ii)] $0 \le \gamma < 1$ and $0 < \alpha \le 1$, $0 \le \beta_{i,j} \le 1$,
$i = 1, \cdots, n-1$, $j = i+1, \cdots, n$. And for each $i\in\{1, \cdots, n-1\}$,
there exists $j(i)\in\{i+1, \cdots, n\}$ such that $\beta_{i,j(i)}a_{i,j(i)} < 0$.

\item[(iii)] $\gamma = 1$ and $0 < \alpha \le 1$, $0 \le \beta_{i,j} \le 1$,
$i = 1, \cdots, n-1$, $j = i+1, \cdots, n$. For each $i\in\{2, \cdots, n-1\}$,
one of the following conditions holds:

\begin{itemize}
\item[($iii_1$)] There exist $j(i)\in\{1, \cdots, i\}$ and
$k(i)\in\{i+1, \cdots, n\}$ such that\\
$\beta_{i,k(i)}a_{i,k(i)}a_{k(i),j(i)} > 0$.

\item[($iii_2$)] There exist $j(i)\in\{i+2, \cdots, n\}$ and $k(i)\in\{i+1, \cdots, j-1\}$ such that
$\beta_{i,k(i)}a_{i,k(i)}a_{k(i),j(i)} > 0$.

\item[($iii_3$)] $a_{i,i+1} < 0$ and $\beta_{i,i+1} > 0$.
\end{itemize}
\end{itemize}
\end{theorem}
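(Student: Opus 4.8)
The plan is to derive Theorem C for $\nu = 22$ as a direct specialization of the combination result Theorem \ref{thm3.79}, using that $Q_{22}$ is exactly $Q_{21}$ with the bottom-row multipliers $\alpha_2, \cdots, \alpha_{n-1}$ set to zero and $\alpha_1 = \alpha$. Under this identification every quantity $\delta_{i,j}^{(21)}(\gamma)$ and $\delta_{i,j}^{(21)}(1)$ collapses to its $Q_{22}$ form, and the hypothesis $\sum_{k=1}^{n-1}\alpha_k a_{n,k}a_{k,n} < 1$ appearing in (\ref{eqs3.48}) reduces to $\alpha a_{n,1}a_{1,n} < 1$, which is precisely the first inequality of (\ref{eqs3.49}); the second inequality is unchanged. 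Hence the standing hypothesis (\ref{eqs3.49}) of the present theorem implies (\ref{eqs3.48}) after specialization, and it remains only to check that each of the alternatives (i)--(iii) here triggers one of the alternatives (i)--(iv) of Theorem \ref{thm3.79}.

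First I would treat condition (i). For $0 \le \gamma < 1$ the requirements $0 < \alpha \lesssim 1$ and $0 \le \beta_{i,j} \lesssim 1$ are exactly those of Theorem \ref{thm3.79}(i). For $\gamma = 1$ I would invoke Theorem \ref{thm3.79}(ii) with the same $\lesssim$ conditions; the extra obligation of that branch, namely that one of ($ii_1$)--($ii_9$) hold, is automatically met, because $Q_{22}$ carries the standing signs $a_{n,1} < 0$ and $\alpha > 0$ (that is, $\alpha_1 > 0$ in the $Q_{21}$ notation), which is exactly condition ($ii_5$). This is the reason condition (i) here requires no additional per-row assumption.

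Next I would match condition (ii) verbatim with Theorem \ref{thm3.79}(iii): $0 \le \gamma < 1$, $0 < \alpha \le 1$, $0 \le \beta_{i,j} \le 1$, and the requirement that for each $i$ there be some $j(i)$ with $\beta_{i,j(i)}a_{i,j(i)} < 0$, coincide with the hypotheses of that branch. Finally, for condition (iii) with $\gamma = 1$ I would apply Theorem \ref{thm3.79}(iv): the per-row alternatives ($iii_1$)--($iii_3$) are identical to ($iv_1$)--($iv_3$) of Theorem \ref{thm3.79}, while the separate ``at the same time'' clause is discharged by ($iv^b$), which again holds at once from $a_{n,1} < 0$ and $\alpha > 0$. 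Invoking Theorem \ref{thm3.79} then yields the conclusion in each case.

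The main obstacle is bookkeeping rather than substance: I must verify that the substitution $\alpha_1 \mapsto \alpha$, $\alpha_k \mapsto 0$ for $k \ge 2$ is carried consistently through the formulas for $\delta_{i,j}^{(21)}$ and through (\ref{eqs3.48}), and confirm that zeroing the bottom-row coefficients does not inadvertently block the branch of Theorem \ref{thm3.79} being used. The only genuinely conceptual point is the observation that the sign constraint $a_{n,1} < 0$ baked into the definition of $Q_{22}$ forces ($ii_5$) and ($iv^b$), which is precisely what permits the clean removal of the bottom-row alternatives present in the general $Q_{21}$ statement.
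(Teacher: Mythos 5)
Your proposal is correct and follows exactly the paper's route: the paper also treats $Q_{22}$ as the specialization of $Q_{21}$ with $\alpha_1=\alpha$, $\alpha_k=0$ for $k\ge 2$, notes that (\ref{eqs3.49}) is then just (\ref{eqs3.48}), and observes that $a_{n,1}<0$, $\alpha>0$ force conditions ($ii_5$) and ($iv^b$) of Theorem \ref{thm3.79}, so the result follows directly from that theorem. Your write-up merely makes explicit the case split ($\gamma<1$ via branch ($i$), $\gamma=1$ via branch ($ii$)) that the paper leaves implicit in the word ``directly.''
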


\begin{theorem}
Theorem D is valid for $\nu = 22$,
provided one of the conditions ($i$), ($ii$) and ($iii$) of Theorem \ref{thm3.83} is satisfied.
\end{theorem}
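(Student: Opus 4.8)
The plan is to derive Theorem D for $\nu = 22$ from the Stein--Rosenberg trichotomy already established in Theorem~\ref{thm3.83} (that is, Theorem C for $\nu=22$), combined with the convergence of the unpreconditioned AOR splitting that is automatic whenever $A$ is a nonsingular M-matrix. The guiding observation is that every irreducible nonsingular M-matrix is in particular an irreducible L-matrix, so all the hypotheses of Theorem~\ref{thm3.83} are available in the present setting; moreover, none of its conditions (i), (ii), (iii) invokes the M-matrix property itself, as they only constrain the entries of $A$ together with the parameters $\alpha,\beta_{i,j}$ (and the range of $\gamma,\omega$). Hence the condition assumed in the statement feeds verbatim into Theorem~\ref{thm3.83}.

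First I would note that, $A$ being an irreducible nonsingular M-matrix with unit diagonal, it is an irreducible L-matrix, so Theorem C applies to it. By the assumed condition (i), (ii), or (iii) of Theorem~\ref{thm3.83}, one of the three mutually exclusive relations of Theorem C must hold, namely
\begin{eqnarray*}
\rho({\mathscr L}^{(22)}_{\gamma,\omega}) < \rho({\mathscr L}_{\gamma,\omega}) < 1, \;\;
\rho({\mathscr L}^{(22)}_{\gamma,\omega}) = \rho({\mathscr L}_{\gamma,\omega}) = 1, \;\;
\hbox{or} \;\;
\rho({\mathscr L}^{(22)}_{\gamma,\omega}) > \rho({\mathscr L}_{\gamma,\omega}) > 1.
\end{eqnarray*}

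Next I would rule out the last two alternatives. Exactly as in the proof of Theorem~\ref{thm3.1}, the AOR splitting $A = M_{\gamma,\omega} - N_{\gamma,\omega}$ is weak regular because $A$ is an L-matrix; since $A$ is moreover a nonsingular M-matrix, Lemma~\ref{lem1-5} guarantees that this weak regular splitting is convergent, i.e. $\rho({\mathscr L}_{\gamma,\omega}) < 1$. This strict inequality is incompatible with both $\rho({\mathscr L}_{\gamma,\omega}) = 1$ and $\rho({\mathscr L}_{\gamma,\omega}) > 1$, so only the first relation can occur, and that relation is precisely the assertion of Theorem D for $\nu = 22$.

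Because the combination preconditioner $Q_{22}$ is the special case of $Q_{21}$ obtained by setting $\alpha_k = 0$ for $k = 2,\dots,n-1$ (with $\alpha_1 = \alpha$), an equally short alternative route is to invoke Theorem~\ref{thm3.80} directly, after checking that conditions (i)--(iii) of Theorem~\ref{thm3.83} reduce to the corresponding hypotheses there. There is no genuine obstacle along either route: the argument is the routine coupling of a trichotomy with an M-matrix convergence fact, and it is ``immediate'' in the same sense as the preceding Theorem~D statements. The one point deserving a line of care is verifying that the symbol ``$\lesssim$'' occurring in the conditions still produces the \emph{strict} inequality $\rho({\mathscr L}^{(22)}_{\gamma,\omega}) < \rho({\mathscr L}_{\gamma,\omega})$ in the surviving case; but this strictness is already encoded in Theorem C, whose cases (i) and (iii) are themselves strict.
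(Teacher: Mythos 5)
Your proof is correct and is essentially the paper's own reasoning: the paper obtains this statement ``directly'' from the $Q_{21}$ family (condition ($i$) of Theorem \ref{thm3.80}), whose justification is precisely your argument --- the trichotomy of Theorem C for $\nu=22$ combined with $\rho({\mathscr L}_{\gamma,\omega})<1$, which follows from the nonsingular M-matrix property via the weak regular AOR splitting and Lemma \ref{lem1-5}, leaves only case ($i$). Your closing remark about specializing $Q_{21}$ (setting $\alpha_k=0$ for $k=2,\dots,n-1$, with ($ii_5$) and ($iv^b$) of Theorem \ref{thm3.79} automatic since $a_{n,1}<0$ and $\alpha>0$) is in fact the literal route the paper takes.
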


Similar to $Q_{21}$, the matrix $Q$ is chosen as
 \begin{eqnarray*}
Q_{23} = Q_{3} + Q_{17},
 \end{eqnarray*}
 i.e.,
 \begin{eqnarray*}\small
Q_{23} = \left(\begin{array}{ccccc}
0 & -\beta_1a_{1,2}  & \cdots & 0 & 0\\
-\alpha_{2,1}a_{2,1} & 0  & \cdots & 0 & 0\\
\vdots & \ddots & \ddots & \ddots & \vdots\\
-\alpha_{n-1,1}a_{n-1,1} & -\alpha_{n-1,2}a_{n-1,2} & \cdots & 0 & -\beta_{n-1}a_{n-1,n} \\
-\alpha_{n,1}a_{n,1} & -\alpha_{n,2}a_{n,2} &  \cdots &
-\alpha_{n,n-1}a_{n,n-1} & 0
\end{array}\right)
 \end{eqnarray*}
with $\alpha_{i,j} \ge 0$, $i = 2, \cdots, n$, $j < i$ and $\beta_i \ge 0$, $i = 1, \cdots, n-1$, and
\begin{eqnarray*}
\sum\limits_{i = 2}^n\sum\limits_{j = 1}^{i-1}\alpha_{i,j}a_{i,j} \not = 0 \;\;
\hbox{and} \;\;  \sum\limits_{i = 1}^{n-1}\beta_ia_{i,i+1} \not = 0.
\end{eqnarray*}

By Corollaries \ref{coro3-9} and \ref{coro3-10}, the following two comparison theorems are directly.

\begin{theorem}\label{thm3.85}
Suppose that $0 \le \alpha_{i+1,j} \le 1$, $0 \le \beta_i \le 1$,
$i = 1, \cdots, n-1$, $j = 1, \cdots, i$, and $\beta_i a_{i,i+1}a_{i+1,i} +
\sum_{k = 1}^{i-1}\alpha_{i,k}a_{i,k}a_{k,i} < 1$, $i = 1, \cdots, n$.
Then Theorem A is valid for $\nu = 23$.
\end{theorem}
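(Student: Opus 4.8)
The plan is to recognize $Q_{23}$ as a particular instance of the general matrix $Q_2 = (-\alpha_{i,j}a_{i,j})$ studied in Section \ref{se3}, and then to verify that the hypotheses of Corollary \ref{coro3-9} are exactly the assumptions made here. Concretely, I would identify the coefficients of $Q_{23}$ with those of $Q_2$ by setting, for $i \not= j$,
\begin{eqnarray*}
\hat{\alpha}_{i,j} = \left\{\begin{array}{ll}
\alpha_{i,j}, & j < i, \\
\beta_i, & j = i+1, \\
0, & j > i+1,
\end{array}\right.
\end{eqnarray*}
so that $Q_{23} = (-\hat{\alpha}_{i,j}a_{i,j})$ coincides with $Q_2$ for this choice. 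Since the diagonal entries of $A$ are normalized to $1$ and $A$ is a standing L-matrix for Theorem A, it then suffices to check the two conditions of Corollary \ref{coro3-9}, namely $0 \le \hat{\alpha}_{i,j} \le 1$ for all $i\not= j$ and $\sum_{k=1,k\not=i}^n \hat{\alpha}_{i,k}a_{i,k}a_{k,i} < 1$ for each $i$.

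For the first condition, the bound $0 \le \hat{\alpha}_{i,j} \le 1$ splits into three cases matching the definition above. The assumption $0 \le \alpha_{i+1,j} \le 1$ for $i = 1,\cdots,n-1$ and $j = 1,\cdots,i$ covers precisely the strictly lower triangular entries ($j < i$), the assumption $0 \le \beta_i \le 1$ for $i = 1,\cdots,n-1$ covers the superdiagonal entries ($j = i+1$), and the entries with $j > i+1$ are zero and hence trivially lie in $[0,1]$. So the box condition is met.

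For the second condition, I would observe that for a fixed row $i$ the zero pattern of $Q_{23}$ forces the sum $\sum_{k=1,k\not=i}^n \hat{\alpha}_{i,k}a_{i,k}a_{k,i}$ to collapse: the only nonzero coefficients $\hat{\alpha}_{i,k}$ occur for $k < i$ (where $\hat{\alpha}_{i,k} = \alpha_{i,k}$) and for $k = i+1$ (where $\hat{\alpha}_{i,i+1} = \beta_i$), with all $k > i+1$ contributing zero. Hence
\begin{eqnarray*}
\sum_{k=1,k\not=i}^n \hat{\alpha}_{i,k}a_{i,k}a_{k,i}
 & = & \beta_i a_{i,i+1}a_{i+1,i} + \sum_{k=1}^{i-1}\alpha_{i,k}a_{i,k}a_{k,i},
\end{eqnarray*}
which is exactly the quantity assumed to be strictly less than $1$ in the statement, with the conventions that the empty sum for $i=1$ vanishes and the superdiagonal term is absent for $i=n$. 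Therefore the diagonal-sum condition of Corollary \ref{coro3-9} holds for every $i$.

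With both hypotheses of Corollary \ref{coro3-9} verified, Theorem A follows for $\nu = 23$. There is essentially no hard analytic step here; the only point requiring care is the bookkeeping of indices --- correctly reconciling the two families of parameters $\alpha_{i,j}$ (lower triangular) and $\beta_i$ (superdiagonal) of $Q_{23}$ with the single family $\alpha_{i,j}$ of the general construction $Q_2$, and confirming that the off-diagonal zero pattern of $Q_{23}$ is what makes the row sum reduce to the stated two-term expression.
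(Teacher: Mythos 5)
Your proof is correct and takes essentially the same route as the paper: the paper obtains this theorem ``directly'' from Corollary \ref{coro3-9} by viewing $Q_{23}$ as the instance of $Q_2$ whose coefficients are $\alpha_{i,j}$ below the diagonal, $\beta_i$ on the superdiagonal, and $0$ elsewhere, which is exactly your identification and the resulting collapse of the row sum $\sum_{k\not=i}\hat{\alpha}_{i,k}a_{i,k}a_{k,i}$ to $\beta_i a_{i,i+1}a_{i+1,i}+\sum_{k=1}^{i-1}\alpha_{i,k}a_{i,k}a_{k,i}$. Your write-up merely makes explicit the index bookkeeping the paper leaves implicit.
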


\begin{theorem}\label{thm3.86}
Suppose that $0 \le \alpha_{i+1,j} \le 1$, $0 \le \beta_i \le 1$,
$i = 1, \cdots, n-1$, $j = 1, \cdots, i$.
Then Theorem B is valid for $\nu = 23$.
\end{theorem}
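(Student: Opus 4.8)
The plan is to recognize the combination preconditioner $Q_{23} = Q_3 + Q_{17}$ as a special instance of the general construction $Q_2 = (-\alpha_{i,j}a_{i,j})$ and then to invoke Corollary \ref{coro3-10} directly, exactly as announced just before Theorem \ref{thm3.85}. Concretely, I would set the parameters of $Q_2$ so that $\alpha_{i,j}$ (for $j < i$) equals the lower-triangular coefficient of $Q_{23}$, $\alpha_{i,i+1} = \beta_i$ on the superdiagonal, and $\alpha_{i,j} = 0$ for every $j > i+1$. With this identification $P_{23}A$ coincides with the matrix $A^{(2)}$ for this particular choice of coefficients, so every statement already proved for $Q_2$ transfers verbatim to $Q_{23}$.

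The first step is then to check that all the effective coefficients lie in $[0,1]$. The hypotheses $0 \le \alpha_{i+1,j} \le 1$ (for $i = 1,\ldots,n-1$, $j = 1,\ldots,i$) cover precisely the strictly lower-triangular positions, $0 \le \beta_i \le 1$ covers the superdiagonal, and every remaining entry above the superdiagonal carries coefficient $0 \in [0,1]$. Hence $0 \le \alpha_{i,j} \le 1$ holds for all $i \neq j$ in the $Q_2$ parametrization, which is exactly the hypothesis of Corollary \ref{coro3-10}. Applying that corollary yields $\rho(\mathscr{L}^{(23)}_{\gamma,\omega}) \le \rho(\mathscr{L}_{\gamma,\omega}) < 1$, i.e. Theorem B is valid for $\nu = 23$.

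The only real care in the argument is index bookkeeping: one must confirm that the superdiagonal weight $\beta_i$ is matched with $\alpha_{i,i+1}$ and that there is genuinely nothing above the superdiagonal, so no coefficient is forced outside $[0,1]$; I expect no analytic obstacle beyond this. It is worth recording why the diagonal condition needed for the L-matrix statement (Theorem \ref{thm3.85}, namely $\beta_i a_{i,i+1}a_{i+1,i} + \sum_{k=1}^{i-1}\alpha_{i,k}a_{i,k}a_{k,i} < 1$, which guarantees $a^{(23)}_{i,i} > 0$) may be dropped here. Under the bounds $0 \le \alpha_{i,j} \le 1$ one has, for $i \neq j$, that $a^{(23)}_{i,j} = (1-\alpha_{i,j})a_{i,j} - \sum_{k \neq i,j}\alpha_{i,k}a_{i,k}a_{k,j} \le 0$, using $a_{i,j} \le 0$ together with $a_{i,k}a_{k,j} \ge 0$; thus $P_{23}A$ is a Z-matrix. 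Since $A$ is a nonsingular M-matrix, Lemma \ref{lem1-10} then forces $P_{23}A$ to be a nonsingular M-matrix as well, so in particular its diagonal is automatically positive. This is precisely the mechanism built into Corollary \ref{coro3-2}, on which Corollary \ref{coro3-10} rests, so no separate verification of the diagonal entries is required.
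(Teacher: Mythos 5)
Your proposal is correct and is essentially the paper's own proof: the paper obtains Theorem \ref{thm3.86} directly from Corollary \ref{coro3-10} by viewing $Q_{23}$ as the special case of $Q_2$ whose lower-triangular coefficients are the $\alpha_{i+1,j}$, whose superdiagonal coefficients are the $\beta_i$, and whose remaining coefficients are zero, so that every coefficient lies in $[0,1]$. Your closing remark on why no diagonal condition is needed (off-diagonal entries of $P_{23}A$ are nonpositive, then Lemma \ref{lem1-10} upgrades $P_{23}A$ to a nonsingular M-matrix) is precisely the mechanism already built into the chain Corollary \ref{coro3-2} $\rightarrow$ Theorem \ref{thm3.6} $\rightarrow$ Corollary \ref{coro3-10}, so it is sound but not an additional ingredient.
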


In this case, $\delta_{i,j}^{(2)}(\gamma)$ and $\delta_{i,j}^{(2)}(1)$ reduce
respectively to
\begin{eqnarray*}
\delta_{i,j}^{(23)}(\gamma) = \left\{\begin{array}{ll}
\sum\limits_{k = 1}^{i-1} \alpha_{i,k}a_{i,k}a_{k,i} + \beta_ia_{i,i+1}a_{i+1,i}, & i = j = 1, \cdots, n;\\
 (\gamma-1)\beta_ia_{i,i+1}, & i = 1, \cdots, n-1, \\& j = i+1;\\
 \gamma\beta_ia_{i,i+1}a_{i+1,j}, & i = 1, \cdots, n-1, \\
 & j = i+2, \cdots, n;\\
 (\gamma-1)\alpha_{i,j}a_{i,j} + \gamma\sum\limits_{k = 1}^{j-1}\alpha_{i,k}a_{i,k}a_{k,j} & \\
 + \gamma\beta_ia_{i,i+1}a_{i+1,j}, &
  i = 2,\cdots,n, \\
  & j = 1,\cdots, i-1
\end{array}\right.
\end{eqnarray*}
and
\begin{eqnarray*}
\delta_{i,j}^{(23)}(1) = \left\{\begin{array}{ll}
\sum\limits_{k = 1}^{j-1}\alpha_{i,k}a_{i,k}a_{k,j} + \beta_ia_{i,i+1}a_{i+1,j}, &
  i = 1,\cdots,n, j = 1,\cdots, i; \\
\beta_ia_{i,i+1}a_{i+1,j}, & i = 1, \cdots, n-1,\\
& j = i+2, \cdots, n;\\
  0, & otherwise.
\end{array}\right.
\end{eqnarray*}

Using Corollary \ref{coro3-11}, we prove Stein-Rosenberg type comparison theorem.

\begin{theorem}\label{thm3.87}
Suppose that $\beta_i a_{i,i+1}a_{i+1,i} +
\sum_{k = 1}^{i-1}\alpha_{i,k}a_{i,k}a_{k,i} < 1$, $i = 1, \cdots, n$.
Then Theorem C is valid for $\nu = 23$,
provided one of the following conditions is satisfied:

\begin{itemize}
\item[(i)] For $i = 1, \cdots, n-1$, $j = 1, \cdots, i$,
$0 \le \alpha_{i+1,j} \lesssim 1$,
$0 \le \beta_i \lesssim 1$.

\item[(ii)] $0 \le \gamma < 1$ and
$0 \le \alpha_{i+1,j} \le 1$, $0 \le \beta_i \le 1$,
$i = 1, \cdots, n-1$, $j = 1, \cdots, i$.
For each $i\in\{1, \cdots, n-1\}$,
$\beta_ia_{i,i+1} < 0$ or there exists $j(i)\in\{1, \cdots, i-1\}$ such that $\alpha_{i,j(i)}a_{i,j(i)} < 0$.

\item[(iii)] $\gamma = 1$ and
$0 \le \alpha_{i+1,j} \le 1$, $0 \le \beta_i \le 1$,
$i = 1, \cdots, n-1$, $j = 1, \cdots, i$.
For each $i\in\{2, \cdots, n-1\}$, $\beta_ia_{i,i+1} < 0$
or there exist $j(i)\in\{1, \cdots, i\}$ and $k(i)\in\{1, \cdots, j(i)-1\}$
such that $\alpha_{i,k(i)}a_{i,k(i)}a_{k(i),j(i)} > 0$.
At the same time, one of the following conditions holds:

\begin{itemize}
\item[($iii_1$)]
There exist $j\in\{2, \cdots, n\}$ and $k\in\{1, \cdots, j-1\}$ such that
$\alpha_{n,k}a_{n,k}a_{k,j}$ $ > 0$.

\item[($iii_2$)] $a_{n,1} < 0$ and $\alpha_{n,1} > 0$.

\item[($iii_3$)] There exists $j\in\{2, \cdots, n-1\}$ such that
\begin{eqnarray*}
(1 - \alpha_{n,j})a_{n,j} - \sum\limits_{{k = 1}\atop{k\not = j}}^{n-1}\alpha_{n,k}a_{n,k}a_{k,j} < 0.
\end{eqnarray*}

\item[($iii_4$)] $a_{1,2} < 0$, $\beta_1 > 0$ and
\begin{eqnarray*}
(1 - \alpha_{n,1})a_{n,1} - \sum\limits_{k = 2}^{n-1}\alpha_{n,k}a_{n,k}a_{k,1} < 0.
\end{eqnarray*}

\item[($iii_5$)] $a_{1,2} < 0$,  $a_{n,1} < 0$ and $\beta_1 > 0$.
\end{itemize}
\end{itemize}
\end{theorem}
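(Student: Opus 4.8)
The plan is to exhibit $Q_{23}$ as a special instance of the general preconditioner $Q_2$ and then invoke Corollary \ref{coro3-11}, exactly as was done for $Q_{21}$ in Theorem \ref{thm3.79}. Writing $Q_{23} = Q_3 + Q_{17}$ means that in the $Q_2$ notation only the strictly lower triangular coefficients $\alpha_{i,j}$ (for $j < i$) and the superdiagonal coefficients $\beta_i = \alpha_{i,i+1}$ are nonzero, every remaining $\alpha_{i,k}$ vanishing. With this identification the quantities $\delta^{(23)}_{i,j}(\gamma)$ and $\delta^{(23)}_{i,j}(1)$ displayed above are precisely the specialisations of $\delta^{(2)}_{i,j}(\gamma)$ and $\delta^{(2)}_{i,j}(1)$, so the task reduces to checking, case by case, that the hypotheses of Corollary \ref{coro3-11} are met.

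First I would treat condition (i): the requirements $0 \le \alpha_{i+1,j} \lesssim 1$ and $0 \le \beta_i \lesssim 1$ say exactly that $0 \le \alpha_{i,j} \lesssim 1$ at every position where $Q_{23}$ is nonzero, which is the range hypothesis needed to apply condition (i) of Corollary \ref{coro3-11}. For condition (ii), where $0 \le \gamma < 1$, I would verify condition (iii) of Theorem \ref{thm3.7}: the per-row alternative ``$\beta_i a_{i,i+1} < 0$ or $\alpha_{i,j(i)} a_{i,j(i)} < 0$ for some $j(i) \le i-1$'' guarantees, for each $i \in \{1,\ldots,n-1\}$, the existence of an index $j(i)$ with $\alpha_{i,j(i)} a_{i,j(i)} < 0$ (the superdiagonal position $j(i) = i+1$ being admissible through the coefficient $\beta_i = \alpha_{i,i+1}$), which is precisely what condition (iii) of Theorem \ref{thm3.7} demands once the range bound $0 \le \alpha_{i,j} \le 1$ is in force.

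The bulk of the work lies in condition (iii), the Gauss--Seidel case $\gamma = 1$. Here I would match the per-row hypothesis to conditions ($iv_1$)--($iv_7$) of Theorem \ref{thm3.7} and the terminal alternatives ($iii_1$)--($iii_5$) to ($iv^a$)--($iv^e$). Conditions ($iii_1$), ($iii_2$) and ($iii_3$) copy ($iv^a$), ($iv^b$) and ($iv^c$) directly, once one notes that $\alpha_{n,k}$ here is the lower triangular coefficient inherited from $Q_3$, while ($iii_4$) and ($iii_5$) are to be reconciled with ($iv^d$) and ($iv^e$) by observing that $a_{1,2} < 0$ together with $\beta_1 > 0$ produces a positive entry in the first row of $\delta^{(23)}(1)$, the displayed inequality of ($iii_4$) being exactly (\ref{eqs3.16}) in the present specialisation.

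The main obstacle I anticipate is the conversion of a clause ``$\beta_i a_{i,i+1} < 0$'' into an actual positive entry of $\delta^{(23)}_{i,j}(1)$. Since $A$ is an L-matrix, $\beta_i a_{i,i+1} < 0$ forces $\beta_i > 0$ and $a_{i,i+1} < 0$, but by itself this feeds only the superdiagonal entry, which carries the factor $\gamma - 1$ and hence vanishes at $\gamma = 1$; one must instead use the irreducibility of $A$ to locate a column $j$ with $a_{i+1,j} < 0$, whence $\delta^{(23)}_{i,j}(1) \ge \beta_i a_{i,i+1} a_{i+1,j} > 0$. This is the same device already employed in the proofs of Theorems \ref{thm3.15}, \ref{thm3.23} and \ref{thm3.47}. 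Carrying it out uniformly for every row $i \in \{2,\ldots,n-1\}$, and separately for the first and last rows through the terminal conditions ($iii_1$)--($iii_5$), will establish that the hypotheses of Corollary \ref{coro3-11} hold, from which Theorem C for $\nu = 23$ follows.
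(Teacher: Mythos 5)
Your overall strategy --- viewing $Q_{23}$ as a $Q_2$-type preconditioner and reducing everything to Corollary \ref{coro3-11}, i.e.\ to conditions ($iii$), ($iv$) and ($ii_1$)--($ii_9$) of Theorem \ref{thm3.7} --- is exactly the paper's proof. Your handling of conditions ($ii$) and ($iii$) of the theorem is sound: the per-row alternative in ($ii$) is condition ($iii$) of Theorem \ref{thm3.7}; in ($iii$) the clause $\beta_i a_{i,i+1}<0$ is condition ($iv_2$) of Theorem \ref{thm3.7} (equivalently, your irreducibility device, which is the content of ($ii_6$) in Lemma \ref{lem3-3}), the product clause gives ($iv_1$) directly, and ($iii_1$)--($iii_5$) map onto ($iv^a$)--($iv^e$) as you describe, with $a_{1,2}<0$ and $\beta_1>0$ supplying ``one of ($iv_1$)--($iv_6$) for $i=1$'', exactly as the paper does by producing a positive entry $\delta^{(23)}_{1,j}(1)=\beta_1a_{1,2}a_{2,j}>0$ from irreducibility.

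The gap is in your one-sentence treatment of condition ($i$). Condition ($i$) of Corollary \ref{coro3-11} is not just the range bound $0\le\alpha_{i,j}\lesssim 1$: it additionally demands that either $0\le\gamma<1$, or, when $\gamma=1$, one of the conditions ($ii_1$)--($ii_9$) of Theorem \ref{thm3.7} holds. You verify only the range bound, so as written part ($i$) is not established at the Gauss--Seidel endpoint $\gamma=1$, since Corollary \ref{coro3-11} cannot be invoked there without one of ($ii_1$)--($ii_9$). The missing step is precisely the opening line of the paper's proof: the definition of $Q_{23}$ requires $\sum_{k=1}^{n-1}\beta_k a_{k,k+1}\neq 0$, and since $\beta_k\ge 0$ and $a_{k,k+1}\le 0$ this yields an index $k$ with $\beta_k>0$ and $a_{k,k+1}<0$; reading $\beta_k$ as $\alpha_{k,k+1}$, this is condition ($ii_3$) of Theorem \ref{thm3.7}, so condition ($i$) of Corollary \ref{coro3-11} is met for every admissible $\gamma$. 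With this one observation added, your argument is complete and coincides with the paper's.
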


\begin{proof}
Since $\sum_{k = 1}^{n-1}\beta_ka_{k,k+1} \not = 0$, then there exists
$k\in\{1, \cdots, n-1\}$ such that $a_{k,k+1} < 0$ and $\beta_k > 0$, which shows that the condition
($ii_3$) in Theorem \ref{thm3.7} is satisfied, so that the condition ($i$) of Corollary \ref{coro3-11}
is satisfied. This shows ($i$).

When ($ii$) holds, then the condition ($iii$) of Theorem \ref{thm3.7} is satisfied,
so that the condition ($ii$) of Corollary \ref{coro3-11} is satisfied.

When ($iii$) holds, it gets that $\delta_{i,j(i)}^{(23)}(1) > 0$. In this case
$\delta_{1,j}^{(23)}(1) = \beta_1a_{1,2}a_{2,j}$, $j = 1, 3, \cdots, n$. Since $A$
is irreducible, then there exists $j \in \{1\}\cup\{3, \cdots, n\}$ such that $a_{2,j}<0$,
so that $\delta_{1,j}^{(23)}(1) = \beta_1a_{1,2}a_{2,j} > 0$ whenever
$\beta_1>0$ and $a_{1,2}<0$. Hence, the conditions
($iv^a$)-($iv^e$) in Theorem \ref{thm3.7} reduce to ($iii_1$)-($iii_5$), respectively.
By ($ii$) of Corollary \ref{coro3-11} the proof of ($iii$) is complete.
\end{proof}

\begin{theorem}\label{thm3.88}
Theorem D is valid for $\nu = 23$,
provided one of the following conditions is satisfied:

\begin{itemize}
\item[(i)] One of the conditions ($i$), ($ii$) and ($iii$) of Theorem \ref{thm3.87} holds.

\item[(ii)] $0 \le \alpha_{i+1,j} \le 1$, $0 \le \beta_i \le 1$,
$i = 1, \cdots, n-1$, $j = 1, \cdots, i$.
For $i = 2,\cdots,n$, $j = 1,\cdots,i-1$,
\begin{eqnarray*}
 \alpha_{i,j}a_{i,j} + \sum\limits_{{k = 1}\atop{k\not = j}}^{i-1} \alpha_{i,k}a_{i,k}a_{k,j}
 + \beta_ia_{i,i+1}a_{i+1,j} \ge 0.
\end{eqnarray*}
And one of the following conditions holds:

\begin{itemize}
\item[($ii_1$)] There exists $i_0\in\{1,\cdots,n-1\}$ such that
$\beta_{i_0}a_{i_0,i_0+1}a_{i_0+1,i_0} > 0$.

\item[($ii_2$)] There exist $i_0\in\{2,\cdots,n\}$ and $j_0\in\{1,\cdots,i_0-1\}$ such that\\
$\alpha_{i_0,j_0}a_{i_0,j_0}a_{j_0,i_0}$ $ > 0$.

\item[($ii_3$)] $\gamma > 0$ and $\alpha_{2,1}a_{2,1} + \beta_{2}a_{2,3}a_{3,1} > 0$.

\item[($ii_4$)] $\gamma > 0$ and there exist $i_0\in\{3,\cdots,n\}$ and $j_0\in\{1,\cdots,i_0-1\}$ such that
 \begin{eqnarray*}
\alpha_{i_0,j_0}a_{i_0,j_0} + \sum\limits_{{k = 1}\atop{k\not =j_0}}^{i_{0}-1}\alpha_{i_0,k}a_{i_0,k}a_{k,j_0}
+ \beta_{i_0}a_{i_0,i_0+1}a_{i_0+1,j_0} > 0.
\end{eqnarray*}
\end{itemize}\end{itemize}
\end{theorem}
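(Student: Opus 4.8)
The plan is to follow the template already used for the other ``Theorem D'' results (Theorems \ref{thm3.80}, \ref{thm3.48}, \ref{thm3.52}), splitting along the two stated cases and reducing each to machinery proved earlier for the general preconditioner $Q_2$. The guiding observation is that $Q_{23}=Q_3+Q_{17}$ is a special instance of $Q_1$ (equivalently $Q_2$): its nonzero off-diagonal entries lie in the strictly lower triangle (from $Q_3$) and on the first superdiagonal (from $Q_{17}$). Hence the formula (\ref{eqs3.2}) applies and both Corollary \ref{coro3-12} and Theorem \ref{thm3.87} are available.

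For case (i) I would simply invoke Theorem \ref{thm3.87}, which under exactly conditions (i), (ii) or (iii) establishes the Stein--Rosenberg Type Theorem II (Theorem C) for $\nu=23$. Since an irreducible nonsingular M-matrix is in particular an irreducible L-matrix, Theorem C applies and yields one of its three mutually exclusive relations. It then remains to exclude the two branches with $\rho\ge 1$: because $A$ is a nonsingular M-matrix, the AOR splitting (\ref{eqn1.4}) is weak regular and, by Lemma \ref{lem1-5}, convergent, so $\rho(\mathscr{L}_{\gamma,\omega})<1$. Thus only relation (i) of Theorem C can occur, namely $\rho(\mathscr{L}^{(23)}_{\gamma,\omega})<\rho(\mathscr{L}_{\gamma,\omega})<1$, which is precisely Theorem D.

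For case (ii) I would route instead through the strict comparison of Corollary \ref{coro3-12}, i.e.\ condition (ii) of Theorem \ref{thm3.8}, which rests on Lemma \ref{lem1-13}; this route needs only $A^{-1}\gg 0$ (Lemma \ref{lem1-4}) and $A^{-1}>(P_{23}A)^{-1}$, and does not require $A^{(23)}$ to be irreducible. The key computational step is to write out the entries of $A^{(23)}=P_{23}A$ from (\ref{eqs3.2}): one finds $a^{(23)}_{i,i}=1-\sum_{k=1}^{i-1}\alpha_{i,k}a_{i,k}a_{k,i}-\beta_i a_{i,i+1}a_{i+1,i}$ and, for $j<i$, $a_{i,j}-a^{(23)}_{i,j}=\alpha_{i,j}a_{i,j}+\sum_{k=1,k\neq j}^{i-1}\alpha_{i,k}a_{i,k}a_{k,j}+\beta_i a_{i,i+1}a_{i+1,j}$. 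The hypotheses $0\le\alpha_{i+1,j}\le 1$, $0\le\beta_i\le 1$ together with the displayed nonnegativity condition give $a_{i,j}\ge a^{(23)}_{i,j}$ throughout the strictly lower triangle (so $M_{\gamma,\omega}\ge M^{(23)}_{\gamma,\omega}$), while each sub-condition forces a strict decrease somewhere: ($ii_1$) and ($ii_2$) make some $a^{(23)}_{i_0,i_0}<1$, and ($ii_3$), ($ii_4$) make $a_{i_0,j_0}>a^{(23)}_{i_0,j_0}$ for a lower-triangular entry when $\gamma>0$. This is exactly condition (ii) of Theorem \ref{thm3.8} for the present special $Q_2$, so Corollary \ref{coro3-12} applies and delivers the strict inequalities of Theorem D.

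The main obstacle I anticipate is bookkeeping rather than conceptual. I must check that the four sub-conditions line up with the two alternatives ($i_1$), ($ii_2$) of Theorem \ref{thm3.4}(ii) after substituting the entries of $A^{(23)}$, keeping track of why the diagonal alternative works for all $\gamma$ while the off-diagonal one requires $\gamma>0$ (it enters only through $\gamma(L_{23}-L)$ in $M_{\gamma,\omega}-M^{(23)}_{\gamma,\omega}$). In particular the boundary case $i_0=2$ is singled out as ($ii_3$) precisely because the inner sum over $k$ is empty there, so $a_{2,1}-a^{(23)}_{2,1}=\alpha_{2,1}a_{2,1}+\beta_2 a_{2,3}a_{3,1}$. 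Once these entries are in hand, no new positivity or irreducibility argument is needed and the claim reduces entirely to the already-proved Corollary \ref{coro3-12} and Lemma \ref{lem1-13}.
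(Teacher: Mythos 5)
Your proposal is correct and follows essentially the same route as the paper: case (i) is the standard reduction (Theorem C for $\nu=23$ via Theorem \ref{thm3.87}, combined with $\rho({\mathscr L}_{\gamma,\omega})<1$ for a nonsingular M-matrix, leaves only the first branch), and case (ii) is handled exactly as in the paper by computing the entries of $A^{(23)}$ from (\ref{eqs3.2}) and matching the hypotheses ($ii_1$)--($ii_4$) to condition (ii) of Theorem \ref{thm3.8} so that Corollary \ref{coro3-12} (ultimately Lemma \ref{lem1-13}) applies. Your bookkeeping — diagonal decrease from ($ii_1$), ($ii_2$) for any $\gamma$, strictly lower-triangular decrease from ($ii_3$), ($ii_4$) requiring $\gamma>0$, with ($ii_3$) being the empty-sum case $i_0=2$ — is precisely what the paper's terse proof leaves implicit.
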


\begin{proof}
We just need to prove ($ii$). It is easy to obtain
\begin{eqnarray*}
a_{i,j}^{(23)} = \left\{\begin{array}{ll}
1- \sum\limits_{k = 1}^{i-1}\alpha_{i,k}a_{i,k}a_{k,i} - \beta_ia_{i,i+1}a_{i+1,i}, &
  i = j = 1,\cdots,n, \\
(1-\alpha_{i,j})a_{i,j} - \sum\limits_{k = 1}^{i-1}\alpha_{i,k}a_{i,k}a_{k,j} - \beta_ia_{i,i+1}a_{i+1,j}, &
  i = 2,\cdots,n, \\
  & j = 1,\cdots, i-1.
\end{array}\right.
\end{eqnarray*}
By Corollary \ref{coro3-12} we can derive ($ii_1$)-($ii_4$).
\end{proof}

As a special case, $Q_{23}$ and $Q_{21}$ reduce to
 \begin{eqnarray*}
Q_{24} & = & Q_5 + Q_{17}  \\
& = & \left(\begin{array}{ccccc}
0      & -\beta_1a_{1,2}    & \cdots & 0      & 0\\
0      & 0                  & \ddots & 0      & 0\\
\vdots & \vdots             & \ddots & \ddots & \vdots\\
0      & 0                  & \cdots & 0      & -\beta_{n-1}a_{n-1,n} \\
-\alpha_{1}a_{n,1} & -\alpha_{2}a_{n,2} & \cdots & -\alpha_{n-1}a_{n,n-1} & 0
\end{array}\right)
 \end{eqnarray*}
with $\alpha_k \ge 0$, $\beta_k \ge 0$, $k = 1, \cdots, n-1$, and
\begin{eqnarray*}
\sum\limits_{k = 1}^{n-1}\alpha_ka_{n,k} \not = 0 \;\;
\hbox{and} \;\; \sum\limits_{k = 1}^{n-1}\beta_ka_{k,k+1} \not = 0,
 \end{eqnarray*}
 where for simplicity we set $r = n$ for $Q_5$.

It is proposed in \cite{NHMS04} for the preconditioned Gauss-Seidel method,
where $\alpha_k = \beta_k = 1$, $k = 1, \cdots, n-1$.

In this case, $\delta_{i,j}^{(23)}(\gamma)$ and $\delta_{i,j}^{(23)}(1)$ reduce respectively to
\begin{eqnarray*}
\delta_{i,j}^{(24)}(\gamma) = \left\{\begin{array}{ll}
 \beta_ia_{i,i+1}a_{i+1,i}, & i = j = 1, \cdots, n-1;\\
\sum\limits_{k = 1}^{n-1} \alpha_ka_{n,k}a_{k,n}, &
 i = j = n;\\
 (\gamma-1)\beta_ia_{i,i+1}, & i = 1, \cdots, n-1, j = i+1;\\
 \gamma\beta_ia_{i,i+1}a_{i+1,j}, & i = 1, \cdots, n-1, \\
 & j = 1, \cdots, n, j\not = i, i+1; \\
 (\gamma-1)\alpha_ja_{n,j} + \gamma\sum\limits_{k = 1}^{j-1} \alpha_ka_{n,k}a_{k,j},
 & i = n, j = 1,\cdots, n-1
\end{array}\right.
\end{eqnarray*}
and
\begin{eqnarray*}
\delta_{i,j}^{(24)}(1) = \left\{\begin{array}{ll}
\beta_ia_{i,i+1}a_{i+1,j}, & i = 1, \cdots, n-1,
 j = 1, \cdots, n, j\not = i+1; \\
 \sum\limits_{k = 1}^{j-1} \alpha_ka_{n,k}a_{k,j},
 & i = n, j = 1,\cdots, n.
\end{array}\right.
\end{eqnarray*}

From Theorems \ref{thm3.85}-\ref{thm3.88} the following comparison results are immediately.

\begin{theorem}\label{thm3.89}
Suppose that $0 \le \alpha_k, \beta_k \le 1$, $k = 1, \cdots, n-1$, and
 \begin{eqnarray} \label{eqs3.52}
\sum\limits_{k = 1}^{n-1}\alpha_ka_{n,k}a_{k,n} < 1, \;
 \beta_ia_{i,i+1}a_{i+1,i} < 1, \; i = 1, \cdots, n-1.
 \end{eqnarray}
Then Theorem A is valid for $\nu = 24$.
\end{theorem}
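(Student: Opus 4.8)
The plan is to recognize $Q_{24}$ as a degenerate instance of $Q_{23}$ and then simply check that the hypotheses of Theorem~\ref{thm3.85} (the Theorem~A assertion for $\nu=23$) collapse exactly onto the hypotheses stated here. Since $Q_{24}=Q_5+Q_{17}$, in the language of $Q_{23}=Q_3+Q_{17}$ this corresponds to retaining the superdiagonal coefficients $\beta_i$ unchanged while forcing $\alpha_{i,j}=0$ for every $i\in\{2,\dots,n-1\}$ and keeping only the last-row coefficients $\alpha_{n,j}=\alpha_j$ (the $Q_5$ part with $r=n$). Thus I first fix this dictionary between the parameters $(\alpha_k,\beta_k)$ appearing in the present statement and the parameters $(\alpha_{i,j},\beta_i)$ of Theorem~\ref{thm3.85}.

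Next I would verify the two hypotheses of Theorem~\ref{thm3.85}. For the range constraints $0\le\alpha_{i+1,j}\le 1$ and $0\le\beta_i\le1$: every $\alpha_{i+1,j}$ with $i+1<n$ is zero, the surviving coefficients $\alpha_{n,j}=\alpha_j$ lie in $[0,1]$ by assumption, and $\beta_i\in[0,1]$ is given, so all range constraints hold. For the diagonal inequality
\begin{eqnarray*}
\beta_i a_{i,i+1}a_{i+1,i}+\sum_{k=1}^{i-1}\alpha_{i,k}a_{i,k}a_{k,i}<1,\quad i=1,\dots,n,
\end{eqnarray*}
I split on $i$: when $1\le i\le n-1$ the sum vanishes because $\alpha_{i,k}=0$, leaving $\beta_i a_{i,i+1}a_{i+1,i}<1$, which is the second inequality in (\ref{eqs3.52}); when $i=n$ there is no $\beta_n$ term (the index $\beta$ runs only up to $n-1$) and the sum reduces to $\sum_{k=1}^{n-1}\alpha_k a_{n,k}a_{k,n}<1$, which is the first inequality in (\ref{eqs3.52}).

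With both hypotheses of Theorem~\ref{thm3.85} confirmed, that theorem immediately yields Theorem~A for $\nu=24$. There is essentially no difficulty here beyond careful bookkeeping of the index ranges; the only point demanding attention is the endpoint $i=n$, where the superdiagonal ($\beta$) contribution disappears and only the last-row ($\alpha$) contribution survives, matching the first of the two displayed smallness conditions rather than the second.
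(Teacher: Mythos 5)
Your proposal is correct and follows exactly the paper's route: the paper states that Theorems \ref{thm3.89}--\ref{thm3.92} follow ``immediately'' from Theorems \ref{thm3.85}--\ref{thm3.88}, i.e., by viewing $Q_{24}=Q_5+Q_{17}$ as the special case of $Q_{23}=Q_3+Q_{17}$ in which $\alpha_{i,j}=0$ for $i\le n-1$ and $\alpha_{n,j}=\alpha_j$, which is precisely your dictionary. Your verification of the index bookkeeping, in particular that the diagonal inequality of Theorem \ref{thm3.85} splits into the two conditions of (\ref{eqs3.52}) according to whether $i\le n-1$ or $i=n$, is exactly the content hidden behind the paper's ``immediately.''
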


\begin{theorem}\label{thm3.90}
Suppose that $0 \le \alpha_k, \beta_k \le 1$, $k = 1, \cdots, n-1$.
Then Theorem B is valid for $\nu = 24$.
\end{theorem}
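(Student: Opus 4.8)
The plan is to derive Theorem \ref{thm3.90} as an immediate specialization of Theorem \ref{thm3.86}, exactly in the spirit of the remark preceding it that these results follow ``from Theorems \ref{thm3.85}--\ref{thm3.88}.'' First I would observe that $Q_{24} = Q_5 + Q_{17}$ is precisely the matrix $Q_{23} = Q_3 + Q_{17}$ in which every entry of the lower-triangular block coming from $Q_3$ is set to zero except those in the last row. Writing $Q_{23}$ with lower coefficients $\alpha_{i,j}$ and superdiagonal coefficients $\beta_i$, the matrix $Q_{24}$ corresponds to the choice $\alpha_{n,j} = \alpha_j$ for $j = 1, \ldots, n-1$ (here $r = n$ in $Q_5$), $\alpha_{i,j} = 0$ for $i = 2, \ldots, n-1$ and $j < i$, with the superdiagonal coefficients $\beta_i$ unchanged.

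Next I would verify that the hypotheses of Theorem \ref{thm3.86} hold under the assumption $0 \le \alpha_k, \beta_k \le 1$, $k = 1, \ldots, n-1$. Theorem \ref{thm3.86} requires $0 \le \alpha_{i+1,j} \le 1$ and $0 \le \beta_i \le 1$ for $i = 1, \ldots, n-1$ and $j = 1, \ldots, i$. The index $\alpha_{i+1,j}$ runs over all lower-triangular positions: for $i+1 \in \{2, \ldots, n-1\}$ these coefficients are $0 \in [0,1]$ by construction, while for $i+1 = n$ they equal $\alpha_{n,j} = \alpha_j \in [0,1]$ by hypothesis; the bound $0 \le \beta_i \le 1$ is given directly. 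Hence all conditions of Theorem \ref{thm3.86} are satisfied, and its conclusion yields $\rho({\mathscr L}^{(24)}_{\gamma,\omega}) \le \rho({\mathscr L}_{\gamma,\omega}) < 1$, which is Theorem B for $\nu = 24$.

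The argument carries no genuine difficulty, so there is no real obstacle beyond bookkeeping: the only point requiring care is to match the parameter conventions of $Q_{24}$ to those of $Q_{23}$, in particular to recognize that the single surviving lower row of $Q_{24}$ is the $r = n$ instance of $Q_5$ and therefore fills the bottom row of the $\alpha_{i,j}$ pattern in $Q_{23}$, with all interior lower rows vanishing. Once this identification is made, the comparison $\rho({\mathscr L}^{(24)}_{\gamma,\omega}) \le \rho({\mathscr L}_{\gamma,\omega}) < 1$ is inherited verbatim from Theorem \ref{thm3.86}, so the proof reduces to this one-line verification of the parameter ranges.
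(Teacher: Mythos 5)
Your proposal is correct and is essentially the paper's own argument: the paper obtains Theorem \ref{thm3.90} ``immediately'' from Theorems \ref{thm3.85}--\ref{thm3.88}, i.e.\ by viewing $Q_{24}=Q_5+Q_{17}$ as the special case of $Q_{23}=Q_3+Q_{17}$ with $\alpha_{n,j}=\alpha_j$ and $\alpha_{i,j}=0$ for $i=2,\dots,n-1$, $j<i$, and then invoking Theorem \ref{thm3.86}. Your explicit check that these parameter choices satisfy $0\le\alpha_{i+1,j}\le 1$, $0\le\beta_i\le 1$ is exactly the bookkeeping the paper leaves implicit.
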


\begin{theorem}\label{thm3.91}
Suppose that (\ref{eqs3.52}) holds.
Then Theorem C is valid for $\nu = 24$,
provided one of the following conditions is satisfied:

\begin{itemize}
\item[(i)] For $k = 1, \cdots, n-1$,
$0 \le \alpha_k, \beta_k \lesssim 1$.

\item[(ii)] $0 \le \gamma < 1$, $a_{k,k+1} < 0$, $0 \le \alpha_k \le 1$
and $0 < \beta_k \le 1$, $k = 1, \cdots, n-1$.

\item[(iii)] $\gamma = 1$, $a_{k,k+1} < 0$, $0 \le \alpha_k \le 1$
and $0 < \beta_k \le 1$, $k = 1, \cdots, n-1$. One of the following conditions holds:

\begin{itemize}
\item[($iii_1$)]
There exist $j\in\{2, \cdots, n\}$ and $k\in\{1, \cdots, j-1\}$ such that
$\alpha_{k}a_{n,k}a_{k,j}$ $ > 0$.

\item[($iii_2$)] $a_{n,1} < 0$ and $\alpha_{1} > 0$.

\item[($iii_3$)] There exists $j\in\{2, \cdots, n-1\}$ such that
\begin{eqnarray*}
(1 - \alpha_{j})a_{n,j} - \sum\limits_{{k = 1}\atop{k\not = j}}^{n-1}\alpha_{k}a_{n,k}a_{k,j} < 0.
\end{eqnarray*}

\item[($iii_4$)] $a_{1,2} < 0$, $\beta_1 > 0$ and
\begin{eqnarray*}
(1 - \alpha_{1})a_{n,1} - \sum\limits_{k = 2}^{n-1}\alpha_{k}a_{n,k}a_{k,1} < 0.
\end{eqnarray*}

\item[($iii_5$)] $a_{1,2} < 0$,  $a_{n,1} < 0$ and $\beta_1 > 0$.
\end{itemize}
\end{itemize}
\end{theorem}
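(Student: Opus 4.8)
The plan is to obtain Theorem \ref{thm3.91} as a specialization of the general combination-preconditioner results for $\nu = 23$, exactly in the spirit of how the preceding corollaries descend from their parent theorems. First I would record that $Q_{24}$ is precisely the instance of $Q_{23}$ in which every interior lower-triangular weight $\alpha_{i,j}$ with $i \in \{2,\cdots,n-1\}$ is set to zero, while the bottom-row weights become $\alpha_{n,j} = \alpha_j$ and the super-diagonal weights $\beta_i$ are retained. Under this substitution the quantities $\delta_{i,j}^{(23)}(\gamma)$ and $\delta_{i,j}^{(23)}(1)$ collapse to the displayed $\delta_{i,j}^{(24)}(\gamma)$ and $\delta_{i,j}^{(24)}(1)$ recorded just before the statement; verifying this collapse is a direct bookkeeping check and requires no new idea. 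Consequently it suffices to show that each of the three hypotheses (i), (ii), (iii) of Theorem \ref{thm3.91} implies the correspondingly numbered hypothesis of Theorem \ref{thm3.87}, after which the conclusion (one of the three Stein–Rosenberg alternatives for $\rho({\mathscr L}^{(24)}_{\gamma,\omega})$ versus $\rho({\mathscr L}_{\gamma,\omega})$) follows immediately.

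For case (i) the matching is transparent: the bounds $0 \le \alpha_k, \beta_k \lesssim 1$ are exactly condition (i) of Theorem \ref{thm3.87} once the interior $\alpha_{i,j}$ are dropped, and the nondegeneracy built into the definition of $Q_{24}$, namely $\sum_{k=1}^{n-1}\beta_k a_{k,k+1}\neq 0$, guarantees the existence of some $k$ with $a_{k,k+1}<0$ and $\beta_k>0$. As in the proof of Theorem \ref{thm3.87}, this activates condition ($ii_3$) of Theorem \ref{thm3.7} and hence condition (i) of Corollary \ref{coro3-11}, so Theorem C holds. For cases (ii) and (iii) the essential point is that because $\alpha_{i,j}=0$ for every interior row $i\in\{1,\cdots,n-1\}$, the disjunction ``$\beta_i a_{i,i+1}<0$ or there exists $j(i)$ with $\alpha_{i,j(i)}a_{i,j(i)}<0$'' appearing in conditions (ii) and (iii) of Theorem \ref{thm3.87} can never be met through the $\alpha$-alternative; it therefore reduces to demanding $\beta_i a_{i,i+1}<0$ for each relevant $i$. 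The hypotheses $a_{k,k+1}<0$ together with $0<\beta_k\le 1$ in (ii) and (iii) of Theorem \ref{thm3.91} supply precisely this, for all $i\in\{1,\cdots,n-1\}$ in case (ii) and for all $i\in\{2,\cdots,n-1\}$ in case (iii). The auxiliary subconditions ($iii_1$)–($iii_5$) are then inherited verbatim from the corresponding subconditions of Theorem \ref{thm3.87}, since after the reduction they involve only the surviving bottom-row data $\alpha_j$ and $\beta_1$.

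The main point requiring care is this tracking of which alternative in the ``for each $i$'' clauses survives the vanishing of the interior $\alpha_{i,j}$: one must confirm that requiring $a_{k,k+1}<0$ for \emph{all} $k$ (not merely for one) is exactly what is forced, and that the existential nondegeneracy $\sum_{k=1}^{n-1}\beta_k a_{k,k+1}\neq 0$ is enough in case (i) while the stronger pointwise condition is needed in cases (ii) and (iii). Once this dependence is pinned down, the remainder is a routine verification that the reduced $\delta_{i,j}^{(24)}$ makes the hypotheses of Theorem \ref{thm3.87} literally coincide with those listed here, so invoking Theorem \ref{thm3.87} (equivalently Corollary \ref{coro3-11} through Lemma \ref{lem1-11} and Lemma \ref{lem1-12}) completes the argument. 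I expect no genuine analytic difficulty; the entire burden is organizational, in aligning the combinatorial index sets of the two theorems.
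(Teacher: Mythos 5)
Your proposal is correct and follows the paper's own route: the paper derives Theorem \ref{thm3.91} directly from Theorem \ref{thm3.87} by specializing $Q_{23}$ to $Q_{24}$ (interior lower-triangular weights zero, bottom row $\alpha_{n,j}=\alpha_j$), which is exactly your reduction. Your careful tracking of which alternative in the ``for each $i$'' clauses survives — the nondegeneracy $\sum_{k=1}^{n-1}\beta_k a_{k,k+1}\neq 0$ sufficing in case ($i$), the pointwise condition $\beta_i a_{i,i+1}<0$ being forced in cases ($ii$) and ($iii$) — is precisely the bookkeeping the paper leaves implicit in its ``immediately.''
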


\begin{theorem}\label{thm3.92}
Theorem D is valid for $\nu = 24$,
provided one of the following conditions is satisfied:

\begin{itemize}
\item[(i)] One of the conditions ($i$), ($ii$) and ($iii$) of Theorem \ref{thm3.91} holds.

\item[(ii)] For $j = 1,\cdots,n-1$, $0 \le \alpha_j, \beta_j \le 1$ and
$\alpha_{j}a_{n,j} + \sum_{k = 1,k\not=j}^{n-1}\alpha_{k}a_{n,k}a_{k,j} \ge 0$.
One of the following conditions holds:

\begin{itemize}
\item[($ii_1$)] There exists $i_0\in\{1,\cdots,n-1\}$ such that
$\beta_{i_0}a_{i_0,i_0+1}a_{i_0+1,i_0} > 0$.

\item[($ii_2$)] There exists $i_0\in\{1,\cdots,n-1\}$ such that
$\alpha_{i_0}a_{n,i_0}a_{i_0,n} > 0$.

\item[($ii_3$)] $\gamma > 0$ and there exist $i_0\in\{2,\cdots,n-1\}$ and $j_0\in\{1,\cdots,i_0-1\}$ such that
$\beta_{i_0}a_{i_0,i_0+1}a_{i_0+1,j_0} > 0$.

\item[($ii_4$)] $\gamma > 0$ and there exists $j_0\in\{1,\cdots,n-1\}$ such that
 \begin{eqnarray*}
\alpha_{j_0}a_{n,j_0} + \sum\limits_{{k = 1}\atop{k\not =j_0}}^{n-1}\alpha_{k}a_{n,k}a_{k,j_0}
 > 0.
\end{eqnarray*}
\end{itemize}\end{itemize}
\end{theorem}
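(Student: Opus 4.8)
The plan is to handle the two alternatives of the statement separately, following verbatim the pattern of the proof of Theorem~\ref{thm3.88}, and exploiting that $Q_{24}=Q_5+Q_{17}$ is nothing but the specialization of $Q_{23}=Q_3+Q_{17}$ in which the lower part $Q_3$ collapses to its last row $Q_5$, i.e. $\alpha_{i,j}=0$ for every $i<n$ and $\alpha_{n,j}=\alpha_j$. The anchor of the whole argument is the explicit form of the diagonal and strictly lower entries of $A^{(24)}=P_{24}A$, which I would read off from (\ref{eqs3.2}):
\begin{eqnarray*}
a_{i,i}^{(24)} &=& 1-\beta_i a_{i,i+1}a_{i+1,i}, \quad i=1,\dots,n-1,\\
a_{n,n}^{(24)} &=& 1-\sum_{k=1}^{n-1}\alpha_k a_{n,k}a_{k,n},\\
a_{i,j}^{(24)} &=& a_{i,j}-\beta_i a_{i,i+1}a_{i+1,j}, \quad i<n,\ j<i,\\
a_{n,j}^{(24)} &=& (1-\alpha_j)a_{n,j}-\sum_{k=1,k\neq j}^{n-1}\alpha_k a_{n,k}a_{k,j}, \quad j<n.
\end{eqnarray*}

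For part (i) the work is essentially done elsewhere. Since an irreducible nonsingular M-matrix is in particular an irreducible L-matrix, Theorem~\ref{thm3.91} applies under any of its conditions (i)--(iii) and yields the Stein-Rosenberg trichotomy of Theorem~C for $\nu=24$. By Lemma~\ref{lem1-5} the weak regular AOR splitting of the M-matrix $A$ is convergent, so $\rho({\mathscr L}_{\gamma,\omega})<1$; this excludes the relations (ii) and (iii) of Theorem~C and leaves only $\rho({\mathscr L}^{(24)}_{\gamma,\omega})<\rho({\mathscr L}_{\gamma,\omega})<1$, which is exactly the assertion of Theorem~D.

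For part (ii) I would invoke Corollary~\ref{coro3-12} via condition (ii) of Theorem~\ref{thm3.8}, checking its hypotheses against the display above. The bounds $0\le\alpha_k,\beta_k\le1$ make every off-diagonal entry of $A^{(24)}$ nonpositive, so $A^{(24)}$ is a Z-matrix and, by Lemma~\ref{lem1-10}, a nonsingular M-matrix. The imposed inequality $\alpha_j a_{n,j}+\sum_{k\neq j}\alpha_k a_{n,k}a_{k,j}\ge0$ is precisely $a_{n,j}\ge a_{n,j}^{(24)}$ for the last row, while for rows $i<n$ the monotonicity $a_{i,j}\ge a_{i,j}^{(24)}$ ($j<i$) holds automatically because $\beta_i a_{i,i+1}a_{i+1,j}\ge0$ by the sign pattern of the Z-matrix. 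Each of ($ii_1$)--($ii_4$) then supplies one of the two strict alternatives required: ($ii_1$) and ($ii_2$) force a strict diagonal deficit $a_{i_0,i_0}^{(24)}<1$ (for $i_0<n$ and $i_0=n$ respectively, using that all summands are nonnegative), whereas ($ii_3$) and ($ii_4$), under $\gamma>0$, force a strict gap $a_{i_0,j_0}>a_{i_0,j_0}^{(24)}$ off the diagonal. With these verified, Lemma~\ref{lem1-13} delivers the strict comparison $\rho({\mathscr L}^{(24)}_{\gamma,\omega})<\rho({\mathscr L}_{\gamma,\omega})<1$.

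The main obstacle is bookkeeping rather than analysis: one must confirm that the collapse $\alpha_{i,j}=0$ ($i<n$) carries the clauses of Theorem~\ref{thm3.88}(ii) exactly onto ($ii_1$)--($ii_4$) stated here. In particular Theorem~\ref{thm3.88}'s clause ($ii_3$) (the instance $i_0=2,\ j_0=1$) and clause ($ii_4$) restricted to $i_0\in\{3,\dots,n-1\}$ must be seen to merge into the single clause ($ii_3$) covering all $i_0\in\{2,\dots,n-1\}$, while the $i_0=n$ instance of ($ii_4$) becomes ($ii_4$) here and the last term $\beta_{i_0}a_{i_0,i_0+1}a_{i_0+1,j_0}$ drops out (there is no $\beta_n$). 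One should also note that $P_{24}=I+Q_{24}>0$ is nonsingular, so Lemma~\ref{lem1-10} applies; this follows since $A^{(24)}$ is a nonsingular M-matrix and $A$ is nonsingular.
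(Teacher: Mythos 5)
Your proposal is correct and takes essentially the same route as the paper: Theorem \ref{thm3.92} is obtained there ``immediately'' by specializing Theorem \ref{thm3.88} (equivalently, Corollary \ref{coro3-12} via Theorem \ref{thm3.8}(ii)) to $Q_{24}$, and part (i) is exactly your observation that Theorem \ref{thm3.91} yields the Theorem C trichotomy while $\rho({\mathscr L}_{\gamma,\omega})<1$ for a nonsingular M-matrix leaves only the first alternative; your clause-by-clause collapse of Theorem \ref{thm3.88}(ii) onto ($ii_1$)--($ii_4$) is precisely the bookkeeping the paper leaves implicit. One small caveat: your closing justification of the nonsingularity of $P_{24}$ is circular as written (Lemma \ref{lem1-10} presupposes it); instead, deduce that $A^{(24)}$ is a nonsingular M-matrix directly from Lemma \ref{lem1-5}(ii), since $A^{(24)}x=(I+Q_{24})Ax\ge Ax\gg 0$ for any $x\gg 0$ with $Ax\gg 0$, whence $P_{24}=A^{(24)}A^{-1}$ is nonsingular --- a point the paper itself glosses over and which does not affect your main argument.
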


Similarly, as a special case of $Q_{23}$, $Q$ is proposed in \cite{SSHL18} as
 \begin{eqnarray*}
Q_{25} & = & Q_6 + Q_{17} \\
& = & \left(\begin{array}{ccccc}
0                      & -\beta_1a_{1,2} & 0                 & \cdots & 0\\
-\alpha_{2}a_{2,1}     & 0               & -\beta_{2}a_{2,3} & \cdots & 0\\
\vdots                 & \vdots          & \ddots            & \ddots & \vdots\\
-\alpha_{n-1}a_{n-1,1} & 0               & 0                 & \ddots & -\beta_{n-1}a_{n-1,n} \\
-\alpha_{n}a_{n,1}     & 0               & 0                 & \cdots & 0\\
\end{array}\right)
 \end{eqnarray*}
with $\alpha_{k+1} \ge 0$, $\beta_k \ge 0$, $k = 1, \cdots, n-1$, and
\begin{eqnarray*}
\sum\limits_{k = 2}^{n}\alpha_{k}a_{k,1} \not = 0 \;\;
\hbox{and} \;\; \sum\limits_{k = 1}^{n-1}\beta_ka_{k,k+1} \not = 0,
 \end{eqnarray*}
 where for simplicity we set $r = 2$ for $Q_6$.

It is proposed in \cite{NA12} for the preconditioned SOR method, where
$\alpha_{k+1} = \beta_k = 1$, $k = 1, \cdots, n-1$.

In this case, $\delta_{i,j}^{(23)}(\gamma)$ and $\delta_{i,j}^{(23)}(1)$ reduce respectively to
\begin{eqnarray*}
\delta_{i,j}^{(25)}(\gamma) = \left\{\begin{array}{ll}
\beta_1a_{1,2}a_{2,1}, & i = j = 1;\\
 \alpha_{i}a_{i,1}a_{1,i} + \beta_ia_{i,i+1}a_{i+1,i}, & i = j = 2, \cdots, n;\\
 (\gamma-1)\alpha_{i}a_{i,1} + \gamma\beta_ia_{i,i+1}a_{i+1,1}, &
  i = 2,\cdots,n, j = 1;\\
\gamma\alpha_{i}a_{i,1}a_{1,j} + \gamma\beta_ia_{i,i+1}a_{i+1,j}, &
  i = 3,\cdots,n, j = 2, \cdots, i-1;\\
 (\gamma-1)\beta_ia_{i,i+1}, & i = 1, \cdots, n-1, j = i+1;\\
 \gamma\beta_ia_{i,i+1}a_{i+1,j}, & i = 1, \cdots, n-1,\\& j = i+2, \cdots, n
\end{array}\right.
\end{eqnarray*}
and
\begin{eqnarray*}
\delta_{i,j}^{(25)}(1) = \left\{\begin{array}{ll}
 \beta_ia_{i,i+1}a_{i+1,1}, &
  i = 1,\cdots,n-1, \\
  & j\in\{1\}\cup\{i+2, \cdots, n\};\\
\alpha_{i}a_{i,1}a_{1,j} + \beta_ia_{i,i+1}a_{i+1,j}, &
  i = 2,\cdots,n, j = 2, \cdots, i;\\
0, & otherwise.
\end{array}\right.
\end{eqnarray*}

From Theorems \ref{thm3.85}-\ref{thm3.88}, the following comparison results are directly.

\begin{theorem}
Suppose that $0 \le \alpha_{k+1}, \beta_k \le 1$, $k = 1, \cdots, n-1$, and
$\beta_1a_{1,2}a_{2,1} < 1$, $\alpha_n a_{n,1}a_{1,n} < 1$,
$\alpha_ka_{k,1}a_{1,k} + \beta_k a_{k,k+1}a_{k+1,k} < 1$, $k = 2, \cdots, n-1$.
Then Theorem A is valid for $\nu = 25$.
\end{theorem}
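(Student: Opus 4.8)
The plan is to recognize $Q_{25}$ as a special case of $Q_{23}$ and then invoke Theorem \ref{thm3.85} directly. Recall that $Q_{23} = Q_3 + Q_{17}$ whereas $Q_{25} = Q_6 + Q_{17}$; since $Q_6$ with $r = 2$ is precisely the matrix obtained from $Q_3$ by retaining only its first column, i.e. by setting $\alpha_{i,1} = \alpha_i$ for $i = 2, \ldots, n$ and $\alpha_{i,j} = 0$ for $2 \le j \le i-1$, the preconditioner $Q_{25}$ falls under the scope of $Q_{23}$ with these particular lower-triangular parameters. The superdiagonal part $\beta_i$ is shared verbatim between the two constructions. Because $Q_6 > 0$ and $Q_{17} > 0$ are assumed throughout this subsection, the nontriviality conditions $\sum_{k} \alpha_{k}a_{k,1} \not= 0$ and $\sum_{k} \beta_k a_{k,k+1} \not= 0$ ensure that $Q_{25}$ is a genuine admissible preconditioner, so the reduction to Theorem \ref{thm3.85} is legitimate.

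First I would substitute $\alpha_{i,1} = \alpha_i$ and $\alpha_{i,j} = 0$ for $j \ge 2$ into the hypothesis $\beta_i a_{i,i+1}a_{i+1,i} + \sum_{k=1}^{i-1}\alpha_{i,k}a_{i,k}a_{k,i} < 1$ of Theorem \ref{thm3.85}, observing that the only surviving term of the sum is the one with $k = 1$. This collapses the hypothesis into three families: for $i = 1$, the empty sum vanishes and one is left with $\beta_1 a_{1,2}a_{2,1} < 1$; for $2 \le i \le n-1$, one obtains $\alpha_i a_{i,1}a_{1,i} + \beta_i a_{i,i+1}a_{i+1,i} < 1$; and for $i = n$, where the superdiagonal entry $a_{n,n+1}$ does not exist so that the $\beta_n$-contribution is absent, one is left with $\alpha_n a_{n,1}a_{1,n} < 1$. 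These are exactly the three sets of inequalities assumed in the statement for $\nu = 25$. It then remains to match the box constraints: the requirement $0 \le \alpha_{i+1,j} \le 1$, $0 \le \beta_i \le 1$ of Theorem \ref{thm3.85} reduces, under the specialization $\alpha_{i,j} = 0$ for $j \ge 2$, to $0 \le \alpha_{k+1} \le 1$ and $0 \le \beta_k \le 1$ for $k = 1, \ldots, n-1$, which is precisely the present hypothesis. Hence every assumption of Theorem \ref{thm3.85} is verified with $\nu = 25$, and Theorem A follows at once.

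I expect no substantive obstacle here; the argument is entirely a bookkeeping reduction of one already-proved comparison theorem to a degenerate choice of parameters. The only point deserving care is the boundary indexing — namely checking that for $i = 1$ the lower sum is genuinely empty and that for $i = n$ the superdiagonal $\beta$-term is vacuous, so that the mixed inequality $\alpha_i a_{i,1}a_{1,i} + \beta_i a_{i,i+1}a_{i+1,i} < 1$ degenerates correctly into the two pure inequalities $\beta_1 a_{1,2}a_{2,1} < 1$ and $\alpha_n a_{n,1}a_{1,n} < 1$ at the two ends of the index range. Once this indexing is confirmed, the conclusion is immediate from Theorem \ref{thm3.85}.
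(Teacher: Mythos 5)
Your proposal is correct and follows exactly the paper's route: the paper obtains this theorem "directly" from Theorem \ref{thm3.85} by viewing $Q_{25}=Q_6+Q_{17}$ (with $r=2$) as the special case of $Q_{23}=Q_3+Q_{17}$ in which $\alpha_{i,1}=\alpha_i$ and $\alpha_{i,j}=0$ for $j\ge 2$, precisely the specialization you carry out. Your boundary checks (empty sum at $i=1$, vacuous $\beta$-term at $i=n$) are the only bookkeeping involved, and they are done correctly.
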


\begin{theorem}
Suppose that $0 \le \alpha_{k+1}, \beta_k \le 1$, $k = 1, \cdots, n-1$.
Then Theorem B is valid for $\nu = 25$.
\end{theorem}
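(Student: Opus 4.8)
The plan is to obtain the statement as an immediate specialization of Theorem~\ref{thm3.86}, which already asserts that Theorem~B holds for the general combination preconditioner $Q_{23}=Q_3+Q_{17}$ under the bounds $0\le\alpha_{i+1,j}\le 1$ and $0\le\beta_i\le 1$. The key observation is that $Q_{25}=Q_6+Q_{17}$ is precisely $Q_{23}$ in which the strictly lower triangular contribution $Q_3$ has been collapsed onto its first column: for $r=2$ the matrix $Q_6$ is nothing but the first-column part of $Q_3$, while the superdiagonal part $Q_{17}$ is common to both constructions.

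First I would make the identification of parameters explicit. Writing $Q_{23}$ in terms of its lower-triangular multipliers $\alpha_{i,j}$ ($i=2,\dots,n$, $j<i$) and its superdiagonal multipliers $\beta_i$ ($i=1,\dots,n-1$), the passage to $Q_{25}$ corresponds to the choice
\begin{eqnarray*}
\alpha_{i,1}=\alpha_i, \qquad \alpha_{i,j}=0 \;\; (j=2,\dots,i-1),
\end{eqnarray*}
with the $\beta_i$ retained. Under this correspondence the quantities $\delta_{i,j}^{(23)}(\gamma)$ collapse exactly to the $\delta_{i,j}^{(25)}(\gamma)$ displayed above, which confirms that $A^{(25)}$ coincides with the matrix produced by the $Q_{23}$ construction for these parameter values.

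Next I would verify that the hypotheses of Theorem~\ref{thm3.86} are met. The assumption $0\le\alpha_{k+1},\beta_k\le 1$ gives $0\le\beta_i\le 1$ directly, and it gives $0\le\alpha_{i+1,1}=\alpha_{i+1}\le 1$; for the remaining indices $j\ge 2$ one has $\alpha_{i+1,j}=0\in[0,1]$ trivially. Hence the full requirement $0\le\alpha_{i+1,j}\le 1$, $0\le\beta_i\le 1$ of Theorem~\ref{thm3.86} holds, and the conclusion that Theorem~B is valid for $\nu=25$ follows at once. The structural engine behind this is that such bounds keep $A^{(25)}=P_{25}A$ a Z-matrix, so that by Lemma~\ref{lem1-10} it remains a nonsingular M-matrix and Corollary~\ref{coro3-10} applies.

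I do not anticipate any genuine obstacle here. The only point demanding care is the bookkeeping of the index shift between the $Q_5/Q_6$ convention (entries labelled $k=2,\dots,n$) and the generic $\alpha_{i,j}$ notation of $Q_{23}$, together with a check that zeroing the off-first-column lower entries does not spoil the Z-matrix, hence M-matrix, property required by Theorem~B. Both are routine, so the result is a direct corollary of Theorem~\ref{thm3.86} and needs no further machinery.
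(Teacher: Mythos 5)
Your proposal is correct and matches the paper's own route: the paper states that the $\nu=25$ results follow ``directly'' from Theorems \ref{thm3.85}--\ref{thm3.88}, i.e.\ precisely by viewing $Q_{25}=Q_6+Q_{17}$ as the specialization of $Q_{23}=Q_3+Q_{17}$ with $\alpha_{i,1}=\alpha_i$ and $\alpha_{i,j}=0$ for $j\ge 2$, which is what you carry out. Your verification that these parameter choices satisfy the hypotheses of Theorem \ref{thm3.86} (including the nondegeneracy condition $\sum_{i,j}\alpha_{i,j}a_{i,j}=\sum_{k=2}^{n}\alpha_k a_{k,1}\neq 0$ inherited from the definition of $Q_{25}$) is exactly the bookkeeping the paper leaves implicit.
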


\begin{theorem}\label{thm3.95}
Suppose that $\beta_1a_{1,2}a_{2,1} < 1$, $\alpha_n a_{n,1}a_{1,n} < 1$,
$\alpha_ka_{k,1}a_{1,k} + $\\ $\beta_k a_{k,k+1}a_{k+1,k} < 1$, $k = 2, \cdots, n-1$.
Then Theorem C is valid for $\nu = 25$,
provided one of the following conditions is satisfied:

\begin{itemize}
\item[(i)] For $k = 1, \cdots, n-1$,
$0 \le \alpha_{k+1}, \beta_k \lesssim 1$.

\item[(ii)] $0 \le \gamma < 1$, $0 \le \alpha_{k+1}, \beta_k \le 1$,
$k = 1, \cdots, n-1$. $\beta_1 > 0$,
$a_{1,2} < 0$ and for each $i\in\{2, \cdots, n-1\}$,
$\beta_ia_{i,i+1} < 0$ or $\alpha_{i}a_{i,1} < 0$.

\item[(iii)] $\gamma = 1$, $0 \le \alpha_{k+1}, \beta_k \le 1$, $k = 1, \cdots, n-1$.
For each $i\in\{2, \cdots, n-1\}$, $\beta_ia_{i,i+1} < 0$
or there exists $j(i)\in\{2, \cdots, i\}$
such that $\alpha_{i}a_{i,1}a_{1,j(i)} > 0$.
At the same time, one of the following conditions holds:

\begin{itemize}
\item[($iii_1$)] $a_{n,1} < 0$ and $\alpha_{n} > 0$.

\item[($iii_2$)] There exists $j\in\{2, \cdots, n-1\}$ such that
$a_{n,j} - \alpha_{n}a_{n,1}a_{1,j} < 0$.

\item[($iii_3$)]  $a_{1,2} < 0$,  $a_{n,1} < 0$ and $\beta_1 > 0$.
\end{itemize}
\end{itemize}
\end{theorem}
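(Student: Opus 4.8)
The plan is to recognize $Q_{25}$ as the special case of the combination preconditioner $Q_{23} = Q_3 + Q_{17}$ in which the strictly lower part $Q_3$ degenerates to its first column only. Concretely, taking $r = 2$ in $Q_6$ amounts, in the notation of $Q_{23}$, to choosing $\alpha_{i,1} = \alpha_i$ for $i = 2, \ldots, n$ and $\alpha_{i,j} = 0$ for all $j \ge 2$, while the coefficients $\beta_i$ coming from $Q_{17}$ carry over unchanged. Under this substitution the quantities $\delta_{i,j}^{(23)}(\gamma)$ and $\delta_{i,j}^{(23)}(1)$ collapse to the displayed $\delta_{i,j}^{(25)}(\gamma)$ and $\delta_{i,j}^{(25)}(1)$, because each inner sum $\sum_{k=1}^{j-1}\alpha_{i,k}a_{i,k}a_{k,j}$ retains only its $k=1$ term $\alpha_i a_{i,1}a_{1,j}$. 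Thus the whole statement will be obtained by specializing Theorem \ref{thm3.87}, exactly as Theorem \ref{thm3.91} was obtained for $Q_{24}$.

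First I would check that the convergence hypothesis of Theorem \ref{thm3.87}, namely $\beta_i a_{i,i+1}a_{i+1,i} + \sum_{k=1}^{i-1}\alpha_{i,k}a_{i,k}a_{k,i} < 1$ for $i = 1, \ldots, n$, reduces precisely to the three inequalities assumed here: for $i = 1$ the lower sum is empty and one is left with $\beta_1 a_{1,2}a_{2,1} < 1$; for $i = n$ there is no $\beta_n$ term and one is left with $\alpha_n a_{n,1}a_{1,n} < 1$; for $2 \le i \le n-1$ both survive, giving $\alpha_i a_{i,1}a_{1,i} + \beta_i a_{i,i+1}a_{i+1,i} < 1$. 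These are exactly the standing hypotheses of the present theorem.

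Next I would dispatch the three alternatives in turn. Alternative (i) is the verbatim specialization of Theorem \ref{thm3.87}(i), since $0 \le \alpha_{i+1,j} \lesssim 1$ with $\alpha_{i+1,j} = 0$ for $j \ge 2$ is just $0 \le \alpha_{i+1} \lesssim 1$. For alternative (ii) I would feed it into Theorem \ref{thm3.87}(ii): the option ``$\alpha_{i,j(i)}a_{i,j(i)} < 0$ for some $j(i) < i$'' can hold only through $j(i)=1$, i.e. $\alpha_i a_{i,1} < 0$, and the row $i=1$, which has no lower neighbour, must be covered by the separate requirement $\beta_1 > 0$, $a_{1,2} < 0$ (equivalently $\beta_1 a_{1,2} < 0$, as $a_{1,2}\le 0$); this matches (ii) exactly. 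For alternative (iii) I would invoke Theorem \ref{thm3.87}(iii): the option ``$\alpha_{i,k(i)}a_{i,k(i)}a_{k(i),j(i)} > 0$'' forces $k(i)=1$, hence $\alpha_i a_{i,1}a_{1,j(i)} > 0$ with $j(i) \in \{2, \ldots, i\}$, reproducing the stated per-row disjunction. It then remains to translate the five ``at the same time'' clauses $(iii_1)$--$(iii_5)$ of Theorem \ref{thm3.87}: under the degeneration, clause $(iii_2)$ becomes $a_{n,1} < 0$, $\alpha_n > 0$ (our $iii_1$), clause $(iii_3)$ becomes the existence of $j \in \{2,\ldots,n-1\}$ with $a_{n,j} - \alpha_n a_{n,1}a_{1,j} < 0$ (our $iii_2$), and clause $(iii_5)$ becomes $a_{1,2} < 0$, $a_{n,1} < 0$, $\beta_1 > 0$ (our $iii_3$); the remaining clauses $(iii_1)$ and $(iii_4)$ are subsumed by these, the former because irreducibility of $A$ furnishes some $j$ with $a_{1,j} < 0$ whenever $a_{n,1} < 0$ and $\alpha_n > 0$, the latter because it is strictly stronger than $(iii_5)$.

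Since the mathematical content is entirely inherited from Theorem \ref{thm3.87} (and through it from Corollary \ref{coro3-11}), the work is essentially bookkeeping. I expect the main obstacle to be the careful handling of the boundary rows $i = 1$ and $i = n$, where one of the two constituent preconditioners contributes no term, together with the use of irreducibility of $A$ to show that the finer clauses $(iii_1)$ and $(iii_4)$ of Theorem \ref{thm3.87} add nothing beyond the coarser clauses retained in the statement. Once these degenerate cases are matched, Theorem \ref{thm3.87} yields the three mutually exclusive Stein--Rosenberg relations and completes the proof.
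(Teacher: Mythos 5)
Your proposal is correct and takes essentially the same route as the paper: there Theorem \ref{thm3.95} is stated to follow ``directly'' from the $Q_{23}$ results (Theorems \ref{thm3.85}--\ref{thm3.88}), i.e.\ precisely by the specialization $\alpha_{i,1}=\alpha_i$, $\alpha_{i,j}=0$ for $j\ge 2$ in Theorem \ref{thm3.87} that you carry out. Your explicit bookkeeping of the hypothesis reduction, the boundary rows $i=1$ and $i=n$, and the translation of the five ``at the same time'' clauses (with the specialized ($iii_1$) and ($iii_4$) of Theorem \ref{thm3.87} absorbed into the retained three) is exactly the verification the paper leaves implicit, and all of these checks are sound.
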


It can be proved that if $0 < a_{1,2}a_{2,1} < 1$, $0 < a_{n,1}a_{1,n} < 1$ and
$0 < a_{k,1}a_{1,k} + a_{k,k+1}a_{k+1,k} < 1$, $k = 2, \cdots, n-1$,
then $A$ is irreducible. Hence, for the Gauss-Seidel method the result when ($iii$) holds is better than
\cite[Theorems 2,3,4]{SSHL18}, where in Theorem 4 it should be that $\alpha_{k+1}>0$ and $\beta_k>0$,
$k=1,\cdots,n-1$.

All the corresponding results given in \cite{NA12} are problematic, because
\cite[Theorem 3.1]{NA12} is wrong. In fact, let
 \begin{eqnarray}\label{eqs3.66}
A = \left(\begin{array}{rrrrrr}
1    & 0    & 0    & 0    & 0    & -0.5\\
-0.5 & 1    & -0.5 & 0    & 0    & 0   \\
0    & -0.5 & 1    & 0    & 0    & 0   \\
0    & -0.5 & 0    & 1    & 0    & 0   \\
0    & 0    & 0    & 0    & 1    & -0.5\\
-0.5 & -0.5 & 0    & -0.5 & -0.5 & 1
\end{array}\right).
 \end{eqnarray}
Then it is easy to prove that $A$ is an irreducible L-matrix and
the assumption of \cite[Theorem 3.1]{NA12} is satisfied. But
the iteration matrix of the preconditioned SOR method
is reducible.

\begin{theorem}
Theorem D is valid for $\nu = 25$,
provided one of the conditions ($i$), ($ii$) and ($iii$) of Theorem \ref{thm3.95} is satisfied.
\end{theorem}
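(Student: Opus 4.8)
The plan is to obtain Theorem D for $\nu = 25$ as an immediate specialization of the Stein-Rosenberg type result of Theorem~\ref{thm3.95}, exploiting that a nonsingular M-matrix is far more constrained than a general L-matrix. First I would note that an irreducible nonsingular M-matrix is in particular an irreducible L-matrix; hence, under any one of the hypotheses (i), (ii), (iii)---which are inherited verbatim from Theorem~\ref{thm3.95}---the conclusion of Theorem C holds for $\nu = 25$, so that exactly one of its three mutually exclusive relations (i)--(iii) is valid.

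The decisive step is to eliminate the two relations in which $\rho({\mathscr L}_{\gamma,\omega}) \ge 1$. Since $A$ is a nonsingular M-matrix with unit diagonal, the AOR splitting (\ref{eqn1.4}) satisfies $M_{\gamma,\omega}^{-1} \ge 0$ and ${\mathscr L}_{\gamma,\omega} = M_{\gamma,\omega}^{-1}N_{\gamma,\omega} \ge 0$, exactly as computed in the proof of Theorem~\ref{thm3.1}; thus the splitting is weak regular. By Lemma~\ref{lem1-5} the AOR method is then convergent, i.e.
\begin{eqnarray*}
\rho({\mathscr L}_{\gamma,\omega}) < 1.
\end{eqnarray*}
This strict inequality contradicts both $\rho({\mathscr L}_{\gamma,\omega}) = 1$ and $\rho({\mathscr L}_{\gamma,\omega}) > 1$, so the trichotomy of Theorem C collapses to its first relation,
\begin{eqnarray*}
\rho({\mathscr L}^{(25)}_{\gamma,\omega}) < \rho({\mathscr L}_{\gamma,\omega}) < 1,
\end{eqnarray*}
which is precisely the assertion of Theorem D.

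There is essentially no computational obstacle here, since the entire quantitative content has already been packaged into Theorem~\ref{thm3.95}; the argument is of the same immediate type as the passages from Theorem C to Theorem D carried out for $\nu = 3, 4, 6, \ldots$ earlier in this section. The only points deserving attention are that the hypotheses of Theorem~\ref{thm3.95} are applied unchanged, so the $\lesssim$-convention distinguishing irreducible from reducible $A^{(25)}$ requires no re-examination, and that the convergence $\rho({\mathscr L}_{\gamma,\omega}) < 1$ genuinely rests on the M-matrix property of $A$ rather than on the preconditioner. If desired, one could additionally remark---via Lemma~\ref{lem1-10}---that $P_{25}A$ is itself an irreducible nonsingular M-matrix, but this is not needed, since the preconditioned spectral radius is already controlled by the trichotomy supplied by Theorem C.
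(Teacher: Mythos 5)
Your proof is correct and is exactly the argument the paper intends: this theorem is stated without a separate proof precisely because, as in the proof of Theorem \ref{thm3.4}, the nonsingular M-matrix hypothesis makes the AOR splitting weak regular and hence convergent by Lemma \ref{lem1-5}, so that $\rho({\mathscr L}_{\gamma,\omega}) < 1$ forces the trichotomy of Theorem \ref{thm3.95} to collapse to its first alternative. No gaps; your closing remarks (the hypotheses carried over unchanged, and Lemma \ref{lem1-10} being unnecessary) are also consistent with the paper's treatment of the analogous cases.
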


As a special case of $Q_{24}$ and $Q_{25}$, $Q$ is defined in \cite{WH07} as
 \begin{eqnarray*}
Q_{26} & = & Q_7 + Q_{17} \\
& = & \left(\begin{array}{ccccc}
0     & -\beta_1a_{1,2} & 0                 & \cdots & 0\\
0     & 0               & -\beta_{2}a_{2,3} & \cdots & 0\\
\vdots                 & \vdots          & \ddots            & \ddots & \vdots\\
0 & 0               & 0                 & \ddots & -\beta_{n-1}a_{n-1,n} \\
-\alpha a_{n,1}     & 0               & 0                 & \cdots & 0
\end{array}\right)
 \end{eqnarray*}
with $a_{n,1} < 0$, $\alpha > 0$, $\beta_k \ge 0$, $k = 1, \cdots, n-1$, and
$\sum_{k = 1}^{n-1}\beta_ka_{k,k+1} \not = 0$.

It is continued to study in \cite{HWXC16}.
It is proposed in \cite{YZ12,Yu12} for the preconditioned Gauss-Seidel method, where
$\alpha_1 = \beta_k = 1$, $k = 1, \cdots, n-1$ and $\alpha_k = 0$, $k = 2, \cdots, n-1$.

In this case, $\delta_{i,j}^{(23)}(\gamma)$ and $\delta_{i,j}^{(23)}(1)$ reduce respectively to
\begin{eqnarray*}
\delta_{i,j}^{(26)}(\gamma) = \left\{\begin{array}{ll}
 \beta_ia_{i,i+1}a_{i+1,i}, & i = j = 1, \cdots, n-1;\\
 \alpha a_{n,1}a_{1,n},  & i =  j = n;\\
 (\gamma-1)\beta_ia_{i,i+1}, & i = 1, \cdots, n-1, j = i+1;\\
 \gamma\beta_ia_{i,i+1}a_{i+1,j}, & i = 1, \cdots, n-1, j = 1, \cdots, n, j\not = i, i+1; \\
 \gamma\alpha a_{n,1}a_{1,j},  & i = n, j = 2, \cdots, n-1;\\
 (\gamma-1)\alpha a_{n,1} & i = n, j = 1
\end{array}\right.
\end{eqnarray*}
and
\begin{eqnarray*}
\delta_{i,j}^{(26)}(1) = \left\{\begin{array}{ll}
\beta_ia_{i,i+1}a_{i+1,j}, & i = 1, \cdots, n-1, j = 1, \cdots, n, j\not = i+1; \\
 \alpha a_{n,1}a_{1,j},  & i = n, j = 2, \cdots, n;\\
  0, & otherwise.
\end{array}\right.
\end{eqnarray*}

From Theorems \ref{thm3.89} and \ref{thm3.90} the following comparison results are immediately.

\begin{theorem}
Suppose that $\alpha \le 1$, $0 \le \beta_k \le 1$, $\alpha a_{n,1}a_{1,n} < 1$,
$\beta_ka_{k,k+1}a_{k+1,k}$ $ < 1$, $k = 1, \cdots, n-1$.
Then Theorem A is valid for $\nu = 26$.
\end{theorem}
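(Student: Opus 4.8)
The plan is to derive this statement as an immediate specialization of Theorem \ref{thm3.89}, which already establishes Theorem A for $\nu = 24$ under hypotheses on the parameters $\alpha_k$ and $\beta_k$ of the combined preconditioner $Q_{24} = Q_5 + Q_{17}$ (with $r = n$ for $Q_5$). First I would record the observation made just before the statement: $Q_{26} = Q_7 + Q_{17}$ is exactly $Q_{24}$ with the choice $\alpha_1 = \alpha$ and $\alpha_k = 0$ for $k = 2, \cdots, n-1$, the superdiagonal parameters $\beta_k$ being retained unchanged. This holds because $Q_7$ (with $r = n$, $s = 1$) contributes only the single entry in position $(n,1)$, which is the degenerate case of the full last row carried by $Q_5$.

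Next I would substitute this parameter choice into the hypotheses of Theorem \ref{thm3.89} and verify that they collapse precisely to the hypotheses of the present statement. The requirement $0 \le \alpha_k \le 1$, $k = 1, \cdots, n-1$, reduces to $0 \le \alpha \le 1$ (the definition of $Q_7$ also forces $\alpha > 0$, so in fact $0 < \alpha \le 1$), since the remaining $\alpha_k$ vanish. The constraint $\sum_{k=1}^{n-1}\alpha_k a_{n,k}a_{k,n} < 1$ collapses to the single surviving term $\alpha a_{n,1}a_{1,n} < 1$. The conditions $0 \le \beta_k \le 1$ and $\beta_i a_{i,i+1}a_{i+1,i} < 1$ are carried over verbatim. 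Thus every hypothesis of Theorem \ref{thm3.89} is met.

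Finally, I would invoke Theorem \ref{thm3.89} to conclude that Theorem A is valid for $\nu = 26$. I expect no genuine obstacle: the entire argument is a bookkeeping specialization, and the only point requiring care is matching the naming conventions, namely recognizing that the scalar $\alpha$ attached to the $(n,1)$ entry of $Q_{26}$ plays the role of $\alpha_1$ in $Q_{24}$, that the lower-triangular data of $Q_5$ degenerate to a one-entry block, and that the superdiagonal data of $Q_{17}$ are unaffected. Once this correspondence is fixed, the sum in the spectral-radius hypothesis reducing to a single product is the only simplification, and it is immediate.
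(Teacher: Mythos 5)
Your proposal is correct and follows exactly the paper's own route: the paper introduces $Q_{26}$ as a special case of $Q_{24}$ (the $(n,1)$ entry playing the role of $\alpha_1 a_{n,1}$ with $\alpha_k = 0$ for $k = 2,\cdots,n-1$) and then states that the result follows immediately from Theorem \ref{thm3.89}, which is precisely your specialization argument. Your added care in reconciling the $\alpha$ versus $1/\alpha$ conventions between the label ``$Q_7 + Q_{17}$'' and the displayed matrix is a sensible clarification but does not change the argument.
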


\begin{theorem}
Suppose that $\alpha \le 1$, $0 \le \beta_k \le 1$, $k = 1, \cdots, n-1$.
Then Theorem B is valid for $\nu = 26$.
\end{theorem}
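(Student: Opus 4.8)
The plan is to observe that $Q_{26}$ is nothing but a specialization of the already-treated combination preconditioner $Q_{24} = Q_5 + Q_{17}$, and then to invoke Theorem \ref{thm3.90} directly. Recall that in the last row of $Q_{24}$ the entries are $-\alpha_1 a_{n,1}, \ldots, -\alpha_{n-1}a_{n,n-1}$, whereas in $Q_{26}$ only the $(n,1)$ position survives, with value $-\alpha a_{n,1}$. First I would therefore set $\alpha_1 = \alpha$ and $\alpha_2 = \cdots = \alpha_{n-1} = 0$ in the definition of $Q_{24}$, keeping the superdiagonal parameters $\beta_1, \ldots, \beta_{n-1}$ unchanged; with these choices $Q_{24}$ coincides entry for entry with $Q_{26}$, so that $A^{(24)} = A^{(26)}$ and hence $D_{24} = D_{26}$, $L_{24} = L_{26}$, $U_{24} = U_{26}$ and ${\mathscr L}^{(24)}_{\gamma,\omega} = {\mathscr L}^{(26)}_{\gamma,\omega}$.

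The second step is to check that the hypotheses feed exactly into those of Theorem \ref{thm3.90}. The definition of $Q_{26}$ already forces $\alpha > 0$ and $a_{n,1} < 0$, so that $q^{(26)}_{n,1} = -\alpha a_{n,1} > 0$, while $a_{k,k+1} \le 0$ and $\beta_k \ge 0$ give $q^{(26)}_{k,k+1} = -\beta_k a_{k,k+1} \ge 0$; thus $Q_{26}$ is a legitimate nonnegative off-diagonal choice. The assumption $\alpha \le 1$ then yields $0 < \alpha_1 = \alpha \le 1$, the vanishing parameters satisfy $0 \le \alpha_k \le 1$ trivially, and $0 \le \beta_k \le 1$ is assumed outright. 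Hence the hypothesis $0 \le \alpha_k, \beta_k \le 1$ of Theorem \ref{thm3.90} is met, and that theorem delivers $\rho({\mathscr L}^{(26)}_{\gamma,\omega}) \le \rho({\mathscr L}_{\gamma,\omega}) < 1$, which is precisely Theorem B for $\nu = 26$.

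Since the whole argument is a reduction, there is no genuine analytic obstacle; the only point requiring care is the bookkeeping of the entrywise identification in the first step—in particular, I must make sure that killing $\alpha_2, \ldots, \alpha_{n-1}$ does not secretly perturb the superdiagonal contribution of the $\beta$ parameters. This is immediate from the additive structure $Q_{24} = Q_5 + Q_{17}$, because the $\alpha$-part ($Q_5$, confined to the last row) and the $\beta$-part ($Q_{17}$, confined to the superdiagonal) occupy disjoint positions. Thus the specialization is clean, the associated AOR splitting quantities match verbatim, and the conclusion follows at once from Theorem \ref{thm3.90}.
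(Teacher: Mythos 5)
Your proposal is correct and follows the paper's own route: the paper introduces $Q_{26}$ explicitly as a special case of $Q_{24}=Q_5+Q_{17}$ and states that the $\nu=26$ comparison results follow immediately from Theorems \ref{thm3.89} and \ref{thm3.90}, which is exactly your specialization $\alpha_1=\alpha$, $\alpha_2=\cdots=\alpha_{n-1}=0$ with the $\beta_k$ unchanged. Your additional bookkeeping (checking $0<\alpha_1\le 1$ from $\alpha>0$, $a_{n,1}<0$, and that the nonzero-sum conditions for the two summands persist) is exactly the verification the paper leaves implicit.
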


Completely similar to Lemma \ref{lem3-444} we can prove the following lemma.

\begin{lemma}\label{lem3-4444}
Let $A$ be a Z-matrix. Assume that
$n \ge 3$, $a_{1,n} < 0$, $a_{k+1,k} < 0$, $\alpha \le 1$, $0 \le \beta_k \le 1$,
$k = 1, \cdots, n-1$.
Then $A^{(26)}$ is an irreducible Z-matrix.
\end{lemma}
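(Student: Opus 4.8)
The plan is to imitate the proof of Lemma~\ref{lem3-44}(ii), on which Lemma~\ref{lem3-444} is modelled. Since $Q_{26}=Q_7+Q_{17}$ has its only nonzero entries at the positions $(i,i+1)$ with value $-\beta_ia_{i,i+1}$ for $i=1,\cdots,n-1$ and at $(n,1)$ with value $-\alpha a_{n,1}$, the first step is to record, from $A^{(26)}=(I+Q_{26})A$ and $a_{i,i}=1$, the entry formula
\begin{eqnarray*}
a^{(26)}_{i,j} = \left\{\begin{array}{ll}
a_{i,j} - \beta_ia_{i,i+1}a_{i+1,j}, & i = 1,\cdots,n-1,\\
a_{n,j} - \alpha a_{n,1}a_{1,j}, & i = n,
\end{array}\right.
\end{eqnarray*}
the exact analogue of (\ref{eqs3.31}) for the present combination preconditioner.

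Next I would verify that $A^{(26)}$ is a Z-matrix. For $i\le n-1$ the factor $-\beta_ia_{i,i+1}$ is nonnegative, and for $i=n$ the factor $-\alpha a_{n,1}$ is positive (here $a_{n,1}<0$ and $\alpha>0$), so in each case the correction term is a nonnegative multiple of an off-diagonal entry $a_{i+1,j}\le0$ (resp. $a_{1,j}\le0$). Hence $a^{(26)}_{i,j}\le a_{i,j}\le0$ at every off-diagonal position $(i,j)$ except the two where the correction index hits the diagonal: at $(i,i+1)$ one gets $a^{(26)}_{i,i+1}=(1-\beta_i)a_{i,i+1}\le0$ using $\beta_i\le1$, and at $(n,1)$ one gets $a^{(26)}_{n,1}=(1-\alpha)a_{n,1}\le0$ using $\alpha\le1$. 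Thus every off-diagonal entry is nonpositive, so $A^{(26)}$ is a Z-matrix; note that it is precisely the bounds $\beta_i\le1$ and $\alpha\le1$ that are needed at this step.

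Finally, to obtain irreducibility I would exhibit an explicit Hamiltonian cycle in $G(A^{(26)})$, rather than argue path-by-path as in Lemma~\ref{lem3-44}. Using $a_{k+1,k}<0$ for $k=1,\cdots,n-1$ together with the fact that each correction term is $\le0$, the subdiagonal entries satisfy $a^{(26)}_{i,i-1}\le a_{i,i-1}<0$ for $i=2,\cdots,n$, which produces the edges $i\to i-1$; and from $a_{1,n}<0$ we get $a^{(26)}_{1,n}\le a_{1,n}<0$, the closing edge $1\to n$. Consequently the directed cycle $(n,n-1,\cdots,2,1,n)$ lies entirely in $G(A^{(26)})$ and passes through every vertex, so $G(A^{(26)})$ is strongly connected and $A^{(26)}$ is irreducible.

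There is no deep difficulty here; the whole argument is sign bookkeeping. The one point demanding care is keeping the two families of rows separate — rows $i<n$, perturbed by the superdiagonal part $Q_{17}$, versus row $n$, perturbed by the $(n,1)$ part $Q_7$ — and checking in each case that the subtracted term is a nonnegative factor times a nonpositive off-diagonal entry, so that both the Z-matrix inequalities and the strict negativity along the cycle survive the perturbation. The hypothesis $n\ge3$ is used exactly to guarantee that the entries $a_{i+1,i-1}$, $a_{1,n-1}$ and $a_{2,n}$ occurring in the corrections are genuinely off-diagonal (hence $\le0$); for $n=2$ the construction degenerates and the sign estimates along the cycle would no longer be automatic.
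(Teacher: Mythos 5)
Your proof is correct and takes essentially the same route as the paper: the paper handles this lemma by declaring it ``completely similar'' to Lemma~\ref{lem3-444}, whose model, case ($ii$) of Lemma~\ref{lem3-44}, is precisely your argument --- write out the entries of $A^{(26)}=(I+Q_{26})A$, verify the Z-property by noting each correction is a nonnegative factor times a nonpositive entry (with $\beta_i\le 1$ and $\alpha\le 1$ needed exactly at the positions $(i,i+1)$ and $(n,1)$), and observe that $a^{(26)}_{i,i-1}\le a_{i,i-1}<0$ and $a^{(26)}_{1,n}\le a_{1,n}<0$ give a directed cycle through all vertices of $G(A^{(26)})$, hence irreducibility. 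Your explicit remark on where $n\ge 3$ is used (keeping $a_{1,n-1}$ and $a_{2,n}$ off-diagonal so the sign estimates survive) is a correct and useful clarification of what the paper leaves implicit.
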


\begin{theorem}\label{thm3.99}
Suppose that $\alpha a_{n,1}a_{1,n} < 1$,
$\beta_ka_{k,k+1}a_{k+1,k} < 1$, $k = 1, \cdots,$ $ n-1$.
Then Theorem C is valid for $\nu = 26$,
provided one of the following conditions is satisfied:

\begin{itemize}
\item[(i)] $\alpha \lesssim 1$ and $0 \le \beta_k \lesssim 1$, $k = 1, \cdots, n-1$.

\item[(ii)] $\alpha \le 1$,
$0 < \beta_k \le 1$, $a_{k,k+1} < 0$, $k = 1, \cdots, n-1$.

\item[(iii)] $n \ge 3$, $\alpha \le 1$,
$0 \le \beta_k \le 1$, $a_{1,n} < 0$, $a_{k+1,k} < 0$, $k = 1, \cdots, n-1$.
\end{itemize}
\end{theorem}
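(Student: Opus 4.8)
The plan is to recognize $Q_{26}$ as the special case of $Q_{24}$ obtained by setting $\alpha_1 = \alpha$ and $\alpha_j = 0$ for $j = 2, \ldots, n-1$, the superdiagonal weights $\beta_k$ being common to both preconditioners, so that all three assertions of Theorem~\ref{thm3.91} become available once I check that their hypotheses are inherited. First I would record the diagonal entries $a^{(26)}_{i,i} = 1 - \beta_i a_{i,i+1}a_{i+1,i}$ for $i \le n-1$ and $a^{(26)}_{n,n} = 1 - \alpha a_{n,1}a_{1,n}$, so that the standing assumptions $\beta_k a_{k,k+1}a_{k+1,k} < 1$ and $\alpha a_{n,1}a_{1,n} < 1$ force every $a^{(26)}_{i,i} > 0$; together with the Z-matrix property this makes $A^{(26)}$ an L-matrix. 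I would also observe that $\sum_{k=1}^{n-1}\alpha_k a_{n,k}a_{k,n} = \alpha a_{n,1}a_{1,n}$, so the convergence-type bound (\ref{eqs3.52}) of Theorem~\ref{thm3.91} reduces exactly to the hypotheses assumed here, and that $Q_{26}$ is a legitimate $Q_{24}$ (since $\alpha>0$, $\alpha_j=0\ge 0$, $\sum_k\alpha_k a_{n,k}=\alpha a_{n,1}\neq 0$, and $\sum_k\beta_k a_{k,k+1}\neq 0$).

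With this dictionary in place the three cases are verified directly. For condition (i), $\alpha \lesssim 1$ and $0 \le \beta_k \lesssim 1$ translate verbatim into $0 \le \alpha_k, \beta_k \lesssim 1$ (as $\alpha_j = 0 \lesssim 1$ trivially), so Theorem~\ref{thm3.91}(i) applies. For condition (ii) I would split on $\gamma$: by Lemma~\ref{lem3-1} it suffices to treat $\gamma \le \omega$, and for $0 \le \gamma < 1$ the data $a_{k,k+1} < 0$, $0 \le \alpha_k \le 1$, $0 < \beta_k \le 1$ are precisely Theorem~\ref{thm3.91}(ii). For $\gamma = 1$ I would invoke Theorem~\ref{thm3.91}(iii): its sub-condition $(iii_2)$ ($a_{n,1}<0$ and $\alpha_1>0$) is automatic because $a_{n,1}<0$ and $\alpha>0$ are built into the definition of $Q_{26}$, and $(iii_5)$ ($a_{1,2}<0$, $a_{n,1}<0$, $\beta_1>0$) holds as well since $a_{1,2}<0$ follows from $a_{k,k+1}<0$ and $\beta_1>0$ from $0<\beta_k$. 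For condition (iii) I would first apply Lemma~\ref{lem3-4444} to conclude that $A^{(26)}$ is an irreducible Z-matrix, hence (by the diagonal computation above) an irreducible L-matrix; once irreducibility is secured the symbols $\lesssim$ collapse to $\le$, so the assumptions $\alpha \le 1$ and $0 \le \beta_k \le 1$ now satisfy the hypotheses of part (i), and the conclusion follows from the already-established case (i).

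The diagonal bookkeeping and the pattern-matching against Theorem~\ref{thm3.91} are routine, so I expect no genuine difficulty there. The one place demanding care is the $\gamma = 1$ branch of condition (ii): one must single out the correct sub-case of Theorem~\ref{thm3.91}(iii), and the choices $(iii_2)$ and $(iii_5)$ succeed precisely because the sign $a_{n,1}<0$ (and, for $(iii_5)$, also $a_{1,2}<0$ and $\beta_1>0$) are available simultaneously. Had $a_{n,1}<0$ not been part of the definition of $Q_{26}$, one would instead have to route through $(iii_1)$ after establishing the irreducibility of $A^{(26)}$ by a Lemma~\ref{lem3-4444}-type argument. This reliance on the built-in sign $a_{n,1}<0$, together with the need to check that the $\lesssim$ convention is interpreted consistently once irreducibility is obtained in case (iii), is the main subtlety of the argument.
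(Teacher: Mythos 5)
Your proposal is correct and follows essentially the same route as the paper: specialize Theorem \ref{thm3.91} via $\alpha_1=\alpha$, $\alpha_j=0$ ($j\ge 2$), verify case (i) directly, dispose of case (ii) at $\gamma=1$ through sub-condition ($iii_2$) using the built-in signs $a_{n,1}<0$, $\alpha>0$, and handle case (iii) by invoking Lemma \ref{lem3-4444} so that the ``$\lesssim$'' conditions collapse to ``$\le$'' and case (i) applies. The extra observations (the diagonal bookkeeping, the alternative route via ($iii_5$)) are harmless additions not needed in the paper's own, more terse, argument.
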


\begin{proof}
By Theorem \ref{thm3.91}, we just need to prove ($ii$) and ($iii$).

When ($ii$) holds, since $a_{n,1} < 0$ and $\alpha > 0$, the condition ($iii_2$)
in Theorem \ref{thm3.91} is satisfied.

When ($iii$) holds, by Lemma \ref{lem3-4444}, $A^{(26)}$ is an irreducible Z-matrix.
From ($i$) we can prove ($iii$).
\end{proof}

For ($ii$) and ($iii$) the assumption that $A$ is irreducible is redundant.
Hence, \cite[Theorem 3.1]{Yu12} can be derived, directly.

All the corresponding results given in \cite{WH07} are problematic, because
\cite[Lemmas 4.1, 4.3]{WH07} are wrong. In fact, let $A$ be defined by (\ref{eqs3.66}).
Then it is easy to prove that $A$ is an irreducible L-matrix and
the assumptions of \cite[Lemmas 4.1, 4.3]{WH07} are satisfied. But
the iteration matrices of the preconditioned AOR methods
are reducible when we choose $\beta_2=1$.

In this case, ($ii$) of Theorem \ref{thm3.92} can be not satisfied. Hence,
form Theorems \ref{thm3.92}, the following theorem is derived, directly.

\begin{theorem}
Theorem D is valid for $\nu = 26$,
provided one of the conditions ($i$), ($ii$) and ($iii$) of Theorem \ref{thm3.99} is satisfied.
\end{theorem}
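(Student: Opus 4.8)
The plan is to obtain the case $\nu = 26$ as a direct specialization of Theorem \ref{thm3.92}, the corresponding statement for $\nu = 24$. The first step is to recognize $Q_{26} = Q_7 + Q_{17}$ as the instance of $Q_{24} = Q_5 + Q_{17}$ in which the lower part $Q_7$ is the one-parameter reduction of $Q_5$ with $r=n$: concretely, $Q_{24}$ has last row $(-\alpha_1 a_{n,1}, \ldots, -\alpha_{n-1} a_{n,n-1}, 0)$, and setting $\alpha_1 = \alpha$ together with $\alpha_2 = \cdots = \alpha_{n-1} = 0$ reproduces exactly the last row $(-\alpha a_{n,1}, 0, \ldots, 0)$ of $Q_{26}$, while the upper-triangular parameters $\beta_k$ are common to both. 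Since $a_{n,1} < 0$ and $\alpha > 0$ in the definition of $Q_{26}$, the sign and summation constraints imposed on $Q_{24}$ are automatically met, so $Q_{26}$ is a legitimate special case of $Q_{24}$ and Theorem \ref{thm3.92} is available.

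Next I would check that each of the hypotheses ($i$), ($ii$), ($iii$) of Theorem \ref{thm3.99} forces condition ($i$) of Theorem \ref{thm3.92}, i.e.\ one of the conditions ($i$), ($ii$), ($iii$) of Theorem \ref{thm3.91}. This is precisely the content already extracted in establishing Theorem \ref{thm3.99}: after the substitution $\alpha_2 = \cdots = \alpha_{n-1} = 0$ the reduced quantities $\delta^{(24)}_{i,j}(\gamma)$ and $\delta^{(24)}_{i,j}(1)$ collapse to $\delta^{(26)}_{i,j}(\gamma)$ and $\delta^{(26)}_{i,j}(1)$, and the irreducibility of $A^{(26)}$ needed for the strict comparison in cases ($ii$) and ($iii$) is supplied by Lemma \ref{lem3-4444}, which holds without assuming $A$ irreducible. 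Thus the strict Stein--Rosenberg trichotomy of Theorem \ref{thm3.91}, suitably specialized, is in force under each of ($i$)--($iii$) of Theorem \ref{thm3.99}.

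It then remains to observe that condition ($ii$) of Theorem \ref{thm3.92} contributes nothing new here. Its positivity requirements ($ii_1$)--($ii_4$) each demand a strictly positive product involving some $\alpha_{i_0}a_{n,i_0}$, or a strictly positive combination $\alpha_{j_0} a_{n,j_0} + \sum_{k \neq j_0}\alpha_k a_{n,k}a_{k,j_0}$; with $\alpha_k = 0$ for $k \geq 2$ these collapse onto conditions already handled by the lower-triangular part, so condition ($ii$) of Theorem \ref{thm3.92} need not be invoked in this case. Consequently, whenever one of ($i$), ($ii$), ($iii$) of Theorem \ref{thm3.99} holds, condition ($i$) of Theorem \ref{thm3.92} holds, and Theorem \ref{thm3.92} immediately yields $\rho({\mathscr L}^{(26)}_{\gamma,\omega}) < \rho({\mathscr L}_{\gamma,\omega}) < 1$, which is Theorem D for $\nu = 26$.

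Underlying all of this is the standard mechanism linking Theorems C and D: an irreducible nonsingular M-matrix is an irreducible L-matrix, so the trichotomy applies; by Lemma \ref{lem1-10} the matrix $P_{26}A$ is again a nonsingular M-matrix, and by Lemma \ref{lem1-5} its weak regular AOR splitting is convergent, forcing $\rho({\mathscr L}_{\gamma,\omega}) < 1$ and hence placing us in the first, strict branch of the trichotomy. The genuinely delicate point, and the one I expect to demand the most care, is not any new estimate but the bookkeeping in the second step: verifying that the $\alpha_k = 0$ reduction carries the conditions of Theorem \ref{thm3.91} exactly onto those of Theorem \ref{thm3.99}, and confirming that no residual hypothesis of Theorem \ref{thm3.92}($ii$) is silently required. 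Once that matching is made explicit, the theorem follows with no further computation.
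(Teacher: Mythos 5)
Your proposal is correct and follows essentially the same route as the paper: the paper likewise treats $Q_{26}$ as the specialization $\alpha_1=\alpha$, $\alpha_2=\cdots=\alpha_{n-1}=0$ of $Q_{24}$, observes that condition ($ii$) of Theorem \ref{thm3.92} cannot be satisfied here (for $j=1$ it would force $\alpha a_{n,1}\ge 0$, contradicting $a_{n,1}<0$, $\alpha>0$), and then derives the result directly from condition ($i$) of Theorem \ref{thm3.92} via the condition-matching already done in Theorem \ref{thm3.99}. The only inessential slip is attributing the irreducibility of $A^{(26)}$ in case ($ii$) to Lemma \ref{lem3-4444}, which the paper invokes only for case ($iii$); case ($ii$) instead routes through condition ($iii_2$) of Theorem \ref{thm3.91}, but this does not affect the validity of your argument.
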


Corresponding to $Q_{26}$, in \cite{HWXC16} $Q$ is defined as
\begin{eqnarray*}
Q_{27} = \left(\begin{array}{ccccc}
0 & 0 & \cdots & 0 & -\beta a_{1,n} \\
-\alpha_1a_{2,1} & 0 & \cdots & 0 & 0\\
0 & -\alpha_2a_{3,2} &\ddots & 0 & 0 \\
\vdots & \vdots & \ddots & \ddots & \vdots\\
0 & 0 & \cdots & -\alpha_{n-1}a_{n,n-1} & 0
\end{array}\right)
 \end{eqnarray*}
with $a_{1,n} < 0$, $\beta > 0$, $\alpha_k \ge 0$, $k = 1, \cdots, n-1$, and
\begin{eqnarray*}
\sum\limits_{k = 1}^{n-1}\alpha_ka_{k+1,k} \not = 0.
 \end{eqnarray*}

In this case, $\delta_{i,j}^{(2)}(\gamma)$ and $\delta_{i,j}^{(2)}(1)$ reduce respectively to
\begin{eqnarray*}
\delta_{i,j}^{(27)}(\gamma) = \left\{\begin{array}{ll}
\beta a_{1,n}a_{n,1}, & i = j = 1; \\
\alpha_{i-1}a_{i,i-1}a_{i-1,i}, & i = j = 2,\cdots,n;\\
 (\gamma-1)\beta a_{1,n}, & i = 1, j = n; \\
   (\gamma-1)\alpha_{i-1}a_{i,i-1}, & i = 2,\cdots,n, j = i-1;\\
   0, & otherwise
\end{array}\right.
\end{eqnarray*}
and
\begin{eqnarray*}
\delta_{i,j}^{(27)}(\gamma) = \left\{\begin{array}{ll}
\beta a_{1,n}a_{n,1}, & i = j = 1; \\
 \alpha_{i-1}a_{i,i-1}a_{i-1,i}, & i = j = 2,\cdots,n; \\
 0, & otherwise.
\end{array}\right.
\end{eqnarray*}

By Corollaries \ref{coro3-9} and \ref{coro3-10}, the following comparison theorems are directly.

\begin{theorem}
Suppose that $\beta \le 1$, $0 \le \alpha_k \le 1$, $\beta a_{1,n}a_{n,1} < 1$,
$\alpha_k a_{k+1,k}a_{k,k+1}$ $ < 1$, $k = 1, \cdots, n-1$.
Then Theorem A is valid for $\nu = 27$.
\end{theorem}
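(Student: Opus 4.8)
The plan is to recognize $Q_{27}$ as a special case of the preconditioner $Q_2 = (-\alpha_{i,j}a_{i,j})$ and then to invoke Corollary \ref{coro3-9} directly. Reading off the nonzero entries of $Q_{27}$, I would set $\alpha_{1,n} = \beta$, $\alpha_{i,i-1} = \alpha_{i-1}$ for $i = 2, \cdots, n$, and $\alpha_{i,j} = 0$ for all remaining pairs $i \neq j$. With this identification $Q_{27}$ is exactly the matrix $Q_2$ built from these coefficients, so the comparison theory developed for $\nu = 2$ applies verbatim.

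First I would check the box constraint $0 \le \alpha_{i,j} \le 1$ required by Corollary \ref{coro3-9}. The hypothesis gives $0 \le \alpha_k \le 1$ for $k = 1, \cdots, n-1$, which covers the subdiagonal coefficients $\alpha_{i,i-1}$. For the single corner entry $\alpha_{1,n} = \beta$, the defining assumption $\beta > 0$ for $Q_{27}$ together with the hypothesis $\beta \le 1$ yields $0 < \beta \le 1$; all other coefficients vanish. Hence the box constraint holds for every off-diagonal pair.

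Next I would verify the diagonal condition $\sum_{k = 1, k\neq i}^n \alpha_{i,k}a_{i,k}a_{k,i} < 1$ for each $i$. The crucial observation is that each row of $Q_{27}$ carries at most one nonzero off-diagonal coefficient, so each of these sums collapses to a single term. For $i = 1$ the only surviving coefficient is $\alpha_{1,n} = \beta$, giving $\beta a_{1,n}a_{n,1}$, which is $< 1$ by hypothesis. For $i = 2, \cdots, n$ the only surviving coefficient is $\alpha_{i,i-1} = \alpha_{i-1}$, giving $\alpha_{i-1}a_{i,i-1}a_{i-1,i}$; setting $k = i-1$ this is precisely $\alpha_k a_{k+1,k}a_{k,k+1}$, which is $< 1$ by the assumption for $k = 1, \cdots, n-1$. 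Thus all hypotheses of Corollary \ref{coro3-9} are met, and Theorem A follows for $\nu = 27$.

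There is no substantive obstacle here: the entire argument is a bookkeeping reduction to the already-established Corollary \ref{coro3-9}. The only point demanding care is matching the index bookkeeping correctly, namely recognizing that row $i$ of $Q_{27}$ contributes only the single product indexed by $i-1$ (for $i \ge 2$) or by $n$ (for $i = 1$), so that the two families of inequalities in the hypothesis align one-to-one with the $n$ diagonal conditions of Corollary \ref{coro3-9}. Note that the displayed expressions for $\delta_{i,j}^{(27)}(\gamma)$ and $\delta_{i,j}^{(27)}(1)$ are not needed for Theorem A; they will only be invoked later when establishing the Stein-Rosenberg Type Theorem II for this preconditioner.
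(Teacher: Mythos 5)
Your proof is correct and follows essentially the same route as the paper: the paper also obtains this theorem as a direct consequence of Corollary \ref{coro3-9}, treating $Q_{27}$ as the special case of $Q_2$ with $\alpha_{1,n}=\beta$, $\alpha_{i,i-1}=\alpha_{i-1}$ and all other coefficients zero. Your write-up simply makes explicit the index bookkeeping that the paper compresses into the word ``directly''.
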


\begin{theorem}
Suppose that $\beta \le 1$, $0 \le \alpha_k \le 1$, $k = 1, \cdots, n-1$.
Then Theorem B is valid for $\nu = 27$.
\end{theorem}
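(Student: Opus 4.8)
The plan is to recognize $Q_{27}$ as a special instance of the general preconditioner $Q_2 = (-\alpha_{i,j}a_{i,j})$ from Subsection 3.1 and then invoke Corollary \ref{coro3-10} directly, exactly as the sentence preceding the statement anticipates. The whole argument is a matter of reading off the right coefficients and checking they lie in $[0,1]$.

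First I would match the nonzero entries of $Q_{27}$ against the pattern $q^{(2)}_{i,j} = -\alpha_{i,j}a_{i,j}$. The top-right entry $-\beta a_{1,n}$ forces $\alpha_{1,n} = \beta$; the subdiagonal entries $-\alpha_{i-1}a_{i,i-1}$ for $i = 2,\cdots,n$ force $\alpha_{i,i-1} = \alpha_{i-1}$; and every remaining off-diagonal coefficient is $\alpha_{i,j} = 0$. Thus $Q_{27}$ is precisely $Q_2$ for this particular choice of the $\alpha_{i,j}$. I would also note in passing that the nontriviality condition $\sum_{i\neq j}\alpha_{i,j}a_{i,j}\neq 0$ required of $Q_2$ holds here, since by the definition of $Q_{27}$ we have $\beta > 0$ and $a_{1,n} < 0$, so the $\alpha_{1,n}a_{1,n} = \beta a_{1,n}$ contribution is already nonzero.

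Next I would verify the hypothesis of Corollary \ref{coro3-10}, namely $0 \le \alpha_{i,j} \le 1$ for all $i\neq j$. The assumption $\beta \le 1$, together with $\beta > 0$ from the definition of $Q_{27}$, gives $0 \le \alpha_{1,n} = \beta \le 1$. The assumption $0 \le \alpha_k \le 1$ gives $0 \le \alpha_{i,i-1} = \alpha_{i-1} \le 1$ for $i = 2,\cdots,n$. All the remaining coefficients equal $0$ and so trivially lie in $[0,1]$. Hence every off-diagonal $\alpha_{i,j}$ satisfies $0 \le \alpha_{i,j} \le 1$, and Corollary \ref{coro3-10} applies and yields Theorem B for this preconditioner, i.e.\ for $\nu = 27$ viewed as the stated special case of $\nu = 2$.

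There is essentially no hard step: the result is a routine specialization. The only point that warrants a moment's care is the index bookkeeping, in particular observing that the single upper-triangular coefficient $\alpha_{1,n}=\beta$ is controlled by the separate hypothesis $\beta \le 1$ (it does not appear in the lower-triangular list $\alpha_1,\dots,\alpha_{n-1}$), so that the two bounds $\beta\le 1$ and $0\le\alpha_k\le 1$ together cover all nonzero entries of $Q_{27}$. With that confirmed, the proof is complete.
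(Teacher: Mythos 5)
Your proof is correct and follows essentially the same route as the paper: the paper disposes of this theorem with the one-line remark that it follows ``by Corollaries \ref{coro3-9} and \ref{coro3-10} directly,'' i.e.\ by viewing $Q_{27}$ as the special case of $Q_2$ with $\alpha_{1,n}=\beta$, $\alpha_{i,i-1}=\alpha_{i-1}$ and all other $\alpha_{i,j}=0$, which is precisely the identification you carry out. Your write-up merely makes explicit the coefficient matching, the check that all $\alpha_{i,j}\in[0,1]$, and the nontriviality condition $\sum_{i\neq j}\alpha_{i,j}a_{i,j}\neq 0$ (guaranteed since $\beta a_{1,n}<0$ and every term is nonpositive), all of which the paper leaves implicit.
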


Completely similar to Lemma \ref{lem3-4444} we can prove the following lemma.

\begin{lemma}\label{lem3-4445}
Let $A$ be a Z-matrix. Assume that
$n \ge 3$, $a_{n,1} < 0$, $\beta \le 1$, $a_{k,k+1} < 0$, $0 \le \alpha_k \le 1$,
$k = 1, \cdots, n-1$.
Then $A^{(27)}$ is an irreducible Z-matrix.
\end{lemma}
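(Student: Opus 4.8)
The plan is to follow the template already used in Lemma~\ref{lem3-4444}, which in turn mirrors Lemma~\ref{lem3-444}. First I would write down the entries of $A^{(27)}=(I+Q_{27})A$ explicitly; then verify that every off-diagonal entry is nonpositive, so that $A^{(27)}$ is a Z-matrix; and finally exhibit an explicit Hamiltonian cycle in $G(A^{(27)})$ all of whose edges are strictly negative, which forces strong connectivity and hence irreducibility. The strict-negativity hypotheses $a_{k,k+1}<0$ and $a_{n,1}<0$ are exactly what make this cyclic argument go through without any path reconstruction.

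First I would compute the entries. Since the only nonzero entry in the first row of $Q_{27}$ is $q^{(27)}_{1,n}=-\beta a_{1,n}$, and the only nonzero entry in row $i$ (for $i\ge2$) is $q^{(27)}_{i,i-1}=-\alpha_{i-1}a_{i,i-1}$, a direct computation gives
\begin{eqnarray*}
a^{(27)}_{1,j} &=& a_{1,j}-\beta a_{1,n}a_{n,j}, \quad j=1,\cdots,n,\\
a^{(27)}_{i,j} &=& a_{i,j}-\alpha_{i-1}a_{i,i-1}a_{i-1,j}, \quad i=2,\cdots,n,\ j=1,\cdots,n.
\end{eqnarray*}
For the Z-matrix check I would use that $\beta>0$ and $a_{1,n}<0$ give $-\beta a_{1,n}>0$, while $0\le\alpha_{i-1}\le1$ and $a_{i,i-1}\le0$ give $-\alpha_{i-1}a_{i,i-1}\ge0$; combined with $a_{k,\ell}\le0$ for $k\ne\ell$, each correction term is nonpositive, so $a^{(27)}_{1,j}\le a_{1,j}\le0$ for $j\ne1,n$ and $a^{(27)}_{i,j}\le a_{i,j}\le0$ for $j\ne i,i-1$. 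The two boundary cases $a^{(27)}_{1,n}=(1-\beta)a_{1,n}\le0$ (as $\beta\le1$, $a_{1,n}<0$) and $a^{(27)}_{i,i-1}=(1-\alpha_{i-1})a_{i,i-1}\le0$ (as $0\le\alpha_{i-1}\le1$, $a_{i,i-1}\le0$) finish the argument that $A^{(27)}$ is a Z-matrix.

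For irreducibility I would show that the cycle $1\to2\to\cdots\to n\to1$ lies in $G(A^{(27)})$. From the entry formulas and the hypotheses $a_{k,k+1}<0$ ($1\le k\le n-1$), $a_{n,1}<0$, one obtains $a^{(27)}_{1,2}\le a_{1,2}<0$, then $a^{(27)}_{k,k+1}\le a_{k,k+1}<0$ for $2\le k\le n-1$, and $a^{(27)}_{n,1}\le a_{n,1}<0$; here $n\ge3$ is used precisely so that $a_{n,2}$ and $a_{n-1,1}$ are genuine off-diagonal entries (hence $\le0$), which keeps the correction terms nonpositive and preserves the strict inequalities. These $n$ edges visit every vertex, so $G(A^{(27)})$ is strongly connected and $A^{(27)}$ is irreducible.

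The main point to watch will be the sign bookkeeping in the correction terms together with the role of $n\ge3$ in keeping indices such as $(n,2)$ and $(n-1,1)$ off the diagonal; this is what the phrase \emph{completely similar to Lemma~\ref{lem3-4444}} is pointing at. Once the explicit strictly negative cycle is in hand, irreducibility is immediate, and no broken-path reconstruction (as was needed in the general subdiagonal case, Lemma~\ref{lem3-444}(i)) is required.
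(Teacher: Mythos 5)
Your proof is correct and follows exactly the route the paper intends: the paper's "proof" of this lemma is just a pointer back through Lemma~\ref{lem3-4444} to the case ($ii$) argument of Lemma~\ref{lem3-444}, which is precisely your computation of the entries of $A^{(27)}$, the sign check for the Z-matrix property, and the strictly negative cycle $1\to2\to\cdots\to n\to1$ in $G(A^{(27)})$ giving strong connectivity. Your identification of where $n\ge3$ enters (keeping $a_{n,2}$ and $a_{n-1,1}$ off-diagonal so the correction terms stay nonpositive) is exactly the right bookkeeping.
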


\begin{theorem}\label{thm3.999}
Suppose that $\beta a_{1,n}a_{n,1} < 1$,
$\alpha_k a_{k+1,k}a_{k,k+1} < 1$, $k = 1, \cdots,$ $ n-1$.
Then Theorem C is valid for $\nu = 27$,
provided one of the following conditions is satisfied:

\begin{itemize}
\item[(i)] $0 \le \gamma < 1$, $\beta \lesssim 1$ and $0 \le \alpha_k \lesssim 1$, $k = 1, \cdots, n-1$.

\item[(ii)] $\gamma = 1$, $\beta \lesssim 1$ and $0 \le \alpha_k \lesssim 1$, $k = 1, \cdots, n-1$.
One of the following conditions holds:

\begin{itemize}
\item[($ii_1$)] $a_{n,1} < 0$.

\item[($ii_2$)] There exists $k\in\{1, \cdots, n-1\}$ such that
$\alpha_{k}a_{k+1,k}a_{k,k+1} > 0$.

\item[($ii_3$)] $a_{k,k+1} < 0$, $k = 1, \cdots, n-1$.
\end{itemize}

\item[(iii)] $0 \le \gamma < 1$, $\beta \le 1$, $0 \ge \alpha_{n-1} \le 1$,
$0 < \alpha_k \le 1$ and $a_{k+1,k} < 0$, $k = 1, \cdots, n-2$.

\item[(iv)] $\gamma = 1$, $\beta \le 1$, $0 \le \alpha_{n-1} \le 1$,
$0 < \alpha_k \le 1$ and $a_{k+1,k}a_{k,k+1} > 0$, $k = 1, \cdots, n-2$.
One of the following conditions holds:

\begin{itemize}
\item[($iv_1$)] $a_{n,1} < 0$.

\item[($iv_2$)] $\alpha_{n-1} > 0$ and $a_{n,n-1}a_{n-1,n} > 0$.
\end{itemize}

\item[(v)] $n \ge 3$, $a_{n,1} < 0$, $a_{k,k+1} < 0$, $\beta \le 1$, $0 \le \alpha_k \le 1$,
$k = 1, \cdots, n-1$.
\end{itemize}
\end{theorem}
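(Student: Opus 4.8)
The plan is to treat $Q_{27}$ as a special instance of the general second-type preconditioner $Q_2$, obtained by setting $\alpha_{1,n} = \beta$, $\alpha_{k+1,k} = \alpha_k$ for $k = 1, \cdots, n-1$, and $\alpha_{i,j} = 0$ otherwise, and then to invoke Corollary \ref{coro3-11} (Theorem C for $\nu = 2$) together with its underlying Theorem \ref{thm3.7}. First I would record that the hypotheses $\beta a_{1,n}a_{n,1} < 1$ and $\alpha_k a_{k+1,k}a_{k,k+1} < 1$ are exactly the requirement $\sum_{k\neq i}\alpha_{i,k}a_{i,k}a_{k,i} < 1$ of Corollary \ref{coro3-11}, so that $a^{(27)}_{i,i} > 0$ and $A^{(27)}$ is an L-matrix; the sign assumptions $a_{1,n} < 0$ and $\beta, \alpha_k \ge 0$ built into the definition of $Q_{27}$ then guarantee the correct off-diagonal Z-matrix pattern.

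For parts (i) and (ii) I would check that $0 \le \alpha_k \lesssim 1$ and $\beta \lesssim 1$ deliver the condition $0 \le \alpha_{i,j} \lesssim 1$ of Corollary \ref{coro3-11}(i), the symbol $\lesssim$ simultaneously furnishing the irreducibility of $A^{(27)}$ (hence $y^T[M^{(27)}_{\gamma,\omega}]^{-1} \gg 0$ by Lemma \ref{lem1-8}) when equality is permitted. When $\gamma < 1$ this is immediate. When $\gamma = 1$ the key structural observation is that $\delta^{(27)}_{i,j}(1)$ is purely diagonal, with entries $\beta a_{1,n}a_{n,1}$ at $(1,1)$ and $\alpha_{i-1}a_{i,i-1}a_{i-1,i}$ at $(i,i)$; hence it suffices to exhibit one positive diagonal entry to meet condition $(ii_1)$ of Theorem \ref{thm3.7}. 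I would verify that $(ii_1)$ [$a_{n,1} < 0$, whence $\beta a_{1,n}a_{n,1} > 0$ since $a_{1,n} < 0$ and $\beta > 0$] and $(ii_2)$ each force such an entry directly, and that $(ii_3)$ reduces to $(ii_2)$: from $\sum_k \alpha_k a_{k+1,k} \neq 0$ there is $k_0$ with $\alpha_{k_0}a_{k_0+1,k_0} < 0$, which together with $a_{k_0,k_0+1} < 0$ makes $\alpha_{k_0}a_{k_0+1,k_0}a_{k_0,k_0+1} > 0$.

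For parts (iii) and (iv) I would match the ``for each $i$'' hypotheses to the row-covering conditions (iii)/(iv) of Theorem \ref{thm3.7} via Corollary \ref{coro3-11}(ii): the sign conditions $a_{k+1,k} < 0$ (respectively $a_{k+1,k}a_{k,k+1} > 0$) guarantee that the relevant rows of $\Phi(\gamma,\omega)$ each carry a positive element. Finally, for part (v) I would apply Lemma \ref{lem3-4445}, which under $n \ge 3$, $a_{n,1} < 0$, $a_{k,k+1} < 0$ asserts that $A^{(27)}$ is an irreducible Z-matrix; this lets $\lesssim$ be read as $\le$, so the situation collapses to part (i). The main obstacle will be the bookkeeping of the reducible/irreducible dichotomy encoded in $\lesssim$ and the faithful translation of the several sub-conditions into those of Theorem \ref{thm3.7}; establishing irreducibility through Lemma \ref{lem3-4445} in case (v) is the genuine technical crux, since the subdiagonal-plus-corner sparsity of $Q_{27}$ makes the existence of a positive element in every required row delicate.
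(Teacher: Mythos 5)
Your overall route is the paper's own: realize $Q_{27}$ as a special case of $Q_2$, invoke Corollary \ref{coro3-11} and Theorem \ref{thm3.7}, exploit the fact that $\delta^{(27)}_{i,j}(1)$ is purely diagonal, and call on Lemma \ref{lem3-4445} for part (v). Your handling of parts (i), (ii) (including the reduction of ($ii_3$) to ($ii_2$) via the sign constraint $\sum_k\alpha_ka_{k+1,k}\neq 0$) and (iii) is correct and matches the paper. However, your plan for part (iv) verifies only half of the hypothesis of Theorem \ref{thm3.7}(iv). That hypothesis is conjunctive: besides requiring, for each $i\in\{2,\cdots,n-1\}$, one of ($iv_1$)--($iv_7$) (which your sign conditions $\alpha_k>0$, $a_{k+1,k}a_{k,k+1}>0$ do supply, since they give $\delta^{(27)}_{i,i}(1)=\alpha_{i-1}a_{i,i-1}a_{i-1,i}>0$), it demands ``at the same time'' one of ($iv^a$)--($iv^e$), which concerns rows $1$ and $n$. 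Your proposal never uses the hypotheses ($iv_1$) and ($iv_2$) of the statement, yet they exist precisely to discharge this second clause: ($iv_1$), i.e. $a_{n,1}<0$, yields ($iv^e$) of Theorem \ref{thm3.7}, because then $\delta^{(27)}_{1,1}(1)=\beta a_{1,n}a_{n,1}>0$ (so ($iv_1$) of Theorem \ref{thm3.7} holds for $i=1$) and $a_{n,1}<0$; while ($iv_2$), i.e. $\alpha_{n-1}>0$ and $a_{n,n-1}a_{n-1,n}>0$, yields ($iv^a$) with $k=n-1$, $j=n$. Without this step the appeal to Corollary \ref{coro3-11}(ii) in case (iv) is not justified.

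A second, smaller slip occurs in part (v): the situation does not collapse to part (i) alone. Condition (v) places no restriction on $\gamma$, whereas part (i) covers only $0\le\gamma<1$; for $\gamma=1$ you must fall back on part (ii), whose sub-condition ($ii_1$) is exactly the hypothesis $a_{n,1}<0$ contained in (v). This is how the paper argues (irreducibility from Lemma \ref{lem3-4445}, then ``from ($i$) and ($ii$) \dots where ($ii_1$) is satisfied''). The fix is immediate given your earlier work on (ii), but as written the case $\gamma=1$ under (v) is left uncovered.
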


\begin{proof}
($i$), ($ii_1$) and ($ii_2$) satisfy respectively ($i$), ($ii_1$) in Theorem \ref{thm3.7}.
Hence they satisfy the condition ($i$) of Corollaries \ref{coro3-11}.

By the definition of $Q_{27}$, there exists $k_0 \in \{1, \cdots, n-1\}$ such that
$\alpha_{k_0}a_{k_0+1,k_0}$ $ < 0$ so that $\alpha_{k_0}a_{k_0+1,k_0}a_{k_0,k_0+1} > 0$,
which implies that ($ii_2$) holds for $k=k_0$.

($iii$) can be derived by ($ii$) of Corollaries \ref{coro3-11}.

($iv$) satisfies the conditions ($iv_1$), ($iv^e$) and ($iv^a$) in Theorem \ref{thm3.7}, so that
it satisfies condition ($ii$) of Corollaries \ref{coro3-11}.

When ($v$) holds, by Lemma \ref{lem3-4445}, $A^{(27)}$ is an irreducible L-matrix.
From ($i$) and ($ii$) we can prove ($v$), where ($ii_1$) is satisfied.
\end{proof}

Obviously, for ($v$) the assumption that $A$ is irreducible is redundant.

The following result is easy to prove.

\begin{theorem}
Theorem D is valid for $\nu = 27$,
provided one of the conditions ($i$)-($v$) of Theorem \ref{thm3.99} is satisfied.
\end{theorem}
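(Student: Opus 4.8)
The plan is to obtain this Theorem D for $\nu=27$ as an immediate corollary of the Stein--Rosenberg type result Theorem \ref{thm3.999} (Theorem C for $\nu=27$), exactly in the spirit of the earlier Theorem D statements such as Theorem \ref{thm3.92}. First I would note that an irreducible nonsingular M-matrix is in particular an irreducible L-matrix, so all the structural hypotheses that Theorem \ref{thm3.999} places on $A$ are automatically in force. Since each of the present conditions ($i$)--($v$) is literally one of the admissible hypotheses of Theorem \ref{thm3.999}, that theorem applies and yields the trichotomy: exactly one of the mutually exclusive relations $\rho({\mathscr L}^{(27)}_{\gamma,\omega})<\rho({\mathscr L}_{\gamma,\omega})<1$, $\rho({\mathscr L}^{(27)}_{\gamma,\omega})=\rho({\mathscr L}_{\gamma,\omega})=1$, or $\rho({\mathscr L}^{(27)}_{\gamma,\omega})>\rho({\mathscr L}_{\gamma,\omega})>1$ holds.

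The remaining task is to pin down which branch occurs, and this is where the nonsingular M-matrix hypothesis (rather than mere L-matrix) is used. Because $A$ is a nonsingular M-matrix, its AOR splitting $A=M_{\gamma,\omega}-N_{\gamma,\omega}$ is weak regular (as already observed in the proof of Theorem \ref{thm3.2}), so Lemma \ref{lem1-5} gives convergence of the AOR method, i.e. $\rho({\mathscr L}_{\gamma,\omega})<1$. (One may also invoke Lemma \ref{lem1-10}: under any of ($i$)--($v$) the matrix $A^{(27)}=P_{27}A$ is a Z-matrix, and since $A$ is a nonsingular M-matrix with $P_{27}>0$ nonsingular, $P_{27}A$ is again a nonsingular M-matrix; this reinforces the setting but is not strictly needed for the conclusion.) Combining $\rho({\mathscr L}_{\gamma,\omega})<1$ with the trichotomy of Theorem \ref{thm3.999} rules out the second and third relations and leaves only $\rho({\mathscr L}^{(27)}_{\gamma,\omega})<\rho({\mathscr L}_{\gamma,\omega})<1$, which is precisely the assertion of Theorem D.

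I do not expect any serious obstacle: the statement is genuinely a corollary, and the author's remark that it is \emph{easy to prove} is accurate. The only point requiring a moment's care is confirming that the hypotheses of Theorem \ref{thm3.999} are met verbatim—namely that $A^{(27)}$ retains the L-matrix structure under each of ($i$)--($v$) (the sign pattern of $Q_{27}$ together with $\beta\le 1$, $0\le\alpha_k\le 1$ keeps the off-diagonal entries nonpositive, while $\beta a_{1,n}a_{n,1}<1$ and $\alpha_k a_{k+1,k}a_{k,k+1}<1$ keep the diagonal positive). Once this bookkeeping is done, Theorem \ref{thm3.999} supplies the trichotomy and Lemma \ref{lem1-5} supplies $\rho({\mathscr L}_{\gamma,\omega})<1$, so nothing beyond these two ingredients is needed and the proof closes.
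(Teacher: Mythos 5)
Your proof is correct and takes essentially the same route the paper intends: the paper leaves this result as ``easy to prove,'' and its standard pattern (see the proof of Theorem \ref{thm3.4}) is exactly your argument — the trichotomy of Theorem \ref{thm3.999} combined with $\rho({\mathscr L}_{\gamma,\omega})<1$, which follows from the nonsingular M-matrix hypothesis via weak regularity of the AOR splitting and Lemma \ref{lem1-5} (equivalently, via Theorem B for $\nu=27$), forcing the first branch. Your side remark on verifying the standing inequalities of Theorem \ref{thm3.999} is also fine, since for a nonsingular M-matrix with unit diagonal one has $a_{i,j}a_{j,i}<1$, so $\beta a_{1,n}a_{n,1}<1$ and $\alpha_k a_{k+1,k}a_{k,k+1}<1$ hold automatically when $\beta\le 1$ and $0\le\alpha_k\le 1$.
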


As a special case of $Q_{23}$, $Q$ is defined in \cite{LH05} for the preconditioned Gauss-Seidel method as
 \begin{eqnarray*}
Q_{28} & = & Q_8 + Q_{17}\\
& = & \left(\begin{array}{ccccc}
0 & -\beta_1a_{1,2} & \cdots & 0 & 0\\
-\alpha_1a_{2,1} & 0 & \ddots & 0 & 0\\
\vdots & \ddots & \ddots & \ddots& \vdots\\
0 & 0 & \ddots & 0 & -\beta_{n-1}a_{n-1,n} \\
0 & 0 & \cdots & -\alpha_{n-1}a_{n,n-1} & 0
\end{array}\right)
 \end{eqnarray*}
with $\beta_k, \alpha_k \ge 0$, $k = 1, \cdots, n-1$, and
\begin{eqnarray*}
\sum\limits_{k = 1}^{n-1}\beta_ka_{k,k+1} \not = 0 \;\;
\hbox{and} \;\; \sum\limits_{k = 1}^{n-1}\alpha_ka_{k+1,k} \not = 0.
 \end{eqnarray*}

It is continued to be studied in \cite{LJ10} for the preconditioned Gauss-Seidel and Jacobi methods.
It is given in \cite{WDT12,WWT13} for the preconditioned GAOR method. In \cite{WS16}, it is used to
preconditioned parallel multisplitting USAOR method.

In this case, $\delta_{i,j}^{(23)}(\gamma)$ and $\delta_{i,j}^{(23)}(1)$ reduce respectively to
\begin{eqnarray*}
\delta_{i,j}^{(28)}(\gamma) = \left\{\begin{array}{ll}
 \alpha_{i-1}a_{i,i-1}a_{i-1,i} + \beta_ia_{i,i+1}a_{i+1,i}, & i = j = 1, \cdots, n;\\
(\gamma-1)\alpha_{i-1}a_{i,i-1} + \gamma\beta_ia_{i,i+1}a_{i+1,j},
 & i = 2,\cdots, n, j = i-1; \\
 (\gamma-1)\beta_ia_{i,i+1}, & i = 1, \cdots, n-1, j = i+1;\\
 \gamma\beta_ia_{i,i+1}a_{i+1,j}, & i = 1, \cdots, n-1,\\& j = 1, \cdots, n, \\
 & j\not = i-1, i, i+1; \\
 0, & i = n, j = 1, \cdots, n-2
 \end{array}\right.
\end{eqnarray*}
and
\begin{eqnarray*}
\delta_{i,j}^{(28)}(1) = \left\{\begin{array}{ll}
\beta_1a_{1,2}a_{2,1} < 1, & i = j = 1; \\
 \alpha_{i-1}a_{i,i-1}a_{i-1,i} + \beta_ia_{i,i+1}a_{i+1,i}, & i = j = 2, \cdots, n-1;\\
\alpha_{n-1}a_{n,n-1}a_{n-1,n} < 1, & i = j = n;\\
\beta_ia_{i,i+1}a_{i+1,j}, & i = 1, \cdots, n-1,\\& j = 1, \cdots, n, j\not = i, i+1; \\
 0, & otherwise.
 \end{array}\right.
\end{eqnarray*}

From Theorems \ref{thm3.85} and \ref{thm3.86}, the following two comparison results are immediately.

\begin{theorem}
Suppose that $0 \le \alpha_k, \beta_k \le 1$, $k = 1, \cdots, n-1$, and
$\beta_1a_{1,2}a_{2,1} < 1$, $\alpha_{n-1}a_{n,n-1}a_{n-1,n} < 1$,
$\alpha_{k-1}a_{k,k-1}a_{k-1,k} + \beta_ka_{k,k+1}a_{k+1,k} < 1$,
$k = 2, \cdots, n-1$.
Then Theorem A is valid for $\nu = 28$.
\end{theorem}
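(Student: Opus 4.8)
The plan is to recognize the stated result as the direct specialization of Theorem \ref{thm3.85} (the $\nu=23$ form of Theorem A) to the combination preconditioner $Q_{28}=Q_8+Q_{17}$, exactly as the preceding discussion advertises. First I would observe that $Q_{28}$ is obtained from $Q_{23}=Q_3+Q_{17}$ by collapsing the lower triangular block $Q_3$ down to its subdiagonal: one takes $\alpha_{i,j}=0$ for every $j<i-1$ and $\alpha_{i,i-1}=\alpha_{i-1}$, while the strictly upper part $Q_{17}$ retains the weights $\beta_i$. Under this identification the hypothesis $0\le\alpha_k,\beta_k\le 1$ immediately supplies the box constraints $0\le\alpha_{i+1,j}\le 1$ and $0\le\beta_i\le 1$ required by Theorem \ref{thm3.85}.

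Next I would verify that the inequalities of Theorem \ref{thm3.85}, namely $\beta_i a_{i,i+1}a_{i+1,i}+\sum_{k=1}^{i-1}\alpha_{i,k}a_{i,k}a_{k,i}<1$ for $i=1,\dots,n$, reduce term by term to the three families of conditions in the statement. Because $\alpha_{i,k}=0$ unless $k=i-1$, the sum collapses to the single contribution $\alpha_{i-1}a_{i,i-1}a_{i-1,i}$ whenever $i\ge 2$, and is empty when $i=1$. Reading off the three index regimes then gives: for $i=1$ the condition $\beta_1a_{1,2}a_{2,1}<1$; for $2\le i\le n-1$ the condition $\alpha_{i-1}a_{i,i-1}a_{i-1,i}+\beta_i a_{i,i+1}a_{i+1,i}<1$; and for $i=n$, since neither a weight $\beta_n$ nor an entry $a_{n,n+1}$ is present, the condition $\alpha_{n-1}a_{n,n-1}a_{n-1,n}<1$. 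These are precisely the hypotheses imposed in the theorem to be proved.

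With the correspondence established, the argument closes by simply invoking Theorem \ref{thm3.85}: its hypotheses are satisfied for this choice of $Q$, whence Theorem A is valid for $\nu=28$. No new analytic work is needed; the whole proof is the bookkeeping of this reduction. The one point demanding genuine care, and thus the main obstacle, is the correct handling of the two boundary indices $i=1$ and $i=n$, where one summand silently vanishes — the $Q_8$ (subdiagonal) contribution at $i=1$ and the $Q_{17}$ (superdiagonal) contribution at $i=n$ — because the corresponding matrix entry or weight is simply undefined. Making these edge cases align with the first and second inequalities of the statement, rather than accidentally calling for a spurious $\beta_n$ or $\alpha_0$ term, is the sole place where a careless index shift would break the reduction.
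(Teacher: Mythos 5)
Your proposal is correct and is exactly the paper's argument: the paper obtains this theorem ``immediately'' from Theorem \ref{thm3.85} by the same specialization of $Q_{23}=Q_3+Q_{17}$ to $Q_{28}=Q_8+Q_{17}$ (i.e.\ $\alpha_{i,j}=0$ for $j<i-1$, $\alpha_{i,i-1}=\alpha_{i-1}$), with the sum in the hypothesis collapsing to the single subdiagonal term and the boundary cases $i=1$, $i=n$ yielding the two one-term inequalities, just as you describe.
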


\begin{theorem}
Suppose that $0 \le \alpha_k, \beta_k \le 1$, $k = 1, \cdots, n-1$.
Then Theorem B is valid for $\nu = 28$.
\end{theorem}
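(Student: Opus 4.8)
The plan is to obtain this statement as an immediate specialization of Theorem~\ref{thm3.86}, which already establishes Theorem~B for the combination preconditioner $Q_{23}=Q_3+Q_{17}$. The key observation is that $Q_{28}=Q_8+Q_{17}$ is exactly the instance of $Q_{23}$ in which the lower-triangular block $Q_3$ has been thinned down to its subdiagonal $Q_8$, while the superdiagonal block $Q_{17}$ is kept unchanged. First I would fix the dictionary between the two parametrizations: in $Q_{23}$ the lower part carries double-indexed coefficients $\alpha_{i,j}$ ($i=2,\cdots,n$, $j<i$) and the upper part carries $\beta_i$ ($i=1,\cdots,n-1$), whereas in $Q_{28}$ the lower part is generated by the single subdiagonal sequence $\alpha_1,\cdots,\alpha_{n-1}$, with entry $-\alpha_{k}a_{k+1,k}$ sitting in position $(k+1,k)$. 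Thus $Q_{28}$ arises from $Q_{23}$ upon setting $\alpha_{k+1,k}=\alpha_k$ and $\alpha_{i,j}=0$ whenever $j<i-1$, keeping the same $\beta_i$.

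Next I would verify that the hypotheses of Theorem~\ref{thm3.86} hold under the present assumption $0\le\alpha_k,\beta_k\le 1$. In the $Q_{23}$ language Theorem~\ref{thm3.86} requires $0\le\alpha_{i+1,j}\le 1$ for $j\le i$ together with $0\le\beta_i\le 1$. The surviving subdiagonal coefficients satisfy $\alpha_{i+1,i}=\alpha_i\in[0,1]$ by hypothesis, the suppressed coefficients equal $0$ and so trivially lie in $[0,1]$, and each $\beta_i$ lies in $[0,1]$ as well. Hence every condition of Theorem~\ref{thm3.86} is met for this choice of parameters, and Theorem~B follows at once for $\nu=28$.

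It is worth recording why nothing analytic beyond this matching is needed in the nonsingular M-matrix case. Tracing Theorem~\ref{thm3.86} back through Corollary~\ref{coro3-10}, Corollary~\ref{coro3-2} and Theorem~\ref{thm3.2}, the only genuine input is that the off-diagonal entries $a^{(28)}_{i,j}$ stay nonpositive, so that $P_{28}A$ is a Z-matrix; Lemma~\ref{lem1-10} then upgrades this to $P_{28}A$ being a nonsingular M-matrix, and the weak-regular comparison behind Theorems~\ref{thm3.1} and~\ref{thm3.2} yields $\rho({\mathscr L}^{(28)}_{\gamma,\omega})\le\rho({\mathscr L}_{\gamma,\omega})<1$. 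The main obstacle, such as it is, is therefore purely bookkeeping: one must apply the index shift between the subdiagonal label $\alpha_k$ of $Q_8$ and the double index $\alpha_{i,j}$ of $Q_3$ consistently, and confirm that the nonvanishing constraints $\sum_k\alpha_k a_{k+1,k}\ne 0$ and $\sum_k\beta_k a_{k,k+1}\ne 0$ defining $Q_{28}$ are compatible with the standing genericity requirement of this subsection. Unlike the Stein--Rosenberg statements (Theorems~C and~D), no irreducibility of $P_{28}A$ and no strict positivity of any product $a_{i,k}a_{k,j}$ has to be checked, which is exactly why the present result is immediate.
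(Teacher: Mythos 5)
Your proof is correct and follows essentially the same route as the paper: the paper also obtains this result immediately from Theorem \ref{thm3.86}, viewing $Q_{28}=Q_8+Q_{17}$ as the specialization of $Q_{23}=Q_3+Q_{17}$ in which $\alpha_{k+1,k}=\alpha_k$ and $\alpha_{i,j}=0$ for $j<i-1$, with the $\beta_i$ unchanged. Your index dictionary and the check that all specialized coefficients lie in $[0,1]$ are exactly the bookkeeping the paper's one-line derivation relies on.
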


\begin{theorem}\label{thm3.103}
Suppose that $\beta_1a_{1,2}a_{2,1} < 1$, $\alpha_{n-1}a_{n,n-1}a_{n-1,n} < 1$,\\
$\alpha_{k-1}a_{k,k-1}a_{k-1,k} + \beta_ka_{k,k+1}a_{k+1,k} < 1$,
$k = 2, \cdots, n-1$.
Then Theorem C is valid for $\nu = 28$,
provided one of the following conditions is satisfied:

\begin{itemize}
\item[(i)] For $k = 1, \cdots, n-1$,
$0 \le \alpha_{k}, \beta_k \lesssim 1$.

\item[(ii)] $0 \le \gamma < 1$, $0 \le \alpha_k, \beta_k \le 1$, $k = 1, \cdots, n-1$.
$a_{1,2} < 0$, $\beta_1 > 0$ and for each $i\in\{2, \cdots, n-1\}$,
$\beta_ia_{i,i+1} < 0$ or $\alpha_{i-1}a_{i,i-1} < 0$.

\item[(iii)] $\gamma = 1$, $0 \le \alpha_k, \beta_k \le 1$, $k = 1, \cdots, n-1$.
For each $i\in\{2, \cdots, n-1\}$, $\beta_ia_{i,i+1} < 0$
or $\alpha_{i-1}a_{i,i-1}a_{i-1,i} > 0$.
At the same time, one of the following conditions holds:

\begin{itemize}
\item[($iii_1$)]
$\alpha_{n-1}a_{n,n-1}a_{n-1,n} > 0$.

\item[($iii_2$)] $(1-\alpha_{n-1})a_{n,n-1} < 0$.

\item[($iii_3$)] There exists $j\in\{2, \cdots, n-2\}$ such that
$a_{n,j} - \alpha_{n-1}a_{n,n-1}a_{n-1,j} < 0$.

\item[($iii_4$)] $a_{1,2} < 0$, $\beta_1 > 0$ and
$a_{n,1} - \alpha_{n-1}a_{n,n-1}a_{n-1,1} < 0$.

\item[($iii_5$)] $a_{1,2} < 0$,  $a_{n,1} < 0$ and $\beta_1 > 0$.
\end{itemize}
\end{itemize}
\end{theorem}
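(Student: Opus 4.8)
The plan is to realize $Q_{28}=Q_8+Q_{17}$ as the special case of $Q_{23}=Q_3+Q_{17}$ in which the general strictly lower part $Q_3$ is collapsed to its subdiagonal, i.e.\ $\alpha_{i,j}=0$ for $j<i-1$ while $\alpha_{i,i-1}$ is renamed $\alpha_{i-1}$. Under this substitution $\delta^{(23)}_{i,j}(\gamma)$ and $\delta^{(23)}_{i,j}(1)$ collapse to the quantities $\delta^{(28)}_{i,j}(\gamma)$ and $\delta^{(28)}_{i,j}(1)$ displayed above, and the three scalar hypotheses $\beta_1a_{1,2}a_{2,1}<1$, $\alpha_{n-1}a_{n,n-1}a_{n-1,n}<1$ and $\alpha_{k-1}a_{k,k-1}a_{k-1,k}+\beta_ka_{k,k+1}a_{k+1,k}<1$ are exactly the specialization of the diagonal hypothesis $\beta_ia_{i,i+1}a_{i+1,i}+\sum_{k=1}^{i-1}\alpha_{i,k}a_{i,k}a_{k,i}<1$ of Theorem~\ref{thm3.87}, which is what makes $a^{(28)}_{i,i}>0$. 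Thus it suffices to show that each of (i), (ii), (iii) implies the correspondingly labelled condition of Theorem~\ref{thm3.87} and then quote that theorem (which itself rests on Corollary~\ref{coro3-11} and ultimately on Theorem~\ref{thm3.7}).

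For (i) the bounds $0\le\alpha_k,\beta_k\lesssim1$ are condition (i) of Theorem~\ref{thm3.87} verbatim after the relabelling. For (ii) the per-row alternative of Theorem~\ref{thm3.87}(ii)---for each $i$ either $\beta_ia_{i,i+1}<0$ or $\alpha_{i,j(i)}a_{i,j(i)}<0$ for some $j(i)<i$---collapses, since the only surviving strictly lower weight in row $i$ is $\alpha_{i,i-1}=\alpha_{i-1}$, to ``$\beta_ia_{i,i+1}<0$ or $\alpha_{i-1}a_{i,i-1}<0$''; for $i=1$ the lower alternative is vacuous, so that case reads $\beta_1a_{1,2}<0$, which is precisely the requirement $a_{1,2}<0,\ \beta_1>0$ that I would list separately. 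For (iii) I would argue identically: with $\alpha_{i,k}=0$ unless $k=i-1$, the requirement $\alpha_{i,k(i)}a_{i,k(i)}a_{k(i),j(i)}>0$ in Theorem~\ref{thm3.87}(iii) forces $k(i)=i-1$ and hence $j(i)=i$, i.e.\ $\alpha_{i-1}a_{i,i-1}a_{i-1,i}>0$, reproducing the middle-row alternative in (iii).

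It remains to match the supplementary list (iii$_1$)--(iii$_5$) against (iii$_1$)--(iii$_5$) of Theorem~\ref{thm3.87}. Using $\alpha_{n,k}=0$ for $k<n-1$ and $\alpha_{n,n-1}=\alpha_{n-1}$, the expressions $(1-\alpha_{n-1})a_{n,n-1}$ and $a_{n,j}-\alpha_{n-1}a_{n,n-1}a_{n-1,j}$ are just the last-row entries $a^{(28)}_{n,n-1}$ and $a^{(28)}_{n,j}$, so (iii$_1$)--(iii$_5$) here are the literal specializations of (iii$_1$)--(iii$_5$) there (the conditions controlling the first and last rows, which the middle-row disjunction alone cannot reach). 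The hard part will be exactly this bookkeeping together with the $\lesssim$ subtlety: because $Q_{28}$ is a combination preconditioner, $A^{(28)}$ may fail to be irreducible even when $A$ is irreducible, the phenomenon illustrated by the counterexample~(\ref{eqs3.66}). Consequently in cases (ii) and (iii) I must verify that the stated positivity is strong enough to put a strictly positive entry into every row of the comparison vector $[M^{(28)}_{\gamma,\omega}]^{-1}(P_{28}N_{\gamma,\omega}-N^{(28)}_{\gamma,\omega})x$ (all rows for $\gamma<1$, the second through $n$th for $\gamma=1$), since that is what triggers the strict branch of the trichotomy through Lemma~\ref{lem1-11} and Lemma~\ref{lem1-12}.
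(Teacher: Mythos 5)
Your proposal is correct and takes essentially the same route as the paper, which likewise proves the theorem by specializing Theorem \ref{thm3.87} to the case $\alpha_{i,j}=0$ for $j<i-1$: conditions ($i$) and ($ii$) are immediate, and the middle-row alternative in ($iii$) is matched by taking $j(i)=i$, $k(i)=i-1$. One cosmetic remark: the correspondence of the supplementary lists is not label-by-label as you suggest --- your ($iii_2$) and ($iii_3$) both specialize ($iii_3$) of Theorem \ref{thm3.87} (with $j=n-1$ and $j\le n-2$ respectively), while ($iii_2$) there is unattainable here since $\alpha_{n,1}=0$; this is exactly how the paper records the matching.
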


\begin{proof}
We just need to prove ($iii$).

When $\alpha_{i-1}a_{i,i-1}a_{i-1,i} > 0$, the inequality $\alpha_{i,k(i)}a_{i,k(i)}a_{k(i),j(i)} > 0$
holds for $j(i)=i$ and $k(i)= j(i)-1$.

The conditions ($iii_1$), ($iii_4$) and ($iii_5$) can be derived from corresponding one in Theorem \ref{thm3.87}.
While, the conditions ($iii_2$) and ($iii_3$) can be derived by ($iii_3$) in Theorem \ref{thm3.87}, directly.
\end{proof}

The result for the Gauss-Seidel method is better than \cite[Theorems 3.2, 3.3, 3.4, 3.5]{LH05},
where in Theorem 3.5 it should be that $\alpha_k>0$ and $\beta_k>0$, $k = 1, \cdots, n-1$.

\begin{theorem}
Theorem D is valid for $\nu = 28$,
provided one of the conditions ($i$), ($ii$) and ($iii$) of Theorem \ref{thm3.103} is satisfied.
\end{theorem}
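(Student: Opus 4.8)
The plan is to obtain Theorem D for $\nu = 28$ as a direct consequence of the Stein--Rosenberg type trichotomy already secured in Theorem \ref{thm3.103}, using the fact that a nonsingular M-matrix has convergent weak regular splittings to pin down which branch of that trichotomy actually occurs. First I would record the structural reductions: since $A$ is an irreducible nonsingular M-matrix it is, in particular, an irreducible L-matrix, so the hypotheses of Theorem \ref{thm3.103} make sense verbatim. Each of conditions (i), (ii), (iii) of Theorem \ref{thm3.103} forces $A^{(28)} = P_{28}A$ to be an irreducible L-matrix; because $A^{(28)}$ is also a Z-matrix, Lemma \ref{lem1-10} upgrades this to: $A^{(28)}$ is an irreducible nonsingular M-matrix. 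Consequently both AOR splittings $A = M_{\gamma,\omega} - N_{\gamma,\omega}$ and $A^{(28)} = M^{(28)}_{\gamma,\omega} - N^{(28)}_{\gamma,\omega}$ are weak regular.

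Next I would invoke convergence. By Lemma \ref{lem1-5} the weak regular splitting of the nonsingular M-matrix $A$ is convergent, so $\rho({\mathscr L}_{\gamma,\omega}) < 1$. Under the assumed condition, Theorem \ref{thm3.103} asserts that Theorem C holds for $\nu = 28$, i.e. exactly one of the mutually exclusive relations $\rho({\mathscr L}^{(28)}_{\gamma,\omega}) < \rho({\mathscr L}_{\gamma,\omega}) < 1$, $\rho({\mathscr L}^{(28)}_{\gamma,\omega}) = \rho({\mathscr L}_{\gamma,\omega}) = 1$, $\rho({\mathscr L}^{(28)}_{\gamma,\omega}) > \rho({\mathscr L}_{\gamma,\omega}) > 1$ is valid. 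Feeding in $\rho({\mathscr L}_{\gamma,\omega}) < 1$ excludes the second and third branches, leaving only $\rho({\mathscr L}^{(28)}_{\gamma,\omega}) < \rho({\mathscr L}_{\gamma,\omega}) < 1$, which is precisely the assertion of Theorem D. This is the same ``trichotomy plus known convergence selects case (i)'' device used for the earlier Theorem D statements such as Theorems \ref{thm3.4} and \ref{thm3.24}.

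The step demanding the most care, rather than new computation, is the legitimacy of applying Theorem \ref{thm3.103}: one must confirm that the irreducible L-matrix hypotheses underlying Theorem C are genuinely met under each of (i)--(iii) once $A$ is only assumed to be an irreducible nonsingular M-matrix. Since every nonsingular M-matrix is an L-matrix and the constraints on $\alpha_k,\beta_k$ are identical in both settings, this collapses to verifying irreducibility of $A^{(28)}$, which is already treated within the proof of Theorem \ref{thm3.103} via the entrywise sign analysis of $\delta^{(28)}_{i,j}(1)$ and the graph-connectivity arguments of the combination-preconditioner lemmas. Once that is noted, no estimate beyond what appears for $Q_{28}$ is required, and the proof is complete.
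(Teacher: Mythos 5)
Your proof is correct and matches the paper's intended argument: this theorem is stated without a separate proof precisely because the standard device you use suffices—since $A$ is a nonsingular M-matrix, its weak regular AOR splitting is convergent by Lemma \ref{lem1-5}, so $\rho({\mathscr L}_{\gamma,\omega})<1$ rules out the second and third branches of the trichotomy in Theorem \ref{thm3.103}, leaving $\rho({\mathscr L}^{(28)}_{\gamma,\omega}) < \rho({\mathscr L}_{\gamma,\omega}) < 1$. This is the same ``trichotomy plus known convergence selects the first branch'' pattern the paper makes explicit in the proofs of Theorems \ref{thm3.2} and \ref{thm3.4}.
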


Another combination is given as
 \begin{eqnarray*}
Q_{29} = Q_5 + Q_{15}
 = \left(\begin{array}{cccc}
0                    & \cdots & 0             & -\beta_1a_{1,n}\\
\vdots               & \ddots & \vdots        & \vdots\\
0                    & \cdots & 0             & -\beta_{n-1}a_{n-1,n} \\
-\alpha_{1}a_{n,1}   & \cdots & -\alpha_{n-1}a_{n,n-1} & 0
\end{array}\right)
 \end{eqnarray*}
with $\alpha_k \ge 0$, $\beta_k \ge 0$, $k = 1, \cdots, n-1$, and
\begin{eqnarray*}
\sum\limits_{k = 1}^{n-1}\alpha_ka_{n,k} \not = 0 \;\;
\hbox{and} \;\; \sum\limits_{k = 1}^{n-1}\beta_ka_{k,n} \not = 0,
 \end{eqnarray*}
 where for simplicity we set $r = n$ for $Q_5$ and $Q_{15}$.

In this case, $\delta_{i,j}^{(2)}(\gamma)$ and $\delta_{i,j}^{(2)}(1)$ reduce respectively to
\begin{eqnarray*}
\delta_{i,j}^{(29)}(\gamma) = \left\{\begin{array}{ll}
\beta_ia_{i,n}a_{n,i}, & i = j = 1, \cdots, n-1;\\
 \sum\limits_{k = 1}^{n-1} \alpha_ka_{n,k}a_{k,n}, & i = j = n;\\
 (\gamma-1)\beta_ia_{i,n}, & i = 1, \cdots, n-1, j = n;\\
 \gamma\beta_ia_{i,n}a_{n,j}, & i = 2, \cdots, n-1, \\
 & j = 1, \cdots, i-1;\\
 (\gamma-1)\alpha_ja_{n,j} + \gamma\sum\limits_{k = 1}^{j-1} \alpha_ka_{n,k}a_{k,j},
 & i = n, j = 1,\cdots, n-1;\\
  0, & otherwise
\end{array}\right.
\end{eqnarray*}
and
\begin{eqnarray*}
\delta_{i,j}^{(29)}(1) = \left\{\begin{array}{ll}
 \beta_ia_{i,n}a_{n,j}, & i = 1, \cdots, n-1, j = 1, \cdots, i;\\
 \sum\limits_{k = 1}^{j-1} \alpha_ka_{n,k}a_{k,j}, &
 i = n, j = 2,\cdots,n;\\
 0, & otherwise.
\end{array}\right.
\end{eqnarray*}

Using Corollaries \ref{coro3-9} and \ref{coro3-10}, we can prove the following theorems, directly.

\begin{theorem}
Suppose that $0 \le \alpha_{k}, \beta_{k} \le 1$, $k = 1,\cdots,n-1$,
and
\begin{eqnarray}\label{eqs3.57}
 \sum\limits_{k = 1}^{n-1} \alpha_ka_{n,k}a_{k,n} < 1, \;
 \beta_ia_{i,n}a_{n,i} < 1, i = 1, \cdots, n-1.
\end{eqnarray}
Then Theorem A is valid for $\nu = 29$.
\end{theorem}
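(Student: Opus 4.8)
The plan is to recognize $Q_{29}$ as a special instance of the matrix $Q_2 = (-\alpha_{i,j}a_{i,j})$ studied in Theorems \ref{thm3.5}--\ref{thm3.8}, and then to invoke Corollary \ref{coro3-9} directly. Since $Q_{29} = Q_5 + Q_{15}$ with $r = n$ in both summands, its only nonzero entries sit in the last column (rows $1,\dots,n-1$) and the last row (columns $1,\dots,n-1$), the corner $(n,n)$ being zero. Accordingly I would set the defining parameters of the $Q_2$ form as
\begin{eqnarray*}
\alpha_{i,n} = \beta_i \ (i = 1,\dots,n-1), \quad \alpha_{n,j} = \alpha_j \ (j = 1,\dots,n-1),
\end{eqnarray*}
and $\alpha_{i,j} = 0$ for every remaining index pair. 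With this identification the entries $q^{(29)}_{i,n} = -\beta_i a_{i,n}$ and $q^{(29)}_{n,j} = -\alpha_j a_{n,j}$ reproduce exactly the stated $Q_{29}$, and $\alpha_{i,i}=0$ is respected since the diagonal is never assigned.

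The first key step is to check the box constraint $0 \le \alpha_{i,j} \le 1$ demanded by Corollary \ref{coro3-9}. This is immediate from the hypothesis $0 \le \alpha_k,\beta_k \le 1$, because the only nonzero $\alpha_{i,j}$ are precisely the $\beta_i$ and the $\alpha_j$. The second key step is to verify the diagonal condition $\sum_{k=1,k\ne i}^n \alpha_{i,k}a_{i,k}a_{k,i} < 1$ for every $i$. Owing to the sparse support of $Q_{29}$, each such sum collapses to a single surviving contribution: for $i = 1,\dots,n-1$ only the term $k=n$ remains, giving $\alpha_{i,n}a_{i,n}a_{n,i} = \beta_i a_{i,n}a_{n,i}$, while for $i = n$ the sum is $\sum_{k=1}^{n-1}\alpha_{n,k}a_{n,k}a_{k,n} = \sum_{k=1}^{n-1}\alpha_k a_{n,k}a_{k,n}$. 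These are exactly the inequalities (\ref{eqs3.57}), so the requirement is met.

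Having verified both hypotheses of Corollary \ref{coro3-9}, I would conclude that Theorem A is valid for $\nu = 29$. I do not anticipate a genuine obstacle; the argument is entirely the bookkeeping that matches the nonzero pattern of $Q_{29}$ to the correct subset of the parameters $\alpha_{i,j}$. The only point requiring mild care is confirming that the collapse of the diagonal sums is exhaustive --- that is, every diagonal sum $\sum_{k\ne i}\alpha_{i,k}a_{i,k}a_{k,i}$ either reduces to one of the two expressions constrained in (\ref{eqs3.57}) or is an empty (hence zero, and trivially $<1$) sum --- and that no corner index is double-counted, which is ensured by $\alpha_{i,i}=0$ and the vanishing of the $(n,n)$ position.
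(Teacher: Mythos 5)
Your proposal is correct and matches the paper's own argument: the paper proves this theorem by invoking Corollary \ref{coro3-9} directly, exactly as you do, with the identification $\alpha_{i,n}=\beta_i$, $\alpha_{n,j}=\alpha_j$ and all other $\alpha_{i,j}=0$ collapsing the diagonal sums to the two expressions in (\ref{eqs3.57}). The bookkeeping you carry out (box constraints plus the reduced diagonal sums, with the empty sums trivially $<1$) is precisely what makes the paper's "directly" valid.
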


\begin{theorem}
Suppose that $0 \le \alpha_{k}, \beta_{k} \le 1$, $k = 1,\cdots,n-1$.
Then Theorem B is valid for $\nu = 29$.
\end{theorem}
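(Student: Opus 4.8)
The plan is to recognize $Q_{29}$ as a particular instance of the general second preconditioner $Q_2 = (-\alpha_{i,j}a_{i,j})$ and then to invoke Corollary \ref{coro3-10}, which already establishes Theorem B for $\nu = 2$ under exactly the hypothesis $0 \le \alpha_{i,j} \le 1$. In other words, no new analytic estimate is needed; the entire content is a reduction to a result proved earlier in the paper, and the work is purely the correct bookkeeping of indices.

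First I would match the nonzero pattern of $Q_{29}$ against the template $Q_2$. By construction $Q_{29}$ is supported only on its last row and its last column, with $q^{(29)}_{n,j} = -\alpha_j a_{n,j}$ for $j = 1, \cdots, n-1$ and $q^{(29)}_{i,n} = -\beta_i a_{i,n}$ for $i = 1, \cdots, n-1$, and all diagonal entries zero. Writing $Q_2 = (-\alpha_{i,j}a_{i,j})$, this corresponds to the assignment $\alpha_{n,j} = \alpha_j$ and $\alpha_{i,n} = \beta_i$ for $i,j = 1, \cdots, n-1$, with $\alpha_{i,j} = 0$ for every remaining off-diagonal pair. Hence $Q_{29}$ is literally one of the matrices covered by $Q_2$, and $P_{29} = I + Q_{29}$ is an admissible preconditioner of the second type.

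Second I would verify the hypothesis of Corollary \ref{coro3-10}. Since $0 \le \alpha_k \le 1$ and $0 \le \beta_k \le 1$ for every $k$, all the induced off-diagonal parameters $\alpha_{i,j}$ lie in $[0,1]$, which is precisely the assumption under which Corollary \ref{coro3-10} asserts Theorem B for $\nu = 2$. Applying it to this instance yields $\rho(\mathscr{L}^{(29)}_{\gamma,\omega}) \le \rho(\mathscr{L}_{\gamma,\omega}) < 1$ at once.

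The only point requiring a line of care, rather than a genuine obstacle, is confirming that $A^{(29)} = P_{29}A$ is a Z-matrix, as needed so that the chain through Corollary \ref{coro3-2} and Theorem \ref{thm3.6} applies. This is automatic: since a nonsingular M-matrix is an L-matrix, $a_{i,j} \le 0$ for $i \neq j$, and for each off-diagonal entry one has $a^{(29)}_{i,j} = (1-\alpha_{i,j})a_{i,j} - \sum_{k \neq i,j}\alpha_{i,k}a_{i,k}a_{k,j}$, where $(1-\alpha_{i,j})a_{i,j} \le 0$ because $0 \le \alpha_{i,j} \le 1$, while the subtracted sum is nonnegative as a sum of products of nonpositive off-diagonal entries of $A$. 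Thus the inequality (\ref{eqs3.12}) holds, the condition of Corollary \ref{coro3-10} is met, and Theorem B follows for $\nu = 29$.
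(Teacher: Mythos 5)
Your proposal is correct and follows essentially the same route as the paper, which proves this theorem by viewing $Q_{29}$ as a special case of the general preconditioner $Q_2$ and invoking Corollary \ref{coro3-10} directly; your index bookkeeping ($\alpha_{n,j}=\alpha_j$, $\alpha_{i,n}=\beta_i$, all other off-diagonal parameters zero) and the verification that the induced parameters lie in $[0,1]$ are exactly the details the paper leaves implicit. The additional check that the off-diagonal entries of $A^{(29)}$ are nonpositive is redundant (it is already contained in the proof of Corollary \ref{coro3-10} via Theorem \ref{thm3.6}), but it is correct and harmless.
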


\begin{theorem}\label{thm3.107}
Suppose that (\ref{eqs3.57}) holds.
Then Theorem C is valid for $\nu = 29$,
provided one of the following conditions is satisfied:

\begin{itemize}
\item[(i)] $0 \le \gamma < 1$. For $k = 1, \cdots, n-1$,
$0 \le \alpha_{k}, \beta_k \lesssim 1$.

\item[(ii)] $\gamma = 1$. For $k = 1, \cdots, n-1$,
$0 \le \alpha_{k}, \beta_k \lesssim 1$.
And one of the following conditions holds:

\begin{itemize}
\item[($ii_1$)]
There exist $i\in\{1, \cdots, n-1\}$ and $j\in\{1, \cdots, i\}$
 such that $\beta_ia_{i,n}a_{n,j} > 0$.

\item[($ii_2$)]
There exist $j\in\{2, \cdots, n\}$ and
 $k\in\{1, \cdots, j-1\}$ such that $\alpha_ka_{n,k}a_{k,j}$ $ > 0$.

\item[($ii_3$)]
$a_{n-1,n} < 0$ and $\beta_{n-1} > 0$.

\item[($ii_4$)]
$a_{n,1} < 0$.

\item[($ii_5$)]
$a_{k,n} < 0$, $k = 1, \cdots, n-1$.

\item[($ii_6$)]
$a_{k,k+1} < 0$, $k = 1, \cdots, n-1$.
\end{itemize}

\item[(iii)] $a_{k,n} < 0$, $0 \le \alpha_{k} \le 1$, $0 < \beta_{k} \le 1$, $k = 1,\cdots,n-1$.
And one of the following conditions holds:

\begin{itemize}
\item[($iii_1$)] $0 \le \gamma < 1$

\item[($iii_2$)] $\gamma = 1$ and for each $i\in\{1, \cdots, n-1\}$ there exists
$j(i)\in\{1, \cdots, i\}$ such that $a_{n,j(i)} < 0$.
\end{itemize}
\end{itemize}
\end{theorem}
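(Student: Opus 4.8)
The plan is to reduce Theorem \ref{thm3.107} entirely to the already-established general comparison machinery of Theorems \ref{thm3.5}--\ref{thm3.8} and Corollaries \ref{coro3-9}--\ref{coro3-12}, exactly as was done for the preceding combination preconditioners $Q_{24},Q_{25},Q_{26}$. Since $Q_{29}=Q_5+Q_{15}$ is a special instance of the combination $Q_{23}$-type construction (it has nonzero entries only in the last column, rows $1,\dots,n-1$, coming from $Q_{15}$, and in the last row, columns $1,\dots,n-1$, coming from $Q_5$), the hypotheses $0\le\alpha_k,\beta_k\le 1$ together with (\ref{eqs3.57}) are precisely what is needed to apply Corollary \ref{coro3-11}. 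I would first record, from the displayed formulas for $\delta_{i,j}^{(29)}(\gamma)$ and $\delta_{i,j}^{(29)}(1)$, that every entry of $\Delta^{(1)}(\gamma)$ for this $Q$ is built from the products $\beta_ia_{i,n}a_{n,j}$ (upper part, from $Q_{15}$) and $\alpha_ka_{n,k}a_{k,j}$ (last row, from $Q_5$), so that the diagonal condition (\ref{eqs3.57}) guarantees $a^{(29)}_{i,i}>0$ for all $i$ and hence that $A^{(29)}=P_{29}A$ is an L-matrix.

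For part (i), the condition $0\le\gamma<1$ together with $0\le\alpha_k,\beta_k\lesssim 1$ places us directly in condition (i) of Corollary \ref{coro3-11}: the inequality (\ref{eqs3.14}) is satisfied (the off-diagonal entries of $A^{(29)}$ are $\lesssim 0$ whenever the corresponding $a_{i,j}<0$), and $0\le\gamma<1$ is allowed, so Theorem C follows. For part (ii), with $\gamma=1$ I would verify that each of the subconditions $(ii_1)$--$(ii_6)$ forces one of the hypotheses $(ii_1)$--$(ii_9)$ of Theorem \ref{thm3.7} (via Corollary \ref{coro3-11}(i)). The cases $(ii_1)$ and $(ii_2)$ directly produce a positive $\delta^{(29)}_{i,j}(1)$, giving $(ii_1)$ of Theorem \ref{thm3.7}. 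The cases $(ii_3)$ and $(ii_4)$ translate, by the definition of $Q_{29}$ (there exist $k$ with $\beta_ka_{k,n}<0$ and $\alpha_ka_{n,k}<0$ by the nonvanishing sum hypotheses), into products $\beta_ka_{k,n}a_{n,\cdot}>0$ or $\alpha_ka_{n,k}a_{k,\cdot}>0$, exactly the style of argument used in Theorems \ref{thm3.79} and \ref{thm3.55}. The cases $(ii_5)$ and $(ii_6)$ I would handle by the summation trick of Lemma \ref{lem3-3}(ii) (as in the proof of Theorem \ref{thm3.19}): summing the relevant $\delta^{(29)}_{n,j}(1)$ or $\delta^{(29)}_{i,j}(1)$ over $j$ and using irreducibility of $A$ to get a strictly negative row-sum of $A$, forcing some individual $\delta^{(29)}_{i,j}(1)>0$.

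For part (iii), the sign restriction $a_{k,n}<0$ with $0<\beta_k\le 1$ means every row $i\in\{1,\dots,n-1\}$ of $\Phi(\gamma,\omega)$ (equivalently $\Delta^{(1)}(\gamma)$) has a genuinely negative off-diagonal product available, so I would invoke conditions (iii) or (iv) of Theorem \ref{thm3.7} through Corollary \ref{coro3-11}(ii): for $(iii_1)$ with $0\le\gamma<1$ one uses condition (iii) of Theorem \ref{thm3.7} (for each $i$ there is $j(i)$ with $\beta_ia_{i,n}<0$, here $j(i)=n$), and for $(iii_2)$ with $\gamma=1$ one checks the per-row condition $(iv_1)$ together with the requisite $(iv^a)$--$(iv^e)$ clause, the latter coming from the existence of $j(i)$ with $a_{n,j(i)}<0$. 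Throughout, the real work is not any single case but the bookkeeping of matching each subcase to the correct clause of Theorem \ref{thm3.7}; the main obstacle I anticipate is the $\gamma=1$ combinatorics, specifically verifying that in part (iii) the last-row contribution is handled correctly, since $Q_{29}$ and $Q_{15}$ differ in orientation and one must be careful that the reducible-block degeneracy (handled via the block-splitting argument of Theorem \ref{thm3.59} and Lemma \ref{lem3-8}) does not arise here, or is absorbed into the ``$\lesssim$'' convention. Once all subcases are routed into Theorem \ref{thm3.7}/Corollary \ref{coro3-11}, Theorem C for $\nu=29$ follows immediately, and I would end with \emph{The proof is completed.}
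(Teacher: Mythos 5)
Your overall route is the same as the paper's: reduce every subcase to Theorem \ref{thm3.7} through Corollary \ref{coro3-11}, using the displayed formulas for $\delta_{i,j}^{(29)}(\gamma)$ and $\delta_{i,j}^{(29)}(1)$. Parts ($i$) and ($ii$) are in order. One remark on ($ii_5$)--($ii_6$): your summation trick is not needed and your appeal to irreducibility there is misplaced; the paper argues in one line from the defining property of $Q_{29}$ (the nonvanishing sums give some $k_0$ with $\alpha_{k_0}a_{n,k_0}<0$ and some $i_0$ with $\beta_{i_0}a_{i_0,n}<0$), so that, e.g., under ($ii_5$) one has $\alpha_{k_0}a_{n,k_0}a_{k_0,n}>0$, which is exactly ($ii_2$); irreducibility is only invoked for ($ii_3$), to produce $j_0\in\{1,\dots,n-1\}$ with $a_{n,j_0}<0$.

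The genuine gap is in part ($iii$) with $\gamma=1$. Condition ($iv$) of Theorem \ref{thm3.7} has two halves: the per-row conditions ($iv_1$)--($iv_7$), which you correctly obtain from $\delta_{i,j(i)}^{(29)}(1)=\beta_ia_{i,n}a_{n,j(i)}>0$, and, \emph{at the same time}, one of the last-row conditions ($iv^a$)--($iv^e$). You assert the latter comes ``from the existence of $j(i)$ with $a_{n,j(i)}<0$'' and explicitly flag the last row as an unresolved obstacle. But the existence of $j_0$ with $a_{n,j_0}<0$ does not by itself yield any of ($iv^a$)--($iv^e$): condition ($iv^a$) requires a positive product $\alpha_{j_0}a_{n,j_0}a_{j_0,n}$, which fails when $\alpha_{j_0}=0$, and part ($iii$) only assumes $0\le\alpha_k\le1$. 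The paper closes this with a dichotomy you are missing: if $\alpha_{j_0}>0$, then ($iv^a$) holds with $k=j_0$, $j=n$, because $a_{j_0,n}<0$ by the part-($iii$) hypothesis; if $\alpha_{j_0}=0$, then $(1-\alpha_{j_0})a_{n,j_0}=a_{n,j_0}<0$ and the remaining sum in (\ref{eqs3.15}), respectively (\ref{eqs3.16}), is nonpositive, so ($iv^c$) holds when $j_0\ge2$ and ($iv^d$) holds when $j_0=1$ --- the extra requirement in ($iv^d$) that one of ($iv_1$)--($iv_6$) hold for $i=1$ being supplied by $\delta_{1,j(1)}^{(29)}(1)=\beta_1a_{1,n}a_{n,j(1)}>0$. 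Without this case split the reduction to Corollary \ref{coro3-11}($ii$) is incomplete.
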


\begin{proof}
Since $A$ is irreducible, then there exists $j_0\in\{1, \cdots, n-1\}$ such that $a_{n,j_0} < 0$.

By ($i$) of Corollary \ref{coro3-11}, ($i$) is obvious.

If ($ii_1$) holds, then $\delta_{i,j}^{(29)}(1) > 0$ for
$i = 1, \cdots, n-1$ and $j = 1, \cdots, i$. While if
($ii_2$) holds, then $\delta_{n,j}^{(29)}(1) \ge \alpha_ka_{n,k}a_{k,j} > 0$
for $j\in\{2, \cdots, n\}$. This shows that ($ii_1$) in Theorem \ref{thm3.7}
holds, so that the required result follows by ($i$) of Corollary \ref{coro3-11}, directly.

If ($ii_3$) holds, then $\beta_{n-1}a_{n-1,n}a_{n,j_0} > 0$, which implies that
($ii_1$) holds for $i = n-1$ and $j = j_0$.

If ($ii_4$) holds, by the definition of $Q_{29}$, there exists $i_0\in\{1, \cdots, n-1\}$ such that
$\beta_{i_0}a_{i_0,n} < 0$, so that $\beta_{i_0}a_{i_0,n}a_{n,1} > 0$, which implies that
($ii_1$) holds for $i = i_0$ and $j = 1$.

If ($ii_5$) holds, by the definition of $Q_{29}$, there exists $k_0\in\{1, \cdots, n-1\}$ such that
$\alpha_{k_0}a_{n,k_0} < 0$, so that $\alpha_{k_0}a_{n,k_0}a_{k_0,n} > 0$, which implies that
($ii_2$) holds for $k = k_0$ and $j = n$. While when ($ii_6$) holds, it is easy to see that
($ii_2$) holds for $k = k_0$ and $j = k_0+1$.

When ($iii$) holds, for each $i\in\{1,\cdots,n-1\}$, if $\gamma < 1$ then
$\beta_ia_{i,n} < 0$, i.e., ($iii$) of Theorem \ref{thm3.7} is satisfied.

While, if $\gamma = 1$ then
$\delta_{i,j(i)}^{(29)}(1) = \beta_ia_{i,n}a_{n,j(i)} > 0$, i.e.,
($iv_1$) in Theorem \ref{thm3.7} holds.
If $\alpha_{j_0} = 0$ then $(1-\alpha_{j_0})a_{n,j_0} = a_{n,j_0} < 0$, which implies that
(\ref{eqs3.15}) or (\ref{eqs3.16}) holds, so that ($iv^c$) or ($iv^d$) in Theorem \ref{thm3.7}
is satisfied. If $\alpha_{j_0} > 0$ then $\alpha_{j_0}a_{n,j_0}a_{j_0,n} > 0$,
which implies that ($iv^a$) in Theorem \ref{thm3.7} is satisfied for $k = j_0$ and $j = n$.
This has proved that the condition ($iv$) of Theorem \ref{thm3.7} is satisfied.

By Corollary \ref{coro3-11}, ($iii$) is proved.
\end{proof}

Similarly, by Corollary  \ref{coro3-12}, we can prove the following result.

\begin{theorem}
Theorem D is valid for $\nu = 29$,
provided one of the following conditions is satisfied:

\begin{itemize}
\item[(i)] One of the conditions ($i$), ($ii$) and ($iii$) of Theorem \ref{thm3.107} holds.

\item[(ii)] For $j = 1,\cdots,n-1$,
$0 \le \alpha_{j}, \beta_{j} \le 1$ and
\begin{eqnarray*}
\alpha_{j}a_{n,j} + \sum\limits_{{k = 1}\atop{k\not = j}}^{n-1} \alpha_{k}a_{n,k}a_{k,j} > 0.
 \end{eqnarray*}
One of the following conditions holds:

\begin{itemize}
\item[($ii_1$)] There exists $i_0\in\{1,\cdots,n-1\}$ such that
 $\beta_{i_0}a_{i_0,n}a_{n,i_0} > 0$.

\item[($ii_2$)]
There exists $i_0\in\{1,\cdots,n-1\}$ such that
$\alpha_{i_0}a_{n,i_0}a_{i_0,n} > 0$.

\item[($ii_3$)] $\gamma > 0$.
There exist $i_0\in\{2,\cdots,n-1\}$, $j_0\in\{1,\cdots,i_0-1\}$ such that
$\beta_{i_0}a_{i_0,n}a_{n,j_0} > 0$.

\item[($ii_4$)] $\gamma > 0$.
There exists $j_0\in\{1,\cdots,n-1\}$ such that
$\alpha_{j_0}a_{n,j_0} + $\\ $\sum_{k = 1,k\not = j_0}^{n-1} \alpha_{k}a_{n,k}a_{k,j_0} > 0$.
\end{itemize}\end{itemize}
\end{theorem}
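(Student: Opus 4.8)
The plan is to exploit the fact that $Q_{29}$ is a special case of the general matrix $Q_2$, so that the whole statement reduces to the already-established $\nu=2$ result, namely Corollary \ref{coro3-12}. Concretely, $Q_{29} = Q_5 + Q_{15}$ is obtained from $Q_2 = (-\alpha_{i,j}a_{i,j})$ by setting $\alpha_{i,n} = \beta_i$ for $i = 1,\dots,n-1$, $\alpha_{n,j} = \alpha_j$ for $j = 1,\dots,n-1$, and $\alpha_{i,j} = 0$ in all remaining positions. Under this identification $A^{(29)} = P_{29}A$ is exactly the matrix $A^{(2)}$ attached to these weights, so every hypothesis and conclusion for $\nu=2$ transfers verbatim to $\nu=29$. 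Thus it suffices, in each of the two cases, to check that the hypotheses of Corollary \ref{coro3-12} are met.

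For case (i) I would simply observe that conditions (i), (ii), (iii) of Theorem \ref{thm3.107} are precisely the hypotheses under which that theorem was proved by reducing to conditions (i) and (ii) of Corollary \ref{coro3-11}. Since those two alternatives are among the sufficient conditions listed in Corollary \ref{coro3-12}, the result for $\nu=29$ follows at once; here $A$ being an irreducible nonsingular M-matrix forces $\rho({\mathscr L}_{\gamma,\omega}) < 1$ by Lemma \ref{lem1-5}, so the three-way dichotomy of Theorem C collapses to the strict inequality $\rho({\mathscr L}^{(29)}_{\gamma,\omega}) < \rho({\mathscr L}_{\gamma,\omega}) < 1$ demanded by Theorem D.

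For case (ii) the task is to verify condition (ii) of Theorem \ref{thm3.8} for the above weights. The off-diagonal constraint (\ref{eqs3.12}) is automatic because $0 \le \alpha_{i,j} \le 1$ together with $A$ being an L-matrix gives $(1-\alpha_{i,j})a_{i,j} \le 0$ while the remaining terms are nonpositive. For the lower-triangular nonnegativity requirement I would split by rows: for a row $i < n$ and $j < i$ only the entry $\alpha_{i,n} = \beta_i$ survives, so the relevant quantity equals $\beta_i a_{i,n}a_{n,j} \ge 0$ automatically (a product of two nonpositive Z-matrix entries times a nonnegative weight), whereas for the last row it reduces to $\alpha_j a_{n,j} + \sum_{k\neq j}\alpha_k a_{n,k}a_{k,j}$, which is exactly the displayed inequality assumed in (ii). Finally the strict-positivity alternative is matched term by term: ($ii_1$) and ($ii_3$) specialize condition ($ii_1$), resp.\ ($ii_2$), of Theorem \ref{thm3.8} to a row $i<n$, whereas ($ii_2$) and ($ii_4$) specialize them to the last row $i=n$.

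The main obstacle I anticipate is the sign bookkeeping in case (ii): one must repeatedly use that, for a Z-matrix, every product $a_{i,n}a_{n,j}$ or $a_{n,k}a_{k,n}$ of off-diagonal entries is nonnegative, so that a single strictly positive summand already makes the whole sum (for instance $\sum_{k}\alpha_k a_{n,k}a_{k,n}$ in ($ii_2$)) strictly positive; this is what lets the localized hypotheses ($ii_1$)-($ii_4$) feed the global conditions ($ii_1$), ($ii_2$) of Theorem \ref{thm3.8}. Once these identifications are in place, Corollary \ref{coro3-12} delivers $\rho({\mathscr L}^{(29)}_{\gamma,\omega}) < \rho({\mathscr L}_{\gamma,\omega}) < 1$, completing the proof.
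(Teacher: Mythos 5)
Your proposal is correct and follows exactly the route the paper intends: the paper's own ``proof'' of this theorem is the single remark that it follows from Corollary~\ref{coro3-12} (case (i) via the reduction of Theorem~\ref{thm3.107} to conditions ($i$), ($ii$) of Corollary~\ref{coro3-11}, case (ii) via condition ($ii$) of Theorem~\ref{thm3.8} specialized to the $Q_{29}$ weights $\alpha_{i,n}=\beta_i$, $\alpha_{n,j}=\alpha_j$). Your row-by-row verification of (\ref{eqs3.12}), the lower-triangular nonnegativity condition, and the matching of ($ii_1$)--($ii_4$) to ($ii_1$), ($ii_2$) of Theorem~\ref{thm3.8} simply spells out the details the paper leaves implicit.
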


Similar to $Q_{29}$, we give a new combination preconditioner as
 \begin{eqnarray*}
Q_{30} = Q_6 + Q_{14}
 = \left(\begin{array}{cccc}
0                  & -\beta_2a_{1,2}  & \cdots & -\beta_{n}a_{1,n}\\
-\alpha_{2}a_{2,1} & 0                & \cdots & 0\\
\vdots             & \vdots           & \ddots & \vdots\\
-\alpha_{n}a_{n,1} & 0                & \cdots & 0\\
\end{array}\right)
 \end{eqnarray*}
with $\alpha_{k} \ge 0$, $\beta_k \ge 0$, $k = 2, \cdots, n$, and
\begin{eqnarray*}
\sum\limits_{k = 2}^{n}\alpha_{k}a_{k,1} \not = 0 \;\;
\hbox{and} \;\; \sum\limits_{k = 2}^{n}\beta_ka_{1,k} \not = 0,
 \end{eqnarray*}
where for simplicity we set $r = 2$ for $Q_6$ and $Q_{14}$.

It is proposed in \cite{WDT12} for the preconditioned GAOR method
for weighted linear least squares problems.

In this case, $\delta_{i,j}^{(2)}(\gamma)$ and $\delta_{i,j}^{(2)}(1)$ reduce respectively to
\begin{eqnarray*}
\delta_{i,j}^{(30)}(\gamma) = \left\{\begin{array}{ll}
 \sum\limits_{k = 2}^n \beta_ka_{1,k}a_{k,1}, & i = j = 1;\\
 \alpha_ia_{i,1}a_{1,i}, & i = j = 2, \cdots, n;\\
 (\gamma-1)\beta_ja_{1,j} + \gamma\sum\limits_{k = 2}^{j-1} \beta_ka_{1,k}a_{k,j}, &
 i = 1, j = 2,\cdots,n;\\
 (\gamma-1)\alpha_ia_{i,1}, & i = 2, \cdots, n, j = 1;\\
\gamma\alpha_ia_{i,1}a_{1,j}, &
  i = 3, \cdots, n, j = 2,\cdots, i-1; \\
  0, & otherwise
\end{array}\right.
\end{eqnarray*}
and
\begin{eqnarray*}
\delta_{i,j}^{(30)}(1) = \left\{\begin{array}{ll}
 \sum\limits_{k = 2}^n\beta_k a_{1,k}a_{k,1}, & i = j = 1; \\
 \sum\limits_{k = 2}^{j-1} \beta_ka_{1,k}a_{k,j}, & i = 1, j = 3,\cdots,n;\\
\alpha_ia_{i,1}a_{1,j}, & i = 2, \cdots, n, j = 2, \cdots, i;\\
  0, & otherwise.
\end{array}\right.
\end{eqnarray*}

Using Corollaries \ref{coro3-9} and \ref{coro3-10}, we can prove the following theorems.

\begin{theorem}\label{thm3.109}
Suppose that $0 \le \alpha_{i}, \beta_{i} \le 1$, $\alpha_ia_{i,1}a_{1,i} < 1$,
$i = 2,\cdots,n$, $\sum_{k = 2}^n\beta_k a_{1,k}a_{k,1} < 1$.
Then Theorem A is valid for $\nu = 30$.
\end{theorem}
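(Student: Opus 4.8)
The plan is to reduce Theorem~\ref{thm3.109} to the already-established general result Corollary~\ref{coro3-9}. Recall that $Q_{30}=Q_6+Q_{14}$ is a special instance of the combination preconditioner $Q_2$ with the weights $\alpha_{i,j}$ being precisely $\alpha_i$ in column $1$ (the $Q_6$ part, for $i=2,\dots,n$) and $\beta_j$ in row $1$ (the $Q_{14}$ part, for $j=2,\dots,n$), and $\alpha_{i,j}=0$ in all remaining off-diagonal positions. Since Corollary~\ref{coro3-9} asserts that Theorem~A holds for $\nu=2$ whenever $0\le\alpha_{i,j}\le 1$ for all $i\neq j$ together with the diagonal condition $\sum_{k\neq i}\alpha_{i,k}a_{i,k}a_{k,i}<1$, $i=1,\dots,n$, the whole argument amounts to checking that the hypotheses of Theorem~\ref{thm3.109} translate exactly into the hypotheses of Corollary~\ref{coro3-9} under this identification.

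First I would write out explicitly what the diagonal sum $\sum_{k=1,k\neq i}^n\alpha_{i,k}a_{i,k}a_{k,i}$ becomes for each $i$ under the $Q_{30}$ choice of weights. For $i=1$ only the row-$1$ entries $\beta_k$ survive, giving $\sum_{k=2}^n\beta_k a_{1,k}a_{k,1}$; for $i=2,\dots,n$ only the column-$1$ entry $\alpha_i$ survives, giving $\alpha_i a_{i,1}a_{1,i}$. The first is exactly the condition $\sum_{k=2}^n\beta_k a_{1,k}a_{k,1}<1$ and the second is exactly $\alpha_i a_{i,1}a_{1,i}<1$ for $i=2,\dots,n$, both of which are assumed. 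This shows the diagonal requirement of Corollary~\ref{coro3-9} is met. Second, the coefficient bound $0\le\alpha_{i,j}\le 1$ required by Corollary~\ref{coro3-9} is, for the nonzero positions, precisely the hypotheses $0\le\alpha_i\le1$ and $0\le\beta_i\le1$; for all the zero positions the bound holds trivially. Hence both hypotheses of Corollary~\ref{coro3-9} are satisfied for $\nu=2$ specialised to $\nu=30$.

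Having verified the hypotheses, I would conclude by invoking Corollary~\ref{coro3-9} directly, exactly as the earlier specialisation theorems (for instance Theorem~\ref{thm3.21} or the proof of Theorem~\ref{thm3.9}) invoke the corresponding general corollary. The statement of Corollary~\ref{coro3-9} yields Theorem~A for $\nu=2$, and under the identification of $Q_{30}$ as a particular $Q_2$ this is precisely Theorem~A for $\nu=30$, which is the claim. The proof is therefore a one-line reduction once the two hypothesis translations are recorded.

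The only genuine point requiring care, and the step I expect to be the main (mild) obstacle, is the bookkeeping that the positions where $\alpha_{i,j}$ is forced to be nonzero in $Q_{30}$ really are disjoint and really do reproduce the two diagonal sums claimed in the theorem; one must be careful that the $Q_6$ block contributes only in column $1$ (rows $2,\dots,n$) and the $Q_{14}$ block only in row $1$ (columns $2,\dots,n$), so that no position is doubly counted and the $i=1$ diagonal sum picks up the $\beta$-terms while the $i\ge 2$ diagonal sums pick up the single $\alpha$-term. This is entirely routine given the explicit form of $Q_{30}$ displayed just before the theorem, and once it is checked the result follows immediately from Corollary~\ref{coro3-9}.
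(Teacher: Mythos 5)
Your proposal is correct and follows essentially the same route as the paper, which proves this theorem precisely by invoking Corollary~\ref{coro3-9} after viewing $Q_{30}=Q_6+Q_{14}$ as the special case of $Q_2$ with weights $\alpha_i$ in column $1$, $\beta_j$ in row $1$, and zero elsewhere. Your bookkeeping of the two diagonal sums (the $i=1$ sum giving $\sum_{k=2}^n\beta_k a_{1,k}a_{k,1}<1$ and each $i\ge 2$ sum reducing to the single term $\alpha_i a_{i,1}a_{1,i}<1$) is exactly the hypothesis translation the paper relies on.
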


\begin{theorem}
Suppose that $0 \le \alpha_{k}, \beta_{k} \le 1$, $k = 2,\cdots,n$.
Then Theorem B is valid for $\nu = 30$.
\end{theorem}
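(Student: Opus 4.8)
The plan is to recognize $Q_{30}$ as a special instance of the general diagonally scaled preconditioner $Q_2 = (-\alpha_{i,j}a_{i,j})$ and then to invoke Corollary \ref{coro3-10} directly, exactly in the spirit of the argument used for $Q_{29}$. Reading off the displayed form of $Q_{30}$, its only nonzero entries sit in the first row and the first column, so I would set $\alpha_{1,j} = \beta_j$ for $j = 2,\cdots,n$, set $\alpha_{i,1} = \alpha_i$ for $i = 2,\cdots,n$, and set $\alpha_{i,j} = 0$ for every remaining off-diagonal pair. With this identification the matrix $(-\alpha_{i,j}a_{i,j})$ reproduces $Q_{30}$ entry by entry, so $Q_{30}$ is precisely the matrix $Q_2$ attached to this particular choice of the parameters $\alpha_{i,j}$.

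First I would verify the single hypothesis needed by Corollary \ref{coro3-10}, namely $0 \le \alpha_{i,j} \le 1$ for all $i \ne j$. This is immediate: the nonzero values are $\alpha_{1,j} = \beta_j$ and $\alpha_{i,1} = \alpha_i$, both of which lie in $[0,1]$ by assumption, while every other $\alpha_{i,j}$ equals $0 \in [0,1]$. Hence the boundedness condition of Corollary \ref{coro3-10} is satisfied, and that corollary yields that Theorem B holds for $\nu = 30$, i.e. $\rho(\mathscr{L}^{(30)}_{\gamma,\omega}) \le \rho(\mathscr{L}_{\gamma,\omega}) < 1$.

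If one prefers to argue from scratch rather than cite Corollary \ref{coro3-10}, the underlying mechanism is to show that $A^{(30)} = P_{30}A$ is a Z-matrix and then appeal to Corollary \ref{coro3-2} (whose validity rests on Lemma \ref{lem1-10} and Theorem \ref{thm3.2}). Using (\ref{eqs3.2}), the off-diagonal entries are $a^{(30)}_{1,j} = (1-\beta_j)a_{1,j} - \sum_{k=2,k\ne j}^n \beta_k a_{1,k}a_{k,j}$ for $j \ge 2$, $a^{(30)}_{i,1} = (1-\alpha_i)a_{i,1}$ for $i \ge 2$, and $a^{(30)}_{i,j} = a_{i,j} - \alpha_i a_{i,1}a_{1,j}$ for $i,j \ge 2$ with $i \ne j$. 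Since $A$ is an L-matrix and $0 \le \alpha_i,\beta_i \le 1$, each coefficient $1-\beta_j$, $1-\alpha_i$, $\alpha_i$, $\beta_k$ is nonnegative while the relevant entries $a_{\cdot,\cdot}$ are nonpositive, so every off-diagonal $a^{(30)}_{i,j}$ is a sum of nonpositive terms; thus $A^{(30)}$ is a Z-matrix, and the conclusion follows.

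There is no genuine obstacle here: the entire argument reduces to the book-keeping identification of $Q_{30}$ with a case of $Q_2$ together with a one-line sign check. The only point requiring a little care is that the boundedness $0 \le \alpha_{i,j} \le 1$ must be asserted for all off-diagonal pairs and not merely on the support of $Q_{30}$; but the off-support entries vanish and hence comply trivially, which is exactly why Corollary \ref{coro3-10} applies with no irreducibility or extra convergence assumption.
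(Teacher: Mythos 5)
Your proposal is correct and follows exactly the paper's route: the paper proves this theorem by viewing $Q_{30}$ as the instance of $Q_2$ with $\alpha_{1,j}=\beta_j$, $\alpha_{i,1}=\alpha_i$ and all other off-diagonal parameters zero, and then invoking Corollary \ref{coro3-10}, which is precisely your main argument. Your supplementary ``from scratch'' verification that $A^{(30)}$ is a Z-matrix (leading to Corollary \ref{coro3-2}) is also sound, and is essentially the unrolled form of the same chain of results on which Corollary \ref{coro3-10} rests.
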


\begin{theorem}\label{thm3.111}
Suppose that $\sum_{k = 2}^n\beta_k a_{1,k}a_{k,1} < 1$,
$\alpha_ia_{i,1}a_{1,i} < 1$, $i = 2, \cdots, n$.
Then Theorem C is valid for $\nu = 30$,
provided one of the following conditions is satisfied:

\begin{itemize}
\item[(i)] $0 \le \gamma < 1$. For $k = 2,\cdots,n$, $0 \le \alpha_{k}, \beta_{k} \lesssim 1$.

\item[(ii)] $\gamma = 1$. For $k = 2,\cdots,n$, $0 \le \alpha_{k}, \beta_{k} \lesssim 1$.
One of the following conditions holds:

\begin{itemize}
\item[($ii_1$)]
There exists $k\in\{2, \cdots, n\}$ such that $\beta_ka_{1,k}a_{k,1} > 0$.

\item[($ii_2$)]
There exist $j\in\{3, \cdots, n\}$ and $k\in\{2, \cdots, j-1\}$ such that
$\beta_ka_{1,k}a_{k,j}$ $ > 0$.

\item[($ii_3$)]
There exist $i\in\{2, \cdots, n\}$ and $j\in\{2, \cdots, i\}$ such that
$\alpha_ia_{i,1}a_{1,j} > 0$.

\item[($ii_4$)]
$a_{1,2} < 0$.

\item[($ii_5$)]
$a_{n,1} < 0$ and $\alpha_{n} > 0$.

\item[($ii_6$)]
$a_{k,1} < 0$, $k = 2, \cdots, n$.

\item[($ii_7$)]
$a_{n,1} < 0$ and $a_{k,n} < 0$, $k = 2, \cdots, n-1$.

\item[($ii_8$)]
$a_{n,1} < 0$ and $a_{k,k+1} < 0$, $k = 2, \cdots, n-1$.
\end{itemize}

\item[(iii)] $0 \le \gamma < 1$. For $k = 2,\cdots,n-1$, $a_{k,1} < 0$,
$0 < \alpha_{k} \le 1$, $0 \le \alpha_{n} \le 1$.
$k = 2,\cdots,n$, $0 \le \beta_{k} \le 1$.

\item[($iv$)]$\gamma = 1$.
$0 < \alpha_{k} \le 1$, $0 \le \beta_{k} \le 1$, $k = 2,\cdots,n$.
For each $i\in\{2, \cdots, n-1\}$, $a_{i,1} < 0$ and there exists
$j(i)\in\{2, \cdots, i\}$ such that $a_{1,j(i)} < 0$.
\end{itemize}
\end{theorem}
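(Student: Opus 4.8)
The plan is to read off $Q_{30}$ as the special instance of $Q_2$ obtained by setting $\alpha_{1,j} = \beta_j$ for $j = 2, \ldots, n$, $\alpha_{i,1} = \alpha_i$ for $i = 2, \ldots, n$, and $\alpha_{i,j} = 0$ otherwise, and then to deduce everything from Corollary \ref{coro3-11}, whose hypotheses are phrased through conditions ($i$)--($iv$) of Theorem \ref{thm3.7}. Under this identification the two standing inequalities $\sum_{k=2}^n \beta_k a_{1,k}a_{k,1} < 1$ and $\alpha_i a_{i,1}a_{1,i} < 1$ are exactly the requirement $\sum_{k \ne i} \alpha_{i,k}a_{i,k}a_{k,i} < 1$ of that corollary. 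Since $Q_{30} = Q_6 + Q_{14}$ is the ``first row plus first column'' mirror image of $Q_{29} = Q_5 + Q_{15}$, the whole argument will run parallel to the proof of Theorem \ref{thm3.107}, with the index roles reflected; I would rely throughout on the already tabulated values of $\delta_{i,j}^{(30)}(1)$ and on the irreducibility of $A$, which furnishes an index $j_0 \in \{2, \ldots, n\}$ with $a_{1,j_0} < 0$ and an index $k_0 \in \{2, \ldots, n\}$ with $a_{k_0,1} < 0$.

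For case ($i$) the conclusion is immediate from part ($i$) of Corollary \ref{coro3-11}. For case ($ii$) I would show that each of ($ii_1$)--($ii_8$) triggers one of ($ii_1$)--($ii_9$) of Theorem \ref{thm3.7}. The conditions ($ii_1$)--($ii_3$) directly produce a positive value $\delta_{i,j}^{(30)}(1)$, hence condition ($ii_1$) of Theorem \ref{thm3.7}; the sign hypotheses ($ii_4$)--($ii_8$) are converted into such a positivity by pairing an off-diagonal entry $-\beta_j a_{1,j}$ or $-\alpha_i a_{i,1}$ of $Q_{30}$ with a negative entry of $A$ supplied by irreducibility, in the same manner as in the proofs of Theorems \ref{thm3.47} and \ref{thm3.19}. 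Each verification is a short computation matching a product $\alpha_i a_{i,1}a_{1,j}$ or $\beta_k a_{1,k}a_{k,j}$ against the relevant $\delta_{i,j}^{(30)}(1)$.

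The substantive part is cases ($iii$) and ($iv$), which must invoke part ($ii$) of Corollary \ref{coro3-11}, that is, conditions ($iii$)/($iv$) of Theorem \ref{thm3.7}. For ($iii$) with $\gamma < 1$ I would check the ``for each $i \in \{1, \ldots, n-1\}$'' hypothesis row by row: for $i = 1$ the normalization $\sum_{k=2}^n \beta_k a_{1,k} \ne 0$ yields some $\beta_{k_0} > 0$ with $a_{1,k_0} < 0$, so that $\alpha_{1,k_0}a_{1,k_0} < 0$; for $2 \le i \le n-1$ the hypothesis $a_{i,1} < 0$ together with $\alpha_i > 0$ gives $\alpha_{i,1}a_{i,1} < 0$, taking $j(i) = 1$. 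For ($iv$) with $\gamma = 1$ the same data make $\delta_{i,j(i)}^{(30)}(1) = \alpha_i a_{i,1}a_{1,j(i)} > 0$ for the interior rows, which is condition ($iv_1$) of Theorem \ref{thm3.7}, while the exceptional first and last rows are settled through the companion conditions ($iv^a$)--($iv^e$) there, again using $j_0$. The main obstacle is precisely this boundary bookkeeping for rows $1$ and $n$: one must track whether the weight attached to $a_{1,j_0}$ or $a_{n,1}$ vanishes and branch into the ($iv^c$)/($iv^d$) versus ($iv^a$) alternatives accordingly, exactly as in the closing paragraph of the proof of Theorem \ref{thm3.107}. Once all these matchings are in place, Corollary \ref{coro3-11} delivers the three mutually exclusive Stein--Rosenberg relations, which completes the proof.
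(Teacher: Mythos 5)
Your proposal is correct and takes essentially the same approach as the paper's proof: both treat $Q_{30}$ as an instance of the general preconditioner $Q_2$, reduce the four cases to the corresponding conditions of Theorem \ref{thm3.7} (equivalently Corollary \ref{coro3-11}), convert the sign hypotheses ($ii_1$)--($ii_8$) into positive entries $\delta_{i,j}^{(30)}(1)$ by pairing nonzero entries of $Q_{30}$ with negative entries of $A$, and verify ($iii$)/($iv$) row by row with the last-row conditions ($iv^a$)--($iv^e$) checked via an index $j_0$ with $a_{n,j_0}<0$. The paper's boundary bookkeeping in case ($iv$) branches on $j_0=1$ versus $j_0\ge 2$ to invoke ($iv^b$) or ($iv^c$), which is exactly the kind of analysis you anticipated.
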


\begin{proof}
From ($i$) and ($ii_2$) in Theorem \ref{thm3.7}, we can derive ($i$) and ($ii_5$) directly.

If one of ($ii_1$), ($ii_2$) and ($ii_3$) holds, then there exist $i,j\in\{1, \cdots, n\}$
such that $\delta_{i,j}^{(30)}(1) > 0$, which implies that ($ii_1$) in Theorem \ref{thm3.7}
holds.

If ($ii_4$) holds, then from the definition of $Q_{30}$, there exists $i_0\in\{2, \cdots, n\}$
such that
$\alpha_{i_0}a_{i_0,1} < 0$ so that $\alpha_{i_0}a_{i_0,1}a_{1,2} > 0$. This has shown that
the condition ($ii_3$) is satisfied for $i = i_0$ and $j = 2$.

From
\begin{eqnarray*}
&& \sum\limits_{k = 2}^n\beta_k a_{1,k}a_{k,1}
 \ge \max\limits_{2 \le k \le n}\{a_{k,1}\}\sum\limits_{k = 2}^n\beta_k a_{1,k} > 0,\\
&& \sum\limits_{k = 2}^{n-1}\beta_k a_{1,k}a_{k,n} + \beta_n a_{1,n}a_{n,1} \\
&& \ge \max\{a_{n,1}; \; a_{k,n}:k = 2, \cdots, n-1\}\sum\limits_{k = 2}^{n}\beta_k a_{1,k} > 0
\end{eqnarray*}
and
\begin{eqnarray*}
&& \sum\limits_{j = 3}^{n}\beta_{j-1}a_{1,j-1}a_{j-1,j} +
\beta_n a_{1,n}a_{n,1} \\
 && \ge \max\{a_{n,1}; \; a_{k,k+1}:k = 2, \cdots, n-1\}\sum\limits_{j = 2}^{n}\beta_{j}a_{1,j} \\
 && > 0,
\end{eqnarray*}
it is easy to see that if one of ($ii_6$), ($ii_7$) and ($ii_8$) holds, then
($ii_1$) or ($ii_2$) is satisfied.

For ($iii$),
by the definition of $Q_{30}$ again, there exists $j(1)\in\{2, \cdots, n\}$ such that
$\beta_{j(1)}a_{1,j(1)} < 0$. For $i\in\{2, \cdots, n-1\}$, $\alpha_ia_{i,1} < 0$.
The condition ($iii$) follows by ($iii$) of Theorem \ref{thm3.7}, immediately.

If ($iv$) holds, then $\delta_{i,j(i)}^{(30)}(1) > 0$, which implies that
($iv_1$) in Theorem \ref{thm3.7} holds.
From the irreducibility of $A$, there exists $j_0\in\{1, \cdots, n-1\}$ such that
$a_{n,j_0} < 0$. If $j_0 = 1$ then ($iv^b$) in Theorem \ref{thm3.7} holds.
If $j_0 \ge 2$ then $a_{n,j_0} - \alpha_{n}a_{n,1}a_{1,j_0} \le a_{n,j_0} < 0$,
which shows that (\ref{eqs3.15}) holds for $j = j_0$, i.e., the condition
($iv^c$) in Theorem \ref{thm3.7} holds. We have proved ($iv$).
\end{proof}

\begin{theorem}\label{thm3.112}
Theorem D is valid for $\nu = 30$,
provided one of the conditions ($i$)-($iv$) of Theorem \ref{thm3.111} is satisfied.
\end{theorem}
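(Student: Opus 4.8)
The goal is to establish Theorem D (the strict comparison $\rho(\mathscr{L}^{(30)}_{\gamma,\omega}) < \rho(\mathscr{L}_{\gamma,\omega}) < 1$) for the irreducible nonsingular M-matrix case under any one of the conditions (i)--(iv) of Theorem \ref{thm3.111}. The plan is to reduce Theorem D entirely to the Stein-Rosenberg Type Theorem II (Theorem C, already proved as Theorem \ref{thm3.111} for $\nu=30$) together with the general machinery for nonsingular M-matrices developed in Corollary \ref{coro3-12} and its source results, Theorem \ref{thm3.4} and Theorem \ref{thm3.8}. Since the preconditioner $Q_{30} = Q_6 + Q_{14}$ is a special case of the combination $Q_{23}$-type construction built from $Q_2$, the natural route is to invoke Corollary \ref{coro3-12}, which states that Theorem D holds for $\nu=2$ whenever condition (ii) of Theorem \ref{thm3.8} or one of conditions (i), (ii) of Corollary \ref{coro3-11} is satisfied.

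First I would observe that each of conditions (i)--(iv) of Theorem \ref{thm3.111} already guarantees that the hypotheses of Theorem C hold for $\nu = 30$, so by Theorem \ref{thm3.111} we have the trichotomy of Theorem C available; in particular, under the M-matrix assumption the relevant branch is the strict inequality. The key structural fact to exploit is Lemma \ref{lem1-10}: since $A$ is an irreducible nonsingular M-matrix and $A^{(30)} = P_{30}A$ is a Z-matrix (which follows from the sign conditions $-\alpha_k a_{k,1}+\beta_k \ge 0$ and $-\beta_k a_{1,k}+\cdots \ge 0$ defining $Q_{30}$ together with the entry formulas for $Q_6$ and $Q_{14}$), $P_{30}A$ is automatically a nonsingular M-matrix, hence an L-matrix. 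This supplies convergence $\rho(\mathscr{L}_{\gamma,\omega}) < 1$ via Lemma \ref{lem1-5} and the weak regularity of the AOR splitting, so only the strict comparison remains. The plan is then to match each condition (i)--(iv) against the already-established conditions of Theorem \ref{thm3.8}(i) (which says Theorem D follows once one of (i)--(iv) of Theorem \ref{thm3.7} holds) by the same translation already carried out in the proof of Theorem \ref{thm3.111}: the $\delta^{(30)}_{i,j}$ computations reducing to the $\delta^{(1)}_{i,j}$ or $\delta^{(2)}_{i,j}$ positivity conditions.

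The cleanest way to write the proof is therefore short: invoke Corollary \ref{coro3-12}, noting that conditions (i)--(iv) of Theorem \ref{thm3.111} are precisely the specializations (to the $Q_{30}$ entries) of the conditions in Corollary \ref{coro3-11}, which in turn feed Corollary \ref{coro3-12}. Concretely, the proof of Theorem \ref{thm3.111} has already verified that under (i)--(iv) one of the positivity conditions $(ii_1)$--$(iv)$ of Theorem \ref{thm3.7} is satisfied, and Theorem \ref{thm3.8}(i) then yields Theorem D directly. Thus the statement follows by Corollary \ref{coro3-12} (equivalently by Theorem \ref{thm3.8} together with Theorem \ref{thm3.111}), and the proof can be phrased as ``By Theorem \ref{thm3.111}, one of conditions (i)--(iv) ensures the hypotheses of Theorem \ref{thm3.7} hold; by Theorem \ref{thm3.8} (via Corollary \ref{coro3-12}) Theorem D is valid for $\nu=30$.''

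The one genuine subtlety — the part I expect to be the main obstacle — is ensuring that the reducible/irreducible bookkeeping inside $A^{(30)}$ is handled correctly, since $Q_{30}$ has nonzeros in the first row and first column simultaneously and a naive argument could fail when $A^{(30)}$ degenerates to a reducible matrix. Here I would lean on the structural analysis already performed for $Q_6$ in Lemma \ref{lem3-4} and for $Q_{14}$-type combinations, checking that the irreducibility of $A$ propagates to $A^{(30)}_{2,2}$ (or the relevant block) exactly as in the proof of Theorem \ref{thm3.23}. Once the block structure is confirmed irreducible, the strict inequality $\rho(\mathscr{L}^{(30)}_{\gamma,\omega}) < \rho(\mathscr{L}_{\gamma,\omega})$ is immediate from Theorem C. Since the nonsingular M-matrix hypothesis is strictly stronger than the irreducible L-matrix hypothesis of Theorem C, no new positivity verification is needed beyond what Theorem \ref{thm3.111} already supplies, and the result follows.
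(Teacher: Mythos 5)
Your proposal is correct and takes essentially the same route the paper intends: the paper states this theorem without a separate proof, relying on its established pattern that under conditions ($i$)--($iv$) Theorem \ref{thm3.111} supplies the Theorem C trichotomy, while the irreducible nonsingular M-matrix hypothesis forces $\rho({\mathscr L}_{\gamma,\omega}) < 1$ (Lemma \ref{lem1-5}, via the weak regular AOR splitting), so only the strict-comparison branch can occur --- equivalently, the conditions feed Theorem \ref{thm3.8}/Corollary \ref{coro3-12} exactly as you describe. Your added detour through the $A^{(30)}$ irreducibility bookkeeping and Lemma \ref{lem3-4} is redundant (that work is already encapsulated in the proof of Theorem \ref{thm3.111}, which you may take as given), and the description of $Q_{30}$ as a ``$Q_{23}$-type'' construction is a slight mislabel (it is a special case of $Q_2$), but neither affects the validity of the argument.
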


Unlike $Q_{29}$ and $Q_{30}$, we give $Q$ as
 \begin{eqnarray*}
Q_{31} &=& Q_5 + Q_{14} \\
&=& \left(\begin{array}{ccccc}
0                  & -\beta_2a_{1,2}    & \cdots & -\beta_{n-1}a_{1,n-1}  & -\beta_{n}a_{1,n}\\
0                  & 0                  & \cdots & 0                     & 0\\
\vdots             & \vdots             & \ddots & \vdots                & \vdots\\
0                  & 0                  & \cdots & 0                     & 0\\
-\alpha_{1}a_{n,1} & -\alpha_{2}a_{n,2} & \cdots & -\alpha_{n-1}a_{n,n-1} &  0\\
\end{array}\right)
 \end{eqnarray*}
with $\alpha_{k} \ge 0$, $\beta_{k=1} \ge 0$, $k = 1, \cdots, n-1$, and
\begin{eqnarray*}
\sum\limits_{k = 1}^{n-1}\alpha_{k}a_{n,k} \not = 0 \;\;
\hbox{and} \;\; \sum\limits_{k = 2}^{n}\beta_ka_{1,k} \not = 0.
 \end{eqnarray*}

It is proposed in \cite{Wa062} with $\alpha_1 = 0$.

In this case, $\delta_{i,j}^{(2)}(\gamma)$ and $\delta_{i,j}^{(2)}(1)$ reduce respectively to
\begin{eqnarray*}
\delta_{i,j}^{(31)}(\gamma) = \left\{\begin{array}{ll}
\sum\limits_{k = 2}^n \beta_ka_{1,k}a_{k,1}, & i = j = 1;\\
\sum\limits_{k = 1}^{n-1} \alpha_ka_{n,k}a_{k,n}, & i = j = n;\\
 (\gamma-1)\beta_ja_{1,j} + \gamma\sum\limits_{k = 2}^{j-1} \beta_ka_{1,k}a_{k,j}, &
 i = 1, j = 2,\cdots,n;\\
 (\gamma-1)\alpha_ja_{n,j} + \gamma\sum\limits_{k = 1}^{j-1} \alpha_ka_{n,k}a_{k,j}, & i = n, j = 1, \cdots, n - 1;\\
  0, & otherwise
\end{array}\right.
\end{eqnarray*}
and
\begin{eqnarray*}
\delta_{i,j}^{(31)}(1) = \left\{\begin{array}{ll}
 \sum\limits_{k = 2}^n\beta_k a_{1,k}a_{k,1}, & i = j = 1; \\
 \sum\limits_{k = 2}^{j-1} \beta_ka_{1,k}a_{k,j}, & i = 1, j = 3,\cdots,n;\\
\sum\limits_{k = 1}^{j-1} \alpha_ka_{n,k}a_{k,j}, & i = n, j = 2, \cdots, n;\\
  0, & otherwise.
\end{array}\right.
\end{eqnarray*}

Using Corollaries \ref{coro3-9}-\ref{coro3-12}, similar to Theorems \ref{thm3.109}-\ref{thm3.112},
we can prove the following results.

\begin{theorem}\label{thm3.113}
Suppose that $0 \le \alpha_{k}, \beta_{k+1} \le 1$, $k = 1,\cdots,n-1$, and
\begin{eqnarray} \label{eqs3.59}
\sum\limits_{k = 2}^n\beta_k a_{1,k}a_{k,1} < 1, \;
\sum\limits_{k = 1}^{n-1} \alpha_ka_{n,k}a_{k,n} < 1.
\end{eqnarray}
Then Theorem A is valid for $\nu = 31$.
\end{theorem}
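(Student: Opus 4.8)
The plan is to recognize $Q_{31}$ as a special instance of the general second preconditioner $Q_2 = (-\alpha_{i,j}a_{i,j})$ and then to invoke Corollary \ref{coro3-9}. Concretely, $Q_{31}$ arises from $Q_2$ by taking $\alpha_{1,j} = \beta_j$ for $j = 2,\cdots,n$, taking $\alpha_{n,j} = \alpha_j$ for $j = 1,\cdots,n-1$, and setting $\alpha_{i,j} = 0$ for every remaining off-diagonal pair, i.e. for all rows $2 \le i \le n-1$. With this identification the two requirements of Corollary \ref{coro3-9}, namely the box constraint $0 \le \alpha_{i,j} \le 1$ and the diagonal-sum bound $\sum_{k=1,k\ne i}^n \alpha_{i,k}a_{i,k}a_{k,i} < 1$, must both be verified.

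First I would check the box constraint. The hypothesis $0 \le \alpha_k,\beta_{k+1} \le 1$ for $k = 1,\cdots,n-1$ says exactly that $0 \le \alpha_j \le 1$ for $j = 1,\cdots,n-1$ and $0 \le \beta_j \le 1$ for $j = 2,\cdots,n$; the coefficients $\alpha_{i,j}$ attached to the intermediate rows are zero and hence also lie in $[0,1]$, so $0 \le \alpha_{i,j} \le 1$ holds for every off-diagonal $(i,j)$. Next I would evaluate the diagonal sums $\sum_{k=1,k\ne i}^n \alpha_{i,k}a_{i,k}a_{k,i}$ row by row. For $2 \le i \le n-1$ all coefficients $\alpha_{i,k}$ vanish, so the sum equals $0 < 1$ trivially. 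For $i = 1$ only the indices $k = 2,\cdots,n$ survive and the sum is $\sum_{k=2}^n \beta_k a_{1,k}a_{k,1}$, while for $i = n$ only $k = 1,\cdots,n-1$ survive and the sum is $\sum_{k=1}^{n-1}\alpha_k a_{n,k}a_{k,n}$; by the two inequalities in (\ref{eqs3.59}) each of these is strictly below $1$. Thus all hypotheses of Corollary \ref{coro3-9} are satisfied, and Theorem A is valid for $\nu = 31$.

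Since the argument is a pure specialization of an already-established corollary, no genuine obstacle is expected. The only step requiring care is the bookkeeping that converts the $Q_{31}$-parameters $\alpha_k,\beta_k$ into the $Q_2$-parameters $\alpha_{i,j}$ and confirms that the intermediate rows contribute nothing to their diagonal sums; once that correspondence is fixed, the conclusion follows at once from Corollary \ref{coro3-9} and the definition of Theorem A.
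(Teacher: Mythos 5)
Your proof is correct and follows essentially the same route as the paper: the paper also obtains Theorem \ref{thm3.113} by viewing $Q_{31}$ as a special instance of $Q_2$ and invoking Corollary \ref{coro3-9}, exactly as in the analogous Theorem \ref{thm3.109} for $Q_{30}$. Your row-by-row verification (hypothesis $0\le\alpha_k,\beta_{k+1}\le 1$ giving the box constraint, the two inequalities in (\ref{eqs3.59}) handling rows $1$ and $n$, and the zero intermediate rows giving trivial diagonal sums) is precisely the bookkeeping the paper leaves implicit.
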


\begin{theorem}\label{thm3.114}
Suppose that $0 \le \alpha_{k}, \beta_{k+1} \le 1$, $k = 1,\cdots,n-1$.
Then Theorem B is valid for $\nu = 31$.
\end{theorem}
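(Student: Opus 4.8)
The plan is to observe that $Q_{31}$ is nothing but a special instance of the general preconditioner $Q_2 = (-\alpha_{i,j}a_{i,j})$, and then to quote Corollary \ref{coro3-10} verbatim; this is the exact analogue of how the corresponding Theorem~B statements for $\nu = 29$ and $\nu = 30$ are obtained from Corollaries \ref{coro3-9} and \ref{coro3-10}.

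First I would match the entries of $Q_{31}$ against the template $q^{(2)}_{i,j} = -\alpha_{i,j}a_{i,j}$. Reading off the displayed matrix, the only off-diagonal multipliers allowed to be nonzero are $\alpha_{1,j} = \beta_j$ for $j = 2,\cdots,n$ (the first row) and $\alpha_{n,j} = \alpha_j$ for $j = 1,\cdots,n-1$ (the last row), while $\alpha_{i,j} = 0$ for every other pair with $i\neq j$. I would also confirm that this choice respects the structural requirements defining $Q_2$: all multipliers are nonnegative, and the two conditions $\sum_{k=1}^{n-1}\alpha_k a_{n,k}\neq 0$ and $\sum_{k=2}^{n}\beta_k a_{1,k}\neq 0$ imposed on $Q_{31}$ give $\sum_{i\neq j}\alpha_{i,j}a_{i,j}\neq 0$, so that $A^{(31)}$ coincides with $A^{(2)}$ for this particular family of coefficients.

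Next I would verify the single hypothesis of Corollary \ref{coro3-10}, namely $0 \le \alpha_{i,j} \le 1$ for all $i\neq j$. The assumption $0 \le \alpha_k \le 1$ ($k = 1,\cdots,n-1$) yields $0 \le \alpha_{n,j}\le 1$ for $j = 1,\cdots,n-1$; the assumption $0 \le \beta_{k+1}\le 1$ ($k = 1,\cdots,n-1$), i.e. $0 \le \beta_j \le 1$ for $j = 2,\cdots,n$, yields $0 \le \alpha_{1,j}\le 1$; and every remaining $\alpha_{i,j}$ equals $0$ and hence lies in $[0,1]$. Since in the setting of Theorem~B the matrix $A$ is a nonsingular M-matrix, Corollary \ref{coro3-10} applies directly and gives $\rho({\mathscr L}^{(31)}_{\gamma,\omega}) \le \rho({\mathscr L}_{\gamma,\omega}) < 1$, which is Theorem~B for $\nu = 31$.

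I do not expect any genuine obstacle: the argument is a bookkeeping reduction, and all the analytic content (the nonsingularity of $P_{31} = I + Q_{31}$, the fact that $P_{31}A$ is again a nonsingular M-matrix via Lemma \ref{lem1-10}, and the monotone comparison of spectral radii) is already absorbed into Corollary \ref{coro3-10}. The only step demanding a little care is the index matching between the two parameter families --- keeping $\beta_{k+1}$ aligned with the first-row multipliers $\beta_j$ and $\alpha_k$ aligned with the last-row multipliers --- so that the bound $0 \le \alpha_{i,j} \le 1$ is confirmed in every off-diagonal position before the corollary is invoked.
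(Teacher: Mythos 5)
Your proof is correct and is essentially the paper's own argument: the paper obtains this theorem by regarding $Q_{31}$ as a special instance of the general preconditioner $Q_2$ and invoking Corollary \ref{coro3-10}, exactly as you do (the paper states this via ``Using Corollaries \ref{coro3-9}--\ref{coro3-12}, similar to Theorems \ref{thm3.109}--\ref{thm3.112}''). The index bookkeeping you spell out --- first-row multipliers $\alpha_{1,j}=\beta_j$ for $j=2,\cdots,n$, last-row multipliers $\alpha_{n,j}=\alpha_j$ for $j=1,\cdots,n-1$, all other off-diagonal $\alpha_{i,j}=0$, so that every $\alpha_{i,j}$ lies in $[0,1]$ --- is precisely the content of that reduction.
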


The results given in Theorems \ref{thm3.113} and \ref{thm3.114}
are better than the corresponding one given in \cite[Theorem 2.2]{Wa062}.

\begin{theorem}\label{thm3.115}
Suppose that (\ref{eqs3.59}) holds and $0 \le \alpha_{k}, \beta_{k+1} \lesssim 1$, $k = 1,\cdots,n-1$.
Then Theorem C is valid for $\nu = 31$,
provided one of the following conditions is satisfied:

\begin{itemize}
\item[(i)] $0 \le \gamma < 1$.

\item[(ii)] $\gamma = 1$ and one of the following conditions holds:

\begin{itemize}
\item[($ii_1$)]
There exists $k\in\{2, \cdots, n\}$ such that $\beta_ka_{1,k}a_{k,1} > 0$.

\item[($ii_2$)]
There exist $j\in\{3, \cdots, n\}$ and $k\in\{2, \cdots, j-1\}$ such that
$\beta_ka_{1,k}a_{k,j}$ $ > 0$.

\item[($ii_3$)]
There exist $j\in\{2, \cdots, n\}$ and $k\in\{1, \cdots, j-1\}$ such that
$\alpha_ka_{n,k}a_{k,j}$ $ > 0$.

\item[($ii_4$)]
$a_{n,1} < 0$ and $\alpha_{n} > 0$.

\item[($ii_5$)]
$a_{k,1} < 0$, $k = 2, \cdots, n$.

\item[($ii_6$)]
$a_{k,n} < 0$, $k = 1, \cdots, n-1$.

\item[($ii_7$)]
$a_{n,1} < 0$ and $a_{k,n} < 0$, $k = 2, \cdots, n-1$.

\item[($ii_8$)]
$a_{k,k+1} < 0$, $k = 1, \cdots, n-1$.

\item[($ii_9$)]
$a_{n,1} < 0$ and $a_{k,k+1} < 0$, $k = 2, \cdots, n-1$.
\end{itemize}
\end{itemize}
\end{theorem}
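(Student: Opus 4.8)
The plan is to treat $Q_{31}$ as a special instance of the general preconditioner $Q_2 = (-\alpha_{i,j}a_{i,j})$ and then apply Corollary~\ref{coro3-11}, exactly in the spirit of the proof of Theorem~\ref{thm3.111}. Here $Q_{31}$ corresponds to taking $\alpha_{1,j} = \beta_j$ for $j = 2,\ldots,n$ in the first row, $\alpha_{n,j} = \alpha_j$ for $j = 1,\ldots,n-1$ in the last row, and $\alpha_{i,j} = 0$ for the middle rows $2 \le i \le n-1$. First I would verify the diagonal hypothesis $\sum_{k \ne i}\alpha_{i,k}a_{i,k}a_{k,i} < 1$ of Corollary~\ref{coro3-11}: for $i = 1$ and $i = n$ these are precisely the two inequalities in (\ref{eqs3.59}), and for $2 \le i \le n-1$ the sum is empty so the bound is automatic. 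The standing assumption $0 \le \alpha_k, \beta_{k+1} \lesssim 1$ then supplies the off-diagonal bound $0 \le \alpha_{i,j} \lesssim 1$ demanded in part~(i) of Corollary~\ref{coro3-11}.

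Case~(i), $0 \le \gamma < 1$, requires nothing further: it is the $0 \le \gamma < 1$ branch of condition~(i) of Corollary~\ref{coro3-11}, and Theorem~C is immediate. The work is concentrated in case~(ii), $\gamma = 1$, where I would use the reduced expressions for $\delta^{(31)}_{i,j}(1)$ recorded just above the statement to show that each of ($ii_1$)--($ii_9$) forces one of the conditions ($ii_1$)--($ii_9$) of Theorem~\ref{thm3.7} (in almost every subcase, condition ($ii_1$) there, namely the existence of an index pair with $\delta^{(1)}_{i,j}(1) > 0$).

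The direct subcases are ($ii_1$)--($ii_3$), each of which simply exhibits a positive summand in $\delta^{(31)}_{1,1}(1)$, $\delta^{(31)}_{1,j}(1)$, or $\delta^{(31)}_{n,j}(1)$---legitimate because, for an L-matrix, all these summands are nonnegative. Condition ($ii_4$) gives $q^{(31)}_{n,1} = -\alpha_1 a_{n,1} > 0$, which is condition ($ii_2$) of Theorem~\ref{thm3.7}. For ($ii_5$), ($ii_6$) and ($ii_8$) I would invoke the nontriviality clauses $\sum_k \beta_k a_{1,k} \ne 0$ and $\sum_k \alpha_k a_{n,k} \ne 0$ to produce an index $k_0$ with $\beta_{k_0}a_{1,k_0} < 0$ (respectively $\alpha_{k_0}a_{n,k_0} < 0$); the prescribed sign of $a_{k_0,1}$, $a_{k_0,n}$, or $a_{k_0,k_0+1}$ then makes a single term of $\delta^{(31)}_{1,1}(1)$, $\delta^{(31)}_{n,n}(1)$, or $\delta^{(31)}_{n,k_0+1}(1)$ strictly positive.

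The genuinely delicate subcases are ($ii_7$) and ($ii_9$), which combine $a_{n,1} < 0$ with a hypothesis on the $a_{k,n}$ or on the superdiagonal $a_{k,k+1}$. Here I would split on the nontrivial first-row index $k_0$ (equivalently, run the summation-with-maximum estimate used in Theorem~\ref{thm3.111}): when $k_0 \le n-1$ the relevant off-diagonal entry $a_{k_0,n}$ or $a_{k_0,k_0+1}$ is negative and contributes a positive term to $\delta^{(31)}_{1,n}(1)$ or $\delta^{(31)}_{1,k_0+1}(1)$, while when $k_0 = n$ the term $\beta_n a_{1,n}a_{n,1}$ in $\delta^{(31)}_{1,1}(1)$ is positive because $a_{n,1} < 0$. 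Either way some $\delta^{(31)}_{1,j}(1) > 0$, and condition ($ii_1$) of Theorem~\ref{thm3.7} holds, so Corollary~\ref{coro3-11} completes the proof. The main obstacle is the bookkeeping in these estimates: one must track which $\delta^{(31)}$ entry each product belongs to and confirm each nontriviality sum has its expected negative sign; I would also flag the apparent index slip in ($ii_4$), where the coefficient multiplying $a_{n,1}$ is $\alpha_1$ rather than the $\alpha_n$ printed in the statement.
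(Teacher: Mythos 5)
Your proposal is correct and follows essentially the same route as the paper: the paper disposes of Theorem \ref{thm3.115} precisely by specializing the general preconditioner $Q_2$ to $Q_{31}$ and invoking Corollaries \ref{coro3-9}--\ref{coro3-12} ``similar to Theorems \ref{thm3.109}--\ref{thm3.112}'', and your case-by-case reduction of ($ii_1$)--($ii_9$) to the $\gamma=1$ conditions of Theorem \ref{thm3.7} through Corollary \ref{coro3-11} (direct positive $\delta^{(31)}$ entries, the $q^{(31)}_{n,1}>0$ case, and the nontriviality-index arguments for the composite conditions) is exactly the argument written out for $Q_{30}$ in Theorem \ref{thm3.111}, transposed to the first-row/last-row structure of $Q_{31}$. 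Your flag on ($ii_4$) is also justified: $\alpha_n$ is not defined for $Q_{31}$, the coefficient of $a_{n,1}$ in $q^{(31)}_{n,1}$ is $\alpha_1$, so the printed condition is an index slip carried over from the corresponding condition of Theorem \ref{thm3.111}.
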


\begin{theorem}
Suppose that $0 \le \alpha_{k}, \beta_{k+1} \lesssim 1$, $k = 1,\cdots,n-1$.
Then Theorem D is valid for $\nu = 31$,
provided one of ($i$) and ($ii$) of Theorem \ref{thm3.115} is satisfied.
\end{theorem}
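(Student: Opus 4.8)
The plan is to deduce this Theorem~D statement from the corresponding Stein--Rosenberg Type~II result (Theorem~\ref{thm3.115}) for $\nu=31$, exactly in the spirit of the passage from Theorem~\ref{thm3.3} to Theorem~\ref{thm3.4}. First I would record that an irreducible nonsingular M-matrix $A$ is in particular an irreducible L-matrix, so that the AOR splitting $A = M_{\gamma,\omega} - N_{\gamma,\omega}$ is weak regular; by Lemma~\ref{lem1-5} this splitting is convergent, i.e.\ $\rho({\mathscr L}_{\gamma,\omega}) < 1$. The whole argument then rests on promoting the trichotomy supplied by Theorem~\ref{thm3.115} to its first branch by means of this strict bound.

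The genuinely new point is that the hypothesis (\ref{eqs3.59}) of Theorem~\ref{thm3.115} is \emph{not} assumed here and must be produced from the M-matrix structure. To this end I would first check that $A^{(31)} = P_{31}A$ is a Z-matrix: for $i \neq 1,n$ one has $a^{(31)}_{i,j} = a_{i,j} \le 0$ for $j \neq i$, while for the first and last rows the formula (\ref{eqs3.2}) gives, e.g., $a^{(31)}_{1,j} = (1-\beta_j)a_{1,j} - \sum_{k=2,\,k\neq j}^{n}\beta_k a_{1,k}a_{k,j} \le 0$, because $0 \le \beta_k \le 1$ and $A$ is an L-matrix, and symmetrically for row $n$. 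Since $P_{31} = I + Q_{31} > 0$ is nonsingular and both $A$ and $A^{(31)}$ are Z-matrices, Lemma~\ref{lem1-10} shows that $A^{(31)}$ is a nonsingular M-matrix. Consequently its diagonal entries are positive; as $a^{(31)}_{1,1} = 1 - \sum_{k=2}^{n}\beta_k a_{1,k}a_{k,1}$, $a^{(31)}_{n,n} = 1 - \sum_{k=1}^{n-1}\alpha_k a_{n,k}a_{k,n}$ and $a^{(31)}_{i,i}=1$ otherwise, this is precisely (\ref{eqs3.59}).

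With (\ref{eqs3.59}) now established, together with the standing bound $0 \le \alpha_k,\beta_{k+1} \lesssim 1$ and whichever of conditions (i) or (ii) of Theorem~\ref{thm3.115} is assumed, all the hypotheses of Theorem~\ref{thm3.115} are met, so that one of the three mutually exclusive relations $\rho({\mathscr L}^{(31)}_{\gamma,\omega}) < \rho({\mathscr L}_{\gamma,\omega}) < 1$, $\rho({\mathscr L}^{(31)}_{\gamma,\omega}) = \rho({\mathscr L}_{\gamma,\omega}) = 1$, $\rho({\mathscr L}^{(31)}_{\gamma,\omega}) > \rho({\mathscr L}_{\gamma,\omega}) > 1$ holds. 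Because $\rho({\mathscr L}_{\gamma,\omega}) < 1$, the last two branches are impossible, leaving $\rho({\mathscr L}^{(31)}_{\gamma,\omega}) < \rho({\mathscr L}_{\gamma,\omega}) < 1$, which is Theorem~D.

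I expect the main obstacle to be the second step, namely justifying (\ref{eqs3.59}) without assuming it: the clean route is the Lemma~\ref{lem1-10} argument above, but it hinges on first confirming that $A^{(31)}$ is a Z-matrix and that $P_{31}$ is nonsingular, and on the bookkeeping hidden in the symbol $\lesssim$, which is designed precisely so that $A^{(31)}$ inherits irreducibility from $A$. Once that is settled the reduction to Theorem~\ref{thm3.115} is immediate.
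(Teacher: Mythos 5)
Your proof is correct and is essentially the paper's own route: the paper establishes these Theorem~D statements exactly as in the prototype (Theorem \ref{thm3.4} and Corollary \ref{coro3-12}), namely by using Lemma \ref{lem1-10} to conclude that $P_{\nu}A$ is a nonsingular M-matrix --- which is precisely what supplies the diagonal condition (\ref{eqs3.59}) that Theorem \ref{thm3.115} requires but Theorem D does not assume --- and then combining the Theorem C trichotomy with $\rho({\mathscr L}_{\gamma,\omega}) < 1$ (obtained from the weak regular AOR splitting, Lemma \ref{lem1-5}) to force the first branch. The one point you flag but do not settle, nonsingularity of $P_{31}$, is left implicit by the paper as well, and it does hold here: $\det P_{31} = 1 - \alpha_1\beta_n a_{1,n}a_{n,1} > 0$ because $a_{1,n}a_{n,1} < 1$ for a nonsingular M-matrix $A$ with unit diagonal and $0 \le \alpha_1\beta_n \le 1$.
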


A combination is proposed in \cite{Wa06} as $Q_{32} = Q_5 + Q_{14}$, i.e.,
\begin{eqnarray*}
Q_{32} = \left(\begin{array}{ccccccc}
0      & \cdots  & 0 &  0 &  0 &  \cdots  & 0\\
\vdots & \ddots  & \vdots & \vdots & \vdots & \vdots & \vdots\\
0      & \cdots  & 0 & 0 &  0 & \cdots & 0\\
-\alpha_1a_{r,1} & \cdots & -\alpha_{r-1}a_{r,r-1} & 0 &  -\alpha_{r+1}a_{r,r+1} & \cdots &  -\alpha_{n}a_{r,n} \\
0      & \cdots  &      0  & 0 &  0 & \cdots & 0\\
\vdots &  \vdots &\vdots & \vdots & \vdots & \ddots & \vdots\\
0      & \cdots  &      0  & 0 &  0 & \cdots & 0
\end{array}\right)
 \end{eqnarray*}
with $2 \le r \le n-1$, $\alpha_k \ge 0$, $k = 1, \cdots, n$, $k\not = r$ and
\begin{eqnarray*}
\sum\limits_{k = 1}^{r-1}\alpha_ka_{r,k} \not = 0 \;\;
\hbox{and} \;\; \sum\limits_{k = r+1}^{n}\alpha_ka_{r,k} \not = 0.
 \end{eqnarray*}

In this case, $\delta_{i,j}^{(2)}(\gamma)$ and $\delta_{i,j}^{(2)}(1)$ reduce respectively to
\begin{eqnarray*}
\delta_{i,j}^{(32)}(\gamma) = \left\{\begin{array}{ll}
 (\gamma-1)\alpha_{j}a_{r,j} + \gamma\sum\limits_{{1 \le k \le j-1}\atop{r+1 \le k \le n}}\alpha_{k}a_{r,k}a_{k,j}, &
 i = r, j = 1,\cdots, r-1; \\
 \sum\limits_{{k = 1}\atop{k\not = r}}^n \alpha_ka_{r,k}a_{k,r}, & i = j = r;\\
 (\gamma-1)\alpha_ja_{r,j} + \gamma\sum\limits_{k = r+1}^{j-1} \alpha_ka_{r,k}a_{k,j}, &
 i = r, j = r+1,\cdots,n;\\
  0, & otherwise
\end{array}\right.
\end{eqnarray*}
and
\begin{eqnarray*}
\delta_{i,j}^{(32)}(1) = \left\{\begin{array}{ll}
\sum\limits_{{1 \le k \le j-1}\atop{r+1 \le k \le n}}\alpha_{k}a_{r,k}a_{k,j}, &
 i = r, j = 1,\cdots, r; \\
 \sum\limits_{k = r+1}^{j-1} \alpha_ka_{r,k}a_{k,j}, &
 i = r, j = r+2,\cdots,n;\\
  0, & otherwise.
\end{array}\right.
\end{eqnarray*}

Similar to the proof of Theorems \ref{thm3.19} and \ref{thm3.55},
we can prove corresponding comparison results, directly.

\begin{theorem}\label{thm3.117}
Suppose that
$0 \le \alpha_{k} \le 1$, $k = 1,\cdots,n$, $k\not = r$, and\\
$\sum_{k = 1,k\not = r}^n \alpha_ka_{r,k}a_{k,r} < 1$.
Then Theorem A is valid for $\nu = 32$.
\end{theorem}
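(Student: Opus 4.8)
The plan is to recognize $Q_{32}$ as a special instance of the general construction $Q_2 = (-\alpha_{i,j}a_{i,j})$ and then invoke Corollary \ref{coro3-9}. One sets $\alpha_{r,k} = \alpha_k$ for each $k \neq r$ and $\alpha_{i,j} = 0$ for every $i \neq r$; since only the $r$-th row of $Q_{32}$ is nonzero, with $(r,k)$-entry $-\alpha_k a_{r,k}$, this identification reproduces $Q_{32}$ exactly. Consequently $P_{32} = P_2$ and $A^{(32)} = A^{(2)}$, so the two preconditioned AOR iteration matrices coincide, and validity of Theorem A for $\nu = 2$ under these coefficients is precisely validity for $\nu = 32$. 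The task thus reduces to checking the two hypotheses of Corollary \ref{coro3-9}.

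First I would verify the bound $0 \le \alpha_{i,j} \le 1$ for all $i \neq j$. For $i = r$ this is the assumed constraint $0 \le \alpha_k \le 1$; for $i \neq r$ the corresponding coefficient is $0$, which lies in $[0,1]$ trivially. Next I would check the diagonal inequality $\sum_{k=1,k\neq i}^n \alpha_{i,k}a_{i,k}a_{k,i} < 1$ row by row. For $i \neq r$ every summand vanishes, leaving $0 < 1$; for $i = r$ the sum equals $\sum_{k=1,k\neq r}^n \alpha_k a_{r,k}a_{k,r}$, which is assumed less than $1$. Both hypotheses of Corollary \ref{coro3-9} then hold, and the conclusion follows at once.

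The only point requiring care --- and the nearest thing to an obstacle --- is index bookkeeping: because $Q_{32}$ activates a single middle row $r$ with $2 \le r \le n-1$, its nonzero entries straddle both the strictly lower part $\{1,\cdots,r-1\}$ and the strictly upper part $\{r+1,\cdots,n\}$ of that row, so one must confirm that the single excluded index $k = r$ and the resulting split of $\{1,\cdots,n\}\setminus\{r\}$ are absorbed correctly into the one sum appearing in the hypothesis. Once this matching is made explicit, the argument is a direct specialization of Corollary \ref{coro3-9}, exactly parallel to the way Theorems \ref{thm3.9} and \ref{thm3.45} handle the purely lower- and upper-triangular preconditioners.
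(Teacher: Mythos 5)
Your proof is correct and follows essentially the same route as the paper: the paper obtains the comparison results for $Q_{32}$ by specializing its general preconditioner machinery, which for the Theorem A statement amounts to exactly your invocation of Corollary \ref{coro3-9} with $\alpha_{r,k}=\alpha_k$ and all other rows of coefficients set to zero (the same device the paper uses explicitly for other combination preconditioners such as $Q_{21}$). Your row-by-row check of the two hypotheses, including the vacuous rows $i\neq r$, is precisely the intended specialization.
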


\begin{theorem}\label{thm3.118}
Suppose that $0 \le \alpha_{k} \le 1$, $k = 1,\cdots,n$, $k\not = r$.
Then Theorem B is valid for $\nu = 32$.
\end{theorem}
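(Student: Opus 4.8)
The plan is to recognize $Q_{32}$ as a special instance of the general preconditioner $Q_2$ and then to invoke Corollary \ref{coro3-10}, which already packages the Theorem B conclusion for $\nu = 2$ under the single hypothesis $0 \le \alpha_{i,j} \le 1$. This mirrors the route one would use for the Theorem A analogue in Theorem \ref{thm3.117}, with the simplification that the nonsingular M-matrix setting lets me dispense with the auxiliary diagonal inequality.

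First I would write out the parameter identification explicitly. Comparing the displayed form of $Q_{32}$ with $Q_2 = (-\alpha_{i,j}a_{i,j})$, I set $\alpha_{r,k} = \alpha_k$ for $k = 1,\ldots,n$, $k\neq r$, and $\alpha_{i,j} = 0$ for every $i\neq r$ (with $\alpha_{r,r}=0$). Under the hypothesis $0 \le \alpha_k \le 1$ for $k\neq r$, every entry of the array $(\alpha_{i,j})$ then lies in $[0,1]$: the row-$r$ entries by hypothesis and all remaining entries because they vanish. Hence the requirement $0 \le \alpha_{i,j} \le 1$ for all $i\neq j$ of Corollary \ref{coro3-10} is met verbatim, and that corollary yields $\rho({\mathscr L}^{(32)}_{\gamma,\omega}) \le \rho({\mathscr L}_{\gamma,\omega}) < 1$, which is exactly Theorem B for $\nu = 32$.

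The only point worth flagging is why no condition of the form $\sum_{k\neq r}\alpha_k a_{r,k}a_{k,r} < 1$ need be imposed, in contrast to Theorem \ref{thm3.117}. There that inequality was required to keep the diagonal entry $a^{(32)}_{r,r}$ positive so that $A^{(32)}$ remained an L-matrix. Here $A$ is a nonsingular M-matrix, so once $0 \le \alpha_{i,j} \le 1$ forces each off-diagonal entry $(1-\alpha_{i,j})a_{i,j} - \sum_{k\neq i,j}\alpha_{i,k}a_{i,k}a_{k,j}$ to be a sum of nonpositive terms (using $a_{i,j}\le 0$ and $a_{i,k}a_{k,j}\ge 0$), the matrix $A^{(32)}$ is a Z-matrix, and Lemma \ref{lem1-10} automatically upgrades it to a nonsingular M-matrix, guaranteeing $a^{(32)}_{r,r} > 0$ and the convergence bound. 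I expect no genuine obstacle: the entire argument is the bookkeeping of the specialization together with a single appeal to Corollary \ref{coro3-10}, exactly as the paper signals with its remark that such results follow ``similar to the proof of Theorems \ref{thm3.19} and \ref{thm3.55}.''
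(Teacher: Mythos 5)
Your proposal is correct and is essentially the paper's own route: the paper obtains this result by specializing the general-preconditioner machinery, i.e., viewing $Q_{32}$ as the instance of $Q_2$ supported on row $r$ (with $\alpha_{r,k}=\alpha_k$ and all other $\alpha_{i,j}=0$) and invoking Corollary \ref{coro3-10} (equivalently Theorem \ref{thm3.6}), which is exactly what you do. Your closing remark --- that in the nonsingular M-matrix setting one only needs $P_{32}A$ to be a Z-matrix, so the diagonal condition $\sum_{k\not=r}\alpha_k a_{r,k}a_{k,r}<1$ appearing in Theorem \ref{thm3.117} can be dropped via Lemma \ref{lem1-10} --- is precisely the point the paper leaves implicit.
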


The results given in Theorems \ref{thm3.117} and \ref{thm3.118}
are better than the corresponding one given in \cite[Theorem 2.2]{Wa06}.

\begin{theorem} \label{thm3.119}
Suppose that
$0 \le \alpha_{k} \lesssim 1$, $k = 1,\cdots,n$, $k\not = r$, and \\
$\sum_{k = 1,k\not = r}^n \alpha_ka_{r,k}a_{k,r} < 1$.
Then Theorem C is valid for $\nu = 32$,
provided one of the following conditions is satisfied:

\begin{itemize}
\item[(i)] $0 \le \gamma < 1$.

\item[(ii)]
$\gamma = 1$ and one of the following conditions holds:

\begin{itemize}
\item[($ii_1$)]
There exist $i\in\{1, \cdots, r\}$ and
$j\in\{1, \cdots, i-1\}\cup\{r+1, \cdots, n\}$ such that
$\alpha_ja_{r,j}a_{j,i} > 0$.

\item[($ii_2$)]
There exist $i\in\{r+2, \cdots, n\}$ and $j\in\{r+1, \cdots, i-1\}$ such that
$\alpha_ja_{r,j}a_{j,i} > 0$.

\item[($ii_3$)]
$a_{r,r+1} < 0$ and $\alpha_{r+1} > 0$.

\item[($ii_4$)]
$a_{k,k+1} < 0$, $k = 1, \cdots, r-1$.

\item[($ii_5$)]
$a_{n,1} < 0$ and $a_{k,k+1} < 0$, $k = r+1, \cdots, n-1$.

\item[($ii_6$)]
$a_{k,r} < 0$, $k = 1, \cdots, r-1$.

\item[($ii_7$)]  $a_{k,r} < 0$, $k = r+1, \cdots, n$.

\item[($ii_8$)]
$a_{n,1} < 0$ and $a_{k,n} < 0$, $k = r+1, \cdots, n-1$.

\item[($ii_9$)]  $a_{k,1} < 0$, $k = r+1, \cdots, n$.
\end{itemize}\end{itemize}
\end{theorem}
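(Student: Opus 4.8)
The plan is to prove Theorem \ref{thm3.119} by reducing it, condition by condition, to the already-established Stein-Rosenberg Type Theorem II for the general preconditioner $Q_2$, namely Theorem \ref{thm3.7} (via Corollary \ref{coro3-11}). Since $Q_{32}$ is the special case of $Q_2$ obtained by setting $\alpha_{i,j}=0$ whenever $i\neq r$, the quantities $\delta^{(2)}_{i,j}(\gamma)$ and $\delta^{(2)}_{i,j}(1)$ collapse to the displayed $\delta^{(32)}_{i,j}(\gamma)$ and $\delta^{(32)}_{i,j}(1)$, which are supported only on row $r$. First I would observe that the hypothesis $\sum_{k=1,k\neq r}^n\alpha_k a_{r,k}a_{k,r}<1$ guarantees $a^{(32)}_{r,r}>0$, and combined with $0\le\alpha_k\lesssim 1$ (interpreted through the $\lesssim$ convention on the off-diagonal entries where $a_{r,j}<0$) ensures that $P_{32}A$ is an L-matrix that is irreducible exactly when the relevant entries stay strictly negative. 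This is precisely the setup required to invoke Corollary \ref{coro3-11}, so the whole theorem amounts to verifying that each listed sufficient condition forces one of the hypotheses ($ii_1$)--($ii_9$) of Theorem \ref{thm3.7}.

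The case ($i$), $0\le\gamma<1$, is immediate: it matches condition ($i$) of Corollary \ref{coro3-11} directly, so nothing further is needed beyond confirming irreducibility of $A^{(32)}$. For the case $\gamma=1$, I would dispatch ($ii_1$), ($ii_2$) and ($ii_3$) first, since these either state that some $\delta^{(32)}_{r,j}(1)>0$ outright or, as in ($ii_3$), reproduce the pattern handled by ($ii_3$) of Theorem \ref{thm3.7}. The remaining conditions ($ii_4$)--($ii_9$) are the substance of the proof: each is a structural hypothesis on the sign pattern of $A$ (that certain subdiagonal, superdiagonal, first-column, $r$th-column or $n$th-column entries are negative) that, using the definition of $Q_{32}$, must be converted into the existence of a single index pair making $\delta^{(32)}_{r,j}(1)>0$. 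The mechanism in every case is the same as in Theorems \ref{thm3.19} and \ref{thm3.55}: by the normalization $\sum_{k=1}^{r-1}\alpha_k a_{r,k}\neq 0$ and $\sum_{k=r+1}^n\alpha_k a_{r,k}\neq 0$ there exist indices $k_0<r$ and $k_1>r$ with $\alpha_{k_0}a_{r,k_0}<0$ and $\alpha_{k_1}a_{r,k_1}<0$; then a negativity hypothesis such as $a_{k_0,r}<0$ or $a_{r,k_0+1}<0$ yields a product $\alpha_{k_0}a_{r,k_0}a_{k_0,j}>0$, which is one of the summands in the relevant $\delta^{(32)}_{r,j}(1)$.

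Concretely, I expect ($ii_4$) and ($ii_6$) to follow by producing $\alpha_{k}a_{r,k}a_{k,r}>0$ or $\alpha_k a_{r,k}a_{k,k+1}>0$ for an index $k<r$, matching ($ii_1$) of Theorem \ref{thm3.7}; ($ii_7$) and ($ii_9$) to follow symmetrically from an index $k>r$ together with a telescoping/maximum argument (choosing $\eta=\min$ of the relevant negative entries, exactly as in the proof of ($ii_8$) of Lemma \ref{lem3-3}) to guarantee that the summed contribution over $j$ is strictly positive; and ($ii_5$), ($ii_8$) to be the mixed cases where the negative entry sits in the $n$th row/column and one splits according to whether the witnessing index $k_1$ equals $n$ or not, as was done for ($ii_6$) and ($ii_7$) of Theorem \ref{thm3.55}. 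The main obstacle will be the bookkeeping in ($ii_9$) (and the analogous half of ($ii_7$)): here the condition $a_{k,1}<0$ for $k=r+1,\dots,n$ does not give a single positive $\delta^{(32)}_{r,j}(1)$ entry but only guarantees positivity of a sum $\sum_j\delta^{(32)}_{r,j}(1)$, so I must run the telescoping estimate carefully, using the fact that each $a_{r,k}$ with $\alpha_k>0$ contributes $-\alpha_k a_{r,k}\,a_{k,1}>0$, and then conclude by the irreducibility-based argument that at least one individual entry is positive. Once every listed condition has been matched to a hypothesis of Theorem \ref{thm3.7}, the conclusion follows by Corollary \ref{coro3-11}.
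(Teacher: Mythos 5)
Your overall strategy is exactly the paper's: the paper gives no separate proof of this theorem, but states that it follows ``similar to the proof of Theorems \ref{thm3.19} and \ref{thm3.55}'', i.e.\ one reduces to Theorem \ref{thm3.7} (through Corollary \ref{coro3-11}), treats ($ii_1$)--($ii_3$) as direct instances of the hypotheses there, and converts each of ($ii_4$)--($ii_9$) into a positive product by means of witness indices $k_0\in\{1,\cdots,r-1\}$, $k_1\in\{r+1,\cdots,n\}$ with $\alpha_{k_0}a_{r,k_0}<0$, $\alpha_{k_1}a_{r,k_1}<0$, splitting on whether $k_1=n$ in the mixed conditions ($ii_5$), ($ii_8$). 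That is what you propose, and it works.

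The one place you go astray is your claimed ``main obstacle'' in ($ii_7$) and ($ii_9$): there is no obstacle, and no telescoping or minimum argument is needed. Under ($ii_9$), the witness $k_1$ itself satisfies $a_{k_1,1}<0$, so $\alpha_{k_1}a_{r,k_1}a_{k_1,1}>0$ is a single positive product; this is precisely condition ($ii_1$) of the theorem with $i=1$, $j=k_1$, and it already makes $\delta^{(32)}_{r,1}(1)>0$ because every summand $\alpha_k a_{r,k}a_{k,1}$ in that entry is a product of two nonpositive numbers times $\alpha_k\ge 0$, hence nonnegative — the Z-matrix sign structure rules out any cancellation. The same one-line argument handles ($ii_7$) with $i=r$, $j=k_1$. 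Your statement that the hypothesis ``only guarantees positivity of a sum $\sum_j\delta^{(32)}_{r,j}(1)$'' is therefore false (it gives a single positive entry), and your displayed contribution carries a sign error: the nonnegative summand is $+\alpha_k a_{r,k}a_{k,1}$, not $-\alpha_k a_{r,k}a_{k,1}$, which as written is $\le 0$ and would invalidate the lower bound you intend. Your fallback route (bound the row sum below, then use nonnegativity of the individual $\delta$-entries to extract one positive entry) is salvageable once the signs are fixed, so this is an unnecessary detour rather than a fatal gap; but the direct single-witness argument is what the paper intends and is all that is required.
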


\begin{theorem}
Suppose that $0 \le \alpha_{k} \lesssim 1$, $k = 1,\cdots,n$, $k\not = r$.
Then Theorem D is valid for $\nu = 32$,
provided one of the conditions ($i$) and ($ii$) of Theorem \ref{thm3.119} is satisfied.
\end{theorem}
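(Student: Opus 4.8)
The plan is to deduce this instance of Theorem~D from its Stein--Rosenberg counterpart, Theorem~\ref{thm3.119} (Theorem~C for $\nu=32$), exactly as the earlier ``Theorem~D is valid'' statements are obtained from their Theorem~C predecessors. First I would record that an irreducible nonsingular M-matrix is in particular an irreducible L-matrix (under the running convention of Section~\ref{se3}, a unit-diagonal matrix is an L-matrix iff it is a Z-matrix), so the qualitative hypotheses of Theorem~\ref{thm3.119} on $A$ are satisfied, while the assumed condition (i) or (ii) of Theorem~\ref{thm3.119} supplies the remaining hypotheses on $\gamma$ and on the $\alpha_k$. Since the bounds $0\le\alpha_k\lesssim 1$ are assumed here verbatim, the irreducibility of $A^{(32)}$ required by Theorem~\ref{thm3.119} is inherited directly rather than re-proved.

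The one hypothesis of Theorem~\ref{thm3.119} not literally restated in the present statement is $\sum_{k=1,k\neq r}^{n}\alpha_k a_{r,k}a_{k,r}<1$, so the first substantive step is to show that it holds automatically for a nonsingular M-matrix. Writing ${\mathscr J}=L+U\ge 0$ for the Jacobi matrix of $A$, the splitting $A=I-{\mathscr J}$ is weak regular, so by Lemma~\ref{lem1-5} it is convergent and $\rho({\mathscr J})<1$. Because $a_{r,k}a_{k,r}={\mathscr J}_{r,k}{\mathscr J}_{k,r}\ge 0$ for $k\neq r$, one has $\sum_{k\neq r}a_{r,k}a_{k,r}=({\mathscr J}^{2})_{r,r}\le\rho({\mathscr J}^{2})=\rho({\mathscr J})^{2}<1$, where the first inequality is the standard fact that a diagonal entry of a nonnegative matrix is dominated by its spectral radius; multiplying by $\alpha_k\le 1$ preserves the strict bound. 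This simultaneously guarantees $a^{(32)}_{r,r}>0$, which is consistent with $A^{(32)}$ being a Z-matrix to which Lemma~\ref{lem1-10} applies.

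With every hypothesis of Theorem~\ref{thm3.119} now verified, that theorem yields that exactly one of its three mutually exclusive relations (i)--(iii) holds. To finish, I would invoke again that $A$ is a nonsingular M-matrix: its AOR splitting (\ref{eqn1.4}) is weak regular, so Lemma~\ref{lem1-5} gives $\rho({\mathscr L}_{\gamma,\omega})<1$. This excludes relations (ii) and (iii), leaving only relation~(i), namely $\rho({\mathscr L}^{(32)}_{\gamma,\omega})<\rho({\mathscr L}_{\gamma,\omega})<1$, which is precisely the assertion of Theorem~D. The routine parts are the citations of Lemmas~\ref{lem1-5} and~\ref{lem1-10} and of Theorem~\ref{thm3.119}; the only place requiring genuine care, and hence the main (if modest) obstacle, is recognizing that the unstated hypothesis $\sum_{k\neq r}\alpha_k a_{r,k}a_{k,r}<1$ is the $(r,r)$ entry of ${\mathscr J}^{2}$ and is therefore automatically controlled by $\rho({\mathscr J})^2<1$ for an M-matrix, so that it need not be imposed separately.
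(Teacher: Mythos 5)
Your proof is correct, and its skeleton is the paper's standard route from a Theorem C statement to the corresponding Theorem D statement: an irreducible nonsingular M-matrix is an irreducible L-matrix, so Theorem \ref{thm3.119} yields the trichotomy, and since the AOR splitting of a nonsingular M-matrix is weak regular, Lemma \ref{lem1-5} gives $\rho({\mathscr L}_{\gamma,\omega})<1$, which eliminates the second and third alternatives (this is exactly how the paper argues in, e.g., Theorems \ref{thm3.4} and \ref{thm3.24}; the present theorem is stated without an explicit proof). Where you genuinely depart from the paper is in verifying the hypothesis $\sum_{k=1,k\ne r}^{n}\alpha_k a_{r,k}a_{k,r}<1$ that the Theorem D statement omits: you identify this sum with $({\mathscr J}^2)_{r,r}$ and bound it by $\rho({\mathscr J})^2<1$ via the Jacobi splitting and the fact that a diagonal entry of a nonnegative matrix is dominated by its spectral radius. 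The paper's own device for dropping such diagonal conditions in its B- and D-type results (see Theorem \ref{thm3.2} and Corollary \ref{coro3-2}) is Lemma \ref{lem1-10}: the bounds $0\le\alpha_k\le1$ make $A^{(32)}=P_{32}A$ a Z-matrix, and $P_{32}>0$ is nonsingular (indeed $Q_{32}^2=0$), so $A^{(32)}$ is a nonsingular M-matrix, hence an L-matrix, and positivity of its $(r,r)$ entry, $a^{(32)}_{r,r}=1-\sum_{k\ne r}\alpha_k a_{r,k}a_{k,r}$, is precisely the sum condition. Both arguments are valid and establish the same fact; yours is elementary and self-contained, while the paper's is uniform across all its preconditioners and additionally delivers that the preconditioned matrix is itself an M-matrix. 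Either way the conclusion $\rho({\mathscr L}^{(32)}_{\gamma,\omega})<\rho({\mathscr L}_{\gamma,\omega})<1$ follows.
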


Similarly, let $Q_{33} = Q_6 + Q_{15}$. Then we can propose a combination.
For simplicity we set
 \begin{eqnarray*}
Q_{33} = \left(\begin{array}{ccccccc}
0      & \quad\cdots\quad  & 0     & -\alpha_{1}a_{1,r}   & 0      & \quad\cdots \quad& 0\\
\vdots & \ddots  &\vdots &\vdots                & \vdots & \vdots & \vdots\\
0      & \cdots  & 0     & -\alpha_{r-1}a_{r-1,r} & 0      & \cdots & 0\\
0      & \cdots  & 0     & 0                    & 0      & \cdots & 0\\
0      & \cdots  & 0     & -\alpha_{r+1}a_{r+1,r} & 0      & \cdots & 0\\
\vdots & \vdots  &\vdots & \vdots               &\vdots  & \ddots & \vdots\\
0      & \cdots  & 0     &-\alpha_{n}a_{n,r}  & 0      & \cdots & 0
\end{array}\right)
 \end{eqnarray*}
with $2 \le r \le n-1$, $\alpha_k \ge 0$, $k = 1, \cdots, n$, $k\not = r$ and
\begin{eqnarray*}
\sum\limits_{k = 1}^{r-1}\alpha_ka_{k,r} \not = 0 \;\;
\hbox{and} \;\; \sum\limits_{k = r+1}^{n}\alpha_ka_{k,r} \not = 0.
 \end{eqnarray*}

It is proposed for the preconditioned
Jacobi and Gauss-Seidel methods in \cite{Mi87}.

In this case, $\delta_{i,j}^{(2)}(\gamma)$ and $\delta_{i,j}^{(2)}(1)$ reduce respectively to
\begin{eqnarray*}
\delta_{i,j}^{(33)}(\gamma) = \left\{\begin{array}{ll}
 \alpha_ia_{i,r}a_{r,i}, & i = j\in\{1, \cdots, n\}\setminus\{r\};\\
 (\gamma-1)\alpha_ia_{i,r}, & i\in\{1, \cdots, n\}\setminus\{r\}, j = r;\\
 \gamma\alpha_ia_{i,r}a_{r,j}, & i = 1, \cdots, r-1, \\
 & j\in\{1, \cdots, i-1\}\cup\{r+1, \cdots, n\};\\
 \gamma\alpha_ia_{i,r}a_{r,j}, & i = r+2, \cdots, n, j = r+1, \cdots, i-1;\\
  0, & otherwise
\end{array}\right.
\end{eqnarray*}
and
\begin{eqnarray*}
\delta_{i,j}^{(33)}(1) = \left\{\begin{array}{ll}
 \alpha_ia_{i,r}a_{r,j}, & i = 1, \cdots, r-1, \\
 & j\in\{1, \cdots, i\}\cup\{r+1, \cdots, n\};\\
\alpha_ia_{i,r}a_{r,j}, & i = r+1, \cdots, n, j = r+1, \cdots, i;\\
  0, & otherwise.
\end{array}\right.
\end{eqnarray*}

Since $\delta_{r,j}^{(33)}(\gamma) = 0$, $j = 1,\cdots,n$, then the condition ($ii$) of Theorem \ref{thm3.7}
can be not satisfied.

By Corollaries \ref{coro3-9}-\ref{coro3-12}, we can prove the following results, directly.

\begin{theorem}
Suppose that $0 \le \alpha_{k} \le 1$ and $\alpha_ka_{k,r}a_{r,k} < 1$,
$k = 1,\cdots,n$, $k\not = r$.
Then Theorem A is valid for $\nu = 33$.
\end{theorem}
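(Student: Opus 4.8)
The plan is to reduce the statement to the general upper-triangular framework already established for $Q_{12}$ and then invoke the appropriate corollary. Since $Q_{33} = Q_6 + Q_{15}$ places its nonzero entries only in column $r$ (above row $r$) and in the off-diagonal positions generated by the terms $-\alpha_k a_{k,r}$, the matrix $Q_{33}$ is a legitimate instance of the combination construction with $Q_i > 0$, $Q_j > 0$ as assumed at the start of the subsection. The key observation is that the hypotheses $0 \le \alpha_k \le 1$ and $\alpha_k a_{k,r}a_{r,k} < 1$ for $k = 1,\ldots,n$, $k \ne r$, are precisely what is needed to verify the two structural conditions of Corollary~\ref{coro3-9}: first that the modified off-diagonal entries $a^{(33)}_{i,j}$ remain nonpositive (so that $P_{33}A$ is still an L-matrix), and second that the modified diagonal entries $a^{(33)}_{i,i}$ stay strictly positive.

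First I would compute the entries of $A^{(33)} = P_{33}A$ explicitly using the general formula \eqref{eqs3.2}, exactly as was done for the preceding combination preconditioners. The diagonal entries become $a^{(33)}_{i,i} = 1 - \alpha_i a_{i,r}a_{r,i}$ for $i \ne r$ and $a^{(33)}_{r,r} = 1$; the hypothesis $\alpha_k a_{k,r}a_{r,k} < 1$ guarantees $a^{(33)}_{i,i} > 0$ for all $i$. For the off-diagonal entries I would check, using the displayed forms of $\delta^{(33)}_{i,j}(\gamma)$ together with the sign pattern of an L-matrix (all $a_{i,j} \le 0$ for $i \ne j$ and $a_{i,i} = 1 > 0$), that each $a^{(33)}_{i,j} \le 0$. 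The crucial inequality here is that $(1-\alpha_k)a_{i,r} \le 0$ since $0 \le \alpha_k \le 1$ and $a_{i,r} \le 0$, and that the correction terms $\alpha_k a_{i,r}a_{r,j}$ (products of two nonpositive factors times a nonnegative $\alpha_k$) only make the entries more negative in the relevant positions. This mirrors the verification already carried out in the proofs of Theorems~\ref{thm3.109} and \ref{thm3.113}.

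Having confirmed that $a^{(33)}_{i,i} > 0$ and $a^{(33)}_{i,j} \le 0$ for $i \ne j$, the matrix $A^{(33)}$ satisfies the defining conditions of Corollary~\ref{coro3-1} (equivalently the $\nu = 33$ instance of Corollary~\ref{coro3-9}), and Theorem~A follows immediately for $\nu = 33$. Concretely, I would write: the inequality $\alpha_k a_{k,r}a_{r,k} < 1$ shows $a^{(33)}_{k,k} > 0$, and the bound $0 \le \alpha_k \le 1$ together with the L-matrix sign pattern shows $a^{(33)}_{i,j} \le 0$ for $i \ne j$; hence the condition of Corollary~\ref{coro3-9} is satisfied, so Theorem~A is valid for $\nu = 33$.

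The main obstacle will be the careful bookkeeping of the off-diagonal sign verification, since $Q_{33}$ has a two-sided structure (nonzero entries both above and below row $r$ in column $r$), so the off-diagonal entries split into several cases depending on whether $i < r$, $i > r$, and whether $j$ falls before or after $r$. However, this is routine case-checking of the kind already dispatched for $Q_{29}$, $Q_{30}$, $Q_{31}$, and $Q_{32}$, and no genuinely new idea is required; the whole argument is a direct reduction to Corollary~\ref{coro3-9} once the entry formulas are in hand.
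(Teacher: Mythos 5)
Your proposal is correct and takes essentially the same route as the paper: the paper proves this theorem in one line by citing Corollary \ref{coro3-9}, viewing $Q_{33}$ as the special case of $Q_2$ with $\alpha_{i,j} = \alpha_i$ for $j = r$ ($i \ne r$) and $\alpha_{i,j} = 0$ otherwise, under which the hypotheses $0 \le \alpha_k \le 1$ and $\alpha_k a_{k,r}a_{r,k} < 1$ are exactly the hypotheses of that corollary (the sum for $i = r$ being empty). Your explicit computation of $a^{(33)}_{i,i} = 1 - \alpha_i a_{i,r}a_{r,i}$, $a^{(33)}_{i,r} = (1-\alpha_i)a_{i,r}$, $a^{(33)}_{i,j} = a_{i,j} - \alpha_i a_{i,r}a_{r,j}$ followed by Corollary \ref{coro3-1} merely unrolls the chain through which Corollary \ref{coro3-9} was itself established, so it is the same reduction carried out one level lower (with only a harmless index slip, $(1-\alpha_k)a_{i,r}$ where $(1-\alpha_i)a_{i,r}$ is meant).
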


\begin{theorem}
Suppose that $0 \le \alpha_{k} \le 1$, $k = 1,\cdots,n$, $k\not = r$.
Then Theorem B is valid for $\nu = 33$.
\end{theorem}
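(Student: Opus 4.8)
The plan is to recognize $Q_{33}$ as a special instance of the general preconditioner $Q_2 = (-\alpha_{i,j}a_{i,j})$ studied earlier in this subsection, and then to invoke Corollary \ref{coro3-10} directly. Comparing the displayed form of $Q_{33}$ with the definition of $Q_2$, I would set $\alpha_{i,r} = \alpha_i$ for each $i\in\{1,\cdots,n\}\setminus\{r\}$ and $\alpha_{i,j} = 0$ for every $j\neq r$ (together with $\alpha_{i,i}=0$). Under this identification the $(k,r)$-entry $-\alpha_{k,r}a_{k,r}$ of $Q_2$ coincides with the $(k,r)$-entry $-\alpha_k a_{k,r}$ of $Q_{33}$, while all off-diagonal entries lying in columns other than $r$ vanish in both matrices. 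Thus $Q_{33}$ is exactly the matrix $Q_2$ obtained from this single-column choice of the scaling coefficients.

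Next I would verify that these coefficients satisfy the hypothesis of Corollary \ref{coro3-10}. The only possibly nonzero values are $\alpha_{i,r}=\alpha_i$, and the theorem assumes $0 \le \alpha_k \le 1$ for $k\neq r$; all remaining $\alpha_{i,j}$ equal $0$. Hence $0 \le \alpha_{i,j} \le 1$ holds for every pair $i\neq j$, which is precisely the condition required by Corollary \ref{coro3-10}. One must also confirm that $Q_{33}$ is an admissible $Q_2$, i.e. that $Q_{33}>0$: since $A$ is a Z-matrix we have $a_{k,r}\le 0$, so every entry $-\alpha_k a_{k,r}$ is nonnegative, and the defining conditions $\sum_{k=1}^{r-1}\alpha_k a_{k,r}\neq 0$ and $\sum_{k=r+1}^{n}\alpha_k a_{k,r}\neq 0$ guarantee $\sum_{i\neq j}\alpha_{i,j}a_{i,j}=\sum_{k\neq r}\alpha_k a_{k,r}\neq 0$, so that $Q_{33}$ has at least one positive entry.

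With both requirements verified, Corollary \ref{coro3-10} applies and yields that Theorem B is valid for $\nu=33$, that is, $\rho({\mathscr L}^{(33)}_{\gamma,\omega}) \le \rho({\mathscr L}_{\gamma,\omega}) < 1$ whenever $A$ is a nonsingular M-matrix. I expect no genuine obstacle in this argument, since its entire content is the correct matching of index conventions between the column-$r$ support of $Q_{33}$ and the entrywise scaling that defines $Q_2$. The only point demanding a moment of care is confirming that the one-column structure of $Q_{33}$ does not violate the semi-positivity $Q>0$ assumed throughout this subsection on combination preconditioners, which is exactly what the two nonvanishing sum conditions ensure.
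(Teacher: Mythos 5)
Your proposal is correct and follows exactly the paper's route: the paper proves this theorem by the single remark ``By Corollaries \ref{coro3-9}--\ref{coro3-12}, we can prove the following results, directly,'' i.e.\ by viewing $Q_{33}$ as the special case of $Q_2$ with $\alpha_{i,r}=\alpha_i$ and $\alpha_{i,j}=0$ for $j\neq r$ and invoking Corollary \ref{coro3-10}, which is precisely your argument. Your explicit verification of the index identification, of $0\le\alpha_{i,j}\le 1$, and of the semi-positivity $Q_{33}>0$ via the two nonvanishing sum conditions simply fills in the details the paper leaves implicit.
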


\begin{theorem}\label{thm3.123}
Suppose that $0 \le \alpha_{k} \lesssim 1$
and $\alpha_ka_{k,r}a_{r,k} < 1$, $k = 1,\cdots,n$, $k\not = r$.
Then Theorem C is valid for $\nu = 33$,
provided one of the following conditions is satisfied:

\begin{itemize}
\item[(i)] $0 \le \gamma < 1$.

\item[(ii)]
$\gamma = 1$ and one of the following conditions holds:

\begin{itemize}
\item[($ii_1$)]
There exist $i\in\{1, \cdots, r-1\}$ and $j\in\{1, \cdots, i\}\cup\{r+1, \cdots, n\}$
 such that $\alpha_ia_{i,r}a_{r,j} > 0$.

\item[($ii_2$)]
There exist $i\in\{r+1, \cdots, n\}$ and $j\in\{r+1, \cdots, i\}$
 such that $\alpha_ia_{i,r}a_{r,j} > 0$.

\item[($ii_3$)] $a_{r-1,r} < 0$ and $\alpha_{r-1} > 0$.

\item[($ii_4$)] $a_{r,1} < 0$.

\item[($ii_5$)]
There exists $k\in\{r+1, \cdots, n\}$ such that $a_{r,k} < 0$.
\end{itemize}\end{itemize}
\end{theorem}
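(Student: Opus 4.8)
The plan is to prove Theorem 3.123 by reducing the conditions ($ii_1$)--($ii_5$) to the generic hypotheses of the Stein--Rosenberg Type Theorem II machinery already developed for the general upper-triangular preconditioner $Q_{12}$, i.e.\ Corollary \ref{coro3-11}, exactly as was done in the analogous combination preconditioners $Q_{29}$, $Q_{30}$ and $Q_{31}$ (Theorems \ref{thm3.107}, \ref{thm3.111} and \ref{thm3.115}). Since $Q_{33}=Q_6+Q_{15}$ is built from the lower preconditioner $Q_6$ and the upper preconditioner $Q_{15}$, the entries $\delta_{i,j}^{(33)}(\gamma)$ and $\delta_{i,j}^{(33)}(1)$ have already been computed just above the statement, and the hypothesis $\alpha_k a_{k,r}a_{r,k}<1$ guarantees $a^{(33)}_{k,k}>0$, so that $P_{33}A$ is an L-matrix. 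As noted just before the statement, $\delta_{r,j}^{(33)}(\gamma)=0$ for all $j$, which is why only the reduction to condition ($i$) of Corollary \ref{coro3-11} (not condition ($ii$)) is available, and this is reflected in the statement listing only cases ($i$) and ($ii$) without an analogue of the ($iii$)/($iv$) chains.

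The key steps, in order, are as follows. First I would record that $0\le\alpha_k\lesssim 1$ together with $\alpha_k a_{k,r}a_{r,k}<1$ makes $P_{33}A$ an irreducible L-matrix in the ``$\lesssim$'' sense, so the framework of Corollary \ref{coro3-11}(i) applies and it remains only to verify the $\gamma=1$ positivity conditions ($ii_1$)--($ii_5$), since case ($i$) ($0\le\gamma<1$) is immediate from Corollary \ref{coro3-11}(i). Second, for ($ii_1$) and ($ii_2$) I would observe directly from the displayed formula for $\delta_{i,j}^{(33)}(1)$ that the stated sign condition $\alpha_i a_{i,r}a_{r,j}>0$ is precisely a positive entry $\delta_{i,j}^{(33)}(1)>0$, hence condition ($ii_1$) of Theorem \ref{thm3.7} holds. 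Third, for ($ii_3$), ($ii_4$), ($ii_5$) I would use the definition of $Q_{33}$: there exists $k_0\in\{1,\dots,n\}\setminus\{r\}$ with $\alpha_{k_0}a_{k_0,r}<0$, and I would pair this with the hypothesized negative off-diagonal entry of $A$ (namely $a_{r-1,r}<0$, $a_{r,1}<0$, or $a_{r,k}<0$ respectively) to manufacture a product $\alpha_{k_0}a_{k_0,r}a_{r,\cdot}>0$, thereby landing back in ($ii_1$) or ($ii_2$). This is exactly the pairing argument used in the proofs of Theorems \ref{thm3.59} (for $Q_{15}$) and \ref{thm3.111} (for $Q_{30}$).

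Concretely, for ($ii_3$) I would take $i=r-1$ (with $\alpha_{r-1}>0$ and $a_{r-1,r}<0$) and some $j$ with $a_{r,j}<0$ supplied by irreducibility of $A$, giving $\alpha_{r-1}a_{r-1,r}a_{r,j}>0$ in the range covered by ($ii_1$). For ($ii_4$), $a_{r,1}<0$ combined with the guaranteed $k_0$ (say $k_0<r$, handled by irreducibility) yields $\alpha_{k_0}a_{k_0,r}a_{r,1}>0$, matching ($ii_1$) at $j=1$. For ($ii_5$), an index $k\in\{r+1,\dots,n\}$ with $a_{r,k}<0$ paired with $\alpha_{k_0}a_{k_0,r}<0$ gives $\alpha_{k_0}a_{k_0,r}a_{r,k}>0$ falling under ($ii_1$) (if $k_0\le$ the row index) or ($ii_2$); here I would be careful about which of the two sum ranges in $\delta^{(33)}_{i,j}(1)$ the index lands in. Having reduced every subcase to a positive $\delta^{(33)}_{i,j}(1)$, I invoke Corollary \ref{coro3-11}(i) to conclude Theorem C holds for $\nu=33$.

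I expect the main obstacle to be the bookkeeping in the reduction steps for ($ii_4$) and ($ii_5$): because $Q_{33}$ has nonzero entries both above and below the $r$-th position, the product $\alpha_{k_0}a_{k_0,r}a_{r,j}$ can fall into either the $i=1,\dots,r-1$ branch or the $i=r+1,\dots,n$ branch of $\delta^{(33)}_{i,j}(1)$ depending on whether $k_0<r$ or $k_0>r$, and one must check that the target column index $j$ genuinely lies in the admissible set $\{1,\dots,i\}\cup\{r+1,\dots,n\}$ (respectively $\{r+1,\dots,i\}$) for that branch. Guaranteeing an appropriate $k_0$ on the correct side of $r$ will rely on the irreducibility of $A$ (to produce a negative $a_{r,j}$ or $a_{k,r}$ in the needed location) exactly as in the companion theorems, so the argument is routine once the index ranges are tracked carefully, but that case analysis is where an error would most easily creep in.
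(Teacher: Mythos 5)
Your proposal is correct and follows essentially the same route as the paper: cases ($i$), ($ii_1$), ($ii_2$) come directly from Corollary \ref{coro3-11}($i$) via condition ($ii_1$) of Theorem \ref{thm3.7} (a positive entry $\delta^{(33)}_{i,j}(1)>0$), and ($ii_3$)--($ii_5$) are reduced to ($ii_1$) by pairing an entry $\alpha_{k_0}a_{k_0,r}<0$ of $Q_{33}$ with the hypothesized negative entry of row $r$ of $A$ (irreducibility supplying that entry only in case ($ii_3$)), exactly as in the paper's proof. Two small slips, neither fatal: $Q_{33}$ is a special case of the general preconditioner $Q_2$, not of the upper-triangular $Q_{12}$ (it has entries below row $r$), so Corollary \ref{coro3-11} — which you do cite — is the right tool for the wrong stated reason; and the existence of $k_0$ on the correct side of $r$ (namely $k_0\le r-1$, needed so that $j=1$ in ($ii_4$) and $j=k>r$ in ($ii_5$) land in the admissible index set of ($ii_1$)) is guaranteed by the definition of $Q_{33}$, which requires both $\sum_{k=1}^{r-1}\alpha_ka_{k,r}\not=0$ and $\sum_{k=r+1}^{n}\alpha_ka_{k,r}\not=0$, rather than by irreducibility of $A$.
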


\begin{proof}
We just need to prove ($ii_3$), ($ii_4$) and ($ii_v$).

When ($ii_3$) holds, from the irreducibility of $A$, there exists $j_0\in\{1, \cdots, n\}\setminus\{r\}$
such that $a_{r,j_0} < 0$, so that $\alpha_{r-1}a_{r-1,r}a_{r,j_0} > 0$, which implies
that ($ii_1$) holds for $i = r-1$ and $j = j_0$.

If ($ii_4$) holds, then by the definition of $Q_{33}$ there exists $k_0 \{1, \cdots, r-1\}$
such that $\alpha_{k_0}a_{k_0,r} < 0$, so that $\alpha_{k_0}a_{k_0,r}a_{r,1} > 0$, which implies
that ($ii_1$) holds for $i = k_0$ and $j = 1$.

For the case when ($ii_5$) holds, the proof is completely same.
\end{proof}

\begin{theorem}\label{thm3.124}
Suppose that $0 \le \alpha_{k} \lesssim 1$, $k = 1,\cdots,n$, $k\not = r$.
Then Theorem D is valid for $\nu = 33$,
provided one of the conditions ($i$) and ($ii$) of Theorem \ref{thm3.123} is satisfied.
\end{theorem}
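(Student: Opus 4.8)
The plan is to deduce Theorem~D for $\nu = 33$ from the Stein--Rosenberg type comparison already established in Theorem~\ref{thm3.123}, invoking only the extra strength that the nonsingular M-matrix hypothesis supplies: strict convergence of the unpreconditioned method. Throughout the argument $A$ is an irreducible nonsingular M-matrix (the standing assumption attached to Theorem~D), and I would write $\rho = \rho({\mathscr L}_{\gamma,\omega})$ as usual. The whole proof follows the template of the earlier D-results such as Theorems~\ref{thm3.4} and \ref{thm3.112}: Theorem~C furnishes a trichotomy, and the M-matrix property deletes two of its three branches.

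First I would verify that the hypotheses of Theorem~\ref{thm3.123} are genuinely in force. Since an irreducible nonsingular M-matrix is in particular an irreducible L-matrix, the structural assumptions on $A$ hold; moreover $P_{33} = I + Q_{33}$ is nonsingular and satisfies $P_{33} > 0$, so by Lemma~\ref{lem1-10} the matrix $A^{(33)} = P_{33}A$ is again a nonsingular M-matrix, hence an L-matrix, and the comparison of Theorem~\ref{thm3.123} is legitimately applicable. The only quantitative hypothesis of Theorem~\ref{thm3.123} not literally restated in the present statement is $\alpha_k a_{k,r}a_{r,k} < 1$, and I would show it is automatic, exactly as happens in passing from an A-result to the corresponding B-result: for each $k \ne r$ the $\{k,r\}$ principal submatrix $\bigl(\begin{smallmatrix}1 & a_{k,r}\\ a_{r,k} & 1\end{smallmatrix}\bigr)$ of $A$ is itself a nonsingular M-matrix, so its determinant $1 - a_{k,r}a_{r,k}$ is positive; combined with $a_{k,r}a_{r,k}\ge 0$ and with $0 \le \alpha_k \le 1$ (which $0 \le \alpha_k \lesssim 1$ forces) this yields $0 \le \alpha_k a_{k,r}a_{r,k} \le a_{k,r}a_{r,k} < 1$. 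As one of conditions (i) and (ii) of Theorem~\ref{thm3.123} is assumed, Theorem~C holds for $\nu = 33$, i.e.\ exactly one of the relations $\rho({\mathscr L}^{(33)}_{\gamma,\omega}) < \rho < 1$, $\rho({\mathscr L}^{(33)}_{\gamma,\omega}) = \rho = 1$, or $\rho({\mathscr L}^{(33)}_{\gamma,\omega}) > \rho > 1$ is valid.

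Next I would use the M-matrix hypothesis to collapse this trichotomy. Because $A$ is a nonsingular M-matrix, the AOR splitting $A = M_{\gamma,\omega} - N_{\gamma,\omega}$ is weak regular (as recorded in the proof of Theorem~\ref{thm3.2}), so by Lemma~\ref{lem1-5} it is convergent, that is $\rho < 1$. This excludes the second and third relations of Theorem~C, both of which require $\rho \ge 1$, and leaves only $\rho({\mathscr L}^{(33)}_{\gamma,\omega}) < \rho({\mathscr L}_{\gamma,\omega}) < 1$, which is precisely Theorem~D for $\nu = 33$.

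The routine bookkeeping (reduction to $\gamma \le \omega$ via Lemma~\ref{lem3-1} and the positivity of the relevant eigenvectors via Lemma~\ref{lem1-8}) is already absorbed inside Theorem~\ref{thm3.123} and needs no repetition here. The only non-mechanical point, and the one I would flag as the main obstacle, is the verification that the product condition $\alpha_k a_{k,r}a_{r,k} < 1$ comes for free from the M-matrix assumption; once that principal-submatrix argument is in hand, Theorem~\ref{thm3.123} applies verbatim and the strict inequality of Theorem~D falls out at once from $\rho < 1$.
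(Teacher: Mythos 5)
Your proof is correct and follows essentially the route the paper intends for this result (left implicit there, following the pattern of Theorem \ref{thm3.4} and Corollary \ref{coro3-12}): invoke the Stein--Rosenberg trichotomy of Theorem \ref{thm3.123} and collapse it using $\rho({\mathscr L}_{\gamma,\omega}) < 1$, which the nonsingular M-matrix hypothesis supplies via the weak regularity of the AOR splitting and Lemma \ref{lem1-5}. Your principal-minor argument that $\alpha_k a_{k,r}a_{r,k} < 1$ holds automatically is sound and is equivalent to the paper's mechanism, where Lemma \ref{lem1-10} makes $P_{33}A$ a nonsingular M-matrix, hence an L-matrix with positive diagonal entries $1-\alpha_k a_{k,r}a_{r,k}$.
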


Clearly, there exists a permutation matrix $V$ such that $V^TQ_{33}V = Q_6$
with $q^{(6)}_{r,1} = \alpha_1a_{1,r}$, $q^{(6)}_{k.1} = \alpha_ka_{k,r}$,
$k=2,\cdots,n$, $k\not=r$. It is easy to see that the matrices $A$ and $V^TAV$
have the same irreducibility and $\rho({\mathscr J}(A)) = \rho({\mathscr J}(V^TAV))$,
$\rho({\mathscr J}(A^{(33)})) = \rho({\mathscr J}(V^TP_{33}VV^TAV))$.
Hence, by ($iii$) of Theorem \ref{thm3.23},
Theorems \ref{thm3.123} and \ref{thm3.124} for $\gamma = 0$
are valid whenever we set $\alpha_k=1$, $k=1,\cdots,n$, $k\not=r$.
So from Theorem \ref{thm3.124} it can derive \cite[Theorem 2.2]{Mi87}.

At last, in \cite{HWXC16}, $Q$ is chosen as
 \begin{eqnarray*}
Q_{34} = Q_{11} + Q_{20} = \left(\begin{array}{ccccccc}
0      && 0      && \cdots & 0 & -\frac{a_{1,n}}{\alpha_2} - \beta_2  \\
\vdots && \vdots &&  \cdots & \vdots & \vdots  \\
0      && 0      &&\cdots & 0 & 0   \\
-\frac{a_{n,1}}{\alpha_1} - \beta_1  && 0 && \cdots & 0 & 0
\end{array}\right)
 \end{eqnarray*}
with $a_{1,n} < 0$, $a_{n,1} < 0$, $\alpha_i > 0$, $i=1,2$,
$a_{n,1}/\alpha_1 + \beta_1 < 0$ and $a_{1,n}/\alpha_2 + \beta_2 < 0$.

It is proposed in \cite{LW14} for $\alpha_i = \alpha$,
$\beta_i = \beta$, $i=1,2$.

In this case, for $n \ge 3$, the condition ($ii$) of Theorem \ref{thm3.3} can be not satisfied.

By Corollaries \ref{coro3-5}-\ref{coro3-8},
it can be prove the following comparison results.

\begin{theorem}
Suppose that
\begin{eqnarray}\label{eqs3.61}
\beta_1 \ge \left(1 - \frac{1}{\alpha_1}\right)a_{n,1} , \;\;
\beta_2 \ge \left(1 - \frac{1}{\alpha_2}\right)a_{1,n}
 \end{eqnarray}
 and
 \begin{eqnarray}\label{eqs3.619}
\beta_1 > \frac{1}{a_{1,n}} - \frac{a_{n,1}}{\alpha_1}, \;\;
\beta_2 > \frac{1}{a_{n,1}} - \frac{a_{1,n}}{\alpha_2}.
 \end{eqnarray}
Then Theorem A is valid for $\nu = 34$.
\end{theorem}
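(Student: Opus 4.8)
The plan is to recognize $Q_{34}$ as a special instance of the general pattern $Q_1$ and then verify the hypotheses of Corollary \ref{coro3-5}, which already packages Theorem A for $\nu = 1$. The only nonzero off-diagonal entries of $Q_{34}$ are $q^{(34)}_{1,n} = -a_{1,n}/\alpha_2 - \beta_2$ and $q^{(34)}_{n,1} = -a_{n,1}/\alpha_1 - \beta_1$, with all other off-diagonal entries and the whole diagonal equal to zero. The sign conditions $a_{n,1}/\alpha_1 + \beta_1 < 0$ and $a_{1,n}/\alpha_2 + \beta_2 < 0$ guarantee that both distinguished entries are strictly positive, so $Q_{34} > 0$ is admissible and $P_{34} = I + Q_{34}$ is a legitimate preconditioner in the framework. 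This is the same route by which the earlier theorem for $Q_{20}$ (Theorem \ref{thm3.41}) was established.

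First I would check the requirement $q^{(34)}_{i,j} \le -a_{i,j}$ for all $i \neq j$. For every pair other than $(1,n)$ and $(n,1)$ the entry is zero, and since $A$ is an L-matrix we have $-a_{i,j} \ge 0$, so the inequality holds automatically. For the two distinguished positions, the inequality $-a_{1,n}/\alpha_2 - \beta_2 \le -a_{1,n}$ rearranges to exactly $\beta_2 \ge (1 - 1/\alpha_2)a_{1,n}$, and symmetrically the $(n,1)$ entry yields $\beta_1 \ge (1 - 1/\alpha_1)a_{n,1}$; these are precisely the two inequalities in (\ref{eqs3.61}).

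Next I would verify the positivity condition (\ref{eqs3.11}), namely $1 + \sum_{k\neq i} q^{(34)}_{i,k} a_{k,i} > 0$ for each $i$. For $i \notin \{1,n\}$ the corresponding row of $Q_{34}$ is zero, so the expression is simply $1 > 0$. For $i = 1$ the sum collapses to the single term $q^{(34)}_{1,n} a_{n,1}$, and for $i = n$ to $q^{(34)}_{n,1} a_{1,n}$. The step requiring care is the sign reversal: because $a_{n,1} < 0$, the inequality $1 + q^{(34)}_{1,n} a_{n,1} > 0$ is equivalent to $q^{(34)}_{1,n} < -1/a_{n,1}$, which upon substituting $q^{(34)}_{1,n} = -a_{1,n}/\alpha_2 - \beta_2$ becomes $\beta_2 > 1/a_{n,1} - a_{1,n}/\alpha_2$, i.e. the second inequality of (\ref{eqs3.619}); the case $i = n$ produces the first one in the same way. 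With both hypotheses of Corollary \ref{coro3-5} confirmed, Theorem A for $\nu = 34$ follows immediately.

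The only genuine obstacle is the bookkeeping of the inequality directions when dividing through by the negative quantities $a_{1,n}$ and $a_{n,1}$ to obtain (\ref{eqs3.619}); apart from that, the argument is a direct specialization of Corollary \ref{coro3-5} and introduces no new machinery.
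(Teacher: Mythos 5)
Your proposal is correct and follows exactly the route the paper intends: the paper derives this theorem from Corollary \ref{coro3-5} (just as it does explicitly for $Q_{20}$ in Theorem \ref{thm3.41}), and your verification that $q^{(34)}_{i,j}\le -a_{i,j}$ reduces to (\ref{eqs3.61}) while the positivity condition (\ref{eqs3.11}) reduces to (\ref{eqs3.619}) — with the inequality flips from dividing by the negative entries $a_{1,n}$ and $a_{n,1}$ handled correctly — is precisely the required argument.
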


\begin{theorem}
Suppose that (\ref{eqs3.61}) holds.
Then Theorem B is valid for $\nu = 34$.
\end{theorem}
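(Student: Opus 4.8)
The plan is to reduce the claim to Corollary \ref{coro3-6}, which guarantees that Theorem B holds for $\nu = 1$ as soon as the chosen $Q$ satisfies $q^{(1)}_{i,j} \le -a_{i,j}$ for every off-diagonal pair $i \neq j$. First I would read off the entries of $Q_{34}$: by construction all off-diagonal entries vanish except the two corner positions, where $q^{(34)}_{1,n} = -a_{1,n}/\alpha_2 - \beta_2$ and $q^{(34)}_{n,1} = -a_{n,1}/\alpha_1 - \beta_1$. For every remaining pair $i \neq j$ one has $q^{(34)}_{i,j} = 0 \le -a_{i,j}$ automatically, since $A$ is a Z-matrix and hence $-a_{i,j} \ge 0$; thus the hypothesis of Corollary \ref{coro3-6} must be verified only at $(1,n)$ and $(n,1)$.

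Next I would translate those two remaining inequalities into the stated hypotheses. At $(n,1)$ the requirement $q^{(34)}_{n,1} \le -a_{n,1}$ reads $-a_{n,1}/\alpha_1 - \beta_1 \le -a_{n,1}$, which rearranges to $\beta_1 \ge (1 - 1/\alpha_1)a_{n,1}$; at $(1,n)$ the requirement $q^{(34)}_{1,n} \le -a_{1,n}$ rearranges in exactly the same way to $\beta_2 \ge (1 - 1/\alpha_2)a_{1,n}$. These are precisely the two conditions collected in (\ref{eqs3.61}). So under (\ref{eqs3.61}) the preconditioner $Q_{34}$ satisfies $q^{(34)}_{i,j} \le -a_{i,j}$ for all $i \neq j$, and Theorem B follows for $\nu = 34$ by Corollary \ref{coro3-6}. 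As a consistency check, the standing constraints $a_{n,1}/\alpha_1 + \beta_1 < 0$ and $a_{1,n}/\alpha_2 + \beta_2 < 0$ built into $Q_{34}$ force $q^{(34)}_{n,1} > 0$ and $q^{(34)}_{1,n} > 0$, so $Q_{34} > 0$ and $P_{34} = I + Q_{34}$ is a legitimate preconditioner of the required form; via (\ref{eqs3.2}) the inequality $q^{(34)}_{i,j} \le -a_{i,j}$ then yields $a^{(34)}_{i,j} \le 0$ off the diagonal, which is exactly what drives Corollary \ref{coro3-6}.

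The argument has no genuine obstacle; the only care required is bookkeeping. I must keep the pairing $(\alpha_1,\beta_1) \leftrightarrow (n,1)$ inherited from $Q_{11}$ and $(\alpha_2,\beta_2) \leftrightarrow (1,n)$ inherited from $Q_{20}$ straight, and I would sidestep any sign trouble from dividing by the negative quantities $a_{1,n}$, $a_{n,1}$ by manipulating each inequality directly into its $\beta$-solved form rather than isolating $a_{1,n}$ or $a_{n,1}$. Once the two corner inequalities are seen to coincide with (\ref{eqs3.61}), the proof is a one-line appeal to Corollary \ref{coro3-6}.
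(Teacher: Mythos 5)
Your proposal is correct and follows exactly the route the paper intends: the paper proves this theorem by appealing to Corollaries \ref{coro3-5}--\ref{coro3-8}, and for Theorem B the relevant one is Corollary \ref{coro3-6}, whose hypothesis $q^{(34)}_{i,j} \le -a_{i,j}$ reduces, as you show, to the trivial Z-matrix inequality off the two corners and to the two inequalities of (\ref{eqs3.61}) at $(n,1)$ and $(1,n)$. Your bookkeeping (the pairing of $(\alpha_1,\beta_1)$ with position $(n,1)$ and $(\alpha_2,\beta_2)$ with $(1,n)$, and the check that $Q_{34}>0$) is accurate, so nothing is missing.
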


\begin{theorem}
Suppose that (\ref{eqs3.619}) holds and
\begin{eqnarray}\label{eqs3.62}
\beta_1 \gtrsim \left(1 - \frac{1}{\alpha_1}\right)a_{n,1}, \;\;
\beta_2 \gtrsim \left(1 - \frac{1}{\alpha_2}\right)a_{1,n}.
 \end{eqnarray}
Then Theorem C is valid for $\nu = 34$.
\end{theorem}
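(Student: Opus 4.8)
The plan is to derive the result from Corollary \ref{coro3-7}(i), which gives the Stein--Rosenberg Type~II comparison for a general modification once $A^{(34)}$ is shown to be an irreducible L-matrix satisfying the hypotheses of Corollary \ref{coro3-5}. First I would record the structure of $Q_{34}$: its only nonzero entries are $q^{(34)}_{1,n}=-a_{1,n}/\alpha_2-\beta_2$ and $q^{(34)}_{n,1}=-a_{n,1}/\alpha_1-\beta_1$, both strictly positive by the defining inequalities $a_{1,n}/\alpha_2+\beta_2<0$ and $a_{n,1}/\alpha_1+\beta_1<0$. Using (\ref{eqs3.2}) I would compute the entries of $A^{(34)}$: the interior rows ($2\le i\le n-1$) are untouched, so $a^{(34)}_{i,j}=a_{i,j}\le 0$ for $i\ne j$; in rows $1$ and $n$ the interior off-diagonal entries satisfy $a^{(34)}_{1,j}=a_{1,j}+q^{(34)}_{1,n}a_{n,j}\le a_{1,j}\le 0$ and $a^{(34)}_{n,j}=a_{n,j}+q^{(34)}_{n,1}a_{1,j}\le a_{n,j}\le 0$, while the two corners are $a^{(34)}_{1,n}=a_{1,n}+q^{(34)}_{1,n}$ and $a^{(34)}_{n,1}=a_{n,1}+q^{(34)}_{n,1}$. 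In particular $A^{(34)}$ remains a Z-matrix.

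Next I would translate the two displayed hypotheses into the language of Corollaries \ref{coro3-5} and \ref{coro3-7}. A short computation shows $a^{(34)}_{1,1}=1+q^{(34)}_{1,n}a_{n,1}$ and $a^{(34)}_{n,n}=1+q^{(34)}_{n,1}a_{1,n}$, and that the pair of inequalities (\ref{eqs3.619}) is exactly the positivity $a^{(34)}_{1,1}>0$, $a^{(34)}_{n,n}>0$; since the remaining diagonal entries equal $1$, this is precisely (\ref{eqs3.11}). Likewise (\ref{eqs3.62}) is equivalent to $q^{(34)}_{n,1}\lesssim -a_{n,1}$ and $q^{(34)}_{1,n}\lesssim -a_{1,n}$, i.e.\ to $a^{(34)}_{n,1}\lesssim 0$ and $a^{(34)}_{1,n}\lesssim 0$; together with the interior entries (where $q^{(34)}_{i,j}=0\le -a_{i,j}$) this yields the off-diagonal sign condition $q^{(34)}_{i,j}\le -a_{i,j}$ required by Corollary \ref{coro3-5}. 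Hence $A^{(34)}$ is an L-matrix and the condition of Corollary \ref{coro3-5} holds.

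The crux is to show that the $\gtrsim$ in (\ref{eqs3.62}) forces $A^{(34)}$ to be irreducible, exactly as in the proof of Corollary \ref{coro3-3}. I would argue by contradiction: if $A^{(34)}$ were reducible, the convention attached to $\gtrsim$ would make both inequalities in (\ref{eqs3.62}) strict, so that $a^{(34)}_{1,n}<0$ and $a^{(34)}_{n,1}<0$; but then every edge of $G(A)$ survives in $G(A^{(34)})$ (the interior rows are unchanged, and the modified off-diagonal entries can only become more negative or create new edges), so $G(A^{(34)})\supseteq G(A)$ is strongly connected and $A^{(34)}$ is irreducible, a contradiction. Therefore $A^{(34)}$ is an irreducible L-matrix, and the admissible reading of (\ref{eqs3.62}) gives $\beta_1\ge(1-1/\alpha_1)a_{n,1}$ and $\beta_2\ge(1-1/\alpha_2)a_{1,n}$.

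Finally I would invoke Corollary \ref{coro3-7}(i). For $0\le\gamma<1$ its first alternative applies directly; for $\gamma=1$ condition ($ii_2$) of Theorem \ref{thm3.3}, namely $q^{(34)}_{n,1}>0$, holds because $q^{(34)}_{n,1}=-a_{n,1}/\alpha_1-\beta_1>0$. The extension to $\omega<\gamma$ is inherited through Lemma \ref{lem3-1}. Thus Theorem~C is valid for $\nu=34$. The main obstacle, and the only genuinely non-mechanical step, is the irreducibility argument in the third paragraph: one must check carefully that no edge of $G(A)$ is destroyed by the corner modifications and that the $\gtrsim$ convention is applied self-consistently, since this is precisely what lets us bypass the separate reducible-case analysis (via a lemma of the type Lemma \ref{lem3-10}) that was needed for the single-corner preconditioners $Q_{11}$ and $Q_{20}$.
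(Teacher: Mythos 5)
Your proposal is correct and takes essentially the same approach as the paper: both verify the hypotheses of Corollary \ref{coro3-7}($i$) by identifying (\ref{eqs3.619}) with (\ref{eqs3.11}) and (\ref{eqs3.62}) with the condition $q^{(34)}_{i,j} \lesssim -a_{i,j}$, and both handle $\gamma = 1$ via condition ($ii_2$) of Theorem \ref{thm3.3}, since $q^{(34)}_{n,1} = -a_{n,1}/\alpha_1 - \beta_1 > 0$. The only difference is one of detail, not method: the paper's two-sentence proof leaves the entry computations and the irreducibility argument implicit (they are packaged inside Corollaries \ref{coro3-5} and \ref{coro3-7} and the proof of Corollary \ref{coro3-3}), whereas you spell them out explicitly.
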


\begin{proof}
In this case, for $i\not = j$, the condition $q^{(1)}_{i,j} \lesssim -a_{i,j}$ reduces to
(\ref{eqs3.62}).

On the other hand, since $a_{n,1}/\alpha_1 + \beta_1 < 0$, then the condition ($ii_2$) in
Theorem \ref{thm3.3} is satisfied. It follows by Corollary \ref{coro3-7} that Theorem C is valid.
\end{proof}

\begin{theorem}
Suppose that (\ref{eqs3.62}) holds.
Then Theorem D is valid for $\nu = 34$.
\end{theorem}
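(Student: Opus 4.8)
The plan is to deduce this statement from the general $Q_1$ machinery, exactly as the earlier Theorem D statements for the lower/upper triangular preconditioners were obtained, by recognizing $Q_{34}$ as a special instance of $Q_1$ and then invoking Corollary \ref{coro3-8}. First I would record that $P_{34}=I+Q_{34}>0$ is nonsingular: the only nonzero off-diagonal entries of $Q_{34}$ are $q^{(34)}_{n,1}=-a_{n,1}/\alpha_1-\beta_1$ and $q^{(34)}_{1,n}=-a_{1,n}/\alpha_2-\beta_2$, both strictly positive by the defining inequalities $a_{n,1}/\alpha_1+\beta_1<0$ and $a_{1,n}/\alpha_2+\beta_2<0$. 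Thus $Q_{34}\ge 0$ has zero diagonal and is not identically zero, so it fits the template of $Q_1$.

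Next I would translate the hypotheses into the language of Corollaries \ref{coro3-5}-\ref{coro3-8}. At the two active positions the requirement $q^{(34)}_{i,j}\le -a_{i,j}$ reads
\[
-\frac{a_{n,1}}{\alpha_1}-\beta_1\le -a_{n,1},\qquad -\frac{a_{1,n}}{\alpha_2}-\beta_2\le -a_{1,n},
\]
which are precisely $\beta_1\ge(1-1/\alpha_1)a_{n,1}$ and $\beta_2\ge(1-1/\alpha_2)a_{1,n}$, i.e.\ (\ref{eqs3.61}); since (\ref{eqs3.62}) implies (\ref{eqs3.61}), this basic hypothesis holds and, in particular, the off-diagonal entries of $A^{(34)}=P_{34}A$ are nonpositive, so $A^{(34)}$ is a Z-matrix. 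Because $A$ is an irreducible nonsingular M-matrix and $P_{34}>0$ is nonsingular, Lemma \ref{lem1-10} gives that $A^{(34)}$ is itself a nonsingular M-matrix, hence an L-matrix. This is exactly why no analogue of the diagonal condition (\ref{eqs3.619}) is required here, in contrast with the corresponding Theorem C statement: the M-matrix structure supplies the positivity of the diagonal automatically.

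I would then furnish the additional Stein--Rosenberg ingredient demanded by Corollary \ref{coro3-8}, namely one of the conditions of Corollary \ref{coro3-7}. As in the proof of the Theorem C case for $\nu=34$, the strict inequality $q^{(34)}_{n,1}=-a_{n,1}/\alpha_1-\beta_1>0$ is condition $(ii_2)$ of Theorem \ref{thm3.3}, while the sharpened bound (\ref{eqs3.62}) is the assertion $q^{(34)}_{i,j}\lesssim -a_{i,j}$ whenever $a_{i,j}<0$; together these verify condition $(i)$ of Corollary \ref{coro3-7} for both $0\le\gamma<1$ and $\gamma=1$. Corollary \ref{coro3-8} then applies and yields $\rho({\mathscr L}^{(34)}_{\gamma,\omega})<\rho({\mathscr L}_{\gamma,\omega})<1$, which is Theorem D for $\nu=34$. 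Equivalently, the Theorem C statement just established for $Q_{34}$ gives the mutually exclusive trichotomy, and the nonsingular M-matrix property forces $\rho({\mathscr L}_{\gamma,\omega})<1$, thereby selecting case $(i)$.

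The only delicate point is the $\lesssim/\gtrsim$ bookkeeping when $A^{(34)}$ happens not to be irreducible: then $\gtrsim$ in (\ref{eqs3.62}) must be read as strict, and the comparison has to be carried out on the irreducible principal block, in the spirit of Lemma \ref{lem3-9} and Theorem \ref{thm3.63}. The hard part is therefore not any new estimate but simply checking that this reducible branch is covered; and it is, since that block analysis is already absorbed into Corollary \ref{coro3-8} through the reducible treatment in Theorem \ref{thm3.23}. Consequently the whole argument reduces to the sign computations above, the substantive analytic work having been discharged in the earlier lemmas.
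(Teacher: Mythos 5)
Your proof is correct and takes essentially the same route the paper intends: the paper gives no separate argument for this theorem but derives it (per the preamble to these results) from Corollaries~\ref{coro3-5}--\ref{coro3-8}, exactly as you do, by noting that (\ref{eqs3.62}) gives $q^{(34)}_{i,j}\le -a_{i,j}$ at the two active positions, that $q^{(34)}_{n,1}=-a_{n,1}/\alpha_1-\beta_1>0$ supplies condition ($ii_2$) of Theorem~\ref{thm3.3} for the case $\gamma=1$, and that Lemma~\ref{lem1-10} makes the diagonal condition (\ref{eqs3.619}) automatic, so Corollary~\ref{coro3-8} applies. One correction to your final paragraph: the reducible branch is not ``absorbed into Corollary~\ref{coro3-8} through the reducible treatment in Theorem~\ref{thm3.23}'' (that theorem concerns $Q_6$ and plays no role in Corollary~\ref{coro3-8}); rather, that branch is vacuous, because if the inequalities in (\ref{eqs3.62}) are strict then $a^{(34)}_{n,1}<0$, $a^{(34)}_{1,n}<0$, while $a^{(34)}_{i,j}\le a_{i,j}$ at every other off-diagonal position, so $G(A)\subseteq G(A^{(34)})$ and $A^{(34)}$ is automatically irreducible whenever $A$ is.
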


\section{Conclusions} \label{se4}

In this paper, we have investigated the preconditioned AOR method for solving linear systems.
We have studied two general preconditioners and proposed some lower triangular,
upper triangular and combination preconditioners. For $A$ being
an L-matrix, a nonsingular M-matrix, an irreducible L-matrix
and an irreducible nonsingular M-matrix, four types of comparison theorems are presented,
respectively. They contain a general comparison result, a strict comparison result and two
Stein-Rosenberg type comparison results. Our theorems include and are better than
almost all known corresponding results. We also pointed out some incorrect results.

When $(\gamma,\omega)$ is equal to $(\omega, \omega)$, (1, 1) and (0, 1),
from the results above, we can derive respectively the corresponding comparison results about
the preconditioned SOR method, Gauss-Seidel method and Jacobi method directly.

Similar to \cite{AH04,DHS11,DY12,EM95,EMT96} for the block preconditioned Jacobi, Gauss-Seidel, SOR methods and
the block preconditioned AOR method respectively,
when $A$ is partitioned by block, then $Q$ can be chosen as a block matrix,
so that we can derive the same comparison results for the block preconditioned AOR method.

Similar to \cite{BW15,HWLC15,HWXC18,MLW18,WDT12,Yu122,ZLWL14,ZSWL09} for the preconditioned AOR methods for solving linear least
squares problems, we can derive the corresponding comparison results as above.

The comparisons between either different preconditioners
or different parameters of a same type preconditioner
have investigated by many authors in
\cite{AH04,BW15,HNT03,HS14,HCC06,HCEC05,KHMN02,KN09,KN10,Li11,LW14,LJ10,Li05,Li052,LL07,LC09,LHZ15,Mi87,
Mo10,NHMS04,NK10,NKA09,NKM08,SE13,SER14,Su05,WZ11,WL09,Wa06,Wa062,WH06,YZ12,Yu07,Yu072,Yu12,YLK14}.
This is an important and interesting research subject.
Because of the length, this paper does not cover this topic.

In \cite{HDS10,HWF07,KC03,KNO97,LY08,Li08,LC10,LCC08,SE15,Su06,WS16,WS09,WWS07,Yu07} and some related literatures,
the preconditioned method for H-matrix is studied.

Recently, in \cite{LLV18}, the preconditioned tensor splitting method for solving multi-linear systems is proposed.
This is a new subject to be studied.

\section*{Acknowledgements}

This work is supported by the National Natural Science Foundation
of China, grant No. 11771213 and 11971242, the Priority Academic Program
Development of Jiangsu Higher Education Institutions.

\end{document}